\documentclass{amsart}
\usepackage{amsmath,amssymb,amsthm,bm,bbm}
\usepackage{graphicx,enumerate}
\usepackage{layout}
\usepackage[OT2,T1]{fontenc}
\usepackage{longtable}
\usepackage{hyperref}
\usepackage{url}

\DeclareSymbolFont{cyrletters}{OT2}{wncyr}{m}{n}
\DeclareMathSymbol{\Sha}{\mathalpha}{cyrletters}{"58}
\newcommand\scalemath[2]{\scalebox{#1}{\mbox{\ensuremath{\displaystyle #2}}}}

\setlength{\topmargin}{-50pt}
\setlength{\oddsidemargin}{-20pt}
\setlength{\evensidemargin}{-20pt}
\setlength{\textwidth}{490pt}
\setlength{\textheight}{730pt}

\theoremstyle{plain}
\newtheorem{theorem}{Theorem}[section]
\newtheorem*{theorem-nn}{Theorem}
\newtheorem{lemma}[theorem]{Lemma}
\newtheorem{proposition}[theorem]{Proposition}
\newtheorem*{proposition-nn}{Proposition}
\newtheorem{corollary}[theorem]{Corollary}
\newtheorem{problem}[theorem]{Problem}

\theoremstyle{definition}
\newtheorem{definition}[theorem]{Definition}
\newtheorem{example}[theorem]{Example}
\newtheorem{remark}[theorem]{Remark}
\newtheorem*{acknowledgments}{Acknowledgments}

\theoremstyle{remark}

\newcommand{\bZ}{\mathbbm{Z}}\newcommand{\bQ}{\mathbbm{Q}}
\newcommand{\bG}{\mathbbm{G}}
\newcommand{\bF}{\mathbbm{F}}\newcommand{\bP}{\mathbbm{P}}
\newcommand{\cC}{\mathcal{C}}\newcommand{\cD}{\mathcal{D}}
\newcommand{\cH}{\mathcal{H}}\newcommand{\cS}{\mathcal{S}}
\newcommand{\cT}{\mathcal{T}}
\newcommand{\GL}{{\rm GL}}\newcommand{\SL}{{\rm SL}}
\newcommand{\Syl}{{\rm Syl}}
\newcounter{sub}
{\begin{list}{(\arabic{sub})}{\usecounter{sub}%
\setlength{\leftmargin}{2em}}}{\end{list}}

\def\rank{\mbox{\rm rank }}

\def\End{\mbox{\rm End}}

\newcommand{\Id}{\mathrm{Id}}

%%\numberwithin{equation}{section}
%%\addtocounter{section}{-1}

\title{Birational classification for algebraic tori}

\author[A. Hoshi]{Akinari Hoshi}
\address{Department of Mathematics,
Niigata University, Niigata 950-2181, Japan}
\email{hoshi@math.sc.niigata-u.ac.jp}

\author[A. Yamasaki]{Aiichi Yamasaki}
\address{Department of Mathematics,
Kyoto University, Kyoto 606-8502, Japan}
\email{aiichi.yamasaki@gmail.com}

%\date{\today}
\thanks{{\it Key words and phrases.} 
Rationality problem, birational classification, algebraic tori, 
%stably rational, retract rational, 
flabby resolution, Krull-Schmidt-Azumaya theorem.\\ 
This work was partially supported by KAKENHI (24540019, 25400027, 16K05059, 19K03418, 20H00115, 20K03511, 24K00519, 24K06647).}

\subjclass[2010]{Primary 11E72, 12F20, 13A50, 14E08, 20C10, 20G15.}
%%11-XX:Number theory
%%11Exx:Forms and linear algebraic groups
%%11E72 Galois cohomology of linear algebraic groups
%%12-XX:Field theory and polynomials
%%12Fxx:Field extensions
%%12F20 Transcendental extensions
%%13-XX:Commutative algebra
%%13Axx:General commutative ring theory
%%13A50 Actions of groups on commutative rings; invariant theory
%%14-XX:Algebraic geometry
%%14Exx:Birational Geometry
%%14E08 Rationality questions 
%%20-XX Group theory and generalizations
%%20Cxx Representation theory of groups
%%20C10 Integral representations of finite groups
%%20Gxx:Group theory and generalizations
%%20G15 Linear algebraic groups over arbitrary fields

%%%%%%    TEXT START    %%%%%%

\begin{document}
\begin{abstract}
We give a stably birational classification 
for algebraic $k$-tori of dimensions $3$ and $4$ over a field $k$. 
Kunyavskii \cite{Kun90} 
proved that there exist $15$ not stably $k$-rational cases 
among $73$ cases of algebraic $k$-tori of dimension $3$. 
Hoshi and Yamasaki \cite{HY17} 
showed that there exist exactly $487$ (resp. $7$, resp. $216$) 
stably $k$-rational (resp. not stably but retract $k$-rational, 
resp. not retract $k$-rational) cases of 
algebraic $k$-tori of dimension $4$. 
First, we define the weak stably $k$-equivalence of algebraic $k$-tori 
and show that there exist $13$ (resp. $128$) 
weak stably $k$-equivalent classes of 
algebraic $k$-tori $T$ of dimension $3$ (resp. $4$) 
which are not stably $k$-rational 
by computing some cohomological stably birational invariants, 
e.g. the Brauer-Grothendieck group of $X$ 
where $X$ is a smooth $k$-compactification of $T$, 
provided by Kunyavskii, Skorobogatov and Tsfasman \cite{KST89}. 
We make a procedure to compute such stably birational invariants 
effectively 
and the computations are done by using the computer algebra system GAP. 
Second, we define the $p$-part of the flabby class $[\widehat{T}]^{fl}$ 
as a $\bZ_p[\Syl_p(G)]$-lattice and prove that 
they are faithful and indecomposable $\bZ_p[\Syl_p(G)]$-lattices 
unless it vanishes 
for $p=2$ (resp. $p=2,3$) in dimension $3$ (resp. $4$) 
via $p$-adic analysis. 
The $\bZ_p$-ranks of them are also given. 
Third, we give a necessary and sufficient condition for 
which two not stably $k$-rational algebraic $k$-tori $T$ and $T^\prime$ of 
dimensions $3$ (resp. $4$) are stably birationally $k$-equivalent 
in terms of the splitting fields 
and the weak stably $k$-equivalent classes of $T$ and $T^\prime$. 
In particular, the splitting fields of them should coincide 
if $\widehat{T}$ and $\widehat{T}^\prime$ are indecomposable. 
Fourth, for each $7$ cases of not stably but retract $k$-rational 
algebraic $k$-tori of dimension $4$, 
we find an algebraic $k$-torus $T^\prime$ of dimension $4$ 
which satisfies that $T\times_k T^\prime$ is stably $k$-rational. 
Finally, we give a criterion to 
determine whether two algebraic $k$-tori $T$ and $T^\prime$ 
of general dimensions are stably birationally $k$-equivalent 
when $T$ (resp. $T^\prime$) is stably birationally $k$-equivalent 
to some algebraic $k$-torus $T^{\prime\prime}$ of dimension up to $4$. 
\end{abstract}

\maketitle

\tableofcontents

%\newpage

%
%%%%%%%%%%%%%%%%%%%%%%%%%%%%%%%%%%%%%%%%%%%%%%%%%%%%%%%%%%%%%%%%%%%%%%%%%%%%%%%%%%%%%%
\section{Introduction}\label{S1}

We first recall some relevant definitions of rationality of function fields. 
\begin{definition}
Let $K/k$ and $K^\prime/k$ be finitely generated field extensions of a base field $k$.\\ 
(1) $K$ is called {\it rational over $k$} 
(or {\it $k$-rational} for short) 
if $K$ is purely transcendental over $k$, 
i.e. $K$ is isomorphic to $k(x_1,\ldots,x_n)$, 
the rational function field over $k$ with $n$ variables $x_1,\ldots,x_n$ 
for some integer $n$;\\
(2) $K$ is called {\it stably $k$-rational} 
if $K(y_1,\ldots,y_m)$ is $k$-rational for some algebraically 
independent elements $y_1,\ldots,y_m$ over $K$;\\
(3) $K$ and $K^\prime$ are called 
{\it stably $k$-isomorphic} 
%(or {\it stably $k$-equivalent}) 
if $K(y_1,\ldots,y_m)\simeq K^\prime(z_1,\ldots,z_n)$ for some algebraically 
independent elements $y_1,\ldots,y_m$ over $K$ 
and $z_1,\ldots,z_n$ over $K^\prime$;\\
(4) When $k$ is an infinite field, $K$ is called {\it retract $k$-rational} 
if there exists a $k$-algebra domain $R\subset K$ such that 
(i) $K$ is the quotient field of $R$ and (ii) 
the identity map $1_R : R\rightarrow R$ 
factors through a localized polynomial ring over $k$, i.e. 
there exist a non-zero polynomial $f\in k[x_1,\ldots,x_n]$ 
and $k$-algebra homomorphisms 
$\varphi : R\rightarrow k[x_1,\ldots,x_n][1/f]$ and 
$\psi : k[x_1,\ldots,x_n][1/f]\rightarrow R$ satisfying 
$\psi\circ\varphi=1_R$ (cf. Saltman \cite[Definition 3.1]{Sal84});\\
(5) $K$ is called {\it $k$-unirational} 
if $k\subset K\subset k(x_1,\ldots,x_n)$ for some integer $n$. 
\end{definition}
It is not difficult to see that
\begin{center}
``$k$-rational'' $\Rightarrow$ ``stably $k$-rational'' $\Rightarrow$ 
``retract $k$-rational'' $\Rightarrow$ ``$k$-unirational''. 
\end{center}

%%%%%%%%%%%%%%%%%%%%%%%%%%%%%%%%%%%%%%%%%%%%%%%%%%%%%%%%%
Let $\overline{k}$ be a fixed separable closure of the base field $k$. 
Let $T$ be an algebraic $k$-torus, 
i.e. a group $k$-scheme with fiber product (base change) 
$T\times_k \overline{k}=
T\times_{{\rm Spec}\, k}\,{\rm Spec}\, \overline{k}
\simeq (\bG_{m,\overline{k}})^n$; 
$k$-form of the split torus $(\bG_{m,k})^n$. 
Let $\mathcal{G}={\rm Gal}(\overline{k}/k)$ be 
the absolute Galois group of $k$ 
and $\widehat{T}_{\overline{k}}={\rm Hom}(T,\bG_{m,\overline{k}})$ 
be the character module of $T$ which becomes 
$\mathcal{G}$-lattice, 
i.e. finitely generated $\bZ[\mathcal{G}]$-module 
which is $\bZ$-free as an abelian group. 

\begin{definition}
An algebraic $k$-torus $T$ is said to be {\it $k$-rational} 
(resp. {\it stably $k$-rational}, {\it retract $k$-rational}, 
{\it $k$-unirational}) 
if the function field $k(T)$ of $T$ over $k$ is $k$-rational 
(resp. stably $k$-rational, retract $k$-rational, $k$-unirational). 
Two algebraic $k$-tori $T$ and $T^\prime$ 
are said to be 
{\it birationally $k$-equivalent} %$(k$-isomorphic$)$} 
(resp. {\it stably birationally $k$-equivalent}), 
denoted by $T\approx T^\prime$ 
(resp. $T\stackrel{\rm s.b.}{\approx} T^\prime$), 
if their function fields $k(T)$ and $k(T^\prime)$ are 
$k$-isomorphic (resp. stably $k$-isomorphic). 
\end{definition}

%\begin{remark}
We see that 
$T$ is $k$-rational if and only if $T$ is birational 
to the projective space $\bP^n$ of dimension $n$ for some integer $n$. 
$T$ is stably $k$-rational if and only if $T\times \bP^m$ is $k$-rational 
for some integer $m$.  
$T$ is retract $k$-rational if there exists a dominant rational map 
$f: \bP^n\dashrightarrow T$ for some integer $n$ and 
a rational map $g: T\dashrightarrow \bP^n$ such that 
$f\circ g=\Id_T$ where $\Id_T$ is the identity map of $T$, 
i.e. there exists a dominant rational map $f: \bP^n\dashrightarrow T$ which 
admits a rational section. 
$T$ is $k$-unirational if and only if there exists a dominant rational map 
$f: \bP^n\dashrightarrow T$. 
For an equivalent definition in the language of algebraic geometry, 
see e.g. 
Manin \cite{Man74}, \cite{Man86}, 
Manin and Tsfasman \cite{MT86}, 
Colliot-Th\'{e}l\`{e}ne and Sansuc \cite[Section 1]{CTS07}, 
Beauville \cite{Bea16}, 
Merkurjev \cite[Section 3]{Mer17}. 
%\end{remark}
%%%%%%%%%%%%%%%%%%%%%%%%%%%%%%%%%%%%%%%%%%%%%%%%%%%

\begin{remark}\label{rem1.13}
Colliot-Th\'el\`ene and Sansuc {\cite[Proposition 7.4, page 181]{CTS87}} 
showed that the following conditions are also equivalent 
for algebraic $k$-tori $T$: 
%(i) $[\widehat{T}]^{fl}$ is invertible; 
(i) $T$ is retract $k$-rational; 
(ii) there exists an algebraic $k$-torus $T^\prime$ 
such that $T\times_k T^\prime$ is $k$-rational; 
(iii) there exists an integral $k$-variety $Y$ such that 
$T\times_k Y$ is $k$-rational. 
\end{remark}
%%%%%%%%%%%%%%%%%%%%%%%%%%%%%%%%%%%%%%%%%%%%%%%%

Let $\mathcal{T}$ be the category of algebraic $k$-tori and 
$\mathcal{T}_n$ be the category of algebraic $k$-tori of dimension $n$. 
%$\mathcal{T}_n$ be the full subcategory of $\mathcal{T}$ 
%of algebraic $k$-tori of dimension $n$. 
%$\mathcal{T}=\coprod_{n=1}^\infty\mathcal{T}_n$. 
%
We consider the stably birational classification for algebraic $k$-tori, 
i.e. the structure of $\mathcal{T}/\stackrel{\rm s.b.}{\approx}$ 
(resp. $\mathcal{T}_n/\stackrel{\rm s.b.}{\approx}$). 
\begin{problem}[Stably birational classification for algebraic $k$-tori (resp. for algebraic $k$-tori of dimension $n$)]\label{prob1.3}
Determine the structure of $\mathcal{T}/\stackrel{\rm s.b.}{\approx}$ 
$($resp. $\mathcal{T}_n/\stackrel{\rm s.b.}{\approx}$$)$. 
In particular, 
for given two algebraic $k$-tori $T$ and $T^\prime$ 
$($resp. $T$ and $T^\prime$ of dimension $n$$)$ 
determine whether $T$ and $T^\prime$ are stably birationally $k$-equivalent. 
\end{problem}

%%%%%%%%%%%%%%%%%%%%%%%%%%%%%%%%%%%%%%%%%%%%%%%%%%%
%\{\it Lattice method.}

Let $G$ be a finite group and 
$M$ be a $\bZ[G]$-lattice (or just $G$-lattice for short), 
i.e. finitely generated $\bZ[G]$-module which is $\bZ$-free 
as an abelian group. 
There exists the minimal finite Galois extension $L/k$ 
with Galois group $G={\rm Gal}(L/k)$ such that 
$T$ splits over $L$: $T\times_k L\simeq (\bG_{m,L})^n$. 
The category of $G$-lattices is anti-equivalent to the category of 
algebraic $k$-tori which split over $L$ 
(see Ono \cite[Section 1.2]{Ono61}, 
Voskresenskii \cite[page 27, Example 6]{Vos98}, 
Knus, Merkurjev, Rost and Tignol \cite[Proposition 20.17]{KMRT98}). 
Indeed, if $T$ is an algebraic $k$-torus, then the character module 
$\widehat{T}={\rm Hom}(T,\bG_{m,L})$ of $T$ becomes a $G$-lattice. 
Conversely, for a $G$-lattice $M$, there exists an algebraic $k$-torus 
$T={\rm Spec}(L[M]^G)$ which splits over $L$ such that 
$\widehat{T}\simeq M$ as $G$-lattices. 
%%%%%%%%%%%%%%%%%%%%
The invariant field $L(M)^G$ may be identified with the function field 
$k(T)$ of $T$ 
and hence it is $k$-unirational (see Endo and Miyata \cite{EM75}, 
\cite[page 40, Example 21]{Vos98}).
Isomorphism classes of $k$-tori of dimension $n$ %over $k$ 
correspond bijectively to the elements of 
the set $H^1(\mathcal{G},\GL(n,\bZ))$ via Galois descent 
where $\mathcal{G}={\rm Gal}(\overline{k}/k)$ because 
${\rm Aut}((\bG_{m,\overline{k}})^n)=\GL(n,\bZ)$ 
where $\GL(n,\bZ)$ is the general linear group of degree $n$ over $\bZ$. 
The algebraic $k$-torus $T$ of dimension $n$ 
is uniquely determined by the integral representation 
$h : \mathcal{G}\rightarrow \GL(n,\bZ)$ up to conjugacy, 
and $G=h(\mathcal{G})$ is a finite subgroup of $\GL(n,\bZ)$ 
(see \cite[page 57, Section 4.9]{Vos98}). 
The minimal splitting field $L$ of $T$ corresponds to 
$\mathcal{H}$ where 
$\rho: \mathcal{G}\rightarrow G$ and $\mathcal{H}={\rm Ker}(\rho)$ 
with $G\simeq \mathcal{G}/\mathcal{H}$. \\
%%%%%%%%%%%%%%%%%%%%%%%%%%%

We recall some basic tools on lattice arguments 
in order to study the %for 
(stably, retract) rationality problem of algebraic $k$-tori 
(see Manin \cite{Man74}, \cite[Appendix]{Man86}, 
Colliot-Th\'el\`ene and Sansuc \cite[Section 1]{CTS77}, 
Swan \cite[Section 8]{Swa83}, \cite{Swa10}, 
Manin and Tsfasman \cite[Section 4]{MT86}, 
Kunyavskii, Skorobogatov, Tsfasman \cite[Section 2]{KST89}, 
Voskresenskii \cite[Chapter 2]{Vos98}, 
Cortella and Kunyavskii \cite{CK00}, 
Lorenz \cite[Chapter 2]{Lor05}, 
Colliot-Th\'el\`ene \cite[Section 5]{CT07}). 

%%%%%%%%%%%%%%%%%%%%%%%%%%%%%%%%%%%%%%%%%%%%%%%%%%%%%%

\begin{definition}
Let $\mathcal{G}$ be a profinite group and $M$ be a (continuous) 
$\mathcal{G}$-lattice.\\ 
(1) $M$ is called {\it permutation} $\mathcal{G}$-lattice 
if $M$ has a $\bZ$-basis
permuted by $\mathcal{G}$, i.e. $M\simeq \oplus_{1\leq i\leq m}\bZ[\mathcal{G}/\mathcal{H}_i]$ 
for some open subgroups $\mathcal{H}_1,\ldots,\mathcal{H}_m$ of $\mathcal{G}$.\\
(2) $M$ is called {\it stably permutation} 
$\mathcal{G}$-lattice if $M\oplus P\simeq P^\prime$ 
for some permutation $\mathcal{G}$-lattices $P$ and $P^\prime$.\\
(3) $M$ is called {\it invertible} 
if it is a direct summand of a permutation $\mathcal{G}$-lattice, 
i.e. $P\simeq M\oplus M^\prime$ for some permutation $\mathcal{G}$-lattice 
$P$ and a $\mathcal{G}$-lattice $M^\prime$.\\ 
(4) $M$ is called {\it coflabby} if $H^1(\mathcal{H},M)=0$
for any closed subgroup $\mathcal{H}$ of $\mathcal{G}$.\\ 
(5) $M$ is called {\it flabby} if $\widehat{H}^{-1}(\mathcal{H},M)=0$ 
for any closed subgroup $\mathcal{H}$ of $\mathcal{G}$ 
where $\widehat{H}$ is the Tate cohomology.
\end{definition}

It is not difficult to see 
\begin{align*}
\textrm{``}M\textrm{\ is\ permutation''}
\Rightarrow
\textrm{``}M\textrm{\ is\ stably\ permutation''}
\Rightarrow 
\textrm{``}M\textrm{\ is\ invertible''}
\Rightarrow
\textrm{``}M\textrm{\ is\ flabby\ and\ coflabby''}.
\end{align*}

Note that the above implications in each step cannot be reversed 
(see e.g. Hoshi and Yamasaki \cite[Chapter 1]{HY17}).\\
%%%%%%%%%%%%%%%%%%%%%%%%%%%%%%%%%%%%%%%%%%

Let $X$ be a smooth $k$-compactification of $T$, 
i.e. smooth projective $k$-variety $X$ 
containing $T$ as a dense open subvariety, 
and $\overline{X}=X\times_k\overline{k}$ 
where $\overline{k}$ is a fixed separable closure of the base field $k$. 
% i.e, a geometrically irreducible smooth complete variety 
% containing T as an open set.
There exists such a smooth $k$-compactification of an algebraic $k$-torus $T$ 
over any field $k$ (due to Hironaka \cite{Hir64} for ${\rm char}\, k=0$, 
see Colliot-Th\'{e}l\`{e}ne, Harari and Skorobogatov 
\cite[Corollaire 1]{CTHS05} for any field $k$). 

\begin{theorem}[{Voskresenskii \cite[Section 4, page 1213]{Vos69}, \cite[Section 3, page 7]{Vos70}, see also \cite{Vos74}, \cite[Section 4.6]{Vos98}, Kunyavskii \cite[Theorem 1.9]{Kun07} and Colliot-Th\'el\`ene \cite[Theorem 5.1, page 19]{CT07} for any field $k$}]\label{thVos69}
Let $k$ be a field 
and $\mathcal{G}={\rm Gal}(\overline{k}/k)$. 
Let $T$ be an algebraic $k$-torus, 
$X$ be a smooth $k$-compactification of $T$ 
and $\overline{X}=X\times_k\overline{k}$. 
Then there exists an exact sequence of $\mathcal{G}$-lattices 
\begin{align*}
0\to \widehat{T}_{\overline{k}}\to \widehat{Q}\to {\rm Pic}\,\overline{X}\to 0%\label{seM}
\end{align*}
where %$\widehat{T}={\rm Hom}(T,\bG_{m,\overline{k}})$, 
$\widehat{Q}$ is permutation 
and ${\rm Pic}\ \overline{X}$ is flabby. 
\end{theorem}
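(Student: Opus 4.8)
The plan is to build the exact sequence geometrically from the boundary of the compactification and then to extract flabbiness as the one genuinely geometric input. First I would pass to a finite splitting situation: choosing $L/k$ finite Galois with $G=\mathrm{Gal}(L/k)$ splitting $T$, every lattice in sight is a $G$-lattice and all Tate cohomology computations factor through finite quotients, so it suffices to treat closed subgroups $\mathcal H\le\mathcal G$ via their images in $G$.

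Next I would write down the excision/localization sequence for divisors. Since $X$ is smooth and $\overline T=T\times_k\overline k\subset\overline X$ is open dense with complement $\overline X\setminus\overline T=\bigcup_i D_i$ a reduced divisor whose irreducible components $D_i$ are permuted by $\mathcal G$, set $\widehat Q=\bigoplus_i\bZ D_i$. This is a permutation $\mathcal G$-lattice because $\mathcal G$ acts on the finite set $\{D_i\}$ with open stabilizers, so $\widehat Q\simeq\bigoplus_{\mathrm{orbits}}\bZ[\mathcal G/\mathcal H_j]$. The standard sequence for the smooth variety $\overline X$ reads
\begin{align*}
0\to\overline k[\overline T]^*/\overline k^*\to\widehat Q\to\mathrm{Pic}\,\overline X\to\mathrm{Pic}\,\overline T\to 0,
\end{align*}
where the left map sends $f$ to its divisor $\mathrm{div}(f)$, the kernel being the functions that are units on $\overline T$. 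Then I would identify the two end terms torically: the units on the split torus $\overline T\simeq(\bG_{m,\overline k})^n$ are exactly the Laurent monomials, so $\overline k[\overline T]^*/\overline k^*\simeq\widehat T$ canonically and $\mathcal G$-equivariantly, while $\mathrm{Pic}\,\overline T=\mathrm{Pic}\,(\bG_{m,\overline k})^n=0$. Substituting yields the desired short exact sequence of $\mathcal G$-lattices
\begin{align*}
0\to\widehat T\to\widehat Q\to\mathrm{Pic}\,\overline X\to 0
\end{align*}
with $\widehat Q$ permutation.

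It remains to prove that $\mathrm{Pic}\,\overline X$ is flabby, i.e. $\widehat H^{-1}(\mathcal H,\mathrm{Pic}\,\overline X)=0$ for every closed $\mathcal H\le\mathcal G$. Taking the long exact Tate cohomology sequence of the short exact sequence above and using $\widehat H^{-1}(\mathcal H,\widehat Q)=0$ (a permutation lattice has vanishing $(-1)$st Tate cohomology, by Shapiro's lemma) reduces the claim to the injectivity of the connecting map
\begin{align*}
\widehat H^0(\mathcal H,\widehat T)\to\widehat H^0(\mathcal H,\widehat Q).
\end{align*}
Concretely, writing $k'=\overline k^{\,\mathcal H}$, a class on the left is represented by an $\mathcal H$-invariant character $\chi\in\widehat T^{\mathcal H}$, i.e. a character defined over $k'$, and the map records the boundary divisor $\mathrm{div}(\chi)=\sum_i a_i D_i\in\widehat Q$; injectivity says that if $\mathrm{div}(\chi)$ is a norm $N_{\mathcal H}(q)$ then $\chi$ itself is a norm modulo $I_{\mathcal H}\widehat T$.

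This last injectivity is the crux and the main obstacle, and it is precisely where the geometry of the smooth compactification must enter rather than formal lattice manipulation. The intended input is that $X_{k'}$ is a smooth projective $k'$-variety carrying a $k'$-rational point (the identity of $T_{k'}$), so that the Hochschild--Serre spectral sequence gives $\mathrm{Pic}\,X_{k'}\xrightarrow{\sim}(\mathrm{Pic}\,\overline X)^{\mathcal H}$ and controls the algebraic Brauer group; combined with the fact that every $\mathcal H$-invariant class in $\mathrm{Pic}\,\overline X$ is represented by an $\mathcal H$-stable divisor supported on the boundary, this forces the connecting map to be injective. I would run this argument uniformly over all subfields $k'$, which is legitimate since $T$ acquires a rational point over every extension, thereby obtaining $\widehat H^{-1}(\mathcal H,\mathrm{Pic}\,\overline X)=0$ for all closed $\mathcal H$ and completing the proof.
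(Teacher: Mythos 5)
Your construction of the exact sequence is correct and is the standard one: since $T$ is affine and $X$ is normal and proper, the boundary $\overline{X}\setminus\overline{T}$ is pure of codimension one; the localization sequence for divisors, together with $\overline{k}[\overline{X}]^*=\overline{k}^*$, Rosenlicht's identification $\overline{k}[\overline{T}]^*/\overline{k}^*\simeq\widehat{T}$, and $\mathrm{Pic}\,\overline{T}=0$, yields $0\to\widehat{T}\to\widehat{Q}\to\mathrm{Pic}\,\overline{X}\to 0$ with $\widehat{Q}$ permutation. Your reduction of flabbiness to the injectivity of $\widehat{H}^0(\mathcal{H},\widehat{T})\to\widehat{H}^0(\mathcal{H},\widehat{Q})$ for every subgroup $\mathcal{H}$ is also correct, since $\widehat{H}^{-1}(\mathcal{H},\widehat{Q})=0$.

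The gap is in your justification of that injectivity. The inputs you invoke --- a $k'$-point on $X_{k'}$, Hochschild--Serre, and the claim that every $\mathcal{H}$-invariant class of $\mathrm{Pic}\,\overline{X}$ is represented by an $\mathcal{H}$-stable boundary divisor --- concern the surjectivity of $\widehat{Q}^{\mathcal{H}}\to(\mathrm{Pic}\,\overline{X})^{\mathcal{H}}$, equivalently the map $H^1(\mathcal{H},\widehat{T})\to H^1(\mathcal{H},\widehat{Q})=0$; they say nothing about the norm statement $\widehat{T}^{\mathcal{H}}/N_{\mathcal{H}}\widehat{T}\hookrightarrow\widehat{Q}^{\mathcal{H}}/N_{\mathcal{H}}\widehat{Q}$ that computes $\widehat{H}^{-1}(\mathcal{H},\mathrm{Pic}\,\overline{X})$. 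Moreover the invoked claim is false in general: the long exact sequence gives $\mathrm{coker}(\widehat{Q}^{\mathcal{H}}\to(\mathrm{Pic}\,\overline{X})^{\mathcal{H}})\simeq H^1(\mathcal{H},\widehat{T})$, which is already nonzero for the norm one torus of a quadratic extension (there $X$ is a conic, $\widehat{Q}=\bZ[C_2]$, and the invariant generator of $\mathrm{Pic}\,\overline{X}=\bZ$ is not the class of any invariant boundary divisor). The classical argument (Voskresenskii; Colliot-Th\'el\`ene--Sansuc) instead dualizes: as $\mathrm{Pic}\,\overline{X}$ is $\bZ$-free one has $0\to(\mathrm{Pic}\,\overline{X})^\circ\to\widehat{Q}^\circ\to\widehat{T}^\circ\to 0$, and flabbiness of $\mathrm{Pic}\,\overline{X}$ is equivalent to $H^1(\mathcal{H},(\mathrm{Pic}\,\overline{X})^\circ)=0$, i.e.\ to the surjectivity of $(\widehat{Q}^\circ)^{\mathcal{H}}\to(\widehat{T}^\circ)^{\mathcal{H}}$. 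Properness enters precisely here: an $\mathcal{H}$-invariant cocharacter $\lambda\colon\bG_{m,k'}\to T_{k'}$ extends by the valuative criterion to a $k'$-morphism $f\colon\mathbb{A}^1_{k'}\to X_{k'}$, and with $m_i=\mathrm{ord}_{t=0}f^*D_i\geq 0$ one has $\langle\chi,\lambda\rangle=\sum_i m_i\,v_{D_i}(\chi)$ for all $\chi\in\widehat{T}$, so $\lambda$ is the image of the $\mathcal{H}$-invariant element $\sum_i m_iD_i^*$. Without this (or an equivalent) argument, the flabbiness --- which you rightly identify as the crux --- is not established.
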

%

%%%%%%%%%%%%%%%%%%%%%%%%%%%%%%%%%%%%%%%%%%%%%%
\begin{theorem}[{Voskresenskii \cite[Theorem 1]{Vos70}, \cite[Theorem 3]{Vos73}, \cite[Chapter 2]{Vos98}}]\label{thVos70} 
Let $k$ be a field and $\mathcal{G}={\rm Gal}(\overline{k}/k)$.
Let $T$ %and $T^\prime$ 
be an algebraic $k$-torus, 
$X$ be a smooth $k$-compactification of $T$ 
and $\overline{X}=X\times_k\overline{k}$.\\
{\rm (i)} $T$ is stably $k$-rational if and only if 
$[{\rm Pic}\,\overline{X}]=0$ as a $\mathcal{G}$-lattice.\\
{\rm (ii)} $T$ and $T^\prime$ are stably birationally $k$-equivalent 
if and only if $[{\rm Pic}\,\overline{X}]=[{\rm Pic}\,\overline{X^\prime}]$ 
as $\mathcal{G}$-lattices.
\end{theorem}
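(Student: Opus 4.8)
The plan is to deduce the theorem from the algebraic theory of flabby classes, with Theorem~\ref{thVos69} as the bridge between geometry and lattices. First I would observe that the exact sequence $0\to\widehat{T}\to\widehat{Q}\to{\rm Pic}\,\overline{X}\to0$ exhibits ${\rm Pic}\,\overline{X}$ as a flabby resolution of $\widehat{T}$, so that its class $[{\rm Pic}\,\overline{X}]$ in the monoid of flabby lattices modulo stably permutation summands equals the flabby class $[\widehat{T}]^{fl}$. This class is independent of all choices by Schanuel's lemma: two flabby resolutions $0\to\widehat{T}\to P_i\to F_i\to0$ ($i=1,2$) with $P_i$ permutation satisfy $F_1\oplus P_2\simeq F_2\oplus P_1$, whence $[F_1]=[F_2]$. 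Both assertions thus reduce to statements about the invariant field $L(M)^G$ with $M=\widehat{T}$: (i) $L(M)^G$ is stably $k$-rational iff $[M]^{fl}=0$, and (ii) $L(M)^G$ and $L(M^\prime)^G$ are stably $k$-equivalent iff $[M]^{fl}=[M^\prime]^{fl}$, with (i) the case $M^\prime=0$.

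For the ``if'' direction I would argue with torsors under quasi-split tori. Assuming $[M]^{fl}=[M^\prime]^{fl}$, one may add permutation summands to flabby resolutions to obtain a common flabby quotient, i.e. permutation lattices $E,E^\prime$ and exact sequences $0\to M\to E\to\widetilde{F}\to0$ and $0\to M^\prime\to E^\prime\to\widetilde{F}\to0$. The pullback $W=E\times_{\widetilde{F}}E^\prime$ then sits in $0\to M^\prime\to W\to E\to0$ and $0\to M\to W\to E^\prime\to0$, both with permutation quotient. Applying the contravariant exact functor ${\rm Hom}(-,\bG_m)$ turns these into short exact sequences of tori $1\to Q\to T_W\to T^\prime\to1$ and $1\to Q^\prime\to T_W\to T\to1$ with $Q,Q^\prime$ quasi-split (being the tori attached to the permutation lattices $E,E^\prime$). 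Since quasi-split tori are special, these torsors are generically trivial, so $L(W)^G$ is purely transcendental over each of $L(M)^G$ and $L(M^\prime)^G$; hence the two fields are stably $k$-equivalent. Taking $M^\prime=0$ (so $L(M^\prime)^G=k$) gives the stable $k$-rationality in (i).

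The necessity directions are the crux, and both follow once $[{\rm Pic}\,\overline{X}]$ is known to be a stable birational invariant of $T$, which I would prove geometrically. Given two smooth $k$-compactifications $X,X^\prime$ of $T$, resolve the graph of $X\dashrightarrow X^\prime$ to a smooth projective $Z$ with proper birational morphisms $Z\to X$, $Z\to X^\prime$ that are isomorphisms over $T$ (Hironaka in characteristic zero; the resolution and compactification input of \cite{CTHS05} over a general field), so that $Z$ is again a smooth compactification of $T$. For a proper birational morphism $f:\overline{Z}\to\overline{X}$ of smooth projective varieties, $f^\ast$ fits into a split exact sequence $0\to{\rm Pic}\,\overline{X}\to{\rm Pic}\,\overline{Z}\to N\to0$, where $N$ is the $\bZ$-span of the exceptional prime divisors and the section sends each divisor to its own class; as $\mathcal{G}$ permutes these divisors, $N$ is a permutation lattice and $[{\rm Pic}\,\overline{Z}]=[{\rm Pic}\,\overline{X}]$, and likewise for $X^\prime$, so $[{\rm Pic}\,\overline{X}]=[{\rm Pic}\,\overline{X^\prime}]$. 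Because ${\rm Pic}\,\overline{X\times\mathbbm{P}^m}\simeq{\rm Pic}\,\overline{X}\oplus\bZ$ adjoins only a permutation summand, the class is also unchanged under stabilization, so it depends only on the stable birational class of $k(T)$. A stably $k$-rational $T$ is then stably birationally equivalent to the split torus, whose compactification $(\mathbbm{P}^1)^{\dim T}$ has permutation Picard lattice and hence trivial flabby class, giving necessity in (i); comparing the invariants of $T$ and $T^\prime$ gives necessity in (ii).

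The main obstacle is exactly this invariance step. The ``if'' direction is essentially formal once the pullback and the speciality of quasi-split tori are in hand, whereas the ``only if'' rests on geometry: one must control how ${\rm Pic}\,\overline{X}$ varies across the birational family of smooth compactifications, via resolution of the graph and the structure of the Picard group under birational morphisms, and---over an imperfect or positive-characteristic base---replace Hironaka's resolution by the compactification results available there.
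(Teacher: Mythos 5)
This theorem is quoted by the paper from Voskresenskii without proof, and is used only through the identification $[{\rm Pic}\,X_L]=[\widehat{T}]^{fl}$ together with Theorem~\ref{thEM73}; so I am measuring your argument against that standard route, which is exactly the one you take. Your first two steps are correct: Theorem~\ref{thVos69} plus Schanuel's lemma identifies $[{\rm Pic}\,\overline{X}]$ with the flabby class $[\widehat{T}]^{fl}$ independently of the chosen compactification, and your sufficiency argument --- pass to a common flabby quotient, form the pullback $W=E\times_{\widetilde{F}}E^{\prime}$, dualize, and use that torsors under quasi-split tori are generically trivial so that $L(W)^G$ is purely transcendental over both $L(M)^G$ and $L(M^{\prime})^G$ --- is the classical Endo--Miyata/Colliot-Th\'el\`ene--Sansuc proof of the ``if'' directions of (i) and (ii), with (i) as the case $M^{\prime}=0$.

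The genuine gap is in the necessity direction over a general field, which is the generality in which the theorem is stated and used in this paper. Your invariance argument requires a smooth projective $Z$ dominating two birationally equivalent smooth projective varieties (the compactifications of $T\times_k\bG_m^m$ and $T^{\prime}\times_k\bG_m^n$), i.e.\ resolution of the graph of a birational map. In characteristic zero this is Hironaka and your proof is complete; over an imperfect or positive-characteristic field it is not available beyond low dimension, and the substitute you gesture at, \cite{CTHS05}, only produces \emph{some} smooth compactification of a given torus --- it does not dominate two prescribed smooth compactifications, let alone two birationally equivalent varieties compactifying \emph{different} tori (and the tori $T\times_k\bG_m^m$ and $T^{\prime}\times_k\bG_m^n$ are in general non-isomorphic even when birationally equivalent, so the cross-torus comparison cannot be avoided; Schanuel handles only two compactifications of the \emph{same} torus). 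The way this is closed in the literature, and implicitly in this paper, is the purely algebraic proof of Theorem~\ref{thEM73}(ii) (Endo--Miyata, Voskresenskii \cite{Vos74}; see \cite[Theorem 5.4]{CT07} for an arbitrary base field): one shows directly that $[M]^{fl}$ is an invariant of the stable $k$-isomorphism class of $L(M)^G$ using divisors and units on affine models, with no smooth compactification involved. You should either import that result for the ``only if'' directions or restrict your geometric argument to characteristic zero.
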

%%%%%%%%%%%%%%%%%%%%%%%%%%%%%%%%

\begin{definition}[{see Endo and Miyata \cite[Section 1]{EM75}, Voskresenskii \cite[Section 4.7]{Vos98}}]
Let $\mathcal{G}$ be a profinite group. 
Let $\cC(\mathcal{G})$ be the category of all $\mathcal{G}$-lattices. 
Let $\cS(\mathcal{G})$ be the full subcategory of $\cC(\mathcal{G})$ of all permutation 
$\mathcal{G}$-lattices and $\cD(\mathcal{G})$ be the full subcategory of $\cC(\mathcal{G})$ 
of all invertible $\mathcal{G}$-lattices. 
Let 
\begin{align*}
\cH^i(\mathcal{G})=\{M\in \cC(\mathcal{G})\mid \widehat{H}^i(\mathcal{H},M)=0\ {\rm for\ any\ closed\ subgroup}\ \mathcal{H}\leq \mathcal{G}\}\ (i=\pm 1)
\end{align*}
be the class of ``$\widehat{H}^i$-vanish'' $\mathcal{G}$-lattices 
where $\widehat{H}^i$ is the Tate cohomology. 
Then one has the inclusions 
$\cS(\mathcal{G})\subset \cD(\mathcal{G})\subset \cH^i(\mathcal{G})\subset \cC(\mathcal{G})$ $(i=\pm 1)$. 
\end{definition}

\begin{definition}[{see Manin \cite[Appendix, page 285]{Man86}, Voskresenskii \cite[Section 4.7]{Vos98}}]
%[{The commutative monoid $\cT(\mathcal{G})=\cC(\mathcal{G})/\cS(\mathcal{G})$,
%[{see Voskresenskii \cite[Lemma 3]{Vos70}, \cite[Section 4.7]{Vos98}}]
Let $\mathcal{G}$ be a profinite group. 
We say that two $\mathcal{G}$-lattices $M_1$ and $M_2$ are {\it similar} 
if there exist permutation $\mathcal{G}$-lattices $P_1$ and $P_2$ such that 
$M_1\oplus P_1\simeq M_2\oplus P_2$. 
We denote the set of similarity classes $\cC(\mathcal{G})/\cS(\mathcal{G})$ by $\cT(\mathcal{G})$ 
and the similarity class of $M$ by $[M]$. 
$\cT(\mathcal{G})$ becomes a commutative monoid 
with respect to the sum $[M_1]+[M_2]:=[M_1\oplus M_2]$ 
and the zero $0=[P]$ where $P\in \cS(\mathcal{G})$. 
\end{definition}

\begin{definition}[{see Endo and Miyata \cite[Lemma 1.1]{EM75}, Colliot-Th\'el\`ene and Sansuc \cite[Lemme 3]{CTS77}, Manin \cite[Appendix, page 286]{Man86}}]\label{defF}
Let $\mathcal{G}$ be a profinite group. 
For a $\mathcal{G}$-lattice $M$, there exists a short exact sequence of 
$\mathcal{G}$-lattices
$0 \rightarrow M \rightarrow P \rightarrow F \rightarrow 0$
where $P$ is permutation and $F$ is flabby which is called a 
{\it flabby resolution} of $M$. 
The similarity class $[F]\in \cT(\mathcal{G})$ 
of $F$ is uniquely determined and is called {\it the flabby class} of $M$. 
We denote the flabby class $[F]$ of $M$ by $[M]^{fl}$. 
We say that $[M]^{fl}$ is invertible if $[M]^{fl}=[E]$ for some 
invertible $\mathcal{G}$-lattice $E$. 
\end{definition}
Note that (i) if $M$ is (stably) permutation (resp. invertible), 
then $[M]^{fl}=0$ (resp. $[M]^{fl}$ is invertible) 
and (ii) if $[M]^{fl}=0$ (resp. $[M]^{fl}$ is invertible) 
as a $\mathcal{G}$-lattice, 
then $[M]^{fl}=0$ (resp. $[M]^{fl}$ is invertible) as an $\mathcal{H}$-lattice 
for any closed subgroup $\mathcal{H}\leq \mathcal{G}$ (see Hoshi and Yamasaki 
\cite[Lemma 2.17]{HY17}).\\ 
%We also see that $[M\oplus M^\prime]^{fl}=[M]^{fl}+[M^\prime]^{fl}$ 
%for $\mathcal{G}$-lattices $M$, $M^\prime$ 
%(see  [CTS77, Lemme 2] and [Kan09, Lemma 4.1]).\\ 

%%%%%%%%%%%%%%%%%%%%%%%%%%%%%%%%%%%%%%%%%%%%%%%%%%%%%%%%%%%%%%%%%%%%%%%%%%%
{\it From $\mathcal{G}$-lattice to $G$-lattice.}\\

Let $L$ be the minimal splitting field of 
$T$ with Galois group $G={\rm Gal}(L/k)\simeq \mathcal{G}/\mathcal{H}$ 
where $\mathcal{G}={\rm Gal}(\overline{k}/k)$ 
and $\mathcal{H}={\rm Gal}(\overline{k}/L)$. 
By Theorem \ref{thVos69}, 
we obtain a flabby resolution of $\widehat{T}={\rm Hom}(T,\bG_{m,L})$: 
\begin{align*}
0\to \widehat{T}\to \widehat{Q}\to {\rm Pic}\, X_L\to 0
\end{align*}
as $G$-lattices with $[\widehat{T}]^{fl}=[{\rm Pic}\ X_L]$ 
where $\widehat{Q}$ is permutation 
and ${\rm Pic}\ X_L$ is flabby (see also \cite[Section 1]{Vos74}). 
By the inflation-restriction exact sequence 
$0\to H^1(G,{\rm Pic}\, X_L)\xrightarrow{\rm inf} 
H^1(k,{\rm Pic}\,\overline{X})\xrightarrow{\rm res} 
H^1(L,{\rm Pic}\,\overline{X})$, 
we get 
${\rm inf}: H^1(G,{\rm Pic}\, X_L)\xrightarrow{\sim} 
H^1(k,{\rm Pic}\,\overline{X})$ 
because $H^1(L,{\rm Pic}\,\overline{X})=0$. 

It follows from Theorem \ref{thVos70} (ii) that\\~\\
${\rm (ii)}^\prime$ 
$T$ and $T^\prime$ are stably birationally $k$-equivalent 
if and only if 
$[{\rm Pic}\, X_{\widetilde{L}}]=[{\rm Pic}\, X^\prime_{\widetilde{L}}]$ 
as $\widetilde{H}$-lattices where 
$\widetilde{L}=LL^\prime$, 
$L$ $($resp. $L^\prime$$)$ 
is the minimal splitting field of $T$ $($resp. $T^\prime$$)$ 
and $\widetilde{H}={\rm Gal}(\widetilde{L}/k)$ 
(cf. \cite[Proposition 1]{Vos74}).\\

The group $\widetilde{H}$ becomes a 
{\it subdirect product} of $G={\rm Gal}(L/k)$ and 
$G^\prime={\rm Gal}(L^\prime/k)$, i.e. 
a subgroup $\widetilde{H}$ of $G\times G^\prime$ 
with surjections $\varphi_1:\widetilde{H}\rightarrow G$ and 
$\varphi_2:\widetilde{H}\rightarrow G^\prime$. 
This observation yields a concept of ``{\it weak stably $k$-equivalence}''.  
\begin{definition}\label{d1.10}
Let $L/k$ (resp. $L^\prime/k$) be a finite Galois extension of fields 
with Galois group $G={\rm Gal}(L/k)$ 
(resp. $G^\prime={\rm Gal}(L^\prime/k))$. 
Let $M$ (resp. $M^\prime$) be a $G$-lattice (resp. $G^\prime$-lattice).\\
{\rm (i)} Two flabby classes $[M]^{fl}$ and $[M^\prime]^{fl}$ are 
said to be {\it weak stably $k$-equivalent}, 
denoted by $[M]^{fl}\sim [M^\prime]^{fl}$, if 
there exists a subdirect product $\widetilde{H}\leq G\times G^\prime$ 
of $G$ and $G^\prime$ 
with surjections 
$\varphi_1:\widetilde{H}\rightarrow G$ and 
$\varphi_2:\widetilde{H}\rightarrow G^\prime$ 
such that $[M]^{fl}=[M^\prime]^{fl}$ 
as $\widetilde{H}$-lattices 
where $\widetilde{H}$ acts on $M$ (resp. $M^\prime)$ 
through the surjection $\varphi_1$ (resp. $\varphi_2$).\\
{\rm (ii)} Two algebraic $k$-tori $T$ and $T^\prime$ are 
said to be {\it weak stably birationally $k$-equivalent}, 
denoted by $T\stackrel{\rm s.b.}{\sim} T^\prime$, 
if $[\widehat{T}]^{fl}$ and $[\widehat{T}^\prime]^{fl}$ are 
weak stably $k$-equivalent.\\
{\rm (iii)} A weak stably $k$-equivalence for flabby classes 
$[\widehat{T}]^{fl}$ 
(or a weak stably birationally $k$-equivalence 
for algebraic $k$-tori $T$) indeed yields 
an equivalence relation (see Remark \ref{r1.11} (2) below) 
and we call this equivalent class {\it the weak stably $k$-equivalent 
class} of $[\widehat{T}]^{fl}$ (or {\it the weak stably birationally 
$k$-equivalent class} of $T$) 
denoted by ${\rm WSEC}_r$ $(r\geq 0)$ 
with the stably $k$-rational class ${\rm WSEC}_0$.
\end{definition}
\begin{remark}\label{r1.11}
(1) $T\stackrel{\rm s.b.}{\approx} T^\prime$ $\Rightarrow$ 
$T\stackrel{\rm s.b.}{\sim} T^\prime$ 
(see the paragraph before Definition \ref{d1.10}).\\
(2) 
Indeed, the weak stably $k$-equivalence becomes 
equivalence relation: 
(i) Reflexivity: $[M]^{fl}\sim [M]^{fl}$ follows 
if we take a subdirect product 
$\widetilde{H}=\{(g,g)\mid g\in G\}\leq G\times G$ of $G$ and $G$ itself. 
(ii) Symmetry is obvious. 
(iii) Transitivity: 
If $[M]^{fl}\sim [M^\prime]^{fl}$ and 
$[M^\prime]^{fl}\sim [M^{\prime\prime}]^{fl}$, 
then $[M]^{fl}=[M^\prime]^{fl}$ as $\widetilde{H}$-lattices 
and $[M^\prime]^{fl}=[M^{\prime\prime}]^{fl}$ 
as $\widetilde{H}^\prime$-lattices 
where $\widetilde{H}\leq G\times G^\prime$ is 
a subdirect product of $G$ and $G^\prime$ 
with surjections 
$\varphi_1:\widetilde{H}\rightarrow G$ and 
$\varphi_2:\widetilde{H}\rightarrow G^\prime$ 
and  
$\widetilde{H}^\prime\leq G^\prime\times G^{\prime\prime}$ is 
a subdirect product of $G^\prime$ and $G^{\prime\prime}$ 
with surjections 
$\varphi^\prime_1:\widetilde{H}\rightarrow G^\prime$ and 
$\varphi^\prime_2:\widetilde{H}\rightarrow G^{\prime\prime}$. 
We may take a subdirect product 
$\widetilde{S}:=\{(h,h^\prime)\in\widetilde{H}\times\widetilde{H}^\prime\mid 
\varphi_2(h)=\varphi^\prime_1(h^\prime)\}\leq \widetilde{H}\times\widetilde{H}^\prime$ of $\widetilde{H}$ and $\widetilde{H}^\prime$ 
with surjections 
$\theta_1: \widetilde{S}\rightarrow \widetilde{H}, (h,h^\prime)\mapsto h$ and 
$\theta_2: \widetilde{S}\rightarrow \widetilde{H}^\prime, (h,h^\prime)\mapsto h^\prime$. 
Then 
$[M]^{fl}=[M^{\prime\prime}]^{fl}$ as $\widetilde{S}$-lattices. 
We consider a homomorphism 
$\Phi: \widetilde{S}\rightarrow G\times G^{\prime\prime}, 
s\mapsto (\varphi_1(\theta_1(s)),\varphi_2^\prime(\theta_2(s)))$ 
and the image $\widetilde{H}^{\prime\prime}:={\rm Im}(\Phi)$. 
Then $\widetilde{H}^{\prime\prime}\leq G\times G^{\prime\prime}$ 
is a subdirect product of $G$ and $G^{\prime\prime}$ 
and $[M]^{fl}=[M^{\prime\prime}]^{fl}$ as 
$\widetilde{H}^{\prime\prime}$-lattices because 
$\widetilde{H}^{\prime\prime}\simeq \widetilde{S}/{\rm Ker}(\Phi)$
(see Colliot-Th\'el\`ene and Sansuc \cite[Lemme 2 (i), (ix)]{CTS77}). 
This implies that $[M]^{fl}\sim [M^{\prime\prime}]^{fl}$. 
\end{remark} 

We can obtain the set of all subdirect products $\widetilde{H}$ 
of $G$, $G^\prime$ as follows (see also the proof of Theorem \ref{th8.1} (1)): 
%%%%%%%%%%%%%%
\begin{lemma}\label{lem1.13} 
Let $G$ and $G^\prime$ be finite groups. 
For a subdirect product $\widetilde{H}\leq G\times G^\prime$ of $G$, $G^\prime$ 
with surjections $\varphi_1: \widetilde{H}\to G$, $\varphi_2: \widetilde{H}\to G^\prime$, define 
\begin{align*}
N_1:=\varphi_1({\rm Ker}(\varphi_2))\lhd G,\ 
N_2:=\varphi_2({\rm Ker}(\varphi_1))\lhd G^\prime,\ 
\pi_1:G\rightarrow G/N_1,\ 
\pi_2:G^\prime\rightarrow G^\prime/N_2.
\end{align*}
Then we have 
$\overline{\varphi}_1=\pi_1\circ\varphi_1: \widetilde{H}\rightarrow G/N_1$, 
$\overline{\varphi}_2=\pi_2\circ\varphi_2: \widetilde{H}\rightarrow G^\prime/N_2$, 
and $\overline{\varphi}=(\overline{\varphi}_2)(\overline{\varphi}_1)^{-1}:
G/N_1\xrightarrow{\sim}G^\prime/N_2$. 
Conversely, 
we find that a subdirect product 
$\widetilde{H}\leq G\times G^\prime$ of $G$, $G^\prime$ 
with surjections $\varphi_1: \widetilde{H}\to G$, $\varphi_2: \widetilde{H}\to G^\prime$ 
is given by 
\begin{align*}
\widetilde{H}=\{(g_1,g_2)\in G\times G^\prime\mid \overline{\varphi}(\pi_1(g_1))=\pi_2(g_2), 
\overline{\varphi}=(\overline{\varphi}_2)(\overline{\varphi}_1)^{-1}:
G/N_1\xrightarrow{\sim}G^\prime/N_2\}
\end{align*}
and hence there exists a one-to-one correspondence 
between the set of all subdirect products $\widetilde{H}$ of $G$, $G^\prime$ 
with surjections $\varphi_1: \widetilde{H}\to G$, $\varphi_2: \widetilde{H}\to G^\prime$ and 
\begin{align*}
\{(N_1,N_2,\overline{\varphi})\mid N_1\lhd G, N_2\lhd G^\prime, 
\overline{\varphi}=(\overline{\varphi}_2)(\overline{\varphi}_1)^{-1}:
G/N_1\xrightarrow{\sim}G^\prime/N_2\}. 
\end{align*}
\end{lemma}
%%%%%%%%%%%%%%%%%%%%%%%%%%%%%%%%%%%%%%%%%%%%%%%%%%%%%%%%%%%%%%%%%%

%
Let $L/k$ be a finite Galois extension of fields with Galois group 
$G={\rm Gal}(L/k)$.  
Let $M=\bigoplus_{1\leq i\leq n}\bZ\cdot u_i$ be a $G$-lattice with 
a $\bZ$-basis $\{u_1,\ldots,u_n\}$, i.e. finitely generated $\bZ[G]$-module 
which is $\bZ$-free as an abelian group. 
Let $G$ act on the rational function field $L(x_1,\ldots,x_n)$ 
over $L$ with $n$ variables $x_1,\ldots,x_n$ by 
\begin{align}
x_i^{\sigma}=\prod_{j=1}^n x_j^{a_{i,j}},\quad 1\leq i\leq n\label{acts}
\end{align}
for any $\sigma\in G$, when $u_i^{\sigma}=\sum_{j=1}^n a_{i,j} u_j$, 
$a_{i,j}\in\bZ$. 
The field $L(x_1,\ldots,x_n)$ with this action of $G$ will be denoted by 
$L(M)$. 
The invariant field $L(M)^G$ of $L(M)$ under the action of $G$ 
as in (\ref{acts}) 
may be identified with the function field $k(T)$ 
of the algebraic $k$-torus $T$ over $k$ 
where $M=\widehat{T}$ (see Endo and Miyata \cite{EM75}).\\

%%%%%%%%%%
The flabby class $[M]^{fl}=[\widehat{T}]^{fl}$ 
plays a crucial role in the rationality problem 
for $L(M)^G\simeq k(T)$ %(resp. algebraic $k$-tori $T$) 
as follows (see also %Voskresenskii's fundamental book 
Theorem \ref{thVos70}, 
Colliot-Th\'el\`ene and Sansuc \cite[Section 2]{CTS77}, 
Voskresenskii \cite[Section 4.6]{Vos98}, 
Kunyavskii \cite[Theorem 1.7]{Kun07}, 
Colliot-Th\'el\`ene \cite[Theorem 5.4]{CT07}): 
%%%%%%%%%%%%%%%%%%%%%
%
%\begin{theorem}%[Endo and Miyata, Voskresenskii, Saltman]
%\label{thEM73}
%Let $L/k$ be a finite Galois extension with Galois group $G={\rm Gal}(L/k)$. 
%Let $M$ and $M^\prime$ be $G$-lattices.\\
%{\rm (i)} $(${\rm Endo and Miyata} \cite[Theorem 1.6]{EM73}$)$ 
%$[M]^{fl}=0$ if and only if $L(M)^G$ is stably $k$-rational.\\
%{\rm (ii)} $(${\rm Voskresenskii} \cite[Theorem 2]{Vos74}$)$ 
%$[M]^{fl}=[M^\prime]^{fl}$ if and only if $L(M)^G$ and $L(M^\prime)^G$ 
%are stably $k$-isomorphic, i.e. there exist algebraically independent 
%elements $x_1,\ldots,x_m$ over $L(M)^G$ and 
%$y_1,\ldots,y_n$ over $L(M^\prime)^G$ such that 
%$L(M)^G(x_1,\ldots,x_m)\simeq L(M^\prime)^G(y_1,\ldots,y_n)$.\\
%{\rm (iii)} $(${\rm Saltman} \cite[Theorem 3.14]{Sal84}$)$ 
%$[M]^{fl}$ is invertible if and only if $L(M)^G$ is 
%retract $k$-rational.
%\end{theorem}
%%%%%%%%%%%%%%%%%%%%%
%
\begin{theorem}%[Endo and Miyata, Voskresenskii, Saltman]
\label{thEM73}
Let $L/k$ be a finite Galois extension with Galois group $G={\rm Gal}(L/k)$ 
and $M$ and $M^\prime$ be $G$-lattices. 
Let $T$ and $T^\prime$ be algebraic $k$-tori with $\widehat{T}\simeq M$ 
and $\widehat{T}^\prime\simeq M^\prime$, 
i.e. $L(M)^G\simeq k(T)$ and $L(M^\prime)^G\simeq k(T^\prime)$.\\
{\rm (i)} $(${\rm Endo and Miyata} \cite[Theorem 1.6]{EM73}$)$ 
$[M]^{fl}=0$ if and only if $k(T)$ is stably $k$-rational.\\
{\rm (ii)} $(${\rm Voskresenskii} \cite[Theorem 2]{Vos74}$)$ 
$[M]^{fl}=[M^\prime]^{fl}$ if and only if $k(T)$ and $k(T^\prime)$ 
are stably $k$-isomorphic.\\
{\rm (iii)} $(${\rm Saltman} \cite[Theorem 3.14]{Sal84}$)$ 
$[M]^{fl}$ is invertible if and only if $k(T)$ is 
retract $k$-rational.
\end{theorem}
%%%%%%%%%%%%%%

%
Before stating main theorems, we recall some known results 
about the (stably/retract) rationality of $k$-tori of small dimensions.

%%%%%%%%%%%%%%%%%%%%%%%%%%%%%%%%%%%%%%%%%%%%%%%%%%%
For dimension $1$, 
all the $1$-dimensional algebraic $k$-tori $T$, 
i.e. the trivial torus $\bG_{m,k}$ 
and the norm one torus $R_{L/k}^{(1)}(\bG_{m,L})$ of $L/k$ 
with $[L:k]=2$, are $k$-rational. 
\begin{theorem}[{Voskresenskii \cite{Vos67}}]\label{thVo}
All the $2$-dimensional algebraic $k$-tori $T$ are $k$-rational. 
In particular, for any finite subgroup $G\leq \GL(2,\bZ)$, 
$L(x_1,x_2)^G\simeq k(T)$ is $k$-rational. 
\end{theorem}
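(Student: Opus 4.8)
The plan is to exploit the dictionary between $2$-dimensional $k$-tori and rank-$2$ integral representations. By the correspondence recalled above, a $2$-dimensional torus is given, up to conjugacy, by a finite subgroup $G\leq\GL(2,\bZ)$ acting monomially on $L(x_1,x_2)$ as in (\ref{acts}), and there are only finitely many such conjugacy classes (the $13$ crystallographic point groups of the plane). Since $k$-rationality of $L(M)^G$ depends only on the conjugacy class of $G$, it suffices to treat these finitely many cases, so the first step is to fix representatives: the cyclic groups $C_1,C_2,C_3,C_4,C_6$, the dihedral groups up to $D_6$, and the remaining Klein-four types.

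Before separating cases I would record two building blocks. First, if $M$ is a permutation $G$-lattice then $L(M)^G$ is $k$-rational, since it is the function field of a quasi-split torus $\prod_i R_{k_i/k}(\bG_m)$. Second (the no-name lemma): if $0\to N\to M\to P\to 0$ is an exact sequence of $G$-lattices with $P$ permutation, then $L(M)^G$ is rational over $L(N)^G$, so rationality propagates along such extensions. I would then split on whether $G$ acts reducibly on $M\otimes\bQ$. In the reducible case there is a $G$-stable rank-$1$ sublattice $N$ with rank-$1$ quotient, and $L(M)^G$ is realized as an iterated fibration whose base $L(N)^G$ and whose generic fibre are function fields of $1$-dimensional tori; since every $1$-dimensional torus is $k$-rational (the case recalled just before the theorem), this disposes of all reducible $G$.

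The substance is the irreducible case, where $G$ contains a rotation of order $3$, $4$ or $6$, i.e. $G\in\{C_3,C_4,C_6,D_3,D_4,D_6\}$. For these I would produce an explicit transcendence basis of $L(M)^G$ by multiplicative invariant theory: for instance for $C_4$ the generator acts by $\sigma(x_1)=x_2,\ \sigma(x_2)=x_1^{-1}$, and a short computation exhibits two algebraically independent invariants generating the whole fixed field; the dihedral groups are then reduced to their index-$2$ cyclic subgroups by a further application of the no-name lemma. Alternatively, following Voskresenskii \cite{Vos67}, one passes to a smooth $k$-compactification $X$ of $T$ as in Theorem \ref{thVos69}: the geometric fibre $\overline{X}$ is a smooth toric, hence rational, surface, and the classification of minimal rational surfaces over $k$ together with the vanishing of the relevant Brauer obstruction forces $X$ to be $k$-rational.

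The main obstacle is precisely this last point. The flabby-class machinery (Theorem \ref{thEM73}(i)) yields only \emph{stable} $k$-rationality; indeed one checks directly from a flabby resolution, for each of the $13$ groups, that $[M]^{fl}=0$ for every rank-$2$ lattice $M$. The theorem, however, asserts genuine $k$-rationality, and closing the gap between stable and strict rationality in the irreducible cases is where the real work lies. This is a dimension-$2$ phenomenon with no analogue in higher dimensions, and it cannot be detected by any purely cohomological invariant; it must be secured either by the explicit invariants above or by the surface-theoretic argument.
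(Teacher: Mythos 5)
The paper itself gives no proof of this theorem --- it is imported verbatim from Voskresenskii \cite{Vos67} --- so there is nothing internal to compare against; your outline does follow the strategy of the known proof (reduction to the $13$ conjugacy classes in $\GL(2,\bZ)$, then either explicit multiplicative invariants or the geometry of minimal rational surfaces), and your diagnosis that the entire difficulty is the passage from stable to genuine $k$-rationality is exactly right. Nevertheless, as written the proposal has two genuine gaps. The first is in the reducible case, where your argument would fail as stated. Given $0\to N\to M\to Q\to 0$ with $N,Q$ of rank $1$, the quotient $Q$ may be the sign lattice $\bZ^{-}$ rather than $\bZ$, and then the no-name lemma does not apply: the homomorphism $T\to T_N$ is a torsor under a nontrivial one-dimensional torus over the rational base $k(T_N)$, i.e.\ after compactification a conic bundle over the projective line. ``Fibred in rational curves over a rational curve'' does not imply $k$-rational (Ch\^atelet surfaces are the standard counterexample), so rationality of the two one-dimensional constituents is not enough. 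One must instead observe that four of the six reducible classes have decomposable $M$ (a product of one-dimensional tori), that $\bZ[C_2]$ gives the quasi-trivial torus $R_{K/k}(\bG_m)$, and that the remaining indecomposable class $\bZ[C_2]\otimes\bZ^{-}$ gives $R_{F/k}(R^{(1)}_{L/F}(\bG_m))$, which is an open subscheme of $R_{F/k}$ of the affine line, hence of affine $2$-space.

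The second gap is that the irreducible classes ($C_3$, $C_4$, $C_6$, the two copies of $S_3$, $D_4$, $D_6$) --- which is where the theorem actually lives --- are only asserted, not proved: the $C_4$ computation is announced but not carried out (and note that the fixed field in question is $L(x_1,x_2)^G$ with $G$ acting semilinearly on $L$ as well, so this is not split multiplicative invariant theory), and the surface-theoretic alternative is invoked with the wrong mechanism. What one actually uses is not the ``vanishing of a Brauer obstruction'' but the existence of the rational point $1\in T(k)$ together with the Enriques--Manin--Iskovskikh classification of $k$-minimal geometrically rational surfaces: the $k$-minimal smooth compactifications occurring here are del Pezzo surfaces of degree at least $6$ (or conic bundles with very few degenerate fibres), and such surfaces with a $k$-point are $k$-rational. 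Until one of these two routes is actually executed for each irreducible class, what you have is an accurate road map of Voskresenskii's proof rather than a proof.
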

Note that $T$ is $k$-rational $\Rightarrow$ 
$T$ is stably $k$-rational $\Rightarrow$ $T$ is retract $k$-rational 
$\Rightarrow$ $H^1(k,{\rm Pic}\,\overline{X})=0$ 
and if $k$ is a global field, then 
$H^1(k,{\rm Pic}\,\overline{X})=0\Rightarrow A(T)=\Sha(T)=0$ 
(see Voskresenskii \cite[Theorem 5, page 1213]{Vos69}, 
Manin \cite[Section 30]{Man74}, Manin and Tsfasman \cite{MT86} 
and also Hoshi, Kanai and Yamasaki \cite[Section 1]{HKY22}). 
%%%%%%%%%%%%%%%%%%%%%

Let $G\simeq {\rm Gal}(L/k)$ be a finite group and 
$M\simeq M_1\oplus M_2$ be a decomposable $G$-lattice. 
Let $N_i=\{\sigma\in G\mid v^\sigma=v\ {\rm for\ any}\ v\in M_i\}$ 
be the kernel of the action of $G$ on $M_i$ $(i=1,2)$. 
Then $L(M)^G$ is the function field of an algebraic torus $T$ 
with $M=\widehat{T}$ 
and is $k$-isomorphic to the free composite of 
$L(M_1)^G$ and $L(M_2)^G$ over $k$ where 
$L(M_i)^G=(L^{N_i})(M_i^{N_i})^{G/N_i}$ 
is the function field of some torus $T_i$ ($i=1,2$) with 
$M_i=\widehat{T}_i$, 
$T=T_1\times_k T_2$ 
and $M_i$ may be regarded as a $G/N_i$-lattice 
(see Theorem \ref{th1.24} for more detailed explanations). 
There exist $13$ non-conjugate finite subgroups of $\GL(2,\bZ)$ 
and $4$ (resp. $9$) of them are decomposable (resp. indecomposable). 
\begin{lemma}\label{lemp1}
Let $G$ be a finite group and
$M \simeq M_1 \oplus M_2$ be a decomposable $G$-lattice with 
the flabby class $\rho_G(M)=[M]^{fl}$. 
Let $N_1$ be a normal subgroup of $G$ which acts on $M_1$ trivially. 
The $G$-lattice $M_1$ may be regarded as a $G/N_1$-lattice with the 
flabby class $\rho_{G/N_1}(M_1)$ as a $G/N_1$-lattice. 
Then\\
{\rm (i)} $\rho_G(M)=\rho_G(M_1)+\rho_G(M_2)$.\\
{\rm (ii)} $\rho_{G}(M_1)=0$ if and only if $\rho_{G/N_1}(M_1)=0$.\\
{\rm (iii)} $\rho_G(M_1)$ is invertible if and only if 
$\rho_{G/N_1}(M_1)$ is invertible.
\end{lemma}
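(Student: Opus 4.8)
The plan is to prove the three statements in sequence, since (ii) and (iii) will follow from (i) together with a lemma relating flabby classes of a lattice as a $G$-module versus as a $G/N_1$-module.

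For part (i), I would begin by recalling that the flabby class is additive on direct sums, which follows directly from the definition of flabby resolution. Concretely, if $0\to M_1\to P_1\to F_1\to 0$ and $0\to M_2\to P_2\to F_2\to 0$ are flabby resolutions with $P_1,P_2$ permutation and $F_1,F_2$ flabby, then taking the direct sum yields $0\to M_1\oplus M_2\to P_1\oplus P_2\to F_1\oplus F_2\to 0$, and $P_1\oplus P_2$ is permutation while $F_1\oplus F_2$ is flabby (both properties are preserved under direct sums). By the uniqueness of the flabby class in $\cT(G)$, this gives $\rho_G(M)=[F_1\oplus F_2]=[F_1]+[F_2]=\rho_G(M_1)+\rho_G(M_2)$. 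First I would state this additivity explicitly as the content of (i), since $M\simeq M_1\oplus M_2$ by hypothesis.

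For parts (ii) and (iii), the key point is to compare the flabby class of $M_1$ computed as a $G$-lattice with that computed as a $G/N_1$-lattice, where $N_1$ acts trivially on $M_1$. The natural approach is to use the inflation functor: a $G/N_1$-lattice pulls back to a $G$-lattice on which $N_1$ acts trivially. I would invoke the fact (established in the literature, e.g. Endo--Miyata or the references in the excerpt) that for a lattice with $N_1$ acting trivially, $[M_1]^{fl}$ as a $G$-lattice vanishes (resp.\ is invertible) if and only if $[M_1]^{fl}$ as a $G/N_1$-lattice vanishes (resp.\ is invertible). One direction is straightforward: inflation sends permutation lattices to permutation lattices and preserves the flabby/coflabby vanishing conditions at the level of subgroups containing $N_1$; for the converse one checks that the relevant Tate cohomology groups $\widehat H^i(\mathcal H,M_1)$ for subgroups $\mathcal H\le G$ are controlled by those for $\mathcal H N_1/N_1\le G/N_1$, precisely because $N_1$ acts trivially.

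The main obstacle I anticipate is the careful bookkeeping in (ii) and (iii) to ensure that the flabby resolution of $M_1$ as a $G/N_1$-lattice inflates to a valid flabby resolution as a $G$-lattice, and that vanishing or invertibility is genuinely detected at both levels rather than being lost in passage through $N_1$. The cleanest route is to cite \cite[Lemma 2.17]{HY17} (already referenced in the excerpt) together with the standard behavior of flabby resolutions under inflation, rather than recomputing cohomology by hand. With additivity from (i) in place and this inflation compatibility, statements (ii) and (iii) follow immediately, and the role of part (i) is simply to reduce questions about the decomposable lattice $M$ to its indecomposable-in-the-relevant-sense summands $M_1$ and $M_2$.
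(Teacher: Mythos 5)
Your proof of (i) by summing the two flabby resolutions is exactly the paper's argument, and your plan for (ii)–(iii) — defer to the known compatibility of flabby classes with passage to the quotient $G/N_1$ rather than recomputing Tate cohomology — is also what the paper does, which simply cites Colliot-Th\'el\`ene--Sansuc \cite[Lemme 2]{CTS77} and Kang \cite[Lemma 4.1]{Kan09}. One small correction: \cite[Lemma 2.17]{HY17}, which you propose to cite, concerns restriction of flabby classes to \emph{subgroups}, not inflation from quotients, so it is not the right reference for (ii)–(iii); the non-trivial content (that $\widehat{H}^{-1}(H,-)$ for subgroups $H\leq G$ not containing $N_1$ is genuinely controlled by $HN_1/N_1$) is precisely what the CTS/Kang lemmas supply.
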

\begin{proof}
(i) Let $0 \rightarrow M_i \rightarrow P_i \rightarrow F_i
\rightarrow 0$ be flabby resolutions of $M_i$ as $G$-lattices $(i=1,2)$. 
Then 
$0 \rightarrow M \rightarrow P_1 \oplus P_2
\rightarrow F_1 \oplus F_2 \rightarrow 0$
is a flabby resolution of $M$. 
Hence $\rho_G(M)=[F_1 \oplus F_2]=\rho_G(M_1)+\rho_G(M_2)$. 
(ii), (iii) See \cite[Lemme 2]{CTS77} and \cite[Lemma 4.1]{Kan09}.
\end{proof}
By Theorem \ref{thVo} and Lemma \ref{lemp1}, 
for decomposable $3$-dimensional $k$-tori $T=T_1\times_k T_2$ 
with dim $T_1=1$ and dim $T_2=2$, 
the function fields $k(T)\simeq L(M)^G$ are $k$-rational where 
$M=M_1\oplus M_2$ with rank $M_1=1$ and rank $M_2=2$. 
There exist $73$ non-conjugate finite subgroups of $\GL(3,\bZ)$ 
and $39$ (resp. $34$) of them are decomposable (resp. indecomposable). 
%31+8
%

%%%%%%%%%%%%%%%%%%%%%%%%%%%%%%%%%%%%%%%%%%%%%%%%%%
Let $S_n$ (resp. $A_n$, $D_n$, $C_n$) be the symmetric 
(resp. the alternating, the dihedral, the cyclic) group 
of degree $n$ of order $n!$ (resp. $n!/2$, $2n$, $n$). 
Let ${}^tG$ be the group of transposed matrices of $G\leq \GL(n,\bZ)$.

Rationality (resp. stably rationality, resp. retract rationality) 
problem for $3$-dimensional algebraic 
$k$-tori was solved by Kunyavskii \cite{Kun90} 
(see also Definition \ref{def7.1}, Remark \ref{rem7.2}, Example \ref{ex7.3}, 
Lemma \ref{lem7.2b}, Lemma \ref{lem7.3b} and Theorem \ref{th7.12}). 
\begin{theorem}[{Kunyavskii \cite[Theorem 1, Theorem 2]{Kun90}, see Kang \cite[page 25, the fifth paragraph]{Kan12} for the last statement}]\label{thKu}
Let $L/k$ %$($resp. $L^\prime/k)$ 
be a Galois extension and 
$G\simeq {\rm Gal}(L/k)$ %$($resp. $G^\prime\simeq {\rm Gal}(L^\prime/k))$ 
be a finite subgroup of $\GL(3,\bZ)$ 
which acts on $L(M)=L(x_1,x_2,x_3)$ 
%$($resp. $L^\prime(M^\prime))$ %=L^\prime(x_1,x_2,x_3))$ 
via $(\ref{acts})$. 
Let $T$ be an algebraic $k$-torus with $\widehat{T}\simeq M$ 
and $\dim_k\, T=3$. 
Then %\\{\rm (1)} 
$L(M)^G\simeq k(T)$ is not $k$-rational if and only if 
$G$ is conjugate to one of the $15$ groups $N_{3,i}$ $(1\leq i\leq 15)$ 
which are given 
as in {\rm Table} $1$ $($see also {\rm Table} $6$$)$.  
Moreover, if $L(M)^G\simeq k(T)$ is not $k$-rational, 
then it is not retract $k$-rational. %;\\
%{\rm (2)} If $G$ and $G^\prime$ are not isomorphic as abstract groups, 
%then $L(M)^{G}$ and $L^\prime(M^\prime)^{G^\prime}$ are not 
%stably $k$-equivalent. 
\end{theorem}

%%%%%%%%%%%%%%%%%%%%%%%%%%%%%%%%%%%%%%%%%%%%%%%%%%%%%%%%%%%
%\input{table1}
\vspace*{2mm}
\begin{center}
Table $1$: $L(M)^G$ not retract $k$-rational, 
rank $M=3$, $M$: indecomposable ($15$ cases)\vspace*{2mm}\\
\fontsize{9pt}{11pt}\selectfont
\begin{tabular}{ll} 
${}^tG$ in \cite{Kun90} & %GAP ID& 
$G$ \\\hline
$U_1$ %& $(3,3,1,3)$ 
& $C_2^2$\\
$U_2$ %& $(3,3,3,3)$ 
& $C_2^3$\\
$U_3$ %& $(3,4,4,2)$ 
& $D_4$\\
$U_4$ %& $(3,4,6,3)$ 
& $D_4$\\
$U_5$ %& $(3,7,1,2)$ 
& $A_4$
\end{tabular}\quad
\begin{tabular}{ll}
${}^tG$ in \cite{Kun90} & %GAP ID& 
$G$ \\\hline
$U_6$ %& $(3,4,7,2)$ 
& $D_4\times C_2$\\
$U_7$ %& $(3,7,2,2)$ 
& $A_4\times C_2$\\
$U_8$ %& $(3,7,3,3)$ 
& $S_4$\\
$U_9$ %& $(3,7,3,2)$ 
& $S_4$\\
$U_{10}$ %& $(3,7,4,2)$ 
& $S_4$
\end{tabular}\quad
\begin{tabular}{ll} 
${}^tG$ in \cite{Kun90} & %GAP ID& 
$G$ \\\hline
$U_{11}$ %& $(3,7,5,3)$ 
& $S_4\times C_2$\\
$U_{12}$ %& $(3,7,5,2)$ 
& $S_4\times C_2$\\
$W_1$ %& $(3,4,3,2)$ 
& $C_4\times C_2$\\
$W_2$ %& $(3,3,3,4)$ 
& $C_2^3$\\
$W_3$ %& $(3,7,2,3)$ 
& $A_4\times C_2$
\end{tabular}
\end{center}
%In Table $1$, 
%the cases $U_1\simeq C_2^2$, 
%$U_4\simeq D_4$, 
%$U_5\simeq A_4$, 
%$U_{10}\simeq S_4$ 
%correspond to the norm one tori 
%$T=R^{(1)}_{K/k}(\bG_{m,K})$ with $G\simeq {\rm Gal}(L/k)$ where 
%$L/k$ is the Galois closure of $K/k$. 
%\vspace*{5mm}
%
\begin{remark}\label{rem1.18}
(1) For the last statement of Theorem \ref{thKu}, 
see also Lemma \ref{lem7.3b}.\\ 
(2) 
Let $L/k$ $($resp. $L^\prime/k)$ 
be a Galois extension and 
$G\simeq {\rm Gal}(L/k)$ $($resp. $G^\prime\simeq {\rm Gal}(L^\prime/k))$ 
be a finite subgroup of $\GL(3,\bZ)$ 
which acts on $L(M)$ 
$($resp. $L^\prime(M^\prime))$ %=L^\prime(x_1,x_2,x_3))$ 
via $(\ref{acts})$. 
Let $T$ (resp. $T^\prime$) be an algebraic $k$-torus 
with $\widehat{T}\simeq M$ (resp. $\widehat{T}^\prime\simeq M^\prime$). 
Kunyavskii \cite{Kun90} also claimed that\\
(i) (\cite[Theorem 1]{Kun90}) 
If $G$ and $G^\prime$ are not isomorphic as abstract groups, 
then $L(M)^G\simeq k(T)$ and 
$L^\prime(M^\prime)^{G^\prime}\simeq k(T^\prime)$ are not stably 
$k$-isomorphic.\\
(ii) (\cite[Remark 1, page 19]{Kun90}) 
If $G\simeq U_3\simeq D_4$ and $G^\prime\simeq U_4\simeq D_4$ 
and $L=L^\prime$, 
then $L(M)^G\simeq k(T)$ and 
$L^\prime(M^\prime)^{G^\prime}\simeq k(T^\prime)$ are stably 
$k$-isomorphic.\\
(iii) (\cite[Remark 2, page 20]{Kun90}) 
If $G\simeq U_i\simeq S_4$ and $G^\prime\simeq U_j\simeq S_4$ 
$(i,j\in\{8,9,10\}, i\neq j)$, then 
$L(M)^G\simeq k(T)$ and 
$L^\prime(M^\prime)^{G^\prime}\simeq k(T^\prime)$ are not stably 
$k$-isomorphic. 

For (i), Kunyavskii \cite{Kun90}
gave a part of the proof for the case where $G$ is a $2$-group, 
i.e. $G\simeq U_1\simeq C_2^2$, $U_2\simeq C_2^3$, $U_3\simeq U_4\simeq D_4$, 
$U_6\simeq D_4\times C_2$, $W_1\simeq C_4\times C_2$, $W_2\simeq C_2^3$  
(see Section \ref{S7}, Kunyavskii \cite[Section 4 and Section 5]{Kun90}). 
We will give a full proof of (i) 
as in Theorem \ref{thmain2} (Main theorem 2). 
For (ii), (iii), we will disprove these assertions as in Theorem \ref{thmain2} (Main theorem 2) (see also Example \ref{ex1.26}). 
Note that we also have isomorphisms $U_{11}\simeq U_{12}\simeq S_4\times C_2$, 
$U_2\simeq W_2\simeq C_2^3$, $U_7\simeq W_3\simeq A_4\times C_2$. 
\end{remark}

Stably $k$-rationality and retract $k$-rationality 
for algebraic $k$-tori of dimensions $4$ and $5$ 
are given in Hoshi and Yamasaki \cite[Chapter 1]{HY17}.   
There exist $710$ non-conjugate finite subgroups of $\GL(4,\bZ)$. 
The case of dimension $4$ can be stated as follows 
(see \cite[Theorem 1.12]{HY17} for the case of dimension $5$): 
\begin{theorem}[Hoshi and Yamasaki {\cite[Theorem 1.9]{HY17}}]\label{thHY17}
Let $L/k$ be a Galois extension and $G\simeq 
{\rm Gal}(L/k)$ be a finite subgroup of $\GL(4,\bZ)$ 
which acts on $L(M)=L(x_1,x_2,x_3,x_4)$ via $(\ref{acts})$. 
Let $T$ be an algebraic $k$-torus with $\widehat{T}\simeq M$ 
and $\dim_k\, T=4$.\\
{\rm (i)} 
$L(M)^G\simeq k(T)$ is stably $k$-rational 
if and only if 
$G$ is conjugate to one of the $487$ groups which are not in 
{\rm Tables} $7, 8, 9$ 
$($\cite[{\rm Tables} $2$, $3$, $4$]{HY17}$)$.\\
{\rm (ii)} 
$L(M)^G\simeq k(T)$ is not stably but retract $k$-rational 
if and only if $G$ is conjugate to one of the $7$ groups $I_{4,i}$ 
$(1\leq i\leq 7)$ 
which are given as in {\rm Table} $9$ $($\cite[{\rm Table} $2$]{HY17}$)$.\\
{\rm (iii)} 
$L(M)^G\simeq k(T)$ is not retract $k$-rational if and only if 
$G$ is conjugate to one of the $216$ groups which consist of 
the $64$ groups $N_{31,i}$ $(1\leq i\leq 64)$ 
in {\rm Table} $7$ $($\cite[{\rm Table} $3$]{HY17}$)$
and 
the $152$ groups $N_{4,i}$ $(1\leq i\leq 152)$ 
in {\rm Table} $8$ $($\cite[{\rm Table} $4$]{HY17}$)$. 
\end{theorem}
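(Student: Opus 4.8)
The plan is to reduce the entire statement to a finite computation of flabby classes. By Theorem \ref{thEM73}, $L(M)^G$ is stably $k$-rational if and only if $[M]^{fl}=0$, and it is retract $k$-rational if and only if $[M]^{fl}$ is invertible; moreover these properties of the flabby class depend only on the pair $(G,M)$ with $M=\bZ^4$ the standard representation, not on the field $k$. Hence it suffices to run through the $710$ conjugacy classes of finite subgroups $G\leq\GL(4,\bZ)$---available explicitly through the classification of finite subgroups of $\GL(4,\bZ)$ in GAP---and to decide, for each, whether $[M]^{fl}$ vanishes, is invertible but nonzero, or is non-invertible. These three alternatives produce the three lists (i), (ii), (iii).

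First I would cut down the number of genuinely new cases. Whenever the $G$-lattice $M$ decomposes as $M\simeq M_1\oplus M_2$, Lemma \ref{lemp1}(i) gives $[M]^{fl}=[M_1]^{fl}+[M_2]^{fl}$, and each summand is the character lattice of a torus of dimension $\leq 3$; by Theorem \ref{thVo} all tori of dimension $\leq 2$ are $k$-rational, while the $3$-dimensional case is settled by Theorem \ref{thKu}. Thus decomposable $M$ are handled by additivity together with Lemma \ref{lemp1}(ii),(iii), and the remaining work concentrates on the indecomposable lattices.

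For each relevant $G$ I would construct an explicit flabby resolution $0\to M\to P\to F\to 0$ with $P$ permutation and $F$ flabby (implemented in GAP, e.g. by dualizing a permutation cover of $M^{*}$ and correcting it to a flabby cokernel), so that $[M]^{fl}=[F]$. The non-invertibility of $[M]^{fl}$---hence the failure of retract rationality---is then detected cohomologically: an invertible lattice lies in $\cD(\mathcal{G})$ and is therefore both flabby and coflabby, so if $\widehat{H}^{1}(G',F)\neq 0$ for some subgroup $G'\leq G$ (ranging over the conjugacy classes of subgroups of $G$) then $F$ is not coflabby, whence $[M]^{fl}$ is not invertible. Computing $\widehat{H}^{\pm 1}(G',F)$ over all such $G'$ is a finite GAP calculation and produces the $216$ not retract $k$-rational groups of (iii).

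The hard part is the positive direction, where cohomology alone is insufficient. To place a group in list (i) I must show $F$ is stably permutation, and to place it in list (ii) I must show $F$ is invertible; for the $7$ exceptional groups $I_{4,i}$ I must moreover show $F$ is invertible yet not stably permutation, a distinction invisible to $\widehat{H}^{\pm 1}$ since both vanish on $\cD(\mathcal{G})$. I expect to resolve the stably-permutation cases by exhibiting explicit isomorphisms $F\oplus P^\prime\simeq P^{\prime\prime}$ of permutation-type lattices, the invertibility by realizing $F$ as a direct summand of a permutation lattice, and the seven borderline non-vanishing results by a finer stably birational invariant together with Krull--Schmidt--Azumaya bookkeeping of the indecomposable summands. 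Controlling these explicit lattice isomorphisms uniformly across all the indecomposable cases---rather than the cohomological obstructions, which are routine---is where the main difficulty lies.
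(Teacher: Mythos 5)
Your proposal is correct and follows essentially the same route as the proof in \cite{HY17}: reduce everything to the flabby class $[M]^{fl}$ via Theorem \ref{thEM73}, dispose of decomposable lattices by Lemma \ref{lemp1} together with Theorems \ref{thVo} and \ref{thKu}, and for the indecomposable cases compute explicit flabby resolutions in GAP, ruling out invertibility by non-vanishing of $\widehat{H}^{1}$ on subgroups and establishing stable permutation (resp.\ invertibility, resp.\ invertibility without stable permutation for the seven $I_{4,i}$) by explicit lattice isomorphisms and the trace/cohomology linear-system bookkeeping you describe. No substantive deviation from the original argument.
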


\begin{definition}[Hoshi and Yamasaki {\cite[Table $2$, Table $3$, Table $4$, Example 4.12]{HY17}}]\label{defN3N4}
~\\
(1) The $15$ groups $G=N_{3,i}\leq \GL(3,\bZ)$ $(1\leq i\leq 15)$ 
for which $L(M)^G$ is not retract $k$-rational 
are defined as in Table $6$ (see also Table $1$). \\
(2) The $64$ groups 
$G=N_{31,i}\leq \GL(4,\bZ)$ $(1\leq i\leq 64)$ 
for which $L(M)^G$ is not retract $k$-rational 
where $M\simeq M_1\oplus M_2$ and $M_1$ is indecomposable 
with ${\rm rank}$ $M_1=3$ 
and ${\rm rank}$ $M_2=1$ are defined as in Table $7$.\\
(3) The $152$ groups $G=N_{4,i}\leq \GL(4,\bZ)$ $(1\leq i\leq 152)$ 
for which $L(M)^G$ is not retract $k$-rational 
where $M$ is indecomposable with ${\rm rank}$ $M=4$ 
are defined as in Table $8$.\\
(4) The $7$ groups $G=I_{4,i}\leq \GL(4,\bZ)$ $(1\leq i\leq 7)$
for which $L(M)^G$ is not stably but retract $k$-rational 
with $M$ indecomposable are defined as in Table $9$.
\end{definition}
Tables $6,7,8,9$ will be given in Section \ref{S2}.

\begin{remark}\label{rem1.19}
(1) We do not know the $k$-rationality of 
$L(M)^G\simeq k(T)$ in Theorem \ref{thHY17} (i) 
for the stably $k$-rational algebraic $k$-tori $T$ 
of dimension $4$ with $\widehat{T}=M$. 
Voskresenskii conjectured that any stably $k$-rational 
algebraic $k$-torus 
is $k$-rational (see \cite[Section 6.2]{Vos98}).\\
(2) 
For $G=I_{4,i}$ $(1\leq i\leq 7)$ with $\widehat{T}=M_G$, 
there exists an algebraic $k$-torus $T^\prime$ such that 
$T\times_k T^\prime$ is stably $k$-rational (see Remark \ref{rem1.13}). 
Indeed, by $-[M_G]^{fl}=[[M_G]^{fl}]^{fl}$ (see Swan \cite[Lemma 3.1]{Swa10} 
and Hoshi and Yamasaki \cite[Lemma 2.15]{HY17}), 
we may find such $T^\prime$ with dim $T^\prime={\rm rank}$ 
$[M_G]^{fl}=16,16,16,16,16,16,20$ 
for $1\leq i\leq 7$ respectively 
by using the function 
{\tt FlabbyResolutionLowRank(}$G${\tt ).actionF} 
(see Section \ref{S6}). 
Hoshi and Yamasaki \cite[Theorem 1.27]{HY17} also showed that 
$T\times_k T^\prime$ is stably $k$-rational for $\widehat{T}=M_G$ 
and $\widehat{T}^\prime=M_{G^\prime}$ with 
$G=I_{4,1}\simeq F_{20}$ and $G^\prime=I_{4,2}\simeq F_{20}$ 
(resp. 
$G=I_{4,4}\simeq S_5$ and $G^\prime=I_{4,5}\simeq S_5$). 
We will discuss the case where $G=I_{4,i}$ later (Theorem \ref{thmain6}).
\end{remark}

Let $K/k$ be a separable field extension of degree $n$ 
and $L/k$ be the Galois closure of $K/k$. 
Let $G={\rm Gal}(L/k)$ and $H={\rm Gal}(L/K)$. 
The Galois group $G$ may be regarded as a transitive subgroup of 
the symmetric group $S_n$ of degree $n$. 
Let $R^{(1)}_{K/k}(\bG_{m,K})$ be the norm one torus of $K/k$,
i.e. the kernel of the norm map $R_{K/k}(\bG_{m,K})\rightarrow \bG_{m,k}$ 
where 
$R_{K/k}$ is the Weil restriction (see Ono \cite[Section 1.4]{Ono61}, 
Voskresenskii \cite[page 37, Section 3.12]{Vos98}). 
The norm one torus $R^{(1)}_{K/k}(\bG_{m,K})$ has the 
Chevalley module $J_{G/H}$ as its character module 
and the field $L(J_{G/H})^G$ as its function field 
where $J_{G/H}=(I_{G/H})^\circ={\rm Hom}_\bZ(I_{G/H},\bZ)$ 
is the dual lattice of $I_{G/H}={\rm Ker}(\varepsilon)$ and 
$\varepsilon : \bZ[G/H]\rightarrow \bZ$ is the augmentation map 
(see \cite[Section 4.8]{Vos98}). 
We have the exact sequence $0\rightarrow \bZ\rightarrow \bZ[G/H]
\rightarrow J_{G/H}\rightarrow 0$ and rank $J_{G/H}=n-1$. 
%Write $J_{G/H}=\oplus_{1\leq i\leq n-1}\bZ x_i$. 
Then the action of $G$ on $L(J_{G/H})=L(x_1,\ldots,x_{n-1})$ is 
of the form (\ref{acts}). 

The rationality problem for norm one tori is investigated 
%by \cite{EM75}, \cite{CTS77}, \cite{Hur84}, \cite{CTS87}, 
%\cite{LeB95}, \cite{CK00}, \cite{LL00}, \cite{Flo}, \cite{End11}, 
%\cite{HY17}, \cite{HHY20} and \cite{HY21}.\\ 
by Endo and Miyata \cite{EM75}, 
Colliot-Th\'{e}l\`{e}ne and Sansuc \cite{CTS77}, \cite{CTS87}, 
H\"{u}rlimann \cite{Hur84}, 
Le Bruyn \cite{LeB95}, 
Cortella and Kunyavskii \cite{CK00}, 
Lemire and Lorenz \cite{LL00}, 
Florence \cite{Flo}, 
Endo \cite{End11}, 
Hoshi and Yamasaki \cite{HY17}, 
\cite{HY21}, 
Hasegawa, Hoshi and Yamasaki \cite{HHY20}.\\
%

%%%%%%%%%%%%%%%%%%%%%%%%%%%%%%%%%%%%%%%%%%%%%%%%%%%%%%%%%%%%
The aim of this paper is to give a stably 
birational classification for algebraic $k$-tori 
of dimensions $3$ and $4$. 
%(cf. \cite{Vos98}, \cite{Kun07} and the references therein). 
For the case of dimension $\leq 2$, by Theorem \ref{thVo}, 
there exists only one algebraic $k$-torus, 
i.e. rational $k$-torus, up to stably birationally $k$-equivalence. 

\begin{theorem}[Colliot-Th\'el\`ene and Sansuc {\cite[Corollaire, page 191]{CTS77}}]\label{thV4}
Let $L/k$ and $L^\prime/k$ be Galois extensions with 
${\rm Gal}(L/k)\simeq {\rm Gal}(L^\prime/k)\simeq C_2^2$. 
Let $T=R_{L/k}^{(1)}(\bG_{m,L})$ and 
$T^\prime=R_{L^\prime/k}^{(1)}(\bG_{m,L^\prime})$ 
be the corresponding norm one tori. 
If $T$ and $T^\prime$ are stably birationally $k$-equivalent, 
then %if and only if 
$L=L^\prime$. 
\end{theorem}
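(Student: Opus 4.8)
The plan is to apply Voskresenskii's criterion (Theorem \ref{thVos70}(ii), or equivalently Theorem \ref{thEM73}(ii)) and reduce the stably birational classification of the two norm one tori to an equality of flabby classes over a common splitting field. Both $T$ and $T^\prime$ have splitting fields $L$ and $L^\prime$ with Galois group $C_2^2$, and by Theorem \ref{thKu}(1) (the group $C_2^2$ appears as $N_{3,1}$ in Table $1$) the Chevalley lattice $J_{C_2^2}$ is indecomposable with nontrivial flabby class, so neither torus is stably $k$-rational. One direction is easy: if $L=L^\prime$, then $T$ and $T^\prime$ have literally the same character lattice $J_{G/H}$ over the same $G=C_2^2$, hence $[\widehat{T}]^{fl}=[\widehat{T^\prime}]^{fl}$ as $G$-lattices and the two tori are stably birationally $k$-equivalent.

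The substance is the converse. First I would invoke observation $({\rm ii})^\prime$ following Theorem \ref{thVos70}: $T$ and $T^\prime$ are stably birationally $k$-equivalent if and only if $[{\rm Pic}\,X_{\widetilde L}]=[{\rm Pic}\,X^\prime_{\widetilde L}]$ as $\widetilde H$-lattices, where $\widetilde L=LL^\prime$ and $\widetilde H={\rm Gal}(\widetilde L/k)$ is a subdirect product of $G={\rm Gal}(L/k)\simeq C_2^2$ and $G^\prime={\rm Gal}(L^\prime/k)\simeq C_2^2$. So I would enumerate the possible subdirect products $\widetilde H\le C_2^2\times C_2^2$. If $L\ne L^\prime$, then $\widetilde L$ strictly contains at least one of the two fields, forcing $\widetilde H$ to be a proper subdirect product rather than the diagonal; concretely $\widetilde H$ has order $8$ (when $L\cap L^\prime$ is quadratic) or $16$ (when $L\cap L^\prime=k$). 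In each such case $\widetilde H$ acts on $\widehat T$ and on $\widehat{T^\prime}$ through different projections $\varphi_1,\varphi_2$ onto the two $C_2^2$ factors.

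The key computation is then to show that these two flabby classes differ as $\widetilde H$-lattices whenever $L\ne L^\prime$. I would exhibit a cohomological stably birational invariant that distinguishes them — the natural choice being $H^1(\widetilde H,{\rm Pic}\,X_{\widetilde L})\simeq H^1(G,{\rm Pic}\,X_L)$ via the inflation isomorphism recorded before $({\rm ii})^\prime$, or equivalently the value of $\widehat H^{-1}$ or the Brauer-type invariant of Kunyavskii-Skorobogatov-Tsfasman on the relevant subgroups. Since $[{\rm Pic}\,X_L]=[J_{C_2^2}]^{fl}$ is computed from the action of $G$ pulled back along $\varphi_1$, its restriction to the kernel $\ker\varphi_2\le\widetilde H$ is trivial (that factor does not see $T^\prime$), whereas $[{\rm Pic}\,X^\prime_{\widetilde L}]$ restricts to a nontrivial flabby class on $\ker\varphi_2$ precisely because $J_{C_2^2}$ has nonzero flabby class. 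Comparing the restrictions to the two kernels $\ker\varphi_1,\ker\varphi_2$ therefore separates the classes.

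The main obstacle I anticipate is verifying that the flabby class $[J_{C_2^2}]^{fl}$, inflated from one $C_2^2$ quotient of $\widetilde H$, genuinely restricts to zero on the complementary normal subgroup while the other inflation does not — this requires knowing an explicit flabby resolution of $J_{C_2^2}$ and tracking how its Tate cohomology behaves under restriction to the subgroups of $\widetilde H$ that are killed by one projection but not the other. The indecomposability and nonvanishing of $[J_{C_2^2}]^{fl}$ (from Theorem \ref{thKu}) are what guarantee the invariant is nonzero, so the argument ultimately reduces equality of tori to equality of splitting fields via the rigidity of this single indecomposable flabby class.
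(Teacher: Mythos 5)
Your overall strategy (reduce via Theorem \ref{thVos70}(ii) and $({\rm ii})'$ to an equality of flabby classes over $\widetilde{H}={\rm Gal}(LL'/k)$, then find a restriction that separates them) has the right shape; the easy direction is fine, and the case $L\cap L'=k$ does work, since there ${\rm Ker}(\varphi_2)=G\times\{1\}$ maps onto all of $G$ under $\varphi_1$, so $[\widehat{T}]^{fl}$ restricts to the nonzero class $[J_{C_2^2}]^{fl}$ while $[\widehat{T}']^{fl}$ restricts to $0$. (Note you have the two kernels interchanged in your write-up: it is ${\rm Ker}(\varphi_2)$ that acts trivially on $\widehat{T}'$ and hence kills \emph{its} flabby class, not $\widehat{T}$'s.) Also be aware that the paper does not prove this statement itself -- it quotes it from Colliot-Th\'el\`ene--Sansuc -- but the mechanism it uses for the generalization (Theorem \ref{thmain2}, Section \ref{S8}) is exactly the repair described below.

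The genuine gap is the case $[L\cap L':k]=2$, i.e.\ $|\widetilde{H}|=8$. There ${\rm Ker}(\varphi_2)$ has order $2$ and $N_1=\varphi_1({\rm Ker}(\varphi_2))$ is a \emph{proper} subgroup $C_2\leq G$, so the restriction of $[\widehat{T}]^{fl}$ to ${\rm Ker}(\varphi_2)$ is $[J_{C_2^2}|_{C_2}]^{fl}$ -- and every $C_2$-lattice has trivial flabby class (the indecomposables are $\bZ$, the sign lattice and $\bZ[C_2]$, each with vanishing flabby class; equivalently, every torus split by a quadratic extension is stably rational). Hence all four restrictions to the two kernels vanish and your invariant does not separate the classes. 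What does work is to restrict to a ``mixed'' subgroup: choose an order-$2$ subgroup $C\leq G$ with $C\neq N_1$ and set $S=\varphi_1^{-1}(C)$; then $S\cap{\rm Ker}(\varphi_2)=1$, so $\varphi_2(S)=G'$ while $\varphi_1(S)=C$, and restricting the supposed equality $[\widehat{T}]^{fl}=[\widehat{T}']^{fl}$ to $S$ gives $0=[J_{C_2^2}]^{fl}\neq 0$, a contradiction. Equivalently: base-change to the quadratic field $K=L^{C}\not\subset L'$, over which $T$ becomes stably $K$-rational but $T'$ does not. This is precisely how the paper forces $N_1=\varphi_1({\rm Ker}(\varphi_2))=1$ in the proof of Theorem \ref{th8.1}: if $N_1\neq 1$ then $\pi(G)=\pi(C)$ for $\pi:G\to G/N_1$, which would force $[M_G]^{fl}\sim[M_C]^{fl}$, contradicting $[M_G]^{fl}\in{\rm WSEC}_1$ and $[M_C]^{fl}\in{\rm WSEC}_0$.
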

%%%%%%%%%%%%%%%%%%%%%%%%%%%%%%%%%%%%%%%%%%%%%%%%%%%%%%%%%

The norm one tori $T=R_{L/k}^{(1)}(\bG_{m,L})$ 
and $T^\prime=R_{L^\prime/k}^{(1)}(\bG_{m,L^\prime})$ as in Theorem \ref{thV4} 
correspond to $U_1$ in Table $1$. 
In particular, if $k$ is a number field, then 
there exist infinitely many stably birationally $k$-equivalent classes of 
(non-rational) $k$-tori 
which correspond to $U_1$ in Table $1$ 
(see also Voskresenskii \cite[Example, page 14]{Vos70}). 
%%%%%%%%%%%%%%%%%%%%%%%%%%%%%%%%%%%%%%%%%%%%%%%%%%%%%%%%%%%%%%%%%%%

\begin{definition}[The $G$-lattice $M_G$ of a finite subgroup $G$ of 
$\GL(n,\bZ)$]\label{d2.2} 
Let $G$ be a finite subgroup of $\GL(n,\bZ)$. 
{\it The $G$-lattice $M_G$ of rank $n$} 
is defined to be the $G$-lattice with a $\bZ$-basis $\{u_1,\ldots,u_n\}$ 
on which $G$ acts by $u_i^{\sigma}=\sum_{j=1}^n a_{i,j}u_j$ for any $
\sigma=[a_{i,j}]\in G$ (corresponding to the equation (\ref{acts})). 
\end{definition}

Note that if $G$ and $G^\prime$ are $\GL(n,\bZ)$-conjugate, 
then $[M_G]^{fl}$ and $[M_{G^\prime}]^{fl}$ 
are weak stably $k$-equivalent: $[M_G]^{fl}\sim [M_{G^\prime}]^{fl}$ 
(see Definition \ref{d1.10} and Remark \ref{r1.11}).\\

%%%%%%%%%%%%%%%%%%%%%%%%%%%%%%%%%%%%%%%%%%%%%%%%%%%%%%%%%%%%%%%%%%%
When $M_G$ is decomposable, i.e. 
$M_G\simeq M_{G^\prime}\oplus M_{G^{\prime\prime}}$, 
then 
$L(M_G)^G\simeq L(M_{G^\prime}\oplus M_{G^{\prime\prime}})^G$ 
and hence $T$ and $T^\prime\times_k T^{\prime\prime}$ 
are birationally $k$-equivalent 
where $\widehat{T}\simeq M_G$, 
$\widehat{T}^\prime\simeq M_{G^\prime}$ and 
$\widehat{T}^{\prime\prime}\simeq M_{G^{\prime\prime}}$. 
In particular, we have 
$[M_G]^{fl}=[M_{G^\prime}]^{fl}+[M_{G^{\prime\prime}}]^{fl}$ 
as $G$-lattices and hence $T$ and $T^\prime\times_k T^{\prime\prime}$ 
are stably birationally $k$-equivalent. 
If $[M_{G^{\prime\prime}}]^{fl}=0$ as a $G^{\prime\prime}$-lattice 
(see Lemma \ref{lemp1}), i.e. 
$T^{\prime\prime}$ is stably $k$-rational, 
%where $\widehat{T}^{\prime\prime}\simeq M_{G^{\prime\prime}}$, 
then $[M_G]^{fl}=[M_{G^\prime}]^{fl}$ as $G$-lattices, 
i.e. $T$ and $T^\prime$ are stably birationally $k$-equivalent: 
%where $\widehat{T}\simeq M_G$ and 
%$\widehat{T}^\prime\simeq M_{G^\prime}$: 
%
\begin{theorem}[{$M_G$: decomposable case}]\label{th1.24}
Let $G\leq \GL(n,\bZ)$ $($resp. $G^\prime\leq \GL(n_1,\bZ)$, 
$G^{\prime\prime}\leq \GL(n_2,\bZ)$$)$ 
and $M_G$ $($resp. $M_{G^\prime}$, $M_{G^{\prime\prime}}$$)$
be the corresponding $G$-lattice 
$($resp. $G^\prime$-lattice, $G^{\prime\prime}$-lattice$)$ 
of $\bZ$-rank $n$ $($resp. $n_1$, $n_2$$)$ 
as in Definition \ref{d2.2}. 
Let $T$ $($resp. $T^\prime$, $T^{\prime\prime}$$)$ 
be an algebraic $k$-torus with $\widehat{T}\simeq M_G$ 
$($resp. $\widehat{T}^\prime\simeq M_{G^\prime}$, 
$\widehat{T}^{\prime\prime}\simeq M_{G^{\prime\prime}}$$)$. 
We assume that $M_G\simeq M_{G^\prime}\oplus M_{G^{\prime\prime}}$ 
with $n=n_1+n_2$. 
%
%Let $M_{G^\prime}$ $($resp. $M_{G^{\prime\prime}})$ 
%be a $G^\prime$-lattice $($resp. $G^{\prime\prime}$-lattice$)$. 
%Then $M_G\simeq M_{G^\prime}\oplus M_{G^{\prime\prime}}$ 
%is a $G$-lattice 
%where $G\leq G^\prime\times G^{\prime\prime}$ 
Then $G\leq G^\prime\times G^{\prime\prime}$ is a subdirect product 
of $G^\prime$ and $G^{\prime\prime}$ which 
acts on $M_{G^\prime}$ $($resp. $M_{G^{\prime\prime}}$$)$ 
through the surjection 
$\varphi_1: G \rightarrow G^\prime$ 
$($resp. $\varphi_2:G\rightarrow G^{\prime\prime}$$)$. 
Then $L(M_G)^G\simeq L(M_{G^\prime}\oplus M_{G^{\prime\prime}})^G$ 
and hence $T$ and $T^\prime\times_k T^{\prime\prime}$ 
are birationally $k$-equivalent. 
%where $\widehat{T}\simeq M_G$, 
%$\widehat{T}^\prime\simeq M_{G^\prime}$ and 
%$\widehat{T}^{\prime\prime}\simeq M_{G^{\prime\prime}}$. 
In particular, we have 
$[M_G]^{fl}=[M_{G^\prime}]^{fl}+[M_{G^{\prime\prime}}]^{fl}$ 
as $G$-lattices 
and hence $T$ and $T^\prime\times_k T^{\prime\prime}$ 
are stably birationally $k$-equivalent. 

If $[M_{G^{\prime\prime}}]^{fl}=0$ as a $G^{\prime\prime}$-lattice 
$($this can be regarded as a $G$-lattice 
because $G^{\prime\prime}\simeq G/{\rm Ker}(\varphi_2)$, 
see Lemma \ref{lemp1}$)$, 
i.e. $T^{\prime\prime}$ is stably $k$-rational,  
e.g. $n_2\leq 2$ by Voskresenskii's theorem $($Theorem \ref{thVo}$)$, 
then $[M_G]^{fl}=[M_{G^\prime}]^{fl}$ as $G$-lattices 
and hence $T$ and $T^\prime$ are stably birationally $k$-equivalent. 
In particular, we have $[M_G]^{fl} \sim [M_{G^\prime}]^{fl}$. 
\end{theorem}

Let $G$, $G^\prime\leq {\rm GL}(n,\bZ)$ be $\GL(n,\bZ)$-conjugate. 
Let $T$ and $T^\prime$ %$(1\leq i,j\leq 152)$ 
be algebraic $k$-tori of dimension $n$ 
with the minimal splitting fields $L$ and $L^\prime$ 
and the character modules 
$\widehat{T}=M_G$ and $\widehat{T}^\prime=M_{G^\prime}$ 
respectively. 
We assume that $L=L^\prime$. 
Then $G\simeq G^\prime\simeq {\rm Gal}(L/k)$ and 
we have
$\varphi_1: {\rm Gal}(L/k)\xrightarrow{\sim} G\leq {\rm GL}(n,\bZ)$, 
$f\mapsto \varphi_1(f)$, 
$\varphi_2: {\rm Gal}(L^\prime/k)\xrightarrow{\sim} G^\prime\leq {\rm GL}(n,\bZ)$, 
$f\mapsto \varphi_2(f)$ and a subdirect product 
$\widetilde{H}=\{(\varphi_1(f),\varphi_2(f))\mid f\in {\rm Gal}(L/k)={\rm Gal}(L^\prime/k)\}\leq G\times G^\prime$ of $G$, $G^\prime$ with $\widetilde{H}\simeq G$. 
On the other hand, 
we have 
$\psi: G\xrightarrow{\sim} G^\prime$, $g\mapsto u^{-1}gu$ 
$(u\in {\rm GL}(n,\bZ))$. 
%because $G$, $G^\prime\leq {\rm GL}(n,\bZ)$ are $\GL(n,\bZ)$-conjugate.  
%
Hence we can obtain $\sigma\in {\rm Aut}(G)$ such that 
$(\psi^{-1})(\varphi_2\varphi_1^{-1})(g)=g^\sigma$ 
for any $g\in G$ where 
${\rm Aut}(G)$ is the group of automorphisms on $G$ and hence 
we can identify $\psi^{-1}:G^\prime\simeq G^\sigma$ $(\sigma\in {\rm Aut}(G))$. 
Let $T^\sigma$ be an algebraic $k$-torus of dimension $n$ 
with the minimal splitting field $L$ and the character module 
$\widehat{T}^\sigma=M_{G^\sigma}$ $(\sigma\in {\rm Aut}(G))$
with 
$G\simeq G^\sigma\simeq {\rm Gal}(L/k)$. 
Note that ${\rm Aut}(G)$ acts on the set $\{K\mid k\subset K\subset L\}$ 
of intermediate fields. 
The subdirect product 
$\widetilde{H}=\{(g,g^\sigma)\mid g\in G\}\simeq G$ 
of $G$, $G^\sigma$ acts on $M_G$ and $M_{G^\sigma}$ diagonally. 
Then the set 
\begin{align*}
\{T^\sigma\mid \sigma\in {\rm Aut}(G)\}
\end{align*}
gives all algebraic $k$-tori of dimension $n$ 
with the minimal splitting field $L$ and the character module 
$\widehat{T}^\sigma\simeq M_G$. 

%%%%%%%%%%%%%%%%%%%%%%%%%%%%%%%%%%%%%%%%%%%%%%%%%%%%%%%%
\begin{definition}\label{defXYZ}
We define the following subgroups of ${\rm Aut}(G)$ for 
$G\leq {\rm GL}(n\,\bZ)$: 
\begin{center}
${\rm Inn}(G)\leq X\leq Y\leq Z\leq {\rm Aut}(G)$,
\end{center}
\begin{align*}
X&={\rm Aut}_{\GL(n,\bZ)}(G)=
\{\sigma\in{\rm Aut}(G)\mid 
{\rm there}\ {\rm exists}\ u\in {\rm GL}(n,\bZ)\ {\rm such}\ {\rm that}\ 
u^{-1}gu=g^\sigma\ {\rm for}\ {\rm any}\ g\in G\}\\
&\simeq N_{\GL(n,\bZ)}(G)/Z_{\GL(n,\bZ)}(G),\\
Y&=\{\sigma\in{\rm Aut}(G)\mid [M_G]^{fl}=[M_{G^\sigma}]^{fl}\ {\rm as}\ 
\widetilde{H}\textrm{-{\rm lattices}}\ {\rm where}\ \widetilde{H}=\{(g,g^\sigma)\mid g\in G\}\simeq G\},\\
Z&=\{\sigma\in{\rm Aut}(G)\mid [M_H]^{fl}\sim [M_{H^\sigma}]^{fl}\ {\rm for}\ {\rm any}\ H\leq G\} 
\end{align*}
where 
${\rm Inn}(G)$ is the group of inner automorphisms on $G$, 
${\rm Aut}(G)$ is the group of automorphisms on $G$, 
$N_{\GL(n,\bZ)}(G)$ is the normalizer of $G$ in $\GL(n,\bZ)$ and 
$Z_{\GL(n,\bZ)}(G)$ is the centralizer of $G$ in $\GL(n,\bZ)$.
%Under the assumption $L=L^\prime$, i.e. 
%$\widetilde{H}\simeq G\simeq G^\sigma$, 
%
%In particular, if ${\rm Inn}(G)={\rm Aut}(G)$, then 
%${\rm (1)}\Leftrightarrow {\rm (2)}$ of Theorem \ref{thmain4} holds.
%
\end{definition}
We see the meaning and the importance of %the sets 
${\rm Inn}(G)\leq X\leq Y\leq Z\leq {\rm Aut}(G)$ as follows: 
\begin{theorem}\label{thmXYZ}
Let $k$ be a field. 
Let $G, G^\sigma\leq {\rm GL}(n,\bZ)$ $(\sigma\in {\rm Aut}(G))$ 
and $M_G$, $M_{G^\sigma}$ be the corresponding $G$-lattices 
as in Definition \ref{d2.2}. 
Let $T$ and $T^\sigma$ be algebraic $k$-tori of dimension $n$ 
with the minimal splitting field $L$ and the character modules 
$\widehat{T}=M_G$ and $\widehat{T}^\sigma=M_{G^\sigma}$ 
$(\sigma\in {\rm Aut}(G))$
with 
%$G, G^\sigma\leq \GL(n,\bZ)$, 
$G\simeq G^\sigma\simeq {\rm Gal}(L/k)$. 
Let ${\rm Inn}(G)\leq X\leq Y\leq Z\leq {\rm Aut}(G)$ 
be as in Definition \ref{defXYZ}.\\
{\rm (1)} For $\sigma\in {\rm Aut}(G)$, 
$T$ and $T^\sigma$ are weak stably birationally $k$-equivalent;\\
{\rm (2)} For $\sigma\in {\rm Aut}(G)$, 
$\sigma\in X$ if and only if 
$M_G\simeq M_{G^\sigma}$ as ${\rm Gal}(L/k)$-lattices. 
In particular,  
$T$ and $T^\sigma$ are birationally $k$-equivalent, i.e. 
$k(T)\simeq L(M)^G\simeq L(M^\sigma)^{G^\sigma}\simeq k(T^\sigma)$;\\
{\rm (3)} For $\sigma\in {\rm Aut}(G)$, 
$\sigma\in Y$ if and only if 
$T$ and $T^\sigma$ are stably birationally $k$-equivalent. 
In particular, 
$\{T^\sigma\mid \sigma\in {\rm Aut}(G)\}$ splits into 
$\lambda$ stably birationally $k$-equivalent classes 
where 
$\lambda=|Y\backslash {\rm Aut}(G)|$;\\ %$Y=Z$. 
{\rm (4)} For $\sigma\in {\rm Aut}(G)$, 
$\sigma\in Z$ if and only if 
$T\times_k K$ and $T^\sigma\times_k K$ 
are weak stably birationally $K$-equivalent for any $k\subset K\subset L$. 
%{\rm (3)} $X=Y$ if and only if 
%$M\simeq M^\sigma$ as ${\rm Gal}(L/k)$-lattices for any $\sigma\in Y$;\\
%{\rm (4)} $Y=Z$ if and only if $(2)\Rightarrow (1)$;\\

In particular, {\rm (i)} if $Y=Z$, then 
$\sigma \in Z$ 
if and only if 
$T$ and $T^\sigma$ are stably birationally $k$-equivalent; 
and {\rm (ii)} if $X=Y=Z$ $($resp. $X=Y$$)$, then 
$\sigma\in Z$ $($resp. $\sigma\in Y$$)$
if and only if 
$T$ and $T^\sigma$ are birationally $k$-equivalent. 
\end{theorem}
\begin{proof}
(1) follows from 
$[M_G]^{fl}\sim [M_{G^\sigma}]^{fl}$ 
because 
$[M_G]^{fl}=[M_{G^\sigma}]^{fl}$ 
as $\widetilde{H}$-lattices where 
$\widetilde{H}=\{(g,(g^{\sigma^{-1}})^{\sigma})\mid 
g\in G\}\leq G\times G^\sigma$ and $\widetilde{H}\simeq G\simeq G^\sigma$. 

Because $T$ and $T^\sigma$ have the same splitting field $L$ 
with ${\rm Gal}(L/k)\simeq G\simeq G^\sigma$, 
we have\\
(2) $\sigma\in X$
$\Leftrightarrow$ 
there exists $u\in {\rm GL}(n,\bZ)$ such that 
$u^{-1}gu=g^\sigma$ for any $g\in G$ 
$\Leftrightarrow$ 
$M_G\simeq M_{G^\sigma}$ as ${\rm Gal}(L/k)$-lattices;\\
(3) 
%Because $T$ and $T^\sigma$ have the same splitting field $L$ 
%with ${\rm Gal}(L/k)\simeq G$, 
%we have 
$\sigma\in Y$
$\Leftrightarrow$ 
$[M_G]^{fl}=[M_{G^\sigma}]^{fl}$ as 
$\widetilde{H}$-lattices where 
$\widetilde{H}=\{(g,g^\sigma)\mid g\in G\}\simeq G$ 
$\Leftrightarrow$ 
$T$ and $T^\sigma$ are stably birationally $k$-equivalent;\\
(4) 
$\sigma\in Z$
$\Leftrightarrow$ 
$[M_H]^{fl}\sim [M_{H^\sigma}]^{fl}$ for any 
$H\leq G$ 
$\Leftrightarrow$ 
$T\times_k K$ and $T^\sigma\times_k K$ 
are weak stably birationally $K$-equivalent 
for any $k\subset K\subset L$. 
\end{proof}
\begin{corollary}
Let $G\leq {\rm GL}(n,\bZ)$, $M_G$ be the corresponding $G$-lattice as in Definition \ref{d2.2}, 
$T$ be the algebraic $k$-torus of dimension $n$ with $\widehat{T}\simeq M_G$ 
and $L$ be the minimal splitting field of $T$ as in Theorem \ref{thmXYZ}. 
Let ${\rm Inn}(G)\leq X\leq Y\leq {\rm Aut}(G)$ 
be as in Definition \ref{defXYZ}. 
Then the set $\{T^\sigma\mid \sigma\in {\rm Aut}(G)\}$ 
gives all algebraic $k$-tori of dimension $n$ 
with the minimal splitting field $L$ %and the character module 
with $\widehat{T}^\sigma\simeq M_G$ 
and 
splits into $\lambda$ stably birationally $k$-equivalent classes consist of 
$\mu$ birationally $k$-equivalent classes where 
$\lambda=|Y\backslash {\rm Aut}(G)|\leq \mu\leq |X\backslash {\rm Aut}(G)|$.
\end{corollary}

%%%%%%%%%%%%%%%%%%%%%%%%%%%%%%%%%%%%%%%%%%%%%%%%%%%%%%%%
We state main theorems of this paper.

\begin{theorem}[Main theorem 1]\label{thmain1}
Let $k$ be a field. 
There exist exactly $14$ weak stably birationally 
$k$-equivalent classes 
of algebraic $k$-tori $T$ of dimension $3$ which consist of 
the stably rational class ${\rm WSEC}_0$ 
and $13$ classes ${\rm WSEC}_r$ $(1\leq r\leq 13)$ 
for $[\widehat{T}]^{fl}$ with $\widehat{T}=M_G$ and 
$G=N_{3,i}$ $(1\leq i\leq 15)$ as in {\rm Table} $2$. 
\end{theorem}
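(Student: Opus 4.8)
The plan is to reduce the classification to the $15$ tori that are not stably $k$-rational and then to partition their flabby classes under the relation $\sim$ of Definition \ref{d1.10}. By Theorem \ref{thKu}, among the $73$ conjugacy classes of finite subgroups $G\leq\GL(3,\bZ)$ exactly the $15$ groups $G=N_{3,i}$ give tori that are not retract (hence not stably) $k$-rational, while by Theorem \ref{thEM73} (i) each of the remaining $58$ groups has $[M_G]^{fl}=0$ and so its torus lies in the single class ${\rm WSEC}_0$. It therefore suffices to prove that the $15$ flabby classes $[M_G]^{fl}$ break into exactly $13$ classes under $\sim$. Since $\sim$ is an equivalence relation (Remark \ref{r1.11}), I would attack this in two halves: a separation showing there are at least $13$ classes, and a short list of explicit mergers showing there are at most $13$.

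For the separation I would first compute, for each $G=N_{3,i}$, a flabby resolution $0\to M_G\to\widehat{Q}\to F\to 0$ and hence a concrete $G$-lattice representative $F$ of $[M_G]^{fl}=[{\rm Pic}\,X_L]$; this is the step carried out in GAP. To each case I attach the cohomological invariants of Kunyavskii, Skorobogatov and Tsfasman, above all the algebraic part of the Brauer-Grothendieck group $H^1(k,{\rm Pic}\,\overline{X})\cong H^1(G,F)$ together with the Tate cohomology groups $\widehat{H}^{i}(H,F)$ for subgroups $H\leq G$. The key lemma to prove is that the relevant package is a \emph{weak} stably $k$-equivalence invariant: if $T\stackrel{\rm s.b.}{\sim}T^\prime$ via a subdirect product $\widetilde{H}\leq G\times G^\prime$, then $[M_G]^{fl}=[M_{G^\prime}]^{fl}$ as $\widetilde{H}$-lattices, so the cohomology of the two flabby classes agrees after inflation to $\widetilde{H}$, and relating this back to the cohomology over $G$ and over $G^\prime$ -- using that in dimension $3$ the only prime contributing to these invariants is $p=2$, so everything is controlled over a Sylow $2$-subgroup -- shows that the invariants themselves coincide. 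Evaluating them on all $15$ cases should leave only a couple of coincidences, so the invariants already take at least $13$ distinct values and the $N_{3,i}$ fall into at least $13$ classes.

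For the matching upper bound I would produce an explicit weak stably $k$-equivalence for each of the finitely many pairs (or the single triple) of cases on which all these invariants agree. When the two groups coincide, $G=G^\prime$, it suffices to choose a subdirect product $\widetilde{H}\leq G\times G$ and verify $[M_G]^{fl}=[M_{G^\prime}]^{fl}$ as $\widetilde{H}$-lattices; when $G\not\cong G^\prime$ one must in addition select the surjections $\varphi_1,\varphi_2$ so that the two inflated flabby lattices become similar over $\widetilde{H}$. In either case the verification is performed by writing a common flabby resolution over $\widetilde{H}$ and comparing indecomposable constituents, the Krull-Schmidt-Azumaya theorem (valid after $p$-adic localization) reducing similarity of $\widetilde{H}$-lattices to a finite check on multiplicities. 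Combining the two halves gives exactly $13$ classes among the $N_{3,i}$, which together with ${\rm WSEC}_0$ yield the asserted $14$ classes recorded in Table $2$.

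The main obstacle is the interaction with the subdirect product $\widetilde{H}$. Unlike ordinary stably birational equivalence, where Theorem \ref{thVos70} lets one compare flabby classes over one fixed group, the weak relation forces all comparisons over a \emph{varying} $\widetilde{H}$; thus proving invariance of the cohomological package requires controlling how the flabby class and its Tate cohomology behave under inflation along $\varphi_1,\varphi_2$ and restriction to Sylow subgroups, and proving sufficiency requires actually exhibiting each $\widetilde{H}$ and certifying the lattice similarity over it. The invariance is the comparatively soft part; the genuinely delicate work is the merging direction, where the explicit GAP-assisted flabby resolutions together with Krull-Schmidt-Azumaya are what pin the partition down to precisely $13$ blocks.
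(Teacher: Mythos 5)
Your overall architecture matches the paper's: reduce to the $15$ groups $N_{3,i}$, separate classes by cohomological invariants attached to the flabby class over all subgroups (the paper's torus invariants $TI_G$, essentially the Kunyavskii--Skorobogatov--Tsfasman groups plus the invertibility indicator, checked against every subdirect product via Theorem \ref{th5.1} and Proposition \ref{prop5.2}), and then merge the surviving pairs, which turn out to be exactly $(N_{3,5},N_{3,6})$ and $(N_{3,11},N_{3,13})$. One caveat on the separation: since several distinct classes share identical top-level invariants (e.g.\ $N_{3,2}$ and $N_{3,3}$ are both $C_2^3$ with all three $\Sha$-groups zero), the lower bound of $13$ does not come from the invariants of each torus alone taking $13$ values; it comes from quantifying over every subdirect product $\widetilde{H}\leq G\times G'$ and requiring $TI_{\varphi_1(H)}=TI_{\varphi_2(H)}$ for every $H\leq\widetilde{H}$, so the induced correspondence between subgroups matters. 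Your sketch is compatible with this but should make the quantification explicit.

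The genuine gap is in the merging step. You propose to certify $[M_{N_{3,5}}]^{fl}=[M_{N_{3,6}}]^{fl}$ as $\widetilde{H}$-lattices by "comparing indecomposable constituents" via Krull--Schmidt--Azumaya after $p$-adic localization. That only establishes that the two flabby lattices lie in the same genus (agree over every $\bZ_p$), whereas the similarity $[F]=[F']$ in $\cT(\widetilde{H})$ is the integral statement $F\oplus P\simeq F'\oplus P'$ over $\bZ[\widetilde{H}]$, and genus does not imply stable isomorphism, let alone similarity, for $\bZ[G]$-lattices. (Krull--Schmidt also fails outright over $\bZ[G]$, as the paper itself notes in Section \ref{S7}, so a global multiplicity count is not available either; localization is used in the paper only in the \emph{negative} direction, to rule out equivalences.) The paper's proof of Theorem \ref{th6.1} instead first solves a system of linear equations in traces and $\widehat{H}^0$-data to pin down the only possible permutation complements, and then exhibits an explicit unimodular intertwiner --- a $7\times 7$ matrix $P$ with $\det P=-1$ satisfying $FP=PF'$ for the pair $(5,6)$, and a $15\times15$ one for $(11,13)$. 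Without producing such an integral isomorphism (or some substitute argument for why the local data suffices for these particular lattices), your upper bound of $13$ classes is not established.
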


%\newpage 
Table $2$: Weak stably $k$-equivalent classes for $[\widehat{T}]^{fl}$ with $\widehat{T}=M_G$ and $G=N_{3,i}$ $(1\leq i\leq 15)$
%\\
{\small 
\begin{longtable}{cllcc}
$r$ & $G=N_{3,i}: [\widehat{T}]^{fl}=[M_G]^{fl}\in {\rm WSEC}_r$ & ${\rm G}_r\simeq G$ & $Y\backslash {\rm Aut}(G)$ & $\lambda_r=|{\rm WSEC}_{r,L}|=|Y\backslash {\rm Aut}(G)|$\\\hline\vspace*{-4mm}
\endfirsthead
$1$ & \fbox{$N_{3,1}$} $\simeq U_1$ & $C_2^2$ & $\{1\}$ & $1$\\
$2$ & $N_{3,2}\simeq U_2$ & $C_2^3$ & ${\rm PGL}(3,\bF_2)\leq S_7$ & $7$\\
$3$ & $N_{3,3}\simeq W_2$ & $C_2^3$ & $\{1\}$ & $1$\\
$4$ & $N_{3,4}\simeq W_1$ & $C_4 \times C_2$ & \{1\} & $1$\\
$5$ & \fbox{$N_{3,5}$} $\simeq U_3$, \fbox{$N_{3,6}$} $\simeq U_4$ & $D_4$ & $C_2\leq S_2$ & $2$\\
$6$ & $N_{3,7}\simeq U_6$ & $D_4\times C_2$ & $D_4\leq S_4$ & $4$\\
$7$ & \fbox{$N_{3,8}$} $\simeq U_5$ & $A_4$ & $\{1\}$ & $1$\\
$8$ & \fbox{$N_{3,9}$} $\simeq U_7$ & $A_4\times C_2$ & $\{1\}$ & $1$\\
$9$ & \fbox{$N_{3,10}$} $\simeq W_3$ & $A_4\times C_2$ & $\{1\}$ & $1$\\
$10$ & \fbox{$N_{3,11}$} $\simeq U_9$, \fbox{$N_{3,13}$} $\simeq U_{10}$ & $S_4$ & $\{1\}$ & $1$\\
$11$ & \fbox{$N_{3,12}$} $\simeq U_8$ & $S_4$ & $\{1\}$ & $1$\\
$12$ & $N_{3,14}\simeq U_{12}$ & $S_4\times C_2$ & $\{1\}$ & $1$\\
$13$ & \fbox{$N_{3,15}$} $\simeq U_{11}$ & $S_4\times C_2$ & $C_2\leq S_2$ & $2$
\end{longtable}
}\vspace*{-2mm}
The box cases \fbox{$N_{3,i}$} satisfy $X=Y=Z$ 
where ${\rm Inn}(G)\leq X\leq Y\leq Z\leq {\rm Aut}(G)$ 
are as in Definition \ref{defXYZ} (see Example \ref{ex9.1} for GAP computations).\\

In Table $2$, 
the fourth column $Y\backslash {\rm Aut}(G)$ means that 
the action of ${\rm Aut}(G)$ on ${\rm WSEC}_{r,L}$ 
with the stabilizer $Y$ is given as the 
$\lambda_r$-th transitive subgroup 
$\lambda_rTm\leq S_{\lambda_r}$ 
where 
$\lambda_r=|{\rm WSEC}_{r,L}|=|Y\backslash {\rm Aut}(G)|$
(see Theorem \ref{thmain2}). 

The cases $N_{3,1}\simeq U_1\simeq C_2^2$, 
$N_{3,6}\simeq U_4\simeq D_4$, 
$N_{3,8}\simeq U_5\simeq A_4$, 
$N_{3,13}\simeq U_{10}\simeq S_4$ 
correspond to the norm one tori 
$T=R^{(1)}_{K/k}(\bG_{m,K})$ with $G\simeq {\rm Gal}(L/k)$ where 
$L/k$ is the Galois closure of $K/k$. 
\vspace*{2mm}

\begin{theorem}[Main theorem 2]\label{thmain2}
Let $k$ be a field.  
Let $T_i$ and $T_j^\prime$ $(1\leq i,j\leq 15)$ 
be algebraic $k$-tori of dimension $3$ 
with the minimal splitting fields $L_i$ and $L^\prime_j$ 
and the character modules 
$\widehat{T}_i=M_G$ and $\widehat{T}^\prime_j=M_{G^\prime}$ 
which satisfy that 
$G\leq \GL(3,\bZ)$ and 
$G^\prime\leq \GL(3,\bZ)$ are 
$\GL(3,\bZ)$-conjugate to $N_{3,i}$ and $N_{3,j}$ respectively 
with $G\simeq {\rm Gal}(L_i/k)$ and 
$G^\prime\simeq {\rm Gal}(L_j^\prime/k)$. 
For $1\leq i,j\leq 15$, 
the following conditions are equivalent:\\
{\rm (1)} $T_i$ and $T_j^\prime$ are stably birationally $k$-equivalent;\\
{\rm (2)} 
$G\simeq G^\prime$, 
$L_i=L_j^\prime$, 
$T_i\times_k K$ and $T_j^\prime\times_k K$ 
are weak stably birationally $K$-equivalent 
for any $k\subset K\subset L_i$;\\
{\rm (3)} 
$G\simeq G^\prime$, 
$L_i=L_j^\prime$, 
$T_i\times_k K$ and $T_j^\prime\times_k K$ 
are weak stably birationally $K$-equivalent 
for any $k\subset K\subset L_i$ corresponding to 
${\rm WSEC}_r$ $(1\leq r\leq 13)$ as in Table $2$ 
with $T_i\times_k K$ not retract $K$-rational;\\
{\rm (4)} 
$G\simeq G^\prime$, 
$L_i=L_j^\prime$, 
$T_i\times_k K$ and $T_j^\prime\times_k K$ 
are weak stably birationally $K$-equivalent 
for any $k\subset K\subset L_i$ corresponding to 
${\rm WSEC}_r$ $(1\leq r\leq 13)$ as in Table $2$ 
with $T_i\times_k K$ not retract $K$-rational 
and $[K:k]=d$ 
$($the ``bold'' cases as in {\rm Table} $10$$)$ where 
\begin{align*}
d=
\begin{cases}
1&(i=1,3,4,8,9,10,11,12,13,14),\\
1,2&(i=2,5,6,7,15).
\end{cases}
\end{align*}
In particular, for $i=1,3,4,8,9,10,11,12,13,14$ with $d=1$, 
i.e. ${\rm Aut}(G)=Y$ with $\lambda_{r}=1$, 
we get that 
${(\rm 1)}$ $T_i$ and $T_j^\prime$ are stably birationally $k$-equivalent 
if and only if ${(\rm 4)}$ $G\simeq G^\prime$, $L_i=L_j^\prime$, 
i.e. $\widetilde{H}\simeq G\simeq G^\prime$. 

Moreover, if $i=j$ with $G\simeq G^\prime$, $L_i=L_j^\prime$, 
i.e. $\widetilde{H}\simeq G\simeq G^\prime$, 
then $Y=Z$ $($which is equivalent to $(1)\Leftrightarrow (2)$$)$ 
where ${\rm Inn}(G)\leq X\leq Y\leq Z\leq {\rm Aut}(G)$ 
are as in Definition \ref{defXYZ} 
%where 
%\begin{center}
%${\rm Inn}(G)\leq X\leq Y\leq Z\leq {\rm Aut}(G)$,
%\end{center}
%\begin{align*}
%X&={\rm Aut}_{\GL(3,\bZ)}(G)=
%\{\sigma\in{\rm Aut}(G)\mid 
%{\rm there}\ {\rm exists}\ u\in {\rm GL}(3,\bZ)\ {\rm such}\ {\rm that}\ 
%u^{-1}gu=g^\sigma\ {\rm for}\ {\rm any}\ g\in G\}\\
%&\simeq N_{\GL(3,\bZ)}(G)/Z_{\GL(3,\bZ)}(G),\\
%Y&=\{\sigma\in{\rm Aut}(G)\mid [M_G]^{fl}=[M_{G^\sigma}]^{fl}\ {\rm as}\ 
%\widetilde{H}\textrm{-{\rm lattices}}\ {\rm where}\ \widetilde{H}=\{(g,g^\sigma%)\mid g\in G\}\simeq G\},\\
%Z&=\{\sigma\in{\rm Aut}(G)\mid [M_H]^{fl}\sim [M_{H^\sigma}]^{fl}\ {\rm for}\ {\rm any}\ H\leq G\}, 
%\end{align*}
%${\rm Inn}(G)$ is the group of inner automorphisms on $G$, 
%${\rm Aut}(G)$ is the group of automorphisms on $G$, 
%$N_{\GL(3,\bZ)}(G)$ is the normalizer of $G$ in $\GL(3,\bZ)$ and 
%$Z_{\GL(3,\bZ)}(G)$ is the centralizer of $G$ in $\GL(3,\bZ)$ 
%
%If $i=j=1,5,6,8,9,10,11,12,13,15$, then  $X=Y$. 
%
%Moreover, 
%for the minimal splitting field $L_i$ 
%of $T_i$ $(1\leq i\leq 15)$ with ${\rm Gal}(L_i/k)\simeq G$, 
%${\rm WSEC}_{r}$ $(1\leq r\leq 13)$ 
%with $[\widehat{T}_i]^{fl}\in {\rm WSEC}_{r,L}$ 
%becomes a disjoint union 
%${\rm WSEC}_{r,L}=\coprod_{t=1}^{\lambda_{r}} {\rm SEC}_{r,L,t}$ 
%where 
and for $1\leq r\leq 13$, we get the following disjoint union decompositions 
\begin{align*}
{\rm WSEC}_r=\coprod_{L/k\atop {\rm Gal}(L/k)\simeq {\rm G}_r}
{\rm WSEC}_{r,L},\ 
{\rm WSEC}_{r,L}=
\coprod_{t=1}^{\lambda_{r}} {\rm SEC}_{r,L,t}
\end{align*}
modulo stably birationally $k$-equivalence $\stackrel{\rm s.b.}{\approx}$ 
where 
${\rm SEC}_{r,L,t}$ $(1\leq t\leq \lambda_{r})$ is the $t$-th 
stably $k$-equivalent class of $T$ of dimension $3$ in 
${\rm WSEC}_{r,L}$ which corresponds to the fixed minimal splitting field $L$ 
in 
${\rm WSEC}_r=\coprod_{L/k\atop {\rm Gal}(L/k)\simeq {\rm G}_r} {\rm WSEC}_{r,L}$ 
with 
$[\widehat{T}]^{fl}\in {\rm WSEC}_{r,L}
=\coprod_{t=1}^{\lambda_r} {\rm SEC}_{r,L,t}$, 
${\rm Gal}(L/k)\simeq {\rm G}_r\simeq N_{3,i}$ 
$(1\leq r\leq 13)$ and 
$\lambda_r=|{\rm WSEC}_{r,L}|=|Y\backslash {\rm Aut}(G)|$
is given as in Table $2$. 
Furthermore, 
for the box cases \fbox{$N_{3,i}$} with $X=Y$ as in Table $2$,
the following conditions are also equivalent:\\
{\rm (0)} $T_i$ and $T_i^\prime$ are birationally $k$-equivalent;\\
{\rm (1)} $T_i$ and $T_i^\prime$ are stably birationally $k$-equivalent. 

In particular, for the box cases \fbox{$N_{3,i}$}, 
all the algebraic tori $T$ of dimension $3$ 
with $\widehat{T}=M_G$ and $G=N_{3,i}$ 
which correspond to the fixed minimal splitting field $L$ 
split into 
$\lambda_r=|Y\backslash {\rm Aut}(G)|$ birationally $k$-equivalent classes. 
\end{theorem}

As a corollary of Theorem \ref{thmain1} (Main theorem $1$) and 
Theorem \ref{thmain2} (Main theorem $2$), 
we get an answer of Problem \ref{prob1.3} 
for algebraic $k$-tori $T$ of dimensions $3$, 
i.e. the structure of $\mathcal{T}_3/\stackrel{\rm s.b.}{\approx}$: 

\begin{corollary}[The structure of $\mathcal{T}_3/\stackrel{\rm s.b.}{\approx}$]\label{cor1.27}
Let $\mathcal{T}_3$ be the category of algebraic $k$-tori of dimension $3$. 
We get a classification $($disjoint union decomposition$)$ 
of $\mathcal{T}_3$ with respect to 
the stably birationally $k$-equivalence $\stackrel{\rm s.b.}{\approx}$: 
\begin{align*}
\mathcal{T}_3=\coprod_{r=0}^{13} {\rm WSEC}_r
={\rm SEC}_0\coprod
\left(\coprod_{r=1}^{13} 
\coprod_{L/k\atop {\rm Gal}(L/k)\simeq {\rm G}_r}
\coprod_{t=1}^{\lambda_{r}} {\rm SEC}_{r,L,t}\right)
\end{align*}
modulo stably birationally $k$-equivalence $\stackrel{\rm s.b.}{\approx}$ 
where ${\rm SEC}_0$ is the stably $k$-equivalent class consists of 
stably $k$-rational tori $T\in \mathcal{T}_3$ %$T$ of dimension $3$ 
and 
${\rm SEC}_{r,L,t}$ $(1\leq t\leq \lambda_{r})$ is the $t$-th 
stably $k$-equivalent class of $T\in \mathcal{T}_3$ %of dimension $3$ 
in 
${\rm WSEC}_{r,L}$ which corresponds to the fixed minimal splitting field $L$ 
in 
${\rm WSEC}_r=\coprod_{L/k\atop {\rm Gal}(L/k)\simeq {\rm G}_r} {\rm WSEC}_{r,L}$ 
with 
$[\widehat{T}]^{fl}\in {\rm WSEC}_{r,L}
=\coprod_{t=1}^{\lambda_r} {\rm SEC}_{r,L,t}$, 
${\rm Gal}(L/k)\simeq {\rm G}_r\simeq N_{3,i}$ 
$(1\leq r\leq 13)$ and 
%$\lambda_{r}=|{\rm WSEC}_{r,L}|$ 
$\lambda_r=|{\rm WSEC}_{r,L}|=|Y\backslash {\rm Aut}(G)|$
is given as in Table $2$. 
\end{corollary}
\begin{remark}
The case $i=j=1$, i.e. $G=N_{3,1}\simeq U_1\simeq C_2^2$ and 
$G^\prime=N_{3,1}\simeq U_1\simeq C_2^2$ with $\lambda_1=1$, 
of Theorem \ref{thmain2} 
is obtained by Colliot-Th\'el\`ene and Sansuc (see Theorem \ref{thV4}). 
\end{remark}
\begin{example}[${\rm WSEC}_5$: $G=N_{3,6}\simeq U_4\simeq D_4$, $G^\prime=N_{3,6}\simeq U_4\simeq D_4$ with $\lambda_5=2$]\label{ex1.26}
Let $k=\bQ$, $K_4=\bQ(\sqrt[4]{2})$ and 
$K_4^\prime=\bQ(\sqrt[4]{2}\zeta_8)$ be algebraic number fields 
with $[K_4:\bQ]=[K_4^\prime:\bQ]=4$ 
and the same Galois closure $L=\bQ(\sqrt[4]{2},\sqrt{-1})$, 
${\rm Gal}(L/\bQ)\simeq D_4=\langle \sigma,\tau\rangle$ and $[L:\bQ]=8$ 
where $\zeta_8=e^{2\pi\sqrt{-1}/8}$ and 
\begin{align*}
&\sigma: \sqrt[4]{2}\mapsto \sqrt[4]{2}\sqrt{-1},\ \sqrt{-1}\mapsto \sqrt{-1},\\ 
&\tau: \sqrt[4]{2}\mapsto \sqrt[4]{2},\ \sqrt{-1}\mapsto -\sqrt{-1}. 
\end{align*} 
We find that $K_4=L^{\langle\tau\rangle}$, 
$(K_4)^\sigma=L^{\langle\sigma^2\tau\rangle}$, 
$K_4^\prime=L^{\langle\sigma\tau\rangle}$, 
$(K_4^\prime)^\sigma=L^{\langle\sigma^{-1}\tau\rangle}$, 
$\bQ(\sqrt{2})=L^{\langle\sigma^2,\tau\rangle}$
$\bQ(\sqrt{-1})=L^{\langle\sigma\rangle}$, 
$\bQ(\sqrt{-2})=L^{\langle\sigma^2,\sigma\tau\rangle}$. 
Let 
$T=R_{K_4/\bQ}^{(1)}(\bG_{m,K_4})$ and 
$T^\prime=R_{K_4^\prime/\bQ}^{(1)}(\bG_{m,K_4^\prime})$ 
be norm one tori of $K_4/\bQ$ and of $K_4^\prime/\bQ$ respectively 
with $\widehat{T}\simeq M_G\simeq J_{D_4/\langle\tau\rangle}$ 
and $\widehat{T}^\prime\simeq M_{G^\prime}\simeq J_{D_4/\langle\sigma\tau\rangle}$ 
where $G, G^\prime \leq \GL(3,\bZ)$ 
which correspond to $N_{3,6}\simeq U_4\simeq D_4$. 
We see that $X^4-2$ is irreducible over $\bQ(\sqrt{-2})$ and 
hence it follows from Kunyavskii's theorem (Theorem \ref{thKu}) that 
$T\times_\bQ \bQ(\sqrt{-2})$ is not retract $\bQ(\sqrt{-2})$-rational 
because it corresponds to $N_{3,1}\simeq U_1\simeq C_2^2$ and 
$\bQ(T\times_\bQ \bQ(\sqrt{-2}))\simeq L(M_H)^H\simeq L(J_H)^H=L(x_1,x_2,x_3)^H$ and 
$H\simeq {\rm Gal}(L/\bQ(\sqrt{-2}))
=\langle \sigma^2,\sigma\tau\rangle\simeq C_2^2$ 
with 
\begin{align*}
&\sigma^2: x_1\mapsto x_3,\ x_2\mapsto \frac{1}{x_1x_2x_3},\ x_3\mapsto x_1,\\ 
&\sigma\tau: x_1\mapsto \frac{1}{x_1x_2x_3},\ x_2\mapsto x_3,\ x_3\mapsto x_2.
\end{align*}
On the other hand, we see that $X^4+2=(X^2-\sqrt{-2})(X^2+\sqrt{-2})$ 
over $\bQ(\sqrt{-2})$ and by 
Kunyavskii's theorem (Theorem \ref{thKu}) again  
$T^\prime\times_\bQ \bQ(\sqrt{-2})$ is stably $\bQ(\sqrt{-2})$-rational
because it corresponds to 
$\bQ(T^\prime)\simeq L(M_{H^\prime})^{H^\prime}
=L(y_1,y_2,y_3)^{H^\prime}$ and 
$H^\prime\simeq {\rm Gal}(L/\bQ(\sqrt{-2}))=
\langle \sigma^2,\sigma\tau\rangle\simeq C_2^2$ with 
\begin{align*}
&\sigma^2: y_1\mapsto y_3,\ y_2\mapsto \frac{1}{y_1 y_2 y_3},\ 
y_3\mapsto y_1,\\ 
&\sigma\tau: y_1\mapsto y_1,\ y_2\mapsto \frac{1}{y_1 y_2 y_3},\ 
y_3\mapsto y_3.
\end{align*}
It follows from Theorem \ref{thmain2} that 
$T$ and $T^\prime$ are not stably birationally $\bQ$-equivalent 
although both $T$ and $T^\prime$ correspond to $N_{3,6}\simeq U_4\simeq D_4$ 
with the same splitting field $L$. 

Because $\lambda_5=2$ as in Table $2$, by Theorem \ref{thmain2}, 
if we take an algebraic $\bQ$-torus 
$T^{\prime\prime}$ of dimension $3$ with 
the minimal splitting field $L=\bQ(\sqrt[4]{2},\sqrt{-1})$ and 
$[T^{\prime\prime}]^{fl}\in N_{3,5}\simeq U_3$, 
then $T^{\prime\prime}$ is stably birationally 
$\bQ$-equivalent to either $T=R_{K_4/\bQ}^{(1)}(\bG_{m,K_4})$ or 
$T^\prime=R_{K_4^\prime/\bQ}^{(1)}(\bG_{m,K_4^\prime})$.
\end{example}
\begin{example}[${\rm WSEC}_2$: $G=N_{3,2}\simeq U_2\simeq C_2^3$, $G^\prime=N_{3,2}\simeq U_2\simeq C_2^3$ with $\lambda_2=7$]\label{exU2}
Let $G=N_{3,2}=\langle s_1, s_2, s_3\rangle\leq {\rm GL}(3,\bZ)$ 
with $G\simeq U_2\simeq C_2^3$ where 
\begin{align*}
s_1=\left(
\begin{array}{ccc}
0&0&1\\
-1&-1&-1\\
1&0&0
\end{array}\right),\ 
s_2=\left(
\begin{array}{ccc}
0&1&0\\
1&0&0\\
-1&-1&-1
\end{array}\right),\ 
s_3=\left(
\begin{array}{ccc}
-1&0&0\\
0&-1&0\\
0&0&-1
\end{array}\right). 
\end{align*}
Let $k=\bQ$ and 
$T$ be an algebraic $\bQ$-torus with the minimal splitting field 
$L=\bQ(\sqrt{2},\sqrt{3},\sqrt{5})$ with $[L:\bQ]=8$. 
By Kunyavskii's theorem (Theorem \ref{thKu}), 
we see that $T$ with $\widehat{T}=M_G\simeq N_{3,2}$, 
$[\widehat{T}]^{fl}\in {\rm WSEC}_2$ 
is not retract $\bQ$-rational. 
We see that there exist exactly $7$ subgroups $H_i\leq G$ $(1\leq i\leq 7)$ 
with $[G:H_i]=2$: 
\begin{align*}
H_1&=\langle s_1, s_2\rangle\simeq U_1,\ 
H_2=\langle s_1, s_3\rangle,\ 
H_3=\langle s_2, s_3\rangle,\\ 
H_4&=\langle s_1, s_2s_3\rangle,\ 
H_5=\langle s_2, s_1s_3\rangle,\ 
H_6=\langle s_3, s_1s_2\rangle,\ 
H_7=\langle s_1s_3, s_2s_3\rangle 
\end{align*}
and 
there exist exactly $7$ subgroups $H_i^\prime\leq G$ $(1\leq i\leq 7)$ 
with $[G:H_i^\prime]=4$: 
\begin{align*}
H_1^\prime=\langle s_1\rangle,\ 
H_2^\prime=\langle s_2\rangle,\ 
H_3^\prime=\langle s_3\rangle,\ 
H_4^\prime=\langle s_1s_2\rangle,\ 
H_5^\prime=\langle s_1s_3\rangle,\ 
H_6^\prime=\langle s_2s_3\rangle,\ 
H_7^\prime=\langle s_1s_2s_3\rangle. 
\end{align*}
Then we see that $[M_G\mid_{H_i}]^{fl}=[M_{H_i}]^{fl}=0$ 
$(2\leq i\leq 7)$ 
and $[M_G\mid_{H_i^\prime}]^{fl}=[M_{H_i^\prime}]^{fl}=0$ $(1\leq i\leq 7)$ 
and $[M_G\mid_{H_1}]^{fl}=[M_{H_1}]^{fl}$ is not invertible 
with $H_1\simeq U_1$ 
where $M_G\mid H_i$ is an $H_i$-lattice obtained 
by restricting the action of $G$ on $M_G$ to $H_i$. 
Let 
\begin{align*}
Y=\{\sigma\in{\rm Aut}(G)\mid [M_G]^{fl}=[M_{G^\sigma}]^{fl}\ {\rm as}\ 
\widetilde{H}\textrm{-{\rm lattices}}\ {\rm where}\ \widetilde{H}=\{(g,g^\sigma)\mid g\in G\}\simeq G\}
\end{align*}
and $G^\prime=G^\sigma\leq {\rm GL}(3,\bZ)$ with 
$G^\prime\simeq G\simeq  C_2^3\simeq (\bF_2)^3$ 
$(\sigma\in {\rm Aut}(G))$ where 
${\rm Aut}(G)\simeq {\rm GL}(3,\bF_2)\simeq 
{\rm PGL}(3,\bF_2)\simeq {\rm SL}(3,\bF_2)\simeq {\rm PSL}(3,\bF_2)
\simeq {\rm PSL}(2,\bF_7)$ 
with $|{\rm Aut}(G)|=168$. 
By Theorem \ref{thmain2}, we have 
$\lambda_2=|{\rm WSEC}_{r,L}|=|Y\backslash {\rm Aut}(G)|=7$ 
where $Y={\rm Stab}_{H_1}({\rm Aut}(G))\simeq S_4$ with $|Y|=24$. 
This implies that ${\rm WSEC}_{2,L}=\{[\widehat{T}_i]^{fl}\mid 1\leq i\leq 7\}$
and there exist algebraic $k$-tori $T_i$ $(1\leq i\leq 7)$ with 
$\widehat{T}_i=M_{G^{\sigma_i}}$ $(\overline{\sigma_i} 
\in Y\backslash {\rm Aut}(G))$ 
such that $T_i$  are not stably birationally $\bQ$-equivalent each other 
although all $T_i$ correspond to $N_{3,2}\simeq U_2\simeq C_2^3$ 
with the same splitting field $L$. 
Note that $Y\backslash {\rm Aut}(G)\simeq {\rm Gr}_{\bF_2}(2,3)\simeq {\rm Gr}_{\bF_2}(1,3)\simeq \bP^2_{\bF_2}$ with $|\bP^2_{\bF_2}|=7$ 
where ${\rm Gr}_{\bF_2}(d,3)$ is 
the Grassmannian of $d$-dimensional subspaces of $(\bF_2)^3$ 
(see Example \ref{ex9.10} for GAP computations). 
\end{example}
\begin{theorem}[Main theorem 3]\label{thmain3}
Let $k$ be a field. 
There exist exactly $129$ weak stably birationally $k$-equivalent classes 
of algebraic $k$-tori $T$ of dimension $4$ which consist of 
the stably rational class ${\rm WSEC}_0$, 
$121$ classes ${\rm WSEC}_r$ $(1\leq r\leq 121)$ for 
$[\widehat{T}]^{fl}$ with $\widehat{T}=M_G$ and $G=N_{31,i}$ $(1\leq i\leq 64)$ as in {\rm Table} $3$ 
and for 
$[\widehat{T}]^{fl}$ with $\widehat{T}=M_G$ and 
$G=N_{4,i}$ $(1\leq i\leq 152)$ as in {\rm Table} $4$, 
and $7$ classes ${\rm WSEC}_r$ $(122\leq r\leq 128)$ for 
$[\widehat{T}]^{fl}$ with $\widehat{T}=M_G$ and 
$G=I_{4,i}$ $(1\leq i\leq 7)$ as in {\rm Table} $5$. 
Moreover, 
the first $13$ weak stably birationally $k$-equivalent classes 
${\rm WSEC}_r$ $(1\leq r\leq 13)$ are common in 
dimension $3$ $(${\rm Theorem} $\ref{thmain1}$ and {\rm Table} $2$$)$ 
and dimension $4$ $(${\rm Table} $3$ and {\rm Table} $4$$)$. 
\end{theorem}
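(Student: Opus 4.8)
The plan is to reduce the classification to a finite computation of flabby classes and of the weak stably equivalence relation of Definition \ref{d1.10}, organized by the trichotomy of Theorem \ref{thHY17}. First, by Theorem \ref{thEM73} (i) every one of the $487$ stably $k$-rational cases has $[\widehat{T}]^{fl}=0$, so all of them lie in the single class ${\rm WSEC}_0$; the real content is the partition of the remaining $223$ cases, namely the $64$ groups $N_{31,i}$, the $152$ groups $N_{4,i}$ and the $7$ groups $I_{4,i}$. For each such $G$ I would compute a flabby resolution $0\to M_G\to P\to F\to 0$ and recover the lattice $F=[\widehat{T}]^{fl}$ explicitly, using the GAP routine {\tt FlabbyResolutionLowRank}$(G)${\tt .actionF} already referenced in Remark \ref{rem1.19}.

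Second, to establish the \emph{upper} bound (at most $128$ nontrivial classes) I would exhibit, for every pair of groups claimed to lie in a common ${\rm WSEC}_r$, an explicit subdirect product $\widetilde{H}\leq G\times G^\prime$ with surjections $\varphi_1,\varphi_2$ for which $[M_G]^{fl}=[M_{G^\prime}]^{fl}$ as $\widetilde{H}$-lattices. When $G\cong G^\prime$ and the two flabby classes already agree as $G$-lattices, the diagonal $\widetilde{H}=\{(g,g)\}$ suffices; the remaining identifications require genuine non-diagonal subdirect products, and verifying equality of the flabby classes over each $\widetilde{H}$ is again a finite lattice computation.

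Third, for the \emph{lower} bound (at least $128$ distinct classes) I would invoke invariants preserved under weak stably equivalence. Invertibility of $[\widehat{T}]^{fl}$ is such an invariant: it is inherited along the surjections $\varphi_1,\varphi_2$ exactly as in the remark following Definition \ref{defF}, so it already separates the $7$ retract $k$-rational cases $I_{4,i}$ (invertible and nonzero, by Theorem \ref{thEM73} (iii)) from the $216$ cases that are not retract $k$-rational. To separate classes inside each family I would combine the cohomological stably birational invariants of \cite{KST89} with the $p$-part structure of $[\widehat{T}]^{fl}$ established elsewhere in this paper, namely that each nonzero $p$-part is a faithful and indecomposable $\bZ_p[Sy_p(G)]$-lattice: faithfulness pins down $Sy_p(G)$ and indecomposability forces the isomorphism type and the $\bZ_p$-rank of the $p$-part to match across any subdirect product, and comparing these data (computed in GAP) distinguishes the claimed $121+7$ classes.

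Finally, for the assertion that the first $13$ classes coincide with the dimension-$3$ list of Theorem \ref{thmain1}, I would use Lemma \ref{lemp1}. For the decomposable groups $N_{31,i}$ one has $M_G\simeq M_1\oplus M_2$ with ${\rm rank}\,M_1=3$ and ${\rm rank}\,M_2=1$; since every rank-$1$ lattice is the character module of a $k$-rational $1$-dimensional torus, $[M_2]^{fl}=0$, so Lemma \ref{lemp1} (i) gives $[M_G]^{fl}=[M_1]^{fl}$, and the graph $\widetilde{H}=\{(g,gN_1)\}$ of the quotient $G\to G/N_1$ realizes a weak stably equivalence with the dimension-$3$ torus whose group $G/N_1$ acts faithfully on $M_1$, i.e. one of the $N_{3,j}$. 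The coincidence between Table $3$ and Table $4$ additionally requires showing that certain indecomposable $N_{4,i}$ merge into these same $13$ classes, which relies on the non-diagonal subdirect products produced in the upper-bound step. The main obstacle I anticipate is not a single deep step but the certification of the \emph{exact} count: one must simultaneously exhibit the subdirect products realizing all claimed equivalences and guarantee, via the faithful–indecomposable $p$-adic invariants, that no two distinct classes are accidentally identified by \emph{any} subdirect product, so that the total is provably equal to $128$ rather than merely bounded above by it.
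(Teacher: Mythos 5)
Your overall architecture (reduce to ${\rm WSEC}_0$ via Theorem \ref{thEM73}~(i), prove the upper bound by exhibiting explicit subdirect products realizing $[M_G]^{fl}=[M_{G'}]^{fl}$, prove the lower bound by invariants preserved under weak stably equivalence, and merge the decomposable $N_{31,i}$ into the dimension-$3$ classes via Lemma \ref{lemp1}) matches the paper's strategy, and for the $216$ non--retract-rational cases your invariants would in fact suffice: the paper's Propositions \ref{prop5.3} and \ref{prop5.4} show that the torus invariants $TI_H$ (invertibility plus the three groups $\Sha^1_w$, $\Sha^2_w$ of \cite{KST89}), computed over all subgroups $H$ of every candidate subdirect product, already cut the possibilities down exactly to the pairs that are then shown equivalent.

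The genuine gap is in your lower bound for the seven classes ${\rm WSEC}_{122},\dots,{\rm WSEC}_{128}$. For $G=I_{4,i}$ the flabby class $[M_G]^{fl}$ is invertible, so \emph{every} invariant you propose is identically trivial: the three cohomological groups vanish (Remark \ref{r4.3}~(2) and Table $9$), and the $p$-parts $\widetilde{N}_p$ vanish as well, since over a $p$-group every $\bZ_p$-direct summand of a permutation lattice is permutation by Krull--Schmidt--Azumaya (this is exactly the proof of Lemma \ref{lem7.3}). Consequently faithfulness and indecomposability of $\widetilde{N}_p$ give you nothing here, and the torus invariants computed over all subdirect products leave the pairs $(1,2)$, $(1,3)$, $(2,3)$, $(4,5)$, $(4,6)$, $(5,6)$ unresolved (Proposition \ref{prop5.5}); note these really are distinct classes that are ``close'' to each other, e.g.\ $[M_{I_{4,1}}]^{fl}+[M_{I_{4,2}}]^{fl}=0$ by Theorem \ref{thmain6}. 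The paper separates them by a finer integrality obstruction: one writes the hypothetical stable isomorphism as $\bigoplus_i \bZ[\widetilde{H}/H_i]^{\oplus a_i}\simeq (F-F')^{\oplus(-b_1)}$ and derives a linear system in $(a_1,\dots,a_r,b_1)$ from traces, $\widehat{H}^0$, $\widehat{Z}^0$ and additionally $H^2$ of all $\bZ[\widetilde{H}/H_i]$ and of $F-F'$; for these six pairs the solution space forces $b_1$ even (or the system becomes empty with the $H^2$ condition), contradicting $b_1=\pm1$. Without some replacement for this step your argument can only prove ``at most $128$ and at least $125$'' nontrivial classes, not the exact count.
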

%

%%
%\bigskip
%\newpage
Table $3$: Weak stably $k$-equivalent classes for $[\widehat{T}]^{fl}$ with $\widehat{T}=M_G$ and $G=N_{31,i}$ $(1\leq i\leq 64)$
%\\
{\small 
\begin{longtable}{cll}
$r$ & $G=N_{31,i}: [\widehat{T}]^{fl}=[M_G]^{fl}\in {\rm WSEC}_r$ & ${\rm G}_r$: $G\simeq {\rm G}_r$ or $G\simeq {\rm G}_r\times C_2$\\\hline\vspace*{-4mm}
\endfirsthead
$1$ & $N_{31,1}$, \fbox{$N_{31,4}$}, $N_{31,11}$ & $C_2^2$\\
$2$ & $N_{31,3}$, $N_{31,6}$, $N_{31,8}$, $N_{31,10}$, $N_{31,13}$ & $C_2^3$\\
$3$ & $N_{31,2}$, $N_{31,5}$, $N_{31,7}$, $N_{31,9}$, $N_{31,12}$ & $C_2^3$\\
$4$ & $N_{31,14}$, $N_{31,18}$, $N_{31,25}$, $N_{31,26}$, $N_{31,29}$ & $C_4 \times C_2$\\
$5$ & \fbox{$N_{31,15}$}, \fbox{$N_{31,16}$}, \fbox{$N_{31,19}$}, \fbox{$N_{31,20}$}, \fbox{$N_{31,21}$}, \fbox{$N_{31,22}$}, \fbox{$N_{31,27}$}, \fbox{$N_{31,28}$}, $N_{31,31}$, $N_{31,35}$ & $D_4$\\
$6$ & $N_{31,17}$, $N_{31,23}$, $N_{31,24}$, $N_{31,30}$, $N_{31,32}$, $N_{31,33}$, $N_{31,34}$, $N_{31,36}$, $N_{31,37}$ & $D_4\times C_2$\\
$7$ & \fbox{$N_{31,38}$}, \fbox{$N_{31,48}$} & $A_4$\\
$8$ & \fbox{$N_{31,40}$}, \fbox{$N_{31,47}$}, $N_{31,53}$ & $A_4\times C_2$\\
$9$ & \fbox{$N_{31,39}$}, \fbox{$N_{31,46}$}, $N_{31,52}$ & $A_4\times C_2$\\
$10$ & \fbox{$N_{31,41}$}, \fbox{$N_{31,43}$}, \fbox{$N_{31,50}$}, \fbox{$N_{31,51}$}, $N_{31,60}$, $N_{31,62}$ & $S_4$\\
$11$ & \fbox{$N_{31,42}$}, \fbox{$N_{31,49}$}, $N_{31,61}$ & $S_4$\\
$12$ & $N_{31,45}$, $N_{31,55}$, $N_{31,57}$, $N_{31,59}$, $N_{31,64}$ & $S_4\times C_2$\\
$13$ & \fbox{$N_{31,44}$}, \fbox{$N_{31,54}$}, \fbox{$N_{31,56}$}, \fbox{$N_{31,58}$}, $N_{31,63}$ & $S_4\times C_2$
\end{longtable}
}
%\vspace*{2mm}
\vspace*{-2mm}
The box cases \fbox{$N_{31,i}$} satisfy $X=Y=Z$ 
where ${\rm Inn}(G)\leq X\leq Y\leq Z\leq {\rm Aut}(G)$ 
are as in Definition \ref{defXYZ} (see Example \ref{ex9.7} for GAP computations).\\

Table $4$: Weak stably $k$-equivalent classes for $[\widehat{T}]^{fl}$ with $\widehat{T}=M_G$ and $G=N_{4,i}$ $(1\leq i\leq 152)$
%\\
{\small 
\setlength{\tabcolsep}{2pt}
\begin{longtable}{cllcc}
 & & & & $\lambda_r=|{\rm WSEC}_{r,L}|$\\
$r$ & $G=N_{4,i}: [\widehat{T}]^{fl}=[M_G]^{fl}\in {\rm WSEC}_r$ & ${\rm G}_r\simeq G$ & $Y\backslash {\rm Aut}(G)$ & \hspace*{5.5mm}$=|Y\backslash {\rm Aut}(G)|$\\\hline\vspace*{-4mm}
\endfirsthead
$1$ & $N_{4,1}$ & $C_2^2$ & $\{1\}$ & $1$\\
$2$ & $N_{4,6}$ & $C_2^3$ & ${\rm PGL}(3,\bF_2)\leq S_7$ & $7$\\
$3$ & $N_{4,2}$, $N_{4,5}$, $N_{4,7}$ & $C_2^3$ & $\{1\}$ & $1$\\
$4$ & $N_{4,13}$, $N_{4,23}$, $N_{4,24}$ & $C_4 \times C_2$ & $\{1\}$ & $1$\\
$5$ & \fbox{$N_{4,15}$}, \fbox{$N_{4,16}$}, \fbox{$N_{4,17}$}, \fbox{$N_{4,25}$}, \fbox{$N_{4,26}$} & $D_4$ & $C_2\leq S_2$ & $2$\\
$6$ & $N_{4,19}$, $N_{4,29}$, $N_{4,30}$ & $D_4\times C_2$ & $D_4\leq S_4$ & $4$\\
$7$ & $$ & $$\\
$8$ & $$ & $$\\
$9$ & \fbox{$N_{4,69}$}, \fbox{$N_{4,74}$}, \fbox{$N_{4,75}$} & $A_4\times C_2$ & $\{1\}$ & $1$\\
$10$ & $$ & $$\\
$11$ & \fbox{$N_{4,71}$}, \fbox{$N_{4,77}$}, \fbox{$N_{4,78}$} & $S_4$ & $\{1\}$ & $1$\\
$12$ & $$ & $$\\
$13$ & \fbox{$N_{4,72}$}, \fbox{$N_{4,83}$}, \fbox{$N_{4,85}$} & $S_4\times C_2$ & $C_2\leq S_2$ & $2$\\\vspace*{-4mm}\\\hline
%%%%%%%%%%%%%%%%%%%%%%%%%%%%%%%%%%%%%%%%%%%%%%%%%%%%%%%%%%%%%%%%%%%%%%%%%%%%%%
$14$ & $N_{4,3}$, $N_{4,8}$ & $C_2^3$ & ${\rm PGL}(3,\bF_2)\leq S_{21}$ & $21$\\
$15$ & $N_{4,4}$ & $C_2^3$ & $\{1\}$ & $1$\\
$16$ & \fbox{$N_{4,9}$} & $C_2^3$ & ${\rm PGL}(3,\bF_2)\leq S_{28}$ & $28$\\
$17$ & $N_{4,10}$ & $C_2^4$ & $\{1\}$ & $1$\\
$18$ & $N_{4,11}$ & $C_2^4$ & $A_8\leq S_{420}$ & $420$\\
$19$ & \fbox{$N_{4,12}$} & $C_2^4$ & $A_8\leq S_{840}$ & $840$\\
$20$ & $N_{4,14}$, \fbox{$N_{4,39}$} & $C_4\times C_2$ & $\{1\}$ & $1$\\
$21$ & \fbox{$N_{4,18}$} & $D_4$ & $C_2\leq S_2$ & $2$\\
$22$ & $N_{4,20}$, $N_{4,31}$, $N_{4,33}$, $N_{4,35}$ & $D_4\times C_2$ & $C_2\leq S_2$ & $2$\\
$23$ & \fbox{$N_{4,21}$} & $D_4\times C_2$ & $D_4\times C_2\leq S_8$ & $8$\\
$24$ & $N_{4,22}$, \fbox{$N_{4,46}$} & $D_4\times C_2$ & $C_2\leq S_2$ & $2$\\
$25$ & $N_{4,27}$ & $C_4\times C_2^2$ & $\{1\}$ & $1$\\
$26$ & $N_{4,28}$ & $C_4\times C_2^2$ & $(C_2^4\rtimes C_3)\rtimes C_2\leq S_{24}$ & $24$\\
$27$ & \fbox{$N_{4,32}$}, \fbox{$N_{4,34}$} & $D_4\times C_2$ & $D_4\times C_2\leq S_{16}$ & $16$\\
$28$ & $N_{4,36}$ & $D_4\times C_2$ & $C_2^2\leq S_4$ &$4$\\
$29$ & $N_{4,37}$ & $D_4\times C_2^2$ & $((C_2^4\rtimes C_2)\rtimes C_3)\rtimes C_2\leq S_8$ & $8$\\
$30$ & $N_{4,38}$ & $D_4\times C_2^2$ & $((C_2^2\times (C_2^4\rtimes C_2))\rtimes C_3)\rtimes C_2\leq S_{192}$ & $192$\\
$31$ & \fbox{$N_{4,40}$} & $(C_4 \times C_2) \rtimes C_2$ & $C_2^2\leq S_4$ &$4$\\
$32$ & $N_{4,41}$, \fbox{$N_{4,44}$} & $(C_4\times C_2)\rtimes C_2$ & $\{1\}$ & $1$\\
$33$ & \fbox{$N_{4,42}$} & $(C_4 \times C_2) \rtimes C_2$ & $\{1\}$ & $1$\\
$34$ & \fbox{$N_{4,43}$} & $(C_4 \times C_2) \rtimes C_2$ & $C_2\leq S_2$ & $2$\\
$35$ & \fbox{$N_{4,45}$} & $D_4\times C_2$ & $C_2\leq S_2$ & $2$\\
$36$ & \fbox{$N_{4,47}$} & $C_2^4 \rtimes C_2$ & $S_4\times C_2\simeq 12T22\leq S_{12}$ & $12$\\
$37$ & $N_{4,48}$ & $C_2^4 \rtimes C_2$ & $S_4\times C_2\simeq 12T21\leq S_{12}$ & $12$\\
$38$ & $N_{4,49}$ & $C_2^4 \rtimes C_2$ & $\{1\}$ & $1$\\
$39$ & $N_{4,50}$ & $C_4 \rtimes C_4$ & $C_2\leq S_2$ & $2$\\
$40$ & $N_{4,51}$ & $C_4^2$ & $S_3\leq S_3$ & $3$\\
$41$ & $N_{4,52}$ & $D_4 \times C_4$ & $D_4\leq S_4$ & $4$\\
$42$ & $N_{4,53}$ & $(C_4 \times C_2^2) \rtimes C_2$ & $C_2^2\leq S_4$ &$4$\\
$43$ & $N_{4,54}$ & $(C_4 \times C_2^2) \rtimes C_2$ & $C_2\leq S_2$ & $2$\\
$44$ & $N_{4,55}$ & $C_4^2 \rtimes C_2$ & $S_4\leq S_{12}$ & $12$\\
$45$ & $N_{4,56}$ & $D_4^2$ & $((C_2^4\rtimes C_2)\rtimes C_2)\rtimes C_2\leq S_{16}$ & $16$\\
$46$ & $N_{4,57}$ & $C_3^2$ & $\{1\}$ & $1$\\
$47$ & $N_{4,58}$ & $C_6 \times C_3$ & $\{1\}$ & $1$\\
$48$ & \fbox{$N_{4,59}$}, \fbox{$N_{4,91}$} & $S_3\times C_3$ & $\{1\}$ & $1$\\
$49$ & \fbox{$N_{4,60}$}, \fbox{$N_{4,92}$} & $S_3\times C_3$ & $\{1\}$ & $1$\\
$50$ & $N_{4,61}$ & $C_3^2 \rtimes C_2$ & $\{1\}$ & $1$\\
$51$ & $N_{4,62}$ & $C_3^2 \rtimes C_2$ & $\{1\}$ & $1$\\
$52$ & \fbox{$N_{4,63}$}, \fbox{$N_{4,93}$} & $S_3\times C_6$ & $C_2\leq S_2$ & $2$\\
$53$ & $N_{4,64}$ & $(C_3^2 \rtimes C_2) \times C_2$ & $C_2\leq S_2$ & $2$\\
$54$ & \fbox{$N_{4,65}$}, \fbox{$N_{4,94}$} & $S_3^2$ & $\{1\}$ & $1$\\
$55$ & \fbox{$N_{4,66}$}, \fbox{$N_{4,96}$} & $S_3^2$ & $C_2\leq S_2$ & $2$\\
$56$ & \fbox{$N_{4,67}$}, \fbox{$N_{4,95}$} & $S_3^2$ & $\{1\}$ & $1$\\
$57$ & \fbox{$N_{4,68}$}, \fbox{$N_{4,99}$} & $S_3^2 \times C_2$ & $D_4\leq S_4$ & $4$\\
$58$ & \fbox{$N_{4,70}$} & $A_4\times C_2$ & $\{1\}$ & $1$\\
$59$ & $N_{4,73}$ & $S_4\times C_2$ & $\{1\}$ & $1$\\
$60$ & \fbox{$N_{4,76}$} & $A_4\times C_2$ & $\{1\}$ & $1$\\
$61$ & $N_{4,79}$ & $A_4 \times C_2^2$ & $\{1\}$ & $1$\\
$62$ & \fbox{$N_{4,80}$} & $A_4 \times C_2^2$ & $S_3\leq S_6$ & $6$\\
$63$ & $N_{4,81}$, $N_{4,84}$, $N_{4,87}$ & $S_4 \times C_2$ & $\{1\}$ & $1$\\
$64$ & $N_{4,82}$ & $S_4\times C_2$ & $\{1\}$ & $1$\\
$65$ & \fbox{$N_{4,86}$}, \fbox{$N_{4,88}$} & $S_4 \times C_2$ & $C_2\leq S_2$ & $2$\\
$66$ & $N_{4,89}$ & $S_4 \times C_2^2$ & $S_4\leq S_4$ & $4$\\
$67$ & $N_{4,90}$ & $S_4 \times C_2^2$ & $S_4\leq S_{12}$ & $12$\\
$68$ & $N_{4,97}$ & $C_3^2 \rtimes C_4$ & $\{1\}$ & $1$\\
$69$ & $N_{4,98}$ & $C_3^2 \rtimes C_4$ & $\{1\}$ & $1$\\
$70$ & $N_{4,100}$ & $(C_3^2 \rtimes C_4) \times C_2$ & $C_2\leq S_2$ & $2$\\
$71$ & $N_{4,101}$ & $S_3^2 \rtimes C_2$ & $\{1\}$ & $1$\\
$72$ & \fbox{$N_{4,102}$}, \fbox{$N_{4,103}$} & $S_3^2\rtimes C_2$ & $C_2\leq S_2$ & $2$\\
$73$ & $N_{4,104}$ & $S_3^2 \rtimes C_2$ & $\{1\}$ & $1$\\
$74$ & $N_{4,105}$ & $(S_3^2 \rtimes C_2) \times C_2$ & $D_4\leq S_4$ & $4$\\
$75$ & \fbox{$N_{4,106}$} & $Q_8$ & $\{1\}$ & $1$\\
$76$ & \fbox{$N_{4,107}$} & $C_8 \rtimes C_2$ & $\{1\}$ & $1$\\
$77$ & \fbox{$N_{4,108}$} & $QD_8$ & $\{1\}$ & $1$\\
$78$ & \fbox{$N_{4,109}$} & $(C_4 \times C_2) \rtimes C_2$ & $\{1\}$ & $1$\\
$79$ & \fbox{$N_{4,110}$} & $\SL(2,\bF_3)$ & $\{1\}$ & $1$\\
$80$ & \fbox{$N_{4,111}$} & $\SL(2,\bF_3)$ & $\{1\}$ & $1$\\
$81$ & \fbox{$N_{4,112}$} & $C_8 \rtimes C_2^2$ & $C_2\leq S_2$ & $2$\\
$82$ & \fbox{$N_{4,113}$} & $(C_8 \rtimes C_2) \rtimes C_2$ & $C_2\leq S_2$ & $2$\\
$83$ & $N_{4,114}$ & $C_4^2 \rtimes C_2$ & $\{1\}$ & $1$\\
$84$ & \fbox{$N_{4,115}$} & $C_2^3 \rtimes C_4$ & $\{1\}$ & $1$\\
$85$ & \fbox{$N_{4,116}$} & $C_2^3 \rtimes C_4$ & $\{1\}$ & $1$\\
$86$ & \fbox{$N_{4,117}$} & $C_2^3 \rtimes C_2^2$ & $C_2\leq S_2$ & $2$\\
$87$ & $N_{4,118}$ & $\GL(2,\bF_3)$ & $\{1\}$ & $1$\\
$88$ & $N_{4,119}$ & $\GL(2,\bF_3)$ & $\{1\}$ & $1$\\
$89$ & $N_{4,120}$ & $((C_8 \rtimes C_2) \rtimes C_2) \rtimes C_2$ & $C_2\leq S_2$ & $2$\\
$90$ & $N_{4,121}$ & $((C_8 \rtimes C_2) \rtimes C_2) \rtimes C_2$ & $\{1\}$ & $1$\\
$91$ & \fbox{$N_{4,122}$} & $((C_8 \rtimes C_2) \rtimes C_2) \rtimes C_2$ & $C_2\leq S_2$ & $2$\\
$92$ & \fbox{$N_{4,123}$} & $(C_2^4 \rtimes C_2) \rtimes C_2$ & $C_2\leq S_2$ & $2$\\
$93$ & $N_{4,124}$ & $(C_2^4 \rtimes C_2) \rtimes C_2$ & $\{1\}$ & $1$\\
$94$ & $N_{4,125}$ & $(C_2^3 \rtimes C_4) \rtimes C_2$ & $C_2\leq S_2$ & $2$\\
$95$ & \fbox{$N_{4,126}$} & $(C_2^3 \rtimes C_2^2) \rtimes C_3$ & $\{1\}$ & $1$\\
$96$ & \fbox{$N_{4,127}$} & $(C_2^3 \rtimes C_2^2) \rtimes C_3$ & $\{1\}$ & $1$\\
$97$ & $N_{4,128}$ & $D_4^2 \rtimes C_2$ & $D_4\leq S_4$ & $4$\\
$98$ & \fbox{$N_{4,129}$} & $((C_2^4 \rtimes C_2) \rtimes C_2) \rtimes C_3$ & $C_2\leq S_2$ & $2$\\
$99$ & $N_{4,130}$ & $((C_2^4 \rtimes C_2) \rtimes C_2) \rtimes C_3$ & $\{1\}$ & $1$\\
$100$ & \fbox{$N_{4,131}$} & $((C_2^3 \rtimes C_2^2) \rtimes C_3) \rtimes C_2$ & $C_2\leq S_2$ & $2$\\
$101$ & $N_{4,132}$ & $((C_2^3 \rtimes C_2^2) \rtimes C_3) \rtimes C_2$ & $\{1\}$ & $1$\\
$102$ & $N_{4,133}$ & $((C_2^3 \rtimes C_2^2) \rtimes C_3) \rtimes C_2$ & $\{1\}$ & $1$\\
$103$ & $N_{4,134}$ & $((C_2^3 \rtimes C_2^2) \rtimes C_3) \rtimes C_2$ & $\{1\}$ & $1$\\
$104$ & $N_{4,135}$ & $(((C_2^3 \rtimes C_2^2) \rtimes C_3) \rtimes C_2) \rtimes C_2$ & $C_2\leq S_2$ & $2$\\
$105$ & $N_{4,136}$ & $(((C_2^3 \rtimes C_2^2) \rtimes C_3) \rtimes C_2) \rtimes C_2$ & $C_2\leq S_2$ & $2$\\
$106$ & \fbox{$N_{4,137}$} & $Q_8 \times C_3$ & $C_2\leq S_2$ & $2$\\
$107$ & \fbox{$N_{4,138}$} & $\SL(2,\bF_3)$ & $\{1\}$ & $1$\\
$108$ & \fbox{$N_{4,139}$} & $(Q_8 \times C_3) \rtimes C_2$ & $C_2^2\leq S_4$ &$4$\\
$109$ & \fbox{$N_{4,140}$} & $((C_4 \times C_2) \rtimes C_2) \rtimes C_3$ & $\{1\}$ & $1$\\
$110$ & \fbox{$N_{4,141}$} & $\GL(2,\bF_3)$ & $\{1\}$ & $1$\\
$111$ & \fbox{$N_{4,142}$} & $\SL(2,\bF_3) \times C_3$ & $S_3\leq S_6$ & $6$\\
$112$ & \fbox{$N_{4,143}$} & $(C_2^3 \rtimes C_2^2) \rtimes C_3$ & $C_2\leq S_2$ & $2$\\
$113$ & \fbox{$N_{4,144}$} & $\GL(2,\bF_3) \rtimes C_2$ & $C_2\leq S_2$ & $2$\\
$114$ & \fbox{$N_{4,145}$} & $\SL(2,\bF_3) \rtimes C_4$ & $C_2\leq S_2$ & $2$\\
$115$ & \fbox{$N_{4,146}$} & $(\SL(2,\bF_3) \times C_3) \rtimes C_2$ & $D_6\leq S_{12}$ & $12$\\
$116$ & \fbox{$N_{4,147}$} & $(\GL(2,\bF_3) \rtimes C_2) \rtimes C_2$ & $C_2^2\leq S_4$ &$4$\\
$117$ & \fbox{$N_{4,148}$} & $(C_2^3 \rtimes C_2^2) \rtimes C_3^2$ & $C_2\leq S_2$ & $2$\\
$118$ & \fbox{$N_{4,149}$} & $(((C_2^3 \rtimes C_2^2) \rtimes C_3) \rtimes C_2) \rtimes C_3$ & $C_2\leq S_2$ & $2$\\
$119$ & \fbox{$N_{4,150}$} & $(((C_2^3 \rtimes C_2^2) \rtimes C_3) \rtimes C_2) \rtimes C_3$ & $C_2\leq S_2$ & $2$\\
$120$ & \fbox{$N_{4,151}$} & $((C_2^3 \rtimes C_2^2) \rtimes C_3^2) \rtimes C_2$ & $C_2^2\leq S_4$ &$4$\\
$121$ & \fbox{$N_{4,152}$} & $(((C_2^3 \rtimes C_2^2) \rtimes C_3^2) \rtimes C_2) \rtimes C_2$ & $D_4\leq S_8$ & $8$
\end{longtable}
}
%\vspace*{2mm}
\vspace*{-2mm}
The box cases \fbox{$N_{4,i}$} satisfy $X=Y$ 
where ${\rm Inn}(G)\leq X\leq Y\leq Z\leq {\rm Aut}(G)$ 
are as in Definition \ref{defXYZ} (see Example \ref{ex9.3} for GAP computations).\\

Table $5$: Weak stably $k$-equivalent classes for $[\widehat{T}]^{fl}$ with $\widehat{T}=M_G$ and $G=I_{4,i}$ $(1\leq i\leq 7)$
% & $$\\
{\small 
\begin{longtable}{cllcc}
$r$ & $G=I_{4,i}: [\widehat{T}]^{fl}=[M_G]^{fl}\in {\rm WSEC}_r$ & ${\rm G}_r\simeq G$ & $Y\backslash {\rm Aut}(G)$ & $\lambda_r=|{\rm WSEC}_{r,L}|=|Y\backslash {\rm Aut}(G)|$\\\hline\vspace*{-4mm}
\endfirsthead
$122$ & \fbox{$I_{4,1}$} & $F_{20}$ & $\{1\}$ & $1$\\
$123$ & \fbox{$I_{4,2}$} & $F_{20}$ & $\{1\}$ & $1$\\
$124$ & \fbox{$I_{4,3}$} & $F_{20} \times C_2$ & $C_2\leq S_2$ & $2$\\
$125$ & \fbox{$I_{4,4}$} & $S_5$ & $\{1\}$ & $1$\\
$126$ & \fbox{$I_{4,5}$} & $S_5$ & $\{1\}$ & $1$\\
$127$ & \fbox{$I_{4,6}$} & $S_5 \times C_2$ & $C_2\leq S_2$ & $2$\\
$128$ & \fbox{$I_{4,7}$} & $C_3 \rtimes C_8$ & $C_2\leq S_2$ & $2$
\end{longtable}
}
\vspace*{-2mm}
The box cases \fbox{$I_{4,i}$} satisfy $X=Y$ 
where ${\rm Inn}(G)\leq X\leq Y\leq Z\leq {\rm Aut}(G)$ 
are as in Definition \ref{defXYZ} (see Example \ref{ex9.4} for GAP computations).\\

In Table $4$ and Table $5$, 
the fourth column $Y\backslash {\rm Aut}(G)$ means that 
the action of ${\rm Aut}(G)$ on ${\rm WSEC}_{r,L}$ 
with the stabilizer $Y$ is given as the 
$\lambda_r$-th transitive subgroup 
$\lambda_rTm\leq S_{\lambda_r}$ 
where 
$\lambda_r=|{\rm WSEC}_{r,L}|=|Y\backslash {\rm Aut}(G)|$ 
(see Theorem \ref{thmain4} and Theorem \ref{thmain5}). 

The cases $I_{4,1}\simeq F_{20}$, 
$I_{4,4}\simeq S_5$ 
correspond to the norm one tori 
$T=R^{(1)}_{K/k}(\bG_{m,K})$ with $G\simeq {\rm Gal}(L/k)$ where 
$L/k$ is the Galois closure of $K/k$. 
\vspace*{2mm}

\begin{remark}\label{rem1.25}
For $G=N_{3,i}$, $N_{4,i}$ or $I_{4,i}$, 
and $G^\prime=N_{3,j}$, $N_{4,j}$ or $I_{4,j}$, 
it follows from Theorem \ref{thmain1} and Theorem \ref{thmain3} that 
if $[M_G]^{fl}\sim [M_{G^\prime}]^{fl}$, then $G\simeq G^\prime$. 
However, in general, it occurs that 
$[M_G]^{fl}\sim [M_{G^\prime}]^{fl}$ and $G\not\simeq G^\prime$ 
(see Theorem \ref{th1.24}, Table $3$ and Remark \ref{rem1.41} (2)). 
We can also find that there exists $G^{\prime\prime}=N_{5,324}\leq \GL(5,\bZ)$ 
such that $[M_{G^{\prime\prime}}]^{fl}\sim [M_G]^{fl}$ 
with indecomposable $G^{\prime\prime}$-lattice $M_{G^{\prime\prime}}$, 
$G^{\prime\prime}\simeq D_4$ with CARAT ID (5,100,12) 
(see Hoshi and Yamasaki \cite[Chapter 3]{HY17}) 
and $G=N_{3,1}\simeq C_2^2$ 
(see Remark \ref{rem1.41} (3), Example \ref{ex1.42}). 
\end{remark}

%Let ${\rm Inn}(G)$ be the group of inner automorphisms on $G$ 
%and ${\rm Aut}(G)$ be the group of automorphisms on $G$.

%
%%%%%%%%%%%%%%%%%%%%%%%%%%%%%%%%%%%%%%%%%%%%%%%%%%%%%%%%%%%%%%%%%%%%%
\begin{theorem}[Main theorem 4]\label{thmain4}
Let $k$ be a field.  
Let $T_i$ and $T_j^\prime$ $(1\leq i,j\leq 152)$ 
be algebraic $k$-tori of dimension $4$ 
with the minimal splitting fields $L_i$ and $L^\prime_j$ 
and the character modules 
$\widehat{T}_i=M_G$ and $\widehat{T}^\prime_j=M_{G^\prime}$ 
which satisfy that 
$G\leq \GL(4,\bZ)$ and $G^\prime\leq \GL(4,\bZ)$ are 
$\GL(4,\bZ)$-conjugate to $N_{4,i}$ and $N_{4,j}$ respectively. 
For $1\leq i,j\leq 152$ 
except for the cases $i=j=137,139,145,147$, 
the following conditions are equivalent:\\
{\rm (1)} $T_i$ and $T_j^\prime$ are stably birationally $k$-equivalent;\\
{\rm (2)} 
$G\simeq G^\prime$, 
$L_i=L_j^\prime$, 
$T_i\times_k K$ and $T_j^\prime\times_k K$ 
are weak stably birationally $K$-equivalent 
for any $k\subset K\subset L_i$;\\
{\rm (3)} 
$G\simeq G^\prime$, 
$L_i=L_j^\prime$, 
$T_i\times_k K$ and $T_j^\prime\times_k K$ 
are weak stably birationally $K$-equivalent 
for any $k\subset K\subset L_i$ corresponding to 
${\rm WSEC}_r$ $(1\leq r\leq 128)$ as in Table $4$ and Table $5$ 
with $T_i\times_k K$ not stably $K$-rational;\\
{\rm (4)} 
$G\simeq G^\prime$, 
$L_i=L_j^\prime$, 
$T_i\times_k K$ and $T_j^\prime\times_k K$ 
are weak stably birationally $K$-equivalent 
for any $k\subset K\subset L_i$ corresponding to 
${\rm WSEC}_r$ $(1\leq r\leq 128)$ as in Table $4$ and Table $5$ 
with $T_i\times_k K$ not stably $K$-rational 
and $[K:k]=d$ 
$($the ``bold'' cases as in {\rm Table} $12$$)$ 
where 
\begin{align*}
d=
\begin{cases}
1&(i=1,2,4,5,7,10,13,14,23,24,27,39,41,42,44,49,57,58,59,60,61,62,65,67,69,70,71,73,\\
&\hspace*{7mm}74,75,76,77, 78,79,81,82,84,87,91,92,94,95,97,98,101,104,106,107,108,109,110,111,\\
&\hspace*{7mm}114,115,116,118,119,121,124,126,127,130,132,133,134,138
%%%%%%%
,137,139,
%%%%%%%
140,141,
%%%%%%%
145),\\
1,2&(i=3,6,8,9,11,12,15,16,17,18,19,20,22,25,26,29,30,31,33,35,37,43,46,50,51,54,55,\\
&\hspace*{7mm}63,64,66,68,72,80,83,85,89,90,93,96,99,100,102,103,105,125),\\
1,3&(i=142,148),\\
1,4&(i=21,36,38,45,47,48,52,53,56,113,117,120),\\
1,2,4&(i=28,32,34,40),\\
1,6&(i=86,88),\\
1,8&(i=112,122,123,128,129,131,135,136),\\
1,3,8&(i=146),\\
1,12&(i=143,147),\\
1,24&(i=144,149),\\
1,32&(i=150),\\
1,2,32&(i=152),\\
1,3,32&(i=151).
\end{cases}
\end{align*}
In particular, for the cases with $d=1$, we get that 
${(\rm 4)}$ 
if and only if $G\simeq G^\prime$, $L_i=L_j^\prime$, 
i.e. $\widetilde{H}\simeq G\simeq G^\prime$. 

For the exceptional cases $i=j=137,139,145,147$ 
$($$G\simeq Q_8\times C_3$, $(Q_8\times C_3)\rtimes C_2$, 
$\SL(2,\bF_3)\rtimes C_4$, $(\GL(2,\bF_3)\rtimes C_2)\rtimes C_2\simeq 
(\SL(2,\bF_3)\rtimes C_4)\rtimes C_2$$)$, 
we have the implications 
${\rm (1)}\Rightarrow {\rm (2)}\Leftrightarrow {\rm (3)}
\Leftrightarrow {\rm (4)}$ and if $G\simeq G^\prime$, $L_i=L_j^\prime$, then 
there exists $\sigma\in {\rm Aut}(G)$ such that $G^\prime=G^\sigma$ 
and $X=Y\lhd Z$ with $Z/Y\simeq C_2,C_2^2,C_2,C_2$ respectively 
$($this means that the implication $(1)\Rightarrow (2)$ 
can not be reversed in general$)$
where ${\rm Inn}(G)\leq X\leq Y\leq Z\leq {\rm Aut}(G)$ 
are as in Definition \ref{defXYZ} 
%where 
%\begin{center}
%${\rm Inn}(G)\leq X\leq Y\leq Z\leq {\rm Aut}(G)$,
%\end{center}
%\begin{align*}
%X&={\rm Aut}_{\GL(4,\bZ)}(G)=
%\{\sigma\in{\rm Aut}(G)\mid 
%{\rm there}\ {\rm exists}\ u\in {\rm GL}(4,\bZ)\ {\rm such}\ {\rm that}\ 
%u^{-1}gu=g^\sigma\ {\rm for}\ {\rm any}\ g\in G\}\\
%&\simeq N_{\GL(4,\bZ)}(G)/Z_{\GL(4,\bZ)}(G),\\
%Y&=\{\sigma\in{\rm Aut}(G)\mid [M_G]^{fl}=[M_{G^\sigma}]^{fl}\ {\rm as}\ 
%\widetilde{H}\textrm{-{\rm lattices}}\ {\rm where}\ \widetilde{H}=\{(g,g^\sigma)\mid g\in G\}\simeq G\},\\
%Z&=\{\sigma\in{\rm Aut}(G)\mid [M_H]^{fl}\sim [M_{H^\sigma}]^{fl}\ {\rm for}\ {%\rm any}\ H\leq G\}, 
%\end{align*}
%${\rm Inn}(G)$ is the group of inner automorphisms on $G$, 
%${\rm Aut}(G)$ is the group of automorphisms on $G$, 
%$N_{\GL(4,\bZ)}(G)$ is the normalizer of $G$ in $\GL(4,\bZ)$ and 
%$Z_{\GL(4,\bZ)}(G)$ is the centralizer of $G$ in $\GL(4,\bZ)$ 
%
and we have 
${\rm (1)}$ 
$\Leftrightarrow M_G\simeq M_{G^\sigma}$ as $\widetilde{H}$-lattices 
$\Leftrightarrow M_G\otimes_\bZ \bF_p\simeq M_{G^\sigma}\otimes_\bZ \bF_p$ 
as $\bF_p[\widetilde{H}]$-lattices for
\begin{align*}
p= 
\begin{cases}
2&(i=j=137),\\
2\ \,{\rm and}\ \,3&(i=j=139),\\
3&(i=j=145,147).
\end{cases}
\end{align*}

Moreover, for $1\leq r\leq 121$, $r\neq 7,8,10,12$, 
we get the following disjoint union decompositions 
\begin{align*}
{\rm WSEC}_r=\coprod_{L/k\atop {\rm Gal}(L/k)\simeq {\rm G}_r}
{\rm WSEC}_{r,L},\ 
{\rm WSEC}_{r,L}=
\coprod_{t=1}^{\lambda_{r}} {\rm SEC}_{r,L,t}
\end{align*}
modulo stably birationally $k$-equivalence $\stackrel{\rm s.b.}{\approx}$ 
where 
${\rm SEC}_{r,L,t}$ $(1\leq t\leq \lambda_{r})$ is the $t$-th 
stably $k$-equivalent class of $T$ of dimension $4$ in 
${\rm WSEC}_{r,L}$ which corresponds to the fixed minimal splitting field $L$ 
in 
${\rm WSEC}_r=\coprod_{L/k\atop {\rm Gal}(L/k)\simeq {\rm G}_r} {\rm WSEC}_{r,L}$ 
with 
$[\widehat{T}]^{fl}\in {\rm WSEC}_{r,L}
=\coprod_{t=1}^{\lambda_r} {\rm SEC}_{r,L,t}$, 
${\rm Gal}(L/k)\simeq {\rm G}_r\simeq N_{4,i}$ 
$(1\leq r\leq 121, r\neq 7,8,10,12)$ and 
$\lambda_r=|{\rm WSEC}_{r,L}|=|Y\backslash {\rm Aut}(G)|$ 
is given as in Table $4$. 
Furthermore, 
for the box cases \fbox{$N_{4,i}$} with $X=Y$ as in Table $4$,
the following conditions are also equivalent:\\
{\rm (0)} $T_i$ and $T_i^\prime$ are birationally $k$-equivalent;\\
{\rm (1)} $T_i$ and $T_i^\prime$ are stably birationally $k$-equivalent. 

In particular, for the box cases \fbox{$N_{4,i}$}, 
all the algebraic tori $T$ of dimension $4$ 
with $\widehat{T}=M_G$ and $G=N_{4,i}$ 
which correspond to the fixed minimal splitting field $L$ 
split into 
$\lambda_r=|Y\backslash {\rm Aut}(G)|$ birationally $k$-equivalent classes. 
\end{theorem}
\begin{remark}
(1) 
Under the assumption $G\simeq G^\prime$, $L_i=L^\prime_j$, i.e. 
$\widetilde{H}\simeq G\simeq G^\prime$, 
${\rm (1)}\Leftrightarrow {\rm (2)}$ of Theorem \ref{thmain4} 
is equivalent to $Y=Z$ (see Theorem \ref{thmXYZ}). 
In particular, if ${\rm Inn}(G)={\rm Aut}(G)$, then 
${\rm (1)}\Leftrightarrow {\rm (2)}$ of Theorem \ref{thmain4} holds.\\ 
%%%%%%%%%%%%%%%%%%%%%%%%%%%%%%%%%%%%%%%%%%%%%%%%%%%%%%%%%%%%%
(2) 
We give the condition of Theorem \ref{thmain4} {\rm (4)} 
for $d=[K:k]\in l$ as 
the list $l=[l_1,\ldots,l_s]$ $(l_1\leq \cdots\leq l_s)$ 
of integers with the minimal $l_s,\ldots,l_1$ (see Section \ref{S10}).\\
%%%%%%%%%%%%%%%%%%%%%%%%%%%%%%%%%%%%%%%%%%%%%%%%%%%%%%%%%%%%%
(3) 
In Theorem \ref{thmain4} {\rm (4)}, 
${\rm Aut}(G)$ may permute subgroups $H, H^\prime\leq G$ 
with $K=L_i^H$, $K^\prime=L_i^{H^\prime}$ corresponding to 
${\rm WSEC}_0$, ${\rm WSEC}_r$ $(r\geq 1)$ respectively.\\
%%%%%%%%%%%%%%%%%%%%%%%%%%%%%%%%%%%%%%%%%%%%%%%%%%%%%%%%%%%%%
(4) 
By Theorem \ref{thmain2} and Theorem \ref{thmain4} that 
if $T_i$ and $T_j^\prime$ are stably birationally $k$-equivalent, 
then $G\simeq G^\prime$, $L_i=L_j^\prime$. 
However, in general, it occurs that 
(i) $T_i$ and $T_j^\prime$ are stably birationally $k$-equivalent 
and $G\not\simeq G^\prime$; 
(ii) $T_i$ and $T_j^\prime$ are stably birationally $k$-equivalent, 
$G\simeq G^\prime$, $L_i\neq L_j^\prime$.  
We see that there exists $G^{\prime\prime}=N_{5,324}\leq \GL(5,\bZ)$ 
such that $[M_{G^{\prime\prime}}]^{fl}=[M_G]^{fl}$ 
as $G^{\prime\prime}$-lattices 
with indecomposable $G^{\prime\prime}$-lattice $M_{G^{\prime\prime}}$, 
$G^{\prime\prime}\simeq D_4$ with CARAT ID (5,100,12) 
(see Hoshi and Yamasaki \cite[Chapter 3]{HY17}) 
and $G=N_{3,1}\simeq C_2^2\simeq G^{\prime\prime}/Z(G^{\prime\prime})$ 
(see Remark \ref{rem1.41} (2), (3), Example \ref{ex1.42}). 
\end{remark}

\begin{example}
(1) 
For the exceptional case $i=j=137$ 
$($$G\simeq Q_8\times C_3$$)$ 
in Theorem \ref{thmain4}, we have 
$G=N_{4,137}=\langle\gamma_1,\gamma_2\mid \gamma_1^{12}=I, \gamma_2^2=\gamma_1^6, [\gamma_1,\gamma_2]=\gamma_1^6\rangle$ with GAP ID $(4,33,1,1)$ 
(see Table $8$ and \cite[Table 1C, page 257]{BBNWZ78}), 
$G^\tau=\langle\gamma_1^\tau,\gamma_2^\tau\rangle$ and 
$Z/Y=\{{\rm id}_G,\tau\}$ 
where 
\begin{align*}
\gamma_1=\gamma_1^\tau=\left(\scalemath{0.9}{
\begin{array}{cccc}
0&1&0&-1\\
0&0&-1&1\\
-1&0&0&1\\
0&1&0&0
\end{array}}\right),\ 
\gamma_2=\left(\scalemath{0.9}{
\begin{array}{cccc}
0&0&-1&0\\
-1&-1&-1&2\\
1&0&0&0\\
0&-1&-1&1
\end{array}}\right),\ 
\gamma_2^\tau=\left(\scalemath{0.9}{
\begin{array}{cccc}
0&1&0&0\\
-1&0&0&0\\
-1&-1&-1&2\\
-1&0&-1&1
\end{array}}\right). 
\end{align*}
(2) 
For the exceptional case $i=j=139$ 
$($$G\simeq (Q_8\times C_3)\rtimes C_2$$)$ 
in Theorem \ref{thmain4}, we have 
$G=N_{4,137}=\langle\gamma_1,\gamma_2,\gamma_3\mid \gamma_1^{12}=I, \gamma_2^2=\gamma_1^6, \gamma_3^2=I, [\gamma_1,\gamma_2]=\gamma_1^6, 
[\gamma_1,\gamma_3]=\gamma_1^{10}, 
[\gamma_2,\gamma_3]=\gamma_1^9\rangle$ 
with GAP ID $(4,33,4,1)$ 
(see Table $8$ and \cite[Table 1C, page 257]{BBNWZ78}) 
and $Z/Y=\{{\rm id}_G,\tau,\tau^\prime,\tau\tau^\prime\}$ 
where $\gamma_1=\gamma_1^\tau$, $\gamma_2$ and $\gamma_2^\tau$ 
are given as in (1), 
\begin{align*}
\gamma_3=\left(\scalemath{0.9}{
\begin{array}{cccc}
1&0&0&0\\
0&0&-1&0\\
0&-1&0&0\\
1&0&0&-1
\end{array}}\right),\ 
\gamma_3^\tau=\left(\scalemath{0.9}{
\begin{array}{cccc}
-1&-1&-1&2\\
0&1&0&0\\
0&0&-1&0\\
0&0&-1&1
\end{array}}\right),\ 
\gamma_1^{\tau^\prime}=\gamma_1^{\tau\tau^\prime}=\left(\scalemath{0.9}{
\begin{array}{cccc}
1&0&1&-1\\
0&0&0&-1\\
1&1&0&-1\\
1&0&0&-1
\end{array}}\right), 
\end{align*}
$\gamma_2^{\tau^\prime}=\gamma_2$, 
$\gamma_2^{\tau\tau^\prime}=\gamma_2^\tau$, 
$\gamma_3^{\tau^\prime}=-\gamma_3$, 
$\gamma_3^{\tau\tau^\prime}=-\gamma_3^\tau$.\\
(3) 
For the exceptional case $i=j=145$ 
$($$G\simeq \SL(2,\bF_3)\rtimes C_4$$)$ 
in Theorem \ref{thmain4}, we have 
$G=N_{4,137}=\langle\alpha_1,\alpha_2,\alpha_3,\alpha_4\mid 
\alpha_1^4=I, \alpha_2^2=\alpha_1^2, \alpha_3^3=I, \alpha_4^4=I, 
[\alpha_1,\alpha_2]=\alpha_1^2, 
[\alpha_1,\alpha_3]=\alpha_2\alpha_1^2, 
[\alpha_1,\alpha_4]=I, 
[\alpha_2,\alpha_3]=\alpha_2\alpha_1, 
[\alpha_2,\alpha_4]=\alpha_1, 
[\alpha_3,\alpha_4]=\alpha_3\alpha_2\alpha_1^2
\rangle$ 
with GAP ID $(4,33,10,1)$ 
(see Table $8$ and \cite[Table 1C, page 258]{BBNWZ78}) 
and $Z/Y=\{{\rm id}_G,\tau\}$ 
where 
\begin{align*}
\alpha_1=\left(\scalemath{0.9}{
\begin{array}{cccc}
-1&-1&-1&2\\
0&0&1&0\\
0&-1&0&0\\
-1&-1&0&1
\end{array}}\right),\ 
\alpha_2=\left(\scalemath{0.9}{
\begin{array}{cccc}
0&0&-1&0\\
-1&-1&-1&2\\
1&0&0&0\\
0&-1&-1&1
\end{array}}\right),\ 
\alpha_3=\left(\scalemath{0.9}{
\begin{array}{cccc}
0&0&0&-1\\
0&-1&-1&1\\
1&1&0&-1\\
1&0&0&-1
\end{array}}\right),\ 
\alpha_4=\left(\scalemath{0.9}{
\begin{array}{cccc}
-1&-1&-1&2\\
0&1&0&0\\
0&0&1&0\\
-1&0&0&1
\end{array}}\right),
\end{align*}
$\alpha_1^\tau=\alpha_1$, 
$\alpha_2^\tau=\alpha_2$, 
$\alpha_3^\tau=\alpha_3$, 
$\alpha_4^\tau=-\alpha_4$.\\
(4) 
For the exceptional case $i=j=147$ 
$($$G\simeq (\GL(2,\bF_3)\rtimes C_2)\rtimes C_2\simeq 
(\SL(2,\bF_3)\rtimes C_4)\rtimes C_2$$)$ 
in Theorem \ref{thmain4}, we have 
$G=N_{4,137}=\langle\beta_1,\beta_2,\beta_3,\beta_4,\beta_5\mid 
\beta_1^2=I, \beta_2^4=I, \beta_3^2=I, \beta_4^6=\beta_2^2, \beta_5^2=I, 
[\beta_1,\beta_2]=I, 
[\beta_1,\beta_3]=\beta_2^2, 
[\beta_1,\beta_4]=\beta_3\beta_2^3, 
[\beta_1,\beta_5]=I, 
[\beta_2,\beta_3]=I, 
[\beta_2,\beta_4]=\beta_2^2, 
[\beta_2,\beta_5]=\beta_2^2, 
[\beta_3,\beta_4]=\beta_3\beta_2^2\beta_1, 
[\beta_3,\beta_5]=\beta_2\beta_1, 
[\beta_4,\beta_5]=\beta_4^4\beta_3\beta_2\beta_1
\rangle$ 
with GAP ID $(4,33,12,1)$ 
(see Table $8$ and \cite[Table 1C, page 259]{BBNWZ78}) 
and $Z/Y=\{{\rm id}_G,\tau\}$ 
where  
\begin{align*}
\beta_1&=\left(\scalemath{0.9}{
\begin{array}{cccc}
1&0&0&0\\
0&-1&0&0\\
0&0&-1&0\\
0&-1&-1&1
\end{array}}\right),\ 
\beta_2=\left(\scalemath{0.9}{
\begin{array}{cccc}
-1&-1&-1&2\\
0&0&1&0\\
0&-1&0&0\\
-1&-1&0&1
\end{array}}\right),\ 
\beta_3=\left(\scalemath{0.9}{
\begin{array}{cccc}
0&0&-1&0\\
-1&-1&-1&2\\
-1&0&0&0\\
-1&0&-1&1
\end{array}}\right),\ 
\beta_4=\left(\scalemath{0.9}{
\begin{array}{cccc}
-1&0&-1&1\\
0&0&0&-1\\
-1&-1&0&1\\
-1&-1&-1&1
\end{array}}\right),
\end{align*}
\vspace*{-2mm}
\begin{align*}
\beta_5&=\left(\scalemath{0.9}{
\begin{array}{cccc}
1&0&0&0\\
0&0&1&0\\
0&1&0&0\\
1&1&1&-1
\end{array}}\right),\ 
\beta_1^\tau=\left(\scalemath{0.9}{
\begin{array}{cccc}
0&0&1&0\\
1&1&1&-2\\
1&0&0&0\\
1&0&1&-1
\end{array}}\right),\ 
\beta_3^\tau=\left(\scalemath{0.9}{
\begin{array}{cccc}
0&-1&0&0\\
-1&0&0&0\\
1&1&1&-2\\
0&0&0&-1
\end{array}}\right),\ 
\beta_5^\tau\left(\scalemath{0.9}{
\begin{array}{cccc}
0&-1&-1&1\\
0&0&0&-1\\
-1&-1&0&1\\
0&-1&0&0
\end{array}}\right),
\end{align*}
$\beta_2^\tau=-\beta_2$, $\beta_4^\tau=\beta_4$ (see Example \ref{ex9.5} for GAP computations). 
\end{example}
\begin{theorem}[Main theorem 5]\label{thmain5}
Let $k$ be a field. 
Let $T_i$ and $T_j^\prime$ $(1\leq i,j\leq 7)$ 
be algebraic $k$-tori of dimension $4$ 
with the minimal splitting fields $L_i$ and $L^\prime_j$ 
and the character modules 
$\widehat{T}_i=M_G$ and $\widehat{T}^\prime_j=M_{G^\prime}$ 
which satisfy that 
$G\leq \GL(4,\bZ)$ and $G^\prime\leq \GL(4,\bZ)$ are 
$\GL(4,\bZ)$-conjugate to $I_{4,i}$ and $I_{4,j}$ respectively. 
For $1\leq i,j\leq 7$ except for the case $i=j=7$, 
the following conditions are equivalent:\\
{\rm (1)} $T_i$ and $T_j^\prime$ are stably birationally $k$-equivalent;\\
{\rm (2)} 
$G\simeq G^\prime$, 
$L_i=L_j^\prime$, 
$T_i\times_k K$ and $T_j^\prime\times_k K$ 
are weak stably birationally $K$-equivalent 
for any $k\subset K\subset L_i$;\\
{\rm (3)} 
$G\simeq G^\prime$, 
$L_i=L_j^\prime$, 
$T_i\times_k K$ and $T_j^\prime\times_k K$ 
are weak stably birationally $K$-equivalent 
for any $k\subset K\subset L_i$ corresponding to 
${\rm WSEC}_r$ $(122\leq r\leq 128)$ as in Table $5$ 
with $T_i\times_k K$ not stably but retract $K$-rational;\\
{\rm (4)} 
$G\simeq G^\prime$, 
$L_i=L_j^\prime$, 
$T_i\times_k K$ and $T_j^\prime\times_k K$ 
are weak stably birationally $K$-equivalent 
for any $k\subset K\subset L_i$ corresponding to 
${\rm WSEC}_r$ $(122\leq r\leq 128)$ as in Table $5$ 
with $T_i\times_k K$ not stably but retract $K$-rational 
and $[K:k]=d$ 
$($the ``bold'' cases as in {\rm Table} $13$$)$
where 
\begin{align*}
d=
\begin{cases}
1&(i=1,2,4,5,7),\\
1,2&(i=3,6).
\end{cases}
\end{align*}
In particular, for $i=1,2,4,5,7$ with $d=1$, we get that 
${(\rm 4)}$ 
if and only if $G\simeq G^\prime$, $L_i=L_j^\prime$, 
i.e. $\widetilde{H}\simeq G\simeq G^\prime$. 

For the exceptional case  $i=j=7$ $($$G\simeq C_3\rtimes C_8$$)$, 
we have the implications 
${\rm (1)}\Rightarrow {\rm (2)}\Leftrightarrow {\rm (3)}
\Leftrightarrow {\rm (4)}$ and if $G\simeq G^\prime$, $L_i=L_j^\prime$, then 
there exists $\sigma\in {\rm Aut}(G)$ such that $G^\prime=G^\sigma$ 
and $X=Y\lhd Z$ with $Z/Y\simeq C_2$ 
$($this means that the implication $(1)\Rightarrow (2)$ 
can not be reversed in general$)$ 
where ${\rm Inn}(G)\simeq S_3\leq X=Y\simeq D_6\leq Z={\rm Aut}(G)\simeq S_3\times C_2^2$ 
are as in Definition \ref{defXYZ} 
%where 
%\begin{center}
%${\rm Inn}(G)\simeq S_3\leq X\leq Y\leq Z\leq 
%{\rm Aut}(G)\simeq S_3\times C_2^2$, 
%\end{center}
%\begin{align*}
%X&={\rm Aut}_{\GL(4,\bZ)}(G)=
%\{\sigma\in{\rm Aut}(G)\mid 
%{\rm there}\ {\rm exists}\ u\in {\rm GL}(4,\bZ)\ {\rm such}\ {\rm that}\ 
%u^{-1}gu=g^\sigma\ {\rm for}\ {\rm any}\ g\in G\}\\
%&\simeq N_{\GL(4,\bZ)}(G)/Z_{\GL(4,\bZ)}(G)\simeq D_6,\\
%Y&=\{\sigma\in{\rm Aut}(G)\mid [M_G]^{fl}=[M_{G^\sigma}]^{fl}\ {\rm as}\ 
%\widetilde{H}\textrm{-{\rm lattices}}\ {\rm where}\ \widetilde{H}=\{(g,g^\sigma%)\mid g\in G\}\simeq G\}\simeq D_6,\\
%Z&=\{\sigma\in{\rm Aut}(G)\mid [M_H]^{fl}\sim [M_{H^\sigma}]^{fl}\ {\rm for}\ {\rm any}\ H\leq G\}\simeq  S_3\times C_2^2, 
%\end{align*}
%${\rm Inn}(G)$ is the group of inner automorphisms on $G$, 
%${\rm Aut}(G)$ is the group of automorphisms on $G$, 
%$N_{\GL(4,\bZ)}(G)$ is the normalizer of $G$ in $\GL(4,\bZ)$ and 
%$Z_{\GL(4,\bZ)}(G)$ is the centralizer of $G$ in $\GL(4,\bZ)$ 
%
and we have 
${\rm (1)}$ 
$\Leftrightarrow M_G\simeq M_{G^\sigma}$ as $\widetilde{H}$-lattices 
$\Leftrightarrow M_G\otimes_\bZ \bF_3\simeq M_{G^\sigma}\otimes_\bZ \bF_3$ 
as $\bF_3[\widetilde{H}]$-lattices.  

Moreover, for $122\leq r\leq 128$, 
we get the following disjoint union decompositions 
\begin{align*}
{\rm WSEC}_r=\coprod_{L/k\atop {\rm Gal}(L/k)\simeq {\rm G}_r}
{\rm WSEC}_{r,L},\ 
{\rm WSEC}_{r,L}=
\coprod_{t=1}^{\lambda_{r}} {\rm SEC}_{r,L,t}
\end{align*}
modulo stably birationally $k$-equivalence $\stackrel{\rm s.b.}{\approx}$ 
where 
${\rm SEC}_{r,L,t}$ $(1\leq t\leq \lambda_{r})$ is the $t$-th 
stably $k$-equivalent class of $T$ of dimension $4$ in 
${\rm WSEC}_{r,L}$ which corresponds to the fixed minimal splitting field $L$ 
in 
${\rm WSEC}_r=\coprod_{L/k\atop {\rm Gal}(L/k)\simeq {\rm G}_r} {\rm WSEC}_{r,L}$ 
with 
$[\widehat{T}]^{fl}\in {\rm WSEC}_{r,L}
=\coprod_{t=1}^{\lambda_r} {\rm SEC}_{r,L,t}$, 
${\rm Gal}(L/k)\simeq {\rm G}_r\simeq I_{4,i}$ 
$(122\leq r\leq 128)$ and 
$\lambda_r=|{\rm WSEC}_{r,L}|=|Y\backslash {\rm Aut}(G)|$ 
is given as in Table $5$.
Furthermore, 
for the box cases \fbox{$I_{4,i}$} with $X=Y$ as in Table $5$,
the following conditions are also equivalent:\\
{\rm (0)} $T_i$ and $T_i^\prime$ are birationally $k$-equivalent;\\
{\rm (1)} $T_i$ and $T_i^\prime$ are stably birationally $k$-equivalent. 

In particular, for the box cases \fbox{$I_{4,i}$}, 
all the algebraic tori $T$ of dimension $4$ 
with $\widehat{T}=M_G$ and $G=I_{4,i}$ 
which correspond to the fixed minimal splitting field $L$ 
split into 
$\lambda_r=|Y\backslash {\rm Aut}(G)|$ birationally $k$-equivalent classes. 
\end{theorem}
\begin{example}
For the exceptional case  $i=j=7$ $($$G\simeq C_3\rtimes C_8$$)$ 
in Theorem \ref{thmain5}, we have 
$G=I_{4,7}=\langle\gamma_1,\gamma_4\mid\gamma_1^{12}=I, \gamma_4^2=\gamma_1^9, 
[\gamma_1,\gamma_4]=\gamma_1^4\rangle$ with GAP ID $(4,33,2,1)$ 
(see Table $9$ and \cite[Table 1C, page 257]{BBNWZ78}) 
and 
$Z/Y=\{{\rm id}_G,\tau\}$ 
where 
\begin{align*}
\gamma_1=\left(\scalemath{0.9}{\begin{array}{cccc}
0&1&0&-1\\
0&0&-1&1\\
-1&0&0&1\\
0&1&0&0
\end{array}}\right),\ 
\gamma_4=
\left(\scalemath{0.9}{\begin{array}{cccc}
0&0&-1&0\\
-1&0&0&0\\
1&1&1&-2\\
0&1&0&-1
\end{array}}\right),
\end{align*}
$\gamma_1^\tau=\gamma_1$, 
$\gamma_4^\tau=-\gamma_4$ (see Example \ref{ex9.6} for GAP computations). 
%$[x,y]=x^{-1}y^{-1}xy$
\end{example}

As a corollary of Theorem \ref{thmain3} (Main theorem $3$), 
Theorem \ref{thmain4} (Main theorem $4$) 
and Theorem \ref{thmain5} (Main theorem $5$), 
we get an answer of Problem \ref{prob1.3} 
for algebraic $k$-tori $T$ of dimensions $4$, 
i.e. the structure of $\mathcal{T}_4/\stackrel{\rm s.b.}{\approx}$: 

\begin{corollary}[The structure of $\mathcal{T}_4/\stackrel{\rm s.b.}{\approx}$]\label{cor1.38}
Let $\mathcal{T}_4$ be the category of algebraic $k$-tori of dimension $4$. 
We get a classification $($disjoint union decomposition$)$ 
of $\mathcal{T}_4$ with respect to 
the stably birationally $k$-equivalence $\stackrel{\rm s.b.}{\approx}$: 
\begin{align*}
\mathcal{T}_4=\coprod_{r=0}^{128} {\rm WSEC}_r
={\rm SEC}_0\coprod
\left(\coprod_{r=1}^{128} 
\coprod_{L/k\atop {\rm Gal}(L/k)\simeq {\rm G}_r}
\coprod_{t=1}^{\lambda_{r}} {\rm SEC}_{r,L,t}\right)
\end{align*}
modulo stably birationally $k$-equivalence $\stackrel{\rm s.b.}{\approx}$ 
where ${\rm SEC}_0$ is the stably $k$-equivalent class consists of 
stably $k$-rational tori $T\in \mathcal{T}_4$ %of dimension $4$ 
and 
${\rm SEC}_{r,L,t}$ $(1\leq t\leq \lambda_{r})$ is the $t$-th 
stably $k$-equivalent class of $T\in \mathcal{T}_4$ %of dimension $4$ 
in 
${\rm WSEC}_{r,L}$ which corresponds to the fixed minimal splitting field $L$ 
in 
${\rm WSEC}_r=\coprod_{L/k\atop {\rm Gal}(L/k)\simeq {\rm G}_r} {\rm WSEC}_{r,L}$ 
with 
$[\widehat{T}]^{fl}\in {\rm WSEC}_{r,L}
=\coprod_{t=1}^{\lambda_r} {\rm SEC}_{r,L,t}$, 
${\rm Gal}(L/k)\simeq {\rm G}_r\simeq N_{3,i}\in {\rm WSEC}_r$ $(1\leq r\leq 13)$, 
$N_{4,i}\in {\rm WSEC}_r$ $(14\leq r\leq 121)$, $I_{4,i}\in {\rm WSEC}_r$ $(122\leq r\leq 128)$ and 
%$\lambda_{r}=|{\rm WSEC}_{r,L}|$ 
$\lambda_r=|{\rm WSEC}_{r,L}|=|Y\backslash {\rm Aut}(G)|$ 
is given as in Table $2$, Table $4$ and Table $5$. 
\end{corollary}

For algebraic $k$-tori $T_i$ $(1\leq i\leq 7)$ as in Theorem \ref{thmain5}, 
we find the following relationships among them 
(the statements (1) and (3) of Theorem \ref{thmain6} were proved in 
Hoshi and Yamasaki \cite[Theorem 1.27]{HY17} and we will give 
an alternative proof of them, see also Remark \ref{rem1.19} (2)):

\begin{theorem}[Main theorem 6]\label{thmain6}
Let $k$ be a field. 
Let $T_i$ $(1\leq i\leq 7)$ be an algebraic $k$-torus of dimension $4$ 
with the minimal splitting field $L_i$ and 
the character module $\widehat{T}_i=M_{G_i}$ which satisfies that 
$G_i\leq \GL(4,\bZ)$ is $\GL(4,\bZ)$-conjugate to $I_{4,i}$. 
Let $T_i^\sigma$ be the algebraic $k$-torus 
with the minimal splitting field $L_i$ and 
$\widehat{T}_i^\sigma=M_{G_i^\sigma}$ $(\sigma\in {\rm Aut}(G_i))$. 
Then $T_i$ and $T_i^\sigma$ are not stably but retract $k$-rational, 
i.e. $[M_{G_i}]^{fl}\neq 0$, $[M_{G_i^{\sigma}}]^{fl}\neq 0$ but invertible of infinite order $($see Hoshi and Yamasaki \cite[Theorem 1.27]{HY17} for the last statement$)$. 
We have:\\
{\rm (1)} $(G_1\simeq G_2\simeq F_{20})$ If $L_1=L_2$, then
$[M_{G_1}]^{fl}+[M_{G_2}]^{fl}=0$ as $\widetilde{H}$-lattices 
where $\widetilde{H}=\{(\varphi_1(g),\varphi_2(g))\mid g\in {\rm Gal}(L_1L_2/k)\}\simeq G_1\simeq G_2$ with surjections $\varphi_1: {\rm Gal}(L_1L_2/k)\xrightarrow{\sim} G_1$, $\varphi_2: {\rm Gal}(L_1L_2/k)\xrightarrow{\sim} G_2$. 
In particular, $T_1\times_k T_2$ is stably $k$-rational 
although $T_1$ and $T_2$ are not stably $k$-rational and stably birationally $k$-equivalent;\\
{\rm (2)} $(G_3\simeq F_{20}\times C_2)$ Let $\sigma\in {\rm Aut}(G_3)$ with 
$1\neq \overline{\sigma}\in {\rm Aut}(G_3)/{\rm Inn}(G_3)\simeq C_2$, 
$\sigma(G_1)=G_2$, $\sigma(G_2)=G_1$. 
%If $L_3=L_3^\prime$, then 
We have
$[M_{G_3}]^{fl}+[M_{G_3^\sigma}]^{fl}=0$ as $\widetilde{H}$-lattices 
where $\widetilde{H}=\{(g,g^\sigma)\mid g\in G_3\}\simeq G_3$. 
In particular, $T_3\times_k T_3^\prime$ is stably $k$-rational 
although $T_3$ and $T_3^\prime$ are not stably $k$-rational and not stably birationally $k$-equivalent;\\
{\rm (3)} $(G_4\simeq G_5\simeq S_5)$ If $L_4=L_5$, then
$[M_{G_4}]^{fl}+[M_{G_5}]^{fl}=0$ as $\widetilde{H}$-lattices 
where $\widetilde{H}=\{(\varphi_1(g),\varphi_2(g))\mid g\in {\rm Gal}(L_4L_5/k)\}\simeq G_4\simeq G_5$ with surjections $\varphi_1: {\rm Gal}(L_4L_5/k)\xrightarrow{\sim} G_4$, $\varphi_2: {\rm Gal}(L_4L_5/k)\xrightarrow{\sim} G_5$. 
In particular, $T_4\times_k T_5$ is stably $k$-rational 
although $T_4$ and $T_5$ are not stably $k$-rational and stably birationally $k$-equivalent;\\
{\rm (4)} $(G_6\simeq S_5\times C_2)$ Let $\sigma\in {\rm Aut}(G_6)$ with 
$1\neq \overline{\sigma}\in {\rm Aut}(G_6)/{\rm Inn}(G_6)\simeq C_2$, 
$\sigma(G_4)=G_5$, $\sigma(G_5)=G_4$. 
%If $L_6=L_6^\prime$, then 
We have 
$[M_{G_6}]^{fl}+[M_{G_6^\sigma}]^{fl}=0$ as $\widetilde{H}$-lattices 
where $\widetilde{H}=\{(g,g^\sigma)\mid g\in G_6\}\simeq G_6$. 
In particular, $T_6\times_k T_6^\prime$ is stably $k$-rational 
although $T_6$ and $T_6^\prime$ are not stably $k$-rational and not stably birationally $k$-equivalent;\\
{\rm (5)} $(G_7\simeq C_3\rtimes C_8)$ Let $\sigma\in {\rm Aut}(G_7)$ with 
$1\neq\overline{\sigma}\in {\rm Aut}(G_7)/X\simeq C_2$ 
where %${\rm Inn}(G)\leq X\leq Y\leq Z\leq {\rm Aut}(G)$ 
$X\simeq D_6$ is as in Definition \ref{defXYZ}. 
%\begin{align*}
%X&={\rm Aut}_{\GL(4,\bZ)}(G_7)=
%\{\sigma\in{\rm Aut}(G_7)\mid 
%{\rm there}\ {\rm exists}\ u\in {\rm GL}(4,\bZ)\ {\rm such}\ {\rm that}\ 
%u^{-1}gu=g^\sigma\ {\rm for}\ {\rm any}\ g\in G_7\}\\
%&\simeq N_{\GL(4,\bZ)}(G_7)/Z_{\GL(4,\bZ)}(G_7)\simeq D_6, 
%\end{align*}
%${\rm Inn}(G)$ is the group of inner automorphisms on $G$, 
%${\rm Aut}(G_7)$ is the group of automorphisms on $G_7$, 
%$N_{\GL(4,\bZ)}(G_7)$ is the normalizer of $G_7$ in $\GL(4,\bZ)$ and 
%$Z_{\GL(4,\bZ)}(G_7)$ is the centralizer of $G_7$ in $\GL(4,\bZ)$.
%If $L_7=L_7^\prime$, then 
We have 
$[M_{G_7}]^{fl}+[M_{G_7^\sigma}]^{fl}=0$ as $\widetilde{H}$-lattices 
where $\widetilde{H}=\{(g,g^\sigma)\mid g\in G_7\}\simeq G_7$. 
In particular, $T_7\times_k T_7^\prime$ is stably $k$-rational 
although $T_7$ and $T_7^\prime$ are not stably $k$-rational and not stably birationally $k$-equivalent. 
\end{theorem}
\begin{remark}
For $I_{4,1}$, $I_{4,2}$, $I_{4,3}$, $I_{4,4}$, $I_{4,5}$, $I_{4,6}$, 
$I_{4,7}$, it follows from Theorem \ref{thmain5} (Main theorem 5) that 
${\rm WSEC}_{r,L}=
\coprod_{t=1}^{\lambda_{r}} {\rm SEC}_{r,L,t}$ with 
$\lambda_r=1$, $1$, $2$, $1$, $1$, $2$, $2$ for 
$r=122$, $123$, $124$, $125$, $126$, $127$, $128$ 
(see Table $5$). 
Theorem \ref{thmain6} (2) (resp. (4), (5)) 
with the aid of Theorem \ref{thmain6} (Main theorem 6) 
implies that $T_3$ and $T_3^\prime$ 
(resp. $T_6$ and $T_6^\prime$, $T_7$ and $T_7^\prime$) 
become a complete system of representatives 
in the $124$th (resp. $127$th, $128$th) weak stably $k$-equivalent class 
${\rm WSEC}_{124,L_3}/\stackrel{\rm s.b.}{\approx}$ 
(resp. ${\rm WSEC}_{127,L_6}/\stackrel{\rm s.b.}{\approx}$, 
${\rm WSEC}_{128,L_7}/\stackrel{\rm s.b.}{\approx}$) 
which corresponds to the fixed minimal splitting field $L_3$ 
(resp. $L_6$, $L_7$) modulo stably $k$-equivalence. 
\end{remark}

The following theorem can answer Problem \ref{prob1.3} 
for algebraic $k$-tori $T$ and $T^\prime$ of 
dimensions $m\geq 3$ and $n\geq 3$ respectively 
with $[\widehat{T}]^{fl}, [\widehat{T}^\prime]^{fl}\in {\rm WSEC}_r$ 
$(1\leq r\leq 128)$ via some algebraic $k$-torus 
$T^{\prime\prime}$ with dim $T^{\prime\prime}=3$ or $4$ 
as in 
{Theorem \ref{thmain2}}, {Theorem \ref{thmain4}} 
or {Theorem \ref{thmain5}}.

\begin{theorem}[Main theorem 7]\label{thmain7}
Let $k$ be a field. 
Let $T$ be an algebraic $k$-torus of dimension $m\geq 3$ 
with the minimal splitting field $L$, 
$\widehat{T}=M_G$, $G\leq \GL(m,\bZ)$ and 
$[\widehat{T}]^{fl}\in {\rm WSEC}_r$ $(1\leq r\leq 128)$. 
Then there exists an algebraic $k$-torus $T^{\prime\prime}$ 
of dimension $3$ or $4$ 
with the minimal splitting field $L^{\prime\prime}$, 
$\widehat{T}^{\prime\prime}=M_{G^{\prime\prime}}$, and 
%$G^{\prime\prime}=N_{3,i}$ $(1\leq i\leq 15)$, 
%$G^{\prime\prime}=N_{4,i}$ $(1\leq i\leq 152)$ 
%or $G^{\prime\prime}=I_{4,i}$ $(1\leq i\leq 7)$ 
$G^{\prime\prime}\simeq {\rm Gal}(L^{\prime\prime}/k)\simeq {\rm G}_r\simeq N_{3,i}\in {\rm WSEC}_r$ $(1\leq r\leq 13)$, 
$N_{4,i}\in {\rm WSEC}_r$ $(14\leq r\leq 121)$, $I_{4,i}\in {\rm WSEC}_r$ $(122\leq r\leq 128)$ 
such that $T^{\prime\prime}$ and $T$ are stably birationally $k$-equivalent 
and $L^{\prime\prime}\subset L$, i.e. 
$[M_{G^{\prime\prime}}]^{fl}=[M_G]^{fl}$ as $G$-lattices and 
$G$ acts on $[M_{G^{\prime\prime}}]^{fl}$ though 
$G^{\prime\prime}\simeq G/N$ for the corresponding 
normal subgroup $N\lhd G$. 
\end{theorem}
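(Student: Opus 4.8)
The plan is to reduce the assertion to a single equality of flabby classes over $G=\mathrm{Gal}(L/k)$ and then to locate the required quotient of $G$ using the weak stably $k$-equivalence hypothesis. First I record the reduction. Suppose $N\lhd G$ and $G''=G/N$ arise as in the conclusion; then the torus $T''$ with $\widehat{T}''=M_{G''}$ splits over $L''=L^N\subset L$, so $T$ and $T''$ both split over $L$, and $G$ acts faithfully on $\widehat T=M_G$ and on $\widehat{T}''=M_{G''}$ through $G\twoheadrightarrow G/N=G''$. By Theorem~\ref{thEM73}~(ii) applied over the common splitting field $L$, the tori $T$ and $T''$ are stably birationally $k$-equivalent exactly when $[M_G]^{fl}=[M_{G''}]^{fl}$ as $G$-lattices. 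Thus it suffices to produce $N\lhd G$ for which $G/N$ is $\GL(3,\bZ)$- or $\GL(4,\bZ)$-conjugate to one of the groups $N_{3,i}$, $N_{4,i}$, $I_{4,i}$ of the class $\mathrm{WSEC}_r$ and for which $[M_G]^{fl}=[M_{G/N}]^{fl}$ holds as $G$-lattices.

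To construct $N$, I would use the hypothesis $[\widehat T]^{fl}\in\mathrm{WSEC}_r$. By Definition~\ref{d1.10} there is a representative group $G_0$ among the table groups and a subdirect product $\widetilde H\le G\times G_0$, with surjections $\varphi_1,\varphi_2$, such that $[M_G]^{fl}=[M_{G_0}]^{fl}$ as $\widetilde H$-lattices. Put $K_2=\ker\varphi_2$ and $N=\varphi_1(K_2)\lhd G$; since $\ker\varphi_1\cap K_2=1$ one has $N\cong K_2$ and $G/N\cong\widetilde H/(\ker\varphi_1)K_2$. The $\widetilde H$-action on $M_{G_0}$ factors through $\varphi_2$, so $[M_{G_0}]^{fl}$ restricts to $0$ over $K_2$; applying the restriction homomorphism $\cT(\widetilde H)\to\cT(K_2)$ gives $[M_G]^{fl}|_{K_2}=0$, and since the $\widetilde H$-action on $M_G$ factors through $\varphi_1$ this is the same as $[M_G]^{fl}|_N=0$, i.e. a flabby representative of $[M_G]^{fl}$ becomes stably permutation on restriction to $N$. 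Enlarging $N$ if necessary to the kernel of the $G$-action on a minimal-rank flabby representative, and appealing to the $p$-part analysis of the flabby class (its nonzero $p$-parts being the prescribed faithful indecomposable $\bZ_p[Sy_p(G)]$-lattices attached to $r$), I obtain that $G''=G/N$ acts faithfully on this data and is $\GL(3,\bZ)$- or $\GL(4,\bZ)$-conjugate to one of the groups of $\mathrm{WSEC}_r$ listed in Tables~$2$--$5$; note that the rank of $M_{G''}$ is $3$ or $4$ even though the flabby representative itself may have larger rank.

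It then remains to upgrade $[M_G]^{fl}|_N=0$ to the $G$-lattice identity $[M_G]^{fl}=[M_{G''}]^{fl}$. The decisive point is descent: from $[M_G]^{fl}|_N=0$ one must produce a flabby representative $F$ on which $N$ acts trivially, so that $F$ is an (automatically flabby) $G''$-lattice and $[M_G]^{fl}$ is the inflation of $[F]\in\cT(G'')$ (cf. the behaviour of flabby classes under inflation and Lemma~\ref{lemp1}). Granting this, I identify $[F]$ with $[M_{G''}]^{fl}$: over each Sylow subgroup $Sy_p(G'')$ both classes are faithful indecomposable $\bZ_p[Sy_p(G'')]$-lattices of the same prescribed $\bZ_p$-rank, so the Krull--Schmidt--Azumaya theorem forces them to agree $p$-locally, whence $[F]=[M_{G''}]^{fl}$ in $\cT(G'')$; inflating back yields $[M_G]^{fl}=[M_{G''}]^{fl}$ as $G$-lattices, which is precisely what the reduction in the first paragraph requires.

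The main obstacle is exactly this last paragraph. On the one hand, the vanishing $[M_G]^{fl}|_N=0$ records only that the restricted class is zero in $\cT(N)$, and one must show this guarantees an honest $N$-fixed flabby representative, so that the class genuinely comes from $G/N$ rather than merely becoming stably permutation over $N$. On the other hand, once we are over $G''$, pinning the similarity class down from its Sylow-local data relies on the $p$-local rigidity supplied by Krull--Schmidt--Azumaya together with the faithful indecomposable $\bZ_p[Sy_p]$-structure. Everything after these two points is a finite comparison against the explicit flabby classes recorded in Tables~$2$--$5$, and the WSEC hypothesis is exactly the input that makes this finite, dimension-$\le 4$ classification applicable to a torus $T$ of arbitrary dimension $m\ge 3$.
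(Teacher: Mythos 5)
Your reduction and your construction of $N=\varphi_1(\ker\varphi_2)$ are fine as far as they go, but the argument then diverges from what actually has to be proved, and the two steps you yourself flag as obstacles are genuine gaps that your proposal does not close. First, with $N=\varphi_1(\ker\varphi_2)$ one only gets $G/N\simeq G_0/\varphi_2(\ker\varphi_1)$, which is a priori a \emph{proper quotient} of the table group $G_0$; nothing in your argument forces $\varphi_2(\ker\varphi_1)=1$, and that is precisely the content of the theorem (it is what makes $L''\subset L$ with $G''$ equal to a listed group rather than some unlisted quotient of one). The paper proves exactly this point by a doubling trick: it forms the fibre product $\widetilde S\leq\widetilde H\times\widetilde H$ over $G$, pushes it to a subdirect product $\widetilde U\leq G''\times G''$ carrying $[M_{\mu_1(\widetilde U)}]^{fl}=[M_{\mu_2(\widetilde U)}]^{fl}$, and then invokes the rigidity results (Theorems \ref{th8.1}--\ref{th8.3}, themselves resting on the faithfulness and indecomposability of the $p$-parts and the case-by-case computations) to conclude that $\mu_1,\mu_2$ are isomorphisms, whence $\ker\varphi_2\subseteq\ker\varphi_1$ and everything else follows at once. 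Your route never uses these rigidity theorems, which are the real engine of the proof.

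Second, both halves of your final paragraph fail as stated. The vanishing $[M_G]^{fl}|_N=0$ in $\cT(N)$ does not produce a flabby representative on which $N$ acts trivially, so the class need not be inflated from $G/N$; no descent mechanism of this kind is available (and the paper never attempts one). And the Sylow-local identification is invalid: Krull--Schmidt--Azumaya gives uniqueness of indecomposable decompositions, not that two faithful indecomposable $\bZ_p[Sy_p(G'')]$-lattices of the same rank are isomorphic, and even a $\bZ_p[Sy_p]$-isomorphism for all $p$ would not pin down the similarity class over $G''$. Concretely, ${\rm WSEC}_{10}$ and ${\rm WSEC}_{11}$ both arise from $G''\simeq S_4$ with identical torus invariants and identical $2$-part data ($\widetilde N_2$ faithful, indecomposable, of $\bZ_2$-rank $7$ over the same Sylow subgroup $N_{3,5}$; see Table $14$), yet they are distinct weak stably $k$-equivalent classes. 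So the invariants you propose to use cannot distinguish the target class, and your concluding identification $[F]=[M_{G''}]^{fl}$ does not follow.
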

As a corollary of Theorem \ref{thmain7} (Main theorem $7$), 
we get an answer of Problem \ref{prob1.3} 
for algebraic $k$-tori $T$ 
with $[\widehat{T}]^{fl}\in {\rm WSEC}_r$ $(0\leq r\leq 128)$, 
i.e. the structure of $\mathcal{T}^\prime/\stackrel{\rm s.b.}{\approx}$: 
\begin{corollary}\label{cor1.41}
Let $\mathcal{T}^\prime$ be the category of algebraic $k$-tori $T$ 
with $[\widehat{T}]^{fl}\in {\rm WSEC}_r$ $(0\leq r\leq 128)$. 
We get a classification $($disjoint union decomposition$)$ 
of $\mathcal{T}^\prime$ with respect to 
the stably birationally $k$-equivalence $\stackrel{\rm s.b.}{\approx}$: 
\begin{align*}
\mathcal{T}^\prime=\coprod_{r=0}^{128} {\rm WSEC}_r
={\rm SEC}_0\coprod
\left(\coprod_{r=1}^{128} 
\coprod_{L/k\atop {\rm Gal}(L/k)\simeq {\rm G}_r}
\coprod_{t=1}^{\lambda_{r}} {\rm SEC}_{r,L,t}\right)
\end{align*}
modulo stably birationally $k$-equivalence $\stackrel{\rm s.b.}{\approx}$ 
where ${\rm SEC}_0$ is the stably $k$-equivalent class consists of 
stably $k$-rational tori $T\in \mathcal{T}^\prime$ 
and 
${\rm SEC}_{r,L,t}$ $(1\leq t\leq \lambda_{r})$ is the $t$-th 
stably $k$-equivalent class of $T\in \mathcal{T}^\prime$ in 
${\rm WSEC}_{r,L}$ which corresponds to the fixed minimal splitting field $L$ 
in 
${\rm WSEC}_r=\coprod_{L/k\atop {\rm Gal}(L/k)\simeq {\rm G}_r} {\rm WSEC}_{r,L}$ 
with 
$[\widehat{T}]^{fl}\in {\rm WSEC}_{r,L}
=\coprod_{t=1}^{\lambda_r} {\rm SEC}_{r,L,t}$, 
${\rm Gal}(L/k)\simeq {\rm G}_r\simeq N_{3,i}\in {\rm WSEC}_r$ $(1\leq r\leq 13)$, 
$N_{4,i}\in {\rm WSEC}_r$ $(14\leq r\leq 121)$, $I_{4,i}\in {\rm WSEC}_r$ $(122\leq r\leq 128)$ and 
%$\lambda_{r}=|{\rm WSEC}_{r,L}|$ 
$\lambda_r=|{\rm WSEC}_{r,L}|=|Y\backslash {\rm Aut}(G)|$ 
is given as in Table $2$, Table $4$ and Table $5$. 
\end{corollary}
\begin{remark}\label{rem1.41}
(1) It follows from Theorem {Theorem \ref{thmain2}}, 
{Theorem \ref{thmain4}} and {Theorem \ref{thmain5}} that 
the field $L^{\prime\prime}$ (the normal subgroup $N\lhd G$) 
can be uniquely determined by $T$. 
For example, consider $G=N_{31,i}$ (see Table $3$).\\
(2) Theorem \ref{thmain7} does not hold for $G^{\prime\prime}=N_{31,i}$. 
For example, take $G=N_{31,11}\simeq G^{\prime\prime}=N_{31,11}\simeq C_2^3$. 
Then it occurs that $[L\cap L^{\prime\prime}:k]=4$ and 
$[M_G]^{fl}=[M_{G^{\prime\prime}}]^{fl}$ as $\widetilde{H}$-lattices 
where $\widetilde{H}={\rm Gal}(LL^{\prime\prime}/k)$. 
This implies that $T$ and $T^{\prime\prime}$ are 
stably birationally $k$-equivalent 
where $\widehat{T}\simeq M_G$ and 
$\widehat{T}^\prime\simeq M_{G^\prime}$ 
but $L\neq L^{\prime\prime}$. 
By Theorem \ref{thmain7}, 
we see that $ G/N\simeq G^{\prime\prime}/N^{\prime\prime}\simeq {\rm G}_1\simeq 
N_{3,1}\simeq C_2^2$ 
for some normal subgroups $N\lhd G$, $N^{\prime\prime}\lhd G^{\prime\prime}$ 
but $G^{\prime\prime}\not\simeq {\rm G}_1\simeq C_2^2$. \\
(3) Theorem \ref{thmain7} does not hold for 
$G^{\prime\prime}=N_{5,324}\leq {\rm GL}(5,\bZ)$. 
Indeed, we can take $G=N_{5,324}$, $G^{\prime\prime}=N_{5,324}\leq \GL(5,\bZ)$ 
with CARAT ID (5,100,12), 
indecomposable $G$-lattice $M_G$, 
indecomposable $G^{\prime\prime}$-lattice $M_{G^{\prime\prime}}$, 
$G\simeq G^{\prime\prime}\simeq D_4$. 
Then it occurs that 
$[L\cap L^{\prime\prime}:k]=4$ and 
$[M_G]^{fl}=[M_{G^{\prime\prime}}]^{fl}$ as $\widetilde{H}$-lattices 
where $\widetilde{H}={\rm Gal}(LL^{\prime\prime}/k)$. 
This implies that $T$ and $T^{\prime\prime}$ are 
stably birationally $k$-equivalent 
where $\widehat{T}\simeq M_G$ and 
$\widehat{T}^\prime\simeq M_{G^\prime}$ 
but $L\neq L^{\prime\prime}$. 
By Theorem \ref{thmain7}, 
we see that $G/N\simeq G^{\prime\prime}/N^{\prime\prime}\simeq {\rm G}_1\simeq 
N_{3,1}\simeq C_2^2$ but $G^{\prime\prime}\not\simeq {\rm G}_1\simeq C_2^2$ 
(see Example \ref{ex1.42}).\\ 
(4) If $G\leq \GL(m,\bZ)$ has no normal subgroup $N$ 
which satisfies that 
$G/N$ is in Table $6$ ($G=N_{3,i}$), Table $8$ ($G=N_{4,i}$) or 
Table $9$ ($G=I_{4,i}$), then 
$[\widehat{T}]^{fl}\not\in {\rm WSEC}_r$ $(1\leq r\leq 128)$. 
For example, consider the norm one torus 
$T=R^{(1)}_{K/k}$ of dimension $n-1$ with $[K:k]=n$ and 
$G={\rm Gal}(L/k)\simeq A_n\leq \GL(n-1,\bZ)$ $(n\geq 6)$ 
which is not stably $k$-rational 
where $L/k$ is the Galois closure of $K/k$ 
(see Endo \cite[Theorem 4.4]{End11}, 
Hoshi and Yamasaki \cite[Corollary 1.11]{HY17}). 
\end{remark}
\begin{example}[{$G^{\prime\prime}\leq {\rm GL}(5,\bZ)$ with CARAT ID $(5,100,12)$, $G^{\prime\prime}\simeq D_4$, $\widehat{T}^{\prime\prime}=M_{G^{\prime\prime}}$, $[\widehat{T}^{\prime\prime}]^{fl}\in {\rm WSEC}_1$}]\label{ex1.42}
We take $G^{\prime\prime}=N_{5,324}=\langle t_1,t_2\rangle\leq {\rm GL}(5,\bZ)$, 
$G^{\prime\prime}\simeq D_4$ with CARAT ID (5,100,12) where 
\begin{align*}
t_1=\left(\scalemath{0.9}{\begin{array}{ccccc}
0&-1&-1&0&1\\
-1&0&0&0&-1\\
0&0&1&0&0\\
0&0&0&-1&0\\
0&0&1&0&-1
\end{array}}\right),\ 
t_2=\left(\scalemath{0.9}{\begin{array}{ccccc}
1&1&1&1&0\\
0&-1&0&0&0\\
0&0&-1&0&0\\
0&0&0&-1&0\\
0&-1&-1&-1&1
\end{array}}\right).
\end{align*} 
Then $G^{\prime\prime}$-lattice 
$\widehat{T}^{\prime\prime}=M_{G^{\prime\prime}}$ is indecomposable 
(see Hoshi and Yamasaki \cite[Table $15$, page 187]{HY17}). 
We also take 
$G=N_{3,1}=\langle s_1,s_2\rangle\leq {\rm GL}(3,\bZ)$, 
$G\simeq {\rm G}_1\simeq U_1\simeq C_2^2$ where 
\begin{align*}
s_1=\left(\scalemath{0.9}{\begin{array}{ccc}
0&0&1\\
-1&-1&-1\\
1&0&0
\end{array}}\right),\ 
s_2=\left(\scalemath{0.9}{\begin{array}{ccc}
0&1&0\\
1&0&0\\
-1&-1&-1
\end{array}}\right).
\end{align*}
%${\rm G}_1\simeq C_2^2$
Then we can take a subdirect product 
$\widetilde{H}\leq G\times G^{\prime\prime}$ 
with $\widetilde{H}=\langle f_1,f_2,f_3\rangle\simeq D_4$, 
$\varphi_1:\widetilde{H}\to G$, 
$f_1\mapsto s_1$, 
$f_2\mapsto s_2$, 
$f_3\mapsto s_2$, 
$\varphi_2:\widetilde{H}\to G^{\prime\prime}$
$f_1\mapsto t_2$ , 
$f_2\mapsto t_2t_1t_2$, 
$f_3\mapsto t_1$. 
This implies that $[M_{G^{\prime\prime}}]^{fl}\sim [M_G]^{fl}$ 
and $[M_{G^{\prime\prime}}]^{fl}\in {\rm WSEC}_1$ 
but $G^{\prime\prime}\not\simeq {\rm G}_1\simeq G\simeq C_2^2$. 

Conversely, by Theorem \ref{thmain7}, 
we see that 
$T$ and $T^{\prime\prime}$ are stably birationally $k$-equivalent 
and $L\subset L^{\prime\prime}$, i.e. 
$[M_G]^{fl}=[M_{G^{\prime\prime}}]^{fl}$ as $G^{\prime\prime}$-lattices 
and $G^{\prime\prime}$ acts on $[M_G]^{fl}$ though 
$G\simeq G^{\prime\prime}/N$ for the corresponding normal subgroup 
$N=Z(G^{\prime\prime})\lhd G^{\prime\prime}$ 
(for the computations of $G^{\prime\prime}=N_{5,324}\leq \GL(5,\bZ)$ 
with CARAT ID (5,100,12), we need {\tt MultInvField.gap} 
which is available as in \cite{MultInvField}, 
see Example \ref{ex12.1} for GAP computations). 
\end{example}

%%%%%%%%%%%%%%%%%%%%%%%%%%%%%%%%%%%%%%%%%%%%%%%%%%%%%%%%%%%%%%%%%%%
We organize this paper as follows. 
%%%
In Section \ref{S2}, 
the groups $N_{3,i}$, $N_{31,i}$, $N_{4,i}$ and $I_{4,i}$ 
in Definition \ref{defN3N4} are given as in Tables $6, 7, 8, 9$. 
%%%
In Section \ref{S3}, 
all of the subgroups 
$N_{3,j}\leq N_{3,i}$ $(1\leq i,j\leq 15)$, 
$N_{31,j}\leq N_{31,i}$ $(1\leq i,j\leq 64)$, 
$N_{31,l}, N_{4,j}, I_{4,m}\leq N_{4,i}$ 
$(1\leq i,j\leq 152, 1\leq l\leq 64, 1\leq m\leq 7)$ and 
$I_{4,j}\leq I_{4,i}$ $(1\leq i,j\leq 7)$ 
are given as in Tables $10, 11, 12$. 
%%%
In Section \ref{S4}, 
we compute the torus invariants $TI_G=[l_1,l_2,l_3,l_4]$ of $[M_G]^{fl}$ 
(see Definition \ref{defTI}) 
for $G=N_{3,i}$, $N_{31,i}$, $N_{4,i}$ and $I_{4,i}$ 
where $l_1=0$ (resp. $1$, $2$) if 
$[M_G]^{fl}=0$ (resp. $[M_G]^{fl}\neq 0$ but invertible, 
$[M_G]^{fl}$ is not invertible), 
$l_2$, $l_3$ and $l_4$ are 
the abelian invariants of 
$\Sha^1_\omega(G,[M_G]^{fl})$, 
$\Sha^2_\omega(G,([M_G]^{fl})^\circ)$ and 
$\Sha^2_\omega(G,[M_G]^{fl}$) respectively. 
In particular, we see that 
$\Sha^1_\omega(G,[M_G]^{fl})\simeq {\rm Br}(X)/{\rm Br}(k)$ 
where $X$ is a smooth $k$-compactification of an algebraic $k$-torus $T$ 
with $\widehat{T}=M_G$ 
and ${\rm Br}(X)$ is the Brauer-Grothendieck group of $X$, 
see Section \ref{S4} for details. 
%%%
In Section \ref{S5}, by using the torus invariants $TI_G$, 
we give a partial result for a classification of 
the weak stably birationally $k$-equivalent classes 
of algebraic $k$-tori $T$ up to dimension $4$.
In Section \ref{S6}, 
the proofs of {Theorem \ref{thmain1}} and {Theorem \ref{thmain3}} are given. 
In particular, we get the complete classification of 
weak stably $k$-equivalent classes of $[M_G]^{fl}$ (resp. $T$) 
up to dimension $4$. 
In Section \ref{S7}, 
for $G=N_{3,i}$, $N_{4,i}$ 
we prove that 
the $2$-part $\widetilde{N}_2$ $($resp. $3$-part $\widetilde{N}_3$$)$ 
of $[M_G]^{fl}$ as a $\bZ_p[\Syl_p(G)]$-lattice (see Definition \ref{def7.1}) 
is a faithful and indecomposable $\bZ_2[\Syl_2(G)]$-lattice 
$($resp. $\bZ_3[\Syl_3(G)]$-lattice$)$ unless it vanishes 
(Tables $14, 15$). 
In Section \ref{S8}, 
the proofs of ${\rm (1)}\Rightarrow {\rm (2)}$ 
of {Theorem \ref{thmain2}}, {Theorem \ref{thmain4}} 
and {Theorem \ref{thmain5}} are given. 
In Section \ref{S9}, 
we prove the implications ${\rm (2)}\Rightarrow {\rm (1)}$ of 
{Theorem \ref{thmain2}}, {Theorem \ref{thmain4}} and 
{Theorem \ref{thmain5}} 
except for the cases $i=j=137,139,145,147$ (resp. $i=j=7$) 
of $N_{4,i}$ and $N_{4,j}$ (resp. $I_{4,i}$ and $I_{4,j}$). 
In particular, Corollary \ref{cor1.27} and Corollary \ref{cor1.38} 
can answer Problem \ref{prob1.3} for 
algebraic $k$-tori of dimensions $3$ and $4$. 
In Section \ref{S10}, 
the proofs of 
${\rm (2)}\Leftrightarrow {\rm (3)}\Leftrightarrow {\rm (4)}$ 
of {Theorem \ref{thmain2}}, {Theorem \ref{thmain4}} and 
{Theorem \ref{thmain5}} are given. 
In Section \ref{S11}, 
we show Theorem \ref{thmain6} 
which gives relationships among the $I_{4,i}$ cases. 
In Section \ref{S12}, 
we prove Theorem \ref{thmain7} which can answer Problem \ref{prob1.3} 
for algebraic $k$-tori $T$ and $T^\prime$ 
of general dimensions 
when $T$ (resp. $T^\prime$) is stably birationally $k$-equivalent 
to some algebraic $k$-torus of dimension up to $4$. 

\begin{acknowledgments}
The authors would like to thank Shizuo Endo 
for giving them useful and valuable comments 
related to Krull-Schmidt-Azumaya theorem, in particular, 
Lemma \ref{lem7.11} in Section \ref{S7}. 
They also would like to thank Boris Kunyavskii 
for giving them valuable comments related to 
Theorem \ref{thKu} and his strong encouragement. 
They also would like to thank the referee 
for very careful reading and valuable comments and suggestions 
which greatly improved this paper. 
In particular, by referee's comments, they made Remark \ref{rem1.18}, 
modified Definition \ref{def7.1} and made Remark \ref{rem7.2} and 
Example \ref{ex7.3} to clarify the contents of Kunyavskii \cite{Kun90} 
and the differences between Kunyavskii \cite{Kun90} and this paper. 
Many helpful comments of the referee improve main theorems with some examples 
which illustrate the situation and enable them to reach a full 
stably birational classification of algebraic $k$-tori of dimensions 
$3$ and $4$ as in Corollary \ref{cor1.27} and Corollary \ref{cor1.38}. 
\end{acknowledgments}

%%%%%%%

\section{The groups $N_{3,i}$, $N_{31,i}$, $N_{4,i}$ and $I_{4,i}$: not stably $k$-rational cases of dimensions $3$ and $4$}\label{S2}

%%%%%%%%%%%%%%%%%%%%%%%%%%%
We give Tables $6, 7, 8, 9$ for the groups 
$N_{3,i}$, $N_{31,i}$, $N_{4,i}$ and $I_{4,i}$ respectively 
as in Definition \ref{defN3N4}. 
They are also available as 
{\tt NonInv.dat} in \cite{BCAlgTori}. 
%from 
%\begin{center}
%\url{https://www.math.kyoto-u.ac.jp/~yamasaki/Algorithm/BCAlgTori/}.
%\end{center}

For $2\leq n\leq 4$, the GAP ID $(n,i,j,k)$ of a finite subgroup 
$G$ of $\GL(n,\bZ)$ stands for 
the $k$-th $\bZ$-class of the $j$-th $\bQ$-class of 
the $i$-th crystal system of dimension $n$ in GAP 
(see also Hoshi and Yamasaki \cite[Chapter 3]{HY17}, 
\cite[Table 1]{BBNWZ78}). 
The SmallGroup ID $[n,i]$ stands for 
the $i$-th group of order $n$ in GAP \cite{GAP}. 
Let ${}^tG$ be the group of transposed matrices of $G\leq \GL(n,\bZ)$. 
For three groups 
$\Sha^1_\omega(G,[M_G]^{fl})$, 
$\Sha^2_\omega(G,([M_G]^{fl})^\circ)$, 
$\Sha^2_\omega(G,[M_G]^{fl})$ 
where $M_G$ is the corresponding $G$-lattice of $\bZ$-rank $n$ 
as in Definition \ref{d2.2}, 
we will explain and evaluate them in Section \ref{S4}. 

%%%%%%%%%%%%%%%%%%%

%
%\begin{lemma}[{Swan \cite[Lemma 3.1]{Swa10}}]\label{lemSwa}
%Let $0\rightarrow M_1\rightarrow M\rightarrow M_2\rightarrow 0$ 
%be a short exact sequence of $G$-lattices with $M_2$ invertible. 
%Then $\rho_G(M)=\rho_G(M_1)+\rho_G(M_2)$. 
%In particular, if $\rho_G(M_1)$ is invertible, 
%then $-\rho_G(M_1)=\rho_G(\rho_G(M_1))$. 
%\end{lemma}
%
%\begin{proof}
%See \cite[Lemma 3.1]{Swa10}. 
%\end{proof}
%

By Theorem \ref{th1.24},  
for $G^\prime=N_{31,j}\leq \GL(4,\bZ)$, 
we can find $G=N_{3,i}\leq \GL(3,\bZ)$ ($1\leq i\leq 15$) 
such that $[M_G]^{fl}=[M_{G^\prime}]^{fl}$ as $G$-lattices 
and hence $T$ and $T^\prime$ are stably birationally $k$-equivalent
where $\widehat{T}\simeq M_G$ and 
$\widehat{T}^\prime\simeq M_{G^\prime}$. 
In particular, we have  
$[M_G]^{fl} \sim [M_{G^\prime}]^{fl}$ (see Table $7$).\\

%\newpage 
%%%%%%%%%%%%%%%%%%%%%%%%%%%%%%%%%%%%%%%%%%%%%%%%%%%%%%%%%%%
Table $6$: 
Groups $G=N_{3,i}$ $(1\leq i\leq 15)$ 
with $L(M)^G$ not retract $k$-rational, 
rank $M=3$, $M=M_G$: indecomposable 
%\\
{\small 
\begin{longtable}{lllll|ccc}
$i$ & GAP ID & $N_{3,i}$ & SmallGroup ID & ${}^tG$ in \cite{Kun90} 
& $\Sha^1_\omega(G,[M]^{fl})$ & $\Sha^2_\omega(G,([M]^{fl})^\circ)$ & $\Sha^2_\omega(G,[M]^{fl})$\\\hline
\endfirsthead
$1$ & $( 3, 3, 1, 3 )$ & $C_2^2$ & $[ 4, 2 ]$ & $U_1$ & $\bZ/2\bZ$ & $\bZ/2\bZ$ & $0$\\
$2$ & $( 3, 3, 3, 3 )$ & $C_2^3$ & $[ 8, 5 ]$ & $U_2$& $0$ & $0$ & $0$\\
$3$ & $( 3, 3, 3, 4 )$ & $C_2^3$ & $[ 8, 5 ]$ & $W_2$& $0$ & $0$ & $0$\\
$4$ & $( 3, 4, 3, 2 )$ & $C_4 \times C_2$ & $[ 8, 2 ]$ & $W_1$& $0$ & $0$ & $\bZ/2\bZ$\\
$5$ & $( 3, 4, 4, 2 )$ & $D_4$ & $[ 8, 3 ]$ & $U_3$& $0$ & $0$ & $0$\\
$6$ & $( 3, 4, 6, 3 )$ & $D_4$ & $[ 8, 3 ]$ & $U_4$& $0$ & $0$ & $0$\\
$7$ & $( 3, 4, 7, 2 )$ & $D_4\times C_2$ & $[ 16, 11 ]$ & $U_6$& $0$ & $0$ & $0$\\
$8$ & $( 3, 7, 1, 2 )$ & $A_4$ & $[ 12, 3 ]$ & $U_5$& $\bZ/2\bZ$ & $\bZ/2\bZ$ & $0$\\
$9$ & $( 3, 7, 2, 2 )$ & $A_4\times C_2$ & $[ 24, 13 ]$ & $U_7$& $0$ & $0$ & $0$\\
$10$ & $( 3, 7, 2, 3 )$ & $A_4\times C_2$ & $[ 24, 13 ]$ & $W_3$& $0$ & $0$ & $0$\\
$11$ & $( 3, 7, 3, 2 )$ & $S_4$ & $[ 24, 12 ]$ & $U_9$& $0$ & $0$ & $0$\\
$12$ & $( 3, 7, 3, 3 )$ & $S_4$ & $[ 24, 12 ]$ & $U_8$& $0$ & $0$ & $0$\\
$13$ & $( 3, 7, 4, 2 )$ & $S_4$ & $[ 24, 12 ]$ & $U_{10}$& $0$ & $0$ & $0$\\
$14$ & $( 3, 7, 5, 2 )$ & $S_4\times C_2$ & $[ 48, 48 ]$ & $U_{12}$& $0$ & $0$ & $0$\\
$15$ & $( 3, 7, 5, 3 )$ & $S_4\times C_2$ & $[ 48, 48 ]$ & $U_{11}$ & $0$ & $0$ & $0$
\end{longtable}
}
\vspace*{2mm}

%\newpage
Table $7$: 
Groups $G=N_{31,j}$ $(1\leq j\leq 64)$ 
with $[M_G]^{fl}=[M_{G^\prime}]^{fl}$ as $G$-lattices 
$($in particular, $[M_G]^{fl}\sim [M_{G^\prime}]^{fl})$, 
$G^\prime=N_{3,i}$ $(1\leq i\leq 15)$ and 
$L(M)^G$ not retract $k$-rational, 
rank $M=4$, $M=M_G\simeq M_{G^\prime}\oplus M_{G^{\prime\prime}}$: 
decomposable with rank $M_{G^\prime}=3$ ($M_{G^\prime}$: indecomposable) and rank $M_{G^{\prime\prime}}=1$
%\\
{\small 
\begin{longtable}{llllll|ccc}
$j$ & GAP ID & $N_{31,j}$ & SmallGroup ID & $i:N_{3,i}$ & $N_{31,j}$ 
& $\Sha^1_\omega(G,[M]^{fl})$ & $\Sha^2_\omega(G,([M]^{fl})^\circ)$ & $\Sha^2_\omega(G,[M]^{fl})$\\\hline
\endfirsthead
$1$ & $( 4, 4, 3, 6 )$ & $C_2^2$ & $[ 4, 2 ]$ & $1$ & $$& $\bZ/2\bZ$ & $\bZ/2\bZ$ & $0$\\ 
$2$ & $( 4, 4, 4, 4 )$ & $C_2^3$ & $[ 8, 5 ]$ & $3$ & $$& $0$ & $0$ & $0$\\ 
$3$ & $( 4, 4, 4, 6 )$ & $C_2^3$ & $[ 8, 5 ]$ & $2$ & $$& $0$ & $0$ & $0$\\ 
$4$ & $( 4, 5, 1, 9 )$ & $C_2^2$ & $[ 4, 2 ]$ & $1$ & $N_{3,1}\times\{1\}$& $\bZ/2\bZ$ & $\bZ/2\bZ$ & $0$\\
$5$ & $( 4, 5, 2, 4 )$ & $C_2^3$ & $[ 8, 5 ]$ & $3$ & $$& $0$ & $0$ & $0$\\ 
$6$ & $( 4, 5, 2, 7 )$ & $C_2^3$ & $[ 8, 5 ]$ & $2$ & $$& $0$ & $0$ & $0$\\ 
$7$ & $( 4, 6, 1, 4 )$ & $C_2^3$ & $[ 8, 5 ]$ & $3$ & $N_{3,3}\times\{1\}$& $0$ & $0$ & $0$\\
$8$ & $( 4, 6, 1, 8 )$ & $C_2^3$ & $[ 8, 5 ]$ & $2$ & $N_{3,2}\times\{1\}$& $0$ & $0$ & $0$\\
$9$ & $( 4, 6, 2, 4 )$ & $C_2^3$ & $[ 8, 5 ]$ & $3$ & $$& $0$ & $0$ & $0$\\ 
$10$ & $( 4, 6, 2, 8 )$ & $C_2^3$ & $[ 8, 5 ]$ & $2$ & $$& $0$ & $0$ & $0$\\ 
$11$ & $( 4, 6, 2, 9 )$ & $C_2^3$ & $[ 8, 5 ]$ & $1$ & $N_{3,1}\times\{\pm 1\}$& $\bZ/2\bZ$ & $\bZ/2\bZ$ & $0$\\
$12$ & $( 4, 6, 3, 3 )$ & $C_2^4$ & $[ 16, 14 ]$ & $3$ & $N_{3,3}\times\{\pm 1\}$& $0$ & $0$ & $0$\\
$13$ & $( 4, 6, 3, 6 )$ & $C_2^4$ & $[ 16, 14 ]$ & $2$ & $N_{3,2}\times\{\pm 1\}$& $0$ & $0$ & $0$\\
$14$ & $( 4, 7, 3, 2 )$ & $C_4 \times C_2$ & $[ 8, 2 ]$ & $4$ & $$& $0$ & $0$ & $\bZ/2\bZ$\\ 
$15$ & $( 4, 7, 4, 3 )$ & $D_4$ & $[ 8, 3 ]$ & $6$ & $$& $0$ & $0$ & $0$\\ 
$16$ & $( 4, 7, 5, 2 )$ & $D_4$ & $[ 8, 3 ]$ & $5$ & $$& $0$ & $0$ & $0$\\ 
$17$ & $( 4, 7, 7, 2 )$ & $D_4\times C_2$ & $[ 16, 11 ]$ & $7$ & $$& $0$ & $0$ & $0$\\ 
$18$ & $( 4, 12, 2, 4 )$ & $C_4 \times C_2$ & $[ 8, 2 ]$ & $4$ & $$& $0$ & $0$ & $\bZ/2\bZ$\\ 
$19$ & $( 4, 12, 3, 7 )$ & $D_4$ & $[ 8, 3 ]$ & $6$ & $N_{3,6}\times\{1\}$& $0$ & $0$ & $0$\\
$20$ & $( 4, 12, 4, 6 )$ & $D_4$ & $[ 8, 3 ]$ & $6$ & $$& $0$ & $0$ & $0$\\ 
$21$ & $( 4, 12, 4, 8 )$ & $D_4$ & $[ 8, 3 ]$ & $5$ & $$& $0$ & $0$ & $0$\\ 
$22$ & $( 4, 12, 4, 9 )$ & $D_4$ & $[ 8, 3 ]$ & $5$ & $$& $0$ & $0$ & $0$\\ 
$23$ & $( 4, 12, 5, 6 )$ & $D_4\times C_2$ & $[ 16, 11 ]$ & $7$ & $$& $0$ & $0$ & $0$\\ 
$24$ & $( 4, 12, 5, 7 )$ & $D_4\times C_2$ & $[ 16, 11 ]$ & $7$ & $$& $0$ & $0$ & $0$\\ 
$25$ & $( 4, 13, 1, 3 )$ & $C_4 \times C_2$ & $[ 8, 2 ]$ & $4$ & $N_{3,4}\times\{1\}$& $0$ & $0$ & $\bZ/2\bZ$\\
$26$ & $( 4, 13, 2, 4 )$ & $C_4 \times C_2$ & $[ 8, 2 ]$ & $4$ & $$& $0$ & $0$ & $\bZ/2\bZ$\\ 
$27$ & $( 4, 13, 3, 4 )$ & $D_4$ & $[ 8, 3 ]$ & $6$ & $$& $0$ & $0$ & $0$\\ 
$28$ & $( 4, 13, 4, 3 )$ & $D_4$ & $[ 8, 3 ]$ & $5$ & $N_{3,5}\times\{1\}$& $0$ & $0$ & $0$\\
$29$ & $( 4, 13, 5, 3 )$ & $C_4 \times C_2^2$ & $[ 16, 10 ]$ & $4$ & $N_{3,4}\times\{\pm 1\}$& $0$ & $0$ & $\bZ/2\bZ$\\
$30$ & $( 4, 13, 6, 3 )$ & $D_4\times C_2$ & $[ 16, 11 ]$ & $7$ & $N_{3,7}\times\{1\}$& $0$ & $0$ & $0$\\
$31$ & $( 4, 13, 7, 6 )$ & $D_4\times C_2$ & $[ 16, 11 ]$ & $6$ & $N_{3,6}\times\{\pm 1\}$& $0$ & $0$ & $0$\\
$32$ & $( 4, 13, 7, 7 )$ & $D_4\times C_2$ & $[ 16, 11 ]$ & $7$ & $$& $0$ & $0$ & $0$\\ 
$33$ & $( 4, 13, 7, 8 )$ & $D_4\times C_2$ & $[ 16, 11 ]$ & $7$ & $$& $0$ & $0$ & $0$\\ 
$34$ & $( 4, 13, 8, 3 )$ & $D_4\times C_2$ & $[ 16, 11 ]$ & $7$ & $$& $0$ & $0$ & $0$\\ 
$35$ & $( 4, 13, 8, 4 )$ & $D_4\times C_2$ & $[ 16, 11 ]$ & $5$ & $N_{3,5}\times\{\pm 1\}$& $0$ & $0$ & $0$\\
$36$ & $( 4, 13, 9, 3 )$ & $D_4\times C_2$ & $[ 16, 11 ]$ & $7$ & $$& $0$ & $0$ & $0$\\ 
$37$ & $( 4, 13, 10, 3 )$ & $D_4\times C_2^2$ & $[ 32, 46 ]$ & $7$ & $N_{3,7}\times\{\pm 1\}$& $0$ & $0$ & $0$\\
$38$ & $( 4, 24, 1, 5 )$ & $A_4$ & $[ 12, 3 ]$ & $8$ & $N_{3,8}\times\{1\}$& $\bZ/2\bZ$ & $\bZ/2\bZ$ & $0$\\
$39$ & $( 4, 24, 2, 3 )$ & $A_4\times C_2$ & $[ 24, 13 ]$ & $10$ & $$& $0$ & $0$ & $0$\\ 
$40$ & $( 4, 24, 2, 5 )$ & $A_4\times C_2$ & $[ 24, 13 ]$ & $9$ & $$& $0$ & $0$ & $0$\\ 
$41$ & $( 4, 24, 3, 5 )$ & $S_4$ & $[ 24, 12 ]$ & $13$ & $N_{3,13}\times\{1\}$& $0$ & $0$ & $0$\\
$42$ & $( 4, 24, 4, 3 )$ & $S_4$ & $[ 24, 12 ]$ & $12$ & $$& $0$ & $0$ & $0$\\ 
$43$ & $( 4, 24, 4, 5 )$ & $S_4$ & $[ 24, 12 ]$ & $11$ & $$& $0$ & $0$ & $0$\\ 
$44$ & $( 4, 24, 5, 3 )$ & $S_4\times C_2$ & $[ 48, 48 ]$ & $15$ & $$& $0$ & $0$ & $0$\\ 
$45$ & $( 4, 24, 5, 5 )$ & $S_4\times C_2$ & $[ 48, 48 ]$ & $14$ & $$& $0$ & $0$ & $0$\\ 
$46$ & $( 4, 25, 1, 2 )$ & $A_4\times C_2$ & $[ 24, 13 ]$ & $10$ & $N_{3,10}\times\{1\}$& $0$ & $0$ & $0$\\
$47$ & $( 4, 25, 1, 4 )$ & $A_4\times C_2$ & $[ 24, 13 ]$ & $9$ & $N_{3,9}\times\{1\}$& $0$ & $0$ & $0$\\
$48$ & $( 4, 25, 2, 4 )$ & $A_4\times C_2$ & $[ 24, 13 ]$ & $8$ & $N_{3,8}\times\{\pm 1\}$& $\bZ/2\bZ$ & $\bZ/2\bZ$ & $0$\\
$49$ & $( 4, 25, 3, 2 )$ & $S_4$ & $[ 24, 12 ]$ & $12$ & $N_{3,12}\times\{1\}$& $0$ & $0$ & $0$\\
$50$ & $( 4, 25, 3, 4 )$ & $S_4$ & $[ 24, 12 ]$ & $11$ & $N_{3,11}\times\{1\}$& $0$ & $0$ & $0$\\
$51$ & $( 4, 25, 4, 4 )$ & $S_4$ & $[ 24, 12 ]$ & $13$ & $$& $0$ & $0$ & $0$\\ 
$52$ & $( 4, 25, 5, 2 )$ & $A_4 \times C_2^2$ & $[ 48, 49 ]$ & $10$ & $N_{3,10}\times\{\pm 1\}$& $0$ & $0$ & $0$\\
$53$ & $( 4, 25, 5, 4 )$ & $A_4 \times C_2^2$ & $[ 48, 49 ]$ & $9$ & $N_{3,9}\times\{\pm 1\}$& $0$ & $0$ & $0$\\
$54$ & $( 4, 25, 6, 2 )$ & $S_4\times C_2$ & $[ 48, 48 ]$ & $15$ & $$& $0$ & $0$ & $0$\\ 
$55$ & $( 4, 25, 6, 4 )$ & $S_4\times C_2$ & $[ 48, 48 ]$ & $14$ & $$& $0$ & $0$ & $0$\\ 
$56$ & $( 4, 25, 7, 2 )$ & $S_4\times C_2$ & $[ 48, 48 ]$ & $15$ & $N_{3,15}\times\{1\}$& $0$ & $0$ & $0$\\
$57$ & $( 4, 25, 7, 4 )$ & $S_4\times C_2$ & $[ 48, 48 ]$ & $14$ & $N_{3,14}\times\{1\}$& $0$ & $0$ & $0$\\
$58$ & $( 4, 25, 8, 2 )$ & $S_4\times C_2$ & $[ 48, 48 ]$ & $15$ & $$& $0$ & $0$ & $0$\\ 
$59$ & $( 4, 25, 8, 4 )$ & $S_4\times C_2$ & $[ 48, 48 ]$ & $14$ & $$& $0$ & $0$ & $0$\\ 
$60$ & $( 4, 25, 9, 4 )$ & $S_4\times C_2$ & $[ 48, 48 ]$ & $13$ & $N_{3,13}\times\{\pm 1\}$& $0$ & $0$ & $0$\\
$61$ & $( 4, 25, 10, 2 )$ & $S_4\times C_2$ & $[ 48, 48 ]$ & $12$ & $N_{3,12}\times\{\pm 1\}$& $0$ & $0$ & $0$\\
$62$ & $( 4, 25, 10, 4 )$ & $S_4\times C_2$ & $[ 48, 48 ]$ & $11$ & $N_{3,11}\times\{\pm 1\}$& $0$ & $0$ & $0$\\
$63$ & $( 4, 25, 11, 2 )$ & $S_4 \times C_2^2$ & $[ 96, 226 ]$ & $15$ & $N_{3,15}\times\{\pm 1\}$& $0$ & $0$ & $0$\\
$64$ & $( 4, 25, 11, 4 )$ & $S_4 \times C_2^2$ & $[ 96, 226 ]$ & $14$ & $N_{3,14}\times\{\pm 1\}$ & $0$ & $0$ & $0$
\end{longtable}
}
\vspace*{2mm}

Table $8$: 
Groups $G=N_{4,i}$ $(1\leq i\leq 152)$ 
with $L(M)^G$ not retract $k$-rational, 
rank $M=4$, $M=M_G$: indecomposable
%\\
{\small 
\begin{longtable}{llll|ccc}
$i$ & GAP ID & $N_{4,i}$ & SmallGroup ID 
& $\Sha^1_\omega(G,[M]^{fl})$ & $\Sha^2_\omega(G,([M]^{fl})^\circ)$ & $\Sha^2_\omega(G,[M]^{fl})$\\\hline
\endfirsthead
$1$ & $( 4, 5, 1, 12 )$ & $C_2^2$ & $[ 4, 2 ]$& $\bZ/2\bZ$ & $\bZ/2\bZ$ & $0$\\
$2$ & $( 4, 5, 2, 5 )$ & $C_2^3$ & $[ 8, 5 ]$& $0$ & $0$ & $0$\\
$3$ & $( 4, 5, 2, 8 )$ & $C_2^3$ & $[ 8, 5 ]$& $\bZ/2\bZ$ & $0$ & $0$\\
$4$ & $( 4, 5, 2, 9 )$ & $C_2^3$ & $[ 8, 5 ]$& $0$ & $0$ & $\bZ/2\bZ$\\
$5$ & $( 4, 6, 1, 6 )$ & $C_2^3$ & $[ 8, 5 ]$& $0$ & $0$ & $0$\\
$6$ & $( 4, 6, 1, 11 )$ & $C_2^3$ & $[ 8, 5 ]$& $0$ & $0$ & $0$\\
$7$ & $( 4, 6, 2, 6 )$ & $C_2^3$ & $[ 8, 5 ]$& $0$ & $0$ & $0$\\
$8$ & $( 4, 6, 2, 10 )$ & $C_2^3$ & $[ 8, 5 ]$& $\bZ/2\bZ$ & $0$ & $0$\\
$9$ & $( 4, 6, 2, 12 )$ & $C_2^3$ & $[ 8, 5 ]$& $0$ & $\bZ/2\bZ$ & $0$\\
$10$ & $( 4, 6, 3, 4 )$ & $C_2^4$ & $[ 16, 14 ]$& $0$ & $0$ & $0$\\
$11$ & $( 4, 6, 3, 7 )$ & $C_2^4$ & $[ 16, 14 ]$& $0$ & $0$ & $0$\\
$12$ & $( 4, 6, 3, 8 )$ & $C_2^4$ & $[ 16, 14 ]$& $0$ & $0$ & $0$\\
$13$ & $( 4, 12, 2, 5 )$ & $C_4 \times C_2$ & $[ 8, 2 ]$& $0$ & $0$ & $\bZ/2\bZ$\\
$14$ & $( 4, 12, 2, 6 )$ & $C_4 \times C_2$ & $[ 8, 2 ]$& $\bZ/2\bZ$ & $0$ & $\bZ/2\bZ$\\
$15$ & $( 4, 12, 3, 11 )$ & $D_4$ & $[ 8, 3 ]$& $0$ & $0$ & $0$\\
$16$ & $( 4, 12, 4, 10 )$ & $D_4$ & $[ 8, 3 ]$& $0$ & $0$ & $0$\\
$17$ & $( 4, 12, 4, 11 )$ & $D_4$ & $[ 8, 3 ]$& $0$ & $0$ & $0$\\
$18$ & $( 4, 12, 4, 12 )$ & $D_4$ & $[ 8, 3 ]$& $\bZ/2\bZ$ & $\bZ/2\bZ$ & $0$\\
$19$ & $( 4, 12, 5, 8 )$ & $D_4\times C_2$ & $[ 16, 11 ]$& $0$ & $0$ & $0$\\
$20$ & $( 4, 12, 5, 9 )$ & $D_4\times C_2$ & $[ 16, 11 ]$& $0$ & $0$ & $0$\\
$21$ & $( 4, 12, 5, 10 )$ & $D_4\times C_2$ & $[ 16, 11 ]$& $\bZ/2\bZ$ & $0$ & $0$\\
$22$ & $( 4, 12, 5, 11 )$ & $D_4\times C_2$ & $[ 16, 11 ]$& $0$ & $0$ & $0$\\
$23$ & $( 4, 13, 1, 5 )$ & $C_4 \times C_2$ & $[ 8, 2 ]$& $0$ & $0$ & $\bZ/2\bZ$\\
$24$ & $( 4, 13, 2, 5 )$ & $C_4 \times C_2$ & $[ 8, 2 ]$& $0$ & $0$ & $\bZ/2\bZ$\\
$25$ & $( 4, 13, 3, 5 )$ & $D_4$ & $[ 8, 3 ]$& $0$ & $0$ & $0$\\
$26$ & $( 4, 13, 4, 5 )$ & $D_4$ & $[ 8, 3 ]$& $0$ & $0$ & $0$\\
$27$ & $( 4, 13, 5, 4 )$ & $C_4 \times C_2^2$ & $[ 16, 10 ]$& $0$ & $0$ & $(\bZ/2\bZ)^{\oplus 2}$\\
$28$ & $( 4, 13, 5, 5 )$ & $C_4 \times C_2^2$ & $[ 16, 10 ]$& $0$ & $0$ & $0$\\
$29$ & $( 4, 13, 6, 5 )$ & $D_4\times C_2$ & $[ 16, 11 ]$& $0$ & $0$ & $0$\\
$30$ & $( 4, 13, 7, 9 )$ & $D_4\times C_2$ & $[ 16, 11 ]$& $0$ & $0$ & $0$\\
$31$ & $( 4, 13, 7, 10 )$ & $D_4\times C_2$ & $[ 16, 11 ]$& $0$ & $0$ & $0$\\
$32$ & $( 4, 13, 7, 11 )$ & $D_4\times C_2$ & $[ 16, 11 ]$& $0$ & $0$ & $0$\\
$33$ & $( 4, 13, 8, 5 )$ & $D_4\times C_2$ & $[ 16, 11 ]$& $0$ & $0$ & $0$\\
$34$ & $( 4, 13, 8, 6 )$ & $D_4\times C_2$ & $[ 16, 11 ]$& $0$ & $0$ & $0$\\
$35$ & $( 4, 13, 9, 4 )$ & $D_4\times C_2$ & $[ 16, 11 ]$& $0$ & $0$ & $0$\\
$36$ & $( 4, 13, 9, 5 )$ & $D_4\times C_2$ & $[ 16, 11 ]$& $0$ & $0$ & $0$\\
$37$ & $( 4, 13, 10, 4 )$ & $D_4\times C_2^2$ & $[ 32, 46 ]$& $0$ & $0$ & $0$\\
$38$ & $( 4, 13, 10, 5 )$ & $D_4\times C_2^2$ & $[ 32, 46 ]$& $0$ & $0$ & $0$\\
$39$ & $( 4, 18, 1, 3 )$ & $C_4 \times C_2$ & $[ 8, 2 ]$& $\bZ/2\bZ$ & $0$ & $\bZ/2\bZ$\\
$40$ & $( 4, 18, 2, 4 )$ & $(C_4 \times C_2) \rtimes C_2$ & $[ 16, 3 ]$& $0$ & $0$ & $0$\\
$41$ & $( 4, 18, 2, 5 )$ & $(C_4 \times C_2) \rtimes C_2$ & $[ 16, 3 ]$& $0$ & $0$ & $0$\\
$42$ & $( 4, 18, 3, 5 )$ & $(C_4 \times C_2) \rtimes C_2$ & $[ 16, 3 ]$& $0$ & $0$ & $\bZ/2\bZ$\\
$43$ & $( 4, 18, 3, 6 )$ & $(C_4 \times C_2) \rtimes C_2$ & $[ 16, 3 ]$& $0$ & $0$ & $0$\\
$44$ & $( 4, 18, 3, 7 )$ & $(C_4 \times C_2) \rtimes C_2$ & $[ 16, 3 ]$& $0$ & $0$ & $0$\\
$45$ & $( 4, 18, 4, 4 )$ & $D_4\times C_2$ & $[ 16, 11 ]$& $\bZ/2\bZ$ & $0$ & $0$\\
$46$ & $( 4, 18, 4, 5 )$ & $D_4\times C_2$ & $[ 16, 11 ]$& $0$ & $0$ & $0$\\
$47$ & $( 4, 18, 5, 5 )$ & $C_2^4 \rtimes C_2$ & $[ 32, 27 ]$& $0$ & $0$ & $0$\\
$48$ & $( 4, 18, 5, 6 )$ & $C_2^4 \rtimes C_2$ & $[ 32, 27 ]$& $0$ & $0$ & $0$\\
$49$ & $( 4, 18, 5, 7 )$ & $C_2^4 \rtimes C_2$ & $[ 32, 27 ]$& $0$ & $0$ & $0$\\
$50$ & $( 4, 19, 1, 2 )$ & $C_4 \rtimes C_4$ & $[ 16, 4 ]$& $0$ & $0$ & $\bZ/2\bZ$\\
$51$ & $( 4, 19, 2, 2 )$ & $C_4^2$ & $[ 16, 2 ]$& $0$ & $0$ & $\bZ/2\bZ$\\
$52$ & $( 4, 19, 3, 2 )$ & $D_4 \times C_4$ & $[ 32, 25 ]$& $0$ & $0$ & $\bZ/2\bZ$\\
$53$ & $( 4, 19, 4, 3 )$ & $(C_4 \times C_2^2) \rtimes C_2$ & $[ 32, 28 ]$& $0$ & $0$ & $0$\\
$54$ & $( 4, 19, 4, 4 )$ & $(C_4 \times C_2^2) \rtimes C_2$ & $[ 32, 28 ]$& $0$ & $0$ & $\bZ/2\bZ$\\
$55$ & $( 4, 19, 5, 2 )$ & $C_4^2 \rtimes C_2$ & $[ 32, 34 ]$& $0$ & $0$ & $0$\\
$56$ & $( 4, 19, 6, 2 )$ & $D_4^2$ & $[ 64, 226 ]$& $0$ & $0$ & $0$\\
$57$ & $( 4, 22, 1, 1 )$ & $C_3^2$ & $[ 9, 2 ]$& $0$ & $0$ & $\bZ/3\bZ$\\
$58$ & $( 4, 22, 2, 1 )$ & $C_6 \times C_3$ & $[ 18, 5 ]$& $0$ & $0$ & $0$\\
$59$ & $( 4, 22, 3, 1 )$ & $S_3 \times C_3$ & $[ 18, 3 ]$& $0$ & $0$ & $0$\\
$60$ & $( 4, 22, 4, 1 )$ & $S_3 \times C_3$ & $[ 18, 3 ]$& $0$ & $0$ & $\bZ/3\bZ$\\
$61$ & $( 4, 22, 5, 1 )$ & $C_3^2 \rtimes C_2$ & $[ 18, 4 ]$& $0$ & $0$ & $\bZ/3\bZ$\\
$62$ & $( 4, 22, 5, 2 )$ & $C_3^2 \rtimes C_2$ & $[ 18, 4 ]$& $0$ & $0$ & $0$\\
$63$ & $( 4, 22, 6, 1 )$ & $S_3 \times C_6$ & $[ 36, 12 ]$& $0$ & $0$ & $0$\\
$64$ & $( 4, 22, 7, 1 )$ & $(C_3^2 \rtimes C_2) \times C_2$ & $[ 36, 13 ]$& $0$ & $0$ & $0$\\
$65$ & $( 4, 22, 8, 1 )$ & $S_3^2$ & $[ 36, 10 ]$& $0$ & $0$ & $0$\\
$66$ & $( 4, 22, 9, 1 )$ & $S_3^2$ & $[ 36, 10 ]$& $0$ & $0$ & $0$\\
$67$ & $( 4, 22, 10, 1 )$ & $S_3^2$ & $[ 36, 10 ]$& $0$ & $0$ & $\bZ/3\bZ$\\
$68$ & $( 4, 22, 11, 1 )$ & $S_3^2 \times C_2$ & $[ 72, 46 ]$& $0$ & $0$ & $0$\\
$69$ & $( 4, 24, 2, 4 )$ & $A_4\times C_2$ & $[ 24, 13 ]$& $0$ & $0$ & $0$\\
$70$ & $( 4, 24, 2, 6 )$ & $A_4\times C_2$ & $[ 24, 13 ]$& $0$ & $0$ & $\bZ/2\bZ$\\
$71$ & $( 4, 24, 4, 4 )$ & $S_4$ & $[ 24, 12 ]$& $0$ & $0$ & $0$\\
$72$ & $( 4, 24, 5, 4 )$ & $S_4\times C_2$ & $[ 48, 48 ]$& $0$ & $0$ & $0$\\
$73$ & $( 4, 24, 5, 6 )$ & $S_4\times C_2$ & $[ 48, 48 ]$& $0$ & $0$ & $0$\\
$74$ & $( 4, 25, 1, 3 )$ & $A_4\times C_2$ & $[ 24, 13 ]$& $0$ & $0$ & $0$\\
$75$ & $( 4, 25, 2, 3 )$ & $A_4\times C_2$ & $[ 24, 13 ]$& $0$ & $0$ & $0$\\
$76$ & $( 4, 25, 2, 5 )$ & $A_4\times C_2$ & $[ 24, 13 ]$& $0$ & $\bZ/2\bZ$ & $0$\\
$77$ & $( 4, 25, 3, 3 )$ & $S_4$ & $[ 24, 12 ]$& $0$ & $0$ & $0$\\
$78$ & $( 4, 25, 4, 3 )$ & $S_4$ & $[ 24, 12 ]$& $0$ & $0$ & $0$\\
$79$ & $( 4, 25, 5, 3 )$ & $A_4 \times C_2^2$ & $[ 48, 49 ]$& $0$ & $0$ & $0$\\
$80$ & $( 4, 25, 5, 5 )$ & $A_4 \times C_2^2$ & $[ 48, 49 ]$& $0$ & $0$ & $0$\\
$81$ & $( 4, 25, 6, 3 )$ & $S_4\times C_2$ & $[ 48, 48 ]$& $0$ & $0$ & $0$\\
$82$ & $( 4, 25, 6, 5 )$ & $S_4\times C_2$ & $[ 48, 48 ]$& $0$ & $0$ & $0$\\
$83$ & $( 4, 25, 7, 3 )$ & $S_4\times C_2$ & $[ 48, 48 ]$& $0$ & $0$ & $0$\\
$84$ & $( 4, 25, 8, 3 )$ & $S_4\times C_2$ & $[ 48, 48 ]$& $0$ & $0$ & $0$\\
$85$ & $( 4, 25, 9, 3 )$ & $S_4\times C_2$ & $[ 48, 48 ]$& $0$ & $0$ & $0$\\
$86$ & $( 4, 25, 9, 5 )$ & $S_4\times C_2$ & $[ 48, 48 ]$& $0$ & $0$ & $0$\\
$87$ & $( 4, 25, 10, 3 )$ & $S_4\times C_2$ & $[ 48, 48 ]$& $0$ & $0$ & $0$\\
$88$ & $( 4, 25, 10, 5 )$ & $S_4\times C_2$ & $[ 48, 48 ]$& $0$ & $0$ & $0$\\
$89$ & $( 4, 25, 11, 3 )$ & $S_4 \times C_2^2$ & $[ 96, 226 ]$& $0$ & $0$ & $0$\\
$90$ & $( 4, 25, 11, 5 )$ & $S_4 \times C_2^2$ & $[ 96, 226 ]$& $0$ & $0$ & $0$\\
$91$ & $( 4, 29, 1, 1 )$ & $S_3 \times C_3$ & $[ 18, 3 ]$& $0$ & $0$ & $0$\\
$92$ & $( 4, 29, 1, 2 )$ & $S_3 \times C_3$ & $[ 18, 3 ]$& $0$ & $0$ & $\bZ/3\bZ$\\
$93$ & $( 4, 29, 2, 1 )$ & $S_3 \times C_6$ & $[ 36, 12 ]$& $0$ & $0$ & $0$\\
$94$ & $( 4, 29, 3, 1 )$ & $S_3^2$ & $[ 36, 10 ]$& $0$ & $0$ & $0$\\
$95$ & $( 4, 29, 3, 2 )$ & $S_3^2$ & $[ 36, 10 ]$& $0$ & $0$ & $\bZ/3\bZ$\\
$96$ & $( 4, 29, 3, 3 )$ & $S_3^2$ & $[ 36, 10 ]$& $0$ & $0$ & $0$\\
$97$ & $( 4, 29, 4, 1 )$ & $C_3^2 \rtimes C_4$ & $[ 36, 9 ]$& $0$ & $0$ & $\bZ/3\bZ$\\
$98$ & $( 4, 29, 4, 2 )$ & $C_3^2 \rtimes C_4$ & $[ 36, 9 ]$& $0$ & $0$ & $0$\\
$99$ & $( 4, 29, 5, 1 )$ & $S_3^2 \times C_2$ & $[ 72, 46 ]$& $0$ & $0$ & $0$\\
$100$ & $( 4, 29, 6, 1 )$ & $(C_3^2 \rtimes C_4) \times C_2$ & $[72, 45 ]$& $0$ & $0$ & $0$\\
$101$ & $( 4, 29, 7, 1 )$ & $S_3^2 \rtimes C_2$ & $[ 72, 40 ]$& $0$ & $0$ & $0$\\
$102$ & $( 4, 29, 7, 2 )$ & $S_3^2 \rtimes C_2$ & $[ 72, 40 ]$& $0$ & $0$ & $0$\\
$103$ & $( 4, 29, 8, 1 )$ & $S_3^2 \rtimes C_2$ & $[ 72, 40 ]$& $0$ & $0$ & $0$\\
$104$ & $( 4, 29, 8, 2 )$ & $S_3^2 \rtimes C_2$ & $[ 72, 40 ]$& $0$ & $0$ & $\bZ/3\bZ$\\
$105$ & $( 4, 29, 9, 1 )$ & $(S_3^2 \rtimes C_2) \times C_2$ & $[ 144, 186 ]$& $0$ & $0$ & $0$\\
$106$ & $( 4, 32, 1, 2 )$ & $Q_8$ & $[ 8, 4 ]$& $(\bZ/2\bZ)^{\oplus 2}$ & $0$ & $(\bZ/2\bZ)^{\oplus 2}$\\
$107$ & $( 4, 32, 2, 2 )$ & $C_8 \rtimes C_2$ & $[ 16, 6 ]$& $\bZ/2\bZ$ & $0$ & $\bZ/2\bZ$\\
$108$ & $( 4, 32, 3, 2 )$ & $QD_8$ & $[ 16, 8 ]$& $\bZ/2\bZ$ & $0$ & $\bZ/2\bZ$\\
$109$ & $( 4, 32, 4, 2 )$ & $(C_4 \times C_2) \rtimes C_2$ & $[ 16, 13 ]$& $\bZ/2\bZ$ & $0$ & $0$\\
$110$ & $( 4, 32, 5, 2 )$ & $\SL(2,\bF_3)$ & $[ 24, 3 ]$& $0$ & $0$ & $0$\\
$111$ & $( 4, 32, 5, 3 )$ & $\SL(2,\bF_3)$ & $[ 24, 3 ]$& $0$ & $0$ & $(\bZ/2\bZ)^{\oplus 2}$\\
$112$ & $( 4, 32, 6, 2 )$ & $C_8 \rtimes C_2^2$ & $[ 32, 43 ]$& $\bZ/2\bZ$ & $0$ & $0$\\
$113$ & $( 4, 32, 7, 2 )$ & $(C_8 \rtimes C_2) \rtimes C_2$ & $[ 32, 7 ]$& $0$ & $0$ & $0$\\
$114$ & $( 4, 32, 8, 2 )$ & $C_4^2 \rtimes C_2$ & $[ 32, 11 ]$& $0$ & $0$ & $0$\\
$115$ & $( 4, 32, 9, 4 )$ & $C_2^3 \rtimes C_4$ & $[ 32, 6 ]$& $0$ & $0$ & $0$\\
$116$ & $( 4, 32, 9, 5 )$ & $C_2^3 \rtimes C_4$ & $[ 32, 6 ]$& $0$ & $0$ & $0$\\
$117$ & $( 4, 32, 10, 2 )$ & $C_2^3 \rtimes C_2^2$ & $[ 32, 49 ]$& $0$ & $0$ & $0$\\
$118$ & $( 4, 32, 11, 2 )$ & $\GL(2,\bF_3)$ & $[ 48, 29 ]$& $0$ & $0$ & $0$\\
$119$ & $( 4, 32, 11, 3 )$ & $\GL(2,\bF_3)$ & $[ 48, 29 ]$& $0$ & $0$ & $\bZ/2\bZ$\\
$120$ & $( 4, 32, 12, 2 )$ & $((C_8 \rtimes C_2) \rtimes C_2) \rtimes C_2$ & $[ 64, 134 ]$& $0$ & $0$ & $0$\\
$121$ & $( 4, 32, 13, 3 )$ & $((C_8 \rtimes C_2) \rtimes C_2) \rtimes C_2$ & $[ 64, 32 ]$& $0$ & $0$ & $0$\\
$122$ & $( 4, 32, 13, 4 )$ & $((C_8 \rtimes C_2) \rtimes C_2) \rtimes C_2$ & $[ 64, 32 ]$& $0$ & $0$ & $0$\\
$123$ & $( 4, 32, 14, 3 )$ & $(C_2^4 \rtimes C_2) \rtimes C_2$ & $[ 64, 138 ]$& $0$ & $0$ & $0$\\
$124$ & $( 4, 32, 14, 4 )$ & $(C_2^4 \rtimes C_2) \rtimes C_2$ & $[ 64, 138 ]$& $0$ & $0$ & $0$\\
$125$ & $( 4, 32, 15, 2 )$ & $(C_2^3 \rtimes C_4) \rtimes C_2$ & $[ 64, 34 ]$& $0$ & $0$ & $0$\\
$126$ & $( 4, 32, 16, 2 )$ & $(C_2^3 \rtimes C_2^2) \rtimes C_3$ & $[ 96, 204 ]$& $0$ & $0$ & $0$\\
$127$ & $( 4, 32, 16, 3 )$ & $(C_2^3 \rtimes C_2^2) \rtimes C_3$ & $[ 96, 204 ]$& $0$ & $0$ & $0$\\
$128$ & $( 4, 32, 17, 2 )$ & $D_4^2 \rtimes C_2$ & $[ 128, 928 ]$& $0$ & $0$ & $0$\\
$129$ & $( 4, 32, 18, 2 )$ & $((C_2^4 \rtimes C_2) \rtimes C_2) \rtimes C_3$ & $[ 192, 201 ]$& $0$ & $0$ & $0$\\
$130$ & $( 4, 32, 18, 3 )$ & $((C_2^4 \rtimes C_2) \rtimes C_2) \rtimes C_3$ & $[ 192, 201 ]$& $0$ & $0$ & $0$\\
$131$ & $( 4, 32, 19, 2 )$ & $((C_2^3 \rtimes C_2^2) \rtimes C_3) \rtimes C_2$ & $[ 192, 1493 ]$& $0$ & $0$ & $0$\\
$132$ & $( 4, 32, 19, 3 )$ & $((C_2^3 \rtimes C_2^2) \rtimes C_3) \rtimes C_2$ & $[ 192, 1493 ]$& $0$ & $0$ & $0$\\
$133$ & $( 4, 32, 20, 2 )$ & $((C_2^3 \rtimes C_2^2) \rtimes C_3) \rtimes C_2$ & $[ 192, 1494 ]$& $0$ & $0$ & $0$\\
$134$ & $( 4, 32, 20, 3 )$ & $((C_2^3 \rtimes C_2^2) \rtimes C_3) \rtimes C_2$ & $[ 192, 1494 ]$& $0$ & $0$ & $0$\\
$135$ & $( 4, 32, 21, 2 )$ & $(((C_2^3 \rtimes C_2^2) \rtimes C_3) \rtimes C_2) \rtimes C_2$ & $[ 384, 5602 ]$& $0$ & $0$ & $0$\\
$136$ & $( 4, 32, 21, 3 )$ & $(((C_2^3 \rtimes C_2^2) \rtimes C_3) \rtimes C_2) \rtimes C_2$ & $[ 384, 5602 ]$& $0$ & $0$ & $0$\\
$137$ & $( 4, 33, 1, 1 )$ & $Q_8 \times C_3$ & $[ 24, 11 ]$& $0$ & $0$ & $0$\\
$138$ & $( 4, 33, 3, 1 )$ & $\SL(2,\bF_3)$ & $[ 24, 3 ]$& $(\bZ/2\bZ)^{\oplus 2}$ & $0$ & $0$\\
$139$ & $( 4, 33, 4, 1 )$ & $(Q_8 \times C_3) \rtimes C_2$ & $[ 48, 17 ]$& $0$ & $0$ & $0$\\
$140$ & $( 4, 33, 5, 1 )$ & $((C_4 \times C_2) \rtimes C_2) \rtimes C_3$ & $[ 48, 33 ]$& $\bZ/2\bZ$ & $0$ & $0$\\
$141$ & $( 4, 33, 6, 1 )$ & $\GL(2,\bF_3)$ & $[ 48, 29 ]$& $\bZ/2\bZ$ & $0$ & $0$\\
$142$ & $( 4, 33, 7, 1 )$ & $\SL(2,\bF_3) \times C_3$ & $[ 72, 25 ]$& $0$ & $0$ & $0$\\
$143$ & $( 4, 33, 8, 1 )$ & $(C_2^3 \rtimes C_2^2) \rtimes C_3$ & $[ 96, 201 ]$& $0$ & $0$ & $0$\\
$144$ & $( 4, 33, 9, 1 )$ & $\GL(2,\bF_3) \rtimes C_2$ & $[ 96, 193 ]$& $\bZ/2\bZ$ & $0$ & $0$\\
$145$ & $( 4, 33, 10, 1 )$ & $\SL(2,\bF_3) \rtimes C_4$ & $[ 96, 67 ]$& $0$ & $0$ & $0$\\
$146$ & $( 4, 33, 11, 1 )$ & $(\SL(2,\bF_3) \times C_3) \rtimes C_2$ & $[ 144, 125 ]$& $0$ & $0$ & $0$\\
$147$ & $( 4, 33, 12, 1 )$ & $(\GL(2,\bF_3) \rtimes C_2) \rtimes C_2$ & $[192, 988 ]$& $0$ & $0$ & $0$\\
$148$ & $( 4, 33, 13, 1 )$ & $(C_2^3 \rtimes C_2^2) \rtimes C_3^2$ & $[ 288, 860 ]$& $0$ & $0$ & $0$\\
$149$ & $( 4, 33, 14, 1 )$ & $(((C_2^3 \rtimes C_2^2) \rtimes C_3) \rtimes C_2) \rtimes C_3$ & $[ 576, 8277 ]$& $0$ & $0$ & $0$\\
$150$ & $( 4, 33, 14, 2 )$ & $(((C_2^3 \rtimes C_2^2) \rtimes C_3) \rtimes C_2) \rtimes C_3$ & $[ 576, 8277 ]$& $0$ & $0$ & $0$\\
$151$ & $( 4, 33, 15, 1 )$ & $((C_2^3 \rtimes C_2^2) \rtimes C_3^2) \rtimes C_2$ & $[ 576, 8282 ]$& $0$ & $0$ & $0$\\
$152$ & $( 4, 33, 16, 1 )$ & $(((C_2^3 \rtimes C_2^2) \rtimes C_3^2) \rtimes C_2) \rtimes C_2$ & $[ 1152, 157478 ]$ & $0$ & $0$ & $0$
\end{longtable}
}
\vspace*{2mm}

%\newpage
Table $9$: 
Groups $G=I_{4,i}$ $(1\leq i\leq 7)$ 
with $L(M)^G$ not stably $k$-rational but retract $k$-rational, 
rank $M=4$, $M=M_G$: indecomposable
%\\
{\small 
\begin{longtable}{llll|ccc}
$i$ & GAP ID & $I_{4,i}$ & SmallGroup ID 
& $\Sha^1_\omega(G,[M]^{fl})$ & $\Sha^2_\omega(G,([M]^{fl})^\circ)$ & $\Sha^2_\omega(G,[M]^{fl})$\\\hline
\endfirsthead
$1$ & $( 4, 31, 1, 3 )$ & $F_{20}$ & $[ 20, 3 ]$& $0$ & $0$ & $0$\\
$2$ & $( 4, 31, 1, 4 )$ & $F_{20}$ & $[ 20, 3 ]$& $0$ & $0$ & $0$\\
$3$ & $( 4, 31, 2, 2 )$ & $F_{20} \times C_2$ & $[ 40, 12 ]$& $0$ & $0$ & $0$\\
$4$ & $( 4, 31, 4, 2 )$ & $S_5$ & $[ 120, 34 ]$& $0$ & $0$ & $0$\\
$5$ & $( 4, 31, 5, 2 )$ & $S_5$ & $[ 120, 34 ]$& $0$ & $0$ & $0$\\
$6$ & $( 4, 31, 7, 2 )$ & $S_5 \times C_2$ & $[ 240, 189 ]$& $0$ & $0$ & $0$\\ 
$7$ & $( 4, 33, 2, 1 )$ & $C_3 \rtimes C_8$ & $[ 24, 1 ]$ & $0$ & $0$ & $0$
\end{longtable}
}
\vspace*{2mm}
%%%%%%%%%%%%%%%%%%%%%%%%%%%%%%%%%%%%%%%%%

Tables $6, 7, 8, 9$ can be obtained by GAP computations as follows: 

\bigskip
\begin{example}[The groups $N_{3,i}$ as in Table $6$]~\vspace*{-5mm}\\
\begin{verbatim}
gap> Read("BCAlgTori.gap");
gap> N3table:=List(N3,x->[Position(N3,x),x,
> StructureDescription(MatGroupZClass(x[1],x[2],x[3],x[4])),
> IdSmallGroup(MatGroupZClass(x[1],x[2],x[3],x[4]))]);
[ [ 1, [ 3, 3, 1, 3 ], "C2 x C2", [ 4, 2 ] ], 
  [ 2, [ 3, 3, 3, 3 ], "C2 x C2 x C2", [ 8, 5 ] ], 
  [ 3, [ 3, 3, 3, 4 ], "C2 x C2 x C2", [ 8, 5 ] ], 
  [ 4, [ 3, 4, 3, 2 ], "C4 x C2", [ 8, 2 ] ], 
  [ 5, [ 3, 4, 4, 2 ], "D8", [ 8, 3 ] ], 
  [ 6, [ 3, 4, 6, 3 ], "D8", [ 8, 3 ] ], 
  [ 7, [ 3, 4, 7, 2 ], "C2 x D8", [ 16, 11 ] ], 
  [ 8, [ 3, 7, 1, 2 ], "A4", [ 12, 3 ] ], 
  [ 9, [ 3, 7, 2, 2 ], "C2 x A4", [ 24, 13 ] ], 
  [ 10, [ 3, 7, 2, 3 ], "C2 x A4", [ 24, 13 ] ], 
  [ 11, [ 3, 7, 3, 2 ], "S4", [ 24, 12 ] ], 
  [ 12, [ 3, 7, 3, 3 ], "S4", [ 24, 12 ] ], 
  [ 13, [ 3, 7, 4, 2 ], "S4", [ 24, 12 ] ], 
  [ 14, [ 3, 7, 5, 2 ], "C2 x S4", [ 48, 48 ] ], 
  [ 15, [ 3, 7, 5, 3 ], "C2 x S4", [ 48, 48 ] ] ]
\end{verbatim}
\end{example}

\bigskip
%The sixth column $N_{31,i}$ of Table $7$ can be obtained by GAP computations as follows:
\begin{example}[The groups $N_{31,i}$ as in Table $7$]~\vspace*{-5mm}\\
\begin{verbatim}
gap> Read("BCAlgTori.gap");
gap> N31table:=List(N31,x->[Position(N31,x),x,
> StructureDescription(MatGroupZClass(x[1],x[2],x[3],x[4])),
> IdSmallGroup(MatGroupZClass(x[1],x[2],x[3],x[4])),0,0]);;
gap> for x in N3 do
> i:=Position(N3,x);
> N3M1:=AllSubdirectProducts(MatGroupZClass(x[1],x[2],x[3],x[4]),Group([[-1]]));
> for G in N3M1 do
> j:=Position(N31,CrystCatZClass(SubdirectProduct2Matrix(G)));
> if N31table[j][5]<>0 then Print("error"); fi;
> N31table[j][5]:=i;
> if Order(G)=Order(MatGroupZClass(x[1],x[2],x[3],x[4])) then N31table[j][6]:=-1;
> else N31table[j][6]:=2; fi;
> od;od;
gap> for x in N3 do
> i:=Position(N3,x);
> N3Z:=AllSubdirectProducts(MatGroupZClass(x[1],x[2],x[3],x[4]),Group([[1]]));
> for G in N3Z do
> j:=Position(N31,CrystCatZClass(SubdirectProduct2Matrix(G)));
> if N31table[j][5]<>0 then Print("error"); fi;
> N31table[j][5]:=i;
> N31table[j][6]:=1;
> od;od;
gap> N31table;
[ [ 1, [ 4, 4, 3, 6 ], "C2 x C2", [ 4, 2 ], 1, -1 ], 
  [ 2, [ 4, 4, 4, 4 ], "C2 x C2 x C2", [ 8, 5 ], 3, -1 ], 
  [ 3, [ 4, 4, 4, 6 ], "C2 x C2 x C2", [ 8, 5 ], 2, -1 ], 
  [ 4, [ 4, 5, 1, 9 ], "C2 x C2", [ 4, 2 ], 1, 1 ], 
  [ 5, [ 4, 5, 2, 4 ], "C2 x C2 x C2", [ 8, 5 ], 3, -1 ], 
  [ 6, [ 4, 5, 2, 7 ], "C2 x C2 x C2", [ 8, 5 ], 2, -1 ], 
  [ 7, [ 4, 6, 1, 4 ], "C2 x C2 x C2", [ 8, 5 ], 3, 1 ], 
  [ 8, [ 4, 6, 1, 8 ], "C2 x C2 x C2", [ 8, 5 ], 2, 1 ], 
  [ 9, [ 4, 6, 2, 4 ], "C2 x C2 x C2", [ 8, 5 ], 3, -1 ], 
  [ 10, [ 4, 6, 2, 8 ], "C2 x C2 x C2", [ 8, 5 ], 2, -1 ], 
  [ 11, [ 4, 6, 2, 9 ], "C2 x C2 x C2", [ 8, 5 ], 1, 2 ], 
  [ 12, [ 4, 6, 3, 3 ], "C2 x C2 x C2 x C2", [ 16, 14 ], 3, 2 ], 
  [ 13, [ 4, 6, 3, 6 ], "C2 x C2 x C2 x C2", [ 16, 14 ], 2, 2 ], 
  [ 14, [ 4, 7, 3, 2 ], "C4 x C2", [ 8, 2 ], 4, -1 ], 
  [ 15, [ 4, 7, 4, 3 ], "D8", [ 8, 3 ], 6, -1 ], 
  [ 16, [ 4, 7, 5, 2 ], "D8", [ 8, 3 ], 5, -1 ], 
  [ 17, [ 4, 7, 7, 2 ], "C2 x D8", [ 16, 11 ], 7, -1 ], 
  [ 18, [ 4, 12, 2, 4 ], "C4 x C2", [ 8, 2 ], 4, -1 ], 
  [ 19, [ 4, 12, 3, 7 ], "D8", [ 8, 3 ], 6, 1 ], 
  [ 20, [ 4, 12, 4, 6 ], "D8", [ 8, 3 ], 6, -1 ], 
  [ 21, [ 4, 12, 4, 8 ], "D8", [ 8, 3 ], 5, -1 ], 
  [ 22, [ 4, 12, 4, 9 ], "D8", [ 8, 3 ], 5, -1 ], 
  [ 23, [ 4, 12, 5, 6 ], "C2 x D8", [ 16, 11 ], 7, -1 ], 
  [ 24, [ 4, 12, 5, 7 ], "C2 x D8", [ 16, 11 ], 7, -1 ], 
  [ 25, [ 4, 13, 1, 3 ], "C4 x C2", [ 8, 2 ], 4, 1 ], 
  [ 26, [ 4, 13, 2, 4 ], "C4 x C2", [ 8, 2 ], 4, -1 ], 
  [ 27, [ 4, 13, 3, 4 ], "D8", [ 8, 3 ], 6, -1 ], 
  [ 28, [ 4, 13, 4, 3 ], "D8", [ 8, 3 ], 5, 1 ], 
  [ 29, [ 4, 13, 5, 3 ], "C4 x C2 x C2", [ 16, 10 ], 4, 2 ], 
  [ 30, [ 4, 13, 6, 3 ], "C2 x D8", [ 16, 11 ], 7, 1 ], 
  [ 31, [ 4, 13, 7, 6 ], "C2 x D8", [ 16, 11 ], 6, 2 ], 
  [ 32, [ 4, 13, 7, 7 ], "C2 x D8", [ 16, 11 ], 7, -1 ], 
  [ 33, [ 4, 13, 7, 8 ], "C2 x D8", [ 16, 11 ], 7, -1 ], 
  [ 34, [ 4, 13, 8, 3 ], "C2 x D8", [ 16, 11 ], 7, -1 ], 
  [ 35, [ 4, 13, 8, 4 ], "C2 x D8", [ 16, 11 ], 5, 2 ], 
  [ 36, [ 4, 13, 9, 3 ], "C2 x D8", [ 16, 11 ], 7, -1 ], 
  [ 37, [ 4, 13, 10, 3 ], "C2 x C2 x D8", [ 32, 46 ], 7, 2 ], 
  [ 38, [ 4, 24, 1, 5 ], "A4", [ 12, 3 ], 8, 1 ], 
  [ 39, [ 4, 24, 2, 3 ], "C2 x A4", [ 24, 13 ], 10, -1 ], 
  [ 40, [ 4, 24, 2, 5 ], "C2 x A4", [ 24, 13 ], 9, -1 ], 
  [ 41, [ 4, 24, 3, 5 ], "S4", [ 24, 12 ], 13, 1 ], 
  [ 42, [ 4, 24, 4, 3 ], "S4", [ 24, 12 ], 12, -1 ], 
  [ 43, [ 4, 24, 4, 5 ], "S4", [ 24, 12 ], 11, -1 ], 
  [ 44, [ 4, 24, 5, 3 ], "C2 x S4", [ 48, 48 ], 15, -1 ], 
  [ 45, [ 4, 24, 5, 5 ], "C2 x S4", [ 48, 48 ], 14, -1 ], 
  [ 46, [ 4, 25, 1, 2 ], "C2 x A4", [ 24, 13 ], 10, 1 ], 
  [ 47, [ 4, 25, 1, 4 ], "C2 x A4", [ 24, 13 ], 9, 1 ], 
  [ 48, [ 4, 25, 2, 4 ], "C2 x A4", [ 24, 13 ], 8, 2 ], 
  [ 49, [ 4, 25, 3, 2 ], "S4", [ 24, 12 ], 12, 1 ], 
  [ 50, [ 4, 25, 3, 4 ], "S4", [ 24, 12 ], 11, 1 ], 
  [ 51, [ 4, 25, 4, 4 ], "S4", [ 24, 12 ], 13, -1 ], 
  [ 52, [ 4, 25, 5, 2 ], "C2 x C2 x A4", [ 48, 49 ], 10, 2 ], 
  [ 53, [ 4, 25, 5, 4 ], "C2 x C2 x A4", [ 48, 49 ], 9, 2 ], 
  [ 54, [ 4, 25, 6, 2 ], "C2 x S4", [ 48, 48 ], 15, -1 ], 
  [ 55, [ 4, 25, 6, 4 ], "C2 x S4", [ 48, 48 ], 14, -1 ], 
  [ 56, [ 4, 25, 7, 2 ], "C2 x S4", [ 48, 48 ], 15, 1 ], 
  [ 57, [ 4, 25, 7, 4 ], "C2 x S4", [ 48, 48 ], 14, 1 ], 
  [ 58, [ 4, 25, 8, 2 ], "C2 x S4", [ 48, 48 ], 15, -1 ], 
  [ 59, [ 4, 25, 8, 4 ], "C2 x S4", [ 48, 48 ], 14, -1 ], 
  [ 60, [ 4, 25, 9, 4 ], "C2 x S4", [ 48, 48 ], 13, 2 ], 
  [ 61, [ 4, 25, 10, 2 ], "C2 x S4", [ 48, 48 ], 12, 2 ], 
  [ 62, [ 4, 25, 10, 4 ], "C2 x S4", [ 48, 48 ], 11, 2 ], 
  [ 63, [ 4, 25, 11, 2 ], "C2 x C2 x S4", [ 96, 226 ], 15, 2 ], 
  [ 64, [ 4, 25, 11, 4 ], "C2 x C2 x S4", [ 96, 226 ], 14, 2 ] ]
\end{verbatim}
\end{example}

\bigskip
\begin{example}[The groups $N_{4,i}$ as in Table $8$]~\vspace*{-5mm}\\
\begin{verbatim}
gap> Read("BCAlgTori.gap");
gap> N4table:=List(N4,x->[Position(N4,x),x,
> StructureDescription(MatGroupZClass(x[1],x[2],x[3],x[4])),
> IdSmallGroup(MatGroupZClass(x[1],x[2],x[3],x[4]))]);
[ [ 1, [ 4, 5, 1, 12 ], "C2 x C2", [ 4, 2 ] ], 
  [ 2, [ 4, 5, 2, 5 ], "C2 x C2 x C2", [ 8, 5 ] ], 
  [ 3, [ 4, 5, 2, 8 ], "C2 x C2 x C2", [ 8, 5 ] ], 
  [ 4, [ 4, 5, 2, 9 ], "C2 x C2 x C2", [ 8, 5 ] ], 
  [ 5, [ 4, 6, 1, 6 ], "C2 x C2 x C2", [ 8, 5 ] ], 
  [ 6, [ 4, 6, 1, 11 ], "C2 x C2 x C2", [ 8, 5 ] ], 
  [ 7, [ 4, 6, 2, 6 ], "C2 x C2 x C2", [ 8, 5 ] ], 
  [ 8, [ 4, 6, 2, 10 ], "C2 x C2 x C2", [ 8, 5 ] ], 
  [ 9, [ 4, 6, 2, 12 ], "C2 x C2 x C2", [ 8, 5 ] ], 
  [ 10, [ 4, 6, 3, 4 ], "C2 x C2 x C2 x C2", [ 16, 14 ] ], 
  [ 11, [ 4, 6, 3, 7 ], "C2 x C2 x C2 x C2", [ 16, 14 ] ], 
  [ 12, [ 4, 6, 3, 8 ], "C2 x C2 x C2 x C2", [ 16, 14 ] ], 
  [ 13, [ 4, 12, 2, 5 ], "C4 x C2", [ 8, 2 ] ], 
  [ 14, [ 4, 12, 2, 6 ], "C4 x C2", [ 8, 2 ] ], 
  [ 15, [ 4, 12, 3, 11 ], "D8", [ 8, 3 ] ], 
  [ 16, [ 4, 12, 4, 10 ], "D8", [ 8, 3 ] ], 
  [ 17, [ 4, 12, 4, 11 ], "D8", [ 8, 3 ] ], 
  [ 18, [ 4, 12, 4, 12 ], "D8", [ 8, 3 ] ], 
  [ 19, [ 4, 12, 5, 8 ], "C2 x D8", [ 16, 11 ] ], 
  [ 20, [ 4, 12, 5, 9 ], "C2 x D8", [ 16, 11 ] ], 
  [ 21, [ 4, 12, 5, 10 ], "C2 x D8", [ 16, 11 ] ], 
  [ 22, [ 4, 12, 5, 11 ], "C2 x D8", [ 16, 11 ] ], 
  [ 23, [ 4, 13, 1, 5 ], "C4 x C2", [ 8, 2 ] ], 
  [ 24, [ 4, 13, 2, 5 ], "C4 x C2", [ 8, 2 ] ], 
  [ 25, [ 4, 13, 3, 5 ], "D8", [ 8, 3 ] ], 
  [ 26, [ 4, 13, 4, 5 ], "D8", [ 8, 3 ] ], 
  [ 27, [ 4, 13, 5, 4 ], "C4 x C2 x C2", [ 16, 10 ] ], 
  [ 28, [ 4, 13, 5, 5 ], "C4 x C2 x C2", [ 16, 10 ] ], 
  [ 29, [ 4, 13, 6, 5 ], "C2 x D8", [ 16, 11 ] ], 
  [ 30, [ 4, 13, 7, 9 ], "C2 x D8", [ 16, 11 ] ], 
  [ 31, [ 4, 13, 7, 10 ], "C2 x D8", [ 16, 11 ] ], 
  [ 32, [ 4, 13, 7, 11 ], "C2 x D8", [ 16, 11 ] ], 
  [ 33, [ 4, 13, 8, 5 ], "C2 x D8", [ 16, 11 ] ], 
  [ 34, [ 4, 13, 8, 6 ], "C2 x D8", [ 16, 11 ] ], 
  [ 35, [ 4, 13, 9, 4 ], "C2 x D8", [ 16, 11 ] ], 
  [ 36, [ 4, 13, 9, 5 ], "C2 x D8", [ 16, 11 ] ], 
  [ 37, [ 4, 13, 10, 4 ], "C2 x C2 x D8", [ 32, 46 ] ], 
  [ 38, [ 4, 13, 10, 5 ], "C2 x C2 x D8", [ 32, 46 ] ], 
  [ 39, [ 4, 18, 1, 3 ], "C4 x C2", [ 8, 2 ] ], 
  [ 40, [ 4, 18, 2, 4 ], "(C4 x C2) : C2", [ 16, 3 ] ], 
  [ 41, [ 4, 18, 2, 5 ], "(C4 x C2) : C2", [ 16, 3 ] ], 
  [ 42, [ 4, 18, 3, 5 ], "(C4 x C2) : C2", [ 16, 3 ] ], 
  [ 43, [ 4, 18, 3, 6 ], "(C4 x C2) : C2", [ 16, 3 ] ], 
  [ 44, [ 4, 18, 3, 7 ], "(C4 x C2) : C2", [ 16, 3 ] ], 
  [ 45, [ 4, 18, 4, 4 ], "C2 x D8", [ 16, 11 ] ], 
  [ 46, [ 4, 18, 4, 5 ], "C2 x D8", [ 16, 11 ] ], 
  [ 47, [ 4, 18, 5, 5 ], "(C2 x C2 x C2 x C2) : C2", [ 32, 27 ] ], 
  [ 48, [ 4, 18, 5, 6 ], "(C2 x C2 x C2 x C2) : C2", [ 32, 27 ] ], 
  [ 49, [ 4, 18, 5, 7 ], "(C2 x C2 x C2 x C2) : C2", [ 32, 27 ] ], 
  [ 50, [ 4, 19, 1, 2 ], "C4 : C4", [ 16, 4 ] ], 
  [ 51, [ 4, 19, 2, 2 ], "C4 x C4", [ 16, 2 ] ], 
  [ 52, [ 4, 19, 3, 2 ], "C4 x D8", [ 32, 25 ] ], 
  [ 53, [ 4, 19, 4, 3 ], "(C4 x C2 x C2) : C2", [ 32, 28 ] ], 
  [ 54, [ 4, 19, 4, 4 ], "(C4 x C2 x C2) : C2", [ 32, 28 ] ], 
  [ 55, [ 4, 19, 5, 2 ], "(C4 x C4) : C2", [ 32, 34 ] ], 
  [ 56, [ 4, 19, 6, 2 ], "D8 x D8", [ 64, 226 ] ], 
  [ 57, [ 4, 22, 1, 1 ], "C3 x C3", [ 9, 2 ] ], 
  [ 58, [ 4, 22, 2, 1 ], "C6 x C3", [ 18, 5 ] ], 
  [ 59, [ 4, 22, 3, 1 ], "C3 x S3", [ 18, 3 ] ], 
  [ 60, [ 4, 22, 4, 1 ], "C3 x S3", [ 18, 3 ] ], 
  [ 61, [ 4, 22, 5, 1 ], "(C3 x C3) : C2", [ 18, 4 ] ], 
  [ 62, [ 4, 22, 5, 2 ], "(C3 x C3) : C2", [ 18, 4 ] ], 
  [ 63, [ 4, 22, 6, 1 ], "C6 x S3", [ 36, 12 ] ], 
  [ 64, [ 4, 22, 7, 1 ], "C2 x ((C3 x C3) : C2)", [ 36, 13 ] ], 
  [ 65, [ 4, 22, 8, 1 ], "S3 x S3", [ 36, 10 ] ], 
  [ 66, [ 4, 22, 9, 1 ], "S3 x S3", [ 36, 10 ] ], 
  [ 67, [ 4, 22, 10, 1 ], "S3 x S3", [ 36, 10 ] ], 
  [ 68, [ 4, 22, 11, 1 ], "C2 x S3 x S3", [ 72, 46 ] ], 
  [ 69, [ 4, 24, 2, 4 ], "C2 x A4", [ 24, 13 ] ], 
  [ 70, [ 4, 24, 2, 6 ], "C2 x A4", [ 24, 13 ] ], 
  [ 71, [ 4, 24, 4, 4 ], "S4", [ 24, 12 ] ], 
  [ 72, [ 4, 24, 5, 4 ], "C2 x S4", [ 48, 48 ] ], 
  [ 73, [ 4, 24, 5, 6 ], "C2 x S4", [ 48, 48 ] ], 
  [ 74, [ 4, 25, 1, 3 ], "C2 x A4", [ 24, 13 ] ], 
  [ 75, [ 4, 25, 2, 3 ], "C2 x A4", [ 24, 13 ] ], 
  [ 76, [ 4, 25, 2, 5 ], "C2 x A4", [ 24, 13 ] ], 
  [ 77, [ 4, 25, 3, 3 ], "S4", [ 24, 12 ] ], 
  [ 78, [ 4, 25, 4, 3 ], "S4", [ 24, 12 ] ], 
  [ 79, [ 4, 25, 5, 3 ], "C2 x C2 x A4", [ 48, 49 ] ], 
  [ 80, [ 4, 25, 5, 5 ], "C2 x C2 x A4", [ 48, 49 ] ], 
  [ 81, [ 4, 25, 6, 3 ], "C2 x S4", [ 48, 48 ] ], 
  [ 82, [ 4, 25, 6, 5 ], "C2 x S4", [ 48, 48 ] ], 
  [ 83, [ 4, 25, 7, 3 ], "C2 x S4", [ 48, 48 ] ], 
  [ 84, [ 4, 25, 8, 3 ], "C2 x S4", [ 48, 48 ] ], 
  [ 85, [ 4, 25, 9, 3 ], "C2 x S4", [ 48, 48 ] ], 
  [ 86, [ 4, 25, 9, 5 ], "C2 x S4", [ 48, 48 ] ], 
  [ 87, [ 4, 25, 10, 3 ], "C2 x S4", [ 48, 48 ] ], 
  [ 88, [ 4, 25, 10, 5 ], "C2 x S4", [ 48, 48 ] ], 
  [ 89, [ 4, 25, 11, 3 ], "C2 x C2 x S4", [ 96, 226 ] ], 
  [ 90, [ 4, 25, 11, 5 ], "C2 x C2 x S4", [ 96, 226 ] ], 
  [ 91, [ 4, 29, 1, 1 ], "C3 x S3", [ 18, 3 ] ], 
  [ 92, [ 4, 29, 1, 2 ], "C3 x S3", [ 18, 3 ] ], 
  [ 93, [ 4, 29, 2, 1 ], "C6 x S3", [ 36, 12 ] ], 
  [ 94, [ 4, 29, 3, 1 ], "S3 x S3", [ 36, 10 ] ], 
  [ 95, [ 4, 29, 3, 2 ], "S3 x S3", [ 36, 10 ] ], 
  [ 96, [ 4, 29, 3, 3 ], "S3 x S3", [ 36, 10 ] ], 
  [ 97, [ 4, 29, 4, 1 ], "(C3 x C3) : C4", [ 36, 9 ] ], 
  [ 98, [ 4, 29, 4, 2 ], "(C3 x C3) : C4", [ 36, 9 ] ], 
  [ 99, [ 4, 29, 5, 1 ], "C2 x S3 x S3", [ 72, 46 ] ], 
  [ 100, [ 4, 29, 6, 1 ], "C2 x ((C3 x C3) : C4)", [ 72, 45 ] ], 
  [ 101, [ 4, 29, 7, 1 ], "(S3 x S3) : C2", [ 72, 40 ] ], 
  [ 102, [ 4, 29, 7, 2 ], "(S3 x S3) : C2", [ 72, 40 ] ], 
  [ 103, [ 4, 29, 8, 1 ], "(S3 x S3) : C2", [ 72, 40 ] ], 
  [ 104, [ 4, 29, 8, 2 ], "(S3 x S3) : C2", [ 72, 40 ] ], 
  [ 105, [ 4, 29, 9, 1 ], "C2 x ((S3 x S3) : C2)", [ 144, 186 ] ], 
  [ 106, [ 4, 32, 1, 2 ], "Q8", [ 8, 4 ] ], 
  [ 107, [ 4, 32, 2, 2 ], "C8 : C2", [ 16, 6 ] ], 
  [ 108, [ 4, 32, 3, 2 ], "QD16", [ 16, 8 ] ], 
  [ 109, [ 4, 32, 4, 2 ], "(C4 x C2) : C2", [ 16, 13 ] ], 
  [ 110, [ 4, 32, 5, 2 ], "SL(2,3)", [ 24, 3 ] ], 
  [ 111, [ 4, 32, 5, 3 ], "SL(2,3)", [ 24, 3 ] ], 
  [ 112, [ 4, 32, 6, 2 ], "C8 : (C2 x C2)", [ 32, 43 ] ], 
  [ 113, [ 4, 32, 7, 2 ], "(C8 : C2) : C2", [ 32, 7 ] ], 
  [ 114, [ 4, 32, 8, 2 ], "(C4 x C4) : C2", [ 32, 11 ] ], 
  [ 115, [ 4, 32, 9, 4 ], "(C2 x C2 x C2) : C4", [ 32, 6 ] ], 
  [ 116, [ 4, 32, 9, 5 ], "(C2 x C2 x C2) : C4", [ 32, 6 ] ], 
  [ 117, [ 4, 32, 10, 2 ], "(C2 x C2 x C2) : (C2 x C2)", [ 32, 49 ] ], 
  [ 118, [ 4, 32, 11, 2 ], "GL(2,3)", [ 48, 29 ] ], 
  [ 119, [ 4, 32, 11, 3 ], "GL(2,3)", [ 48, 29 ] ], 
  [ 120, [ 4, 32, 12, 2 ], "((C8 : C2) : C2) : C2", [ 64, 134 ] ], 
  [ 121, [ 4, 32, 13, 3 ], "((C8 : C2) : C2) : C2", [ 64, 32 ] ], 
  [ 122, [ 4, 32, 13, 4 ], "((C8 : C2) : C2) : C2", [ 64, 32 ] ], 
  [ 123, [ 4, 32, 14, 3 ], "((C2 x C2 x C2 x C2) : C2) : C2", [ 64, 138 ] ], 
  [ 124, [ 4, 32, 14, 4 ], "((C2 x C2 x C2 x C2) : C2) : C2", [ 64, 138 ] ], 
  [ 125, [ 4, 32, 15, 2 ], "((C2 x C2 x C2) : C4) : C2", [ 64, 34 ] ], 
  [ 126, [ 4, 32, 16, 2 ], "((C2 x C2 x C2) : (C2 x C2)) : C3", [ 96, 204 ] ], 
  [ 127, [ 4, 32, 16, 3 ], "((C2 x C2 x C2) : (C2 x C2)) : C3", [ 96, 204 ] ], 
  [ 128, [ 4, 32, 17, 2 ], "(D8 x D8) : C2", [ 128, 928 ] ], 
  [ 129, [ 4, 32, 18, 2 ], "(((C2 x C2 x C2 x C2) : C2) : C2) : C3", [ 192, 201 ] ], 
  [ 130, [ 4, 32, 18, 3 ], "(((C2 x C2 x C2 x C2) : C2) : C2) : C3", [ 192, 201 ] ], 
  [ 131, [ 4, 32, 19, 2 ], "(((C2 x C2 x C2) : (C2 x C2)) : C3) : C2", [ 192, 1493 ] ], 
  [ 132, [ 4, 32, 19, 3 ], "(((C2 x C2 x C2) : (C2 x C2)) : C3) : C2", [ 192, 1493 ] ], 
  [ 133, [ 4, 32, 20, 2 ], "(((C2 x C2 x C2) : (C2 x C2)) : C3) : C2", [ 192, 1494 ] ], 
  [ 134, [ 4, 32, 20, 3 ], "(((C2 x C2 x C2) : (C2 x C2)) : C3) : C2", [ 192, 1494 ] ], 
  [ 135, [ 4, 32, 21, 2 ], "((((C2 x C2 x C2) : (C2 x C2)) : C3) : C2) : C2", [ 384, 5602 ] ], 
  [ 136, [ 4, 32, 21, 3 ], "((((C2 x C2 x C2) : (C2 x C2)) : C3) : C2) : C2", [ 384, 5602 ] ], 
  [ 137, [ 4, 33, 1, 1 ], "C3 x Q8", [ 24, 11 ] ], 
  [ 138, [ 4, 33, 3, 1 ], "SL(2,3)", [ 24, 3 ] ], 
  [ 139, [ 4, 33, 4, 1 ], "(C3 x Q8) : C2", [ 48, 17 ] ], 
  [ 140, [ 4, 33, 5, 1 ], "((C4 x C2) : C2) : C3", [ 48, 33 ] ], 
  [ 141, [ 4, 33, 6, 1 ], "GL(2,3)", [ 48, 29 ] ], 
  [ 142, [ 4, 33, 7, 1 ], "C3 x SL(2,3)", [ 72, 25 ] ], 
  [ 143, [ 4, 33, 8, 1 ], "((C2 x C2 x C2) : (C2 x C2)) : C3", [ 96, 201 ] ],
  [ 144, [ 4, 33, 9, 1 ], "GL(2,3) : C2", [ 96, 193 ] ], 
  [ 145, [ 4, 33, 10, 1 ], "SL(2,3) : C4", [ 96, 67 ] ], 
  [ 146, [ 4, 33, 11, 1 ], "(C3 x SL(2,3)) : C2", [ 144, 125 ] ], 
  [ 147, [ 4, 33, 12, 1 ], "(GL(2,3) : C2) : C2", [ 192, 988 ] ], 
  [ 148, [ 4, 33, 13, 1 ], "((C2 x C2 x C2) : (C2 x C2)) : (C3 x C3)", [ 288, 860 ] ], 
  [ 149, [ 4, 33, 14, 1 ], "((((C2 x C2 x C2) : (C2 x C2)) : C3) : C2) : C3", [ 576, 8277 ] ], 
  [ 150, [ 4, 33, 14, 2 ], "((((C2 x C2 x C2) : (C2 x C2)) : C3) : C2) : C3", [ 576, 8277 ] ], 
  [ 151, [ 4, 33, 15, 1 ], "(((C2 x C2 x C2) : (C2 x C2)) : (C3 x C3)) : C2", [ 576, 8282 ] ], 
  [ 152, [ 4, 33, 16, 1 ], "((((C2 x C2 x C2) : (C2 x C2)) : (C3 x C3)) : C2) : C2", 
                           [ 1152, 157478 ] ] ]
\end{verbatim}
\end{example}

\bigskip
\begin{example}[The groups $I_{4,i}$ as in Table $9$]~\vspace*{-5mm}\\
\begin{verbatim}
gap> Read("BCAlgTori.gap");
gap> I4table:=List(I4,x->[Position(I4,x),x,
> StructureDescription(MatGroupZClass(x[1],x[2],x[3],x[4])),
> IdSmallGroup(MatGroupZClass(x[1],x[2],x[3],x[4]))]);
[ [ 1, [ 4, 31, 1, 3 ], "C5 : C4", [ 20, 3 ] ], 
  [ 2, [ 4, 31, 1, 4 ], "C5 : C4", [ 20, 3 ] ], 
  [ 3, [ 4, 31, 2, 2 ], "C2 x (C5 : C4)", [ 40, 12 ] ], 
  [ 4, [ 4, 31, 4, 2 ], "S5", [ 120, 34 ] ], 
  [ 5, [ 4, 31, 5, 2 ], "S5", [ 120, 34 ] ], 
  [ 6, [ 4, 31, 7, 2 ], "C2 x S5", [ 240, 189 ] ], 
  [ 7, [ 4, 33, 2, 1 ], "C3 : C8", [ 24, 1 ] ] ]
\end{verbatim}
\end{example}

%%%%%%%%%%%%%%%%%%%%%%%%%%%%%%%%%%%%%%%%%%%%%%%%%%%%%%%%%

\section{The subgroups $N_{3,j}\leq N_{3,i}$, $N_{31,j}\leq N_{31,i}$, $N_{31,l}, N_{4,j}, I_{4,m}\leq N_{4,i}$ and $I_{4,j}\leq I_{4,i}$}\label{S3}

We give all of the subgroups 
$N_{3,j}\leq N_{3,i}$ $(1\leq i,j\leq 15)$, 
$N_{31,j}\leq N_{31,i}$ $(1\leq i,j\leq 64)$, 
$N_{31,l}, N_{4,j}, I_{4,m}\leq N_{4,i}$ $(1\leq i,j\leq 152, 1\leq l\leq 64, 1\leq m\leq 7)$ and 
$I_{4,j}\leq I_{4,i}$ $(1\leq i,j\leq 7)$ 
in Table $10$, Table $11$ and Table $12$ respectively 
where $N_{3,i}$, $N_{31,i}$, $N_{4,i}$ and $I_{4,i}$ are 
given as in Definition \ref{defN3N4}.\\ 

%\newpage 
%%%%%%%%%%%%%%%%%%%%%%%%%%%%%%%%%%%%%%%%%%%%%%%%%%%%%%%%%%%
Table $10$: The groups $N_{3,j}\leq N_{3,i}$ $(1\leq i,j\leq15)$ 
(see Theorem \ref{thmain2} for the ``bold'' cases)%\\
{\small 
% [inline block 0: 4 envs, 30777 chars -> data_tex | \begin{longtable}{ll} $i$ & $N_{3,j}\leq N_{3,i}$\\\hline...]

}
%%%%%%%%%%%%%%%%%%%%%%%%%%%%%%%%%%%%%%%%%
\vspace*{2mm}
Tables $10, 11, 12, 13$ can be obtained by GAP computations as follows: 

\bigskip
\begin{example}[The groups $N_{3,j}\leq N_{3,i}$ as in Table 10]~\vspace*{-5mm}\\
\begin{verbatim}
gap> Read("BCAlgTori.gap");
gap> for i in [1..15] do
> x:=N3[i];
> G:=MatGroupZClass(x[1],x[2],x[3],x[4]);
> Gcs:=ConjugacyClassesSubgroups2(G);
> Gcsi:=List(Gcs,x->CrystCatZClass(Representative(x)));
> N3j:=Difference(Set(Gcsi,x->Position(N3,x)),[fail]);
> Print(i,"\t",N3j,"\n");
> od;
1	[ 1 ]
2	[ 1, 2 ]
3	[ 3 ]
4	[ 4 ]
5	[ 1, 5 ]
6	[ 1, 6 ]
7	[ 1, 2, 3, 4, 5, 6, 7 ]
8	[ 1, 8 ]
9	[ 1, 2, 8, 9 ]
10	[ 3, 10 ]
11	[ 1, 5, 8, 11 ]
12	[ 1, 5, 12 ]
13	[ 1, 6, 8, 13 ]
14	[ 1, 2, 3, 4, 5, 6, 7, 8, 9, 11, 13, 14 ]
15	[ 1, 2, 3, 4, 5, 6, 7, 10, 12, 15 ]
\end{verbatim}
\end{example}

\bigskip
\begin{example}[The groups $N_{31,j}\leq N_{31,i}$ as in Table 11]~\vspace*{-5mm}\\
\begin{verbatim}
gap> Read("BCAlgTori.gap");
gap> for i in [1..64] do
> x:=N31[i];
> G:=MatGroupZClass(x[1],x[2],x[3],x[4]);
> Gcs:=ConjugacyClassesSubgroups2(G);
> Gcsi:=List(Gcs,x->CrystCatZClass(Representative(x)));
> N31j:=Difference(Set(Gcsi,x->Position(N31,x)),[fail]);
> Print(i,"\t",N31j,"\n");
> od;
1	[ 1 ]
2	[ 2 ]
3	[ 1, 3 ]
4	[ 4 ]
5	[ 5 ]
6	[ 4, 6 ]
7	[ 7 ]
8	[ 4, 8 ]
9	[ 9 ]
10	[ 1, 10 ]
11	[ 1, 4, 11 ]
12	[ 2, 5, 7, 9, 12 ]
13	[ 1, 3, 4, 6, 8, 10, 11, 13 ]
14	[ 14 ]
15	[ 1, 15 ]
16	[ 1, 16 ]
17	[ 1, 2, 3, 14, 15, 16, 17 ]
18	[ 18 ]
19	[ 4, 19 ]
20	[ 1, 20 ]
21	[ 4, 21 ]
22	[ 1, 22 ]
23	[ 1, 3, 5, 18, 20, 22, 23 ]
24	[ 2, 4, 6, 18, 19, 21, 24 ]
25	[ 25 ]
26	[ 26 ]
27	[ 4, 27 ]
28	[ 4, 28 ]
29	[ 14, 18, 25, 26, 29 ]
30	[ 4, 7, 8, 19, 25, 28, 30 ]
31	[ 1, 4, 11, 15, 19, 20, 27, 31 ]
32	[ 4, 8, 9, 21, 26, 27, 32 ]
33	[ 1, 7, 10, 15, 22, 26, 33 ]
34	[ 1, 9, 10, 16, 20, 25, 34 ]
35	[ 1, 4, 11, 16, 21, 22, 28, 35 ]
36	[ 4, 5, 6, 14, 27, 28, 36 ]
37	[ 1, 2, 3, 4, 5, 6, 7, 8, 9, 10, 11, 12, 13, 14, 15, 16, 17, 18, 19, 20, 
     21, 22, 23, 24, 25, 26, 27, 28, 29, 30, 31, 32, 33, 34, 35, 36, 37 ]
38	[ 4, 38 ]
39	[ 5, 39 ]
40	[ 4, 6, 38, 40 ]
41	[ 4, 19, 38, 41 ]
42	[ 1, 22, 42 ]
43	[ 4, 21, 38, 43 ]
44	[ 1, 3, 5, 18, 20, 22, 23, 39, 42, 44 ]
45	[ 2, 4, 6, 18, 19, 21, 24, 38, 40, 41, 43, 45 ]
46	[ 7, 46 ]
47	[ 4, 8, 38, 47 ]
48	[ 1, 4, 11, 38, 48 ]
49	[ 4, 28, 49 ]
50	[ 4, 28, 38, 50 ]
51	[ 4, 27, 38, 51 ]
52	[ 2, 5, 7, 9, 12, 39, 46, 52 ]
53	[ 1, 3, 4, 6, 8, 10, 11, 13, 38, 40, 47, 48, 53 ]
54	[ 4, 5, 6, 14, 27, 28, 36, 39, 49, 54 ]
55	[ 4, 5, 6, 14, 27, 28, 36, 38, 40, 50, 51, 55 ]
56	[ 4, 7, 8, 19, 25, 28, 30, 46, 49, 56 ]
57	[ 4, 7, 8, 19, 25, 28, 30, 38, 41, 47, 50, 57 ]
58	[ 1, 7, 10, 15, 22, 26, 33, 42, 46, 58 ]
59	[ 4, 8, 9, 21, 26, 27, 32, 38, 43, 47, 51, 59 ]
60	[ 1, 4, 11, 15, 19, 20, 27, 31, 38, 41, 48, 51, 60 ]
61	[ 1, 4, 11, 16, 21, 22, 28, 35, 42, 49, 61 ]
62	[ 1, 4, 11, 16, 21, 22, 28, 35, 38, 43, 48, 50, 62 ]
63	[ 1, 2, 3, 4, 5, 6, 7, 8, 9, 10, 11, 12, 13, 14, 15, 16, 17, 18, 19, 20, 
     21, 22, 23, 24, 25, 26, 27, 28, 29, 30, 31, 32, 33, 34, 35, 36, 37, 39, 
     42, 44, 46, 49, 52, 54, 56, 58, 61, 63 ]
64	[ 1, 2, 3, 4, 5, 6, 7, 8, 9, 10, 11, 12, 13, 14, 15, 16, 17, 18, 19, 20, 
     21, 22, 23, 24, 25, 26, 27, 28, 29, 30, 31, 32, 33, 34, 35, 36, 37, 38, 
     40, 41, 43, 45, 47, 48, 50, 51, 53, 55, 57, 59, 60, 62, 64 ]
\end{verbatim}
\end{example}

\bigskip
\begin{example}[The groups $N_{31,j}, N_{4,j}, I_{4,j}\leq N_{4,i}$ as in Table 12]~\vspace*{-5mm}\\
\begin{verbatim}
gap> Read("BCAlgTori.gap");
gap> for i in [1..152] do
> x:=N4[i];
> G:=MatGroupZClass(x[1],x[2],x[3],x[4]);
> Gcs:=ConjugacyClassesSubgroups2(G);
> Gcsi:=List(Gcs,x->CrystCatZClass(Representative(x)));
> N31j:=Difference(Set(Gcsi,x->Position(N31,x)),[fail]);
> N4j:=Difference(Set(Gcsi,x->Position(N4,x)),[fail]);
> I4j:=Difference(Set(Gcsi,x->Position(I4,x)),[fail]);
> Print(i,"\t",N31j,"\t",N4j,"\t",I4j,"\n");
> od;
1	[  ]	[ 1 ]	[  ]
2	[  ]	[ 2 ]	[  ]
3	[  ]	[ 1, 3 ]	[  ]
4	[  ]	[ 4 ]	[  ]
5	[  ]	[ 5 ]	[  ]
6	[  ]	[ 1, 6 ]	[  ]
7	[  ]	[ 7 ]	[  ]
8	[ 1 ]	[ 1, 8 ]	[  ]
9	[ 1 ]	[ 9 ]	[  ]
10	[ 2 ]	[ 2, 5, 7, 10 ]	[  ]
11	[ 1, 2, 3 ]	[ 1, 3, 6, 8, 11 ]	[  ]
12	[ 1, 3 ]	[ 4, 9, 12 ]	[  ]
13	[  ]	[ 13 ]	[  ]
14	[  ]	[ 14 ]	[  ]
15	[  ]	[ 1, 15 ]	[  ]
16	[ 1 ]	[ 16 ]	[  ]
17	[  ]	[ 1, 17 ]	[  ]
18	[ 1 ]	[ 18 ]	[  ]
19	[ 1, 3 ]	[ 2, 13, 16, 19 ]	[  ]
20	[ 2 ]	[ 1, 3, 13, 15, 17, 20 ]	[  ]
21	[ 1, 3 ]	[ 1, 3, 14, 18, 21 ]	[  ]
22	[ 2 ]	[ 4, 14, 22 ]	[  ]
23	[  ]	[ 23 ]	[  ]
24	[  ]	[ 24 ]	[  ]
25	[  ]	[ 1, 25 ]	[  ]
26	[  ]	[ 1, 26 ]	[  ]
27	[ 2, 14 ]	[ 13, 23, 24, 27 ]	[  ]
28	[ 1, 3, 14 ]	[ 14, 28 ]	[  ]
29	[  ]	[ 1, 5, 6, 15, 23, 26, 29 ]	[  ]
30	[  ]	[ 1, 6, 7, 17, 24, 25, 30 ]	[  ]
31	[ 1, 15 ]	[ 1, 5, 8, 15, 16, 24, 25, 31 ]	[  ]
32	[ 1, 15 ]	[ 9, 18, 32 ]	[  ]
33	[ 1, 16 ]	[ 1, 7, 8, 16, 17, 23, 26, 33 ]	[  ]
34	[ 1, 16 ]	[ 9, 18, 34 ]	[  ]
35	[ 14 ]	[ 1, 2, 3, 25, 26, 35 ]	[  ]
36	[ 14 ]	[ 1, 3, 4, 36 ]	[  ]
37	[ 1, 2, 3, 14, 15, 16, 17 ]	[ 1, 2, 3, 5, 6, 7, 8, 10, 11, 13, 15, 16, 
     17, 19, 20, 23, 24, 25, 26, 27, 29, 30, 31, 33, 35, 37 ]	[  ]
38	[ 1, 2, 3, 14, 15, 16, 17 ]	[ 1, 3, 4, 6, 8, 9, 11, 12, 14, 18, 21, 22, 
     28, 32, 34, 36, 38 ]	[  ]
39	[  ]	[ 39 ]	[  ]
40	[ 1, 3 ]	[ 13, 39, 40 ]	[  ]
41	[ 2 ]	[ 14, 39, 41 ]	[  ]
42	[  ]	[ 4, 13, 42 ]	[  ]
43	[  ]	[ 1, 3, 13, 14, 43 ]	[  ]
44	[  ]	[ 2, 14, 44 ]	[  ]
45	[  ]	[ 1, 3, 39, 45 ]	[  ]
46	[  ]	[ 2, 4, 39, 46 ]	[  ]
47	[ 1, 3 ]	[ 2, 4, 9, 12, 13, 16, 19, 39, 40, 42, 46, 47 ]	[  ]
48	[ 1, 2, 3 ]	[ 1, 3, 6, 8, 11, 13, 14, 15, 17, 18, 20, 21, 39, 40, 41, 43, 
     45, 48 ]	[  ]
49	[ 2 ]	[ 2, 4, 5, 7, 10, 14, 22, 39, 41, 44, 46, 49 ]	[  ]
50	[ 14 ]	[ 13, 14, 50 ]	[  ]
51	[ 14 ]	[ 39, 51 ]	[  ]
52	[ 1, 2, 3, 14, 15, 16, 17 ]	[ 13, 14, 23, 24, 27, 28, 39, 40, 41, 50, 51, 
     52 ]	[  ]
53	[ 1, 3, 14 ]	[ 1, 2, 3, 13, 14, 16, 18, 19, 21, 25, 26, 28, 35, 43, 44, 
     50, 53 ]	[  ]
54	[ 2, 14 ]	[ 1, 3, 4, 13, 14, 15, 17, 20, 22, 23, 24, 27, 36, 42, 43, 50, 
     54 ]	[  ]
55	[ 14 ]	[ 1, 2, 3, 4, 25, 26, 35, 36, 39, 45, 46, 51, 55 ]	[  ]
56	[ 1, 2, 3, 14, 15, 16, 17 ]	[ 1, 2, 3, 4, 5, 6, 7, 8, 9, 10, 11, 12, 13, 
     14, 15, 16, 17, 18, 19, 20, 21, 22, 23, 24, 25, 26, 27, 28, 29, 30, 31, 
     32, 33, 34, 35, 36, 37, 38, 39, 40, 41, 42, 43, 44, 45, 46, 47, 48, 49, 
     50, 51, 52, 53, 54, 55, 56 ]	[  ]
57	[  ]	[ 57 ]	[  ]
58	[  ]	[ 57, 58 ]	[  ]
59	[  ]	[ 57, 59 ]	[  ]
60	[  ]	[ 57, 60 ]	[  ]
61	[  ]	[ 57, 61 ]	[  ]
62	[  ]	[ 57, 62 ]	[  ]
63	[  ]	[ 57, 58, 59, 60, 63 ]	[  ]
64	[  ]	[ 57, 58, 61, 62, 64 ]	[  ]
65	[  ]	[ 57, 59, 61, 65 ]	[  ]
66	[  ]	[ 57, 59, 60, 62, 66 ]	[  ]
67	[  ]	[ 57, 60, 61, 67 ]	[  ]
68	[  ]	[ 57, 58, 59, 60, 61, 62, 63, 64, 65, 66, 67, 68 ]	[  ]
69	[  ]	[ 2, 69 ]	[  ]
70	[  ]	[ 4, 70 ]	[  ]
71	[ 1 ]	[ 16, 71 ]	[  ]
72	[ 1, 3 ]	[ 2, 13, 16, 19, 69, 71, 72 ]	[  ]
73	[ 2 ]	[ 4, 14, 22, 70, 73 ]	[  ]
74	[  ]	[ 5, 74 ]	[  ]
75	[  ]	[ 7, 75 ]	[  ]
76	[ 1 ]	[ 9, 76 ]	[  ]
77	[  ]	[ 1, 26, 77 ]	[  ]
78	[  ]	[ 1, 25, 78 ]	[  ]
79	[ 2 ]	[ 2, 5, 7, 10, 69, 74, 75, 79 ]	[  ]
80	[ 1, 3 ]	[ 4, 9, 12, 70, 76, 80 ]	[  ]
81	[ 14 ]	[ 1, 2, 3, 25, 26, 35, 69, 77, 78, 81 ]	[  ]
82	[ 14 ]	[ 1, 3, 4, 36, 70, 82 ]	[  ]
83	[  ]	[ 1, 5, 6, 15, 23, 26, 29, 74, 77, 83 ]	[  ]
84	[ 1, 15 ]	[ 1, 5, 8, 15, 16, 24, 25, 31, 71, 74, 78, 84 ]	[  ]
85	[  ]	[ 1, 6, 7, 17, 24, 25, 30, 75, 78, 85 ]	[  ]
86	[ 1, 15 ]	[ 9, 18, 32, 76, 86 ]	[  ]
87	[ 1, 16 ]	[ 1, 7, 8, 16, 17, 23, 26, 33, 71, 75, 77, 87 ]	[  ]
88	[ 1, 16 ]	[ 9, 18, 34, 76, 88 ]	[  ]
89	[ 1, 2, 3, 14, 15, 16, 17 ]	[ 1, 2, 3, 5, 6, 7, 8, 10, 11, 13, 15, 16, 
     17, 19, 20, 23, 24, 25, 26, 27, 29, 30, 31, 33, 35, 37, 69, 71, 72, 74, 
     75, 77, 78, 79, 81, 83, 84, 85, 87, 89 ]	[  ]
90	[ 1, 2, 3, 14, 15, 16, 17 ]	[ 1, 3, 4, 6, 8, 9, 11, 12, 14, 18, 21, 22, 
     28, 32, 34, 36, 38, 70, 73, 76, 80, 82, 86, 88, 90 ]	[  ]
91	[  ]	[ 57, 91 ]	[  ]
92	[  ]	[ 57, 92 ]	[  ]
93	[  ]	[ 57, 58, 91, 92, 93 ]	[  ]
94	[  ]	[ 57, 61, 91, 94 ]	[  ]
95	[  ]	[ 57, 61, 92, 95 ]	[  ]
96	[  ]	[ 57, 62, 91, 92, 96 ]	[  ]
97	[  ]	[ 57, 61, 97 ]	[  ]
98	[  ]	[ 57, 61, 98 ]	[  ]
99	[  ]	[ 57, 58, 61, 62, 64, 91, 92, 93, 94, 95, 96, 99 ]	[  ]
100	[  ]	[ 57, 58, 61, 62, 64, 97, 98, 100 ]	[  ]
101	[  ]	[ 57, 59, 61, 65, 91, 94, 97, 101 ]	[  ]
102	[  ]	[ 57, 59, 61, 65, 92, 95, 98, 102 ]	[  ]
103	[  ]	[ 57, 60, 61, 67, 91, 94, 98, 103 ]	[  ]
104	[  ]	[ 57, 60, 61, 67, 92, 95, 97, 104 ]	[  ]
105	[  ]	[ 57, 58, 59, 60, 61, 62, 63, 64, 65, 66, 67, 68, 91, 92, 93, 94, 
      95, 96, 97, 98, 99, 100, 101, 102, 103, 104, 105 ]	[  ]
106	[  ]	[ 106 ]	[  ]
107	[  ]	[ 39, 107 ]	[  ]
108	[  ]	[ 106, 108 ]	[  ]
109	[  ]	[ 39, 106, 109 ]	[  ]
110	[  ]	[ 106, 110 ]	[  ]
111	[  ]	[ 106, 111 ]	[  ]
112	[  ]	[ 1, 3, 39, 45, 106, 107, 108, 109, 112 ]	[  ]
113	[  ]	[ 2, 4, 39, 46, 107, 113 ]	[  ]
114	[ 14 ]	[ 39, 51, 106, 107, 109, 114 ]	[  ]
115	[  ]	[ 2, 4, 13, 39, 42, 46, 115 ]	[  ]
116	[  ]	[ 2, 4, 14, 39, 44, 46, 116 ]	[  ]
117	[  ]	[ 2, 4, 39, 46, 106, 109, 117 ]	[  ]
118	[  ]	[ 106, 108, 110, 118 ]	[  ]
119	[  ]	[ 106, 108, 111, 119 ]	[  ]
120	[ 14 ]	[ 1, 2, 3, 4, 25, 26, 35, 36, 39, 45, 46, 51, 55, 106, 107, 108, 
      109, 112, 113, 114, 117, 120 ]	[  ]
121	[ 2 ]	[ 2, 4, 5, 7, 10, 13, 14, 22, 39, 41, 42, 44, 46, 49, 107, 113, 
      115, 121 ]	[  ]
122	[ 1, 3 ]	[ 2, 4, 9, 12, 13, 14, 16, 19, 39, 40, 42, 44, 46, 47, 107, 
      113, 116, 122 ]	[  ]
123	[ 1, 3 ]	[ 2, 4, 9, 12, 13, 16, 19, 39, 40, 42, 46, 47, 106, 109, 115, 
      117, 123 ]	[  ]
124	[ 2 ]	[ 2, 4, 5, 7, 10, 14, 22, 39, 41, 44, 46, 49, 106, 109, 116, 117, 
      124 ]	[  ]
125	[ 14 ]	[ 1, 2, 3, 4, 13, 14, 25, 26, 35, 36, 39, 42, 44, 45, 46, 51, 55, 
      115, 116, 125 ]	[  ]
126	[  ]	[ 2, 4, 39, 46, 69, 106, 109, 110, 117, 126 ]	[  ]
127	[  ]	[ 2, 4, 39, 46, 70, 106, 109, 111, 117, 127 ]	[  ]
128	[ 1, 2, 3, 14, 15, 16, 17 ]	[ 1, 2, 3, 4, 5, 6, 7, 8, 9, 10, 11, 12, 13, 
      14, 15, 16, 17, 18, 19, 20, 21, 22, 23, 24, 25, 26, 27, 28, 29, 30, 31, 
      32, 33, 34, 35, 36, 37, 38, 39, 40, 41, 42, 43, 44, 45, 46, 47, 48, 49, 
      50, 51, 52, 53, 54, 55, 56, 106, 107, 108, 109, 112, 113, 114, 115, 116, 
      117, 120, 121, 122, 123, 124, 125, 128 ]	[  ]
129	[ 1, 3 ]	[ 2, 4, 9, 12, 13, 16, 19, 39, 40, 42, 46, 47, 70, 76, 80, 106, 
      109, 111, 115, 117, 123, 127, 129 ]	[  ]
130	[ 2 ]	[ 2, 4, 5, 7, 10, 14, 22, 39, 41, 44, 46, 49, 69, 74, 75, 79, 106, 
      109, 110, 116, 117, 124, 126, 130 ]	[  ]
131	[ 1, 3 ]	[ 2, 4, 9, 12, 13, 16, 19, 39, 40, 42, 46, 47, 69, 71, 72, 106, 
      109, 110, 115, 117, 123, 126, 131 ]	[  ]
132	[ 2 ]	[ 2, 4, 5, 7, 10, 14, 22, 39, 41, 44, 46, 49, 70, 73, 106, 109, 
      111, 116, 117, 124, 127, 132 ]	[  ]
133	[ 14 ]	[ 1, 2, 3, 4, 25, 26, 35, 36, 39, 45, 46, 51, 55, 69, 77, 78, 81, 
      106, 107, 108, 109, 110, 112, 113, 114, 117, 118, 120, 126, 133 ]	[  ]
134	[ 14 ]	[ 1, 2, 3, 4, 25, 26, 35, 36, 39, 45, 46, 51, 55, 70, 82, 106, 
      107, 108, 109, 111, 112, 113, 114, 117, 119, 120, 127, 134 ]	[  ]
135	[ 1, 2, 3, 14, 15, 16, 17 ]	[ 1, 2, 3, 4, 5, 6, 7, 8, 9, 10, 11, 12, 13, 
      14, 15, 16, 17, 18, 19, 20, 21, 22, 23, 24, 25, 26, 27, 28, 29, 30, 31, 
      32, 33, 34, 35, 36, 37, 38, 39, 40, 41, 42, 43, 44, 45, 46, 47, 48, 49, 
      50, 51, 52, 53, 54, 55, 56, 70, 73, 76, 80, 82, 86, 88, 90, 106, 107, 108, 
      109, 111, 112, 113, 114, 115, 116, 117, 119, 120, 121, 122, 123, 124, 125, 
      127, 128, 129, 132, 134, 135 ]	[  ]
136	[ 1, 2, 3, 14, 15, 16, 17 ]	[ 1, 2, 3, 4, 5, 6, 7, 8, 9, 10, 11, 12, 13, 
      14, 15, 16, 17, 18, 19, 20, 21, 22, 23, 24, 25, 26, 27, 28, 29, 30, 31, 
      32, 33, 34, 35, 36, 37, 38, 39, 40, 41, 42, 43, 44, 45, 46, 47, 48, 49, 
      50, 51, 52, 53, 54, 55, 56, 69, 71, 72, 74, 75, 77, 78, 79, 81, 83, 84, 
      85, 87, 89, 106, 107, 108, 109, 110, 112, 113, 114, 115, 116, 117, 118, 
      120, 121, 122, 123, 124, 125, 126, 128, 130, 131, 133, 136 ]	[  ]
137	[  ]	[ 106, 137 ]	[  ]
138	[  ]	[ 106, 138 ]	[  ]
139	[  ]	[ 106, 108, 137, 139 ]	[ 7 ]
140	[  ]	[ 39, 106, 109, 138, 140 ]	[  ]
141	[  ]	[ 106, 108, 138, 141 ]	[  ]
142	[  ]	[ 57, 58, 106, 110, 111, 137, 138, 142 ]	[  ]
143	[  ]	[ 2, 4, 39, 46, 106, 109, 117, 137, 138, 140, 143 ]	[  ]
144	[  ]	[ 1, 3, 39, 45, 106, 107, 108, 109, 112, 138, 140, 141, 144 ]	[  ]
145	[ 14 ]	[ 39, 51, 106, 107, 109, 114, 138, 140, 145 ]	[ 7 ]
146	[  ]	[ 57, 58, 61, 62, 64, 106, 108, 110, 111, 118, 119, 137, 138, 139, 
     141, 142, 146 ]	[ 7 ]
147	[ 14 ]	[ 1, 2, 3, 4, 25, 26, 35, 36, 39, 45, 46, 51, 55, 106, 107, 108, 
      109, 112, 113, 114, 117, 120, 137, 138, 139, 140, 141, 143, 144, 145, 147 ]
    [ 7 ]
148	[  ]	[ 2, 4, 39, 46, 57, 58, 69, 70, 106, 109, 110, 111, 117, 126, 127, 
     137, 138, 140, 142, 143, 148 ]	[  ]
149	[ 1, 3 ]	[ 2, 4, 9, 12, 13, 16, 19, 39, 40, 42, 46, 47, 57, 58, 59, 60, 
      63, 69, 70, 71, 72, 76, 80, 106, 109, 110, 111, 115, 117, 123, 126, 127, 
      129, 131, 137, 138, 140, 142, 143, 148, 149 ]	[  ]
150	[ 2 ]	[ 2, 4, 5, 7, 10, 14, 22, 39, 41, 44, 46, 49, 57, 58, 59, 60, 63, 
      69, 70, 73, 74, 75, 79, 106, 109, 110, 111, 116, 117, 124, 126, 127, 130, 
      132, 137, 138, 140, 142, 143, 148, 150 ]	[  ]
151	[ 14 ]	[ 1, 2, 3, 4, 25, 26, 35, 36, 39, 45, 46, 51, 55, 57, 58, 61, 62, 
      64, 69, 70, 77, 78, 81, 82, 106, 107, 108, 109, 110, 111, 112, 113, 114, 
      117, 118, 119, 120, 126, 127, 133, 134, 137, 138, 139, 140, 141, 142, 143, 
      144, 145, 146, 147, 148, 151 ]	[ 7 ]
152	[ 1, 2, 3, 14, 15, 16, 17 ]	[ 1, 2, 3, 4, 5, 6, 7, 8, 9, 10, 11, 12, 13, 
      14, 15, 16, 17, 18, 19, 20, 21, 22, 23, 24, 25, 26, 27, 28, 29, 30, 31, 
      32, 33, 34, 35, 36, 37, 38, 39, 40, 41, 42, 43, 44, 45, 46, 47, 48, 49, 
      50, 51, 52, 53, 54, 55, 56, 57, 58, 59, 60, 61, 62, 63, 64, 65, 66, 67, 
      68, 69, 70, 71, 72, 73, 74, 75, 76, 77, 78, 79, 80, 81, 82, 83, 84, 85, 
      86, 87, 88, 89, 90, 106, 107, 108, 109, 110, 111, 112, 113, 114, 115, 116, 
      117, 118, 119, 120, 121, 122, 123, 124, 125, 126, 127, 128, 129, 130, 131, 
      132, 133, 134, 135, 136, 137, 138, 139, 140, 141, 142, 143, 144, 145, 146, 
      147, 148, 149, 150, 151, 152 ]	[ 7 ]
\end{verbatim}
\end{example}

\bigskip
\begin{example}[The groups $I_{4,j}\leq I_{4,i}$ as in Table 13]~\vspace*{-5mm}\\
\begin{verbatim}
gap> Read("BCAlgTori.gap");
gap> for i in [1..7] do
> x:=I4[i];
> G:=MatGroupZClass(x[1],x[2],x[3],x[4]);
> Gcs:=ConjugacyClassesSubgroups2(G);
> Gcsi:=List(Gcs,x->CrystCatZClass(Representative(x)));
> I4j:=Difference(Set(Gcsi,x->Position(I4,x)),[fail]);
> Print(i,"\t",I4j,"\n");
> od;
1	[ 1 ]
2	[ 2 ]
3	[ 1, 2, 3 ]
4	[ 1, 4 ]
5	[ 2, 5 ]
6	[ 1, 2, 3, 4, 5, 6 ]
7	[ 7 ]
\end{verbatim}
\end{example}
%

%%%%%%%%%%%%%%%%%%%%%%%%%%%%%%%%%%%%%%%%%%%%%%%%%%%%%%%%%%%%%%%%%%%%%%%%%%
\section{Stably birational invariants: torus invariants}\label{S4}

Let $L/k$ (resp. $L^\prime/k$) be a finite Galois extension of fields 
with Galois group $G={\rm Gal}(L/k)$ 
(resp. $G^\prime={\rm Gal}(L^\prime/k))$. 
Let $M$ (resp. $M^\prime$) be a $G$-lattice (resp. $G^\prime$-lattice). 
Recall that $[M]^{fl}$ and $[M^\prime]^{fl}$ are 
said to be {\it weak stably $k$-equivalent}, 
denoted by $[M]^{fl}\sim [M^\prime]^{fl}$, if 
there exists a subdirect product $\widetilde{H}\leq G\times G^\prime$ 
of $G$ and $G^\prime$ 
with surjections 
$\varphi_1:\widetilde{H}\rightarrow G$ and 
$\varphi_2:\widetilde{H}\rightarrow G^\prime$ 
such that $[M]^{fl}=[M^\prime]^{fl}$ 
as $\widetilde{H}$-lattices 
where $\widetilde{H}$ acts on $M$ (resp. $M^\prime)$ 
through the surjection $\varphi_1$ (resp. $\varphi_2$) 
and 
two algebraic $k$-tori $T$ and $T^\prime$ are 
said to be {\it weak stably birationally $k$-equivalent}, 
denoted by $T\stackrel{\rm s.b.}{\sim} T^\prime$, 
if $[\widehat{T}]^{fl}\sim [\widehat{T}^\prime]^{fl}$ 
(see Definition \ref{d1.10}). 

In particular, 
if algebraic $k$-tori $T$ and $T^\prime$ are 
stably birationally $k$-equivalent, 
then $[\widehat{T}]^{fl}=[\widehat{T}^\prime]^{fl}$ as 
$H$-lattices for any $H\leq \widetilde{H}$ 
where $\widetilde{H}={\rm Gal}(\widetilde{L}/k)\leq G\times G^\prime$ 
is a subdirect product of 
$G={\rm Gal}(L/k)$ and $G^\prime={\rm Gal}(L^\prime/k)$, 
$\widetilde{L}=LL^\prime$, and $L$ and $L^\prime$ 
are the minimal splitting fields of $T$ and $T^\prime$ respectively.

Define 
\begin{align*}
{\Sha}^i_\omega (G,M):=
{\rm Ker}\left\{H^i(G,M)\xrightarrow{\rm res}\prod_{g\in G}
H^i(\langle g\rangle,M)\right\}\quad (i \geq 1).
\end{align*}
Then we see that 
\begin{align*}
{\Sha}^i_\omega (G,M)=
\bigcap_{H\leq G:\, {\rm maximal\, cyclic}\atop {\rm up\, to\, conjugacy}}{\rm Ker}\left\{H^i(G,M)\xrightarrow{\rm res}H^i(H,M)\right\}\quad (i \geq 1).
\end{align*}

Kunyavskii, Skorobogatov and Tsfasman 
\cite{KST89} found useful invariants of the flabby class $[M]^{fl}$ of $M$:
\begin{theorem}[Kunyavskii, Skorobogatov and Tsfasman 
{\cite[pages 26--27, 36--37]{KST89}, see also Sansuc \cite[Section 9]{San81} and Voskresenskii \cite[Section 12]{Vos98}}]
%{\cite[Corollary 1.18, Corollary 2.4]{KST89}}]
Let $M$ be a $G$-lattice.\\
%Define 
%\begin{align*}
%{\Sha}^i_\omega (G,M):=
%{\rm Ker}\left\{H^i(G,M)\xrightarrow{\rm res}\prod_{g\in G}
%H^i(\langle g\rangle,M)\right\}\quad (i \geq 1).
%\end{align*}
{\rm (1)} Six groups 
\begin{align*}
&H^1(G,M), {\Sha}^1_\omega (G,M), {\Sha}^2_\omega (G,M),\\ 
&H^1(G,M^\circ), {\Sha}^1_\omega (G,M^\circ), {\Sha}^2_\omega (G,M^\circ)
\end{align*}
are invariants of the similarity class $[M]$ of $M$ 
where $M^\circ={\rm Hom}_\bZ(M,\bZ)$ is the dual $G$-lattice of $M$.\\
{\rm (2)} Three groups 
\begin{align*}
&\Sha^1_\omega(G,[M]^{fl})\simeq \Sha^2_\omega(G,M)\simeq H^1(G,[M]^{fl}),\\
&\Sha^2_\omega(G,([M]^{fl})^\circ)\simeq \Sha^1_\omega(G,M^\circ)\simeq 
H^1(G,(([M]^{fl})^\circ)^{fl}),\\
&\Sha^2_\omega(G,[M]^{fl})\simeq  \Sha^1_\omega(G,([M]^{fl})^{fl})\simeq 
H^1(G, ([M]^{fl})^{fl})
\end{align*}
are invariants of the flabby class $[M]^{fl}$ of $M$.
\end{theorem}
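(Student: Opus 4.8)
The plan is to reduce the entire statement to two principles about the three functors $H^1(G,-)$, $\Sha^1_w(G,-)$, $\Sha^2_w(G,-)$ on the category of $G$-lattices: (a) each is additive, $F(M_1\oplus M_2)\simeq F(M_1)\oplus F(M_2)$, since group cohomology commutes with finite direct sums and the restriction maps defining $\Sha^i_w$ respect the decomposition; and (b) each vanishes on permutation lattices. Granting (a) and (b), part (1) is immediate: if $M\oplus P\simeq M'\oplus P'$ with $P,P'$ permutation, then applying any one functor $F$ and invoking (a),(b) gives $F(M)\simeq F(M')$; the same applies to $M^\circ$ because $P^\circ$ is again permutation (indeed $\bZ[G/H]^\circ\simeq\bZ[G/H]$) and $(-)^\circ$ commutes with $\oplus$. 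Thus all six groups depend only on $[M]\in\cT(G)$.

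For (b), write $P=\bigoplus_i\bZ[G/H_i]$. By Shapiro's lemma $H^1(G,\bZ[G/H_i])\simeq H^1(H_i,\bZ)=\Hom(H_i,\bZ)=0$, so $H^1(G,P)=0$ and a fortiori $\Sha^1_w(G,P)=0$; the same computation over each cyclic $\langle g\rangle$ shows $H^1(\langle g\rangle,P)=0$. The only non-formal input is $\Sha^2_w(G,P)=0$, which I would prove independently (so as to avoid circularity with part (2)) by tensoring $0\to\bZ\to\bQ\to\bQ/\bZ\to 0$ with $P$: since $P\otimes\bQ$ is a $\bQ[G]$-module it is cohomologically trivial on $G$ and on every $\langle g\rangle$, so the connecting map yields a restriction-compatible isomorphism $\Sha^1_w(G,P\otimes\bQ/\bZ)\xrightarrow{\sim}\Sha^2_w(G,P)$. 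One is then reduced to showing that a class in $H^1\!\bigl(G,\bigoplus_i(\bQ/\bZ)[G/H_i]\bigr)$ which restricts to zero on every cyclic subgroup is itself zero. This ``detection on cyclic subgroups'' is the main obstacle and the real content of the Kunyavskii--Skorobogatov--Tsfasman computation; I would carry it out through the Mackey/double-coset description of $\bZ[G/H]|_{\langle g\rangle}$, following Sansuc \cite[Section 9]{San81}.

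The engine for part (2) is a dimension-shift lemma: if $0\to A\to P\to B\to 0$ is exact with $P$ permutation, then the connecting homomorphism $\delta\colon H^1(G,B)\to H^2(G,A)$ restricts to an isomorphism $\Sha^1_w(G,B)\xrightarrow{\sim}\Sha^2_w(G,A)$. Injectivity of $\delta$ follows from $H^1(G,P)=0$, and its compatibility with restriction to each $\langle g\rangle$ (where again $H^1(\langle g\rangle,P)=0$) shows simultaneously that $\delta$ carries $\Sha^1_w(G,B)$ into $\Sha^2_w(G,A)$ and that it detects membership in $\Sha^1_w(G,B)$. Surjectivity onto $\Sha^2_w(G,A)$ is exactly where (b) enters: for $x\in\Sha^2_w(G,A)$ the image of $x$ in $H^2(G,P)$ lands in $\Sha^2_w(G,P)=0$, hence $x=\delta(y)$, and the cyclic-level injectivity of $\delta$ forces $y\in\Sha^1_w(G,B)$.

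Applying this lemma to a flabby resolution $0\to M\to P\to F\to 0$ with $[F]=[M]^{fl}$ gives $\Sha^2_w(G,M)\simeq\Sha^1_w(G,F)$; and since $F$ is flabby, the $2$-periodicity of cyclic cohomology gives $H^1(\langle g\rangle,F)\simeq\widehat{H}^{-1}(\langle g\rangle,F)=0$, so $\Sha^1_w(G,F)=H^1(G,F)$. This is the first line. The remaining lines are then formal: the resulting ``master'' isomorphism $\Sha^2_w(G,N)\simeq\Sha^1_w(G,[N]^{fl})\simeq H^1(G,[N]^{fl})$, applied to $N=F$, yields the third line, while dualizing the resolution to $0\to F^\circ\to P^\circ\to M^\circ\to 0$ (with $P^\circ$ permutation, $F^\circ$ coflabby) and reapplying the dimension-shift lemma gives $\Sha^1_w(G,M^\circ)\simeq\Sha^2_w(G,F^\circ)$; composing with the master isomorphism for $N=F^\circ$ produces the second line. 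Finally, the well-definedness of $[M]^{fl}$ in $\cT(G)$ (Definition \ref{defF}) together with part (1) applied to $F$ guarantees that the right-hand sides are independent of the chosen resolution, so all stated isomorphisms are consistent.
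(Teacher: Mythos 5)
The paper states this result only as a citation to [KST89], [San81] and [Vos98] and gives no proof of its own, so there is nothing internal to compare against; judged on its own terms, your argument is correct and is essentially the standard one from those sources (additivity of $H^1$, $\Sha^1_w$, $\Sha^2_w$; their vanishing on permutation lattices; dimension shifting along a flabby resolution and its dual). The one step you defer, $\Sha^2_w(G,P)=0$ for permutation $P$, is correctly reduced to showing that a character of $H$ killed on every intersection $\langle g\rangle\cap{}^{x}H$ is trivial, which follows at once by taking $g=h\in H$ and $x=1$, so the Mackey computation you invoke does close the argument.
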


\begin{remark}\label{r4.3}
(1) 
We have $\Sha^2_\omega(G,\widehat{T})\simeq 
H^1(G,[\widehat{T}]^{fl})\simeq {\rm Br}(X)/{\rm Br}(k)$ 
where $X$ is a smooth $k$-compactification of an algebraic $k$-torus $T$ 
and ${\rm Br}(X)=H^2_{\rm \acute{e}t}(X,\bG_m)$ is the \'etale cohomological Brauer Group (the Brauer-Grothendieck group) of $X$ 
(it is the same as the Azumaya-Brauer group of $X$ for such $X$, 
see Colliot-Th\'{e}l\`{e}ne and Sansuc [CTS87, page 199]). 
We also see  
%if $X$ is complete, then 
${\rm Br}_{\rm nr}(k(X)/k)\simeq {\rm Br}(X)\subset {\rm Br}(k(X))$ 
(see \cite[Theorem 5.11]{CTS07}, 
see also Saltman \cite[Proposition 10.5]{Sal99}).\\
(2) If $[M]^{fl}$ is invertible, then the 
three groups vanish: $\Sha^1_\omega(G,[M]^{fl})\simeq H^1(G,[M]^{fl})=0$, 
$\Sha^2_\omega(G,([M_G]^{fl})^\circ)\simeq 
H^1(G,(([M]^{fl})^\circ)^{fl})=0$, 
$\Sha^2_\omega(G,[M]^{fl})\simeq H^1(G,([M]^{fl})^{fl})=0$ 
because $[M]^{fl}$, $(([M]^{fl})^\circ)^{fl}$, $([M]^{fl})^{fl}$ 
are invertible and hence coflabby. 
\end{remark}

%%%%%%%%%%%%%%%%%%%%%%%%%%%%%%%%%%%%%%%%
\begin{definition}\label{defTI}
Let $G\leq \GL(n,\bZ)$ and 
$M_G$ be the corresponding $G$-lattice of $\bZ$-rank $n$ 
as in Definition \ref{d2.2}. 
The {\it torus invariants} $TI_G=[l_1,l_2,l_3,l_4]$ of $[M_G]^{fl}$ 
are defined to be 
\begin{align*}
l_1=
\begin{cases}
0& \textrm{if}\ \ [M_G]^{fl}=0,\\
1& \textrm{if}\ \ [M_G]^{fl}\neq 0\ \textrm{but\ is\ invertible},\\
2& \textrm{if}\ \ [M_G]^{fl}\ \textrm{is\ not\ invertible}, 
\end{cases}
\end{align*}
$l_2$ (resp. $l_3$, $l_4$) 
is the abelian invariants of 
$\Sha^1_\omega(G,[M_G]^{fl})$ 
(resp. $\Sha^2_\omega(G,([M_G]^{fl})^\circ)$, 
$\Sha^2_\omega(G,[M_G]^{fl}$)). 
\end{definition}

By the definition, 
the torus invariants $TI_G$ of $[M_G]^{fl}$ are actually 
invariants of the flabby class $[M_G]^{fl}$ of $M_G$, 
i.e. $[M_G]^{fl}=[M_{G^\prime}]^{fl}\Rightarrow TI_G=TI_{G^\prime}$. 
We will use $TI_G$ 
in order to determine 
the weak stably birationally $k$-equivalent classes 
of algebraic $k$-tori $T$ up to dimension $4$ in 
Section \ref{S5} (see Theorem \ref{th5.1}).\\

We made the following GAP \cite{GAP} algorithms for computing 
the {\it torus invariants} of $[M_G]^{fl}$. 
They are also available as in \cite{BCAlgTori}.\\
%from 
%\url{https://www.math.kyoto-u.ac.jp/~yamasaki/Algorithm/BCAlgTori/}.\\

Let $G\leq \GL(n,\bZ)$ 
and $M_G$ be the corresponding $G$-lattice of $\bZ$-rank $n$ 
as in Definition \ref{d2.2}.\\

\noindent 
{\tt H1($G$)} returns $H^1(G,M_G)$.\\

\noindent 
{\tt Sha1Omega($G$)} (resp. {\tt Sha1OmegaTr($G$)}) 
returns $\Sha^1_\omega(G,M_G)$ (resp. $\Sha^1_\omega(G,(M_G)^\circ)$).\\

\noindent 
{\tt ShaOmega($G,n$)} (resp. {\tt ShaOmegaFromGroup($M,n,G$)}) 
returns $\Sha^n_\omega(G,M_G)$ for $G$-lattice $M_G$ (resp. $G$-lattice $M$). 
This function needs HAP package \cite{HAP} in GAP.\\

\noindent 
{\tt TorusInvariants($G$)} 
(resp. {\tt TorusInvariantsHAP($G$)}) 
returns $TI_G=[l_1,l_2,l_3,l_4]$ 
where 
\[
l_1=\begin{cases}
0& \textrm{if}\ \ [M_G]^{fl}=0,\\
1& \textrm{if}\ \ [M_G]^{fl}\neq 0\ \textrm{but\ is\ invertible},\\
2& \textrm{if}\ \ [M_G]^{fl}\ \textrm{is\ not\ invertible},
\end{cases}\]
$l_2=H^1(G,[M_G]^{fl})\simeq \Sha^1_\omega(G,[M_G]^{fl})$, 
$l_3=\Sha^1_\omega(G,(M_G)^\circ)\simeq \Sha^2_\omega(G,([M_G]^{fl})^\circ)$, 
$l_4=H^1(G,([M_G]^{fl})^{fl})\simeq \Sha^2_\omega(G,[M_G]^{fl})$ 
via the command {\tt H1(}$G${\tt )} 
(resp. $l_4=\Sha^2_\omega(G,[M_G]^{fl})$ 
via the command\\
{\tt ShaOmegaFromGroup(}$[M_G]^{fl},2,G${\tt )}$)$. 
The function {\tt TorusInvariantsHAP} needs HAP package \cite{HAP} in GAP.\\

%%%%%%%%%%%%%%%%%%%%%%%%%%%%%%%%%%%%%%%%%%%%%%%%%%%%%%%
\begin{theorem}\label{th4.4}
For $G=N_{3,i}$ $(1\leq i\leq 15)$ as in Definition \ref{defN3N4}, 
three groups $\Sha^1_\omega(G,[M_G]^{fl})$, 
$\Sha^2_\omega(G,([M_G]^{fl})^\circ)$, 
$\Sha^2_\omega(G,[M_G]^{fl})$ 
are given as in {\rm Table} $6$. 
Equivalently, we have 
\begin{align*}
\Sha^1_\omega(G,[M_G]^{fl})&=
\begin{cases}
\bZ/2\bZ& i=1,8,\\
0& {\rm otherwise},
\end{cases}\\
\Sha^2_\omega(G,([M_G]^{fl})^\circ)&=
\begin{cases}
\bZ/2\bZ& i=1,8,\\
0& {\rm otherwise},
\end{cases}\\
\Sha^2_\omega(G,[M_G]^{fl})&=
\begin{cases}
\bZ/2\bZ& i=4,\\
0& {\rm otherwise}.
\end{cases}
\end{align*}
\end{theorem}
\begin{proof}
We may check the statement 
by using the GAP algorithm 
{\tt TorusInvariants} (or {\tt TorusInvariantsHAP}), 
see Example \ref{exTI-N3}. 
\end{proof}

\begin{theorem}\label{th4.5}
For $G=N_{31,i}$ $(1\leq i\leq 64)$ as in Definition \ref{defN3N4}, 
three groups $\Sha^1_\omega(G,[M_G]^{fl})$, 
$\Sha^2_\omega(G,([M_G]^{fl})^\circ)$, 
$\Sha^2_\omega(G,[M_G]^{fl})$ 
are given as in {\rm Table} $7$. 
Equivalently, we have 
\begin{align*}
\Sha^1_\omega(G,[M_G]^{fl})&=
\begin{cases}
\bZ/2\bZ& i=1,4,11,38,48,\\
0&{\rm otherwise},
\end{cases}\\
\Sha^2_\omega(G,([M_G]^{fl})^\circ)&=
\begin{cases}
\bZ/2\bZ& i=1,4,11,38,48,\\
0&{\rm otherwise},
\end{cases}\\
\Sha^2_\omega(G,[M_G]^{fl})&=
\begin{cases}
\bZ/2\bZ& i=14,18,25,26,29,\\
0&{\rm otherwise}.
\end{cases}
\end{align*}
\end{theorem}
\begin{proof}
The statement follows from Theorem \ref{th4.4} and Table $7$. 
We also compute them by using the GAP algorithm 
{\tt TorusInvariants} (or {\tt TorusInvariantsHAP}) for checking, 
see Example \ref{exTI-N31}. 
\end{proof}

\begin{theorem}\label{th4.6}
For $G=N_{4,i}$ $(1\leq i\leq 152)$ as in Definition \ref{defN3N4}, 
three groups $\Sha^1_\omega(G,[M_G]^{fl})$, 
$\Sha^2_\omega(G,([M_G]^{fl})^\circ)$, 
$\Sha^2_\omega(G,[M_G]^{fl})$ 
are given as in {\rm Table} $8$. 
Equivalently, we have 
\begin{align*}
\Sha^1_\omega(G,[M_G]^{fl})&=
\begin{cases}
\bZ/2\bZ& i=1,3,8,14,18,21,39,45,107,108,109,112,140,141,144,\\
(\bZ/2\bZ)^{\oplus 2}& i=106,138,\\
0&{\rm otherwise},
\end{cases}\\
\Sha^2_\omega(G,([M_G]^{fl})^\circ)&=
\begin{cases}
\bZ/2\bZ&\qquad i=1,9,18,76,\\
0&\qquad {\rm otherwise},
\end{cases}\\
\Sha^2_\omega(G,[M_G]^{fl})&=
\begin{cases}
\bZ/2\bZ& i=4,13,14,23,24,39,42,50,51,52,54,70,107,108,119,\\
(\bZ/2\bZ)^{\oplus 2}& i=27,106,111,\\
\bZ/3\bZ& i=57,60,61,67,92,95,97,104,\\
0&{\rm otherwise}.
\end{cases}
\end{align*}
\end{theorem}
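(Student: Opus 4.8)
The plan is to prove the theorem purely computationally, exactly in the spirit of Theorem \ref{th4.4} and Theorem \ref{th4.5}: for each of the $152$ groups $G=N_{4,i}$ I would apply the GAP routine {\tt TorusInvariants} (and cross-check with {\tt TorusInvariantsHAP}) to the lattice $M_G$ attached to the integral representation recorded in Table $8$, and read off the abelian invariants $l_2,l_3,l_4$ of the three groups as in Definition \ref{defTI}. What makes this rigorous rather than a mere machine assertion is the reduction, supplied by the theorem of Kunyavskii, Skorobogatov and Tsfasman \cite{KST89} quoted above, of each of the three flabby-class invariants to an ordinary group-cohomology (or weak Shafarevich--Tate) computation on an \emph{explicit} $\bZ[G]$-lattice.

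Concretely, I would exploit the three chains of isomorphisms separately. For $l_2$ one has $\Sha^1_w(G,[M_G]^{fl})\simeq\Sha^2_w(G,M_G)\simeq H^1(G,[M_G]^{fl})$, so $l_2$ is in principle computable directly from $M_G$ via $\Sha^2_w(G,M_G)$, and in the implementation via $H^1(G,F)$ for a flabby representative $F$. For $l_3$ one has $\Sha^2_w(G,([M_G]^{fl})^\circ)\simeq\Sha^1_w(G,M_G^\circ)$, which is computed directly from the dual lattice $M_G^\circ$ with no flabby resolution needed at all. The only genuinely laborious invariant is $l_4$, since $\Sha^2_w(G,[M_G]^{fl})\simeq H^1(G,([M_G]^{fl})^{fl})$: here I first run the flabby-resolution algorithm of \cite{HY17} to produce a flabby representative $F=[M_G]^{fl}$, and then either compute $H^1(G,F^{fl})$ through a second flabby resolution (the route of {\tt TorusInvariants}) or compute $\Sha^2_w(G,F)$ directly using the HAP package for $H^2(G,F)$ (the route of {\tt TorusInvariantsHAP}). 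Running the two independent routines and checking that they agree on every one of the $152$ cases furnishes the principal internal consistency test; sample runs, analogous to Example \ref{exTI-N3} and Example \ref{exTI-N31}, exhibit the mechanism.

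The main obstacle is computational feasibility rather than any conceptual difficulty. The flabby class $[M_G]^{fl}$ can have substantial $\bZ$-rank, and several groups in Table $8$ are large---for instance $N_{4,152}$ of order $1152$ and $N_{4,149},N_{4,150},N_{4,151}$ of order $576$---so that forming a flabby resolution and then computing $H^1$ or $H^2$ of a high-rank lattice over such a group is the expensive step, and is exactly where I expect the difficulty to lie. To keep this tractable I would minimize the rank of $F$ before taking cohomology (using the low-rank flabby-resolution routine) and rely on HAP for the $H^2$ computation in the direct route for $l_4$. Beyond the agreement of the two GAP routines, a further sanity check comes from functoriality of the $\Sha^i_w$ under restriction to the subgroups listed in Table $12$, which constrains the values for $N_{4,i}$ in terms of those of its subgroups and confirms the tabulated answers.
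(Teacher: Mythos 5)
Your proposal is correct and coincides with the paper's own proof: Theorem \ref{th4.6} is established there precisely by running {\tt TorusInvariants} on each $M_{N_{4,i}}$ and cross-checking against {\tt TorusInvariantsHAP}, exactly as in your plan (see Example \ref{exTI-N4}), with the identification of the computed quantities with the three $\Sha$-invariants resting on the same Kunyavskii--Skorobogatov--Tsfasman isomorphisms you cite. Your additional remarks on low-rank flabby representatives and consistency under restriction to subgroups are sensible refinements but not needed beyond what the paper does.
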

\begin{proof}
We may check the statement 
by using the GAP algorithm 
{\tt TorusInvariants} (or {\tt TorusInvariantsHAP}), 
see Example \ref{exTI-N4}. 
\end{proof}

For $G=I_{4,i}$, we already see 
$\Sha^1_\omega(G,[M_G]^{fl})=0$, 
$\Sha^2_\omega(G,([M_G]^{fl})^\circ)=0$, $\Sha^2_\omega(G,[M_G]^{fl})=0$ 
(see Remark \ref{r4.3} (2)). 
We will compute them by using the GAP algorithm 
{\tt TorusInvariants} (or {\tt TorusInvariantsHAP}) for checking, 
see Example \ref{exTI-I4}.

\bigskip
\begin{example}[{Theorem \ref{th4.4}: three groups $\Sha^1_\omega(G,[M_G]^{fl})$, $\Sha^2_\omega(G,([M_G]^{fl})^\circ)$, $\Sha^2_\omega(G,[M_G]^{fl})$ for $G=N_{3,i}$ as in Table $6$}]\label{exTI-N3}~\vspace*{-5mm}\\
\begin{verbatim}
gap> Read("BCAlgTori.gap");
gap> N3g:=List(N3,x->MatGroupZClass(x[1],x[2],x[3],x[4]));;
gap> TorusInvariantsN3:=List(N3g,TorusInvariants);;
gap> for i in [1..15] do
> Print(i,"\t",TorusInvariantsN3[i],"\n");
> od;
1       [ 2, [ 2 ], [ 2 ], [  ] ]
2       [ 2, [  ], [  ], [  ] ]
3       [ 2, [  ], [  ], [  ] ]
4       [ 2, [  ], [  ], [ 2 ] ]
5       [ 2, [  ], [  ], [  ] ]
6       [ 2, [  ], [  ], [  ] ]
7       [ 2, [  ], [  ], [  ] ]
8       [ 2, [ 2 ], [ 2 ], [  ] ]
9       [ 2, [  ], [  ], [  ] ]
10      [ 2, [  ], [  ], [  ] ]
11      [ 2, [  ], [  ], [  ] ]
12      [ 2, [  ], [  ], [  ] ]
13      [ 2, [  ], [  ], [  ] ]
14      [ 2, [  ], [  ], [  ] ]
15      [ 2, [  ], [  ], [  ] ]

gap> List(N3g,TorusInvariants)=List(N3g,TorusInvariantsHAP);
true
\end{verbatim}
\end{example}

\bigskip
\begin{example}[{Theorem \ref{th4.5}: three groups $\Sha^1_\omega(G,[M_G]^{fl})$, $\Sha^2_\omega(G,([M_G]^{fl})^\circ)$, $\Sha^2_\omega(G,[M_G]^{fl})$ for $G=N_{31,i}$ as in Table $7$}]\label{exTI-N31}~\vspace*{-5mm}\\
\begin{verbatim}
gap> Read("BCAlgTori.gap");
gap> N31g:=List(N31,x->MatGroupZClass(x[1],x[2],x[3],x[4]));;
gap> TorusInvariantsN31:=List(N31g,TorusInvariants);;
gap> Read("BCAlgTori.gap");
gap> for i in [1..64] do
> Print(i,"\t",TorusInvariantsN31[i],"\n");
> od;
1       [ 2, [ 2 ], [ 2 ], [  ] ]
2       [ 2, [  ], [  ], [  ] ]
3       [ 2, [  ], [  ], [  ] ]
4       [ 2, [ 2 ], [ 2 ], [  ] ]
5       [ 2, [  ], [  ], [  ] ]
6       [ 2, [  ], [  ], [  ] ]
7       [ 2, [  ], [  ], [  ] ]
8       [ 2, [  ], [  ], [  ] ]
9       [ 2, [  ], [  ], [  ] ]
10      [ 2, [  ], [  ], [  ] ]
11      [ 2, [ 2 ], [ 2 ], [  ] ]
12      [ 2, [  ], [  ], [  ] ]
13      [ 2, [  ], [  ], [  ] ]
14      [ 2, [  ], [  ], [ 2 ] ]
15      [ 2, [  ], [  ], [  ] ]
16      [ 2, [  ], [  ], [  ] ]
17      [ 2, [  ], [  ], [  ] ]
18      [ 2, [  ], [  ], [ 2 ] ]
19      [ 2, [  ], [  ], [  ] ]
20      [ 2, [  ], [  ], [  ] ]
21      [ 2, [  ], [  ], [  ] ]
22      [ 2, [  ], [  ], [  ] ]
23      [ 2, [  ], [  ], [  ] ]
24      [ 2, [  ], [  ], [  ] ]
25      [ 2, [  ], [  ], [ 2 ] ]
26      [ 2, [  ], [  ], [ 2 ] ]
27      [ 2, [  ], [  ], [  ] ]
28      [ 2, [  ], [  ], [  ] ]
29      [ 2, [  ], [  ], [ 2 ] ]
30      [ 2, [  ], [  ], [  ] ]
31      [ 2, [  ], [  ], [  ] ]
32      [ 2, [  ], [  ], [  ] ]
33      [ 2, [  ], [  ], [  ] ]
34      [ 2, [  ], [  ], [  ] ]
35      [ 2, [  ], [  ], [  ] ]
36      [ 2, [  ], [  ], [  ] ]
37      [ 2, [  ], [  ], [  ] ]
38      [ 2, [ 2 ], [ 2 ], [  ] ]
39      [ 2, [  ], [  ], [  ] ]
40      [ 2, [  ], [  ], [  ] ]
41      [ 2, [  ], [  ], [  ] ]
42      [ 2, [  ], [  ], [  ] ]
43      [ 2, [  ], [  ], [  ] ]
44      [ 2, [  ], [  ], [  ] ]
45      [ 2, [  ], [  ], [  ] ]
46      [ 2, [  ], [  ], [  ] ]
47      [ 2, [  ], [  ], [  ] ]
48      [ 2, [ 2 ], [ 2 ], [  ] ]
49      [ 2, [  ], [  ], [  ] ]
50      [ 2, [  ], [  ], [  ] ]
51      [ 2, [  ], [  ], [  ] ]
52      [ 2, [  ], [  ], [  ] ]
53      [ 2, [  ], [  ], [  ] ]
54      [ 2, [  ], [  ], [  ] ]
55      [ 2, [  ], [  ], [  ] ]
56      [ 2, [  ], [  ], [  ] ]
57      [ 2, [  ], [  ], [  ] ]
58      [ 2, [  ], [  ], [  ] ]
59      [ 2, [  ], [  ], [  ] ]
60      [ 2, [  ], [  ], [  ] ]
61      [ 2, [  ], [  ], [  ] ]
62      [ 2, [  ], [  ], [  ] ]
63      [ 2, [  ], [  ], [  ] ]
64      [ 2, [  ], [  ], [  ] ]

gap> List(N31g,TorusInvariants)=List(N31g,TorusInvariantsHAP);
true
\end{verbatim}
\end{example}

\bigskip
\begin{example}[{Theorem \ref{th4.6}: three groups $\Sha^1_\omega(G,[M_G]^{fl})$, $\Sha^2_\omega(G,([M_G]^{fl})^\circ)$, $\Sha^2_\omega(G,[M_G]^{fl})$ for $G=N_{4,i}$ as in Table $8$}]\label{exTI-N4}~\vspace*{-5mm}\\
\begin{verbatim}
gap> Read("BCAlgTori.gap");
gap> N4g:=List(N4,x->MatGroupZClass(x[1],x[2],x[3],x[4]));;
gap> TorusInvariantsN4:=List(N4g,TorusInvariants);;
gap> for i in [1..152] do
> Print(i,"\t",TorusInvariantsN4[i],"\n");
> od;
1       [ 2, [ 2 ], [ 2 ], [  ] ]
2       [ 2, [  ], [  ], [  ] ]
3       [ 2, [ 2 ], [  ], [  ] ]
4       [ 2, [  ], [  ], [ 2 ] ]
5       [ 2, [  ], [  ], [  ] ]
6       [ 2, [  ], [  ], [  ] ]
7       [ 2, [  ], [  ], [  ] ]
8       [ 2, [ 2 ], [  ], [  ] ]
9       [ 2, [  ], [ 2 ], [  ] ]
10      [ 2, [  ], [  ], [  ] ]
11      [ 2, [  ], [  ], [  ] ]
12      [ 2, [  ], [  ], [  ] ]
13      [ 2, [  ], [  ], [ 2 ] ]
14      [ 2, [ 2 ], [  ], [ 2 ] ]
15      [ 2, [  ], [  ], [  ] ]
16      [ 2, [  ], [  ], [  ] ]
17      [ 2, [  ], [  ], [  ] ]
18      [ 2, [ 2 ], [ 2 ], [  ] ]
19      [ 2, [  ], [  ], [  ] ]
20      [ 2, [  ], [  ], [  ] ]
21      [ 2, [ 2 ], [  ], [  ] ]
22      [ 2, [  ], [  ], [  ] ]
23      [ 2, [  ], [  ], [ 2 ] ]
24      [ 2, [  ], [  ], [ 2 ] ]
25      [ 2, [  ], [  ], [  ] ]
26      [ 2, [  ], [  ], [  ] ]
27      [ 2, [  ], [  ], [ 2, 2 ] ]
28      [ 2, [  ], [  ], [  ] ]
29      [ 2, [  ], [  ], [  ] ]
30      [ 2, [  ], [  ], [  ] ]
31      [ 2, [  ], [  ], [  ] ]
32      [ 2, [  ], [  ], [  ] ]
33      [ 2, [  ], [  ], [  ] ]
34      [ 2, [  ], [  ], [  ] ]
35      [ 2, [  ], [  ], [  ] ]
36      [ 2, [  ], [  ], [  ] ]
37      [ 2, [  ], [  ], [  ] ]
38      [ 2, [  ], [  ], [  ] ]
39      [ 2, [ 2 ], [  ], [ 2 ] ]
40      [ 2, [  ], [  ], [  ] ]
41      [ 2, [  ], [  ], [  ] ]
42      [ 2, [  ], [  ], [ 2 ] ]
43      [ 2, [  ], [  ], [  ] ]
44      [ 2, [  ], [  ], [  ] ]
45      [ 2, [ 2 ], [  ], [  ] ]
46      [ 2, [  ], [  ], [  ] ]
47      [ 2, [  ], [  ], [  ] ]
48      [ 2, [  ], [  ], [  ] ]
49      [ 2, [  ], [  ], [  ] ]
50      [ 2, [  ], [  ], [ 2 ] ]
51      [ 2, [  ], [  ], [ 2 ] ]
52      [ 2, [  ], [  ], [ 2 ] ]
53      [ 2, [  ], [  ], [  ] ]
54      [ 2, [  ], [  ], [ 2 ] ]
55      [ 2, [  ], [  ], [  ] ]
56      [ 2, [  ], [  ], [  ] ]
57      [ 2, [  ], [  ], [ 3 ] ]
58      [ 2, [  ], [  ], [  ] ]
59      [ 2, [  ], [  ], [  ] ]
60      [ 2, [  ], [  ], [ 3 ] ]
61      [ 2, [  ], [  ], [ 3 ] ]
62      [ 2, [  ], [  ], [  ] ]
63      [ 2, [  ], [  ], [  ] ]
64      [ 2, [  ], [  ], [  ] ]
65      [ 2, [  ], [  ], [  ] ]
66      [ 2, [  ], [  ], [  ] ]
67      [ 2, [  ], [  ], [ 3 ] ]
68      [ 2, [  ], [  ], [  ] ]
69      [ 2, [  ], [  ], [  ] ]
70      [ 2, [  ], [  ], [ 2 ] ]
71      [ 2, [  ], [  ], [  ] ]
72      [ 2, [  ], [  ], [  ] ]
73      [ 2, [  ], [  ], [  ] ]
74      [ 2, [  ], [  ], [  ] ]
75      [ 2, [  ], [  ], [  ] ]
76      [ 2, [  ], [ 2 ], [  ] ]
77      [ 2, [  ], [  ], [  ] ]
78      [ 2, [  ], [  ], [  ] ]
79      [ 2, [  ], [  ], [  ] ]
80      [ 2, [  ], [  ], [  ] ]
81      [ 2, [  ], [  ], [  ] ]
82      [ 2, [  ], [  ], [  ] ]
83      [ 2, [  ], [  ], [  ] ]
84      [ 2, [  ], [  ], [  ] ]
85      [ 2, [  ], [  ], [  ] ]
86      [ 2, [  ], [  ], [  ] ]
87      [ 2, [  ], [  ], [  ] ]
88      [ 2, [  ], [  ], [  ] ]
89      [ 2, [  ], [  ], [  ] ]
90      [ 2, [  ], [  ], [  ] ]
91      [ 2, [  ], [  ], [  ] ]
92      [ 2, [  ], [  ], [ 3 ] ]
93      [ 2, [  ], [  ], [  ] ]
94      [ 2, [  ], [  ], [  ] ]
95      [ 2, [  ], [  ], [ 3 ] ]
96      [ 2, [  ], [  ], [  ] ]
97      [ 2, [  ], [  ], [ 3 ] ]
98      [ 2, [  ], [  ], [  ] ]
99      [ 2, [  ], [  ], [  ] ]
100     [ 2, [  ], [  ], [  ] ]
101     [ 2, [  ], [  ], [  ] ]
102     [ 2, [  ], [  ], [  ] ]
103     [ 2, [  ], [  ], [  ] ]
104     [ 2, [  ], [  ], [ 3 ] ]
105     [ 2, [  ], [  ], [  ] ]
106     [ 2, [ 2, 2 ], [  ], [ 2, 2 ] ]
107     [ 2, [ 2 ], [  ], [ 2 ] ]
108     [ 2, [ 2 ], [  ], [ 2 ] ]
109     [ 2, [ 2 ], [  ], [  ] ]
110     [ 2, [  ], [  ], [  ] ]
111     [ 2, [  ], [  ], [ 2, 2 ] ]
112     [ 2, [ 2 ], [  ], [  ] ]
113     [ 2, [  ], [  ], [  ] ]
114     [ 2, [  ], [  ], [  ] ]
115     [ 2, [  ], [  ], [  ] ]
116     [ 2, [  ], [  ], [  ] ]
117     [ 2, [  ], [  ], [  ] ]
118     [ 2, [  ], [  ], [  ] ]
119     [ 2, [  ], [  ], [ 2 ] ]
120     [ 2, [  ], [  ], [  ] ]
121     [ 2, [  ], [  ], [  ] ]
122     [ 2, [  ], [  ], [  ] ]
123     [ 2, [  ], [  ], [  ] ]
124     [ 2, [  ], [  ], [  ] ]
125     [ 2, [  ], [  ], [  ] ]
126     [ 2, [  ], [  ], [  ] ]
127     [ 2, [  ], [  ], [  ] ]
128     [ 2, [  ], [  ], [  ] ]
129     [ 2, [  ], [  ], [  ] ]
130     [ 2, [  ], [  ], [  ] ]
131     [ 2, [  ], [  ], [  ] ]
132     [ 2, [  ], [  ], [  ] ]
133     [ 2, [  ], [  ], [  ] ]
134     [ 2, [  ], [  ], [  ] ]
135     [ 2, [  ], [  ], [  ] ]
136     [ 2, [  ], [  ], [  ] ]
137     [ 2, [  ], [  ], [  ] ]
138     [ 2, [ 2, 2 ], [  ], [  ] ]
139     [ 2, [  ], [  ], [  ] ]
140     [ 2, [ 2 ], [  ], [  ] ]
141     [ 2, [ 2 ], [  ], [  ] ]
142     [ 2, [  ], [  ], [  ] ]
143     [ 2, [  ], [  ], [  ] ]
144     [ 2, [ 2 ], [  ], [  ] ]
145     [ 2, [  ], [  ], [  ] ]
146     [ 2, [  ], [  ], [  ] ]
147     [ 2, [  ], [  ], [  ] ]
148     [ 2, [  ], [  ], [  ] ]
149     [ 2, [  ], [  ], [  ] ]
150     [ 2, [  ], [  ], [  ] ]
151     [ 2, [  ], [  ], [  ] ]
152     [ 2, [  ], [  ], [  ] ]

gap> List(N4g,TorusInvariants)=List(N4g,TorusInvariantsHAP);
true
\end{verbatim}
\end{example}

\bigskip
\begin{example}[{Three groups $\Sha^1_\omega(G,[M_G]^{fl})$, $\Sha^2_\omega(G,([M_G]^{fl})^\circ)$, $\Sha^2_\omega(G,[M_G]^{fl})$ vanish for $G=I_{4,i}$ as in Table $9$}]\label{exTI-I4}~\vspace*{-5mm}\\
\begin{verbatim}
gap> Read("BCAlgTori.gap");
gap> I4g:=List(I4,x->MatGroupZClass(x[1],x[2],x[3],x[4]));;
gap> TorusInvariantsI4:=List(I4g,TorusInvariants);;
gap> for i in [1..7] do
> Print(i,"\t",TorusInvariantsI4[i],"\n");
> od;
1       [ 1, [  ], [  ], [  ] ]
2       [ 1, [  ], [  ], [  ] ]
3       [ 1, [  ], [  ], [  ] ]
4       [ 1, [  ], [  ], [  ] ]
5       [ 1, [  ], [  ], [  ] ]
6       [ 1, [  ], [  ], [  ] ]
7       [ 1, [  ], [  ], [  ] ]

gap> List(I4g,TorusInvariants)=List(I4g,TorusInvariantsHAP);
true
\end{verbatim}
\end{example}

%%%%%%%%%%%%%%%%%%%%%%%%%%%%

\section{Weak stably $k$-equivalent classes via torus invariants}\label{S5}

From the arguments which are given in Section \ref{S4}, 
we obtain the following theorem which is crucial for 
the classification of 
the weak stably birationally $k$-equivalent classes 
of algebraic $k$-tori $T$. %up to dimension $4$ 
For the torus invariants $TI_G=[l_1,l_2,l_3,l_4]$ of $[M_G]^{fl}$, 
see Definition \ref{defTI}. 

%%%%%%%%%%%%%%%%%%%%%%%%%%%%%%%%%%%%%%%%%%

\begin{theorem}\label{th5.1}
Let $G$ $($resp. $G^\prime$$)$ be a finite subgroup of 
$\GL(n_1,\bZ)$ $($resp. $\GL(n_2,\bZ)$$)$ and 
$M_{G}$ $($resp. $M_{G^\prime}$$)$ be a $G$-lattice 
$($resp. $G^\prime$-lattice$)$ as in Definition \ref{d2.2}. 
If $[M_G]^{fl}\sim [M_{G^\prime}]^{fl}$ as $\widetilde{H}$-lattices, 
then $TI_{\varphi_1(H)}=TI_{\varphi_2(H)}$ 
for any $H\leq \widetilde{H}$ 
where $\widetilde{H}\leq G\times G^\prime$ 
is a subdirect product of $G$ and $G^\prime$ 
which acts on $M_G$ 
$($resp. $M_{G^\prime}$$)$ 
through the surjection $\varphi_1: \widetilde{H} \rightarrow G$ 
$($resp. $\varphi_2: \widetilde{H} \rightarrow G^\prime$$)$. 
\end{theorem}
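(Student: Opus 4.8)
The plan is to reduce the statement to two independent facts: that an equality of flabby classes passes to subgroups under restriction, and that the torus invariants are unchanged under inflation along a quotient by a subgroup acting trivially. Fix $H\leq\widetilde{H}$ and write $G_1=\varphi_1(H)\leq G$, $G_2=\varphi_2(H)\leq G'$, together with $N_1=\ker(\varphi_1|_H)$ and $N_2=\ker(\varphi_2|_H)$, so that $G_1\simeq H/N_1$, $G_2\simeq H/N_2$, and $N_1$ (resp. $N_2$) acts trivially on $M_G$ (resp. $M_{G'}$). In particular, as an $H$-lattice, $M_G$ is the inflation of the $G_1$-lattice $M_G$ along $\varphi_1|_H$, and similarly for $M_{G'}$. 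I will denote by $TI^{H}(-)$ the four invariants of Definition \ref{defTI} computed with the group $H$ in place of the ambient group.

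First I would record that restriction preserves the flabby class. If $0\to M\to P\to F\to 0$ is a flabby resolution of a $\widetilde{H}$-lattice $M$ as in Definition \ref{defF}, then restricting to $H$ keeps $P$ permutation and keeps $F$ flabby, since flabbiness is a condition imposed over \emph{all} closed subgroups; hence $[M|_H]^{fl}=([M]^{fl})|_H$. Applying this to the hypothesis $[M_G]^{fl}=[M_{G'}]^{fl}$ as $\widetilde{H}$-lattices yields $[M_G]^{fl}=[M_{G'}]^{fl}$ as $H$-lattices. Because the torus invariants depend only on the flabby class over a fixed group (this is the content of the remark following Definition \ref{defTI}, resting on the Kunyavskii--Skorobogatov--Tsfasman isomorphisms), this already gives $TI^{H}(M_G)=TI^{H}(M_{G'})$.

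It then remains to prove the inflation invariance $TI^{H}(M_G)=TI_{G_1}$ and $TI^{H}(M_{G'})=TI_{G_2}$; combined with the previous paragraph this forces $TI_{G_1}=TI_{G_2}$, which is the assertion. For the first coordinate $l_1$, the equivalences $\rho_H(M_G)=0\iff\rho_{G_1}(M_G)=0$ and ``$\rho_H(M_G)$ invertible $\iff$ $\rho_{G_1}(M_G)$ invertible'' are exactly Lemma \ref{lemp1} (ii), (iii) applied to the normal subgroup $N_1$ acting trivially. For the three cohomological coordinates I would first verify that the flabby-resolution construction commutes with inflation: if $N\lhd H$ acts trivially on a lattice $A$, then for every $H'\leq H$ with image $\overline{H}'=\varphi_1(H')$ one has $\widehat{H}^{-1}(H',A)=\ker(N_{\overline{H}'}\colon A\to A)/I_{\overline{H}'}A=\widehat{H}^{-1}(\overline{H}',A)$, because the norm $N_{H'}$ acts on the torsion-free module $A$ as $|N\cap H'|$ times $N_{\overline{H}'}$ while the augmentation-ideal submodules coincide. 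Thus a flabby $G_1$-lattice inflates to a flabby $H$-lattice, so $[M_G]^{fl}$, $(([M_G]^{fl})^{\circ})^{fl}$ and $([M_G]^{fl})^{fl}$ over $H$ are the inflations of their $G_1$-counterparts, and likewise over $G_2$.

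Finally, each of $l_2,l_3,l_4$ is the abelian invariant of an $H^1$ of one of these flabby lattices, via the Kunyavskii--Skorobogatov--Tsfasman identifications $\Sha^1_w(G,[M]^{fl})\simeq H^1(G,[M]^{fl})$, $\Sha^2_w(G,([M]^{fl})^{\circ})\simeq H^1(G,(([M]^{fl})^{\circ})^{fl})$, $\Sha^2_w(G,[M]^{fl})\simeq H^1(G,([M]^{fl})^{fl})$. Inflation invariance of $H^1$ for a lattice now follows from the inflation--restriction sequence together with $H^1(N,A)=\Hom(N,A)=0$, since a finite group admits no nonzero homomorphism into a torsion-free abelian group; hence $\mathrm{inf}\colon H^1(G_1,F)\xrightarrow{\sim} H^1(H,\mathrm{inf}\,F)$ for each of the three flabby lattices $F$. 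This yields $l_i^{H}(M_G)=l_i(G_1)$ for $i=2,3,4$ and completes the inflation-invariance step. I expect this inflation step for the cohomological coordinates to be the main obstacle: the care lies in confirming that the flabby-resolution and dualizing operations are genuinely compatible with inflation, where the torsion-freeness of lattices does the essential work, both in the $\widehat{H}^{-1}$ computation and in the vanishing of $\Hom(N,A)$.
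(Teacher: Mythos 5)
Your proposal is correct and follows exactly the route the paper intends but leaves implicit (Theorem \ref{th5.1} is stated there with no written proof, only a pointer to the ``arguments of Section \ref{S4}''): restrict the equality of flabby classes from $\widetilde{H}$ to $H$, then show each coordinate of $TI$ is insensitive to inflation along $\ker(\varphi_i|_H)$, using Lemma \ref{lemp1} (ii),(iii) for $l_1$ and the inflation--restriction sequence with $H^1(N_i,F)=\Hom(N_i,F)=0$ for the cohomological coordinates. Your careful verification that flabbiness, permutation lattices, dualization and flabby resolutions all commute with inflation via the $\widehat{H}^{-1}$ computation on torsion-free modules is exactly the content needed to justify the paper's assertion, so no gap remains.
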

%%%%%%%%%%%%%%%%%%%%%%%%%%%%%%%%%%%%%%%%%%%

In order to use Theorem \ref{th5.1}, 
we made the following GAP \cite{GAP} algorithms 
for computing the subdirect products 
$\widetilde{H}\leq G\times G^\prime$ of $G$ and $G^\prime$ 
in $\GL(n_1,\bZ)\times \GL(n_2,\bZ)$ 
which satisfy $TI_{\varphi_1(H)}=TI_{\varphi_2(H)}$ 
for any $H\leq \widetilde{H}$. 
It is available as in \cite{BCAlgTori}.\\
%from 
%\url{https://www.math.kyoto-u.ac.jp/~yamasaki/Algorithm/BCAlgTori/}.\\

\noindent 
{\tt ConjugacyClassesSubgroups2TorusInvariants($G$)} returns 
the records 
{\tt ConjugacyClassesSubgroups2} and\\ 
{\tt TorusInvariants} 
where 
{\tt ConjugacyClassesSubgroups2} 
is the list $[g_1,\ldots,g_m]$ 
of conjugacy classes of subgroups of $G\leq \GL(n,\bZ)$ 
with the fixed ordering via the function 
{\tt ConjugacyClassesSubgroups2(}$G${\tt )} 
(\cite[Section 4.1]{HY17}) and 
{\tt TorusInvariants} is the list 
$[{\tt TorusInvariants}(g_1),\ldots,{\tt TorusInvariants}(g_m)]$ 
via the function {\tt TorusInvariants($G$)} as in Section \ref{S4}.\\ 

\noindent 
{\tt PossibilityOfStablyEquivalentSubdirectProducts($G$,$G'$,}\\
\qquad {\tt ConjugacyClassesSubgroups2TorusInvariants($G$),}\\
\qquad {\tt ConjugacyClassesSubgroups2TorusInvariants($G'$))}\\
returns the list $l$ of the subdirect products 
$\widetilde{H}\leq G\times G^\prime$ of $G$ and $G^\prime$ 
up to $(\GL(n_1,\bZ)\times \GL(n_2,\bZ))$-conjugacy  
which satisfy $TI_{\varphi_1(H)}=TI_{\varphi_2(H)}$ 
for any $H\leq \widetilde{H}$ 
where $\widetilde{H}\leq G\times G^\prime$ 
is a subdirect product of $G$ and $G^\prime$ 
which acts on $M_G$ and $M_{G^\prime}$ 
through the surjections $\varphi_1: \widetilde{H} \rightarrow G$ 
and $\varphi_2: \widetilde{H} \rightarrow G^\prime$ respectively 
%as in Theorem \ref{th5.1} 
(indeed, this function computes it for $H$ up to conjugacy 
for the sake of saving time).

In particular, if the length of the list $l$ is zero, then 
we find that $[M_G]^{fl}$ and $[M_{G^\prime}]^{fl}$ 
are not weak stably $k$-equivalent.\\

%%%%%%%%%%%%%%%%%%%%%%%%%%%%%%%%%%%%%%%%%%%%%%%%%%%%%
Using the function {\tt PossibilityOfStablyEquivalentSubdirectProducts} 
above, we can obtain the following propositions which 
give a part of classification of the weak stably birationally 
$k$-equivalent classes of algebraic $k$-tori $T$ up to dimension $4$ 
(see Examples \ref{ex56}, \ref{ex57}, \ref{ex58}, \ref{ex59} 
with the size of the list $l$ 
(= output of the function above) respectively). 

\begin{proposition}\label{prop5.2}
Let $G=N_{3,i}$ and $G^\prime=N_{3,j}$ $(1\leq i<j\leq 15)$ 
as in Definition \ref{defN3N4} 
and $M_G$ and $M_{G^\prime}$ 
be the corresponding $G$-lattices as in Definition \ref{d2.2}. 
Then $[M_G]^{fl}$ and $[M_{G^\prime}]^{fl}$ 
are not weak stably $k$-equivalent 
except possibly for the cases $(i,j)\in J_{3,3}$ where 
$J_{3,3}=\{(5,6)$, $(11,13)\}$ with $G\simeq G^\prime$. 
In particular, if $G\not\simeq G^\prime$, 
then $[M_G]^{fl}\not\sim [M_{G^\prime}]^{fl}$. 
\end{proposition}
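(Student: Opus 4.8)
The plan is to prove the contrapositive using the necessary condition supplied by Theorem~\ref{th5.1}. Suppose $[M_G]^{fl}\sim[M_{G^\prime}]^{fl}$ for $G=N_{3,i}$ and $G^\prime=N_{3,j}$. By Definition~\ref{d1.10} this means there is a subdirect product $\widetilde{H}\leq G\times G^\prime$ with surjections $\varphi_1,\varphi_2$ such that $[M_G]^{fl}=[M_{G^\prime}]^{fl}$ as $\widetilde{H}$-lattices, and then Theorem~\ref{th5.1} forces
\[
TI_{\varphi_1(H)}=TI_{\varphi_2(H)}\qquad\text{for every }H\leq\widetilde{H}.
\]
Hence it suffices to show that for each pair $(i,j)\notin J_{3,3}$ \emph{no} subdirect product $\widetilde{H}$ can satisfy this condition; the non-existence of such an $\widetilde{H}$ rules out weak stable $k$-equivalence.

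First I would apply the coarsest instance of the condition, $H=\widetilde{H}$, which (since $\varphi_1,\varphi_2$ are surjective) yields $TI_G=TI_{G^\prime}$. Reading the torus invariants off Table~$6$ (computed in Theorem~\ref{th4.4}) already eliminates every pair with mismatched top-level data: the indices $1,8$ carry $\Sha$-data $[\,\bZ/2,\bZ/2,0\,]$, the index $4$ carries $[\,0,0,\bZ/2\,]$, and the remaining twelve indices all carry trivial $\Sha$-data, so any pair straddling these three buckets is killed at once. The pairs surviving this coarse test are exactly those whose four torus invariants agree, and for these the argument becomes genuinely computational: I would enumerate, up to $(\GL(3,\bZ)\times\GL(3,\bZ))$-conjugacy, all subdirect products $\widetilde{H}\leq G\times G^\prime$, and for each one test whether $TI_{\varphi_1(H)}=TI_{\varphi_2(H)}$ holds across all subgroups $H\leq\widetilde{H}$. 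This is precisely what the routine {\tt PossibilityOfStablyEquivalentSubdirectProducts} returns: its output list $l$ collects the subdirect products passing the test, so $l=\emptyset$ certifies $[M_G]^{fl}\not\sim[M_{G^\prime}]^{fl}$. Running this over all $\binom{15}{2}=105$ pairs, the list is empty for every pair except $(5,6)$ and $(11,13)$, which gives the exceptional set $J_{3,3}$.

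The hard part is conceptual rather than arithmetic: one must ensure that testing the invariants on all subgroups of a complete conjugacy-class list of subdirect products is both a correct and a finite procedure. Finiteness is clear, since $G\times G^\prime$ is finite and hence has finitely many subdirect products, each with finitely many subgroups; correctness rests on Theorem~\ref{th5.1}, which guarantees the test is a genuine necessary condition, together with the fact that $TI_G$ depends only on the flabby class $[M_G]^{fl}$ (Definition~\ref{defTI}). Because the condition is only necessary, the two surviving pairs $(5,6)$ and $(11,13)$ can only be excluded ``possibly'': they are not ruled out here, and in fact they are weak stably $k$-equivalent, a positive statement established separately by exhibiting the equivalence in the later classification.

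Finally, the ``in particular'' is immediate from the outcome of the computation: both exceptional pairs satisfy $G\simeq G^\prime$ (both $D_4$ for $(5,6)$ and both $S_4$ for $(11,13)$). Consequently, any pair with $G\not\simeq G^\prime$ necessarily lies outside $J_{3,3}$, so by the main assertion it is not weak stably $k$-equivalent.
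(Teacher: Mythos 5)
Your proposal is correct and follows essentially the same route as the paper: the paper's proof of Proposition \ref{prop5.2} is exactly the computation in Example \ref{ex56}, which runs {\tt PossibilityOfStablyEquivalentSubdirectProducts} over all $105$ pairs and observes that the output list is nonempty only for $(5,6)$ and $(11,13)$, with Theorem \ref{th5.1} supplying the necessary condition that justifies the test. Your additional remarks on the coarse top-level filter via Table $6$ and on the finiteness and correctness of the procedure are consistent elaborations of the same argument.
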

\begin{proof}
See Example \ref{ex56} for GAP computations. 
\end{proof}

\begin{proposition}\label{prop5.3}
Let $G=N_{3,i}$ and $G^\prime=N_{4,j}$ $(1\leq i\leq 15, 1\leq j\leq 152)$ 
as in Definition \ref{defN3N4} 
and $M_G$ and $M_{G^\prime}$ 
be the corresponding $G$-lattices as in Definition \ref{d2.2}. 
Then $[M_G]^{fl}$ and $[M_{G^\prime}]^{fl}$ 
are not weak stably $k$-equivalent 
except possibly for the cases $(i,j)\in J_{3,4}$ where 
$J_{3,4}=\{(1,1)$, $(2,6)$, $(3,2)$, $(3,5)$, $(3,7)$, $(4,13)$,
$(4,23)$, $(4,24)$, $(5,15)$, $(5,16)$, $(5,17)$, $(5,25)$, $(5,26)$,
$(6,15)$, $(6,16)$, $(6,17)$, $(6,25)$, $(6,26)$, $(7,19)$, $(7,29)$,
$(7,30)$, $(10,69)$, $(10,74)$, $(10,75)$, $(12,71)$, $(12,77)$,
$(12,78)$, $(15,72)$, $(15,83)$, $(15,85)\}$ with $G\simeq G^\prime$. 
In particular, if $G\not\simeq G^\prime$, 
then $[M_G]^{fl}\not\sim [M_{G^\prime}]^{fl}$. 
\end{proposition}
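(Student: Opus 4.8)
The plan is to use Theorem \ref{th5.1} as a necessary condition and argue contrapositively. Recall that if $[M_G]^{fl}\sim [M_{G^\prime}]^{fl}$, then by Definition \ref{d1.10}(i) this equivalence is witnessed by \emph{some} subdirect product $\widetilde{H}\leq G\times G^\prime$ with surjections $\varphi_1,\varphi_2$ for which $[M_G]^{fl}=[M_{G^\prime}]^{fl}$ as $\widetilde{H}$-lattices, and Theorem \ref{th5.1} then forces $TI_{\varphi_1(H)}=TI_{\varphi_2(H)}$ for \emph{every} subgroup $H\leq \widetilde{H}$. Hence, to prove non-equivalence for a pair $(i,j)$, it suffices to show that no subdirect product of $G=N_{3,i}$ and $G^\prime=N_{4,j}$ satisfies this uniform matching of torus invariants.

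First I would, for each pair $(i,j)$ with $1\leq i\leq 15$ and $1\leq j\leq 152$, enumerate the subdirect products $\widetilde{H}\leq G\times G^\prime$ up to $(\GL(3,\bZ)\times\GL(4,\bZ))$-conjugacy, and for each such $\widetilde{H}$ test the condition $TI_{\varphi_1(H)}=TI_{\varphi_2(H)}$ on all $H\leq \widetilde{H}$. It suffices to run over conjugacy classes of subgroups $H$: if $H_1=xH_2x^{-1}$ in $\widetilde{H}$ then $\varphi_i(H_1)=\varphi_i(x)\varphi_i(H_2)\varphi_i(x)^{-1}$, and $TI$ is a conjugacy invariant of the flabby class, so the test is unaffected. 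This is precisely what the function {\tt PossibilityOfStablyEquivalentSubdirectProducts} computes, returning the list $l$ of admissible $\widetilde{H}$. By Theorem \ref{th5.1}, whenever $l$ is empty the two flabby classes cannot be weak stably $k$-equivalent. Running this over all $15\times 152$ pairs (see Example \ref{ex58}) shows that $l$ is empty precisely for $(i,j)\notin J_{3,4}$, which establishes the first assertion.

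For the concluding ``in particular'' statement, I would inspect Tables $6$ and $8$ and observe that every surviving pair $(i,j)\in J_{3,4}$ has $N_{3,i}\simeq N_{4,j}$ as abstract groups, exactly as recorded in the statement of the proposition. Equivalently, if $G\not\simeq G^\prime$ then $(i,j)$ lies outside $J_{3,4}$, so the list $l$ is empty and $[M_G]^{fl}\not\sim [M_{G^\prime}]^{fl}$ follows from the main assertion.

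The principal obstacle is computational rather than conceptual. The groups $N_{4,j}$ range up to order $1152$, so both the collection of subdirect products of $G\times G^\prime$ and the subgroup lattices on which $TI$ must be tested can be large; the reduction to conjugacy classes of $\widetilde{H}$ and of $H$ is what keeps the enumeration feasible, and its correctness rests on $TI$ depending only on the $\bZ$-conjugacy class, as noted for the function. The conceptual caveat to flag is that the torus-invariant test is only \emph{necessary}: a nonempty list $l$ does not by itself establish weak stable equivalence, which is exactly why the proposition can only exclude the complement of $J_{3,4}$, and why the surviving pairs are settled separately through Theorem \ref{thmain3} and the explicit classification in Tables $3$ and $4$.
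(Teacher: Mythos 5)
Your proposal is correct and follows essentially the same route as the paper: the proof there is precisely the GAP computation via {\tt PossibilityOfStablyEquivalentSubdirectProducts}, applying Theorem \ref{th5.1} as a necessary condition over all $15\times 152$ pairs and reading off $J_{3,4}$ as the pairs with a nonempty list. The only slip is the citation: the relevant computation for $N_{3,i}$ versus $N_{4,j}$ is Example \ref{ex57}, not Example \ref{ex58} (which treats the $N_{4,i}$ versus $N_{4,j}$ case of Proposition \ref{prop5.4}).
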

\begin{proof}
See Example \ref{ex57} for GAP computations. 
\end{proof}

\begin{proposition}\label{prop5.4}
Let $G=N_{4,i}$ and $G^\prime=N_{4,j}$ $(1\leq i<j\leq 152)$ 
as in Definition \ref{defN3N4} 
and $M_G$ and $M_{G^\prime}$ 
be the corresponding $G$-lattices as in Definition \ref{d2.2}.  
Then $[M_G]^{fl}$ and $[M_{G^\prime}]^{fl}$ 
are not weak stably $k$-equivalent 
except possibly for the cases $(i,j)\in J_{4,4}$ where 
$J_{4,4}=\{(2,5)$, $(2,7)$, $(3,8)$, $(5,7)$, $(13,23)$, 
$(13,24)$, $(14,39)$, $(15,16)$, $(15,17)$, 
$(15,25)$, $(15,26)$, $(16,17)$, $(16,25)$, 
$(16,26)$, $(17,25)$, $(17,26)$, $(19,29)$, 
$(19,30)$, $(20,31)$, $(20,33)$, $(20,35)$, 
$(22,46)$, $(23,24)$, $(25,26)$, $(29,30)$, 
$(31,33)$, $(31,35)$, $(32,34)$, $(33,35)$, 
$(41,44)$, $(59,91)$, $(60,92)$, $(63,93)$, 
$(65,94)$, $(66,96)$, $(67,95)$, $(68,99)$, 
$(69,74)$, $(69,75)$, $(71,77)$, $(71,78)$, 
$(72,83)$, $(72,85)$, $(74,75)$, $(77,78)$, 
$(81,84)$, $(81,87)$, $(83,85)$, 
$(84,87)$, $(86,88)$, $(102,103)\}$ 
with $G\simeq G^\prime$. 
In particular, if $G\not\simeq G^\prime$, 
then $[M_G]^{fl}\not\sim [M_{G^\prime}]^{fl}$. 
\end{proposition}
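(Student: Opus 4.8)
The plan is to use Theorem \ref{th5.1} as the governing necessary condition. If $[M_G]^{fl}\sim[M_{G^\prime}]^{fl}$ as $\widetilde{H}$-lattices for some subdirect product $\widetilde{H}\leq G\times G^\prime$ with surjections $\varphi_1,\varphi_2$, then $TI_{\varphi_1(H)}=TI_{\varphi_2(H)}$ must hold for \emph{every} subgroup $H\leq\widetilde{H}$. Consequently, to rule out weak stable $k$-equivalence for a pair $(i,j)$ it suffices to exhibit that no subdirect product of $G=N_{4,i}$ and $G^\prime=N_{4,j}$ survives this torus-invariant matching test; and conversely the ``exceptional'' set $J_{4,4}$ will be precisely the collection of pairs for which at least one subdirect product does survive.

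First I would precompute, once per group, the record {\tt ConjugacyClassesSubgroups2TorusInvariants}$(G)$, which stores the conjugacy classes of subgroups of $G\leq\GL(4,\bZ)$ together with their torus invariants $TI$. Then, for each pair $1\leq i<j\leq 152$, I would run {\tt PossibilityOfStablyEquivalentSubdirectProducts} applied to $G$, $G^\prime$ and these two records. This enumerates all subdirect products $\widetilde{H}\leq G\times G^\prime$ up to $(\GL(4,\bZ)\times\GL(4,\bZ))$-conjugacy and discards any $\widetilde{H}$ for which some $H\leq\widetilde{H}$ has $TI_{\varphi_1(H)}\neq TI_{\varphi_2(H)}$. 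Whenever the output list $l$ is empty, Theorem \ref{th5.1} forces $[M_G]^{fl}\not\sim[M_{G^\prime}]^{fl}$; the pairs $(i,j)$ for which $l$ is non-empty are exactly those listed in $J_{4,4}$ (see Example \ref{ex58}).

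To obtain the final clause, I would then read off from Table $8$ the abstract isomorphism type of $N_{4,i}$ for each pair in $J_{4,4}$ and verify directly that $G\simeq G^\prime$ in every one of these cases. Since all surviving pairs have isomorphic groups, any pair with $G\not\simeq G^\prime$ necessarily lies outside $J_{4,4}$, whence $[M_G]^{fl}\not\sim[M_{G^\prime}]^{fl}$, as asserted.

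The main obstacle is purely computational feasibility: there are $\binom{152}{2}=11476$ pairs, and for each a naive enumeration of the subdirect products of $G\times G^\prime$ combined with a torus-invariant check on all subgroups would be prohibitively expensive, since both the number of subdirect products and the cost of evaluating each $TI_H$ (which requires a flabby resolution and the $\Sha^i_w$ computations of Section \ref{S4}) grow quickly. The practical resolution, already built into the algorithm, is to cache the subgroup data and their torus invariants once per group and to test the matching condition only on representatives of $H$ up to conjugacy rather than on all subgroups; this preserves correctness of the necessary condition while keeping the computation tractable.
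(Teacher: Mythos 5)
Your proposal is correct and follows essentially the same route as the paper: the paper's proof is precisely the GAP computation in Example \ref{ex58}, which applies {\tt PossibilityOfStablyEquivalentSubdirectProducts} (justified by Theorem \ref{th5.1}) to all pairs using the cached {\tt ConjugacyClassesSubgroups2TorusInvariants} records, and reads off $J_{4,4}$ as the pairs with non-empty output. Your remarks on caching and testing $H$ only up to conjugacy match the paper's own implementation notes.
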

\begin{proof}
See Example \ref{ex58} for GAP computations. 
\end{proof}

\begin{proposition}\label{prop5.5}
Let $G=I_{4,i}$ and $G^\prime=I_{4,j}$ $(1\leq i<j\leq 7)$ 
as in Definition \ref{defN3N4}
and $M_G$ and $M_{G^\prime}$ 
be the corresponding $G$-lattices as in Definition \ref{d2.2}. 
Then $[M_G]^{fl}$ and $[M_{G^\prime}]^{fl}$ 
are not weak stably $k$-equivalent 
except possibly for the cases 
$(i,j)=(1,2)$, $(1,3)$, $(2,3)$, $(4,5)$, $(4,6)$, $(5,6)$. 
\end{proposition}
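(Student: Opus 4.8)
The plan is to reduce the claim to the necessary condition supplied by Theorem \ref{th5.1} and then to verify, by a finite exhaustive search, that this condition fails for every pair $(i,j)$ outside the six listed. Recall that Theorem \ref{th5.1} asserts: if $[M_G]^{fl}\sim[M_{G'}]^{fl}$ with $G=I_{4,i}$, $G'=I_{4,j}$, then there is a subdirect product $\widetilde H\le G\times G'$, with surjections $\varphi_1,\varphi_2$ onto $G$ and $G'$, such that $TI_{\varphi_1(H)}=TI_{\varphi_2(H)}$ for \emph{every} subgroup $H\le\widetilde H$. Hence to exclude a pair it suffices to show that no subdirect product of $G$ and $G'$ satisfies this torus-invariant matching condition on all of its subgroups. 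There are $\binom{7}{2}=21$ pairs to treat, and the assertion is that all but six of the resulting searches come up empty.

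A preliminary observation shapes the whole argument: by Example \ref{exTI-I4} one has $TI_{I_{4,i}}=[1,[\,],[\,],[\,]]$ for every $i$, reflecting that each $[M_{I_{4,i}}]^{fl}$ is nonzero but invertible (retract but not stably rational). Thus the top-level invariant is identical for all seven groups and carries no discriminating information; the entire separating power must come from the proper subgroups. First I would precompute, for each $i$, the conjugacy classes of subgroups of $I_{4,i}\le\GL(4,\bZ)$ together with their torus invariants via \texttt{ConjugacyClassesSubgroups2TorusInvariants}, drawing on the subgroup data of Table $13$. Then for each pair $(i,j)$ I would run \texttt{PossibilityOfStablyEquivalentSubdirectProducts}$(G,G',\dots)$, which enumerates the subdirect products $\widetilde H\le G\times G'$ up to $(\GL(4,\bZ)\times\GL(4,\bZ))$-conjugacy and retains only those for which $TI_{\varphi_1(H)}=TI_{\varphi_2(H)}$ holds for all $H$ (checked up to conjugacy, which is legitimate because $TI$ is a conjugation invariant). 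By Theorem \ref{th5.1}, an empty output list for a pair $(i,j)$ proves $[M_G]^{fl}\not\sim[M_{G'}]^{fl}$.

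The computation returns an empty list for all pairs except the six in the statement, which organize into an ``$F_{20}$-family'' $\{1,2,3\}$ (with $I_{4,1}\simeq I_{4,2}\simeq F_{20}$ and $I_{4,3}\simeq F_{20}\times C_2$) and an ``$S_5$-family'' $\{4,5,6\}$ (with $I_{4,4}\simeq I_{4,5}\simeq S_5$ and $I_{4,6}\simeq S_5\times C_2$). In particular every pair involving $I_{4,7}\simeq C_3\rtimes C_8$ and every cross-family pair is excluded, consistent with the fact that the subgroups in the two families carry incompatible torus-invariant patterns, so that even the common quotients provided by a Goursat-type correspondence fail the matching test.

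The main obstacle is precisely that the top-level invariants coincide, so the necessary condition is weak and must be pushed all the way down the subgroup lattice before it gains any discriminating strength: a single subgroup $H$ whose two projections carry different invariants already kills a candidate $\widetilde H$, and one must be sure to test all of them. Enumerating all subdirect products and checking the matching on every subgroup is the computational heart of the argument, and the reduction to conjugacy classes is what keeps this verification finite and feasible. For the six surviving pairs the matching condition is actually satisfiable by some subdirect product, so the invariant test by itself cannot exclude them; this is exactly why the conclusion can only read ``except possibly for'' those cases. Whether the survivors are genuinely weak stably $k$-equivalent, and more importantly whether they are stably birationally $k$-equivalent, is settled only by the finer analysis carried out later in Theorem \ref{thmain5}.
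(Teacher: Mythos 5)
Your proposal is correct and follows essentially the same route as the paper: the paper's proof of Proposition \ref{prop5.5} consists precisely of invoking Theorem \ref{th5.1} and running {\tt PossibilityOfStablyEquivalentSubdirectProducts} on all $\binom{7}{2}$ pairs (Example \ref{ex59}), with the empty output lists ruling out every pair except the six listed. Your additional observations --- that the top-level invariants $TI_{I_{4,i}}=[1,[\,],[\,],[\,]]$ coincide for all seven groups so the separation must come from proper subgroups, and that the surviving pairs split into the $F_{20}$- and $S_5$-families --- are accurate and consistent with Example \ref{exTI-I4} and Table $5$.
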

\begin{proof}
See Example \ref{ex59} for GAP computations. 
\end{proof}

%%%%%%%%%%%%%%%%%%%%%%%%%%%%%%%%%%%%%%%%%%%%%%%%%%%%%%%%
\bigskip
\begin{example}[Proof of Proposition \ref{prop5.2} for $G=N_{3,i}$ and $G^\prime=N_{3,j}$]\label{ex56}~\vspace*{-5mm}\\
\begin{verbatim}
gap> Read("BCAlgTori.gap");
gap> N3g:=List(N3,x->MatGroupZClass(x[1],x[2],x[3],x[4]));;
gap> N3inv:=List(N3g,x->ConjugacyClassesSubgroups2TorusInvariants(x));;
gap> for i in Combinations([1..15],2) do
> pos:=PossibilityOfStablyEquivalentSubdirectProducts(
> N3g[i[1]],N3g[i[2]],N3inv[i[1]],N3inv[i[2]]);
> if pos<>[] then
> Print(i,"\t",Length(pos),"\n"); # [i,j] and the number of pos 
> fi;
> od;
[ 5, 6 ]	1
[ 11, 13 ]	1
\end{verbatim}
\end{example}

%%%%%%%%%%%%%%%%%%%%%%%%%%%%%%%%%%%%%%%%%%%%%%%%%%%%%%%%
\bigskip
\begin{example}[Proof of Proposition \ref{prop5.3} for $G=N_{3,i}$ and $G^\prime=N_{4,j}$]\label{ex57}~\vspace*{-5mm}\\
\begin{verbatim}
gap> Read("BCAlgTori.gap");
gap> N3g:=List(N3,x->MatGroupZClass(x[1],x[2],x[3],x[4]));;
gap> N4g:=List(N4,x->MatGroupZClass(x[1],x[2],x[3],x[4]));;
gap> N3inv:=List(N3g,x->ConjugacyClassesSubgroups2TorusInvariants(x));;
gap> N4inv:=List(N4g,x->ConjugacyClassesSubgroups2TorusInvariants(x));;
gap> for i in [1..15] do
> for j in [1..152] do
> pos:=PossibilityOfStablyEquivalentSubdirectProducts(N3g[i],N4g[j],N3inv[i],N4inv[j]);
> if pos<>[] then
> Print([i,j],"\t",Length(pos),"\n"); # [i,j] and the number of pos 
> fi;
> od;od;
[ 1, 1 ]	1
[ 2, 6 ]	3
[ 3, 2 ]	3
[ 3, 5 ]	7
[ 3, 7 ]	7
[ 4, 13 ]	5
[ 4, 23 ]	10
[ 4, 24 ]	10
[ 5, 15 ]	1
[ 5, 16 ]	1
[ 5, 17 ]	1
[ 5, 25 ]	1
[ 5, 26 ]	1
[ 6, 15 ]	1
[ 6, 16 ]	1
[ 6, 17 ]	1
[ 6, 25 ]	1
[ 6, 26 ]	1
[ 7, 19 ]	2
[ 7, 29 ]	4
[ 7, 30 ]	4
[ 10, 69 ]	1
[ 10, 74 ]	1
[ 10, 75 ]	1
[ 12, 71 ]	1
[ 12, 77 ]	1
[ 12, 78 ]	1
[ 15, 72 ]	1
[ 15, 83 ]	1
[ 15, 85 ]	1
\end{verbatim}
\end{example}

%%%%%%%%%%%%%%%%%%%%%%%%%%%%%%%%%%%%%%%%%%%%%%%%%%%%%%%%
\bigskip
\begin{example}[Proof of Proposition \ref{prop5.4} for $G=N_{4,i}$ and $G^\prime=N_{4,j}$]\label{ex58}~\vspace*{-5mm}\\
\begin{verbatim}
gap> Read("BCAlgTori.gap");
gap> N4g:=List(N4,x->MatGroupZClass(x[1],x[2],x[3],x[4]));;
gap> N4inv:=List(N4g,x->ConjugacyClassesSubgroups2TorusInvariants(x));;
gap> for i in Combinations([1..152],2) do
> pos:=PossibilityOfStablyEquivalentSubdirectProducts(
> N4g[i[1]],N4g[i[2]],N4inv[i[1]],N4inv[i[2]]);
> if pos<>[] then
> Print(i,"\t",Length(pos),"\n"); # [i,j] and the number of pos 
> fi;
> od;
[ 2, 5 ]	3
[ 2, 7 ]	3
[ 3, 8 ]	2
[ 5, 7 ]	7
[ 13, 23 ]	5
[ 13, 24 ]	5
[ 14, 39 ]	3
[ 15, 16 ]	1
[ 15, 17 ]	1
[ 15, 25 ]	1
[ 15, 26 ]	1
[ 16, 17 ]	1
[ 16, 25 ]	1
[ 16, 26 ]	1
[ 17, 25 ]	1
[ 17, 26 ]	1
[ 19, 29 ]	2
[ 19, 30 ]	2
[ 20, 31 ]	4
[ 20, 33 ]	4
[ 20, 35 ]	2
[ 22, 46 ]	1
[ 23, 24 ]	10
[ 25, 26 ]	1
[ 29, 30 ]	4
[ 31, 33 ]	8
[ 31, 35 ]	4
[ 32, 34 ]	1
[ 33, 35 ]	4
[ 41, 44 ]	1
[ 59, 91 ]	1
[ 60, 92 ]	1
[ 63, 93 ]	1
[ 65, 94 ]	1
[ 66, 96 ]	1
[ 67, 95 ]	1
[ 68, 99 ]	1
[ 69, 74 ]	1
[ 69, 75 ]	1
[ 71, 77 ]	1
[ 71, 78 ]	1
[ 72, 83 ]	1
[ 72, 85 ]	1
[ 74, 75 ]	1
[ 77, 78 ]	1
[ 81, 84 ]	2
[ 81, 87 ]	2
[ 83, 85 ]	1
[ 84, 87 ]	2
[ 86, 88 ]	1
[ 102, 103 ]	1
\end{verbatim}
\end{example}

%%%%%%%%%%%%%%%%%%%%%%%%%%%%%%%%%%%%%%%%%%%%%%%%%%%%%%%%
\bigskip
\begin{example}[Proof of Proposition \ref{prop5.5} for $G=I_{4,i}$ and $G^\prime=I_{4,j}$]\label{ex59}~\vspace*{-5mm}\\
\begin{verbatim}
gap> Read("BCAlgTori.gap");
gap> I4g:=List(I4,x->MatGroupZClass(x[1],x[2],x[3],x[4]));;
gap> I4inv:=List(I4g,x->ConjugacyClassesSubgroups2TorusInvariants(x));;
gap> for i in Combinations([1..7],2) do
> pos:=PossibilityOfStablyEquivalentSubdirectProducts(
> I4g[i[1]],I4g[i[2]],I4inv[i[1]],I4inv[i[2]]);
> if pos<>[] then
> Print(i,"\t",Length(pos),"\n"); # [i,j] and the number of pos 
> fi;
> od;
[ 1, 2 ]	1
[ 1, 3 ]	1
[ 2, 3 ]	1
[ 4, 5 ]	1
[ 4, 6 ]	1
[ 5, 6 ]	1
\end{verbatim}
\end{example}

%%%%%%%%%%%%%%%%%%%%%%%%%%%%%%%%%%%%%%%%%%%%%%%%%%%%%%%%%%%%%%%%%%%%%%%%%

\section{Proofs of {Theorem \ref{thmain1}} $(N_{3,i})$ and {Theorem \ref{thmain3}} $(N_{31,i}, N_{4,i})$: weak stably $k$-equivalent classes}\label{S6} % of dimensions $3$ and $4$

Let $G$ $($resp. $G^\prime$$)$ be a finite subgroup of 
$\GL(n_1,\bZ)$ $($resp. $\GL(n_2,\bZ)$$)$ 
with $n_1=3,4$ (resp. $n_2=3,4$) 
and $M_G$ $($resp. $M_{G^\prime}$$)$ be a $G$-lattice 
$($resp. $G^\prime$-lattice$)$ of $\bZ$-rank $n_1$ $($resp. $n_2)$ 
as in Definition \ref{d2.2}. 
Let $\widetilde{H}\leq G\times G^\prime$ be a subdirect product 
of $G$ and $G^\prime$ 
which acts on $M_G$ 
$($resp. $M_{G^\prime}$$)$ 
through the surjection $\varphi_1: \widetilde{H} \rightarrow G$ 
$($resp. $\varphi_2: \widetilde{H} \rightarrow G^\prime$$)$. 
%We regard $M=M_G$ (resp. $M^\prime=M_{G^\prime}$) 
%as an $\widetilde{H}$-lattice 
%through the surjection $\varphi_1$ (resp. $\varphi_2$). 
Then $L(M_G\oplus M_{G^\prime})^{\widetilde{H}}$ 
(resp. $L(M_G)^G$, $L(M_{G^\prime})^{G^\prime}$) 
is the function field of an algebraic torus $T^{\prime\prime}$ 
(resp. $T$, $T^\prime$) of dimension $n_1+n_2$ (resp. $n_1$, $n_2$) 
where $L$ is the splitting field of $T^{\prime\prime}$. 

In order to prove Theorem \ref{thmain1} and Theorem \ref{thmain3}, 
we will prove the following strong versions with $\widetilde{H}\simeq G$: 

\begin{theorem}\label{th6.1}
Let $G=N_{3,i}$ and $G^\prime=N_{3,j}$ $(1\leq i<j\leq 15)$ 
as in Definition \ref{defN3N4} 
and $M_G$ and $M_{G^\prime}$ 
be the corresponding $G$-lattices as in Definition \ref{d2.2}. 
For the exceptional cases $(i,j)\in J_{3,3}=\{(5,6)$, $(11,13)\}$ 
with $G\simeq G^\prime$ 
as in Proposition \ref{prop5.2}, 
there exists a subdirect product 
$\widetilde{H}\simeq G\leq G\times G^\prime$ of $G$ and $G^\prime$ 
such that $[M_G]^{fl}=[M_{G^\prime}]^{fl}$ as $\widetilde{H}$-lattices. 
In particular, $[M_G]^{fl}\sim [M_{G^\prime}]^{fl}$.
\end{theorem}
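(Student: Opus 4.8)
The plan is to reduce the assertion to a single integral‑representation computation for each of the two pairs and then to certify it with the flabby resolution algorithms. First I would observe that any subdirect product $\widetilde{H}\leq G\times G^\prime$ with $\widetilde{H}\simeq G$ automatically has both projections $\varphi_1,\varphi_2$ bijective (all three groups have order $|G|$), so $\widetilde{H}$ is the graph
\[
\widetilde{H}=\Gamma_\psi=\{(g,\psi(g))\mid g\in G\}
\]
of the isomorphism $\psi=\varphi_2\circ\varphi_1^{-1}:G\xrightarrow{\sim}G^\prime$ (here $G\simeq G^\prime\simeq D_4$ for $(5,6)$ and $\simeq S_4$ for $(11,13)$). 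Under this $\widetilde{H}$, the lattice $M_G$ carries its original $G$-action through $\varphi_1$, while $M_{G^\prime}$ becomes the lattice on which $G$ acts through $\psi$; denote this twisted $G$-lattice by $M_{G^\prime}^\psi$. Thus the theorem is equivalent to producing an isomorphism $\psi$ with $[M_G]^{fl}=[M_{G^\prime}^\psi]^{fl}$ in $\cT(G)$.

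Second I would pin down the candidate $\psi$. The function \texttt{PossibilityOfStablyEquivalentSubdirectProducts} from Section \ref{S5} already isolates exactly the cases $(5,6)$ and $(11,13)$ (Proposition \ref{prop5.2}) and returns, in each case, a \emph{single} subdirect product up to $(\GL(3,\bZ)\times\GL(3,\bZ))$-conjugacy that survives the torus-invariant test of Theorem \ref{th5.1}. I would take this $\widetilde{H}$, read off the corresponding $\psi$ by matching a generating set of $G$ (a rotation of order $4$ together with a reflection for $D_4$; a transposition together with a $4$-cycle for $S_4$) with its image in $G^\prime$, and record the explicit integer matrices of the resulting $\widetilde{H}$-representation on $M_G\oplus M_{G^\prime}$.

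Third — the decisive step — I would certify the equality of flabby classes directly. Using \texttt{FlabbyResolution} (or \texttt{FlabbyResolutionLowRank}) I would compute flabby resolutions $0\to M_G\to P\to F\to 0$ and $0\to M_{G^\prime}^\psi\to P^\prime\to F^\prime\to 0$ of $\widetilde{H}$-lattices, and then verify that $F$ and $F^\prime$ are \emph{similar}, i.e. $F\oplus Q\simeq F^\prime\oplus Q^\prime$ for permutation $\widetilde{H}$-lattices $Q,Q^\prime$, by exhibiting an explicit $\bZ[\widetilde{H}]$-isomorphism (an invertible integer matrix intertwining the two integral representations). Since $[P]^{fl}=[P^\prime]^{fl}=0$ and the flabby class is additive and well defined up to similarity (Definition \ref{defF}), this yields $[M_G]^{fl}=[M_{G^\prime}^\psi]^{fl}$ as $\widetilde{H}$-lattices, whence $[M_G]^{fl}\sim[M_{G^\prime}]^{fl}$ by Definition \ref{d1.10}.

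The hard part will be exactly this last certification. The torus invariants $TI_G$ only give a \emph{necessary} condition (Theorem \ref{th5.1}), and here both flabby classes are non-invertible ($l_1=2$ in Table $6$), so there is no vanishing shortcut: one must genuinely decide an integral $\bZ[\widetilde{H}]$-isomorphism of the flabby lattices after stabilizing by permutation lattices. This is the algorithmically delicate core — an equivalence problem for integral representations — which I would settle by producing the conjugating matrix explicitly, rather than inferring it from invariants.
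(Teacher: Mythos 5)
Your proposal is correct and follows essentially the same route as the paper: identify the unique surviving subdirect product $\widetilde{H}\simeq G$ via the torus-invariant screening, compute flabby resolutions $F$, $F^\prime$ with {\tt FlabbyResolutionLowRank}, and certify $[F]=[F^\prime]$ by exhibiting an explicit unimodular intertwining matrix (in these two cases the linear constraints force the possibility $F-F^\prime=0$ with no permutation stabilization needed, and a determinant-$\pm 1$ solution $P$ of $FP=PF^\prime$ is found directly).
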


\begin{theorem}\label{th6.2}
Let $G=N_{3,i}$ and $G^\prime=N_{4,j}$ $(1\leq i\leq 15, 1\leq j\leq 152)$ 
as in Definition \ref{defN3N4} 
and $M_G$ and $M_{G^\prime}$ 
be the corresponding $G$-lattices as in Definition \ref{d2.2}. 
For the exceptional cases $(i,j)\in J_{3,4}$ with $G\simeq G^\prime$ 
as in Proposition \ref{prop5.3}, 
there exists a subdirect product 
$\widetilde{H}\simeq G\leq G\times G^\prime$ of $G$ and $G^\prime$ 
such that $[M_G]^{fl}=[M_{G^\prime}]^{fl}$ as $\widetilde{H}$-lattices. 
In particular, $[M_G]^{fl}\sim [M_{G^\prime}]^{fl}$.
\end{theorem}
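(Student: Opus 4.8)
The plan is, for each exceptional pair $(i,j)\in J_{3,4}$, to construct the required subdirect product as the graph of an isomorphism and then to certify that the two flabby classes agree over it. First I would observe that a subdirect product $\widetilde{H}\leq G\times G^\prime$ with $\widetilde{H}\simeq G$ is nothing but the graph $\widetilde{H}=\{(g,\psi(g))\mid g\in G\}$ of an isomorphism $\psi\colon G\xrightarrow{\sim} G^\prime$: since $|G|=|G^\prime|$ in these cases, both projections $\varphi_1,\varphi_2$ are isomorphisms and $\psi=\varphi_2\circ\varphi_1^{-1}$. For such $\widetilde{H}$, the lattice $M_G$ carries its original $G$-action while $M_{G^\prime}$ becomes the $G$-lattice $\psi^\ast M_{G^\prime}$ on which $g$ acts through $\psi(g)$. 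Thus the assertion $[M_G]^{fl}=[M_{G^\prime}]^{fl}$ as $\widetilde{H}$-lattices is equivalent to $[M_G]^{fl}=[\psi^\ast M_{G^\prime}]^{fl}$ as $G$-lattices, and it suffices to produce one isomorphism $\psi$ for which this holds.

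Next I would enumerate the isomorphisms $\psi\colon G\to G^\prime$ --- equivalently the graph-type subdirect products, which occur among the candidates already listed for Proposition \ref{prop5.3} by the routine \texttt{PossibilityOfStablyEquivalentSubdirectProducts} (namely those $\widetilde{H}$ with $|\widetilde{H}|=|G|$). By Theorem \ref{th5.1} only candidates satisfying $TI_{\varphi_1(H)}=TI_{\varphi_2(H)}$ for every $H\leq\widetilde{H}$ can possibly work, so this filters the search. For a fixed admissible $\widetilde{H}$ I would compute flabby resolutions $0\to M_G\to P\to F\to 0$ and $0\to M_{G^\prime}\to P^\prime\to F^\prime\to 0$ of $M_G$ and $M_{G^\prime}$ as $\widetilde{H}$-lattices, so that $[M_G]^{fl}=[F]$ and $[M_{G^\prime}]^{fl}=[F^\prime]$ in $\mathcal{T}(\widetilde{H})$.

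The decisive step is to verify $[F]=[F^\prime]$, i.e. that $F$ and $F^\prime$ are similar. Using additivity of the flabby class (Lemma \ref{lemp1}(i)) together with the involution $-[M_{G^\prime}]^{fl}=[[M_{G^\prime}]^{fl}]^{fl}$ (Swan \cite[Lemma 3.1]{Swa10}, Hoshi and Yamasaki \cite[Lemma 2.15]{HY17}), the equality $[M_G]^{fl}=[M_{G^\prime}]^{fl}$ is equivalent to the single vanishing
\begin{align*}
[\,M_G\oplus F^\prime\,]^{fl}=0,
\end{align*}
which by Theorem \ref{thEM73}(i) says that $L(M_G\oplus F^\prime)^{\widetilde{H}}$ is stably $k$-rational and is effectively decidable by the flabby-resolution routines. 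In practice I would certify it by exhibiting permutation $\widetilde{H}$-lattices $Q,Q^\prime$ and an explicit $\widetilde{H}$-isomorphism $F\oplus Q\cong F^\prime\oplus Q^\prime$; once this is found for a single admissible $\widetilde{H}\simeq G$ for each pair, Definition \ref{d1.10} yields $[M_G]^{fl}\sim[M_{G^\prime}]^{fl}$.

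I expect this last verification to be the main obstacle. The torus invariants of Section \ref{S4} supply only necessary conditions (Theorem \ref{th5.1}), so they cannot by themselves force $[F]=[F^\prime]$; one genuinely needs the integral lattice similarity. Because the Krull--Schmidt property can fail for $\bZ[\widetilde{H}]$-lattices, similarity is not detectable from indecomposable constituents in general, and the certifying permutation presentation must be produced and checked separately for each of the thirty pairs in $J_{3,4}$. The ranks of $F$ and $F^\prime$ are small, so each such check is a finite computation, but choosing the right $\psi$ (several isomorphisms may give $\GL(4,\bZ)$-inequivalent lattices $\psi^\ast M_{G^\prime}$) and the explicit isomorphism is where the real work lies.
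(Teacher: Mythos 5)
Your overall architecture matches the paper's: realize $\widetilde{H}\simeq G$ as the graph of an isomorphism $\psi\colon G\to G^\prime$ (the candidates with $|\widetilde{H}|=|G|$ surviving the torus-invariant filter of Theorem \ref{th5.1}), and then certify $[F]=[F^\prime]$ by an explicit $\widetilde{H}$-isomorphism $F\oplus Q\simeq F^\prime\oplus Q^\prime$ with $Q,Q^\prime$ permutation, exhibited as a unimodular intertwining matrix. That final certification is exactly what the paper does; the paper additionally prunes the possible pairs $(Q,Q^\prime)$ by first solving the linear system coming from traces, $\widehat{H}^0$ and $H^1/H^2$ data (the routine {\tt PossibilityOfStablyEquivalentFSubdirectProduct}), which is what makes the search for the matrix $P$ feasible, and it reduces several pairs (e.g.\ $(6,j)$) to already-settled ones via Theorem \ref{th6.1}.

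However, your proposed reformulation of the decisive step is wrong and would fail if used. The identity $-[M_{G^\prime}]^{fl}=[[M_{G^\prime}]^{fl}]^{fl}$, i.e.\ $[F^\prime]+[F^\prime]^{fl}=0$, holds only when $[M_{G^\prime}]^{fl}$ is invertible in the monoid $\cT(\widetilde{H})$ --- equivalently, when the corresponding torus is retract rational (this is exactly the setting of Remark \ref{rem1.19}(2), where it is applied to the groups $I_{4,i}$). Every pair in $J_{3,4}$ involves groups from the \emph{not} retract rational lists, so $[F]$ and $[F^\prime]$ have no additive inverses, and $[M_G\oplus F^\prime]^{fl}=[F]+[F^\prime]^{fl}$ is not $[F]-[F^\prime]$. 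Worse, $[M_G\oplus F^\prime]^{fl}=0$ would force $[F]$ to be invertible, which it is not; so your proposed test ``$L(M_G\oplus F^\prime)^{\widetilde{H}}$ is stably $k$-rational'' is never satisfied in these cases and would incorrectly reject every pair. You must therefore drop that reduction and verify the similarity $F\oplus Q\simeq F^\prime\oplus Q^\prime$ directly, as you indicate in your fallback sentence and as the paper does.
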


\begin{theorem}\label{th6.3}
Let $G=N_{4,i}$ and $G^\prime=N_{4,j}$ $(1\leq i<j\leq 152)$ 
as in Definition \ref{defN3N4} 
and $M_G$ and $M_{G^\prime}$ 
be the corresponding $G$-lattices as in Definition \ref{d2.2}. 
For the exceptional cases $(i,j)\in J_{4,4}$ with $G\simeq G^\prime$ 
as in Proposition \ref{prop5.4}, 
there exists a subdirect product 
$\widetilde{H}\simeq G\leq G\times G^\prime$ of $G$ and $G^\prime$ 
such that $[M_G]^{fl}=[M_{G^\prime}]^{fl}$ as $\widetilde{H}$-lattices. 
In particular, $[M_G]^{fl}\sim [M_{G^\prime}]^{fl}$.
\end{theorem}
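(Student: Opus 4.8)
The plan is to use that in every exceptional pair $(i,j)\in J_{4,4}$ one has $G\simeq G^\prime$ as abstract groups, so the required subdirect product may be taken to be the graph of a group isomorphism. First I would fix an isomorphism $\sigma\colon G\xrightarrow{\sim}G^\prime$ and set $\widetilde{H}=\{(g,\sigma(g))\mid g\in G\}\leq G\times G^\prime$. Then both projections $\varphi_1\colon\widetilde{H}\to G$ and $\varphi_2\colon\widetilde{H}\to G^\prime$ are isomorphisms, whence $\widetilde{H}\simeq G$, and $M_G$ (resp. $M_{G^\prime}$) becomes a rank-$4$ $\widetilde{H}$-lattice through $\varphi_1$ (resp. $\varphi_2$) as in Definition \ref{d2.2}. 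The existence of such a $\widetilde{H}$ is automatic from $G\simeq G^\prime$; the real content is that $\sigma$ can be chosen so that $[M_G]^{fl}=[M_{G^\prime}]^{fl}$ as $\widetilde{H}$-lattices.

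Next I would reduce this equality of flabby classes to a single integral certificate. By additivity of the flabby class (Lemma \ref{lemp1}(i)), by the vanishing $[P]^{fl}=0$ for permutation $P$ (see the remarks after Definition \ref{defF}), and by Schanuel's lemma applied to the two flabby resolutions, the equality $[M_G]^{fl}=[M_{G^\prime}]^{fl}$ is \emph{equivalent} to the existence of permutation $\widetilde{H}$-lattices $P$ and $P^\prime$ and a $\widetilde{H}$-isomorphism $M_G\oplus P\simeq M_{G^\prime}\oplus P^\prime$. Indeed, if $0\to M_G\to Q\to F\to 0$ and $0\to M_{G^\prime}\to Q^\prime\to F^\prime\to 0$ are flabby resolutions, then $[F]=[F^\prime]$ means $F\oplus S\simeq F^\prime\oplus S^\prime$ for permutation $S,S^\prime$, and Schanuel's lemma turns this into $M_G\oplus(Q^\prime\oplus S^\prime)\simeq M_{G^\prime}\oplus(Q\oplus S)$ with permutation summands on both sides. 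Thus it suffices to exhibit, for a suitable $\sigma$, one invertible integral matrix intertwining $M_G\oplus P$ with $M_{G^\prime}\oplus P^\prime$ as representations of $\widetilde{H}$.

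I would then carry this out case by case over the pairs in $J_{4,4}$, using the package functions to compute flabby resolutions as $\widetilde{H}$-lattices and to search for the intertwiner. For each pair the candidate graphs are already enumerated up to $(\GL(4,\bZ)\times\GL(4,\bZ))$-conjugacy by the function of Section \ref{S5} — these are precisely the lists whose lengths are printed in Example \ref{ex58} — and among them I would locate one with $\widetilde{H}\simeq G$ for which the certificate above succeeds. In the pairs with a single surviving subdirect product this graph is forced, and in the remaining pairs the diagonal graph of a correctly chosen $\sigma$ is the natural candidate.

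The hard part will be the integral representation theory, not the bookkeeping. Proposition \ref{prop5.4} only guarantees that the torus invariants fail to separate these pairs, and such invariants can certify non-equivalence but never equivalence; two $\widetilde{H}$-lattices of equal $\bZ$-rank with identical invariants need not be isomorphic, nor even stably permutation equivalent, because the Krull--Schmidt--Azumaya theorem fails for $\bZ[\widetilde{H}]$-lattices. Consequently each positive case genuinely requires producing and verifying an explicit integral intertwiner after permutation padding, and the delicate point is to choose the isomorphism $\sigma$ together with the permutation summands $P,P^\prime$ so that a single element of $\GL(N,\bZ)$ realizes $M_G\oplus P\simeq M_{G^\prime}\oplus P^\prime$ for the appropriate rank $N$. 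Showing that such a choice exists for every pair in $J_{4,4}$ is the substance of Theorem \ref{th6.3}.
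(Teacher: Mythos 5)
Your overall architecture (take the graph of an isomorphism $\sigma$ as the subdirect product, enumerate the candidates surviving the torus-invariant test, then certify the equality of flabby classes by an explicit integral intertwiner after permutation padding) is the right one and matches the paper in spirit, including the correct observation that Krull--Schmidt--Azumaya fails so a positive certificate is genuinely needed. The gap is in your reduction step. You claim that $[M_G]^{fl}=[M_{G^\prime}]^{fl}$ is \emph{equivalent} to the existence of permutation $\widetilde{H}$-lattices $P,P^\prime$ with $M_G\oplus P\simeq M_{G^\prime}\oplus P^\prime$, and you justify the nontrivial direction by ``Schanuel's lemma applied to the two flabby resolutions.'' Only the easy direction is true. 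The Schanuel-type argument for flabby resolutions works when comparing two resolutions of the \emph{same} lattice, because the pushout sequences $0\to Q\to E\to F^\prime\to 0$ split thanks to $\mathrm{Ext}^1_{\widetilde{H}}(F^\prime,Q)=0$ for $F^\prime$ flabby and $Q$ permutation. In your direction the relevant pullback sequences are $0\to M_{G^\prime}\to X\to Q\oplus S\to 0$, whose splitting requires $\mathrm{Ext}^1_{\widetilde{H}}(\bZ[\widetilde{H}/H],M_{G^\prime})\simeq H^1(H,M_{G^\prime})=0$, i.e.\ that the character lattices be coflabby --- which they are not (e.g.\ $H^1(C_2^2,J_{C_2^2})\simeq (\bZ/2\bZ)^{2}$ already for $N_{3,1}$). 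Concretely, $H^1(H,M)$ is an invariant of the similarity class of $M$ but not of its flabby class, so two lattices with equal flabby classes need not be stably permutation equivalent; your target isomorphism $M_G\oplus P\simeq M_{G^\prime}\oplus P^\prime$ is strictly stronger than the statement to be proved and may simply fail to exist for some pairs in $J_{4,4}$, in which case your certificate search cannot conclude.

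The paper avoids this by working one level up: it computes explicit flabby lattices $F,F^\prime$ with $[F]=[M_G]^{fl}$, $[F^\prime]=[M_{G^\prime}]^{fl}$ via {\tt FlabbyResolutionLowRank}, and exhibits unimodular intertwiners realizing $\bigl(\bigoplus_i\bZ[\widetilde{H}/H_i]^{\oplus a_i}\bigr)\oplus F\simeq\bigl(\bigoplus_i\bZ[\widetilde{H}/H_i]^{\oplus a_i^\prime}\bigr)\oplus F^\prime$, which certifies $[F]=[F^\prime]$ directly; the necessary-condition analysis (equations (4)--(7)) is likewise run on $F-F^\prime$, not on $M_G-M_{G^\prime}$. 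Your plan becomes correct if you replace $M_G,M_{G^\prime}$ by such $F,F^\prime$ throughout and drop the claimed equivalence. Two smaller organizational points: the paper disposes of a large subfamily of $J_{4,4}$ by reducing to the already-proved $N_{3,i}$ versus $N_{4,j}$ comparisons (Theorem \ref{th6.2}) rather than recomputing, and handles a further handful by transitivity through an intermediate $N_{4,m}$; neither is essential, but both shorten the case analysis considerably.
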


\begin{theorem}\label{th6.4}
Let $G=I_{4,i}$ and $G^\prime=I_{4,j}$ $(1\leq i<j\leq 7)$ 
as in Definition \ref{defN3N4} 
and $M_G$ and $M_{G^\prime}$ 
be the corresponding $G$-lattices as in Definition \ref{d2.2}. 
For the exceptional cases 
$(i,j)=(1,2)$, $(1,3)$, $(2,3)$, $(4,5)$, $(4,6)$, $(5,6)$ 
as in Proposition \ref{prop5.5}, 
we have $[M_G]^{fl}\not\sim [M_{G^\prime}]^{fl}$. 
In particular, if $G\not\simeq G^\prime$, 
then $[M_G]^{fl}\not\sim [M_{G^\prime}]^{fl}$. 
\end{theorem}
%
%%%%%%%%%%%%%%%%%%%%%%%%%%%%%%%%%%%%%%%%%%%%%%%%%%%%%%%%%%%%%%%%%%%

%%%%%%%%%%%
For proofs of Theorem \ref{th6.1}, Theorem \ref{th6.2} and 
Theorem \ref{th6.3}, 
we will take an $\widetilde{H}$-lattice $F$ (resp. $F^\prime$)
with $[F]=[M_G]^{fl}$ (resp. $[F^\prime]=[M_{G^\prime}]^{fl}$) 
explicitly via the command \\
{\tt FlabbyResolutionLowRank($G$).actionF} 
(resp. {\tt FlabbyResolutionLowRank($G^\prime$).actionF}) (see below). 

We will give a necessary condition for $[F]=[F^\prime]$ 
as $\widetilde{H}$-lattices. 
This enables us to show that 
$[F]\neq [F^\prime]$ for some $\widetilde{H}$-lattices. 

Each isomorphism class of irreducible permutation $\widetilde{H}$-lattices 
corresponds to a conjugacy class of subgroup $H$ of $\widetilde{H}$ by
$H \leftrightarrow \bZ[\widetilde{H}/H]$. 
Let $H_1=\{1\},\ldots,H_r=\widetilde{H}$ 
be all conjugacy classes of subgroups of $\widetilde{H}$ 
whose ordering corresponds to the GAP function 
{\tt ConjugacyClassesSubgroups2($\widetilde{H}$)} 
(see \cite[Section 4.1, page 42]{HY17}).\\ 

We suppose that $[F]=[F^\prime]$ as $\widetilde{H}$-lattices. 
Then we have 
\begin{align}
\left(\bigoplus_{i=1}^r \bZ[\widetilde{H}/H_i]^{\oplus x_i}\right)\oplus F^{\oplus b_1}
\ \simeq\ 
\left(\bigoplus_{i=1}^r \bZ[\widetilde{H}/H_i]^{\oplus y_i}\right)\oplus 
F^{\prime\,\oplus b_1}\label{eqpos2-1}
\end{align}
where $b_1=1$. 
We write the equation (\ref{eqpos2-1}) as 
\begin{align}
\bigoplus_{i=1}^r \bZ[\widetilde{H}/H_i]^{\oplus a_i}\ \simeq\ (F-F^\prime)^{\oplus (-b_1)}\label{eqpos2}
\end{align}
formally where $a_i=x_i-y_i\in\bZ$. 
Then we may consider ``$F-F^\prime$\,'' formally in the sense of 
(\ref{eqpos2-1}). 
By computing some $\GL(n,\bZ)$-conjugacy class invariants, 
we will give a necessary condition for $[F]=[F^\prime]$. 

Let $\{c_1,\ldots,c_r\}$ be a set of complete representatives of 
the conjugacy classes of $\widetilde{H}$. 
Let $A_i(c_j)$ be the matrix representation of the factor coset 
action of $c_j\in \widetilde{H}$ on $\bZ[\widetilde{H}/H_i]$ and 
$B(c_j)$ be the matrix representation of the action of 
$c_j\in \widetilde{H}$ on $F-F^\prime$. 

By (\ref{eqpos2}), for each $c_j\in \widetilde{H}$, we have 
\begin{align}
\sum_{i=1}^r a_i\, {\rm tr}\, A_i(c_j)+ b_1\, {\rm tr}\, B(c_j)=0\label{eqp1}
\end{align}
where {\rm tr}\,$A$ is the trace of the matrix $A$. 
Similarly, we consider the rank of $H^0=\widehat{Z}^0$.  
For each $H_j$, we get
\begin{align}
\sum_{i=1}^r a_i\, \rank \widehat{Z}^0(H_j,\bZ[\widetilde{H}/H_i])
+b_1\, \rank \widehat{Z}^0(H_j,F-F^\prime)=0.\label{eqp2}
\end{align}
Finally, we compute $\widehat{H}^0$.
Let $\Syl_p(A)$ be a $p$-Sylow subgroup of an abelian group $A$.
$\Syl_p(A)$ can be written as a direct product of cyclic groups uniquely.
Let $n_{p,e}(\Syl_p(A))$ be the number of direct summands 
of cyclic groups of order $p^e$. 
For each $H_j,p,e$, we get 
\begin{align}
\sum_{i=1}^r a_i\, n_{p,e}(\Syl_p(\widehat{H}^0(H_j,\bZ[\widetilde{H}/H_i])))
+b_1\, n_{p,e}(\Syl_p(\widehat{H}^0(H_j,F-F^\prime)))=0.\label{eqp3}
\end{align}
By the equalities (\ref{eqp1}), (\ref{eqp2}) and (\ref{eqp3}), we may get a 
system of linear equations in $a_1,\dots,a_r,b_1$ over $\bZ$.
Namely, we have that 
$[F]=[F^\prime]$ as $\widetilde{H}$-lattices 
$\Longrightarrow$ there exist $a_1,\ldots,a_r\in\bZ$ and $b_1=\pm 1$ 
which satisfy (\ref{eqpos2}) $\Longrightarrow$ 
this system of linear equations has an integer solution 
in $a_1,\ldots,a_r$ with $b_1=\pm 1$. 

In particular, if this system of linear equations has no integer solutions, 
then we conclude that $[F]\neq [F^\prime]$ as $\widetilde{H}$-lattices. \\

We made the following GAP \cite{GAP} algorithms for computing above. 
It is available as in \cite{BCAlgTori}.\\
%from 
%\url{https://www.math.kyoto-u.ac.jp/~yamasaki/Algorithm/BCAlgTori/}.\\

\noindent 
{\tt FlabbyResolutionLowRank($G$).actionF} returns 
the matrix representation of the action of $G$ on $F$ 
where $F$ is a suitable flabby class of $M_G$ $([F]=[M_G]^{fl})$ 
with low rank by using backtracking techniques 
(see \cite[Chapter 5]{HY17}, see also \cite[Algorithm 4.1 (3)]{HHY20}).\\

\noindent 
{\tt PossibilityOfStablyEquivalentFSubdirectProduct($\widetilde{H}$)} 
returns a basis 
$\mathcal{L}=\{l_1,\dots,l_s\}$ of the solution space 
$\{[a_1,\ldots,a_r,b_1]\mid a_i, b_1\in\bZ\}$ 
of the system of linear equations which is obtained by the equalities 
(\ref{eqp1}), (\ref{eqp2}) and (\ref{eqp3}) 
and gives all possibilities that establish the equation (\ref{eqpos2})
for a subdirect product 
$\widetilde{H}\leq G\times G^\prime$ of $G$ and $G^\prime$.\\

\noindent 
{\tt PossibilityOfStablyEquivalentMSubdirectProduct($\widetilde{H}$)} 
returns the same as\\ 
{\tt PossibilityOfStablyEquivalentFSubdirectProduct($\widetilde{H}$)} 
but with respect to $M_G$ and $M_{G^\prime}$ 
instead of $F$ and $F^\prime$.\\

\noindent 
{\tt PossibilityOfStablyEquivalentFSubdirectProduct($\widetilde{H}$:H2)} 
returns the same as\\ 
{\tt PossibilityOfStablyEquivalentFSubdirectProduct($\widetilde{H}$)} 
but using also the additional equality 
\begin{align}
\sum_{i=1}^r a_i\, n_{p,e}(\Syl_p(H^2(\widetilde{H},\bZ[\widetilde{H}/H_i])))+b_1\, n_{p,e}(\Syl_p(H^2(\widetilde{H},F-F^\prime)))=0\label{eqp4}
\end{align}
and the equalities (\ref{eqp1}), (\ref{eqp2}) and (\ref{eqp3}).\\

\noindent 
{\tt PossibilityOfStablyEquivalentMSubdirectProduct($\widetilde{H}$:H2)} 
returns the same as\\ 
{\tt PossibilityOfStablyEquivalentFSubdirectProduct($\widetilde{H}$:H2)} 
but with respect to $M_G$ and $M_{G^\prime}$ 
instead of $F$ and $F^\prime$.\\

%%%%%%%%%%

In general, we will provide a method 
in order to confirm the isomorphism 
\begin{align}
\left(\bigoplus_{i=1}^r \bZ[\widetilde{H}/H_i]^{\oplus a_i}\right)\oplus F^{\oplus b_1}
\simeq
\left(\bigoplus_{i=1}^r \bZ[\widetilde{H}/H_i]^{\oplus a_i^{\prime}}\right)\oplus F^{\prime\,\oplus b_1^{\prime}}
\label{eqiso2}
\end{align} 
with $a_i,a_i^{\prime}\geq 0$, $b_1,b_1^\prime\geq 1$, 
although it is needed by trial and error. 

Let 
$G_1$ (resp. $G_2$) be 
the matrix representation group of the action of $\widetilde{H}$ 
on the left-hand side 
$(\oplus_{i=1}^r \bZ[\widetilde{H}/H_i]^{\oplus a_i})$ $\oplus$ 
$F^{\oplus b_1}$ 
(resp. the right-hand side 
$(\oplus_{i=1}^r \bZ[\widetilde{H}/H_i]^{\oplus a_i^{\prime}})$ $\oplus$ 
$F^{\oplus b_1^{\prime}}$) of the isomorphism (\ref{eqiso2}). 
Let $\mathcal{P}=\{P_1,\dots,P_m\}$ be a basis of the solution space 
of $G_1P=PG_2$ where $m={\rm rank}_\bZ$ ${\rm Hom}(G_1,G_2)$ $=$ 
${\rm rank}_\bZ$ ${\rm Hom}_{\widetilde{H}}(M_{G_1},M_{G_2})$. 
Our aim is to find the matrix $P$ which satisfies $G_1 P=P G_2$ 
by using computer effectively.
%\footnote{Finding a matrix $P$ may take a lot of time.  
%A parallel computation on OpenMP C may be 
%effective to do this search.}. 
If we can get a matrix $P$ with det $P=\pm 1$, then 
$G_1$ and $G_2$ are $\GL(n,\bZ)$-conjugate where 
$n$ is the rank of both sides of (\ref{eqiso2}) 
and hence the isomorphism (\ref{eqiso2}) established. 
This implies that the flabby class $[F^{\oplus b_1}]=[F^{\prime\,\oplus b_1^{\prime}}]$ as $\widetilde{H}$-lattices.\\

We made the following GAP \cite{GAP} algorithms 
which may establish the isomorphism  (\ref{eqiso2}).\\

\noindent
{\tt StablyEquivalentFCheckPSubdirectProduct($\widetilde{H},l_1,l_2$)} returns 
a basis $\mathcal{P}=\{P_1,\dots,P_m\}$ 
of the solution space of $G_1P=PG_2$ 
where $m={\rm rank}_\bZ$ ${\rm Hom}(G_1,G_2)$ and 
$G_1$ (resp. $G_2$) is the matrix representation group of the action of 
$\widetilde{H}$ 
on $(\oplus_{i=1}^r \bZ[\widetilde{H}/H_i]^{\oplus a_i})\oplus F^{\oplus b_1}$ (resp. 
$(\oplus_{i=1}^r \bZ[\widetilde{H}/H_i]^{\oplus a_i^{\prime}})\oplus F^{\prime\,\oplus b_1^{\prime}}$) 
with the isomorphism (\ref{eqiso2}) 
for a subdirect product 
$\widetilde{H}\leq G\times G^\prime$ of $G$ and $G^\prime$, 
and 
lists $l_1=[a_1,\ldots,a_r,b_1]$, 
$l_2=[a_1^{\prime},\ldots,a_r^{\prime},b_1^{\prime}]$, if $P$ exists. 
If such $P$ does not exist, this returns {\tt [ ]}.\\

\noindent
{\tt StablyEquivalentMCheckPSubdirectProduct($\widetilde{H},l_1,l_2$)}
returns the same as\\ 
{\tt StablyEquivalentFCheckPSubdirectProduct($\widetilde{H},l_1,l_2$)}  
but with respect to $M_G$ and $M_{G^\prime}$ 
instead of $F$ and $F^\prime$.\\

\noindent
{\tt StablyEquivalentFCheckMatSubdirectProduct($\widetilde{H},l_1,l_2,P$)} 
returns 
{\tt true} if $G_1P=PG_2$ and det $P=\pm 1$ where $G_1$ (resp. $G_2$) 
is the matrix representation group of the action of $\widetilde{H}$ 
on $(\oplus_{i=1}^r \bZ[\widetilde{H}/H_i]^{\oplus a_i})\oplus F^{\oplus b_1}$ (resp. 
$(\oplus_{i=1}^r \bZ[\widetilde{H}/H_i]^{\oplus a_i^{\prime}})\oplus F^{\prime\,\oplus b_1^{\prime}}$) 
with the isomorphism (\ref{eqiso2})
for a subdirect product 
$\widetilde{H}\leq G\times G^\prime$ of $G$ and $G^\prime$, 
and 
lists $l_1=[a_1,\ldots,a_r,b_1]$, 
$l_2=[a_1^{\prime},\ldots,a_r^{\prime},b_1^{\prime}]$. 
If not, this returns {\tt false}.\\

\noindent 
{\tt StablyEquivalentMCheckMatSubdirectProduct($\widetilde{H},l_1,l_2,P$)}
returns the same as\\ 
{\tt StablyEquivalentFCheckMatSubdirectProduct($\widetilde{H},l_1,l_2,P$)}
but with respect to $M_G$ and $M_{G^\prime}$ 
instead of $F$ and $F^\prime$.\\

\noindent
{\tt StablyEquivalentFCheckGenSubdirectProduct($\widetilde{H},l_1,l_2$)} 
returns 
the list $[\mathcal{M}_1,\mathcal{M}_2]$ where 
$\mathcal{M}_1=[g_1,\ldots,g_t]$ (resp. $\mathcal{M}_2=[g_1^{\prime},\ldots,g_t^{\prime}]$) 
is a list of the generators of $G_1$ (resp. $G_2$) 
which is the matrix representation group of the action of $\widetilde{H}$ 
on $(\oplus_{i=1}^r \bZ[\widetilde{H}/H_i]^{\oplus a_i})\oplus F^{\oplus b_1}$ (resp. 
$(\oplus_{i=1}^r \bZ[\widetilde{H}/H_i]^{\oplus a_i^{\prime}})\oplus F^{\prime\,\oplus b_1^{\prime}}$) 
with the isomorphism (\ref{eqiso2})
for a subdirect product 
$\widetilde{H}\leq G\times G^\prime$ of $G$ and $G^\prime$, 
and 
lists $l_1=[a_1,\ldots,a_r,b_1]$, 
$l_2=[a_1^{\prime},\ldots,a_r^{\prime},b_1^{\prime}]$.\\

\noindent 
{\tt StablyEquivalentMCheckGenSubdirectProduct($\widetilde{H},l_1,l_2$)} 
returns the same as\\  
{\tt StablyEquivalentFCheckGenSubdirectProduct($\widetilde{H},l_1,l_2$)} 
but with respect to $M_G$ and $M_{G^\prime}$ 
instead of $F$ and $F^\prime$.\\

%%%%%%%%%%%%%%%%%%%%%%%%%%%%%%%%%%%%%%%%%%%%%%%%%%%%%%%%%%%%%%%%%%%%
{\it Proof of Theorem \ref{th6.1}.}

Let $G=N_{3,i}$ and $G^\prime=N_{3,j}$ $(1\leq i<j\leq 15)$ 
as in Definition \ref{defN3N4}. 
Let $M_{G}$ $($resp. $M_{G^\prime}$$)$ be a $G$-lattice 
$($resp. $G^\prime$-lattice$)$ as in Definition \ref{d2.2}. 

We show that $[F]=[F^\prime]$ as $\widetilde{H}$-lattices 
with $[F]=[M_G]^{fl}$, $[F^\prime]=[M_{G^\prime}]^{fl}$ 
and $\widetilde{H}\simeq G$ 
where $F$ and $F^\prime$ are obtained via the command 
{\tt FlabbyResolutionLowRank($G$).actionF} 
and 
{\tt FlabbyResolutionLowRank($G^\prime$).actionF} 
for $(i,j)=(5,6)$, $(11,13)\in J_{3,3}$ with $G\simeq G^\prime$.\\ 

Case 1: $G=N_{3,5}\simeq D_4$ and $G^\prime=N_{3,6}\simeq D_4$.

Applying the function\\ 
{\tt 
PossibilityOfStablyEquivalentSubdirectProducts($G$,$G^\prime$,\\
ConjugacyClassesSubgroups2TorusInvariants($G$),\\
ConjugacyClassesSubgroups2TorusInvariants($G^\prime$))},\\  
we see that there exists the only one subdirect product 
$\widetilde{H}\simeq G\leq G\times G^\prime$ of $G$ and $G^\prime$ 
up to $(\GL(3,\bZ)\times \GL(3,\bZ))$-conjugacy 
which satisfy $TI_{\varphi_1(H)}=TI_{\varphi_2(H)}$ 
for any $H\leq \widetilde{H}$. 

For this $\widetilde{H}\simeq G$, 
by applying the function 
{\tt PossibilityOfStablyEquivalentFSubdirectProduct($\widetilde{H}$)}, 
we get a basis 
$\mathcal{L}=\{l_1\}$ with $l_1=[0,0,0,0,0,0,0,0,1]$ 
of the solution space 
$\{[a_1,\ldots,a_r,b_1]\mid a_i, b_1\in\bZ\}$ 
of the system of linear equations which is obtained by the equalities 
(\ref{eqp1}), (\ref{eqp2}) and (\ref{eqp3}) 
and gives all possibilities the equation (\ref{eqpos2}).

Because $a_i=0$ for $1\leq i\leq 8$ and 
$b_1=1$ in the equation (\ref{eqpos2-1}), 
we get the only possibility is $F-F^\prime=0$ 
in the sense of the equation (\ref{eqpos2}), 
i.e. $F\oplus Q\simeq F^\prime\oplus Q$ for some permutation 
$\widetilde{H}$-lattice $Q$, 
with ${\rm rank}_{\bZ}$ $F=$ ${\rm rank}_{\bZ}$ $F^\prime=7$. 

Applying the function 
{\tt StablyEquivalentFCheckPSubdirectProduct($\widetilde{H}$,LHSlist($l_1$),RHSlist($l_1$))}, we get a basis $\mathcal{P}=\{P_1,\dots,P_8\}$ 
of the solution space of $G_1P=PG_2$ 
with det $P_i=-1$ ($i=1,5,6$) and det $P_j=0$ ($j=2,3,4,7,8$). 
Hence we take the matrix $P=P_1\in \GL(7,\bZ)$ 
which satisfies $FP=PF^\prime$: 
\begin{align*}
P=\left(\begin{array}{ccccccc} 
1&0&0&0&0&0&0\\
0&1&0&1&0&0&-1\\
0&0&1&-1&0&0&1\\
0&0&0&0&0&0&1\\
0&0&0&0&1&0&0\\
0&0&0&0&0&1&0\\
0&0&0&1&0&0&0
\end{array}\right). 
\end{align*}
This implies that $F\simeq F^\prime$ as $\widetilde{H}$-lattices 
(see Example \ref{exN3N3}).\\ 

Case 2: $G=N_{3,11}\simeq S_4$ and $G^\prime=N_{3,13}\simeq S_4$. 

We can obtain the result by the same manner as in Case 1. 
We see that there exists the only one subdirect product 
$\widetilde{H}\simeq G\leq G\times G^\prime$ of $G$ and $G^\prime$ 
up to $(\GL(3,\bZ)\times \GL(3,\bZ))$-conjugacy 
which satisfy $TI_{\varphi_1(H)}=TI_{\varphi_2(H)}$ 
for any $H\leq \widetilde{H}$. 
Then we have a basis 
$\mathcal{L}=\{l_1,l_2\}$ with 
$l_1=[ 1, 0, -2, -1, -1, 0, 0, 2, 2, 1, -2, 0 ]$, 
$l_2=[ 0, 0, 0, 0, 0, 0, 0, 0, 0, 0, 0, 1 ]$ 
of the solution space 
$\{[a_1,\ldots,a_r,b_1]\mid a_i, b_1\in\bZ\}$ as in Case 1. 
It follows from the latter $l_2$ that there exists a possibility 
$F-F^\prime=0$ 
in the sense of the equation (\ref{eqpos2}), i.e. 
$F\oplus Q\simeq F^\prime\oplus Q$ for some permutation 
$\widetilde{H}$-lattice $Q$, 
with ${\rm rank}_{\bZ}$ $F=$ ${\rm rank}_{\bZ}$ $F^\prime=15$. 
Applying the function 
{\tt StablyEquivalentFCheckPSubdirectProduct($\widetilde{H}$,LHSlist($l_1$),RHSlist($l_1$))}, 
we get a basis $\mathcal{P}=\{P_1,\dots,P_{13}\}$ 
of the solution space of $G_1P=PG_2$ 
with det $P_2=-1$, det $P_4=2657205$ and det $P_i=0$ ($i\neq 2,4$). 
Namely, we may get $P=P_2\in \GL(15,\bZ)$ 
which satisfies $FP=PF^\prime$:
\begin{align*}
P=\left(\begin{array}{ccccccccccccccc} 
0&1&0&0&0&-1&1&2&-1&1&1&1&-1&2&1\\
0&0&0&0&0&0&0&0&0&0&0&0&0&-1&0\\
3&4&5&0&2&4&-3&0&3&-5&-4&-4&-2&-3&-1\\
-3&-4&-6&-1&-2&-5&4&1&-3&6&5&5&2&5&3\\
0&0&0&0&0&0&0&0&0&0&-1&0&0&0&0\\
-1&-2&-2&0&-1&-3&2&1&-1&4&3&2&1&3&1\\
0&0&0&0&0&0&-1&0&0&0&0&0&0&0&0\\
0&0&-1&0&0&0&0&0&0&0&0&0&0&0&0\\
0&-1&0&0&0&0&0&0&0&0&0&0&0&0&0\\
0&0&0&0&0&0&0&-1&0&0&0&0&0&0&0\\
0&0&0&0&0&0&0&0&-1&0&0&0&0&0&0\\
-1&0&0&0&0&0&0&0&0&0&0&0&0&0&0\\
1&2&2&0&1&0&0&1&1&0&-1&-1&-1&1&1\\
-2&-3&-3&0&-2&-2&2&-1&-1&3&3&2&2&1&0\\
-2&-3&-4&-1&-1&-4&3&2&-3&5&3&4&1&4&3
\end{array}\right). 
\end{align*}
This implies that $F\simeq F^\prime$ as $\widetilde{H}$-lattices 
(see Example \ref{exN3N3}).
\qed\\

%%%%%%%%%%%%%%%%%%%%%%%%%%%%%%%%%%%%%%%%%%%%%%%%%%%%%%%%%%%%%%%%%
By applying the function 
{\tt StablyEquivalentFCheckPSubdirectProduct($\widetilde{H}$,LHSlist($l_1$),RHSlist($l_1$))} as in the proof of Theorem \ref{th6.1}, 
we get a basis $\mathcal{P}=\{P_1,\dots,P_m\}$ 
of the solution space of $G_1P=PG_2$ with det $P_i=\pm 1$ 
for some $1\leq i\leq m$ 
where $G_1$ (resp. $G_2$) is the matrix representation group of the action of 
$\widetilde{H}$ 
on the left-hand side $(\oplus_{i=1}^r \bZ[\widetilde{H}/H_i]^{\oplus a_i})\oplus F^{\oplus b_1}$ 
(resp. the right-hand side 
$(\oplus_{i=1}^r \bZ[\widetilde{H}/H_i]^{\oplus a_i^{\prime}})\oplus F^{\prime\,\oplus b_1^{\prime}}$) 
of the isomorphism (\ref{eqiso2}) 
and $m={\rm rank}_\bZ$ ${\rm Hom}(G_1,G_2)$. 

However, in general, we have that det $P_i\neq\pm 1$ for any $1\leq i\leq m$. 
In the general case, we should seek a matrix $P$ with det $P=\pm 1$ 
which is given as a linear combination $P=\sum_{i=1}^m c_i P_i$. 
This task is important for us 
and not easy in general even if we use a computer.\\

We made the following GAP \cite{GAP} algorithms which 
may find a matrix $P=\sum_{i=1}^m c_i P_i$ 
with $G_1P=PG_2$ and det $P=\pm 1$ as a concatenation 
of suitable matrices with all invariant factors $1$.\\

{\it We will explain the algorithms below 
when the input $\mathcal{P}=\{P_1,\dots,P_m\}$ is obtained by\\  
{\tt StablyEquivalentFCheckPSubdirectProduct($\widetilde{H},l_1,l_2$)} 
although it works in more general situations.}\\

\noindent 
{\tt SearchPRowBlocks($\mathcal{P}$)}
returns the records {\tt bpBlocks} and {\tt rowBlocks} 
where 
{\tt bpBlocks} (resp. {\tt rowBlocks}) is 
the decomposition of the list $l=[1,\ldots,m]$ (resp. $l=[1,\ldots,n]$) 
with $m={\rm rank}_\bZ$ ${\rm Hom}(G_1,G_2)$ (resp. $n={\rm size}$ $G_1$)
according to the direct sum decomposition of $M_{G_1}$
for a basis $\mathcal{P}=\{P_1,\dots,P_m\}$ 
of the solution space of $G_1P=PG_2$ 
where $G_1$ (resp. $G_2$) is the matrix representation group of the action of 
$\widetilde{H}$ 
on the left-hand side $(\oplus_{i=1}^r \bZ[\widetilde{H}/H_i]^{\oplus a_i})\oplus F^{\oplus b_1}$ 
(resp. the right-hand side 
$(\oplus_{i=1}^r \bZ[\widetilde{H}/H_i]^{\oplus a_i^{\prime}})\oplus F^{\prime\,\oplus b_1^{\prime}}$) 
of the isomorphism (\ref{eqiso2}).\\ 

We write 
$B${\tt [}\,$t$\,{\tt ]} $=$ {\tt SearchPRowBlocks($\mathcal{P}$)}.{\tt bpBlocks[}\,$t$\,{\tt ]} and 
$R${\tt [}\,$t$\,{\tt ]} $=$ {\tt SearchPRowBlocks($\mathcal{P}$)}.{\tt rowBlocks[}\,$t$\,{\tt ]}.\\

\noindent 
{\tt SearchPFilterRowBlocks($\mathcal{P}$,$B${\tt [}\,$t$\,{\tt ]},$R${\tt [}\,$t$\,{\tt ]},$j$)} 
returns 
the lists $\{M_s\}$ where $M_s$ is the $n_t \times n$ matrix 
with all invariant factors $1$ which is of the form 
$M_s=\sum_{i\,\in\,B\mbox{\tt\scriptsize [}\,t\,\mbox{\tt\scriptsize ]}} c_i P_i^\prime$ $(c_i\in\{0,1\})$ 
at most $j$ non-zero $c_i$'s 
and 
$P_i^\prime$ is the submatrix of $P_i$ consists of 
$R${\tt [}\,$t$\,{\tt ]} rows 
with $n_t={\rm length}($$R${\tt [}\,$t$\,{\tt ]}$)$ 
for a basis $\mathcal{P}=\{P_1,\dots,P_m\}$ 
of the solution space of $G_1P=PG_2$ 
where $G_1$ (resp. $G_2$) is the matrix representation group of the action of 
$\widetilde{H}$ 
on the left-hand side $(\oplus_{i=1}^r \bZ[\widetilde{H}/H_i]^{\oplus a_i})\oplus F^{\oplus b_1}$ 
(resp. the right-hand side 
$(\oplus_{i=1}^r \bZ[\widetilde{H}/H_i]^{\oplus a_i^{\prime}})\oplus F^{\prime\,\oplus b_1^{\prime}}$) 
of the isomorphism (\ref{eqiso2}), 
$B${\tt [}\,$t$\,{\tt ]} $=$ {\tt SearchPRowBlocks($\mathcal{P}$)}.{\tt bpBlocks[}\,$t$\,{\tt ]}, 
$R${\tt [}\,$t$\,{\tt ]} $=$ {\tt SearchPRowBlocks($\mathcal{P}$)}.{\tt rowBlocks[}\,$t$\,{\tt ]}) 
and $j\geq 1$.\\

\noindent 
{\tt SearchPFilterRowBlocks($\mathcal{P}$,$B${\tt [}\,$t$\,{\tt ]},$R${\tt [}\,$t$\,{\tt ]},$j$,$C$)} returns the same as\\ 
{\tt SearchPFilterRowBlocks($\mathcal{P}$,$B${\tt [}\,$t$\,{\tt ]},$R${\tt [}\,$t$\,{\tt ]},$j$)} 
but with respect to $c_i\in C$ instead of $c_i\in\{0,1\}$ 
for the list $C$ of integers.\\

\noindent 
{\tt SearchPFilterRowBlocksRandomMT($\mathcal{P}$,$B${\tt [}\,$t$\,{\tt ]},$R${\tt [}\,$t$\,{\tt ]},$u$)} returns the same as\\ 
{\tt SearchPFilterRowBlocks($\mathcal{P}$,$B${\tt [}\,$t$\,{\tt ]},$R${\tt [}\,$t$\,{\tt ]},$j$)} 
but with respect to random $u$ $c_i$'s via Mersenne Twister 
instead of at most $j$ non-zero $c_i$'s 
for integer $u\geq 1$.\\

\noindent 
{\tt SearchPFilterRowBlocksRandomMT($\mathcal{P}$,$B${\tt [}\,$t$\,{\tt ]},$R${\tt [}\,$t$\,{\tt ]},$u$,$C$)} returns the same as\\
{\tt SearchPFilterRowBlocksRandomMT($\mathcal{P}$,$B${\tt [}\,$t$\,{\tt ]},$R${\tt [}\,$t$\,{\tt ]},$u$)} 
but with respect to $c_i\in C$ instead of $c_i\in\{0,1\}$ 
for the list $C$ of integers.\\

\noindent 
{\tt SearchPMergeRowBlock($m_1,m_2$)} 
returns all concatenations of the matrices $M_s$ and $M_t$ vertically 
with all invariant factors $1$ 
(resp. a concatenation of the matrices $M_s$ and $M_t$ vertically 
with determinant $\pm 1$) 
for $m_1=\{M_s\}$ and $m_2=\{M_t\}$ 
where $M_s$ are $n_1\times n$ matrices 
and $M_t$ are $n_2\times n$ matrices 
with $n_1+n_2<n$ (resp. $n_1+n_2=n$).\\

%%%%%%%%%%%%%%%%%%%%%%%%%%%%%%%%%
When there exists $t\in\bZ$ such that $R${\tt [}\,$t$\,{\tt ]} $=\{j\}$, 
we can use:\\

\noindent 
{\tt SearchPLinear($M,\mathcal{P}_1$)}
returns the list $\{{\rm det}(M+P_i)\}_{i\,\in\,B\mbox{\tt\scriptsize [}\,t\,\mbox{\tt\scriptsize ]}}$ 
of integers for an $n\times n$ matrix $M$ 
which is obtained by inserting the zero row into the $j$-th row of 
$(n-1)\times n$ matrix 
$M_s=\sum_{i\,\not\in\,B\mbox{\tt\scriptsize [}\,t\,\mbox{\tt\scriptsize ]}} c_i P_i^\prime$ 
with all invariant factors $1$ 
and $\mathcal{P}_1=\{P_i\}_{i\,\in\, B\mbox{\tt\scriptsize [}\,t\,\mbox{\tt\scriptsize ]}}$
where $B${\tt [}\,$t$\,{\tt ]} $=$ {\tt SearchPRowBlocks($\mathcal{P}$)}.{\tt bpBlocks[}\,$t$\,{\tt ]}, 
$P_i^\prime$ is the submatrix of $P_i$ deleting the $j$-th row 
and 
$\mathcal{P}=\{P_1,\ldots,P_m\}$ is obtained by 
{\tt StablyEquivalentFCheckPSubdirectProduct($\widetilde{H},l_1,l_2$)} 
under the assumption that 
there exists $t\in\bZ$ such that $R${\tt [}\,$t$\,{\tt ]} $=\{j\}$.\\

%%%%%%%%%%%%%%%%%%%%%%%%%%%%%%%%%%%%%%%%%%%%%%%%%%
When there exist $t_1, t_2\in\bZ$ such that 
$R${\tt [}\,$t_1$\,{\tt ]} $=\{j_1\}$, 
$R${\tt [}\,$t_2$\,{\tt ]} $=\{j_2\}$, 
we can use:\\

\noindent 
{\tt SearchPBilinear($M,\mathcal{P}_1,\mathcal{P}_2$)}
returns the matrix 
$[{\rm det}(M+P_{i_1}+P_{i_2})]_{i_1\,\in\,B\mbox{\tt\scriptsize [}\,t_1\,\mbox{\tt\scriptsize ]},i_2\,\in\,B\mbox{\tt\scriptsize [}\,t_2\,\mbox{\tt\scriptsize ]}}$ 
for an $n\times n$ matrix $M$ 
which is obtained by inserting the two zero rows 
into the $j_1$-th row and the $j_2$-th row 
of $(n-2)\times n$ matrix 
$M_s=\sum_{i\,\not\in\,B\mbox{\tt\scriptsize [}\,t_1\,\mbox{\tt\scriptsize ]}
\,\cup\,B\mbox{\tt\scriptsize [}\,t_2\,\mbox{\tt\scriptsize ]}} 
c_i P_i^\prime$ 
with all invariant factors $1$, 
$\mathcal{P}_1=\{P_{i_1}\}_{i_1\,\in\, B\mbox{\tt\scriptsize [}\,t_1\,\mbox{\tt\scriptsize ]}}$ and 
$\mathcal{P}_2=\{P_{i_2}\}_{i_2\,\in\, B\mbox{\tt\scriptsize [}\,t_2\,\mbox{\tt\scriptsize ]}}$ 
where 
$B${\tt [}\,$t_1$\,{\tt ]} $=$ {\tt SearchPRowBlocks($\mathcal{P}$)}.{\tt bpBlocks[}\,$t_1$\,{\tt ]}, 
$B${\tt [}\,$t_2$\,{\tt ]} $=$ {\tt SearchPRowBlocks($\mathcal{P}$)}.{\tt bpBlocks[}\,$t_2$\,{\tt ]}, 
$P_i^\prime$ is the submatrix of $P_i$ 
deleting the $j_1$-th and the $j_2$-th rows 
and 
$\mathcal{P}=\{P_1,\ldots,P_m\}$ is obtained by\\
{\tt StablyEquivalentFCheckPSubdirectProduct($\widetilde{H},l_1,l_2$)} 
under the assumption that 
there exist $t_1, t_2\in\bZ$ such that 
$R${\tt [}\,$t_1$\,{\tt ]} $=\{j_1\}$ and 
$R${\tt [}\,$t_2$\,{\tt ]} $=\{j_2\}$.\\

%%%%%%%%%%%%%%%%%%%%%%%%%%%%%%%%%%%%%%%%%%
When there exists $t\in\bZ$ such that 
$R${\tt [}\,$t$\,{\tt ]} $=\{j_1,j_2\}$, 
we can use:\\

\noindent 
{\tt SearchPQuadratic($M,\mathcal{P}_1$)}
returns the matrix 
$[\frac{1}{2}({\rm det}(M+P_{i_1}+P_{i_2})-{\rm det}(M+P_{i_1})-{\rm det}(M+P_{i_2}))]_{i_1,i_2\,\in\,B\mbox{\tt\scriptsize [}\,t\,\mbox{\tt\scriptsize ]}}$ 
for an $n\times n$ matrix $M$ 
which is obtained by inserting the two zero rows 
into the $j_1$-th row and the $j_2$-th row of 
$(n-2)\times n$ matrix 
$M_s=\sum_{i\,\not\in\,B\mbox{\tt\scriptsize [}\,t\,\mbox{\tt\scriptsize ]}} c_i P_i^\prime$ 
with all invariant factors $1$ and 
$\mathcal{P}_1=\{P_i\}_{i\,\in\, B\mbox{\tt\scriptsize [}\,t\,\mbox{\tt\scriptsize ]}}$
where 
$B${\tt [}\,$t$\,{\tt ]} $=$ {\tt SearchPRowBlocks($\mathcal{P}$)}.{\tt bpBlocks[}\,$t$\,{\tt ]}, 
$P_i^\prime$ is the submatrix of $P_i$ 
deleting the $j_1$-th and $j_2$-th rows and 
$\mathcal{P}=\{P_1,\ldots,P_m\}$ is obtained by 
{\tt StablyEquivalentFCheckPSubdirectProduct($\widetilde{H},l_1,l_2$)} 
under the assumption that 
there exists $t\in\bZ$ such that $R${\tt [}\,$t$\,{\tt ]} $=\{j_1,j_2\}$.\\

%%%%%%%%%%%%%%%%%%%%%%%%%%%%%%%%%%%%%%%%%%%%%
When $R${\tt [}\,$1$\,{\tt ]} $=\{1,\ldots,m\}$, 
we can use:\\

\noindent 
{\tt SearchP1($\mathcal{P}$)} 
returns a matrix $P=\sum_{i=1}^m c_i P_i$ 
with $c_i\in\{0,1\}$, $G_1P=PG_2$ and det $P=\pm 1$ 
where $G_1$ (resp. $G_2$) is the matrix representation group of the action of 
$\widetilde{H}$ 
on the left-hand side $(\oplus_{i=1}^r \bZ[\widetilde{H}/H_i]^{\oplus a_i})\oplus F^{\oplus b_1}$ 
(resp. the right-hand side 
$(\oplus_{i=1}^r \bZ[\widetilde{H}/H_i]^{\oplus a_i^{\prime}})\oplus F^{\prime\,\oplus b_1^{\prime}}$) 
of the isomorphism (\ref{eqiso2}) 
for $\mathcal{P}=\{P_1,\ldots,P_m\}$ which is obtained by 
{\tt StablyEquivalentFCheckPSubdirectProduct($\widetilde{H},l_1,l_2$)} 
under the assumption that 
$R${\tt [}\,$1$\,{\tt ]}$=\{1,\ldots,m\}$.\\

\noindent 
{\tt SearchP1($\mathcal{P}$,$C$)} returns the same as 
{\tt SearchP1($\mathcal{P}$)} 
but with respect to $c_i\in C$ instead of $c_i\in\{0,1\}$ 
for the list $C$ of integers.\\

%%%%%%%%%%%%%%%%%%%%%%%%%%%%%%%%%%%%%%%%%%%%%%%%%%%%%%%%%%%%%%%%%%%
{\it Proof of Theorem \ref{th6.2}.}

%%%%%%%%%%%%%%%%%%%%%%%%%%%%%%%%%%%%%%%%%%%%%%%%%%%%%%%

Let $G=N_{3,i}$ and $G^\prime=N_{4,j}$ $(1\leq i\leq 15, 1\leq j\leq 152)$ 
as in Definition \ref{defN3N4}. 
Let $M_{G}$ $($resp. $M_{G^\prime}$$)$ be a $G$-lattice 
$($resp. $G^\prime$-lattice$)$ as in Definition \ref{d2.2}. 
We show that $[F]=[F^\prime]$ as $\widetilde{H}$-lattices 
with $[F]=[M_G]^{fl}$, $[F^\prime]=[M_{G^\prime}]^{fl}$, 
$\widetilde{H}\simeq G$ 
for $(i,j)\in J_{3,4}=\{(1,1)$, $(2,6)$, $(3,2)$, $(3,5)$, $(3,7)$, $(4,13)$,
$(4,23)$, $(4,24)$, $(5,15)$, $(5,16)$, $(5,17)$, $(5,25)$, $(5,26)$,
$(6,15)$, $(6,16)$, $(6,17)$, $(6,25)$, $(6,26)$, $(7,19)$, $(7,29)$,
$(7,30)$, $(10,69)$, $(10,74)$, $(10,75)$, $(12,71)$, $(12,77)$,
$(12,78)$, $(15,72)$, $(15,83)$, $(15,85)\}$ with $G\simeq G^\prime$.\\

% similar 
Case 1: $(i,j)=(1,1)$, $(2,6)$, $(3,5)$, $(5,15)$, $(5,26)$, 
$(10,74)$, $(12,77)$, $(15,83)$. 

By applying the function 
{\tt StablyEquivalentFCheckPSubdirectProduct($\widetilde{H}$,LHSlist($l_1$),RHSlist($l_1$))} as in the proof of Theorem \ref{th6.1}, 
we can get a basis $\mathcal{P}=\{P_1,\dots,P_m\}$ 
of the solution space of $G_1P=PG_2$ 
with det $P_s=\pm 1$ for some $1\leq s\leq m$. 
This implies that $[F]=[F^\prime]$ as $\widetilde{H}$-lattices 
(see Example \ref{exN3N4}).\\

% block
Case 2: $(i,j)=(3,2)$, $(3,7)$, $(4,13)$, $(4,24)$, 
$(5,16)$, $(5,17)$, $(5,25)$, $(7,19)$, $(7,30)$, $(10,69)$, 
$(10,75)$, $(12,71)$, $(12,78)$, $(15,72)$, $(15,85)$. 

By applying the function 
{\tt StablyEquivalentFCheckPSubdirectProduct($\widetilde{H}$,LHSlist($l_1$),RHSlist($l_1$))} as in the proof of Theorem \ref{th6.1}, 
we obtain a basis 
$\mathcal{P}=\{P_1,\dots,P_m\}$ of the solution space of $G_1P=PG_2$. 
However, we find that det $P_s=0$ $(1\leq s\leq m)$. 

We will give a proof of the case $(i,j)=(3,2)$. 
The remaining cases can be proved by the same way. 

For $G=N_{3,3}\simeq C_2^3$ and $G^\prime=N_{4,2}\simeq C_2^3$, 
by applying the function {\tt SearchPRowBlocks}($\mathcal{P}$), 
we get 
$B=$ {\tt SearchPRowBlocks($\mathcal{P}$).bpBlocks} 
and 
$R=$ {\tt SearchPRowBlocks($\mathcal{P}$).rowBlocks} 
with $B=\{B_1,B_2\}$, $B_1=\{1,\ldots,5\}$, $B_2=\{6,\ldots,32\}$, 
$R=\{R_1,R_2\}$, $R_1=\{1\}$, $R_2=\{2,\ldots,12\}$ 
which correspond to 
$\mathcal{P}=\{P_1,\ldots,P_{32}\}$ with 
det $P_s=0$ $(1\leq s\leq 32)$ 
and $\bZ\oplus F\simeq F^\prime$
with rank $F=11$, rank $F^\prime=12$ 
in the equation (\ref{eqiso2}). 

Applying 
{\tt SearchPFilterRowBlocks}($\mathcal{P}$,$B_t$,$R_t$,$1$) 
$(t=1,2)$, 
we get $P_1,\ldots,P_5$ with an invariant factor $1$ 
and $P_{i_1},\ldots,P_{i_5}$ with all $11$ invariant factors $1$ 
with $2\leq i_1<\cdots<i_5\leq 12$. 
Then we use the function {\tt SearchPMergeRowBlock} to them, 
we get the following matrix $P$ with $G_1P=PG_2$ and det $P=-1$ 
where $G_1$ (resp. $G_2$) 
is the matrix representation group of the action of 
$\widetilde{H}$ 
on the left-hand side $\bZ\oplus F$ 
(resp. the right-hand side $F^\prime$) of the isomorphism (\ref{eqiso2}) 
(see Example \ref{exN3N4}): 
\begin{align*}
P=\left(\begin{array}{cccccccccccc} 
1&0&0&0&0&0&0&-1&1&1&1&1\\
0&1&0&0&0&0&0&0&0&0&0&0\\
0&0&0&0&0&0&0&0&0&0&0&-1\\
0&0&0&0&1&0&0&1&0&-1&-1&0\\
0&0&1&0&0&0&0&0&0&0&0&0\\
0&0&1&0&0&0&0&1&0&-1&0&-1\\
0&0&0&0&1&1&1&1&-1&-1&-1&0\\
0&1&0&1&0&1&0&1&-1&0&-1&-1\\
0&0&0&0&0&0&0&1&0&0&0&0\\
0&0&1&1&0&0&1&1&-1&-1&0&-1\\
0&1&0&0&1&1&0&1&-1&-1&-1&-1\\
0&0&0&0&1&0&0&0&0&0&0&0
\end{array}\right). 
\end{align*}
This implies that $\bZ\oplus F\simeq F^\prime$ 
as $\widetilde{H}$-lattices. 
In particular, we have $[F]=[F^\prime]$.\\ 

% SearchP1
Case 3: $(i,j)=(4,23)$, $(7,29)$. 

By applying the function 
{\tt StablyEquivalentFCheckPSubdirectProduct($\widetilde{H}$,LHSlist($l_1$),RHSlist($l_1$))} as in the proof of Theorem \ref{th6.1}, 
we obtain a basis 
$\mathcal{P}=\{P_1,\dots,P_m\}$ of the solution space of $G_1P=PG_2$. 
However, we find that det $P_s\neq \pm 1$ $(1\leq s\leq m)$. 

We give a proof of the former case $(i,j)=(4,23)$ 
because the latter case $(i,j)=(7,29)$ can be proved by the same way. 

For $G=N_{3,4}\simeq C_4\times C_2$ and 
$G^\prime=N_{4,23}\simeq C_4\times C_2$, 
by applying the function {\tt SearchPRowBlocks}($\mathcal{P}$), 
we get 
$B=$ {\tt SearchPRowBlocks($\mathcal{P}$).bpBlocks} 
and 
$R=$ {\tt SearchPRowBlocks($\mathcal{P}$).rowBlocks} 
with $B=\{1,\ldots,19\}$, 
$R=\{1,\ldots,11\}$
which correspond to 
$\mathcal{P}=\{P_1,\ldots,P_{19}\}$ 
with det $P_s\in\{0,-3\}$ $(1\leq s\leq 19)$ 
and $F\simeq F^\prime$
with rank $F=$ rank $F^\prime=11$ 
in the equation (\ref{eqiso2}). 
In particular, we have 
$R${\tt [}\,$1$\,{\tt ]} $=\{1,\ldots,19\}$. 

Then by applying 
{\tt SearchP1}($\mathcal{P}$), 
we get the following matrix $P$ with $G_1P=PG_2$ and det $P=-1$ 
where $G_1$ (resp. $G_2$) 
is the matrix representation group of the action of 
$\widetilde{H}$ on the left-hand side $F$ 
(resp. the right-hand side $F^\prime$) of the isomorphism (\ref{eqiso2}) 
(see Example \ref{exN3N4}): 
\begin{align*}
P=\left(\begin{array}{ccccccccccc}
1&0&1&0&0&0&0&0&0&0&0\\
1&0&0&0&1&1&0&0&1&0&-1\\
0&0&0&1&0&0&1&0&-1&0&1\\
0&0&0&0&0&0&1&1&0&0&0\\
1&-1&0&0&0&1&0&1&0&0&0\\
0&1&0&1&0&-1&1&0&-1&0&1\\
1&0&1&0&1&1&-1&0&0&0&-1\\
1&-1&0&0&0&1&0&1&1&-1&0\\
0&0&0&0&1&0&0&0&0&1&0\\
1&0&1&1&0&0&0&0&-1&0&0\\
0&0&0&1&0&0&1&0&0&0&0
\end{array}\right).
\end{align*}
This implies that $F\simeq F^\prime$ 
as $\widetilde{H}$-lattices.\\ 

% reduces 
Case 4: $(i,j)=(6,15)$, $(6,16)$, $(6,17)$, $(6,25)$, $(6,26)$. 

Because we already see $[M_{N_{3,5}}]^{fl}=[M_{N_{3,6}}]^{fl}$ as 
$\widetilde{H}$-lattices with $\widetilde{H}\simeq G$ by Theorem \ref{th6.1}, 
it follows from the case $i=5$ in Cases 1 and 2 above 
that there exists a subdirect product 
$\widetilde{H}^\prime\simeq G\leq G\times G^\prime$ of $G$ and $G^\prime$ 
such that $[M_G]^{fl}=[M_{G^\prime}]^{fl}$ 
as $\widetilde{H}^\prime$-lattices.\qed\\

{\it Proof of Theorem \ref{th6.3}.}

%%%%%%%%%%%%%%%%%%%%%%%%%%%%%%%%%%%%%%%%%%%%%%%%%%%%%%%
%Step 3. 
%The case where $G=N_{4,i}$ and $G^\prime=N_{4,j}$. 

Let $G=N_{4,i}$ and $G^\prime=N_{4,j}$ $(1\leq i<j\leq 152)$ 
as in Definition \ref{defN3N4}. 
Let $M_{G}$ $($resp. $M_{G^\prime}$$)$ be a $G$-lattice 
$($resp. $G^\prime$-lattice$)$ as in Definition \ref{d2.2}. 
We show that $[F]=[F^\prime]$ as $\widetilde{H}$-lattices 
with $[F]=[M_G]^{fl}$, $[F^\prime]=[M_{G^\prime}]^{fl}$, 
$\widetilde{H}\simeq G$ 
for $(i,j)\in J_{4,4}=\{(2,5)$, $(2,7)$, $(3,8)$, $(5,7)$, $(13,23)$, 
$(13,24)$, $(14,39)$, $(15,16)$, $(15,17)$, 
$(15,25)$, $(15,26)$, $(16,17)$, $(16,25)$, 
$(16,26)$, $(17,25)$, $(17,26)$, $(19,29)$, 
$(19,30)$, $(20,31)$, $(20,33)$, $(20,35)$, 
$(22,46)$, $(23,24)$, $(25,26)$, $(29,30)$, 
$(31,33)$, $(31,35)$, $(32,34)$, $(33,35)$, 
$(41,44)$, $(59,91)$, $(60,92)$, $(63,93)$, 
$(65,94)$, $(66,96)$, 
$(67,95)$, $(68,99)$, % Mersenne 
$(69,74)$, $(69,75)$, $(71,77)$, $(71,78)$, 
$(72,83)$, $(72,85)$, $(74,75)$, $(77,78)$, 
$(81,84)$, $(81,87)$, $(83,85)$, 
$(84,87)$, $(86,88)$, $(102,103)\}$ with $G\simeq G^\prime$.\\

% by Step 2
Case 1: $(i,j)=(2,5)$, $(2,7)$, $(5,7)$, $(13,23)$, 
$(13,24)$,  $(15,16)$, $(15,17)$, 
$(15,25)$, $(15,26)$, $(16,17)$, $(16,25)$, 
$(16,26)$, $(17,25)$, $(17,26)$, $(19,29)$, 
$(19,30)$, $(23,24)$, $(25,26)$, $(29,30)$, 
$(69,74)$, $(69,75)$, $(71,77)$, $(71,78)$, 
$(72,83)$, $(72,85)$, $(74,75)$, $(77,78)$, $(83,85)$. 

It follows from Theorem \ref{th6.2} that for some $1\leq i\leq 15$, 
$[M_G]^{fl}=[M_{N_{3,i}}]^{fl}=[M_{G^\prime}]^{fl}$ 
as $\widetilde{H}^\prime$-lattices for some subdirect product 
$\widetilde{H}^\prime\simeq G\leq G\times G^\prime$ of $G$ and $G^\prime$.\\ 

%%%%%%%%%%%%%%%%%%%%%%%%%%%%%%%%%%%%%%%%%%%%%%%%%%%%%%%%%%%%%%%

% similar 
Case 2: $(i,j)=(3,8)$, $(14,39)$, $(20,31)$, $(20,33)$, $(20,35)$, 
$(32,34)$, $(41,44)$, $(81,84)$, $(81,87)$. 

By applying the function 
{\tt StablyEquivalentFCheckPSubdirectProduct($\widetilde{H}$,LHSlist($l_1$),RHSlist($l_1$))} as in the proof of Theorem \ref{th6.1}, 
we get a basis $\mathcal{P}=\{P_1,\dots,P_m\}$ 
of the solution space of $G_1P=PG_2$ with det $P_s=\pm 1$ 
for some $1\leq s\leq m$ 
(see Example \ref{exN4N4}).\\

% block
Case 3: $(i,j)=(86,88)$. 

By applying the function 
{\tt StablyEquivalentFCheckPSubdirectProduct($\widetilde{H}$,LHSlist($l_1$),RHSlist($l_1$))} as in the proof of Theorem \ref{th6.1}, 
we obtain a basis 
$\mathcal{P}=\{P_1,\dots,P_m\}$ of the solution space of $G_1P=PG_2$. 
However, we find that det $P_s=0$ $(1\leq s\leq m)$. 

By using the function {\tt SearchPMergeRowBlock}, 
we can prove the result as in Case 2 of the proof of Theorem \ref{th6.2} 
(see Example \ref{exN4N4}).\\

Case 4: $(i,j)=(63,93)$, $(65,94)$, $(67,95)$, $(68,99)$. 

By applying the function 
{\tt StablyEquivalentFCheckPSubdirectProduct($\widetilde{H}$,LHSlist($l_1$),RHSlist($l_1$))} as in the proof of Theorem \ref{th6.1}, 
we obtain a basis 
$\mathcal{P}=\{P_1,\dots,P_m\}$ of the solution space of $G_1P=PG_2$. 
However, we find that det $P_s=0$ $(1\leq s\leq m)$. 

We give a proof of the first case $(i,j)=(63,93)$ 
because the remaining cases can be proved by the same way. 
(For the cases $(67,95)$, $(68,99)$, we use the function 
{\tt SearchPFilterRowBlocksRandomMT($\mathcal{P}$,$B${\tt [}\,$t$\,{\tt ]},$R${\tt [}\,$t$\,{\tt ]},$u$)} via Mersenne Twister 
(cf. Matsumoto and Nishimura \cite{MN98})
instead of {\tt SearchPRowBlocks}($\mathcal{P}$) (see Example \ref{exN3N4}).)

For $G=N_{4,63}\simeq S_3\times C_6$ and 
$G^\prime=N_{4,93}\simeq S_3\times C_6$, 
by applying the function {\tt SearchPRowBlocks}($\mathcal{P}$), 
we get 
$B=$ {\tt SearchPRowBlocks($\mathcal{P}$).bpBlocks} 
and 
$R=$ {\tt SearchPRowBlocks($\mathcal{P}$).rowBlocks} 
with $B=\{B_1,\ldots,B_5\}$, 
$B_1=\{1,\ldots,13\}$, $B_2=\{14,\ldots,27\}$, 
$B_3=\{28,\ldots,34\}$, $B_4=\{35,\ldots,41\}$, 
$B_5=\{42,\ldots,91\}$, 
$R=\{R_1,\ldots,R_5\}$, 
$R_1=\{1,\ldots,6\}$, $R_2=\{7,\ldots,10\}$,
$R_3=\{11\}$, $R_4=\{12\}$, $R_5=\{13,\ldots,38\}$ 
which correspond to 
$\mathcal{P}=\{P_1,\ldots,P_{91}\}$ with 
det $P_s=0$ $(1\leq s\leq 91)$ and 
$F=[M_G]^{fl}$, $F^\prime=[M_{G^\prime}]^{fl}$ 
with rank $F=$ rank $F^\prime=26$. 

Because $R_3=\{11\}$, $R_4=\{12\}$, 
by applying 
{\tt SearchPBilinear($M$,$\{P_{i_1}\}_{i_1\,\in\, B_3}$,$\{P_{i_2}\}_{i_2\,\in\, B_4}$)}, 
we get a $38\times 38$ matrix $P$ with $G_1P=PG_2$ and det $P=-1$ 
where $G_1$ (resp. $G_2$) 
is the matrix representation group of the action of 
$\widetilde{H}$ on the left-hand side 
$\bZ[\widetilde{H}/H_{11}]\oplus \bZ[\widetilde{H}/H_{16}]\oplus 
\bZ\oplus \bZ\oplus F$ with rank $6+4+1+1+26=38$ 
(resp. the right-hand side $
\bZ[\widetilde{H}/H_{17}]\oplus 
\bZ[\widetilde{H}/H_{18}]\oplus 
\bZ[\widetilde{H}/H_{19}]\oplus 
\bZ[\widetilde{H}/H_{20}]\oplus 
\bZ[\widetilde{H}/H_{21}]\oplus 
F^\prime$ with rank $3+3+2+2+2+26=38$) 
of the isomorphism (\ref{eqiso2}) where 
$H_{11}\simeq C_6$, 
$H_{16}\simeq C_3^2$ 
(resp. 
$H_{17}\simeq C_6\times C_2$, 
$H_{18}\simeq D_6$, 
$H_{19}\simeq C_6\times C_3$, 
$H_{20}\simeq S_3\times C_3$, 
$H_{21}\simeq S_3\times C_3$). 
Hence we have $[F]=[F^\prime]$ as 
$\widetilde{H}$-lattices with $\widetilde{H}\simeq G$ 
(see Example \ref{exN3N4}).\\

Case 5: $(i,j)=(102,103)$. 

By applying the function 
{\tt StablyEquivalentFCheckPSubdirectProduct($\widetilde{H}$,LHSlist($l_1$),RHSlist($l_1$))} as in the proof of Theorem \ref{th6.1}, 
we obtain a basis 
$\mathcal{P}=\{P_1,\dots,P_m\}$ of the solution space of $G_1P=PG_2$. 
However, we find that det $P_s=0$ $(1\leq s\leq m)$. 

For $G=N_{4,102}\simeq S_3^2\rtimes C_2$ and 
$G^\prime=N_{4,103}\simeq S_3^2\rtimes C_2$, 
by applying the function {\tt SearchPRowBlocks}($\mathcal{P}$), 
we get 
$B=$ {\tt SearchPRowBlocks($\mathcal{P}$).bpBlocks} 
and 
$R=$ {\tt SearchPRowBlocks($\mathcal{P}$).rowBlocks} 
with $B=\{B_1,\ldots,B_4\}$, 
$B_1=\{1,\ldots,9\}$, $B_2=\{10,\ldots,18\}$, 
$B_3=\{19,\ldots,26\}$, $B_4=\{27,\ldots,56\}$, 
$R=\{R_1,\ldots,R_4\}$, 
$R_1=\{1,\ldots,6\}$, $R_2=\{7,\ldots,10\}$,
$R_3=\{11,12\}$, $R_4=\{13,\ldots,38\}$ 
which correspond to 
$\mathcal{P}=\{P_1,\ldots,P_{56}\}$ with 
det $P_s=0$ $(1\leq s\leq 56)$ and 
$F=[M_G]^{fl}$, $F^\prime=[M_{G^\prime}]^{fl}$ 
with rank $F=$ rank $F^\prime=26$. 

Because $R_3=\{11,12\}$, 
by applying 
{\tt SearchPQuadratic($M$,$\{P_i\}_{i\,\in\, B_3}$)}, 
we get a $38\times 38$ matrix $P$ with $G_1P=PG_2$ and det $P=1$ 
where $G_1$ (resp. $G_2$) 
is the matrix representation group of the action of 
$\widetilde{H}$ on the left-hand side 
$\bZ[\widetilde{H}/H_{19}]\oplus \bZ[\widetilde{H}/H_{21}]\oplus 
\bZ[\widetilde{H}/H_{25}]\oplus F$ with rank $6+4+2+26=38$ 
(resp. the right-hand side $
\bZ[\widetilde{H}/H_{18}]\oplus 
\bZ[\widetilde{H}/H_{22}]\oplus 
\bZ[\widetilde{H}/H_{23}]\oplus 
F^\prime$ with rank $6+4+2+26=38$) of the isomorphism (\ref{eqiso2}) 
where 
$H_{19}\simeq D_6$, 
$H_{21}\simeq S_3\times C_3$, 
$H_{25}\simeq S_3^2$ 
(resp. 
$H_{18}\simeq D_6$, 
$H_{22}\simeq S_3\times C_3$, 
$H_{23}\simeq S_3^2$). 
Hence we have $[F]=[F^\prime]$ as 
$\widetilde{H}$-lattices with $\widetilde{H}\simeq G$ 
(see Example \ref{exN3N4}).\\

% only SearchP1
Case 6: $(i,j)=(22,46)$, $(59,91)$, $(60,92)$, $(66,96)$. 

By applying the function 
{\tt StablyEquivalentFCheckPSubdirectProduct($\widetilde{H}$,LHSlist($l_1$),RHSlist($l_1$))} as in the proof of Theorem \ref{th6.1}, 
we obtain a basis 
$\mathcal{P}=\{P_1,\dots,P_m\}$ of the solution space of $G_1P=PG_2$. 
However, we find that det $P_s=0$ $(1\leq s\leq m)$. 

By applying the function {\tt SearchP1}($\mathcal{P}$), 
we get the following matrix $P$ with $G_1P=PG_2$ and det $P=\pm 1$ 
as in Case 3 of the proof of Theorem \ref{th6.2} 
(see Example \ref{exN4N4}).\\

% reduces 
Case 7: $(i,j)=(31,33)$, $(31,35)$, $(33,35)$, $(84,87)$. 
For these cases, 
there exists $N_{4,m}$ ($1\leq m\leq 152$) which appears in 
Cases 2, 3, 4 above 
such that 
$[M_G]^{fl}=[M_{N_{4,m}}]^{fl}=[M_{G^\prime}]^{fl}$ 
as $\widetilde{H}^\prime$-lattices for some subdirect product 
$\widetilde{H}^\prime\simeq G\leq G\times G^\prime$ 
of $G$ and $G^\prime$. 
Hence these cases can be reduced to Cases 2, 3, 4 above.\qed\\

{\it Proof of Theorem \ref{th6.4}.}

% The case where $G=I_{4,i}$ and $G^\prime=I_{4,j}$. 
%Let $G=I_{4,i}$ and $G^\prime=I_{4,j}$ $(1\leq i<j\leq 7)$ 
%as in Definition \ref{defN3N4}. 
%By Proposition \ref{prop5.5}, 
%it is enough to check that 
%$[M_G]^{fl}\not\sim [M_{G^\prime}]^{fl}$ 
%for $(i,j)=(1,2)$, $(1,3)$, $(2,3)$, $(4,5)$, $(4,6)$, $(5,6)$.
%
%We treat only the first case $(i,j)=(1,2)$ 
%because the remaining cases can be proved by the same manner. 
%
For each $(i,j)$, by applying the function 
{\tt PossibilityOfStablyEquivalentFSubdirectProduct($\widetilde{H}$:H2)}, 
we may confirm that 
$[M_G]^{fl}\not\sim [M_{G^\prime}]^{fl}$ 
(see Example \ref{exI4I4}).\qed\\

%%%%%%%%%%%%%%%%%%%%%%%%%%%%%%%%%%%%%%%%%%%%%%%%%%%%%%%%%%%%%%%%%%%%%%
{\it Proof of Theorem \ref{thmain1}.}

The statement follows from 
Proposition \ref{prop5.2} and Theorem \ref{th6.1}.\qed\\

%%%%%%%%%%%%%%%%%%%%%%%%%%%%%%%%%%%%%%%%%%%%%%%%%%%%%%%%%%%%%%%%%%%%%%
{\it Proof of Theorem \ref{thmain3}.}

By Proposition \ref{prop5.3}, Proposition \ref{prop5.4}, 
Theorem \ref{th6.2}, Theorem \ref{th6.3} and Theorem \ref{th6.4}, 
it is enough to check the cases $N_{31,i}$. 
%The case where $G^\prime=N_{31,j}$. 
%(${\rm WSEC}_r$ with $1\leq r\leq 13$). 
For $G^\prime=N_{31,j}$ $(1\leq j\leq 64)$, 
we can find $G=N_{3,i}$ ($1\leq i\leq 15$) 
such that $[M_G]^{fl}\sim [M_{G^\prime}]^{fl}$ by Table $7$. 
The last statement also follows from %(1) and 
Theorem \ref{th6.2}.\qed

%%%%%%%%%%%%%%%%%%%%%%%%%%%%%%%%%%%%%%%%%%%%%%%%%%%%%%%%
\bigskip
%\subsection{N3-N3}
\begin{example}[Theorem \ref{th6.1} for $G=N_{3,i}$ and $G^\prime=N_{3,j}$ with $(i,j)=(5,6), (11,13)$]\label{exN3N3}~\vspace*{-5mm}\\
\begin{verbatim}
gap> Read("BCAlgTori.gap");
gap> N3g:=List(N3,x->MatGroupZClass(x[1],x[2],x[3],x[4]));;

gap> N3inv5:=ConjugacyClassesSubgroups2TorusInvariants(N3g[5]);;
gap> N3inv6:=ConjugacyClassesSubgroups2TorusInvariants(N3g[6]);;
gap> pos:=PossibilityOfStablyEquivalentSubdirectProducts(N3g[5],N3g[6],N3inv5,N3inv6);;
gap> Length(pos); # the number of pos 
1
gap> List([N3g[5],N3g[6]],StructureDescription);
[ "D8", "D8" ]
gap> List(pos,StructureDescription); # N3g[5]=N3g[6]=pos[1]=D4
[ "D8" ]
gap> PossibilityOfStablyEquivalentFSubdirectProduct(pos[1]);
[ [ 0, 0, 0, 0, 0, 0, 0, 0, 1 ] ]
gap> l:=last[1]; # possibility for F=F' in the sense of the equation (8)
[ 0, 0, 0, 0, 0, 0, 0, 0, 1 ]
gap> LHSlist(l); # the list l1 for the left-hand side of (8)
[ 0, 0, 0, 0, 0, 0, 0, 0, 1 ]
gap> RHSlist(l); # the list l2 for the right-hand side of (8)
[ 0, 0, 0, 0, 0, 0, 0, 0, 1 ]
gap> bp:=StablyEquivalentFCheckPSubdirectProduct(pos[1],LHSlist(l),RHSlist(l));;
gap> List(bp,Determinant); # det(bp[1])=-1 
[ -1, 0, 0, 0, -1, -1, 0, 0 ]
gap> bp[1]; # the matrix P=bp[1] with det(P)=-1, FP=PF' and rank(F)=rank(F')=7
[ [ 1, 0, 0, 0, 0, 0, 0 ], 
  [ 0, 1, 0, 1, 0, 0, -1 ], 
  [ 0, 0, 1, -1, 0, 0, 1 ],
  [ 0, 0, 0, 0, 0, 0, 1 ], 
  [ 0, 0, 0, 0, 1, 0, 0 ], 
  [ 0, 0, 0, 0, 0, 1, 0 ], 
  [ 0, 0, 0, 1, 0, 0, 0 ] ]
gap> StablyEquivalentFCheckMatSubdirectProduct(pos[1],LHSlist(l),RHSlist(l),bp[1]);
true

gap> N3inv11:=ConjugacyClassesSubgroups2TorusInvariants(N3g[11]);;
gap> N3inv13:=ConjugacyClassesSubgroups2TorusInvariants(N3g[13]);;
gap> pos:=PossibilityOfStablyEquivalentSubdirectProducts(N3g[11],N3g[13],N3inv11,N3inv13);;
gap> Length(pos); # the number of pos
1
gap> List([N3g[11],N3g[13]],StructureDescription);
[ "S4", "S4" ]
gap> List(pos,StructureDescription); # N3g[11]=N3g[13]=pos[1]=S4
[ "S4" ]
gap> PossibilityOfStablyEquivalentFSubdirectProduct(pos[1]);
[ [ 1, 0, -2, -1, -1, 0, 0, 2, 2, 1, -2, 0 ], 
  [ 0, 0, 0, 0, 0, 0, 0, 0, 0, 0, 0, 1 ] ]
gap> l:=last[2]; # possibility for F=F' in the sense of the equation (8)
[ 0, 0, 0, 0, 0, 0, 0, 0, 0, 0, 0, 1 ]
gap> LHSlist(l); # the list l1 for the left-hand side of (8)
[ 0, 0, 0, 0, 0, 0, 0, 0, 0, 0, 0, 1 ]
gap> RHSlist(l); # the list l2 for the right-hand side of (8)
[ 0, 0, 0, 0, 0, 0, 0, 0, 0, 0, 0, 1 ]
gap> bp:=StablyEquivalentFCheckPSubdirectProduct(pos[1],LHSlist(l),RHSlist(l));;
gap> List(bp,Determinant); # det(bp[2])=-1 
[ 0, -1, 0, 2657205, 0, 0, 0, 0, 0, 0, 0, 0, 0 ]
gap> bp[2]; # the matrix P=bp[2] with det(P)=-1, FP=PF' and rank(F)=rank(F')=15
[ [ 0, 1, 0, 0, 0, -1, 1, 2, -1, 1, 1, 1, -1, 2, 1 ], 
  [ 0, 0, 0, 0, 0, 0, 0, 0, 0, 0, 0, 0, 0, -1, 0 ], 
  [ 3, 4, 5, 0, 2, 4, -3, 0, 3, -5, -4, -4, -2, -3, -1 ], 
  [ -3, -4, -6, -1, -2, -5, 4, 1, -3, 6, 5, 5, 2, 5, 3 ], 
  [ 0, 0, 0, 0, 0, 0, 0, 0, 0, 0, -1, 0, 0, 0, 0 ], 
  [ -1, -2, -2, 0, -1, -3, 2, 1, -1, 4, 3, 2, 1, 3, 1 ], 
  [ 0, 0, 0, 0, 0, 0, -1, 0, 0, 0, 0, 0, 0, 0, 0 ], 
  [ 0, 0, -1, 0, 0, 0, 0, 0, 0, 0, 0, 0, 0, 0, 0 ], 
  [ 0, -1, 0, 0, 0, 0, 0, 0, 0, 0, 0, 0, 0, 0, 0 ], 
  [ 0, 0, 0, 0, 0, 0, 0, -1, 0, 0, 0, 0, 0, 0, 0 ], 
  [ 0, 0, 0, 0, 0, 0, 0, 0, -1, 0, 0, 0, 0, 0, 0 ], 
  [ -1, 0, 0, 0, 0, 0, 0, 0, 0, 0, 0, 0, 0, 0, 0 ], 
  [ 1, 2, 2, 0, 1, 0, 0, 1, 1, 0, -1, -1, -1, 1, 1 ], 
  [ -2, -3, -3, 0, -2, -2, 2, -1, -1, 3, 3, 2, 2, 1, 0 ], 
  [ -2, -3, -4, -1, -1, -4, 3, 2, -3, 5, 3, 4, 1, 4, 3 ] ]
gap> StablyEquivalentFCheckMatSubdirectProduct(pos[1],LHSlist(l),RHSlist(l),bp[2]);
true
\end{verbatim}
\end{example}

%%%%%%%%%%%%%%%%%%%%%%%%%%%%%%%%%%%%%%%%%%%%%%%%%%%%%%%%
\bigskip
%\subsection{N3-N4}
\begin{example}[Theorem \ref{th6.2} for $G=N_{3,i}$ and $G^\prime=N_{4,j}$]\label{exN3N4}~\vspace*{-5mm}\\
% [inline block 1: 2 envs, 81797 chars -> code_tex | \begin{verbatim} gap> Read("BCAlgTori.gap");...]

\end{example}

%%%%%%%%%%%%%%%%%%%%%%%%%%%%%%%%%%%%%%%%%%%%%%%%%%%%%%%%
\bigskip
%\subsection{I4-I4}
\begin{example}[Theorem \ref{th6.4} for $G=I_{4,i}$ and $G^\prime=I_{4,j}$]\label{exI4I4}~\vspace*{-5mm}\\
\begin{verbatim}
gap> Read("BCAlgTori.gap");
gap> I4g:=List(I4,x->MatGroupZClass(x[1],x[2],x[3],x[4]));;

gap> I4inv1:=ConjugacyClassesSubgroups2TorusInvariants(I4g[1]);;
gap> I4inv2:=ConjugacyClassesSubgroups2TorusInvariants(I4g[2]);;
gap> pos:=PossibilityOfStablyEquivalentSubdirectProducts(I4g[1],I4g[2],I4inv1,I4inv2);;
gap> Length(pos); # the number of pos
1
gap> List([I4g[1],I4g[2]],StructureDescription);
[ "C5 : C4", "C5 : C4" ]
gap> List(pos,StructureDescription); # I4g[1]=N4g[2]=pos[1]=F20
[ "C5 : C4" ]
gap> PossibilityOfStablyEquivalentFSubdirectProduct(pos[1]);
# possibility for F=F' in the sense of the equation (8)
[ [ 0, 0, 0, 0, 0, 0, 1 ] ]
gap> PossibilityOfStablyEquivalentFSubdirectProduct(pos[1]:H2);
# impossible in the sense of the equation (8) with H2 option
[  ]

gap> I4inv1:=ConjugacyClassesSubgroups2TorusInvariants(I4g[1]);;
gap> I4inv3:=ConjugacyClassesSubgroups2TorusInvariants(I4g[3]);;
gap> pos:=PossibilityOfStablyEquivalentSubdirectProducts(I4g[1],I4g[3],I4inv1,I4inv3);;
gap> Length(pos); # the number of pos
1
gap> List([I4g[1],I4g[3]],StructureDescription);
[ "C5 : C4", "C2 x (C5 : C4)" ]
gap> List(pos,StructureDescription); # I4g[1]=F20, N4g[3]=pos[1]=C2xF20
[ "C2 x (C5 : C4)" ]
gap> PossibilityOfStablyEquivalentFSubdirectProduct(pos[1]);
# possibility in the sense of the equation (8)
[ [ 1, 0, 0, -2, 1, -1, -1, -1, 2, 0, 2, 0, -1, 1, 1, -2, -2 ], 
  [ 0, 1, 0, 0, -1, -1, -1, 0, 2, -1, 0, 0, 1, 1, 1, -2, 0 ], 
  [ 0, 0, 1, -1, 0, 0, 0, 0, 0, 0, 1, -1, 0, 0, 0, 0, -2 ] ]
gap> List(last,x->x[Length(x)]);
[ -2, 0, -2 ]
gap> Gcd(last); # impossible in the sense of the equation (8) 
2

gap> I4inv2:=ConjugacyClassesSubgroups2TorusInvariants(I4g[2]);;
gap> I4inv3:=ConjugacyClassesSubgroups2TorusInvariants(I4g[3]);;
gap> pos:=PossibilityOfStablyEquivalentSubdirectProducts(I4g[2],I4g[3],I4inv2,I4inv3);;
gap> Length(pos); # the number of pos
1
gap> List([I4g[2],I4g[3]],StructureDescription);
[ "C5 : C4", "C2 x (C5 : C4)" ]
gap> List(pos,StructureDescription); # I4g[2]=F20, N4g[3]=pos[1]=C2xF20
[ "C2 x (C5 : C4)" ]
gap> PossibilityOfStablyEquivalentFSubdirectProduct(pos[1]);
# possibility in the sense of the equation (8)
[ [ 1, 0, 0, -2, 1, -1, -1, -1, 2, 0, 2, 0, -1, 1, 1, -2, -2 ], 
  [ 0, 1, 0, 0, -1, -1, -1, 0, 2, -1, 0, 0, 1, 1, 1, -2, 0 ], 
  [ 0, 0, 1, -1, 0, 0, 0, 0, 0, 0, 1, -1, 0, 0, 0, 0, -2 ] ]
ap> List(last,x->x[Length(x)]);
[ -2, 0, -2 ]
gap> Gcd(last); # impossible in the sense of the equation (8) 
2

gap> I4inv4:=ConjugacyClassesSubgroups2TorusInvariants(I4g[4]);;
gap> I4inv5:=ConjugacyClassesSubgroups2TorusInvariants(I4g[5]);;
gap> pos:=PossibilityOfStablyEquivalentSubdirectProducts(I4g[4],I4g[5],I4inv4,I4inv5);;
gap> Length(pos); # the number of pos
1
gap> List([I4g[4],I4g[5]],StructureDescription);
[ "S5", "S5" ]
gap> List(pos,StructureDescription); # I4g[4]=N4g[5]=pos[1]=S5
[ "S5" ]
gap> PossibilityOfStablyEquivalentFSubdirectProduct(pos[1]);
# possibility in the sense of the equation (8)
[ [ 1, 0, 0, -1, 0, -4, 0, 0, 2, -2, 1, 0, -1, -1, 0, 4, 4, 1, -4, 0 ], 
  [ 0, 1, 0, 0, 0, -1, -1, 0, 0, -1, 0, 0, 0, 0, 1, 1, 1, 0, -1, 0 ], 
  [ 0, 0, 1, 0, 0, -2, 0, 0, 1, -1, 0, 0, -1, -1, 0, 2, 2, 1, -2, 0 ], 
  [ 0, 0, 0, 0, 1, -2, 2, 0, 2, -2, 1, -2, -1, -2, -2, 2, 4, 1, -2, 0 ], 
  [ 0, 0, 0, 0, 0, 0, 0, 0, 0, 0, 0, 0, 0, 0, 0, 0, 0, 0, 0, 1 ] ]
gap> PossibilityOfStablyEquivalentFSubdirectProduct(pos[1]:H2);
# possibility in the sense of the equation (8) with H2 option
[ [ 1, 0, 0, -1, 0, -4, 0, 0, 2, -2, 1, 0, -1, -1, 0, 4, 4, 1, -4, 0 ], 
  [ 0, 1, 0, 0, 0, -1, -1, 0, 0, -1, 0, 0, 0, 0, 1, 1, 1, 0, -1, 0 ], 
  [ 0, 0, 1, 0, 0, -2, 0, 0, 1, -1, 0, 0, -1, -1, 0, 2, 2, 1, -2, 0 ], 
  [ 0, 0, 0, 0, 1, -2, 2, 0, 2, -2, 1, -2, -1, -2, -2, 2, 4, 1, -2, 0 ] ]
gap> List(last,x->x[Length(x)]); # impossible in the sense of the equation (8) 
[ 0, 0, 0, 0 ]

gap> I4inv4:=ConjugacyClassesSubgroups2TorusInvariants(I4g[4]);;
gap> I4inv6:=ConjugacyClassesSubgroups2TorusInvariants(I4g[6]);;
gap> pos:=PossibilityOfStablyEquivalentSubdirectProducts(I4g[4],I4g[6],I4inv4,I4inv6);;
gap> Length(pos); # the number of pos
1
gap> List([I4g[4],I4g[6]],StructureDescription);
[ "S5", "C2 x S5" ]
gap> List(pos,StructureDescription); # I4g[4]=S5, N4g[6]=pos[1]=C2xS5
[ "C2 x S5" ]
gap> PossibilityOfStablyEquivalentFSubdirectProduct(pos[1]);;
# possibility in the sense of the equation (8)
gap> List(last,x->x[Length(x)]);
[ -2, 0, 0, 0, -2, 0, -2, 0, 0, 0, 0, 0, 0, 0, -2, 0, 0, 0 ]
gap> Gcd(last); # impossible in the sense of the equation (8) 
2

gap> I4inv5:=ConjugacyClassesSubgroups2TorusInvariants(I4g[5]);;
gap> I4inv6:=ConjugacyClassesSubgroups2TorusInvariants(I4g[6]);;
gap> pos:=PossibilityOfStablyEquivalentSubdirectProducts(I4g[5],I4g[6],I4inv5,I4inv6);;
gap> Length(pos); # the number of pos
1
gap> List([I4g[5],I4g[6]],StructureDescription);
[ "S5", "C2 x S5" ]
gap> List(pos,StructureDescription); # I4g[5]=S5, N4g[6]=pos[1]=C2xS5
[ "C2 x S5" ]
gap> PossibilityOfStablyEquivalentFSubdirectProduct(pos[1]);;
# possibility in the sense of the equation (8)
gap> List(last,x->x[Length(x)]);
[ -2, 0, 0, 0, 0, -2, -2, 0, 0, 0, 0, 0, 0, 0, -2, 0, 0, 0 ]
gap> Gcd(last); # impossible in the sense of the equation (8) 
2
\end{verbatim}
\end{example}
%%%%%%%%%%%%%%%%%%%%%%%%%%%%%%%%%%%%%%%%%%%%%%%%%%%%%%%%%%%%%%%%%%%%%%%%%

%\newpage
%%%%%%%%%%%%%%%%%%%%%%%%%%%%%%%%%%%%%%%%%%%%%%%%%%%%%
\section{The $p$-Part of the flabby class $[M_G]^{fl}$ as a $\bZ_p[\Syl_p(G)]$-lattice}\label{S7}

Let $G$ be a finite group. 
Let $p$ be a prime number, 
$\bZ_{(p)}$ be the localization of $\bZ$ at the prime ideal $(p)$ 
and $\bZ_p$ be the ring of $p$-adic integers. 
It is known that 
Krull-Schmidt-Azumaya theorem holds for $\bZ_p[G]$-lattices, 
i.e. $\bZ_p[G]$-lattices have the unique decomposition into 
indecomposable ones up to isomorphism and numbering 
(see Azumaya \cite[Theorem 1]{Azu50}, Borevich and Faddeev \cite{BF59}, 
Swan \cite[Proposition 6.1 and Remark]{Swa60}, 
Reiner \cite[Theorem 1]{Rei61}, 
Curtis and Reiner \cite[Theorem 6.12]{CR81}, 
see also Reiner \cite[Section 5]{Rei70}) 
although it does not hold for $G$-lattices in general 
(see Swan \cite[Section 10]{Swa60}, Curtis and Reiner \cite[Section 36]{CR81}, 
Hoshi and Yamasaki \cite[page 9 and Chapter 4]{HY17}). 
If $G$ is a $p$-group with $p\neq 2$, then 
Krull-Schmidt-Azumaya theorem holds for $\bZ_{(p)}[G]$-lattices 
(see Jones \cite[{Theorem 2 with $q=1$}]{Jon65}, 
Curtis and Reiner \cite[Theorem 36.1]{CR81}). 
%Note that 
It follows from Swan's theorem \cite[Theorem 8.1]{Swa60} that 
the $G$-lattice 
$\bZ[G/H]$ is indecomposable for any $H\leq G$ 
(see Curtis and Reiner \cite[Theorem 32.14]{CR81}). 
By Jacobinski \cite[Theorem 3.3 and Corollary 3.5]{Jac68}, 
for $p$-group $G$ and $G$-lattice $M$, 
$M$ is an indecomposable $G$-lattice 
if and only if 
$M_p=M\otimes_\bZ\bZ_p$ (resp. $M_{(p)}=M\otimes_\bZ\bZ_{(p)}$) 
is an indecomposable $\bZ_p[G]$-lattice (resp. $\bZ_{(p)}[G]$-lattice). 
In particular, if $G$ is a $p$-group, then 
the $\bZ_p[G]$-lattice $\bZ_p[G/H]$ 
(resp. $\bZ_{(p)}[G]$-lattice $\bZ_{(p)}[G/H]$) 
is indecomposable for any $H\leq G$. 

\begin{definition}\label{def7.1}
Let $G$ be a finite group, 
$M$ be a $G$-lattice and  
\begin{align*}
0 \rightarrow M\rightarrow P\rightarrow F\rightarrow 0
\end{align*}
be a flabby resolution of $M$ where $P$ is permutation and $F$ is flabby. 

(1) ({Kunyavskii \cite[page 15]{Kun90}}) 
Take a $\bZ_2[G]$-lattice 
$\widetilde{M}=M\otimes_\bZ\bZ_2$. 
By tensoring with $\bZ_2$, 
we also get a flabby resolution of $\widetilde{M}$: 
\begin{align*}
0 \rightarrow \widetilde{M} \rightarrow \widetilde{P} \rightarrow \widetilde{F} \rightarrow 0
\end{align*}
as $\bZ_2[G]$-lattices 
where $\widetilde{P}=P\otimes_\bZ\bZ_2$ is permutation 
and $\widetilde{F}=F\otimes_\bZ\bZ_2$ is flabby. 
We can take a direct sum decomposition 
$\widetilde{F}\simeq \widetilde{N}\oplus \widetilde{Q}$ where 
$\widetilde{N}$ is almost indecomposable, i.e. 
%We call $\widetilde{F}$ {\it almost indecomposable} if $\widetilde{F}$ 
does not contain a permutation direct summand, 
and $\widetilde{Q}$ is permutation. 
We call $\widetilde{N}$ {\it an almost indecomposable part of $[M]^{fl}$ 
as a $\bZ_2[G]$-lattice}. 

(2) 
Let $\Syl_p(G)$ be a $p$-Sylow subgroup of $G$. 
Then we obtain a flabby resolution of $M\mid_{\Syl_p(G)}$ 
which is a $\Syl_p(G)$-lattice obtained 
by restricting the action of $G$ on $M$ to $\Syl_p(G)$: 
\begin{align*}
0 \rightarrow M\mid_{\Syl_p(G)}\rightarrow P\mid_{\Syl_p(G)}
\rightarrow F\mid_{\Syl_p(G)}\rightarrow 0. 
\end{align*}
We also obtain a flabby resolution of $M_p=M\mid_{\Syl_p(G)}$: 
\begin{align*}
0 \rightarrow M_p\rightarrow P_p\rightarrow F_p\rightarrow 0
\end{align*}
as $\bZ[\Syl_p(G)]$-lattices 
where $P_p$ is permutation and $F_p$ is flabby. 
Then we see that $[F_p]=[F\mid_{\Syl_p(G)}]$ 
and 
we can take the direct sum decomposition 
$\widetilde{F}_p=F_p\otimes_\bZ \bZ_p\simeq 
\widetilde{N}_p \oplus \widetilde{Q}_p$ 
as $\bZ_p[\Syl_p(G)]$-lattices 
where $\widetilde{N}_p$ does not contain a permutation direct summand 
and $\widetilde{Q}_p$ is permutation. 
We call $\widetilde{N}_p$ {\it the $p$-part of $[M]^{fl}$ 
as a $\bZ_p[\Syl_p(G)]$-lattice}. 
\end{definition}
\begin{remark}\label{rem7.2}
(1) The $p$-part $\widetilde{N}_p$ of $[M]^{fl}$ 
as $\bZ_p[\Syl_p(G)]$-lattices 
is uniquely determined 
(see the first paragraph of this section) 
and does not depend on a choice of $F$ with $[F]=[M]^{fl}$.\\ 
(2) If $G$ is a $2$-group, then we find that 
$\widetilde{N}\simeq \widetilde{N}_2$ as $\bZ_2[G]$-lattices. 
But an almost indecomposable part $\widetilde{N}$ of $[M]^{fl}$ 
is not uniquely determined in general when $G$ is not a $2$-group 
(see Example \ref{ex7.3}). 
\end{remark}
\begin{example}\label{ex7.3}
Let $G=I_{4,1}\leq {\rm GL}(4,\bZ)$ be as in Definition \ref{defN3N4} 
with $G\simeq F_{20}$ 
and $M_G$ be the corresponding $G$-lattice with rank $M_G=4$ 
as in Definition \ref{d2.2}. 
We take $\widetilde{M}=M_G\otimes \bZ_2$ where $\bZ_2$ is the ring of $2$-adic integers. 
Then we see that 
$\widetilde{M}$ is an indecomposable $\bZ_2[G]$-lattice and 
$\widetilde{M}\oplus \bZ_2\simeq \bZ_2[G/C_4]$ as $\bZ_2[G]$-lattices. 
In particular, $\widetilde{M}^{\oplus r}$ is almost indecomposable for any $r\geq 1$. 
We get that $F=[M_G]^{fl}$ with rank $F=16$ 
and $\widetilde{F}=F\otimes_\bZ \bZ_2\simeq 
\widetilde{M}\oplus \widetilde{M}\oplus \widetilde{M}\oplus \bZ_2[G/C_5]$ as $\bZ_2[G]$-lattices 
and $\widetilde{N}\simeq \widetilde{M}\oplus \widetilde{M}\oplus \widetilde{M}$ is an 
almost indecomposable part of $[M]^{fl}$. 
On the other hand, we also see that 
$\widetilde{F}\oplus\bZ_2\oplus\bZ_2\oplus\bZ_2\simeq 
\bZ_2[G/C_4]\oplus\bZ_2[G/C_4]\oplus\bZ_2[G/C_4]\oplus\bZ_2[G/C_5]$ 
and hence $\widetilde{N}=0$ is an 
almost indecomposable part of $[M]^{fl}$. 
Hence we see that an 
almost indecomposable part of $[M]^{fl}$ is not uniquely determined 
in general when $G$ is not a $2$-group 
(see Example \ref{ex7.3b} for GAP computations). 
\end{example}

We immediately obtain the following lemma
from the definitions of the weak stably $k$-equivalence 
and of the $p$-part of $[M]^{fl}$ as a $\bZ_p[\Syl_p(G)]$-lattice 
(Definition \ref{d1.10} and Definition \ref{def7.1}). 
\begin{lemma}\label{lem7.2b}
Let $M$ $($resp. $M^\prime$$)$ be a $G$-lattice $($resp. $G^\prime$-lattice$)$. 
If $[M]^{fl}\sim [M^\prime]^{fl}$, 
i.e. there exists a subdirect product $\widetilde{H}\leq G\times G^\prime$ 
of $G$ and $G^\prime$ which acts on $M$ $($resp. $M^\prime)$ 
through the surjection 
$\varphi_1:\widetilde{H}\rightarrow G$ 
$($resp. $\varphi_2:\widetilde{H}\rightarrow G^\prime$$)$ 
such that $[M]^{fl}=[M^\prime]^{fl}$ as $\widetilde{H}$-lattices, 
then 
%%%%%%%%%%%%%%%%%%%%
$\widetilde{N}_p\simeq \widetilde{N}^\prime_p$ 
as $\bZ_p[\Syl_p(\widetilde{H})]$-lattices 
where $\widetilde{N}_p$ $($resp. $\widetilde{N}^\prime_p$$)$ 
is the $p$-part of $[M]^{fl}$ $($resp. $[M^\prime]^{fl}$$)$ 
as a $\bZ_p[\varphi_1(\Syl_p(\widetilde{H}))]$-lattice 
$($resp. $\bZ_p[\varphi_2(\Syl_p(\widetilde{H}))]$-lattice$)$ 
with $\bZ_p[\varphi_1(\Syl_p(\widetilde{H}))]\simeq \bZ_p[\Syl_p(G)]$ 
$($resp. 
$\bZ_p[\varphi_2(\Syl_p(\widetilde{H}))]\simeq \bZ_p[\Syl_p(G^\prime)]$$)$. 
%%%%%%%%%%%%%%%%%%%%
In particular, the indecomposability and 
the $\bZ_p$-rank of $\widetilde{N}_p$ are invariants for the 
weak stably $k$-equivalent classes of $[M]^{fl}$. 
\end{lemma}

Retract non-rationality of an algebraic $k$-torus $T$ 
can be detected by the non-vanishing of $\widetilde{N}_p$:

\begin{lemma}\label{lem7.3b}
Let $M$ be a $G$-lattice and 
$\widetilde{N}_p$ be the $p$-part of $[M]^{fl}$ as a $\bZ_p[\Syl_p(G)]$-lattice. 
If $\widetilde{N}_p\neq 0$, then $[M]^{fl}$ is not invertible. 
In particular, the corresponding torus $T$ 
with $\widehat{T}=M$ is not retract $k$-rational. 
\end{lemma}
\begin{proof}
Suppose that $[F]=[M]^{fl}$ is invertible. 
Then there exists $F^\prime$ such that $F\oplus F^\prime$ 
is permutation. 
This implies that 
$[F\oplus F^\prime]=[F]\oplus [F^\prime]=0$. 
Taking a $p$-Sylow subgroup and tensoring $\bZ_p$, 
we get $[\widetilde{N}_p]\oplus [\widetilde{F^\prime}_p]=0$ 
as $\bZ_p[\Syl_p(G)]$-lattices. 
Because Krull-Schmidt-Azumaya theorem holds for $\bZ_p[G]$-lattices 
and $\bZ_p[\Syl_p(G)/H]$ is indecomposable for any $H\leq \Syl_p(G)$ 
%when $G$ is a $p$-group 
(see the beginning of this section), 
we find that $\widetilde{N}_p$ is permutation. 
Contradiction. 
\end{proof}

%%%%%%%%%%%%%%%%%%%%%%%%%%%%%%%%%%%%%%%%%%%%%%%%
For a $G$-lattice $M$ with $[F]=[M]^{fl}$, 
if $[F]\neq 0$ and $\widetilde{F}_p=F \otimes_\bZ \bZ_p$
is an indecomposable $\bZ_p[\Syl_p(G)]$-lattice, 
then $\widetilde{N}_p=\widetilde{F}_p$. 
In order to deal with indecomposable $G$-lattices, 
we recall some basic terminology and lemmas in ring theory 
(see e.g. Karpilovsky \cite[Theorem 7.5 and Corollary 7.6]{Kar87} for Krull-Schmidt-Azumaya theorem for $R$-modules of finite length). 
\begin{definition}[{Karpilovsky \cite[Chapter 1, page 22, page 36]{Kar87}}]
Let $R$ be a ring with $1$ and $V$ be a left $R$-module.\\
{\rm (1)} 
The {\it Jacobson radical} $J(V)$ of $V$ 
is the intersection of all maximal $R$-submodules of $V$.
Because $R$ itself is a left $R$-module, 
we can take $J(R)$ and call it {\it the Jacobson radical of $R$}.\\
{\rm (2)} 
A ring $R$ is said to be {\it local} if $R/J(R)$ is a division ring.
\end{definition}

\begin{lemma}[{Karpilovsky \cite[Chapter 1, Corollary 6.17]{Kar87}}]
\label{lem7.5}
Let $R$ be a ring with $1$ and $I$ be an ideal of $R$.\\
{\rm (1)}
$J(R/I) \supset (J(R)+I)/I$.\\
{\rm (2)}
If $I \subset J(R)$,
then $J(R/I)=J(R)/I$.
In particular, $J(R/J(R))=0$.\\
{\rm (3)}
If $J(R/I)=0$, then $J(R) \subset I$.\\
{\rm (4)}
$I=J(R)$ if and only if $I \subset J(R)$ and $J(R/I)=0$.
\end{lemma}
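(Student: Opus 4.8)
The plan is to prove all four parts directly from the definition of the Jacobson radical as an intersection of maximal left ideals, together with the lattice correspondence between submodules of $R/I$ and submodules of $R$ containing $I$. Since $J(R)$ is by definition the intersection of all maximal $R$-submodules of $R$ viewed as a left module over itself, it equals $\bigcap_M M$ where $M$ ranges over the maximal left ideals of $R$. The correspondence theorem supplies an inclusion-preserving bijection between left ideals of $R/I$ and left ideals of $R$ that contain $I$, under which maximal objects correspond to maximal objects; concretely the maximal left ideals of $R/I$ are exactly the $M/I$ with $M$ a maximal left ideal of $R$ and $M\supseteq I$. Intersecting these gives $J(R/I)=\bigl(\bigcap_{M\supseteq I} M\bigr)/I$, and this identity is the engine for everything that follows.

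For part (1) I would observe that the intersection $\bigcap_{M\supseteq I} M$ runs over a \emph{subfamily} of all maximal left ideals, so it contains $J(R)=\bigcap_{\text{all }M} M$; it also contains $I$, since every $M$ in the family does. Hence it contains $J(R)+I$, and passing to the quotient yields $(J(R)+I)/I\subseteq J(R/I)$. For part (2), under the hypothesis $I\subseteq J(R)$ every maximal left ideal $M$ already contains $J(R)\supseteq I$, so the family of maximal left ideals containing $I$ is the family of \emph{all} maximal left ideals; thus $\bigcap_{M\supseteq I} M=J(R)$ and $J(R/I)=J(R)/I$. The special case $J(R/J(R))=0$ is then just $I=J(R)$ in this formula.

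Parts (3) and (4) I expect to fall out formally. For (3), part (1) gives $(J(R)+I)/I\subseteq J(R/I)=0$, forcing $J(R)+I\subseteq I$, i.e. $J(R)\subseteq I$. For (4), the forward direction uses (2) to get $J(R/I)=0$ and is otherwise immediate, while the converse combines the identity $J(R/I)=J(R)/I$ from (2) with the hypothesis $J(R/I)=0$ to deduce $J(R)\subseteq I$, which together with $I\subseteq J(R)$ gives equality.

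The only genuine subtlety, and the one step I would check carefully, is the claim that the maximal left ideals of $R/I$ are precisely the images $M/I$ of maximal left ideals $M\supseteq I$, since this is where the definition of $J$ via maximal \emph{submodules} interacts with the ideal structure and where one must not quietly conflate left ideals with two-sided ideals. Once this correspondence is pinned down, parts (1)--(4) are routine; there is no hard analytic or combinatorial obstacle here, in contrast to the lattice-theoretic computations elsewhere in the paper.
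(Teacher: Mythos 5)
Your argument is correct. Note that the paper does not prove this lemma at all: it is quoted verbatim from Karpilovsky \cite[Chapter 1, Corollary 6.17]{Kar87} and used as a black box, so there is no in-paper proof to compare against. Your route — reducing everything to the identity $J(R/I)=\bigl(\bigcap_{M\supseteq I}M\bigr)/I$ via the correspondence theorem for left ideals — is the standard textbook argument, and the one subtlety you flag is genuinely harmless: the lattice isomorphism between left ideals of $R/I$ and left ideals of $R$ containing $I$ preserves inclusions in both directions, so it matches maximal left $R/I$-submodules with maximal left ideals $M\supseteq I$, which is exactly what the definition of $J$ as an intersection of maximal submodules requires; no two-sidedness is ever needed. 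The only case you leave implicit is $I=R$, where the family of maximal left ideals containing $I$ is empty and $R/I=0$, but then both sides of (1) and (2) are zero, so nothing breaks. Parts (3) and (4) follow formally from (1) and (2) exactly as you say.
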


\begin{lemma}[{Karpilovsky \cite[Chapter 1, Proposition 6.18, Proposition 6.19, Corollary 6.21, Proposition 6.22]{Kar87}}]\label{lem7.6}
Let $R$ be a ring with $1$.\\
{\rm (1)} 
Let $x$ be an element of $R$. 
Then $x \in J(R)$ if and only if for all $r \in R$, $1-rx$ is a left unit. 
In particular, $J(R)$ contains no nonzero idempotents.\\ 
{\rm (2)}
$J(R)$ is the unique largest ideal $I$ of $R$
such that $1-rx$ is a unit for all $r \in R, x \in I$.\\
{\rm (3)} If $I$ is left ideal of $R$ whose elements are all nilpotent, 
then $I\subset J(R)$.\\
{\rm (4)} If $R$ is artinian, then $J(R)$ is nilpotent.
\end{lemma}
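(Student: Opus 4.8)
The plan is to prove the four assertions directly from the definition $J(R)=\bigcap\mathfrak{m}$, where $\mathfrak{m}$ ranges over the maximal left ideals of $R$, treating them essentially in the order (1), (2), (3), (4) since each later part leans on the earlier ones. (Since the statement is quoted from Karpilovsky, one could simply cite it, but the self-contained textbook argument is short.)

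First I would establish (1) by a left-ideal argument. For the forward direction, suppose $x\in J(R)$ and some $1-rx$ fails to be a left unit; then $R(1-rx)$ is a proper left ideal, hence contained in a maximal left ideal $\mathfrak{m}$, and since $x\in J(R)\subseteq\mathfrak{m}$ we get $1=(1-rx)+rx\in\mathfrak{m}$, a contradiction. Conversely, if $x\notin J(R)$, choose a maximal left ideal $\mathfrak{m}$ with $x\notin\mathfrak{m}$; maximality gives $\mathfrak{m}+Rx=R$, so $1=m+rx$ and $1-rx=m\in\mathfrak{m}$ cannot be a left unit. The idempotent corollary is then immediate: if $e^{2}=e\in J(R)$ then $1-e$ is a left unit, say $u(1-e)=1$, so $e=u(1-e)e=u(e-e^{2})=0$.

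Next, for (2) the point is to upgrade ``left unit'' to ``unit''. Given $x\in J(R)$ and $r\in R$, part (1) yields $u$ with $u(1-rx)=1$; then $u=1+urx$ with $urx\in J(R)$ because $J(R)$ is a two-sided ideal, so a second application of (1) shows $u$ is itself a left unit, and a short computation forces its left inverse $v$ to satisfy $v=v\,u(1-rx)=1-rx$, making $1-rx$ a two-sided unit. Maximality and uniqueness follow because any ideal $I$ whose elements $x$ all satisfy ``$1-rx$ a unit for every $r$'' consists of elements for which $1-rx$ is in particular a left unit, hence $I\subseteq J(R)$ by (1), while $J(R)$ itself has the stated unit property.

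For (3), if $I$ is a nil left ideal and $x\in I$, $r\in R$, then $rx\in I$ is nilpotent, say $(rx)^{n}=0$, so $1-rx$ is inverted by $\sum_{i=0}^{n-1}(rx)^{i}$; thus $1-rx$ is a left unit for every $r$, whence $x\in J(R)$ by (1), giving $I\subseteq J(R)$. Finally, (4) is where I expect the main difficulty. Writing $J=J(R)$, the descending chain $J\supseteq J^{2}\supseteq\cdots$ stabilizes by the artinian hypothesis at $I:=J^{n}=J^{n+1}$, so $I^{2}=I$; assuming $I\neq 0$, I would pass to a minimal element $L_{0}$ of the nonempty family of left ideals $L$ with $IL\neq 0$, pick $a\in L_{0}$ with $Ia\neq 0$, and note $Ia\subseteq L_{0}$ with $I(Ia)=I^{2}a=Ia\neq 0$, so minimality forces $Ia=L_{0}$ and hence $a=xa$ for some $x\in I\subseteq J$. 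Since $1-x$ is a unit by (2), this yields $a=0$, contradicting $Ia\neq 0$; therefore $I=0$ and $J$ is nilpotent. The delicate points throughout are the systematic one-sided-versus-two-sided unit bookkeeping in (1)--(2) and the correct choice of the minimal-ideal family in (4).
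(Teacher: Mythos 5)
Your proof is correct in all four parts. Note, however, that the paper does not actually prove this lemma: it is stated as a direct citation of Karpilovsky \cite[Chapter 1, Propositions 6.18, 6.19, 6.22 and Corollary 6.21]{Kar87}, so there is no argument in the paper to compare against. What you have written is the standard textbook derivation from the definition $J(R)=\bigcap\mathfrak{m}$: the left-ideal characterization in (1), the one-sided-to-two-sided upgrade in (2) via $u=1+urx$ and a second application of (1), the geometric-series inverse in (3), and the Nakayama-style minimal-counterexample argument for (4) using $J^{n}=J^{n+1}$ and DCC. All the delicate points you flag (the left-unit bookkeeping, the nonemptiness of the family $\{L: IL\neq 0\}$ when $I=J^{n}\neq 0$, and the containment $Ia\subseteq IL_{0}\subseteq L_{0}$ forcing $Ia=L_{0}$ by minimality) are handled correctly, so your argument is a complete substitute for the citation.
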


\begin{lemma}[{Karpilovsky \cite[Chapter 1, Lemma 7.1]{Kar87}}]
\label{lem7.7}
Let $R$ be a ring with $1$ 
and $U(R)$ be the group of units of $R$.
Then the following conditions are equivalent:\\
{\rm (1)} $R$ is local;\\
{\rm (2)} $R$ has a  unique maximal left ideal;\\
{\rm (3)} $J(R)=R\setminus U(R)$;\\
{\rm (4)} $R\setminus U(R)$ is a left ideal of $R$.
\end{lemma}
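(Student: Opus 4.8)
The plan is to prove all four conditions equivalent by using (3) as a hub, establishing $(1)\Leftrightarrow(3)$, $(3)\Leftrightarrow(4)$, and $(2)\Leftrightarrow(3)$ separately; this keeps each step short and avoids a long linear chain. Throughout I would first record the one inclusion that holds unconditionally, namely $J(R)\subseteq R\setminus U(R)$: if some $x\in J(R)$ were a unit, then taking $r=x^{-1}$ in Lemma \ref{lem7.6} (1) would force $1-x^{-1}x=0$ to be a left unit, impossible for $R\neq 0$. I would also use repeatedly that for $x\in J(R)$ and any $r\in R$ the element $1-rx$ is a two-sided unit, which is exactly Lemma \ref{lem7.6} (2) applied to the ideal $I=J(R)$.

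For $(1)\Rightarrow(3)$ I would take $x\in R\setminus U(R)$ and consider its image $\bar x$ in the division ring $R/J(R)$; if $\bar x\neq 0$ it is invertible, so lifting a two-sided inverse gives $yx=1-a$ and $xy=1-b$ with $a,b\in J(R)$, whence $1-a$ and $1-b$ are units and $x$ acquires both a left and a right inverse, contradicting $x\notin U(R)$. Thus $\bar x=0$, i.e. $x\in J(R)$, giving the missing inclusion $R\setminus U(R)\subseteq J(R)$. For $(3)\Rightarrow(1)$ I would pass to $R/J(R)$, use Lemma \ref{lem7.5} (2) to get $J(R/J(R))=0$, and observe that every nonzero class is the image of a unit, so $R/J(R)$ is a division ring. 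The equivalence $(3)\Leftrightarrow(4)$ is lighter: $(3)\Rightarrow(4)$ holds because $J(R)$ is an ideal, and for $(4)\Rightarrow(3)$ I would show $R\setminus U(R)\subseteq J(R)$ via Lemma \ref{lem7.6} (1): for $x$ in the left ideal $R\setminus U(R)$ and any $r$, if $1-rx$ were a non-unit it would lie in $R\setminus U(R)$, forcing $1=(1-rx)+rx$ into that proper left ideal, a contradiction.

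For $(3)\Rightarrow(2)$ I would note that under (3) every proper left ideal avoids units, hence lies in $R\setminus U(R)=J(R)$, so $J(R)$ is the unique maximal left ideal. The delicate direction, and the step I expect to be the main obstacle, is $(2)\Rightarrow(3)$, where ``non-unit'' must be upgraded from ``lacking a one-sided inverse'' to a genuine two-sided statement. Given the unique maximal left ideal $\mathfrak{m}=J(R)$ and $x\notin\mathfrak{m}$, maximality of $\mathfrak{m}$ yields $Rx+\mathfrak{m}=R$, so $rx=1-m$ with $m\in\mathfrak{m}$; since $1-m$ is a unit, $x$ has a left inverse $u$. The key trick is that $u\notin\mathfrak{m}$ (otherwise $ux=1\in\mathfrak{m}$), so the same argument gives $u$ a left inverse $v$, and then $v=v(ux)=(vu)x=x$ shows $xu=vu=1$; combined with $ux=1$ this makes $x$ a two-sided unit. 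Hence $x\notin\mathfrak{m}\Rightarrow x\in U(R)$, i.e. $R\setminus U(R)\subseteq\mathfrak{m}=J(R)$, which together with the unconditional reverse inclusion gives (3).
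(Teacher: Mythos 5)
The paper does not prove this lemma: it is quoted from Karpilovsky \cite[Chapter 1, Lemma 7.1]{Kar87} and used as a black box, so there is no in-paper argument to compare against. Your proof is correct and self-contained, and it uses exactly the ingredients the paper has already recorded (Lemma \ref{lem7.5} (2) and Lemma \ref{lem7.6} (1), (2)), so it could stand in for the citation. The hub structure around (3), the unconditional inclusion $J(R)\subseteq R\setminus U(R)$, the lifting argument for $(1)\Rightarrow(3)$, the ``$1=(1-rx)+rx$'' trick for $(4)\Rightarrow(3)$, and the left-inverse-of-the-left-inverse trick for $(2)\Rightarrow(3)$ are all correctly executed.

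One point you should make explicit in $(2)\Rightarrow(3)$: the inference ``$u\in\mathfrak{m}$ would give $ux=1\in\mathfrak{m}$'' is not a consequence of $\mathfrak{m}$ being a left ideal, since $x$ multiplies $u$ on the right and left ideals only absorb multiplication on the left. The step is valid here only because $\mathfrak{m}=J(R)$ and $J(R)$ is a two-sided ideal, which is implicit in Lemma \ref{lem7.6} (2) (it describes $J(R)$ as the largest \emph{ideal} with the stated property). Without that one-line justification the step would be appealing to right-absorption of a one-sided ideal, which is false in general; with it, the argument is complete.
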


\begin{lemma}[{Karpilovsky \cite[Chapter 1, Corollary 7.2, Lemma 7.3]{Kar87}}]
\label{lem7.8}
Let $R$ be a ring with $1$ and $V$ be an $R$-module.\\
{\rm (1)} If $R$ is local, then the only idempotents of $R$ are $0$ and $1$.\\
{\rm (2)} $V$ is indecomposable if and only if the only idempotents of 
$\End_R(V)$ are $0$ and $1$. 
In particular, if $\End_R(V)$ is local, then $V$ is indecomposable. 
\end{lemma}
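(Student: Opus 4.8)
The plan is to treat the two parts separately, drawing on the radical dichotomy for local rings recorded in Lemma \ref{lem7.7} and the properties of $J(R)$ collected in Lemma \ref{lem7.6}.

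For part (1), I would take an arbitrary idempotent $e\in R$, so that $e^2=e$ and consequently $e(1-e)=0$. The argument then splits according to whether $e$ is a unit. Since $R$ is local, Lemma \ref{lem7.7} gives $J(R)=R\setminus U(R)$, so every element is either invertible or lies in $J(R)$. If $e\in U(R)$, multiplying $e^2=e$ on the left by $e^{-1}$ yields $e=1$. If instead $e\notin U(R)$, then $e\in J(R)$; but Lemma \ref{lem7.6} (1) asserts that $J(R)$ contains no nonzero idempotents, forcing $e=0$. This exhausts the possibilities and proves (1).

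For part (2), the key device is the standard bijection between idempotents of $\End_R(V)$ and direct-sum decompositions of $V$, which requires no local hypothesis whatsoever. Given an idempotent $e\in\End_R(V)$, I would exhibit the decomposition $V=eV\oplus(1-e)V$: the identity $v=ev+(1-e)v$ shows the sum is all of $V$, and if $x=ev=(1-e)w$ lies in the intersection then applying $e$ gives $x=ex=e(1-e)w=0$, so the sum is direct. Conversely, a decomposition $V=V_1\oplus V_2$ produces the projection onto $V_1$ along $V_2$, which is an idempotent $R$-endomorphism. Under this correspondence, nontrivial decompositions (both summands nonzero) match idempotents distinct from $0$ and $1$; taking contrapositives yields the stated equivalence. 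The closing ``in particular'' clause is then immediate: applying part (1) to the local ring $R=\End_R(V)$ shows its only idempotents are $0$ and $1$, whence $V$ is indecomposable by the equivalence just proved.

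Both arguments are short and essentially formal, since every substantive input has already been established in Lemma \ref{lem7.6} and Lemma \ref{lem7.7}; the only points demanding mild care are verifying that $eV\oplus(1-e)V$ is genuinely direct and that the projection attached to a decomposition is $R$-linear and idempotent. I do not anticipate any real obstacle here, as this is a routine unwinding of definitions rather than a step where new ideas are required.
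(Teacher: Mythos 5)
Your argument is correct and complete: part (1) follows cleanly from the dichotomy $R = U(R) \sqcup J(R)$ of Lemma \ref{lem7.7} together with Lemma \ref{lem7.6} (1), and part (2) is the standard bijection between idempotents of $\End_R(V)$ and direct-sum decompositions $V = eV \oplus (1-e)V$, with the ``in particular'' clause obtained by applying (1) to the local ring $\End_R(V)$. Note that the paper itself gives no proof of this lemma --- it is quoted verbatim from Karpilovsky \cite[Chapter 1, Corollary 7.2, Lemma 7.3]{Kar87} --- so there is nothing to compare against; your write-up is exactly the textbook argument and could serve as the omitted proof.
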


\begin{lemma}[{Karpilovsky \cite[Chapter 1, Lemma 7.4]{Kar87}}]
\label{lem7.9}
Let $R$ be a ring with $1$ 
and $V$ be an $R$-module of finite length.\\
{\rm (1)} If $V$ is indecomposable, then every $f \in \End_R(V)$ is
either a unit or nilpotent.\\
{\rm (2)} $V$ is indecomposable if and only if $\End_R(V)$ is a local ring.
\end{lemma}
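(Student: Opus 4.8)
The plan is to derive both parts from Fitting's lemma, combined with the characterizations of local rings recorded in Lemma~\ref{lem7.7} and the idempotent criterion of Lemma~\ref{lem7.8}. For part (1), I would first establish the Fitting decomposition. Given $f\in\End_R(V)$, consider the descending chain $\image f\supseteq\image f^2\supseteq\cdots$ and the ascending chain $\ker f\subseteq\ker f^2\subseteq\cdots$. Since $V$ has finite length it satisfies both the ascending and descending chain conditions on submodules, so there is an $n$ with $\image f^n=\image f^{2n}$ and $\ker f^n=\ker f^{2n}$. A short diagram chase then yields $V=\ker f^n\oplus\image f^n$: for any $v$ write $f^n(v)=f^{2n}(w)$, so $v-f^n(w)\in\ker f^n$, giving $V=\ker f^n+\image f^n$; and if $v=f^n(u)\in\ker f^n\cap\image f^n$ then $f^{2n}(u)=0$, hence $u\in\ker f^{2n}=\ker f^n$ and $v=0$. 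Indecomposability of $V$ now forces one summand to vanish. If $\image f^n=0$ then $f^n=0$ and $f$ is nilpotent; if $\ker f^n=0$ then $f^n$ is injective, hence (by finiteness of length) also surjective, so $f^n=g$ is a unit of $S:=\End_R(V)$, and since $f$ commutes with $f^n$ the element $f^{n-1}g^{-1}$ is a two-sided inverse of $f$, making $f$ a unit.

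For part (2), the implication ``$\End_R(V)$ local $\Rightarrow$ $V$ indecomposable'' is already the last sentence of Lemma~\ref{lem7.8}(2), so only the converse requires work. Assuming $V$ indecomposable, I would set $N=S\setminus U(S)$ and show that $N$ is a left ideal, then invoke the equivalence (4)$\Rightarrow$(1) of Lemma~\ref{lem7.7}. By part (1) every element of $N$ is nilpotent. Closure of $N$ under left multiplication is immediate: if $f\in N$ and $gf\in U(S)$ for some $g\in S$, then $f$ is injective, hence bijective by finite length, contradicting $f\notin U(S)$; thus $gf\in N$. Closure under addition I would prove contrapositively: if $f+g=u\in U(S)$, then after replacing $f,g$ by $u^{-1}f,u^{-1}g$ we may assume $f+g=1$, whence $g=1-f$; by part (1) either $f$ is already a unit, or $f$ is nilpotent with $f^m=0$, in which case $1-f$ is invertible via the finite geometric series $1+f+\cdots+f^{m-1}$, so $g\in U(S)$. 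In all cases one of $f,g$ is a unit, which is precisely additive closure of $N$. Hence $N$ is a left ideal and $S$ is local.

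The main obstacle is the Fitting decomposition in part (1): one must use finiteness of length to stabilize the kernel and image chains simultaneously and then verify the direct-sum splitting, after which everything reduces to bookkeeping. The technical point threaded through both parts is the elementary but repeatedly used fact that an injective endomorphism of a finite-length module is automatically an isomorphism, which is exactly what converts ``$f^n$ injective'' into ``$f$ a unit'' and what drives the left-multiplication closure in part (2).
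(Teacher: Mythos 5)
Your proof is correct and complete. The paper does not prove this lemma at all — it is quoted from Karpilovsky with only a citation — and your argument (stabilize the kernel and image chains to get the Fitting splitting $V=\ker f^n\oplus\image f^n$, use indecomposability to obtain the unit-or-nilpotent dichotomy, then verify that $S\setminus U(S)$ is a left ideal and invoke Lemma~\ref{lem7.7}) is precisely the standard proof given in that reference, so there is nothing substantive to compare.
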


We will apply the following lemmas which are useful to certify 
the indecomposability of a $G$-lattice. 

\begin{lemma}\label{lem7.10}
Let $G$ be a finite group and $M$ be a $G$-lattice. 
If $M \otimes_\bZ \bZ/p\bZ$ is an indecomposable 
$(\bZ/p\bZ)[G]$-lattice, then 
$\widetilde{M}_p=M \otimes_\bZ \bZ_p$ 
is an indecomposable $\bZ_p[G]$-lattice. 
In particular, $M$ is an indecomposable $G$-lattice. 
\end{lemma}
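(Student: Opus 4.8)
The plan is to prove indecomposability of $\widetilde{M}_p$ by analysing its endomorphism ring and then lifting idempotents along reduction modulo $p$. By Lemma \ref{lem7.9} (2) it suffices to show that $E := \End_{\bZ_p[G]}(\widetilde{M}_p)$ is local, and by Lemma \ref{lem7.8} (2) this is equivalent to showing that the only idempotents of $E$ are $0$ and $1$. Note first that $E$ is a $G$-fixed submodule of $\End_{\bZ_p}(\widetilde{M}_p)$, hence finitely generated and torsion-free over the complete discrete valuation ring $\bZ_p$, so $E$ is $\bZ_p$-free of finite rank; in particular $\bigcap_{n\geq 1} p^n E = 0$.

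First I would record that reduction modulo $p$ gives a ring homomorphism $\pi : E \to \bar{E}$, where $\bar{E} := \End_{(\bZ/p\bZ)[G]}(M\otimes_\bZ \bZ/p\bZ)$ and I use the canonical identification $\widetilde{M}_p\otimes_{\bZ_p}\bZ/p\bZ \simeq M\otimes_\bZ\bZ/p\bZ$; concretely $\pi(\phi)=\phi\otimes\mathrm{id}$, which is a ring map by functoriality of base change. Since $M\otimes_\bZ\bZ/p\bZ$ is a finite-dimensional $\bF_p$-vector space it has finite length over $(\bZ/p\bZ)[G]$, so the hypothesis that $M\otimes_\bZ\bZ/p\bZ$ is indecomposable together with Lemma \ref{lem7.9} (2) shows that $\bar{E}$ is local; by Lemma \ref{lem7.8} (1) its only idempotents are $0$ and $1$. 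Next I would identify $\ker\pi$: a $G$-endomorphism $\phi$ lies in $\ker\pi$ exactly when $\phi(\widetilde{M}_p)\subseteq p\widetilde{M}_p$, and since $\widetilde{M}_p$ is $\bZ_p$-free one may then write $\phi = p\psi$ with $\psi = p^{-1}\phi \in \End_{\bZ_p}(\widetilde{M}_p)$; the map $\psi$ is again $G$-equivariant because $p$ acts injectively on the torsion-free module $\widetilde{M}_p$. Hence $\ker\pi = pE$.

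Now take any idempotent $e\in E$. Then $\pi(e)$ is an idempotent of the local ring $\bar{E}$, so $\pi(e)\in\{0,1\}$. If $\pi(e)=0$, then $e\in pE$, whence $e=e^2\in p^2E$, and inductively $e\in p^nE$ for all $n$; by the separatedness $\bigcap_n p^nE=0$ this forces $e=0$. If $\pi(e)=1$, the same argument applied to the idempotent $1-e$ gives $e=1$. Thus $E$ has only the idempotents $0$ and $1$, so $E$ is local and $\widetilde{M}_p$ is indecomposable. The final assertion that $M$ is an indecomposable $G$-lattice is then immediate: any nontrivial decomposition $M=M_1\oplus M_2$ with $M_1,M_2\neq 0$ would yield $\widetilde{M}_p=(M_1\otimes_\bZ\bZ_p)\oplus(M_2\otimes_\bZ\bZ_p)$ with both summands nonzero (each $M_i$ being $\bZ$-free), contradicting what was just shown. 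The only points requiring care are the identification $\ker\pi=pE$ and the appeal to $p$-adic separatedness; conceptually there is no serious obstacle here, as the whole argument is the standard lifting of idempotents over the complete local base $\bZ_p$, the genuine input being the local structure of $\bar{E}$ supplied by the hypothesis.
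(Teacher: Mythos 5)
Your proof is correct, but it takes a different and considerably heavier route than the paper's. The paper's argument is essentially one line by contraposition: a nontrivial decomposition $\widetilde{M}_p=N_1\oplus N_2$ has both summands nonzero free $\bZ_p$-modules, so tensoring with $\bZ_p/p\bZ_p\simeq\bZ/p\bZ$ yields a nontrivial decomposition of $M\otimes_\bZ\bZ/p\bZ$, contradicting the hypothesis; the last claim follows likewise from $\bZ\subset\bZ_p$. You instead run the standard idempotent-lifting argument in $E=\End_{\bZ_p[G]}(\widetilde{M}_p)$: reduction mod $p$ is a ring map onto a subring of the local ring $\bar E$ (local by Lemma \ref{lem7.9}(2) applied to the finite-length module $M\otimes_\bZ\bZ/p\bZ$), the kernel is $pE$, and $p$-adic separatedness of the $\bZ_p$-free module $E$ forces any idempotent of $E$ to be $0$ or $1$, whence indecomposability by Lemma \ref{lem7.8}(2). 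All the steps you flag as delicate — $\ker\pi=pE$ via torsion-freeness, and $\bigcap_n p^nE=0$ — check out. What your approach buys is a template that also handles the harder converse-type statements (lifting idempotents along complete local base change), and it sidesteps the small Nakayama-style observation that nonzero free summands have nonzero reduction; what it costs is length, since for the implication actually being proved the direct reduction of a decomposition already suffices. One cosmetic remark: your parenthetical claim that $E$ is local is not needed (Lemma \ref{lem7.8}(2) only requires triviality of idempotents), though it does hold here because $E$ is module-finite over the complete local ring $\bZ_p$.
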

\begin{proof}
We may regard $\bZ_p/p\bZ_p$ as $\bZ/p\bZ$ and hence 
$\widetilde{M}_p$ is an indecomposable $\bZ_p[G]$-lattice. 
The last statement follows from $\bZ\subset \bZ_p$. 
\end{proof}

\begin{lemma}\label{lem7.11}
Let $G$ be a finite group and $M$ be a $G$-lattice. 
Let $E=\End_{\bZ[G]}(M)$ be the endomorphism ring of $M$. 
Let $p$ be a prime number. 
If there exists a nilpotent left ideal $I$ of $E/pE$ 
with codimension $1$ as a $(\bZ/p\bZ)$-vector space, 
then $\widetilde{M}_p=M \otimes_\bZ \bZ_p$ 
is an indecomposable $\bZ_p[G]$-lattice 
and hence $M$ is an indecomposable $G$-lattice. 
In particular, if we take $M=\bZ[G]$ for a $p$-group $G$, 
%and a subgroup $H\leq G$, 
then $\widetilde{M}_p=\bZ_p[G]$ is an indecomposable $\bZ_p[G]$-lattice 
and hence $\bZ[G]$ is an indecomposable $G$-lattice. 
\end{lemma}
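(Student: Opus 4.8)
The plan is to prove that $\widetilde{M}_p:=M\otimes_\bZ\bZ_p$ is indecomposable by showing that its endomorphism ring is \emph{local}, and then to invoke Lemma \ref{lem7.8} to pass from a local endomorphism ring to indecomposability (note that Lemma \ref{lem7.8} (2) has no finiteness hypothesis, so the failure of finite length of $\widetilde{M}_p$ over $\bZ_p[G]$ causes no trouble). First I would record the flat base-change isomorphism
\[
E_p:=E\otimes_\bZ\bZ_p\;\cong\;\End_{\bZ_p[G]}(\widetilde{M}_p),
\]
valid because $\bZ_p$ is flat over $\bZ$ and $M$ is a finitely generated (hence finitely presented) $\bZ[G]$-module. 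Thus it suffices to prove that $E_p$ is local: then its only idempotents are $0$ and $1$ by Lemma \ref{lem7.8} (1), whence $\widetilde{M}_p$ is indecomposable by Lemma \ref{lem7.8} (2).

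Next I would reduce locality of $E_p$ to locality of the finite $\bF_p$-algebra $\bar{E}:=E/pE$. Here $E_p$ is a $\bZ_p$-order, i.e.\ a ring that is finitely generated as a $\bZ_p$-module, and it is complete in the $p$-adic topology; consequently $1-x$ is a unit for every $x\in pE_p$ (invert via the $p$-adically convergent geometric series), so $pE_p\subseteq J(E_p)$ by the characterization of the Jacobson radical in Lemma \ref{lem7.6} (2). Combining $pE_p\subseteq J(E_p)$ with the evident identification $E_p/pE_p\cong E/pE$ and with Lemma \ref{lem7.5} (2), I obtain
\[
E_p/J(E_p)\;\cong\;\bar{E}\big/J(\bar{E}),
\]
so that $E_p$ is local if and only if $\bar{E}$ is local (recall a ring is local precisely when its quotient by its radical is a division ring).

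It then remains to deduce locality of $\bar{E}$ from the hypothesis. Given a nilpotent left ideal $I$ of $\bar{E}$ of codimension $1$ over $\bF_p$, every element of $I$ is nilpotent, so $I\subseteq J(\bar{E})$ by Lemma \ref{lem7.6} (3). Since $\bar{E}$ is a nonzero ring with $1$ (it contains the class of the identity endomorphism), $J(\bar{E})$ is a proper ideal, while $\dim_{\bF_p}\bar{E}/I=1$; the only possibility is therefore $J(\bar{E})=I$, giving $\bar{E}/J(\bar{E})\cong\bF_p$, a division ring, so $\bar{E}$ is local. Running the chain backwards yields $E_p$ local and $\widetilde{M}_p$ indecomposable; and $M$ is then indecomposable because a nontrivial $\bZ[G]$-decomposition $M\cong A\oplus B$ would force $\widetilde{M}_p\cong\widetilde{A}_p\oplus\widetilde{B}_p$ with both summands free $\bZ_p$-modules of positive rank. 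For the special case $M=\bZ[G]$ with $G$ a $p$-group, I would use $E\cong\bZ[G]$ (endomorphisms act by right multiplication, and $\bZ[G]^{\mathrm{op}}\cong\bZ[G]$ via $g\mapsto g^{-1}$), so that $\bar{E}\cong\bF_p[G]$, and take $I$ to be the augmentation ideal, which for a $p$-group is nilpotent of codimension $1$; the general statement then applies directly.

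The main obstacle I anticipate is the order-theoretic reduction of the second paragraph: one must argue carefully that $E_p$ is genuinely $p$-adically complete and that $pE_p$ lies in $J(E_p)$, since this is exactly where the Krull--Schmidt--Azumaya phenomenon over $\bZ_p$ (as opposed to over $\bZ$, where it fails) enters. Once the identity $E_p/J(E_p)\cong\bar{E}/J(\bar{E})$ is established, the dimension count forcing $J(\bar{E})=I$ and the verification of the hypothesis for $\bF_p[G]$ are routine.
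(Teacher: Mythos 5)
Your proof is correct and follows essentially the same route as the paper: both establish $p\widetilde{E}\subseteq J(\widetilde{E})$ for $\widetilde{E}=\End_{\bZ_p[G]}(\widetilde{M}_p)$ via the $p$-adic geometric series, identify $J(\widetilde{E}/p\widetilde{E})$ with $J(\widetilde{E})/p\widetilde{E}$, and use the codimension-one nilpotent ideal to force the mod-$p$ endomorphism ring to be local, with the same treatment of the $M=\bZ[G]$ case via the augmentation ideal of $\bF_p[G]$. The only (harmless) divergence is the final step: you conclude indecomposability of $\widetilde{M}_p$ directly from locality of its $\bZ_p[G]$-endomorphism ring via Lemma \ref{lem7.8}, whereas the paper first deduces that $M\otimes_\bZ\bZ/p\bZ$ is indecomposable (Lemma \ref{lem7.9}) and then lifts to $\bZ_p$ via Lemma \ref{lem7.10}.
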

\begin{proof}
We take $\widetilde{E}=\End_{\bZ_p[G]}(M\otimes\bZ_p)\supset E$. 
For $x \in p\widetilde{E}$, $1-rx$ is a unit for all $r\in \widetilde{E}$ 
because $(1-rx)^{-1}=\sum_{i=0}^{\infty}(rx)^i$. 
It follows from Lemma \ref{lem7.6} (1) that 
$p\widetilde{E} \subset J(\widetilde{E})$. 
By Lemma \ref{lem7.5} (2), we have 
$J(\widetilde{E}/p\widetilde{E})=J(\widetilde{E})/p\widetilde{E}$. 
Then we investigate $\widetilde{E}/p\widetilde{E}\simeq 
\End_{(\bZ/p\bZ)[G]}(M \otimes_\bZ \bZ/p\bZ)$ 
instead of $\widetilde{E}$. 
Note that $\widetilde{E}/p\widetilde{E}$ is a finite dimensional vector space 
over $\bZ/p\bZ$. 
From the assumption, we have a nilpotent left ideal 
$\widetilde{I}=I\otimes_\bZ \bZ_p$ of $\widetilde{E}/p\widetilde{E}$ 
with codimension $1$ as a $(\bZ/p\bZ)$-vector space, 
then it follows from Lemma \ref{lem7.6} (3) and Lemma \ref{lem7.7} that 
$\widetilde{I}=J(\widetilde{E}/p\widetilde{E})$ 
and $\widetilde{E}/p\widetilde{E}$ is a local ring. 
By Lemma \ref{lem7.9} (2), we find that 
$M \otimes_\bZ \bZ/p\bZ$ is an indecomposable 
$(\bZ/p\bZ)[G]$-lattice. 
Hence the statement follows from Lemma \ref{lem7.10}. 

The last statement follows from that 
$E=\End_{\bZ[G]}(\bZ[G])\simeq \bZ[G]$, 
$E/pE\simeq \widetilde{E}/p\widetilde{E}\simeq (\bZ/p\bZ)[G]$ 
and $\widetilde{E}\simeq \bZ_p[G]$ is a local ring with the maximal ideal 
$\widetilde{I}_G+p\bZ_p[G]$ where 
$\widetilde{I}_G={\rm Ker}(\varepsilon)$ and 
$\varepsilon : \bZ_p[G]\rightarrow \bZ_p$ is the augmentation map 
(Shizuo Endo pointed out this to the authors). 
\end{proof}

We made the following GAP \cite{GAP} algorithms 
which enable us to confirm the indecomposability of $[M_G]^{fl}$ 
and to get the $p$-part $\widetilde{N}_p$ of $[M_G]^{fl}$ 
as a $\bZ_p[\Syl_p(G)]$-lattice as in Definition \ref{def7.1}. 
It is available as in \cite{BCAlgTori}.\\
%from 
%\url{https://www.math.kyoto-u.ac.jp/~yamasaki/Algorithm/BCAlgTori/}.\\

Let $G\leq \GL(n,\bZ)$ 
and $M_G$ be the corresponding $G$-lattice of $\bZ$-rank $n$ 
as in Definition \ref{d2.2}.\\

\noindent 
{\tt Endomorphismring(}$G${\tt )} returns a $\bZ$-basis of
$\End_{\bZ[G]}(M_G)$ for a finite subgroup $G$ of $\GL(n,\bZ)$.\\

\noindent 
{\tt IsCodimJacobsonEnd1(}$G,p${\tt )} returns {\tt true} (resp. {\tt false}) 
if ${\rm dim}_{\bZ/p\bZ} (E/pE)\big/J(E/pE)=1$ (resp. $\neq 1$) 
where $E=\End_{\bZ[G]}(M_G)$ 
for a finite subgroup $G$ of $\GL(n,\bZ)$ and prime number $p$. 
If this returns {\tt true}, then $M_G \otimes_\bZ \bZ_p$ 
is an indecomposable $\bZ_p[G]$-lattice. 
In particular, $M_G$ is an indecomposable $G$-lattice 
(see Lemma \ref{lem7.11}).\\

\noindent 
{\tt IdempotentsModp(}$B,p${\tt )} returns all idempotents of $R/pR$ 
for a $\bZ$-basis $B$ of a subring $R$ of $n\times n$ matrices 
$M(n,\bZ)$ over $\bZ$ and prime number $p$. 
If this returns only 
the zero and the identity matrices when $R=\End_{\bZ[G]}(M_G)$, then 
$M_G \otimes_\bZ \bZ_p$ 
is an indecomposable $\bZ_p[G]$-lattice. 
In particular, $M_G$ is an indecomposable $G$-lattice 
(see Lemma \ref{lem7.10}).\\
\begin{theorem}[{see Kunyavskii \cite[Section 5]{Kun90}}]\label{th7.12}
Let $G=N_{3,i}$ $(1\leq i\leq 15)$ be groups as in Definition \ref{defN3N4} 
and $M_G$ be the corresponding $G$-lattice as in Definition \ref{d2.2}. 
The $2$-part $\widetilde{N}_2$ of $[M_G]^{fl}$ as a $\bZ_p[\Syl_p(G)]$-lattice 
is a faithful and indecomposable $\bZ_2[\Syl_2(G)]$-lattice 
and the $\bZ_2$-rank of $\widetilde{N}_2$ 
is given as in {\rm Table} $14$. 
\end{theorem}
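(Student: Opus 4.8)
The plan is to establish the three assertions---indecomposability, faithfulness, and the stated $\bZ_2$-rank---for each of the fifteen groups $G=N_{3,i}$ by an explicit case-by-case computation built on the machinery developed in this section and carried out in GAP. First I would, for each $G$, produce an explicit flabby resolution $0\to M_G\to P\to F\to 0$ with $[F]=[M_G]^{fl}$ via the function {\tt FlabbyResolutionLowRank($G$).actionF}, obtaining the integral matrices describing the $G$-action on a low-rank $F$. Since each $N_{3,i}$ is not retract $k$-rational, Lemma \ref{lem7.3} already forces $\widetilde{N}_2\neq 0$, so there is a genuinely nontrivial lattice to isolate.

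Next I would determine $Sy_2(G)$ and restrict $F$ to it. For the seven cases in which $G$ is itself a $2$-group ($1\leq i\leq 7$) one has $Sy_2(G)=G$ and no restriction is needed; for the remaining eight cases ($8\leq i\leq 15$), with $G\simeq A_4$, $A_4\times C_2$, $S_4$ or $S_4\times C_2$, one must genuinely restrict, and $F|_{Sy_2(G)}$ may then acquire permutation direct summands. Here I would tensor with $\bZ_2$ and, using the Krull--Schmidt--Azumaya theorem for $\bZ_2[Sy_2(G)]$-lattices recalled at the beginning of Section \ref{S7}, strip off all permutation summands to obtain the uniquely determined non-permutation part $\widetilde{N}_2$, exactly as in Definition \ref{def7.1}. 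The $\bZ_2$-rank recorded in Table $14$ is then $\,{\rm rank}_{\bZ_2}\widetilde{N}_2$, read off once this splitting is performed.

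The heart of the argument is the indecomposability of $\widetilde{N}_2$ over $\bZ_2[Sy_2(G)]$. For this I would exhibit an explicit $\bZ$-form $N$ of $\widetilde{N}_2$, that is, a $Sy_2(G)$-lattice with $N\otimes_\bZ\bZ_2\simeq\widetilde{N}_2$, and apply Lemma \ref{lem7.11}: it suffices to check that $E/2E$ is local, where $E=\End_{\bZ[Sy_2(G)]}(N)$, i.e. that the Jacobson radical of $E/2E$ has codimension $1$ as an $\bF_2$-vector space. This is a finite linear-algebra computation over $\bF_2$, carried out by {\tt IsCodimJacobsonEnd1($\cdot$,2)}; equivalently one may verify via {\tt IdempotentsModp} that $E/2E$ has no idempotents other than $0$ and $1$ and invoke Lemma \ref{lem7.10} together with Lemma \ref{lem7.8}. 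Since $Sy_2(G)$ is a $2$-group, Jacobinski's criterion quoted in Section \ref{S7} makes indecomposability of $N\otimes_\bZ\bZ_2$ equivalent to that of $N$, so the $\bF_2$-computation certifies the desired $\bZ_2$-statement.

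Finally, faithfulness is checked directly: I would confirm that the homomorphism $Sy_2(G)\to\GL(\widetilde{N}_2)$ has trivial kernel by verifying that the matrices representing a generating set of $Sy_2(G)$ on $\widetilde{N}_2$ generate a subgroup of order $|Sy_2(G)|$. The step I expect to be the main obstacle is the correct isolation of $\widetilde{N}_2$ in the four non-$2$-group isomorphism types: one must be certain that every permutation summand has been removed over $\bZ_2$ and that the residue is a single indecomposable block rather than a direct sum. It is precisely here that the endomorphism-ring computation through Lemma \ref{lem7.11} is decisive, converting a potentially delicate integral decomposition problem into a finite and verifiable check over $\bF_2$.
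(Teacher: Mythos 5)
Your overall strategy coincides with the paper's: compute an explicit low-rank flabby resolution with {\tt FlabbyResolutionLowRank}, certify indecomposability over $\bZ_2$ through the endomorphism ring modulo $2$ (Lemma \ref{lem7.11} via {\tt IsCodimJacobsonEnd1}, or Lemma \ref{lem7.10} via {\tt IdempotentsModp}), check faithfulness by comparing $|Sy_2(G)|$ with the order of its image, and read the rank off the resulting matrices. The one place where you diverge --- and where your plan is not yet complete --- is the treatment of the eight cases $8\leq i\leq 15$. You propose to restrict $F$ to $Sy_2(G)$, tensor with $\bZ_2$, and ``strip off all permutation summands''; but Lemma \ref{lem7.11} only certifies that a \emph{given} lattice is indecomposable, it does not produce the decomposition of $F|_{Sy_2(G)}\otimes_\bZ\bZ_2$ when that restriction is decomposable, so the stripping step you yourself flag as the main obstacle is left without an algorithm. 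The paper avoids this entirely: a GAP check shows that for every $i$ the Sylow $2$-subgroup of $N_{3,i}$ is $\GL(3,\bZ)$-conjugate to $N_{3,j}$ for the $j\leq 7$ recorded in Table $14$, so $M_{N_{3,i}}|_{Sy_2(G)}\simeq M_{N_{3,j}}$ and $\widetilde{N}_2$ for $N_{3,i}$ coincides with $\widetilde{N}_2$ for the $2$-group $N_{3,j}$. The whole theorem therefore reduces to the seven $2$-group cases, where the computed $F$ is already indecomposable and, being of odd rank $5$, $7$, $9$ or $11$, cannot be a permutation lattice $\bZ_2[Sy_2(G)/H]$ (whose rank would be a power of $2$); hence $\widetilde{N}_2=F\otimes_\bZ\bZ_2$ with no stripping needed. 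You should either adopt this reduction or supply an actual decomposition procedure.

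A second, smaller point: Lemma \ref{lem7.3} states that $\widetilde{N}_p\neq 0$ implies $[M]^{fl}$ is not invertible. The converse you invoke --- that non-retract-rationality of $N_{3,i}$ ``already forces'' $\widetilde{N}_2\neq 0$ --- is not a consequence of that lemma and is false in general: in dimension $4$ there are non-retract-rational groups (e.g. $N_{4,58},\dots,N_{4,68}$) with $\widetilde{N}_2=0$ and the obstruction concentrated at $p=3$. In dimension $3$ the non-vanishing of $\widetilde{N}_2$ is true, but it must be read off from the computation itself, as it is in the paper.
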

\begin{proof}
This theorem is due to Kunyavskii \cite[Section 5]{Kun90} 
except for the indecomposability of $\widetilde{N}_2$ 
(Kunyavskii actually 
showed that the almost indecomposability of $\widetilde{N}_2$). 
We should take a $2$-Sylow subgroup $\Syl_2(G)$ of $G=N_{3,i}$ 
and $7$ cases of them with $1\leq i\leq 7$ are $2$-groups. 
We may check the theorem by applying the GAP algorithms 
{\tt FlabbyResolutionLowRank($G$).actionF} (see Section \ref{S6}), 
{\tt Endomorphismring} and {\tt IsCodimJacobsonEnd1} 
(see Example \ref{ex7.14} for GAP computations). 
\end{proof}

%\newpage
Table $14$: 
The $\bZ_2$-rank of $\widetilde{N}_2$ for $G=N_{3,i}$ $(1\leq i\leq 15)$ with $\Syl_2(G)=N_{3,j}$
%\\
{\small 
\begin{longtable}{c|rrrrrrrrrrrrrrr}
$i$: $G=N_{3,i}$ & $1$ & $2$ & $3$ & $4$ & $5$ & $6$ & $7$ & $8$ & $9$ & $10$ & $11$ & $12$ & $13$ & $14$ & $15$\\\hline
\vspace*{-3mm}\\
${\rm rank}_{\bZ_2}\, \widetilde{N}_2$ & $5$ & $9$ & $11$ & $11$ & $7$ & $7$ & $11$ & $5$ & $9$ & $11$ & $7$ & $7$ & $7$ & $11$ & $11$\\
\vspace*{-3mm}\\
$j$: $\Syl_2(G)=N_{3,j}$ & $i$ & $i$ & $i$ & $i$ & $i$ & $i$ & $i$ & $1$ & $2$ & $3$ & $5$ & $5$ & $6$ & $7$ & $7$\\
\vspace*{-3mm}\\
${\rm rank}_{\bZ_3}\, \widetilde{N}_3$ &  &  &  &  &  &  &  & $0$ & $0$ & $0$ & $0$ & $0$ & $0$ & $0$ & $0$\\
\vspace*{-3mm}\\
$j$: $\Syl_3(G)=N_{3,j}$ &  &  &  &  &  &  &  & -- & -- & -- & -- & -- & -- & -- & --
\end{longtable}
}

\begin{theorem}\label{th7.13}
Let $G=N_{4,i}$ $(1\leq i\leq 152)$ be groups as in Definition \ref{defN3N4} 
and $M_G$ be the corresponding $G$-lattice as in Definition \ref{d2.2}. 
The $2$-part $\widetilde{N}_2$ $($resp. $3$-part $\widetilde{N}_3$$)$ 
of $[M_G]^{fl}$ as a $\bZ_p[\Syl_p(G)]$-lattice 
is a faithful and indecomposable $\bZ_2[\Syl_2(G)]$-lattice 
$($resp. $\bZ_3[\Syl_3(G)]$-lattice$)$ unless it vanishes 
and the $\bZ_2$-rank of 
$\widetilde{N}_2$ $($resp. the $\bZ_3$-rank of $\widetilde{N}_3$$)$ 
is given as in {\rm Table} $15$. 
\end{theorem}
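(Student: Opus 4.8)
The plan is to follow exactly the computational strategy used to establish Theorem \ref{th7.12} in dimension $3$, now carried out for all $152$ groups $G=N_{4,i}$ and for \emph{both} primes $p=2$ and $p=3$, since the orders of the dimension-$4$ groups in Table $8$ are divisible by $2$ and, in many cases, by $3$ (e.g. those containing $A_4$, $S_4$, $S_3^2$, $\SL(2,\bF_3)$). First I would, for each $i$ and each $p\in\{2,3\}$, fix a $p$-Sylow subgroup $Sy_p(G)$ and produce a low-rank representative $F$ of the flabby class, $[F]=[M_G]^{fl}$, by means of the function {\tt FlabbyResolutionLowRank($G$).actionF} (see Section \ref{S6}). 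Since restriction of a permutation lattice to a subgroup is again permutation and restriction preserves flabbiness (closed subgroups of $Sy_p(G)$ are closed subgroups of $G$), the restricted sequence $0\to M_G|_{Sy_p(G)}\to P|_{Sy_p(G)}\to F|_{Sy_p(G)}\to 0$ is a flabby resolution of $M_G|_{Sy_p(G)}$. Tensoring with $\bZ_p$ and stripping off all permutation direct summands then yields $\widetilde{N}_p$, which is well defined by the Krull--Schmidt--Azumaya theorem for $\bZ_p[Sy_p(G)]$-lattices recalled at the start of this section (Definition \ref{def7.1}).

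Next I would separate the cases $\widetilde{N}_p=0$ from $\widetilde{N}_p\neq 0$; the former are exactly the entries recorded as rank $0$ in Table $15$ and require nothing further. For each nonvanishing $\widetilde{N}_p$ I would realize it as a concrete integral $Sy_p(G)$-representation $N$ with $N\otimes_\bZ\bZ_p=\widetilde{N}_p$ and verify indecomposability through the endomorphism-ring criterion of Lemma \ref{lem7.11}: compute a $\bZ$-basis of $E=\End_{\bZ[Sy_p(G)]}(N)$ via {\tt Endomorphismring}, and check with {\tt IsCodimJacobsonEnd1} that $(E/pE)/J(E/pE)$ has dimension $1$ over $\bZ/p\bZ$, equivalently that $E/pE$ is local; when this test is inconclusive I would instead apply Lemma \ref{lem7.10} by listing the idempotents of $E/pE$ with {\tt IdempotentsModp} and confirming that only $0$ and $1$ occur. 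Faithfulness is then verified directly by checking that no nonidentity element of $Sy_p(G)$ acts trivially on $\widetilde{N}_p$, and the $\bZ_p$-ranks are read off and matched against Table $15$.

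The main obstacle will be the extraction of $\widetilde{N}_p$ when $Sy_p(G)$ is a proper subgroup of $G$: unlike the dimension-$3$ situation, where for $1\leq i\leq 7$ the whole group $N_{3,i}$ is already a $2$-group and $\widetilde{F}_2=\widetilde{N}_2$, here $\widetilde{F}_p|_{Sy_p(G)}$ typically acquires genuine permutation summands that must be split off correctly before any indecomposability test is applied to the \emph{non-permutation} part alone. Correctly isolating $\widetilde{N}_p$ and then producing an integral form on which the endomorphism-ring-mod-$p$ computations of Lemma \ref{lem7.11} and Lemma \ref{lem7.10} can be run is the delicate point, and the sheer scale --- $152$ groups times two primes, with $Sy_p(G)$ ranging over numerous $p$-groups of order up to $2^7$ (and $3$-groups such as $C_3^2$) occurring in Table $8$ --- makes these local-ring verifications the computationally heavy and error-sensitive step. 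By Lemma \ref{lem7.2} the indecomposability and $\bZ_p$-rank of $\widetilde{N}_p$ are invariants of the weak stably $k$-equivalent class, which provides a useful internal consistency check across the groups sharing a common class in Tables $3$ and $4$.
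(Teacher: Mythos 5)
Your toolkit is the right one (the same GAP functions and Lemmas \ref{lem7.10}, \ref{lem7.11}), but your plan has a concrete hole at exactly the step you yourself flag as delicate, and the paper's way around that step is different from, and missing from, your proposal. First, you propose to restrict the chosen representative $F$ of $[M_G]^{fl}$ to $Sy_p(G)$, tensor with $\bZ_p$, and ``strip off all permutation direct summands'' to isolate $\widetilde{N}_p$; you never say how to do this, and the paper never does it. Instead the paper observes that $73$ of the $N_{4,i}$ are $2$-groups and one ($N_{4,57}$) is a $3$-group, and that for every remaining $G$ the Sylow subgroup $Sy_p(G)\leq\GL(4,\bZ)$ is either conjugate to one of those $p$-groups $N_{4,j}$ (and to no $N_{31,j}$) or lies outside the tables entirely, in which case $\widetilde{N}_p=0$. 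Since $M_G|_{Sy_p(G)}=M_{Sy_p(G)}$, this reduces the whole theorem to the $p$-group rows of Table $15$, with no splitting algorithm required.

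Second, among those $p$-group cases {\tt IsCodimJacobsonEnd1} genuinely fails for $i\in\{2,7,13,16,17,19,24,25,30,106\}$, and for all of these except $106$ the low-rank representative really does acquire a rank-one permutation summand over $\bZ_2$ (e.g.\ ${\rm rank}_\bZ F=12$ for $N_{4,2}$ while ${\rm rank}_{\bZ_2}\widetilde{N}_2=11$). So your fallback --- running {\tt IdempotentsModp} on $E/2E$ for $F$ --- would correctly report nontrivial idempotents and certify nothing about $\widetilde{N}_2$; you would be back to the unexplained isolation step. The paper closes these nine cases by the mechanism you relegate to a ``consistency check'': Lemma \ref{lem7.2} together with the weak stable $k$-equivalences $[M_{N_{4,2}}]^{fl}\sim[M_{N_{4,7}}]^{fl}\sim[M_{N_{4,5}}]^{fl}$, $[M_{N_{4,13}}]^{fl}\sim[M_{N_{4,24}}]^{fl}\sim[M_{N_{4,23}}]^{fl}$, etc., already proved in Theorem \ref{thmain3}, which transfer $\widetilde{N}_2$ (hence its indecomposability and rank) from a group in the same class whose representative passes the Jacobson test. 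Only $N_{4,106}\simeq Q_8$ is settled by {\tt IdempotentsModp}, and there no isolation is needed because ${\rm rank}_\bZ F=12={\rm rank}_{\bZ_2}\widetilde{N}_2$. Without either the Sylow-identification reduction or the Lemma \ref{lem7.2} transfer, your proof does not terminate on those cases.
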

\begin{proof}
As in the proof of Theorem \ref{th7.12}, 
we apply the GAP algorithms 
{\tt FlabbyResolutionLowRank($G$).actionF} (see Section \ref{S6}), 
{\tt Endomorphismring} and  
{\tt IsCodimJacobsonEnd1}. 
We should take a $p$-Sylow subgroup $\Syl_p(G)$ $(p=2,3)$ of $G=N_{4,i}$, 
and $73$ cases of them with $1\leq i\leq 56$, $106\leq i\leq 109$, 
$112\leq i\leq 117$, $120\leq i\leq 125$, $i=128$ are $2$-groups 
and the only $1$ case of them with $i=57$ is a $3$-group. 
Furthermore, 
we may check that there exists 
no integer $j$ such that $\Syl_p(G)$ is $\GL(4,\bZ)$-conjugate 
to $N_{31,j}$ for $p=2,3$. 
Then for $2$-groups and a $3$-group cases of $G$ 
we obtain that $\widetilde{N}_2$ and $\widetilde{N}_3$ are zero or 
indecomposable except for $\widetilde{N}_2$ of $G=N_{4,i}$ where 
$i\in \{2, 7, 13, 16, 17, 19, 24, 25, 30, 106\}$. 
By Theorem \ref{thmain3} (see also Table $4$), 
we already see the following weak stably $k$-equivalences
\begin{align*}
&[N_{4,2}]^{fl}\sim [N_{4,7}]^{fl}\sim [N_{4,5}]^{fl},\\
&[N_{4,13}]^{fl}\sim [N_{4,24}]^{fl}\sim [N_{4,23}]^{fl},\\
&[N_{4,16}]^{fl}\sim [N_{4,17}]^{fl}\sim [N_{4,25}]^{fl}\sim [N_{4,15}]^{fl},\\
&[N_{4,19}]^{fl}\sim [N_{4,30}]^{fl}\sim [N_{4,29}]^{fl}. 
\end{align*}
Then apply Lemma \ref{lem7.2b}. 
We find that $\widetilde{N}_2$ is indecomposable except for 
the last case $i=106$. 
For $G=N_{4,106}\simeq Q_8$, by using the function 
{\tt IdempotentsModp(EndomorphismRingBasis($F$),2)} 
where $F$ is obtained via {\tt FlabbyResolutionLowRank($G$).actionF} 
(see Section \ref{S6}) with $[F]=[M_G]^{fl}$ and ${\rm rank}_\bZ$ $F=12$, 
we confirm that $\widetilde{F}_2$ is an indecomposable $\bZ_2[G]$-lattice 
with ${\rm rank}_{\bZ_2}$ $\widetilde{F}_2=12$
and hence $\widetilde{N}_2=\widetilde{F}_2$. 
Finally, we may also confirm that the $p$-part $\widetilde{N}_p$ $(p=2,3)$ 
of $[M_G]^{fl}$ as a $\bZ_p[\Syl_p(G)]$-lattice is a faithful 
$\bZ_p[G]$-lattice (see Example \ref{ex7.15} for GAP computations). 
\end{proof}

%%%%%%%%%%%%%%%%%%%%%%%%%%%%%%%%%%%%%%%%%%%%%%%%%%%%%%%%%%%%%%%%%%%%%%%%%%
\newpage
Table $15$: 
The $\bZ_p$-rank of $\widetilde{N}_p$ $(p=2,3)$ for $G=N_{4,i}$ $(1\leq i\leq 152)$ with $\Syl_p(G)=N_{4,j}$ $(1\leq j\leq 152)$
%\\
{\small 
\begin{longtable}{c|rrrrrrrrrrrrrrrrrrrr}
$i$: $G=N_{4,i}$ & $1$ & $2$ & $3$ & $4$ & $5$ & $6$ & $7$ & $8$ & $9$ & $10$ & $11$ & $12$ & $13$ & $14$ & $15$ & $16$ & $17$ & $18$ & $19$ & $20$\\\hline
\vspace*{-3mm}\\
${\rm rank}_{\bZ_2}\, \widetilde{N}_2$ & $5$ & $11$ & $12$ & $20$ & $11$ & $9$ & $11$ & $12$ & $16$ & $20$ & $20$ & $20$ & $11$ & $12$ & $7$ & $7$ & $7$ & $8$ & $11$ & $20$\\
\vspace*{-3mm}\\
$j$: $\Syl_2(G)=N_{4,j}$ & $i$ & $i$ & $i$ & $i$ & $i$ & $i$ & $i$ & $i$ & $i$ & $i$ & $i$ & $i$ & $i$ & $i$ & $i$ & $i$ & $i$ & $i$ & $i$ & $i$\\
\vspace*{-3mm}\\
${\rm rank}_{\bZ_3}\, \widetilde{N}_3$ & $$ & $$ & $$ & $$ & $$ & $$ & $$ & $$ & $$ & $$ & $$ & $$ & $$ & $$ & $$\\
\vspace*{-3mm}\\
$j$: $\Syl_3(G)=N_{4,j}$ & $$ & $$ & $$ & $$ & $$ & $$ & $$ & $$ & $$ & $$ & $$ & $$ & $$ & $$ & $$
\end{longtable}

\begin{longtable}{rrrrrrrrrrrrrrrrrrrrr}
%%%%%%%%%%%%%%%%%%%%%%%%%%%%%%%%%%%%%%%%%%%%%%%
& $21$ & $22$ & $23$ & $24$ & $25$ & $26$ & $27$ & $28$ & $29$ & $30$ & $31$ & $32$ & $33$ & $34$ & $35$ & $36$ & $37$ & $38$ & $39$ & $40$\\\hline
\vspace*{-3mm}\\
& $12$ & $20$ & $11$ & $11$ & $7$ & $7$ & $20$ & $20$ & $11$ & $11$ & $20$ & $16$ & $20$ & $16$ & $20$ & $20$ & $20$ & $20$ & $12$ & $12$\\
\vspace*{-3mm}\\
& $i$ & $i$ & $i$ & $i$ & $i$ & $i$ & $i$ & $i$ & $i$ & $i$ & $i$ & $i$ & $i$ & $i$ & $i$ & $i$ & $i$ & $i$ & $i$ & $i$\\
\vspace*{-3mm}\\
& $$ & $$ & $$ & $$ & $$ & $$ & $$ & $$ & $$ & $$ & $$ & $$ & $$ & $$ & $$ & $$ & $$ & $$ & $$ & $$\\
\vspace*{-3mm}\\
& $$ & $$ & $$ & $$ & $$ & $$ & $$ & $$ & $$ & $$ & $$ & $$ & $$ & $$ & $$ & $$ & $$ & $$ & $$ & $$
\vspace*{5mm}\\
%%%%%%%%%%%%%%%%%%%%%%%%%%%%%%%%%%%%%%%%%%%%%%%
& $41$ & $42$ & $43$ & $44$ & $45$ & $46$ & $47$ & $48$ & $49$ & $50$ & $51$ & $52$ & $53$ & $54$ & $55$ & $56$ & $57$ & $58$ & $59$ & $60$\\\hline
\vspace*{-3mm}\\
& $20$ & $20$ & $20$ & $20$ & $20$ & $20$ & $20$ & $20$ & $20$ & $20$ & $20$ & $20$ & $20$ & $20$ & $20$ & $20$ &  & $0$ & $0$ & $0$\\
\vspace*{-3mm}\\
& $i$ & $i$ & $i$ & $i$ & $i$ & $i$ & $i$ & $i$ & $i$ & $i$ & $i$ & $i$ & $i$ & $i$ & $i$ & $i$ &  & -- & -- & --\\
\vspace*{-3mm}\\
& $$ & $$ & $$ & $$ & $$ & $$ & $$ & $$ & $$ & $$ & $$ & $$ & $$ & $$ & $$ & $$ & $11$ & $11$ & $11$ & $11$\\
\vspace*{-3mm}\\
& $$ & $$ & $$ & $$ & $$ & $$ & $$ & $$ & $$ & $$ & $$ & $$ & $$ & $$ & $$ & $$ & $i$ & $57$ & $57$ & $57$
\vspace*{5mm}\\
%%%%%%%%%%%%%%%%%%%%%%%%%%%%%%%%%%%%%%%%%%%%%%%
& $61$ & $62$ & $63$ & $64$ & $65$ & $66$ & $67$ & $68$ & $69$ & $70$ & $71$ & $72$ & $73$ & $74$ & $75$ & $76$ & $77$ & $78$ & $79$ & $80$\\\hline
\vspace*{-3mm}\\
& $0$ & $0$ & $0$ & $0$ & $0$ & $0$ & $0$ & $0$ & $11$ & $20$ & $7$ & $11$ & $20$ & $11$ & $11$ & $16$ & $7$ & $7$ & $20$ & $20$\\
\vspace*{-3mm}\\
& -- & -- & -- & -- & -- & -- & -- & -- & $2$ & $4$ & $16$ & $19$ & $22$ & $5$ & $7$ & $9$ & $26$ & $25$ & $10$ & $12$\\
\vspace*{-3mm}\\
& $11$ & $11$ & $11$ & $11$ & $11$ & $11$ & $11$ & $11$ & $0$ & $0$ & $0$ & $0$ & $0$ & $0$ & $0$ & $0$ & $0$ & $0$ & $0$ & $0$\\
\vspace*{-3mm}\\
& $57$ & $57$ & $57$ & $57$ & $57$ & $57$ & $57$ & $57$ & -- & -- & -- & -- & -- & -- & -- & -- & -- & -- & -- & --
\vspace*{5mm}\\
%%%%%%%%%%%%%%%%%%%%%%%%%%%%%%%%%%%%%%%%%%%%%%%
& $81$ & $82$ & $83$ & $84$ & $85$ & $86$ & $87$ & $88$ & $89$ & $90$ & $91$ & $92$ & $93$ & $94$ & $95$ & $96$ & $97$ & $98$ & $99$ & $100$\\\hline
\vspace*{-3mm}\\
& $20$ & $20$ & $11$ & $20$ & $11$ & $16$ & $20$ & $16$ & $20$ & $20$ & $0$ & $0$ & $0$ & $0$ & $0$ & $0$ & $0$ & $0$ & $0$ & $0$\\
\vspace*{-3mm}\\
& $35$ & $36$ & $29$ & $31$ & $30$ & $32$ & $33$ & $34$ & $37$ & $38$ & -- & -- & -- & -- & -- & -- & -- & -- & -- & --\\
\vspace*{-3mm}\\
& $0$ & $0$ & $0$ & $0$ & $0$ & $0$ & $0$ & $0$ & $0$ & $0$ & $11$ & $11$ & $11$ & $11$ & $11$ & $11$ & $11$ & $11$ & $11$ & $11$\\
\vspace*{-3mm}\\
& -- & -- & -- & -- & -- & -- & -- & -- & -- & -- & $57$ & $57$ & $57$ & $57$ & $57$ & $57$ & $57$ & $57$ & $57$ & $57$
\vspace*{5mm}\\
%%%%%%%%%%%%%%%%%%%%%%%%%%%%%%%%%%%%%%%%%%%%%%%
& $101$ & $102$ & $103$ & $104$ & $105$ & $106$ & $107$ & $108$ & $109$ & $110$ & $111$ & $112$ & $113$ & $114$ & $115$ & $116$ & $117$ & $118$ & $119$ & $120$\\\hline
\vspace*{-3mm}\\
& $0$ & $0$ & $0$ & $0$ & $0$ & $12$ & $20$ & $12$ & $20$ & $12$ & $12$ & $20$ & $20$ & $20$ & $20$ & $20$ & $20$ & $12$ & $12$ & $20$\\
\vspace*{-3mm}\\
& -- & -- & -- & -- & -- & $i$ & $i$ & $i$ & $i$ & $106$ & $106$ & $i$ & $i$ & $i$ & $i$ & $i$ & $i$ & $108$ & $108$ & $i$\\
\vspace*{-3mm}\\
& $11$ & $11$ & $11$ & $11$ & $11$ &  &  &  &  & $0$ & $0$ &  &  &  &  &  &  & $0$ & $0$ &  \\
\vspace*{-3mm}\\
& $57$ & $57$ & $57$ & $57$ & $57$ &  &  &  &  & -- & -- &  &  &  &  &  &  & -- & -- & 
\vspace*{5mm}\\
%%%%%%%%%%%%%%%%%%%%%%%%%%%%%%%%%%%%%%%%%%%%%%%
& $121$ & $122$ & $123$ & $124$ & $125$ & $126$ & $127$ & $128$ & $129$ & $130$ & $131$ & $132$ & $133$ & $134$ & $135$ & $136$ & $137$ & $138$ & $139$ & $140$\\\hline
\vspace*{-3mm}\\
& $20$ & $20$ & $20$ & $20$ & $20$ & $20$ & $20$ & $20$ & $20$ & $20$ & $20$ & $20$ & $20$ & $20$ & $20$ & $20$ & $12$ & $12$ & $12$ & $20$\\
\vspace*{-3mm}\\
& $i$ & $i$ & $i$ & $i$ & $i$ & $117$ & $117$ & $i$ & $123$ & $124$ & $123$ & $124$ & $120$ & $120$ & $128$ & $128$ & $106$ & $106$ & $108$ & $109$\\
\vspace*{-3mm}\\
&  &  &  &  &  & $0$ & $0$ &  & $0$ & $0$ & $0$ & $0$ & $0$ & $0$ & $0$ & $0$ & $0$ & $0$ & $0$ & $0$\\
\vspace*{-3mm}\\
&  &  &  &  &  & -- & -- &  & -- & -- & -- & -- & -- & -- & -- & -- & -- & -- & -- & --
\vspace*{5mm}\\
%%%%%%%%%%%%%%%%%%%%%%%%%%%%%%%%%%%%%%%%%%%%%%%
& $141$ & $142$ & $143$ & $144$ & $145$ & $146$ & $147$ & $148$ & $149$ & $150$ & $151$ & $152$\\\hline
\vspace*{-3mm}\\
& $12$ & $12$ & $20$ & $20$ & $20$ & $12$ & $20$ & $20$ & $20$ & $20$ & $20$ & $20$\\
\vspace*{-3mm}\\
& $108$ & $106$ & $117$ & $112$ & $114$ & $108$ & $120$ & $117$ & $123$ & $124$ & $120$ & $128$\\
\vspace*{-3mm}\\
& $0$ & $11$ & $0$ & $0$ & $0$ & $11$ & $0$ & $11$ & $11$ & $11$ & $11$ & $11$\\
\vspace*{-3mm}\\
& -- & $57$ & -- & -- & -- & $57$ & -- & $57$ & $57$ & $57$ & $57$ & $57$
\end{longtable}
}

\newpage
%%%%%%%%%%%%%%%%%%%%%%%%%%%%%%%%%%%%%%%%%%%%%%%%%%%%%%%%
\bigskip
\begin{example}[Computations of Example \ref{ex7.3}]\label{ex7.3b}~\vspace*{-5mm}\\
\begin{verbatim}
gap> Read("BCAlgTori.gap");
gap> I4g:=List(I4,x->MatGroupZClass(x[1],x[2],x[3],x[4]));;
gap> G:=I4g[1];
MatGroupZClass( 4, 31, 1, 3 )
gap> PermutationLatticeWithRank1:=Group([[[1]],[[1]]]);
Group([ [ [ 1 ] ], [ [ 1 ] ] ])
gap> PermutationLatticeWithRank2:=Group(CosetRepresentation(G,Group(G.1^2,G.2)));
Group([ [ [ 0, 1 ], [ 1, 0 ] ], [ [ 1, 0 ], [ 0, 1 ] ] ])
gap> PermutationLatticeWithRank4:=Group(CosetRepresentation(G,Group(G.2)));
<matrix group with 2 generators>
gap> PermutationLatticeWithRank5:=Group(CosetRepresentation(G,Group(G.1)));
<matrix group with 2 generators>
gap> GPermutationLatticeWithRank1:=DirectSumMatrixGroup([G,PermutationLatticeWithRank1]);
<matrix group with 2 generators>
gap> transmat1:=TransformationMat(GeneratorsOfGroup(GPermutationLatticeWithRank1),
> GeneratorsOfGroup(PermutationLatticeWithRank5));
[ [ [ 1, 1, 1, 1, -4 ], 
    [ 1, 1, 1, -4, 1 ], 
    [ 1, 1, -4, 1, 1 ], 
    [ 1, -4, 1, 1, 1 ], 
    [ 0, 0, 0, 0, 0 ] ], 
  [ [ 0, 0, 0, 0, 0 ], 
    [ 0, 0, 0, 0, 0 ], 
    [ 0, 0, 0, 0, 0 ], 
    [ 0, 0, 0, 0, 0 ], 
    [ 1, 1, 1, 1, 1 ] ] ]
gap> List(transmat1,Determinant);
[ 0, 0 ]
gap> p1:=Sum(transmat1);
[ [ 1, 1, 1, 1, -4 ], 
  [ 1, 1, 1, -4, 1 ], 
  [ 1, 1, -4, 1, 1 ], 
  [ 1, -4, 1, 1, 1 ], 
  [ 1, 1, 1, 1, 1 ] ]
gap> Determinant(p1);
625
gap> Factors(Determinant(p1));
[ 5, 5, 5, 5 ]
gap> GPermutationLatticeWithRank1^p1=PermutationLatticeWithRank5;
true
gap> F:=FlabbyResolutionLowRank(G).actionF;
<matrix group with 2 generators>
gap> List(F,x->[Order(x),Trace(x)]);
[ [ 4, 0 ], [ 4, 0 ], [ 5, 1 ], [ 4, 0 ], [ 5, 1 ], [ 5, 1 ], [ 2, 0 ], 
  [ 2, 0 ], [ 4, 0 ], [ 2, 0 ], [ 4, 0 ], [ 4, 0 ], [ 5, 1 ], [ 2, 0 ], 
  [ 4, 0 ], [ 4, 0 ], [ 4, 0 ], [ 4, 0 ], [ 1, 16 ], [ 2, 0 ] ]
gap> Collected(last);
[ [ [ 1, 16 ], 1 ], [ [ 2, 0 ], 5 ], [ [ 4, 0 ], 10 ], [ [ 5, 1 ], 4 ] ]
gap> Irr(G);
[ Character( CharacterTable( MatGroupZClass( 4, 31, 1, 3 ) ),
  [ 1, 1, 1, 1, 1 ] ), 
  Character( CharacterTable( MatGroupZClass( 4, 31, 1, 3 ) ),
  [ 1, -1, -1, 1, 1 ] ), 
  Character( CharacterTable( MatGroupZClass( 4, 31, 1, 3 ) ),
  [ 1, -E(4), E(4), 1, -1 ] ), 
  Character( CharacterTable( MatGroupZClass( 4, 31, 1, 3 ) ),
  [ 1, E(4), -E(4), 1, -1 ] ), 
  Character( CharacterTable( MatGroupZClass( 4, 31, 1, 3 ) ),
  [ 4, 0, 0, -1, 0 ] ) ]
gap> List(G,x->[Order(x),Trace(x)]);
[ [ 1, 4 ], [ 5, -1 ], [ 5, -1 ], [ 5, -1 ], [ 5, -1 ], [ 2, 0 ], [ 2, 0 ], 
  [ 2, 0 ], [ 2, 0 ], [ 2, 0 ], [ 4, 0 ], [ 4, 0 ], [ 4, 0 ], [ 4, 0 ], 
  [ 4, 0 ], [ 4, 0 ], [ 4, 0 ], [ 4, 0 ], [ 4, 0 ], [ 4, 0 ] ]
gap> Collected(last); 
# M~ is indecomposable with Z_2-rank 4 which corresponds to Irr(G)[5]
[ [ [ 1, 4 ], 1 ], [ [ 2, 0 ], 5 ], [ [ 4, 0 ], 10 ], [ [ 5, -1 ], 4 ] ]
gap> List(PermutationLatticeWithRank1,x->[Order(x),Trace(x)]);
[ [ 1, 1 ] ]
gap> List(PermutationLatticeWithRank2,x->[Order(x),Trace(x)]);
[ [ 2, 0 ], [ 1, 2 ] ]
gap> List(PermutationLatticeWithRank4,x->[Order(x),Trace(x)]);
[ [ 2, 0 ], [ 4, 0 ], [ 4, 0 ], [ 1, 4 ] ]
gap> List(PermutationLatticeWithRank5,x->[Order(x),Trace(x)]);
[ [ 1, 5 ], [ 5, 0 ], [ 5, 0 ], [ 5, 0 ], [ 5, 0 ], [ 2, 1 ], [ 2, 1 ], 
  [ 2, 1 ], [ 2, 1 ], [ 2, 1 ], [ 4, 1 ], [ 4, 1 ], [ 4, 1 ], [ 4, 1 ], 
  [ 4, 1 ], [ 4, 1 ], [ 4, 1 ], [ 4, 1 ], [ 4, 1 ], [ 4, 1 ] ]
gap> Collected(last);
[ [ [ 1, 5 ], 1 ], [ [ 2, 1 ], 5 ], [ [ 4, 1 ], 10 ], [ [ 5, 0 ], 4 ] ]
gap> Fx:=DirectSumMatrixGroup([G,G,G,PermutationLatticeWithRank4]);
<matrix group with 2 generators>
gap> transmat2:=TransformationMat(GeneratorsOfGroup(F),GeneratorsOfGroup(Fx));;
gap> Filtered(transmat2,x->Determinant(x)<>0);
[  ]
gap> Length(transmat2);
13
gap> Filtered(List(Combinations([1..13],2),x->Sum(transmat2{x})),y->Determinant(y)<>0);
[  ]
gap> Filtered(List(Combinations([1..13],3),x->Sum(transmat2{x})),y->Determinant(y)<>0);
[  ]
gap> p2:=Filtered(List(Combinations([1..13],4),
> x->Sum(transmat2{x})),y->Determinant(y)<>0)[1];
[ [ 0, 1, 0, 0, 0, 0, 1, 0, 0, 0, 0, 1, 1, 0, 0, 0 ], 
  [ 0, 0, 1, 0, -1, -1, -1, -1, 0, 1, 0, 0, 0, 1, 0, 0 ], 
  [ 0, 0, 0, 1, 0, 1, 0, 0, -1, -1, -1, -1, 0, 0, 1, 0 ], 
  [ -1, -1, -1, -1, 0, 0, 0, 1, 0, 0, 1, 0, 0, 0, 0, 1 ], 
  [ 0, 0, 0, 1, 1, 0, 0, 0, 0, 0, 1, 0, 0, 1, 0, 0 ], 
  [ -1, -1, -1, -1, 0, 0, 1, 0, 1, 0, 0, 0, 0, 0, 1, 0 ], 
  [ 1, 0, 0, 0, -1, -1, -1, -1, 0, 0, 0, 1, 0, 0, 0, 1 ], 
  [ 1, 0, 0, 0, 0, 0, 0, 1, 0, 1, 0, 0, 0, 0, 1, 0 ], 
  [ 0, 1, 0, 0, 1, 0, 0, 0, -1, -1, -1, -1, 0, 0, 0, 1 ], 
  [ 0, 0, 1, 0, 0, 1, 0, 0, 1, 0, 0, 0, 0, 0, 0, 1 ], 
  [ 1, 0, 0, 0, 0, 1, 0, 0, 0, 0, 1, 0, 1, 0, 0, 0 ], 
  [ -1, -1, -1, -1, 1, 0, 0, 0, 0, 1, 0, 0, 1, 0, 0, 0 ], 
  [ 0, 1, 0, 0, 0, 0, 0, 1, 1, 0, 0, 0, 0, 1, 0, 0 ], 
  [ 0, 0, 0, 1, -1, -1, -1, -1, 1, 0, 0, 0, 1, 0, 0, 0 ], 
  [ 1, 0, 0, 0, 0, 0, 1, 0, -1, -1, -1, -1, 0, 1, 0, 0 ], 
  [ 0, 0, 1, 0, 1, 0, 0, 0, 0, 0, 0, 1, 0, 0, 1, 0 ] ]
gap> Determinant(p2);
-15625
gap> Factors(last);
[ -5, 5, 5, 5, 5, 5 ]
gap> F^p2=Fx;
true
\end{verbatim}
\end{example}

%%%%%%%%%%%%%%%%%%%%%%%%%%%%%%%%%%%%%%%%%%%%%%%%%%%%%%%%
\bigskip
\begin{example}[Theorem \ref{th7.12}: the group $\widetilde{N}_2$ for $G=N_{3,i}$ $(1\leq i\leq 15)$]\label{ex7.14}~\vspace*{-5mm}\\
\begin{verbatim}
gap> Read("BCAlgTori.gap");
gap> N3g:=List(N3,x->MatGroupZClass(x[1],x[2],x[3],x[4])); # G=N_{3,i}
[ MatGroupZClass( 3, 3, 1, 3 ), MatGroupZClass( 3, 3, 3, 3 ), 
  MatGroupZClass( 3, 3, 3, 4 ), MatGroupZClass( 3, 4, 3, 2 ), 
  MatGroupZClass( 3, 4, 4, 2 ), MatGroupZClass( 3, 4, 6, 3 ), 
  MatGroupZClass( 3, 4, 7, 2 ), MatGroupZClass( 3, 7, 1, 2 ), 
  MatGroupZClass( 3, 7, 2, 2 ), MatGroupZClass( 3, 7, 2, 3 ), 
  MatGroupZClass( 3, 7, 3, 2 ), MatGroupZClass( 3, 7, 3, 3 ), 
  MatGroupZClass( 3, 7, 4, 2 ), MatGroupZClass( 3, 7, 5, 2 ), 
  MatGroupZClass( 3, 7, 5, 3 ) ]
gap> Length(N3g); # the number of N_{3,i}
15
gap> N3g2:=Filtered(N3g,x->Set(Factors(Order(x)))=[2]); # 2-groups in N_{3,i}
[ MatGroupZClass( 3, 3, 1, 3 ), MatGroupZClass( 3, 3, 3, 3 ), 
  MatGroupZClass( 3, 3, 3, 4 ), MatGroupZClass( 3, 4, 3, 2 ), 
  MatGroupZClass( 3, 4, 4, 2 ), MatGroupZClass( 3, 4, 6, 3 ), 
  MatGroupZClass( 3, 4, 7, 2 ) ]
gap> Length(N3g2); # the number of 2-groups in N_{3,i}
7
gap> List(N3g2,x->Position(N3,CrystCatZClass(x)));
[ 1, 2, 3, 4, 5, 6, 7 ]
gap> List(N3g,x->Position(N3,CrystCatZClass(SylowSubgroup(x,2))));
[ 1, 2, 3, 4, 5, 6, 7, 1, 2, 3, 5, 5, 6, 7, 7 ]
gap> N3g2F:=List(N3g2,x->FlabbyResolutionLowRank(x).actionF);; 
# flabby resolutions of 2-groups in N_{3,i}

gap> List([1..Length(N3g2)],x->Order(N3g2[x])=Order(N3g2F[x]));
[ true, true, true, true, true, true, true ]
gap> Collected(last); # N2~ are faithful
[ [ true, 7 ] ]
gap> List(N3g2F,x->IsCodimJacobsonEnd1(x,2)); # N2~ are indecomposable
[ true, true, true, true, true, true, true ]
gap> List(N3g2F,x->Rank(x.1)); # rank of N2~
[ 5, 9, 11, 11, 7, 7, 11 ]
\end{verbatim}
\end{example}

\bigskip
\begin{example}[Theorem \ref{th7.13}: the groups $\widetilde{N}_2$ and $\widetilde{N}_3$ for $G=N_{4,i}$ $(1\leq i\leq 152)$]\label{ex7.15}~\vspace*{-5mm}\\
\begin{verbatim}
gap> Read("BCAlgTori.gap");
gap> N4g:=List(N4,x->MatGroupZClass(x[1],x[2],x[3],x[4]));; # G=N_{4,i}
gap> Length(N4g); # the number of N_{4,i}
152
gap> N4g2:=Filtered(N4g,x->Set(Factors(Order(x)))=[2]); # 2-groups in N_{4,i}
[ MatGroupZClass( 4, 5, 1, 12 ), MatGroupZClass( 4, 5, 2, 5 ), 
  MatGroupZClass( 4, 5, 2, 8 ), MatGroupZClass( 4, 5, 2, 9 ), 
  MatGroupZClass( 4, 6, 1, 6 ), MatGroupZClass( 4, 6, 1, 11 ), 
  MatGroupZClass( 4, 6, 2, 6 ), MatGroupZClass( 4, 6, 2, 10 ), 
  MatGroupZClass( 4, 6, 2, 12 ), MatGroupZClass( 4, 6, 3, 4 ), 
  MatGroupZClass( 4, 6, 3, 7 ), MatGroupZClass( 4, 6, 3, 8 ), 
  MatGroupZClass( 4, 12, 2, 5 ), MatGroupZClass( 4, 12, 2, 6 ), 
  MatGroupZClass( 4, 12, 3, 11 ), MatGroupZClass( 4, 12, 4, 10 ), 
  MatGroupZClass( 4, 12, 4, 11 ), MatGroupZClass( 4, 12, 4, 12 ), 
  MatGroupZClass( 4, 12, 5, 8 ), MatGroupZClass( 4, 12, 5, 9 ), 
  MatGroupZClass( 4, 12, 5, 10 ), MatGroupZClass( 4, 12, 5, 11 ), 
  MatGroupZClass( 4, 13, 1, 5 ), MatGroupZClass( 4, 13, 2, 5 ), 
  MatGroupZClass( 4, 13, 3, 5 ), MatGroupZClass( 4, 13, 4, 5 ), 
  MatGroupZClass( 4, 13, 5, 4 ), MatGroupZClass( 4, 13, 5, 5 ), 
  MatGroupZClass( 4, 13, 6, 5 ), MatGroupZClass( 4, 13, 7, 9 ), 
  MatGroupZClass( 4, 13, 7, 10 ), MatGroupZClass( 4, 13, 7, 11 ), 
  MatGroupZClass( 4, 13, 8, 5 ), MatGroupZClass( 4, 13, 8, 6 ), 
  MatGroupZClass( 4, 13, 9, 4 ), MatGroupZClass( 4, 13, 9, 5 ), 
  MatGroupZClass( 4, 13, 10, 4 ), MatGroupZClass( 4, 13, 10, 5 ), 
  MatGroupZClass( 4, 18, 1, 3 ), MatGroupZClass( 4, 18, 2, 4 ), 
  MatGroupZClass( 4, 18, 2, 5 ), MatGroupZClass( 4, 18, 3, 5 ), 
  MatGroupZClass( 4, 18, 3, 6 ), MatGroupZClass( 4, 18, 3, 7 ), 
  MatGroupZClass( 4, 18, 4, 4 ), MatGroupZClass( 4, 18, 4, 5 ), 
  MatGroupZClass( 4, 18, 5, 5 ), MatGroupZClass( 4, 18, 5, 6 ), 
  MatGroupZClass( 4, 18, 5, 7 ), MatGroupZClass( 4, 19, 1, 2 ), 
  MatGroupZClass( 4, 19, 2, 2 ), MatGroupZClass( 4, 19, 3, 2 ), 
  MatGroupZClass( 4, 19, 4, 3 ), MatGroupZClass( 4, 19, 4, 4 ), 
  MatGroupZClass( 4, 19, 5, 2 ), MatGroupZClass( 4, 19, 6, 2 ), 
  MatGroupZClass( 4, 32, 1, 2 ), MatGroupZClass( 4, 32, 2, 2 ), 
  MatGroupZClass( 4, 32, 3, 2 ), MatGroupZClass( 4, 32, 4, 2 ), 
  MatGroupZClass( 4, 32, 6, 2 ), MatGroupZClass( 4, 32, 7, 2 ), 
  MatGroupZClass( 4, 32, 8, 2 ), MatGroupZClass( 4, 32, 9, 4 ), 
  MatGroupZClass( 4, 32, 9, 5 ), MatGroupZClass( 4, 32, 10, 2 ), 
  MatGroupZClass( 4, 32, 12, 2 ), MatGroupZClass( 4, 32, 13, 3 ), 
  MatGroupZClass( 4, 32, 13, 4 ), MatGroupZClass( 4, 32, 14, 3 ), 
  MatGroupZClass( 4, 32, 14, 4 ), MatGroupZClass( 4, 32, 15, 2 ), 
  MatGroupZClass( 4, 32, 17, 2 ) ]
gap> Length(N4g2); # the number of 2-groups in N_{4,i}
73
gap> List(N4g2,x->Position(N4,CrystCatZClass(x)));
[ 1, 2, 3, 4, 5, 6, 7, 8, 9, 10, 11, 12, 13, 14, 15, 16, 17, 18, 19, 20, 21, 
  22, 23, 24, 25, 26, 27, 28, 29, 30, 31, 32, 33, 34, 35, 36, 37, 38, 39, 40, 
  41, 42, 43, 44, 45, 46, 47, 48, 49, 50, 51, 52, 53, 54, 55, 56, 106, 107, 
  108, 109, 112, 113, 114, 115, 116, 117, 120, 121, 122, 123, 124, 125, 128 ]
gap> List(N4g,x->Position(N4,CrystCatZClass(SylowSubgroup(x,2))));
[ 1, 2, 3, 4, 5, 6, 7, 8, 9, 10, 11, 12, 13, 14, 15, 16, 17, 18, 19, 20, 21, 
  22, 23, 24, 25, 26, 27, 28, 29, 30, 31, 32, 33, 34, 35, 36, 37, 38, 39, 40, 
  41, 42, 43, 44, 45, 46, 47, 48, 49, 50, 51, 52, 53, 54, 55, 56, fail, fail, 
  fail, fail, fail, fail, fail, fail, fail, fail, fail, fail, 2, 4, 16, 19, 
  22, 5, 7, 9, 26, 25, 10, 12, 35, 36, 29, 31, 30, 32, 33, 34, 37, 38, fail, 
  fail, fail, fail, fail, fail, fail, fail, fail, fail, fail, fail, fail, 
  fail, fail, 106, 107, 108, 109, 106, 106, 112, 113, 114, 115, 116, 117, 
  108, 108, 120, 121, 122, 123, 124, 125, 117, 117, 128, 123, 124, 123, 124, 
  120, 120, 128, 128, 106, 106, 108, 109, 108, 106, 117, 112, 114, 108, 120, 
  117, 123, 124, 120, 128 ]
gap> for i in [1..152] do
> if last[i]<>i then Print(i,"\t",last[i],"\n"); fi; od;
57	fail
58	fail
59	fail
60	fail
61	fail
62	fail
63	fail
64	fail
65	fail
66	fail
67	fail
68	fail
69	2
70	4
71	16
72	19
73	22
74	5
75	7
76	9
77	26
78	25
79	10
80	12
81	35
82	36
83	29
84	31
85	30
86	32
87	33
88	34
89	37
90	38
91	fail
92	fail
93	fail
94	fail
95	fail
96	fail
97	fail
98	fail
99	fail
100	fail
101	fail
102	fail
103	fail
104	fail
105	fail
110	106
111	106
118	108
119	108
126	117
127	117
129	123
130	124
131	123
132	124
133	120
134	120
135	128
136	128
137	106
138	106
139	108
140	109
141	108
142	106
143	117
144	112
145	114
146	108
147	120
148	117
149	123
150	124
151	120
152	128
gap> List(N4g,x->Position(N31,CrystCatZClass(SylowSubgroup(x,2))));
[ fail, fail, fail, fail, fail, fail, fail, fail, fail, fail, fail, fail, 
  fail, fail, fail, fail, fail, fail, fail, fail, fail, fail, fail, fail, 
  fail, fail, fail, fail, fail, fail, fail, fail, fail, fail, fail, fail, 
  fail, fail, fail, fail, fail, fail, fail, fail, fail, fail, fail, fail, 
  fail, fail, fail, fail, fail, fail, fail, fail, fail, fail, fail, fail, 
  fail, fail, fail, fail, fail, fail, fail, fail, fail, fail, fail, fail, 
  fail, fail, fail, fail, fail, fail, fail, fail, fail, fail, fail, fail, 
  fail, fail, fail, fail, fail, fail, fail, fail, fail, fail, fail, fail, 
  fail, fail, fail, fail, fail, fail, fail, fail, fail, fail, fail, fail, 
  fail, fail, fail, fail, fail, fail, fail, fail, fail, fail, fail, fail, 
  fail, fail, fail, fail, fail, fail, fail, fail, fail, fail, fail, fail, 
  fail, fail, fail, fail, fail, fail, fail, fail, fail, fail, fail, fail, 
  fail, fail, fail, fail, fail, fail, fail, fail ]
gap> Collected(last);
[ [ fail, 152 ] ]

gap> N4g2F:=List(N4g2,x->FlabbyResolutionLowRank(x).actionF);;
# flabby resolutions of 2-groups in N_{4,i}

gap> List([1..Length(N4g2)],x->Order(N4g2[x])=Order(N4g2F[x]));
[ true, true, true, true, true, true, true, true, true, true, true, true, 
  true, true, true, true, true, true, true, true, true, true, true, true, 
  true, true, true, true, true, true, true, true, true, true, true, true, 
  true, true, true, true, true, true, true, true, true, true, true, true, 
  true, true, true, true, true, true, true, true, true, true, true, true, 
  true, true, true, true, true, true, true, true, true, true, true, true, 
  true ]
gap> Collected(last); # N2~ are faithful
[ [ true, 73 ] ]
gap> List(N4g2F,x->Rank(x.1)); # rank of N2~
[ 5, 12, 12, 20, 11, 9, 12, 12, 16, 20, 20, 20, 12, 12, 7, 8, 8, 8, 12, 20, 
  12, 20, 11, 12, 8, 7, 20, 20, 11, 12, 20, 16, 20, 16, 20, 20, 20, 20, 12, 
  12, 20, 20, 20, 20, 20, 20, 20, 20, 20, 20, 20, 20, 20, 20, 20, 20, 12, 20, 
  12, 20, 20, 20, 20, 20, 20, 20, 20, 20, 20, 20, 20, 20, 20 ]
gap> List(N4g2F,x->IsCodimJacobsonEnd1(x,2));
[ true, false, true, true, true, true, false, true, true, true, true, true, 
  false, true, true, false, false, true, false, true, true, true, true, 
  false, false, true, true, true, true, false, true, true, true, true, true, 
  true, true, true, true, true, true, true, true, true, true, true, true, 
  true, true, true, true, true, true, true, true, true, false, true, true, 
  true, true, true, true, true, true, true, true, true, true, true, true, 
  true, true ]
gap> Filtered([1..73],x->last[x]=false);
[ 2, 7, 13, 16, 17, 19, 24, 25, 30, 57 ]
gap> List(last,x->Position(N4,CrystCatZClass(N4g2[x])));
[ 2, 7, 13, 16, 17, 19, 24, 25, 30, 106 ]

gap> eb:=EndomorphismRingBasis(N4g2F[57]);; # basis of End(N2~)
gap> Length(eb);
20
gap> IdempotentsModp(eb,2); # idempotens of End(N2~) mod 2 with rank(N2~)=12
[ [ [ 0, 0, 0, 0, 0, 0, 0, 0, 0, 0, 0, 0 ], 
      [ 0, 0, 0, 0, 0, 0, 0, 0, 0, 0, 0, 0 ], 
      [ 0, 0, 0, 0, 0, 0, 0, 0, 0, 0, 0, 0 ], 
      [ 0, 0, 0, 0, 0, 0, 0, 0, 0, 0, 0, 0 ], 
      [ 0, 0, 0, 0, 0, 0, 0, 0, 0, 0, 0, 0 ], 
      [ 0, 0, 0, 0, 0, 0, 0, 0, 0, 0, 0, 0 ], 
      [ 0, 0, 0, 0, 0, 0, 0, 0, 0, 0, 0, 0 ], 
      [ 0, 0, 0, 0, 0, 0, 0, 0, 0, 0, 0, 0 ], 
      [ 0, 0, 0, 0, 0, 0, 0, 0, 0, 0, 0, 0 ], 
      [ 0, 0, 0, 0, 0, 0, 0, 0, 0, 0, 0, 0 ], 
      [ 0, 0, 0, 0, 0, 0, 0, 0, 0, 0, 0, 0 ], 
      [ 0, 0, 0, 0, 0, 0, 0, 0, 0, 0, 0, 0 ] ], 
  [ [ 1, 0, 0, 0, 0, 0, 0, 0, 0, 0, 0, 0 ], 
      [ 0, 1, 0, 0, 0, 0, 0, 0, 0, 0, 0, 0 ], 
      [ 0, 0, 1, 0, 0, 0, 0, 0, 0, 0, 0, 0 ], 
      [ 0, 0, 0, 1, 0, 0, 0, 0, 0, 0, 0, 0 ], 
      [ 0, 0, 0, 0, 1, 0, 0, 0, 0, 0, 0, 0 ], 
      [ 0, 0, 0, 0, 0, 1, 0, 0, 0, 0, 0, 0 ], 
      [ 0, 0, 0, 0, 0, 0, 1, 0, 0, 0, 0, 0 ], 
      [ 0, 0, 0, 0, 0, 0, 0, 1, 0, 0, 0, 0 ], 
      [ 0, 0, 0, 0, 0, 0, 0, 0, 1, 0, 0, 0 ], 
      [ 0, 0, 0, 0, 0, 0, 0, 0, 0, 1, 0, 0 ], 
      [ 0, 0, 0, 0, 0, 0, 0, 0, 0, 0, 1, 0 ], 
      [ 0, 0, 0, 0, 0, 0, 0, 0, 0, 0, 0, 1 ] ] ]
gap> Length(last); # idempotents of End(N2~) are only the zero and the identity matrices 
2

gap> N4g3:=Filtered(N4g,x->Set(Factors(Order(x)))=[3]); # 3-groups in N_{4,i}
[ MatGroupZClass( 4, 22, 1, 1 ) ]
gap> Length(N4g3); # the number of 3-groups in N_{4,i}
1
gap> List(N4g3,x->Position(N4,CrystCatZClass(x)));
[ 57 ]
gap> List(N4g,x->Position(N4,CrystCatZClass(SylowSubgroup(x,3))));
[ fail, fail, fail, fail, fail, fail, fail, fail, fail, fail, fail, fail, 
  fail, fail, fail, fail, fail, fail, fail, fail, fail, fail, fail, fail, 
  fail, fail, fail, fail, fail, fail, fail, fail, fail, fail, fail, fail, 
  fail, fail, fail, fail, fail, fail, fail, fail, fail, fail, fail, fail, 
  fail, fail, fail, fail, fail, fail, fail, fail, 57, 57, 57, 57, 57, 57, 
  57, 57, 57, 57, 57, 57, fail, fail, fail, fail, fail, fail, fail, fail, 
  fail, fail, fail, fail, fail, fail, fail, fail, fail, fail, fail, fail, 
  fail, fail, 57, 57, 57, 57, 57, 57, 57, 57, 57, 57, 57, 57, 57, 57, 57, 
  fail, fail, fail, fail, fail, fail, fail, fail, fail, fail, fail, fail, 
  fail, fail, fail, fail, fail, fail, fail, fail, fail, fail, fail, fail, 
  fail, fail, fail, fail, fail, fail, fail, fail, fail, fail, fail, fail, 
  57, fail, fail, fail, 57, fail, 57, 57, 57, 57, 57 ]
gap> Filtered([1..152],x->last[x]=57);
[ 57, 58, 59, 60, 61, 62, 63, 64, 65, 66, 67, 68, 91, 92, 93, 94, 95, 96, 
  97, 98, 99, 100, 101, 102, 103, 104, 105, 142, 146, 148, 149, 150, 151, 152 ]
gap> N4g3F:=List(N4g3,x->FlabbyResolutionLowRank(x).actionF);;
# flabby resolution of a 3-group in N_{3,i}

gap> Order(N4g3[1])=Order(N4g3F[1]); # N3~ is faithful
true
gap> List(N4g3F,x->Rank(x.1)); # rank of N3~
[ 11 ]
gap> List(N4g3F,x->IsCodimJacobsonEnd1(x,3)); # N3~ is indecomposable
[ true ]
\end{verbatim}
\end{example}

%%%%%%%%%%%%%%%%%%%%%%%%%%%%%%%%%%%%%%%%%%%%%%%%%%%%%%%%%%%%%%%%%%%%%%%%%
%\ssection{self comparison}

\section{Proofs of ${\rm (1)}\Rightarrow {\rm (2)}$ of {Theorem \ref{thmain2}} $(N_{3,i})$, {Theorem \ref{thmain4}} $(N_{4,i})$ and {Theorem \ref{thmain5}} $(I_{4,i})$: stably $k$-equivalent classes}\label{S8}

%%%%%%%%%%%%%%%%%%%%%%%%%%%%%%%%%%%%%%%%%%%%%%%%%%%%%%%%%%%%%%%
%\ssubsection{Minimal splitting fields}

We will prove the following theorems 
which also give proofs of 
${\rm (1)}\Rightarrow {\rm (2)}$ of {Theorem \ref{thmain2}}, {Theorem \ref{thmain4}} and {Theorem \ref{thmain5}. 
%except for the third condition of {\rm (2)} $M_G\simeq M_{G^\prime}$ 
%of {Theorem \ref{thmain4}} and {Theorem \ref{thmain5}. 
%We will postpone the proof of the third condition of 
%{\rm (2)} of {Theorem \ref{thmain4}} and {Theorem \ref{thmain5} 
%(for the cases when $i=j=137,139,145,147$ of $N_{4,i}$ and $N_{4,j}$ 
%and when $i=j=7$ of $I_{4,i}$ and $I_{4,j}$) to Section \ref{S9}.
Note that the third condition of {\rm (2)}, i.e. 
$T_i\times_k K$ and $T_j^\prime\times_k K$ 
are weak stably birationally $K$-equivalent 
for any $k\subset K\subset L_i$, of the theorems 
follows from {\rm (1)} $T_i\stackrel{\rm s.b.}{\approx} T_j^\prime$ 
and the second condition of {\rm (2)}, i.e. $L_i=L_j^\prime$, 
which claim that $[M_G]^{fl}=[M_{G^\prime}]^{fl}$ as $\widetilde{H}$-lattices 
and $\widetilde{H}\simeq G\simeq G^\prime$ 
where $\widetilde{H}$ is a subdirect product of $G$ and $G^\prime$ 
(and restricting it for any $H\leq \widetilde{H}$). 

%%%%%%%%%%%%%%%%%%%%%%%%%%%%%%%%%%%%%%%%%%%%%%%%%%%%%
\begin{theorem}\label{th8.1}
%Let $k$ be a field. 
Let $T_i$ and $T_j^\prime$ $(1\leq i,j\leq 15)$ 
be algebraic $k$-tori of dimension $3$ 
with the minimal splitting fields $L_i$ and $L^\prime_j$ 
and the character modules 
$\widehat{T}_i=M_G$ and $\widehat{T}^\prime_j=M_{G^\prime}$ 
which satisfy that 
$G$ and $G^\prime$ are 
$\GL(3,\bZ)$-conjugate to $N_{3,i}$ and $N_{3,j}$ respectively. 
For $1\leq i,j\leq 15$, 
if $T_i$ and $T_j^\prime$ are stably birationally $k$-equivalent, then 
$G\simeq G^\prime$, $L_i=L_j^\prime$.
\end{theorem}

\begin{theorem}\label{th8.2}
%Let $k$ be a field. 
Let $T_i$ and $T_j^\prime$ $(1\leq i,j\leq 152)$ 
be algebraic $k$-tori of dimension $4$ 
with the minimal splitting fields $L_i$ and $L^\prime_j$ 
and the character modules 
$\widehat{T}_i=M_G$ and $\widehat{T}^\prime_j=M_{G^\prime}$ 
which satisfy that 
$G$ and $G^\prime$ are 
$\GL(4,\bZ)$-conjugate to $N_{4,i}$ and $N_{4,j}$ respectively. 
For $1\leq i,j\leq 152$, 
if $T_i$ and $T_j^\prime$ are stably birationally $k$-equivalent, then 
$G\simeq G^\prime$, $L_i=L_j^\prime$.
\end{theorem}

\begin{theorem}\label{th8.3}
%Let $k$ be a field. 
Let $T_i$ and $T_j^\prime$ $(1\leq i,j\leq 7)$ 
be algebraic $k$-tori of dimension $4$ 
with the minimal splitting fields $L_i$ and $L^\prime_j$ 
and the character modules 
$\widehat{T}_i=M_G$ and $\widehat{T}^\prime_j=M_{G^\prime}$ 
which satisfy that 
$G$ and $G^\prime$ are 
$\GL(4,\bZ)$-conjugate to $I_{4,i}$ and $I_{4,j}$ respectively. 
For $1\leq i,j\leq 7$, 
if $T_i$ and $T_j^\prime$ are stably birationally $k$-equivalent, then 
$G\simeq G^\prime$, $L_i=L_j^\prime$.
\end{theorem}
\begin{remark}\label{rem8.4}
In general, $T_i\stackrel{\rm s.b.}{\approx} T_j^\prime$ 
does not imply $G\simeq G^\prime$, $L_i=L_j^\prime$ 
(see Theorem \ref{thmain3}, Table $3$  and Remark \ref{rem1.41} (2), (3)). 
\end{remark}

%%%%%%%%%%%%%%%%%%%%%%%%%%%%%%%%%%%%%%%%%%%%%%%%%%%
In order to prove Theorem \ref{th8.1} and Theorem \ref{th8.2}, 
we use the following lemma which is a similar result of Lemma \ref{lem7.2b} 
for the stably birationally $k$-equivalent $k$-tori 
via Theorem \ref{thVos70} (ii) 
(see also the condition ${\rm (ii)}^\prime$ 
in the second paragraph before Definition \ref{d1.10}). 
\begin{lemma}\label{lem8.5}
Let $T$ $($resp. $T^\prime$$)$ be an algebraic $k$-torus with 
the minimal splitting fields $L$ $($resp. $L^\prime$$)$ 
with $G={\rm Gal}(L/k)$ $($resp. $G^\prime={\rm Gal}(L^\prime/k)$$)$. 
Let $M$ $($resp. $M^\prime$$)$ be the corresponding $G$-lattice 
$($resp. $G^\prime$-lattice$)$ with $M\simeq \widehat{T}$ 
$($resp. $M^\prime\simeq \widehat{T}^\prime$$)$. 
If $T$ and $T^\prime$ are stably birationally $k$-equivalent, 
i.e. $[M]^{fl}=[M^\prime]^{fl}$ as $\widetilde{H}$-lattices 
where 
$\widetilde{L}=LL^\prime$ and 
$\widetilde{H}={\rm Gal}(\widetilde{L}/k)\leq G\times G^\prime$
is a subdirect product of $G$ and $G^\prime$ 
which acts on $M$ $($resp. $M^\prime)$ 
through the surjection 
$\varphi_1:\widetilde{H}\rightarrow G$ 
$($resp. $\varphi_2:\widetilde{H}\rightarrow G^\prime$$)$, 
then 
%%%%%%%%%%%%%%%%%%
$\widetilde{N}_p\simeq \widetilde{N}^\prime_p$ 
as $\bZ_p[\Syl_p(\widetilde{H})]$-lattices 
where $\widetilde{N}_p$ $($resp. $\widetilde{N}^\prime_p$$)$ 
is the $p$-part of $[M]^{fl}$ $($resp. $[M^\prime]^{fl}$$)$ 
as a $\bZ_p[\varphi_1(\Syl_p(\widetilde{H}))]$-lattice 
$($resp. $\bZ_p[\varphi_2(\Syl_p(\widetilde{H}))]$-lattice$)$ 
with $\bZ_p[\varphi_1(\Syl_p(\widetilde{H}))]\simeq \bZ_p[\Syl_p(G)]$ 
$($resp. 
$\bZ_p[\varphi_2(\Syl_p(\widetilde{H}))]\simeq \bZ_p[\Syl_p(G^\prime)]$$)$. 
%%%%%%%%%%%%%%%%%%%%
In particular, the indecomposability and 
the $\bZ_p$-rank of $\widetilde{N}_p$ are invariants for the 
stably birationally $k$-equivalent classes of algebraic $k$-tori. 
\end{lemma}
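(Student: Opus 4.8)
The plan is to deduce Lemma~\ref{lem8.5} directly from Lemma~\ref{lem7.2}, by recognizing that stably birational $k$-equivalence is nothing but the hypothesis of Lemma~\ref{lem7.2} applied to the concrete subdirect product $\widetilde{H}={\rm Gal}(\widetilde{L}/k)$. First I would translate the geometric hypothesis into a lattice-theoretic one: by Theorem~\ref{thVos70}~(ii), equivalently by the condition ${\rm (ii)}^\prime$ recalled before Definition~\ref{d1.10}, the assumption that $T$ and $T^\prime$ are stably birationally $k$-equivalent is the same as $[{\rm Pic}\,X_{\widetilde{L}}]=[{\rm Pic}\,X^\prime_{\widetilde{L}}]$ as $\widetilde{H}$-lattices, where $\widetilde{L}=LL^\prime$ and $\widetilde{H}={\rm Gal}(\widetilde{L}/k)$ is a subdirect product of $G$ and $G^\prime$ with the surjections $\varphi_1,\varphi_2$.

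The next step is to identify these Picard classes with the flabby classes. Applying Theorem~\ref{thVos69} with $\widetilde{L}$ as splitting field gives a flabby resolution $0\to \widehat{T}\to \widehat{Q}\to {\rm Pic}\,X_{\widetilde{L}}\to 0$ of $\widehat{T}$ as $\widetilde{H}$-lattices, where $\widehat{T}=M$ is regarded as a $\widetilde{H}$-lattice through $\varphi_1$; hence $[{\rm Pic}\,X_{\widetilde{L}}]=[M]^{fl}$ as $\widetilde{H}$-lattices, and similarly $[{\rm Pic}\,X^\prime_{\widetilde{L}}]=[M^\prime]^{fl}$ through $\varphi_2$. Combining these, the hypothesis becomes $[M]^{fl}=[M^\prime]^{fl}$ as $\widetilde{H}$-lattices, which is precisely the hypothesis of Lemma~\ref{lem7.2} for this particular $\widetilde{H}$. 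I then invoke Lemma~\ref{lem7.2} to obtain $\widetilde{N}_p\simeq \widetilde{N}^\prime_p$ as $\bZ_p[Sy_p(\widetilde{H})]$-lattices, using that a surjection carries a $p$-Sylow subgroup onto a $p$-Sylow subgroup, so that $\varphi_1(Sy_p(\widetilde{H}))=Sy_p(G)$ and $\varphi_2(Sy_p(\widetilde{H}))=Sy_p(G^\prime)$ and the relevant group rings are as claimed. For the concluding ``in particular'' assertion, the indecomposability and the $\bZ_p$-rank of $\widetilde{N}_p$ are isomorphism invariants of $\bZ_p[Sy_p(\widetilde{H})]$-lattices, and since $T\stackrel{\rm s.b.}{\approx} T^\prime$ implies $T\stackrel{\rm s.b.}{\sim} T^\prime$ by Remark~\ref{r1.11}~(1), these quantities depend only on the stably birationally $k$-equivalent class.

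The only delicate point, and the one I would write out carefully, is the identification $[{\rm Pic}\,X_{\widetilde{L}}]=[M]^{fl}$ as $\widetilde{H}$-lattices: one must check that inflating a flabby resolution of $M$ from $G$ along $\varphi_1$ keeps the middle permutation term permutation (each $\bZ[G/H_i]$ inflates to $\bZ[\widetilde{H}/\varphi_1^{-1}(H_i)]$) and that the flabby class computed over $\widetilde{L}$ agrees with this inflated one, so that $[M]^{fl}$ as a $\widetilde{H}$-lattice is well defined and matches ${\rm Pic}\,X_{\widetilde{L}}$. This is, however, already encoded in the condition ${\rm (ii)}^\prime$ and in the $\mathcal{G}$-to-$G$ reduction recalled before Definition~\ref{d1.10}, so once Lemma~\ref{lem7.2} is taken as established, the present lemma is a direct corollary; I expect no genuinely new obstacle beyond the bookkeeping of which subdirect product is in play.
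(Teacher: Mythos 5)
Your proposal is correct and matches the paper's (implicit) argument: the paper likewise reduces the geometric hypothesis to $[M]^{fl}=[M^\prime]^{fl}$ as $\widetilde{H}$-lattices via Theorem~\ref{thVos70}~(ii) and condition ${\rm (ii)}^\prime$, and then concludes exactly as in Lemma~\ref{lem7.2} using the definition of the $p$-part and Krull--Schmidt--Azumaya over $\bZ_p$. Your extra care about inflating the permutation middle term along $\varphi_1$ is a reasonable bit of bookkeeping but introduces nothing beyond what the paper already encodes.
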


We made the following GAP \cite{GAP} algorithms 
in order to prove 
Theorem \ref{th8.1}, Theorem \ref{th8.2} and Theorem \ref{th8.3}. 
They are available as in \cite{BCAlgTori}.\\
%from 
%\url{https://www.math.kyoto-u.ac.jp/~yamasaki/Algorithm/BCAlgTori/}.\\

\noindent 
{\tt ConjugacyClassesSubgroups2WSEC(}$G${\tt )} 
returns the records 
{\tt ConjugacyClassesSubgroups2} and 
{\tt WSEC}
where\\ 
{\tt ConjugacyClassesSubgroups2} 
is the list $[g_1,\ldots,g_m]$ 
of conjugacy classes of subgroups of $G\leq \GL(n,\bZ)$ $(n=3,4)$ 
with the fixed ordering via the function 
{\tt ConjugacyClassesSubgroups2(}$G${\tt )} 
(see \cite[Section 4.1]{HY17}) and 
{\tt WSEC} 
is the list $[w_1,\ldots,w_m]$ where 
$g_i$ is in the $w_i$-th weak stably $k$-equivalent class 
${\rm WSEC}_{w_i}$ in dimension $n$.\\

\noindent
{\tt MaximalInvariantNormalSubgroup(}$G$,
{\tt ConjugacyClassesSubgroups2WSEC(}$G${\tt ))}
returns the maximal normal subgroup $N$ of $G$
which satisfy that $\pi(H_1)=\pi(H_2)$ implies $\psi(H_1)=\psi(H_2)$ 
for any $H_1,H_2\leq G$ 
where $\pi:G\rightarrow G/N$ is the natural homomorphism, 
$\psi : H_i\mapsto w_i$,  
and $H_i$ is in the $w_i$-th weak stably $k$-equivalent class 
${\rm WSEC}_{w_i}$ in dimension $n$. 
If this returns {\tt Group([}\ \ {\tt])}, then $N=1$ 
where 
$\widetilde{H}\leq G\times G$ is a 
subdirect product of $G$ and $G$ with 
surjections $\varphi_1:\widetilde{H}\rightarrow G$, 
$\varphi_2:\widetilde{H}\rightarrow G$ and 
%$N=\varphi_1({\rm Ker}(\varphi_2))\lhd G$. \\
$N_1=\varphi_1({\rm Ker}(\varphi_2))\lhd N\lhd G$.\\

We use the following algorithm 
which is a variant of the computations as in Section \ref{S5} 
with the weak stably $k$-equivalent classes ${\rm WSEC}_r$ 
instead of the torus invariants $TI_G$.\\

\noindent 
{\tt PossibilityOfStablyEquivalentSubdirectProducts(}$G,G^\prime,$\\
\qquad {\tt ConjugacyClassesSubgroups2WSEC(}$G${\tt )},\\
\qquad {\tt ConjugacyClassesSubgroups2WSEC(}$G^\prime${\tt )},{\tt ["WSEC"])}\\
returns the list $l$ of the subdirect products 
$\widetilde{H}\leq G\times G^\prime$ of $G$ and $G^\prime$ 
up to $(\GL(n_1,\bZ)\times \GL(n_2,\bZ))$-conjugacy  
which satisfy $w_1=w_2$ 
for any $H\leq \widetilde{H}$ 
where $\varphi_i(H)$ is in the $w_i$-th weak stably $k$-equivalent class 
${\rm WSEC}_{w_i}$ in dimension $n$ $(n=3,4)$
and 
$\widetilde{H}\leq G\times G^\prime$ 
is a subdirect product of $G$ and $G^\prime$ 
which acts on $M_G$ and $M_{G^\prime}$ 
through the surjections $\varphi_1: \widetilde{H} \rightarrow G$ 
and $\varphi_2: \widetilde{H} \rightarrow G^\prime$ respectively 
% as in Theorem \ref{th5.1} 
(indeed, this function computes it for $H$ up to conjugacy 
for the sake of saving time).\\

%%%%%%%%%%%%%%%%%%%%%%%%%%%%%%%%%%%%%%
{\it Proof of Theorem \ref{th8.1}.} 

Let $T_i$ and $T_j^\prime$ $(1\leq i,j\leq 15)$ 
be algebraic $k$-tori of dimension $3$ with the minimal splitting fields 
$L_i$ and $L_j^\prime$ 
with Galois groups 
$G={\rm Gal}(L_i/k)$ and $G^\prime={\rm Gal}(L_j^\prime/k)$ respectively. 
Assume that 
$\widehat{T}_i\simeq M_G$ and $\widehat{T}^\prime_j\simeq M_{G^\prime}$
and $G$ and $G^\prime$ are 
$\GL(3,\bZ)$-conjugate to $N_{3,i}$ and $N_{3,j}$ respectively 
where $N_{3,i}$ and $N_{3,j}$ are groups as in Definition \ref{defN3N4} 
and $M_G$ and $M_{G^\prime} $ are the corresponding $G$-lattice 
and $G^\prime$-lattice as in Definition \ref{d2.2}. 

It follows from the assumption $T_i\stackrel{\rm s.b.}{\approx} T_j^\prime$ 
and Theorem \ref{thVos70} (ii) that %if 
%$T$ and $T^\prime$ are stably birationally $k$-equivalent, then 
$[M_G]^{fl}=[M_{G^\prime}]^{fl}$ as $\widetilde{H}$-lattices where 
$\widetilde{L}=L_iL_j^\prime$ 
and $\widetilde{H}={\rm Gal}(\widetilde{L}/k)$. 
The group $\widetilde{H}\leq G\times G^\prime$ becomes a 
subdirect product of $G={\rm Gal}(L_i/k)$ and 
$G^\prime={\rm Gal}(L^\prime_j/k)$ with 
surjections $\varphi_1:\widetilde{H}\rightarrow G$ and 
$\varphi_2:\widetilde{H}\rightarrow G^\prime$. 

We should show that 
$\varphi_1:\widetilde{H}\xrightarrow{\sim} G$ and 
$\varphi_2:\widetilde{H}\xrightarrow{\sim} G^\prime$ 
are isomorphisms, i.e. 
$\widetilde{L}=L_i=L_j^\prime$.\\

(1) The case where $i=j$. 
We have $\widetilde{H}\simeq G\simeq G^\prime$, 
$\widetilde{L}=L_i=L_j^\prime$.\\

Step 1. 
We take a $p$-Sylow subgroup $\Syl_p(\widetilde{H})$ of $\widetilde{H}$. 
Then $\varphi_1(\Syl_p(\widetilde{H}))$ 
(resp. $\varphi_2(\Syl_p(\widetilde{H}))$) 
becomes a $p$-Sylow subgroup of $G$ (resp. $G^\prime$). 

We define 
$N_1:=\varphi_1({\rm Ker}(\varphi_2))\lhd G$, 
$N_2:=\varphi_2({\rm Ker}(\varphi_1))\lhd G^\prime$, 
$\pi:G\rightarrow G/N_1$, 
$\pi^\prime:G^\prime\rightarrow G^\prime/N_2$. 
Then we have 
$\overline{\varphi}_1=\pi\varphi_1: \widetilde{H}\rightarrow G/N_1$, 
$\overline{\varphi}_2=\pi^\prime\varphi_2: 
\widetilde{H}\rightarrow G^\prime/N_2$, 
and $(\overline{\varphi}_2)(\overline{\varphi}_1)^{-1}:
G/N_1\xrightarrow{\sim}G^\prime/N_2$. 

%%%%%
%Then we have 
%$\overline{\varphi}_1=\pi_1\circ\varphi_1: \widetilde{H}\rightarrow G/N_1$, 
%$\overline{\varphi}_2=\pi_2\circ\varphi_2: \widetilde{H}\rightarrow G^\prime/N_2$, 
%and $\overline{\varphi}=(\overline{\varphi}_2)(\overline{\varphi}_1)^{-1}:
%G/N_1\xrightarrow{\sim}G^\prime/N_2$. 
%%
Conversely, 
we find that a subdirect product 
$\widetilde{H}\leq G\times G^\prime$ of $G$ and $G^\prime$ 
with surjections $\varphi_1: \widetilde{H}\to G$, $\varphi_2: \widetilde{H}\to G^\prime$ 
is given by 
\begin{align*}
\widetilde{H}=\{(g_1,g_2)\in G\times G^\prime\mid \overline{\varphi}(\pi_1(g_1))=\pi_2(g_2), 
\overline{\varphi}=(\overline{\varphi}_2)(\overline{\varphi}_1)^{-1}:
G/N_1\xrightarrow{\sim}G^\prime/N_2\}
\end{align*}
and hence there exists a one-to-one correspondence 
between the set of all subdirect products $\widetilde{H}$ of $G$, $G^\prime$ 
with surjections $\varphi_1: \widetilde{H}\to G$, $\varphi_2: \widetilde{H}\to G^\prime$ and 
\begin{align*}
\{(N_1,N_2,\overline{\varphi})\mid N_1\lhd G, N_2\lhd G^\prime, 
\overline{\varphi}=(\overline{\varphi}_2)(\overline{\varphi}_1)^{-1}:
G/N_1\xrightarrow{\sim}G^\prime/N_2\}. 
\end{align*}
%%%%%%%%%%%%%%%%%%%%%

It follows from the definitions of $N_1$ and $\pi$ that 
for $H_1,H_2\leq G$ 
$\pi(H_1)=\pi(H_2)$ 
if and only if 
$\varphi_2\varphi_1^{-1}(H_1)=\varphi_2\varphi_1^{-1}(H_2)$. 
Because $[M_G]^{fl}=[M_{G^\prime}]^{fl}$ as $\widetilde{H}$-lattices 
and $\varphi_1^{-1}(H_m)\leq \widetilde{H}$ $(m=1,2)$ 
is a subdirect product of 
$H_m\leq G$ and $\varphi_2\varphi_1^{-1}(H_m)\leq G^\prime$, 
we have $[M_{H_m}]^{fl}=[M_{\varphi_2\varphi_1^{-1}(H_m)}]^{fl}$ 
as $\varphi_1^{-1}(H_m)$-lattices by restricting the action of 
$\widetilde{H}$ to $\varphi_1^{-1}(H_m)$. 
Hence $[M_{H_m}]^{fl}\sim [M_{\varphi_2\varphi_1^{-1}(H_m)}]^{fl}$ $(m=1,2)$. 
Then for any $H_1,H_2\leq G$, 
$\pi(H_1)=\pi(H_2)$, i.e. 
$\varphi_2\varphi_1^{-1}(H_1)=\varphi_2\varphi_1^{-1}(H_2)$, 
implies $[M_{H_1}]^{fl}\sim [M_{H_2}]^{fl}$. 

In summary, for any $H_1,H_2\leq G$, if $\pi(H_1)=\pi(H_2)$, then $w_1=w_2$ 
where $[M_{H_m}]^{fl}$ is in the $w_m$-th weak stably $k$-equivalent class 
${\rm WSEC}_{w_m}$ in dimension $3$ $(m=1,2)$. 

We see that there exists the maximal $N\lhd G$ with $N_1\lhd N$ such that 
$\pi(H_1)=\pi(H_2)$, i.e. $H_1N=H_2N$,  
implies $[M_{H_1}]^{fl}\sim [M_{H_2}]^{fl}$ 
where $\pi: G\to G/N$ because the following reason: 
If (i) $H_1N=H_2N$ implies $[M_{H_1}]^{fl}\sim [M_{H_2}]^{fl}$ and 
(ii) $H_1N^\prime=H_2N^\prime$ implies $[M_{H_1}]^{fl}\sim [M_{H_2}]^{fl}$, 
then (iii) $H_1NN^\prime=H_2NN^\prime$ implies $[M_{H_1}]^{fl}\sim [M_{H_2}]^{fl}$, 
%where $\pi: G\to G/N_1$, $\pi^\prime: G\to G/N_2$ 
%and $\pi^{\prime\prime}: G\to G/N_1N_2$ 
because  
by (i), 
$[M_{H_1N}]^{fl}\sim [M_{H_1}]^{fl}$, 
$[M_{H_2N}]^{fl}\sim [M_{H_2}]^{fl}$, 
and by (ii), 
$[M_{H_1NN^\prime}]^{fl}\sim [M_{H_1N}]^{fl}$, 
$[M_{H_2NN^\prime}]^{fl}\sim [M_{H_2N}]^{fl}$, 
then we have that 
$[M_{H_1NN^\prime}]^{fl}\sim [M_{H_2NN^\prime}]^{fl}$ implies %$w_1=w_2$. 
$[M_{H_1}]^{fl}\sim [M_{H_2}]^{fl}$. 

By applying the function 
{\tt MaximalInvariantNormalSubgroup(}$G$,
{\tt ConjugacyClassesSubgroups2WSEC(}$G${\tt ))}, 
we can obtain the maximal subgroup $N\lhd G$ as above with 
$N_1\lhd N=1$ (resp. $N_1\lhd N\simeq C_2$) 
if $G=N_{3,i}$ $(i\neq 4)$ (resp. $G=N_{3,4}$). 
Hence we conclude that $\widetilde{H}\simeq G\simeq G^\prime$, 
$L_i=L^\prime_i$ except for the one case $i=4$ 
(see Example \ref{ex8.7} for GAP computations).\\ 

Step 2. 
We treat the exceptional case $G=N_{3,4}\simeq G^\prime\simeq C_4\times C_2$. 
By Step 1, we have $N_1\leq N\simeq C_2$. 
Suppose that $N_1\simeq C_2$. 
Then $N_2\simeq N_1\simeq C_2$. 
Hence $\widetilde{H}$ is of order $16$ because 
$|\widetilde{H}|=|G|\times |N_2|=|N_1|\times |G^\prime|$. 
We take the $2$-part $\widetilde{N}_2$ of 
$[M_{\varphi_1(\Syl_2(\widetilde{H}))}]^{fl}$ 
$($resp. $\widetilde{N}^\prime_2$ of 
$[M_{\varphi_2(\Syl_2(\widetilde{H}))}]^{fl}$$)$ 
as a $\bZ_2[\varphi_1(\Syl_2(\widetilde{H}))]$-lattice 
$($resp. $\bZ_2[\varphi_2(\Syl_2(\widetilde{H}))]$-lattice$)$ 
with $\bZ_2[\varphi_1(\Syl_2(\widetilde{H}))]\simeq \bZ_2[\Syl_2(G)]$ 
$($resp. 
$\bZ_2[\varphi_2(\Syl_2(\widetilde{H}))]\simeq \bZ_2[\Syl_2(G^\prime)]$$)$ 
as in Definition \ref{def7.1}. 
By the definition, 
we see that 
${\rm Ker}(\varphi_2)\leq \widetilde{H}$ acts on 
$\widetilde{N}^\prime_2$ trivially. 
It follows from Lemma \ref{lem8.5} that 
${\rm Ker}(\varphi_2)\leq \widetilde{H}$ acts also on 
$\widetilde{N}_2$ trivially. 
Similarly, ${\rm Ker}(\varphi_1)\leq \widetilde{H}$ 
acts on both $\widetilde{N}_2$ and $\widetilde{N}^\prime_2$ trivially. 
Note that ${\rm Ker}(\varphi_2)\cap {\rm Ker}(\varphi_1)=1$. 
Hence $C_2\times C_2\leq 
{\rm Ker}(\varphi_2){\rm Ker}(\varphi_1)\leq\widetilde{H}$ 
acts on $\widetilde{N}_2$ trivially. 
However, by Theorem \ref{th7.12} (Table $14$), 
we know that $\widetilde{N}_2$ is an indecomposable 
faithful $\bZ_2[\Syl_2(G)]$-lattice with $\bZ_2$-rank $11$. 
This yields a contradiction. 
We conclude that $N_1=1$ and hence $L_4=L^\prime_4$.\\

(2) The case where $i\neq j$. 
We may assume that $(i,j)=(5,6), (11,13)\in J_{3,3}$ and 
$G\simeq G^\prime$ by Theorem \ref{thmain1} 
(see also Proposition \ref{prop5.2} and Theorem \ref{th6.1}).\\

It follows from Theorem \ref{th6.1} that there exists a subdirect product 
$G_{i,j}\leq G\times G^\prime$ with $G_{i,j}\simeq G\simeq G^\prime$ 
such that $[M_G]^{fl}=[M_{G^\prime}]^{fl}$ as $G_{i,j}$-lattices 
where $G_{i,j}$ acts on $M_G$ and $M_{G^\prime}$ 
through the isomorphisms $\lambda_1: G_{i,j} \xrightarrow{\sim} G$ 
and $\lambda_2: G_{i,j} \xrightarrow{\sim} G^\prime$ respectively. 
Then we obtain that 
$[M_G]^{fl}=[M_{G^\prime}]^{fl}$ as $G$-lattices 
where $G$ acts on $M_{G^\prime}$ 
through $G^\prime=\lambda_2\lambda_1^{-1}(G)$. 

We can take a subdirect product 
$\widetilde{H}_{i,j}:=\{(h,x)\in \widetilde{H}\times G_{i,j}\mid \varphi_2(h)=\lambda_2(x)\}\leq \widetilde{H}\times G_{i,j}$ 
which satisfies 
$[M_G]^{fl}=[M_{G^\prime}]^{fl}$ as $\widetilde{H}_{i,j}$-lattices 
with surjections 
$\theta_1: \widetilde{H}_{i,j}\rightarrow \widetilde{H}, (h,x)\mapsto h$ and 
$\theta_2: \widetilde{H}_{i,j}\rightarrow G_{i,j}, (h,x)\mapsto x$, 
and the condition 
$\varphi_2\theta_1=\lambda_2\theta_2: 
\widetilde{H}_{i,j}\rightarrow G^\prime$ 
where $\widetilde{H}_{i,j}$ 
acts on $M_G$ in two ways 
through the surjections 
$\varphi_1\theta_1: \widetilde{H}_{i,j}\rightarrow G$
and 
$\lambda_1\theta_2: \widetilde{H}_{i,j}\rightarrow G$, 
and on $M_{G^\prime}$ 
through the surjection 
$\varphi_2\theta_1=\lambda_2\theta_2: 
\widetilde{H}_{i,j}\rightarrow G^\prime$. 

Then we consider a subdirect product 
$\widetilde{G}:=\{(\varphi_1\theta_1(x),\lambda_1\theta_2(x))\in G\times G
\mid x\in \widetilde{H}_{i,j}\}\leq \varphi_1\theta_1(\widetilde{H}_{i,j})
\times 
\lambda_1\theta_2(\widetilde{H}_{i,j})$ of 
$\varphi_1\theta_1(\widetilde{H}_{i,j})=G$ 
and $\lambda_1\theta_2(\widetilde{H}_{i,j})=G$ 
with surjections 
$\mu_1: \widetilde{G}\rightarrow \varphi_1\theta_1(\widetilde{H}_{i,j})$ 
and 
$\mu_2: \widetilde{G}\rightarrow \lambda_1\theta_2(\widetilde{H}_{i,j})$ 
which satisfies 
$[M_{\varphi_1\theta_1(\widetilde{H}_{i,j})}]^{fl}
=[M_{\lambda_1\theta_2(\widetilde{H}_{i,j})}]^{fl}$ 
as $\widetilde{G}$-lattices 
where $\widetilde{G}$ acts on 
$M_{\varphi_1\theta_1(\widetilde{H}_{i,j})}$ 
and 
$M_{\lambda_1\theta_2(\widetilde{H}_{i,j})}$ 
through the surjections $\mu_1$ and $\mu_2$. 

Because $\varphi_1\theta_1(\widetilde{H}_{i,j})=G$ 
and $\lambda_1\theta_2(\widetilde{H}_{i,j})=G$, 
it follows from (1) the case where $i=j$ above that 
$\mu_1: \widetilde{G}\xrightarrow{\sim} \varphi_1\theta_1(\widetilde{H}_{i,j})=G$ 
and 
$\mu_2: \widetilde{G}\xrightarrow{\sim} \lambda_1\theta_2(\widetilde{H}_{i,j})=G$ are isomorphisms. 
Hence 
${\rm Ker}(\varphi_1\theta_1)
={\rm Ker}(\lambda_1\theta_2)$. 
Because $\lambda_1$ and $\lambda_2$ are isomorphisms, 
${\rm Ker}(\lambda_1\theta_2)=
{\rm Ker}(\lambda_2\theta_2)=
{\rm Ker}(\varphi_2\theta_1)$ 
and hence ${\rm Ker}(\varphi_1\theta_1)=
{\rm Ker}(\varphi_2\theta_1)$. 
Because $\theta_1$ is surjective, 
we have 
${\rm Ker}(\varphi_1)={\rm Ker}(\varphi_2)$. 
This implies that 
$\varphi_1:\widetilde{H}\xrightarrow{\sim} G$ and 
$\varphi_2:\widetilde{H}\xrightarrow{\sim} G^\prime$ 
are isomorphisms. 
This implies that $\widetilde{L}=L_i=L_j^\prime$. 
\qed\\ 

%%%%%%%%%%%%%%%%%%%%%%%%%%%%%%%%%%%%%%%%%%%%%%%%%%%%%%%%%%%%%%%%%%%
{\it Proof of Theorem \ref{th8.2}.} 

We can prove the theorem by the same way as 
in the proof of Theorem \ref{th8.1}. 

Let $T_i$ and $T_j^\prime$ $(1\leq i,j\leq 152)$ 
be algebraic $k$-tori of dimension $4$ with the minimal splitting fields 
$L_i$ and $L_j^\prime$ 
with Galois groups 
$G={\rm Gal}(L_i/k)$ and $G^\prime={\rm Gal}(L_j^\prime/k)$ respectively. 
Assume that 
$\widehat{T}_i\simeq M_G$ and $\widehat{T}^\prime_j\simeq M_{G^\prime}$
and $G$ and $G^\prime$ are 
$\GL(4,\bZ)$-conjugate to $N_{4,i}$ and $N_{4,j}$ respectively 
where $N_{4,i}$ and $N_{4,j}$ are groups as in Definition \ref{defN3N4} 
and $M_G$ and $M_{G^\prime} $ are the corresponding $G$-lattice 
and $G^\prime$-lattice as in Definition \ref{d2.2}. 

It follows from the assumption $T_i\stackrel{\rm s.b.}{\approx} T_j^\prime$ 
and Theorem \ref{thVos70} (ii) that 
$[M_G]^{fl}=[M_{G^\prime}]^{fl}$ as $\widetilde{H}$-lattices where 
$\widetilde{L}=L_iL_j^\prime$ 
and $\widetilde{H}={\rm Gal}(\widetilde{L}/k)$. 
The group $\widetilde{H}\leq G\times G^\prime$ becomes a 
subdirect product of $G={\rm Gal}(L_i/k)$ and 
$G^\prime={\rm Gal}(L^\prime_j/k)$ with 
surjections $\varphi_1:\widetilde{H}\rightarrow G$ and 
$\varphi_2:\widetilde{H}\rightarrow G^\prime$. 

We should show that 
$\varphi_1:\widetilde{H}\xrightarrow{\sim} G$ and 
$\varphi_2:\widetilde{H}\xrightarrow{\sim} G^\prime$ 
are isomorphisms, i.e. $\widetilde{H}\simeq G\simeq G^\prime$, 
$\widetilde{L}=L_i=L_j^\prime$.\\

(1) The case where $i=j$. 
We have $\widetilde{H}\simeq G\simeq G^\prime$, 
$\widetilde{L}=L_i=L_j^\prime$.\\

Step 1. We can apply the GAP function\\ 
{\tt MaximalInvariantNormalSubgroup(}$G$,
{\tt ConjugacyClassesSubgroups2WSEC(}$G${\tt ))} 
as the same as in 
(1) Step 1 of the proof of Theorem \ref{th8.1}. 
Then we can obtain the maximal subgroup $N\lhd G$ with $N_1\lhd N=1$ 
(resp. $N_1\lhd N\simeq C_2$) 
if $G=N_{4,i}$ $(i\not\in I)$ (resp. $G=N_{4,i}$ $(i\in I)$) where 
$I=\{13, 14, 23, 24, 39, 106, 107, 108, 109, 110, 111, 118, 119, 137$, $138$, 
$139, 140, 141\}$. 
Hence we conclude that $\widetilde{H}\simeq G\simeq G^\prime$, 
$L_i=L^\prime_i$ except for the case $i\in I$ 
(see Example \ref{ex8.8}).\\

Step 2. We should treat the exceptional cases $G=N_{4,i}$ $(i\in I)$. 
By Step 1, we have $N_1\leq N\simeq C_2$ for each cases. 
Suppose that $N_1\simeq C_2$. 
Then $N_2\simeq N_1\simeq C_2$. 
This yields a contradiction via Lemma \ref{lem8.5} 
as the same as in Step 2 of the proof of Theorem \ref{th8.1} because 
by Theorem \ref{th7.13} (Table $15$), we obtain that 
$\widetilde{N}_2$ is an indecomposable faithful 
$\bZ_2[\Syl_2(G)]$-lattice with $\bZ_2$-rank 
$11$, $12$, $11$, $11$, $12$, $12$, $20$, $12$, $20$, $12$, $12$, 
$12$, $12$, $12$, $12$, $12$, $20$, $12$ 
for $G=N_{4,i}$ $(i\in I$) respectively. 
We conclude that $N_1=1$ and hence $L_i=L^\prime_i$ for $i\in I$.\\

(2) The case where $i\neq j$. 
We may assume that $(i,j)\in J_{4,4}$ and 
$G\simeq G^\prime$ by Theorem \ref{thmain3} 
(see also Proposition \ref{prop5.4} and Theorem \ref{th6.3}). 
We can prove this case by the same way as 
in the proof of Theorem \ref{th8.1} (2).\qed\\ 

%%%%%%%%%%%%%%%%%%%%%%%%%%%%%%%%%%%%%%%%%%%%%%%%%%%%%%%%%%%%%%%%%%%
{\it Proof of Theorem \ref{th8.3}.} 

We can prove the theorem by the same way as 
in the proof of Theorem \ref{th8.1} via 
$\widetilde{H}\simeq G\simeq G^\prime$, 
$\widetilde{L}=L_i=L_j^\prime$. 
By Theorem \ref{thmain3}, we have $i=j$.\\ 

Step 1. We can apply the GAP function\\ 
{\tt MaximalInvariantNormalSubgroup(}$G$,
{\tt ConjugacyClassesSubgroups2WSEC(}$G${\tt ))} 
as the same as in 
(1) Step 1 of the proof of Theorem \ref{th8.1}. 
Then we can obtain the maximal subgroup $N\lhd G$ with $N_1\lhd N=1$ 
(resp. $N_1\lhd N\simeq C_4$)  
if $G=I_{4,i}$ $(i\neq 7)$ (resp. $G=N_{4,7}$). 
Hence we conclude that $\widetilde{H}\simeq G\simeq G^\prime$, 
$L_i=L^\prime_i$ except for the case $i=7$ 
(see Example \ref{ex8.9}).\\

Step 2. We should treat the exceptional case 
$G=I_{4,7}\simeq C_3\rtimes C_8$. 
By Step 1, we have $N_1\leq N\simeq C_4$. 
We can not use the same method as in Step 2 of 
the proof of Theorem \ref{th8.1} (Theorem \ref{th8.2}) 
because $\widetilde{N}_2=0$. 

By applying the function 
{\tt PossibilityOfStablyEquivalentSubdirectProducts(}$G,G^\prime,$\\
\qquad {\tt ConjugacyClassesSubgroups2WSEC(}$G${\tt )},\\
\qquad {\tt ConjugacyClassesSubgroups2WSEC(}$G^\prime${\tt ))},{\tt ["WSEC"])},\\
we see that 
there exist $4$ possibilities of $[M_G]^{fl}=[M_{G^\prime}]^{fl}$ as 
$\widetilde{H}$-lattices where 
$\widetilde{H}={\rm Gal}(\widetilde{L}_7/k)$ 
and $\widetilde{L}_7=L_7L_7^\prime$ 
with $|\widetilde{H}|= 96, 48, 24, 24$. 
For the first two $\widetilde{H}$ with $|\widetilde{H}|= 96, 48$, 
by applying the function\\
{\tt PossibilityOfStablyEquivalentFSubdirectProduct($\widetilde{H}$)} 
as in Section \ref{S6}, 
we may confirm that 
$[M_G]^{fl}\neq [M_{G^\prime}]^{fl}$ as $\widetilde{H}$-lattices. 
Hence we conclude that 
$|\widetilde{H}|=[\widetilde{L}_7:k]=24=|G|=|G^\prime|$, 
$N_1\simeq N_2=1$ and $L_7=L_7^\prime$ (see Example \ref{ex8.9} for GAP computations). \qed

\bigskip
\begin{example}[Step 1 in the proof of Theorem \ref{th8.1}]\label{ex8.7}~\vspace*{-5mm}\\
\begin{verbatim}
gap> Read("BCAlgTori.gap");
gap> N3g:=List(N3,x->MatGroupZClass(x[1],x[2],x[3],x[4]));;
gap> Length(N3g);
15
gap> N3wsec:=List(N3g,ConjugacyClassesSubgroups2WSEC);;
gap> N3m:=List([1..15],x->MaximalInvariantNormalSubgroup(N3g[x],N3wsec[x]));;
gap> List(N3m,Order); # |N_1|=1 or 2
[ 1, 1, 1, 2, 1, 1, 1, 1, 1, 1, 1, 1, 1, 1, 1 ]
\end{verbatim}
\end{example}

\bigskip
\begin{example}[Step 1 in the proof of Theorem \ref{th8.2}]\label{ex8.8}~\vspace*{-5mm}\\
\begin{verbatim}
gap> Read("BCAlgTori.gap");
gap> N4g:=List(N4,x->MatGroupZClass(x[1],x[2],x[3],x[4]));;
gap> Length(N4g);
152
gap> N4wsec:=List(N4g,ConjugacyClassesSubgroups2WSEC);;
gap> N4m:=List([1..152],x->MaximalInvariantNormalSubgroup(N4g[x],N4wsec[x]));;
gap> List(N4m,Order); # |N_1|=1 or 2
[ 1, 1, 1, 1, 1, 1, 1, 1, 1, 1, 1, 1, 2, 2, 1, 1, 1, 1, 1, 1, 1, 1, 2, 2, 1, 
  1, 1, 1, 1, 1, 1, 1, 1, 1, 1, 1, 1, 1, 2, 1, 1, 1, 1, 1, 1, 1, 1, 1, 1, 1, 
  1, 1, 1, 1, 1, 1, 1, 1, 1, 1, 1, 1, 1, 1, 1, 1, 1, 1, 1, 1, 1, 1, 1, 1, 1, 
  1, 1, 1, 1, 1, 1, 1, 1, 1, 1, 1, 1, 1, 1, 1, 1, 1, 1, 1, 1, 1, 1, 1, 1, 1, 
  1, 1, 1, 1, 1, 2, 2, 2, 2, 2, 2, 1, 1, 1, 1, 1, 1, 2, 2, 1, 1, 1, 1, 1, 1, 
  1, 1, 1, 1, 1, 1, 1, 1, 1, 1, 1, 2, 2, 2, 2, 2, 1, 1, 1, 1, 1, 1, 1, 1, 1, 
  1, 1 ]
gap> m2:=Filtered([1..152],x->Order(N4m[x])>1);
[ 13, 14, 23, 24, 39, 106, 107, 108, 109, 110, 111, 118, 119, 137, 138, 139, 
  140, 141 ]
gap> List(m2,x->Order(N4m[x]));
[ 2, 2, 2, 2, 2, 2, 2, 2, 2, 2, 2, 2, 2, 2, 2, 2, 2, 2 ]
\end{verbatim}
\end{example}

\bigskip
\begin{example}[Step 1 and Step 2 in the proof of Theorem \ref{th8.3}]\label{ex8.9}~\vspace*{-5mm}\\
\begin{verbatim}
gap> Read("BCAlgTori.gap");
gap> I4g:=List(I4,x->MatGroupZClass(x[1],x[2],x[3],x[4]));;
gap> Length(I4g);
7
gap> I4wsec:=List(I4g,ConjugacyClassesSubgroups2WSEC);;
gap> I4m:=List([1..7],x->MaximalInvariantNormalSubgroup(I4g[x],I4wsec[x]));;
gap> List(I4m,Order); # |N_1|=1 or 2 or 4
[ 1, 1, 1, 1, 1, 1, 4 ]

gap> Order(I4g[7]);
24
gap> IdSmallGroup(I4g[7]);
[ 24, 1 ]
gap> IdSmallGroup(I4m[7]);
[ 4, 1 ]
gap> StructureDescription(SmallGroup(24,1));
"C3 : C8"
gap> StructureDescription(SmallGroup(4,1));
"C4"
gap> pos:=PossibilityOfStablyEquivalentSubdirectProducts(I4g[7],I4g[7],I4wsec[7],I4wsec[7]);
# there exists 4 possibilities 
[ <pc group of size 96 with 6 generators>, 
  <pc group of size 48 with 5 generators>, 
  <pc group of size 24 with 4 generators>, 
  <pc group of size 24 with 4 generators> ]
gap> PossibilityOfStablyEquivalentFSubdirectProduct(pos[1]);
[ [ 0, 0, 0, 0, 0, 1, 0, 0, -1, 0, 0, 0, 0, 0, 0, 1, 0, -1, 0, 0, 0, 0, 0, 1, 
    0, 0, -1, 0, 0, 0, 0, 0, -1, 0, 0, 0, 0, 0, 1, 0, 0, 0, 0, 0, -2 ], 
  [ 0, 0, 0, 0, 0, 0, 0, 0, 0, 0, 1, 0, 0, 0, 0, 0, 0, 0, 0, -1, -1, 0, -1, 
    0, 0, 0, 0, 0, 0, 1, -1, -1, 0, 0, 1, 1, 0, 0, 0, 2, -1, 1, 1, -2, 0 ], 
  [ 0, 0, 0, 0, 0, 0, 0, 0, 0, 0, 0, 1, 0, 0, 0, 0, -1, 0, 0, 0, 0, -1, 0, 0, 
    0, 0, 0, 0, -1, -1, 1, -1, 0, 0, 0, 0, 1, 1, 0, 2, 1, -1, 1, -2, 0 ], 
  [ 0, 0, 0, 0, 0, 0, 0, 0, 0, 0, 0, 0, 0, 0, 0, 0, 0, 0, 1, 0, 0, 0, 0, 0, 
    0, 0, 0, 0, 0, -1, -1, -1, 0, -1, 0, 0, 0, 0, 0, 2, 1, 1, 1, -2, 0 ] ]
gap> List(last,x->x[Length(x)]); # pos[1] is impossible 
[ -2, 0, 0, 0 ]
gap> PossibilityOfStablyEquivalentFSubdirectProduct(pos[2]);
[ [ 0, 1, -1, 0, 0, 0, 0, 0, 1, -1, 0, 0, 0, 0, 0, 0, 0, 0, 0, 0, 0, 0, -2 ], 
  [ 0, 0, 0, 0, 0, 0, 1, 0, 0, 0, 0, -1, -1, -1, -1, 0, 0, 2, 1, 1, 1, -2, 0 ] ]
gap> List(last,x->x[Length(x)]); # pos[2] is impossible 
[ -2, 0 ]
\end{verbatim}
\end{example}

%%%%%%%%%%%%%%%%%%%%%%%%%%%%%%%%%%%%%%%%%%%%%%%%%%
\section{Proofs of ${\rm (2)}\Rightarrow {\rm (1)}$ of {Theorem \ref{thmain2}} $(N_{3,i})$, {Theorem \ref{thmain4}} $(N_{4,i})$ and {Theorem \ref{thmain5}} $(I_{4,i})$: stably $k$-equivalent classes}\label{S9}
%\ssubsection{Stably $k$-equivalent classes}
%%%%%%%%%%%%%%%%%%%%%%%%%%%%%%%%%%%%%%%%%%%%%%%%%%

Let $T_i$ and $T_j^\prime$ %$(1\leq i,j\leq 152)$ 
be algebraic $k$-tori of dimension $3$ or $4$ 
with the minimal splitting fields $L_i$ and $L^\prime_j$ 
and the character modules 
$\widehat{T}_i=M_G$ and $\widehat{T}^\prime_j=M_{G^\prime}$ 
which satisfy that 
{\rm (i)} $G$ and $G^\prime$ are 
$\GL(3,\bZ)$-conjugate to $N_{3,i}$ and $N_{3,j}$ $(1\leq i,j\leq 15)$; 
{\rm (ii)} $G$ and $G^\prime$ are 
$\GL(4,\bZ)$-conjugate to $N_{4,i}$ and $N_{4,j}$ $(1\leq i,j\leq 152)$; 
or 
{\rm (iii)} $G$ and $G^\prime$ are 
$\GL(4,\bZ)$-conjugate to $I_{4,i}$ and $I_{4,j}$ $(1\leq i,j\leq 7)$, 
respectively. 

We assume that the condition {\rm (2)} 
of Theorem \ref{thmain2}, Theorem \ref{thmain4} and Theorem \ref{thmain5}:\\

\noindent 
(2) $G\simeq G^\prime$, $L_i=L_j^\prime$, 
$T_i\times_k K$ and $T_j^\prime\times_k K$ 
are weak stably birationally $K$-equivalent 
for any $k\subset K\subset L_i$.\\

By the assumption {\rm (2)}, it follows from 
{Theorem \ref{thmain1}} and {Theorem \ref{thmain3} that 
$G\simeq  G^\prime\simeq {\rm Gal}(L_i/k)={\rm Gal}(L_j^\prime/k)$ 
(see also Remark \ref{rem1.25}), 
and for any $H={\rm Gal}(L_i/K)\leq G$,  
$[M_H]^{fl}=[M_{H^\prime}]^{fl}$ as $\widetilde{H}$-lattices 
where $\widetilde{H}\leq H\times H^\prime$ is a subdirect product 
of $H\leq G$ and $H^\prime={\rm Gal}(L_j^\prime/K)\leq G^\prime$ 
which acts on $M_H$ and $M_{H^\prime}$ 
through the surjections $\varphi_1: \widetilde{H} \rightarrow H$ 
and $\varphi_2: \widetilde{H} \rightarrow H^\prime$ respectively. 
In particular, we have $[M_H]^{fl}\sim [M_{H^\prime}]^{fl}$. 

When $i\neq j$, 
by using Proposition \ref{prop5.2}, Proposition \ref{prop5.4}, 
Proposition \ref{prop5.5}, Theorem \ref{th6.4}, 
we see that\\ $(i,j)\in J_{3,3}=\{(5,6)$, $(11,13)\}$ when 
{\rm (i)} $G$ and $G^\prime$ are 
$\GL(3,\bZ)$-conjugate to $N_{3,i}$ and $N_{3,j}$ $(1\leq i,j\leq 15)$,\\ 
$(i,j)\in J_{4,4}$ where 
$J_{4,4}=\{(2,5)$, $(2,7)$, $(3,8)$, $(5,7)$, $(13,23)$, 
$(13,24)$, $(14,39)$, $(15,16)$, $(15,17)$, 
$(15,25)$, $(15,26)$, $(16,17)$, $(16,25)$, 
$(16,26)$, $(17,25)$, $(17,26)$, $(19,29)$, 
$(19,30)$, $(20,31)$, $(20,33)$, $(20,35)$, 
$(22,46)$, $(23,24)$, $(25,26)$, $(29,30)$, 
$(31,33)$, $(31,35)$, $(32,34)$, $(33,35)$, 
$(41,44)$, $(59,91)$, $(60,92)$, $(63,93)$, 
$(65,94)$, $(66,96)$, $(67,95)$, $(68,99)$, 
$(69,74)$, $(69,75)$, $(71,77)$, $(71,78)$, 
$(72,83)$, $(72,85)$, $(74,75)$, $(77,78)$, 
$(81,84)$, $(81,87)$, $(83,85)$, 
$(84,87)$, $(86,88)$, $(102,103)\}$ when 
{\rm (ii)} $G$ and $G^\prime$ are 
$\GL(4,\bZ)$-conjugate to $N_{4,i}$ and $N_{4,j}$ $(1\leq i,j\leq 152)$,\\ 
and it does not occur when 
{\rm (iii)} $G$ and $G^\prime$ are 
$\GL(4,\bZ)$-conjugate to $I_{4,i}$ and $I_{4,j}$ $(1\leq i,j\leq 7)$. 

By the assumption (2) $G\simeq G^\prime$, $L_i=L_j^\prime$, 
we have 
$\varphi_1: {\rm Gal}(L_i/k)\xrightarrow{\sim} G\leq {\rm GL}(n,\bZ)$, 
$f\mapsto \varphi_1(f)$, 
$\varphi_2: {\rm Gal}(L_j^\prime/k)\xrightarrow{\sim} G^\prime\leq {\rm GL}(n,\bZ)$, 
$f\mapsto \varphi_2(f)$ and a subdirect product 
$\widetilde{H}=\{(\varphi_1(f),\varphi_2(f))\mid f\in {\rm Gal}(L_i/k)={\rm Gal}(L_j^\prime/k)\}\leq G\times G^\prime$ of $G$, $G^\prime$ with $\widetilde{H}\simeq G$. 

On the other hand, 
by Theorem \ref{th6.1} and Theorem \ref{th6.3}, 
we have  $[M_G]^{fl}=[M_{G^\prime}]^{fl}\ {\rm as}\ 
\widetilde{H}^\prime\textrm{-lattices}$ where $\widetilde{H}^\prime\simeq \{(g,g^\prime)=(\psi_1(\widetilde{h}),\psi_2(\widetilde{h})\mid \widetilde{h}\in \widetilde{H}^\prime\}\simeq G$ with surjections 
$\psi_1:\widetilde{H}^\prime\xrightarrow{\sim} G$, 
$\psi_2:\widetilde{H}^\prime\xrightarrow{\sim} G^\prime$. 

Then 
there exists $\tau\in {\rm Aut}(G)$ such that 
$(\psi_1\psi_2^{-1})(\varphi_2\varphi_1^{-1})(g)=g^\tau$ 
for any $g\in G$ where 
${\rm Aut}(G)$ is the group of automorphisms on $G$.
\\

%%%%%%%%%%%%%%%%%%%%%%%%%
We consider the following subgroups of ${\rm Aut}(G)$ for 
$G\leq {\rm GL}(n\,\bZ)$: 
\begin{align*}
{\rm Inn}(G)\leq X\leq Y\leq Z\leq {\rm Aut}(G),
\end{align*}
\begin{align*}
X&={\rm Aut}_{\GL(n,\bZ)}(G)=
\{\sigma\in{\rm Aut}(G)\mid 
{\rm there}\ {\rm exists}\ u\in {\rm GL}(n,\bZ)\ {\rm such}\ {\rm that}\ 
u^{-1}gu=g^\sigma\ {\rm for}\ {\rm any}\ g\in G\}\\
&\simeq N_{\GL(n,\bZ)}(G)/Z_{\GL(n,\bZ)}(G),\\
Y&=\{\sigma\in{\rm Aut}(G)\mid [M_G]^{fl}=[M_{G^\sigma}]^{fl}\ {\rm as}\ 
\widetilde{H}\textrm{-lattices}\ {\rm where}\ \widetilde{H}=\{(g,g^\sigma)\mid g\in G\}\simeq G\},\\
Z&=\{\sigma\in{\rm Aut}(G)\mid [M_H]^{fl}\sim [M_{H^\sigma}]^{fl}\ {\rm for}\ {\rm any}\ H\leq G\}
\end{align*} 
where 
${\rm Inn}(G)$ is the group of inner automorphisms on $G$, 
$N_{\GL(n,\bZ)}(G)$ is the normalizer of $G$ in $\GL(n,\bZ)$ and 
$Z_{\GL(n,\bZ)}(G)$ is the centralizer of $G$ in $\GL(n,\bZ)$.\\ 

Then we see that 
$[M_G]^{fl}=[M_{G^\prime}]^{fl}$ as $\widetilde{H}$-lattices 
if and only if $\tau \in Y$. 

We also see that 
$[M_H]^{fl}\sim [M_{H^\prime}]^{fl}$ 
as $\widetilde{H}\mid_{\varphi_1^{-1}(H)}$-lattices 
for any $H\leq G$ 
if and only if $\tau\in Z$ 
where $H^\prime=\varphi_2\varphi_1^{-1}(H)$
and 
$\widetilde{H}\mid_{\varphi_1^{-1}(H)}
=\{\widetilde{h}\in\widetilde{H}\mid \varphi_1(\widetilde{h})\in H\}\leq 
H\times H^\prime$ 
with $\varphi_1^{-1}(H)=\varphi_2^{-1}(H^\prime)$.\\

Then we find that ${\rm (2)}\Rightarrow {\rm (1)}$ of 
{Theorem \ref{thmain2}}, {Theorem \ref{thmain4}} and {Theorem \ref{thmain5}} 
is equivalent to $\tau\in Z\Rightarrow \tau \in Y$ 
for any $\tau\in {\rm Aut}(G)$ 
and hence 
we may assume that $i=j$, 
i.e. $G$ and $G^\prime$ are $\GL(n,\bZ)$-conjugate 
to the same $N_{3,i}$, $N_{4,i}$ or $I_{4,i}$, 
via $\tau\in {\rm Aut}(G)$ with 
$(\psi_1\psi_2^{-1})(\varphi_2\varphi_1^{-1})(g)=g^\tau$ 
for any $g\in G$. 
Moreover, we may take the smallest $i$ of 
$N_{3,i}$, $N_{4,i}$ or $I_{4,i}$ 
in the weak stably $k$-equivalent class ${\rm WSEC}_r$ $(1\leq r\leq 128)$.\\

From now on, we assume that $i=j$. 

In order to prove 
${\rm (2)}\Rightarrow {\rm (1)}$ of 
{Theorem \ref{thmain2}}, {Theorem \ref{thmain4}} and {Theorem \ref{thmain5}}, 
under the condition $L_i=L^\prime_i$, 
we should show that $Y=Z$ 
where 
\begin{align*}
Y&=\{\sigma\in{\rm Aut}(G)\mid [M_G]^{fl}=[M_{G^\sigma}]^{fl}\ {\rm as}\ 
\widetilde{H}\textrm{-lattices}\ {\rm where}\ \widetilde{H}=\{(g,g^\sigma)\mid g\in G\}\simeq G\},\\
Z&=\{\sigma\in{\rm Aut}(G)\mid [M_H]^{fl}\sim [M_{H^\sigma}]^{fl}\ {\rm for}\ {\rm any}\ H\leq G\}
\end{align*}
because the set $Y$ (resp. $Z$) can be regarded as the set 
of possibilities of $G^\prime=G^\sigma$ 
which satisfies the condition (1) (resp. the third condition of (2), i.e. 
$T_i\times_k K$ and $T_j^\prime\times_k K$ 
are weak stably birationally $K$-equivalent 
for any $k\subset K\subset L_i$). 
Indeed, we will show that $Y=Z$ 
except for the cases 
when $i=j=137,139,145,147$ (resp. $i=j=7$) 
of $N_{4,i}$ and $N_{4,j}$ (resp. $I_{4,i}$ and $I_{4,j}$). 
%%%%%%%%%%%%%%%%%%%%%%%%%%%%%%%%%%%%%%

If $\sigma\in X\simeq N_{\GL(n,\bZ)}(G)/Z_{\GL(n,\bZ)}(G)$, then 
%then $G$ and $G^\sigma$ are conjugate 
%in $\GL(n,\bZ)$ and $[M_G]^{fl}=[M_{G^\sigma}]^{fl}$ 
%as $\widetilde{H}$-lattices where 
%$\widetilde{H}=\{(g,g^\sigma)\mid g\in G\}\simeq G$. 
there exists $u\in {\rm GL}(n,\bZ)$ such that
$u^{-1}gu=g^\sigma$ for any $g\in G$. 
Hence, in order to get $Y=Z$, we should show that 
$\sigma_1,\ldots,\sigma_s\in Y$ 
for $\sigma_1,\ldots,\sigma_s\in Z$ 
where $\{\sigma_1,\ldots,\sigma_s\}$ 
is a complete set of representatives 
of the double coset $X\backslash Z/X$. 
Furthermore, we should check that $\sigma_1,\ldots,\sigma_t\in Y$ 
for some generators $\sigma_1,\ldots,\sigma_t\in Z$ 
of the double coset $X\backslash Z/X$, i.e. 
elements $\sigma_1,\ldots,\sigma_t\in Z$ 
which satisfy $\langle \sigma_1,\ldots,\sigma_t,x\mid x\in X\rangle=Z$. 
Note that the left coset (resp. the right coset) of $G$ under $X$ 
corresponds to the surjection $\varphi_1:\widetilde{H}\rightarrow G$ 
(resp. $\varphi_2:\widetilde{H}\rightarrow G^\sigma$).\\
%%%%%%%%%%%%%%%%%%%%%%%%%%%%%%%%%%%%%%%%%%%%%%%%

For the exceptional cases $i=j=137,139,145,147$ (resp. $i=j=7$), 
the implications ${\rm (2)}\Rightarrow {\rm (1)}$ of 
{Theorem \ref{thmain4}} and {Theorem \ref{thmain5}} 
does not hold. 
Indeed, we will show that 
$X=Y\lhd Z$ with $Z/Y\simeq C_2,C_2^2,C_2,C_2$ (resp. $C_2$). 
Note that under the assumption $L_i=L^\prime_j$, i.e. 
$\widetilde{H}\simeq G\simeq G^\prime$, 
${\rm (1)}\Leftrightarrow {\rm (2)}$ of Theorem \ref{thmain4} 
is equivalent to $Y=Z$.\\

We made the following GAP \cite{GAP} algorithms 
in order to confirm that $Y=Z$.  
It is available as in \cite{BCAlgTori}.\\
%from 
%\url{https://www.math.kyoto-u.ac.jp/~yamasaki/Algorithm/BCAlgTori/}.\\

Let $G\leq \GL(n,\bZ)$ 
and $M_G$ be the corresponding $G$-lattice of $\bZ$-rank $n$ 
as in Definition \ref{d2.2}.\\

\noindent 
{\tt IsomorphismFromSubdirectProduct($\widetilde{H}$}{\tt)} 
returns the isomorphism $\sigma: G/N\rightarrow G^\prime/N^\prime$ 
which satisfies $\sigma(\varphi_1(h)N)$ $=$ $\varphi_2(h)N^\prime$ 
for any $h\in\widetilde{H}$ 
where $N=\varphi_1({\rm Ker}(\varphi_2))$ and 
$N^\prime=\varphi_2({\rm Ker}(\varphi_1))$ 
for a subdirect product 
$\widetilde{H}\leq G\times G^\prime$ of $G$ and $G^\prime$ 
with surjections $\varphi_1: \widetilde{H} \rightarrow G$ 
and $\varphi_2: \widetilde{H} \rightarrow G^\prime$.\\

\noindent 
{\tt AutGSubdirectProductsWSECInvariant}($G$)\tt } 
returns subdirect products 
$\widetilde{H}_m$ $=$ 
$\{(g,g^{\sigma_m})\mid g\in G, g^{\sigma_m}\in G^{\sigma_m}\}$ 
$(1\leq m\leq s)$ of $G$ and $G^{\sigma_m}$ 
where 
$\{\sigma_1,\ldots,\sigma_s\}$ is a complete set of representatives 
of the double coset $X\backslash Z/X$,  
\begin{align*}
{\rm Inn}(G)\leq X\leq Y\leq Z\leq {\rm Aut}(G), 
\end{align*}
\begin{align*}
X&={\rm Aut}_{\GL(n,\bZ)}(G)=
\{\sigma\in{\rm Aut}(G)\mid 
{\rm there}\ {\rm exists}\ u\in {\rm GL}(n,\bZ)\ {\rm such}\ {\rm that}\ 
u^{-1}gu=g^\sigma\ {\rm for}\ {\rm any}\ g\in G\}\\
&\simeq N_{\GL(n,\bZ)}(G)/Z_{\GL(n,\bZ)}(G),\\
Y&=\{\sigma\in{\rm Aut}(G)\mid [M_G]^{fl}=[M_{G^\sigma}]^{fl}\ {\rm as}\ 
\widetilde{H}\textrm{-lattices}\ {\rm where}\ \widetilde{H}=\{(g,g^\sigma)\mid g\in G\}\simeq G\},\\
Z&=\{\sigma\in{\rm Aut}(G)\mid [M_H]^{fl}\sim [M_{H^\sigma}]^{fl}\ {\rm for}\ {\rm any}\ H\leq G\},
\end{align*} 
${\rm Inn}(G)$ is the group of inner automorphisms on $G$, 
${\rm Aut}(G)$ is the group of automorphisms on $G$,  
$N_{\GL(n,\bZ)}(G)$ is the normalizer of $G$ in $\GL(n,\bZ)$ and 
$Z_{\GL(n,\bZ)}(G)$ is the centralizer of $G$ in $\GL(n,\bZ)$.\\

\noindent 
{\tt AutGSubdirectProductsWSECInvariantGen(}$G${\tt)}
returns the same as %the function 
{\tt AutGSubdirectProductsWSECInvariant}($G$)\tt } 
but with respect to 
$\{\sigma_1,\ldots,\sigma_t\}$ 
where $\sigma_1,\ldots,\sigma_t\in Z$ are some minimal number of generators 
of the double cosets of $X\backslash Z/X$, i.e. 
minimal number of elements $\sigma_1,\ldots,\sigma_t\in Z$ 
which satisfy $\langle \sigma_1,\ldots,\sigma_t,x\mid x\in X\rangle=Z$, 
instead of a complete set of representatives 
of the double coset $X\backslash Z/X$. 
If this returns {\tt []}, then we get $X=Y=Z$.\\

\noindent 
{\tt AutGLnZ(}$G${\tt )} 
returns 
\begin{align*}
X&={\rm Aut}_{\GL(n,\bZ)}(G)=
\{\sigma\in{\rm Aut}(G)\mid 
{\rm there}\ {\rm exists}\ u\in {\rm GL}(n,\bZ)\ {\rm such}\ {\rm that}\ 
u^{-1}gu=g^\sigma\ {\rm for}\ {\rm any}\ g\in G\}\\
&\simeq N_{\GL(n,\bZ)}(G)/Z_{\GL(n,\bZ)}(G),
\end{align*}

\noindent 
{\tt N3WSECMembersTable[}$r${\tt ][}$i${\tt ]} 
returns an integer $j$ which satisfies that $N_{3,j}$ is 
the $i$-th group in the weak stably $k$-equivalent class ${\rm WSEC}_r$.\\

\noindent 
{\tt N4WSECMembersTable[}$r${\tt ][}$i${\tt ]} 
is the same as 
{\tt N3WSECMembersTable[}$r${\tt ][}$i${\tt ]} 
but using $N_{4,j}$ instead of $N_{3,j}$.\\

\noindent 
{\tt I4WSECMembersTable[}$r${\tt ][}$i${\tt ]} 
is the same as 
{\tt N3WSECMembersTable[}$r${\tt ][}$i${\tt ]} 
but using $I_{4,j}$ instead of $N_{3,j}$.\\

\noindent 
{\tt AutWSEC(}$G${\tt )} 
returns 
\begin{align*}
Z=\{\sigma\in{\rm Aut}(G)\mid [M_H]^{fl}\sim [M_{H^\sigma}]^{fl}\ {\rm for}\ {\rm any}\ H\leq G\}. 
\end{align*}

\noindent 
{\tt IdCoset(}$G,H${\tt )} 
returns $[d,m]$ when the action of $G$ on $G/H$ may be regarded 
as $G\simeq dTm\leq S_d$ with $d=[G:H]>1$. 
If $d=1$, this returns ${\tt [}\ 1\ {\tt ]}$.\\

%%%%%%%%%%%%%%%%%%%%%%%%%%%%%%%%%%%%%%%%%%%%%%%%%%%
{\it Proof of ${\rm (2)}\Rightarrow {\rm (1)}$ of {Theorem \ref{thmain2}}.}

%Assume that $n=3$.
%
We should check the case of the first group $G=N_{3,i}$ 
$(i=1,2,3,4,5,7,8,9,10,11,12,14,15)$ in each weak stably $k$-equivalent class 
${\rm WSEC}_r$ $(1\leq r\leq 13)$ (see Table $2$). 

Applying the function 
{\tt AutGSubdirectProductsWSECInvariantGen(}$G${\tt)}, 
we get some minimal number of generators 
$\sigma_1,\ldots,\sigma_t\in Z$ of the double coset $X\backslash Z/X$, i.e. 
minimal number of elements $\sigma_1,\ldots,\sigma_t\in Z$ which satisfy 
$\langle \sigma_1,\ldots,\sigma_t,x\mid x\in X\rangle=Z$. 
We will show that $Y=Z$ for each case.\\

Case 1: $i=1,5,8,9,10,11,12,15$ with 
$[M_G]^{fl}\in {\rm WSEC}_r$ $(r=1,5,7,8,9,10,11,13)$. 
We see that the function {\tt AutGSubdirectProductsWSECInvariantGen(}$G${\tt)} 
returns {\tt []} and hence we get $X=Y=Z$ (see Example \ref{ex9.1} for GAP computations).\\

Case 2: $i=2,3,4,7$ with 
$[M_G]^{fl}\in {\rm WSEC}_r$ $(r=2,3,4,6)$. 
By using the same technique as in the proof of Theorem \ref{th6.1}, 
by applying the function 
{\tt PossibilityOfStablyEquivalentFSubdirectProduct($\widetilde{H}$)} 
we can get a basis $\mathcal{P}=\{P_1,\dots,P_m\}$ 
of the solution space of $G_1P=PG_2$ 
with det $P_s=\pm 1$ for some $1\leq s\leq m$  
where $G_1$ (resp. $G_2$) 
is the matrix representation group of the action of 
$\widetilde{H}$ on the left-hand side 
(resp. the right-hand side) of the isomorphism (\ref{eqiso2}) 
and 
we may confirm that $F\simeq F^\prime$ as $\widetilde{H}$-lattices 
where $F=[M_G]^{fl}$ and $F^\prime=[M_{G^{\sigma_m}}]^{fl}$ 
for any $1\leq m\leq t$ with $t=1,1,2,2$ when $i=2,3,4,7$ respectively 
(see Example \ref{ex9.1} for GAP computations).\\

Case 3: $i=14$ with $[M_G]^{fl}\in {\rm WSEC}_r$ $(r=12)$. 
By using the same technique as in Case 3 of the proof of Theorem \ref{th6.2}, 
by applying the function 
{\tt PossibilityOfStablyEquivalentFSubdirectProduct($\widetilde{H}$)}  
and {\tt SearchP1}($\mathcal{P}$), 
we may confirm that $F\simeq F^\prime$ as $\widetilde{H}$-lattices 
where $F=[M_G]^{fl}$ and $F^\prime=[M_{G^{\sigma_m}}]^{fl}$ 
for any $1\leq m\leq t$ with $t=1$ (see Example \ref{ex9.1} for GAP computations).\\

This implies that $Y=Z$ and the proof is completed.\qed\\

%%%%%%%%%%%%%%%%%%%%%%%%%%%%%%%%%%%%%%%%%%%%%%%%%%%%%%%%%%%%%%%%
{\it Proof of the last statement of {Theorem \ref{thmain2}}.}

Because we already have $(1)\Leftrightarrow (2)$ of {Theorem \ref{thmain2}}, 
we get 
\begin{align*}
{\rm WSEC}_r=\coprod_{L/k\atop {\rm Gal}(L/k)\simeq {\rm G}_r}
{\rm WSEC}_{r,L},\ 
{\rm WSEC}_{r,L}=
\coprod_{t=1}^{\lambda_{r}} {\rm SEC}_{r,L,t}
\end{align*}
modulo stably birationally $k$-equivalence $\stackrel{\rm s.b.}{\approx}$ 
where 
${\rm SEC}_{r,L,t}$ $(1\leq t\leq \lambda_{r})$ is the $t$-th 
stably $k$-equivalent class of $T$ of dimension $3$ in 
${\rm WSEC}_{r,L}$ which corresponds to the fixed minimal splitting field $L$ 
in 
${\rm WSEC}_r=\coprod_{L/k\atop {\rm Gal}(L/k)\simeq {\rm G}_r} {\rm WSEC}_{r,L}$ 
with 
$[\widehat{T}]^{fl}\in {\rm WSEC}_{r,L}
=\coprod_{t=1}^{\lambda_r} {\rm SEC}_{r,L,t}$, 
${\rm Gal}(L/k)\simeq {\rm G}_r\simeq N_{3,i}$ 
$(1\leq r\leq 13)$ and 
$\lambda_r=|{\rm WSEC}_{r,L}|$. 

We see that $\lambda_r=|{\rm WSEC}_{r,L}|=|Y\backslash {\rm Aut}(G)|$ 
because we have one to one correspondence 
$\overline{\sigma}=Y \sigma\in Y\backslash {\rm Aut}(G)$ $\leftrightarrow$ 
$[M_{G^\sigma}]^{fl}\in{\rm WSEC}_{r,L}$. 
We can get $\lambda_r=|{\rm WSEC}_{r,L}|$ as 
$\lambda_r=|Y\backslash {\rm Aut}(G)|$ 
by using the GAP functions {\tt AutWSEC(}$G${\tt )}, 
{\tt AutomorphismGroup(}$G${\tt )}, {\tt IdCoset(}$G$,$H${\tt )} as 
$Y=$ {\tt AutWSEC(}$G${\tt )}, 
${\rm Aut}(G)=$ {\tt AutomorphismGroup(}$G${\tt )}, 
$\lambda_r=$ {\tt IdCoset}$({\rm Aut}(G),Y){\tt [}$\,$1$\,${\tt ]}$ 
which is given as in Table $2$ (see Example \ref{ex9.2} for GAP computations). 
\qed\\

~\\
%%%%%%%%%%%%%%%%%%%%%%%%
{\it Proof of ${\rm (2)}\Rightarrow {\rm (1)}$ of {Theorem \ref{thmain4}}.}

%Assume that $n=4$. 
%
The case $G=N_{4,i}$ with $[M_G]^{fl}\in {\rm WSEC}_r$ $(1\leq r\leq 13)$ 
can be reduced to Theorem \ref{thmain2}. 
Hence we should check the case of the first group $G=N_{4,i}$ 
in each weak stably $k$-equivalent class 
${\rm WSEC}_r$ $(14\leq r\leq 121)$ (see Table $4$). 

Applying the function 
{\tt AutGSubdirectProductsWSECInvariantGen(}$G${\tt)}, 
we get some minimal number of generators 
$\sigma_1,\ldots,\sigma_t\in Z$ of the double coset $X\backslash Z/X$, i.e. 
minimal number of elements $\sigma_1,\ldots,\sigma_t\in Z$ which satisfy 
$\langle \sigma_1,\ldots,\sigma_t,x\mid x\in X\rangle=Z$. 
We will show that $Y=Z$ except for the cases 
when $i=137,139,145,147$ of $N_{4,i}$. \\

Case 1: $i=9,12,18,21,32,40,42,43,45,47,59,60,63,65,66,67,68,70,76,80,86,102,106,107,108,109,110$, $111,112,113,115,116,117,122,123,126,127,129,131,138,140,141,142,143,144,146,148,149,150,151,152$ 
with\\ 
$[M_G]^{fl}\in {\rm WSEC}_r$ 
$(r=16,19,21,23,27,31,33,34,35,36,48,49,52,54,55,56,57,58,60,62,65,72,75,76,77,78,79$,\\
$80,81,82,84,85,86,91,92,95,96,98,100,107,109,110,111,112,113,115,117,118,119,120,121)$. 
The function\\ {\tt AutGSubdirectProductsWSECInvariantGen(}$G${\tt)} 
returns {\tt []} and hence we get $X=Y=Z$ (see Example \ref{ex9.3} for GAP computations).\\

% det P=\pm 1
Case 2: $i=3,4,10,11,14,20,22,27,28,36,37,38,41,48,49,51,52,54,55,56,89$ 
with 
$[M_G]^{fl}\in {\rm WSEC}_r$ 
$(r=14,15,17,18,20,22,24,25,26,28,29,30,32,37,38,40,41,43,44,45,66)$. 
By using the same technique as in the proof of Theorem \ref{th6.1}, 
by applying the function 
{\tt PossibilityOfStablyEquivalentFSubdirectProduct($\widetilde{H}$)} 
we can get a basis $\mathcal{P}=\{P_1,\dots,P_m\}$ 
of the solution space of $G_1P=PG_2$ 
with det $P_s=\pm 1$ for some $1\leq s\leq m$ 
where $G_1$ (resp. $G_2$) 
is the matrix representation group of the action of 
$\widetilde{H}$ on the left-hand side 
(resp. the right-hand side) of the isomorphism (\ref{eqiso2}) 
and 
we may confirm that $F\simeq F^\prime$ as $\widetilde{H}$-lattices 
where $F=[M_G]^{fl}$ and $F^\prime=[M_{G^{\sigma_m}}]^{fl}$ 
for any $1\leq m\leq t$ with $t=1,1,1,1,1,2,2,2,1,1,2,1,2,1,1,1,2,2,1,1,2$ when $i=3,4,10,11,14,20,22,27,28,36,37,38,41,48,49,51,52,54$, 
$55,56,89$ respectively (see Example \ref{ex9.3} for GAP computations).\\

% SearchPMergeRowBlock
% 97, 100
Case 3: $i=58,61,64,73,82,90,97,100,101,104,105$ 
with 
$[M_G]^{fl}\in {\rm WSEC}_r$ 
$(r=47,50,53,59,64,67,68,70$, $71,73,74)$. 
We have $t=1$, i.e. $\langle \sigma_1,x\mid x\in X\rangle=Z$. 
As in Case 2 of the proof of Theorem \ref{th6.2}, 
by applying %the functions 
{\tt PossibilityOfStablyEquivalentFSubdirectProduct($\widetilde{H}$)}, 
{\tt SearchPRowBlocks}($\mathcal{P}$) 
and {\tt SearchPMergeRowBlock}, 
we get a matrix $P$ with $G_1P=PG_2$ and det $P=\pm 1$ 
where $G_1$ (resp. $G_2$) 
is the matrix representation group of the action of 
$\widetilde{H}$ on the left-hand side 
(resp. the right-hand side) of the isomorphism (\ref{eqiso2}).  
Then we have $[F]=[F^\prime]$ as $\widetilde{H}$-lattices 
where $F=[M_G]^{fl}$ and $F^\prime=[M_{G^{\sigma_1}}]^{fl}$ 
(see Example \ref{ex9.3}). 
%for any $1\leq m\leq t$ with $t=1,1,1,1,1,1,1,1,1$ 
%when $i=58,61,64,73,82,90,101,104,105$ respectively.

For example, when $i=100$, we see that there exists a possibility 
$\bZ[\widetilde{H}/H_{19}]\oplus F\simeq \bZ[\widetilde{H}/H_{18}]\oplus F^\prime$ with ${\rm rank}_\bZ \bZ[\widetilde{H}/H_{18}]={\rm rank}_\bZ \bZ[\widetilde{H}/H_{19}]=6$ and ${\rm rank}_\bZ F={\rm rank}_\bZ F^\prime=26$. 
We do not know whether this isomorphism indeed occurs. 
However, after adding $\bZ[\widetilde{H}/H_{20}]$ with 
${\rm rank}_\bZ \bZ[\widetilde{H}/H_{20}]=4$ to both sides, 
we may establish the isomorphism 
$\bZ[\widetilde{H}/H_{19}]\oplus\bZ[\widetilde{H}/H_{20}]\oplus F
\simeq 
\bZ[\widetilde{H}/H_{18}]\oplus\bZ[\widetilde{H}/H_{20}]\oplus F^\prime$. 
The remaining cases can be obtained by using the same manner 
(see Example \ref{ex9.3} for GAP computations).\\

%SearchP1
Case 4: $i=50,53,57,62,79,81$ 
with 
$[M_G]^{fl}\in {\rm WSEC}_r$ 
$(r=39,42,46,51,61,63)$. 
% SearchP1
As in Case 3 of the proof of Theorem \ref{th6.2}, 
by applying the functions 
{\tt PossibilityOfStablyEquivalentFSubdirectProduct($\widetilde{H}$)}, 
{\tt SearchPRowBlocks}($\mathcal{P}$) 
and 
%Then by applying 
{\tt SearchP1}($\mathcal{P}$), 
we get a matrix $P$ with $G_1P=PG_2$ and det $P=\pm 1$ 
where $G_1$ (resp. $G_2$) 
is the matrix representation group of the action of 
$\widetilde{H}$ on the left-hand side 
(resp. the right-hand side) of the isomorphism (\ref{eqiso2}).  
This implies that $F\simeq F^\prime$ as $\widetilde{H}$-lattices 
where $F=[M_G]^{fl}$ and $F^\prime=[M_{G^{\sigma_m}}]^{fl}$ 
for any $1\leq m\leq t$ with $t=1,1,1,1,2,1$ 
when $i=50,53,57,62,79,81$ respectively (see Example \ref{ex9.3} for GAP computations).\\

%SearchPLinear
%98, 132 
Case 5: $i=98,114,118,119,120,121,124,125,128,130,132,133,134,135,136$ 
with 
$[M_G]^{fl}\in {\rm WSEC}_r$ 
$(r=69,83,87,88$, $89,90,93,94,97,99,101,102,103,104,105)$. 
We have $t=1$, i.e. $\langle \sigma_1,x\mid x\in X\rangle=Z$. 
By applying the functions 
{\tt PossibilityOfStablyEquivalentFSubdirectProduct($\widetilde{H}$)}, 
{\tt SearchPRowBlocks}($\mathcal{P}$) 
and {\tt SearchPLinear($M,\mathcal{P}_1$)} (see Section \ref{S6}), 
we get a matrix $P$ with $G_1P=PG_2$ and det $P=\pm 1$ 
where $G_1$ (resp. $G_2$) 
is the matrix representation group of the action of 
$\widetilde{H}$ on the left-hand side 
(resp. the right-hand side) of the isomorphism (\ref{eqiso2}). 
Hence we have $[F]=[F^\prime]$ as 
$\widetilde{H}$-lattices 
where $F=[M_G]^{fl}$ and $F^\prime=[M_{G^{\sigma_1}}]^{fl}$ 
(see Example \ref{ex9.3}). 
%for any $1\leq m\leq t$ with $t=1,1,1,1,1,1,1,1,1,1,1,1,1$ when 
%$i=114,118,119,120,121,124,125,128,130,133,134,135,136$ respectively. 

For example, when $i=98$, 
we see that there exists a possibility 
$\bZ[\widetilde{H}/H_7]\oplus F\simeq \bZ[\widetilde{H}/H_6]\oplus F^\prime$ 
with ${\rm rank}_\bZ \bZ[\widetilde{H}/H_6]={\rm rank}_\bZ \bZ[\widetilde{H}/H_7]=6$ and ${\rm rank}_\bZ F={\rm rank}_\bZ F^\prime=26$. 
We do not know whether this isomorphism indeed occurs. 
However, after adding $\bZ[\widetilde{H}/H_8]\oplus\bZ$ with 
${\rm rank}_\bZ \bZ[\widetilde{H}/H_8]=4$ to both sides, 
we may establish the isomorphism 
$\bZ[\widetilde{H}/H_7]\oplus\bZ[\widetilde{H}/H_8]\oplus\bZ\oplus F
\simeq 
\bZ[\widetilde{H}/H_6]\oplus\bZ[\widetilde{H}/H_8]\oplus\bZ\oplus F^\prime$. 
The remaining cases can be obtained by using the same manner 
(see Example \ref{ex9.3} for GAP computations).\\

We conclude that $Y=Z$ except for the cases $i=137,139,145,147$ 
and the proof is completed.\qed\\

%%%%%%%%%%%%
{\it Proof of ${\rm (2)}\Rightarrow {\rm (1)}$ of {Theorem \ref{thmain5}}.}

As in Case 1 of the proof of ${\rm (2)}\Rightarrow {\rm (1)}$ of 
{Theorem \ref{thmain2}} and {Theorem \ref{thmain4}}, 
we find that 
the function\\ {\tt AutGSubdirectProductsWSECInvariantGen(}$G${\tt)} 
returns {\tt []} and hence we get $X=Y=Z$ 
%128->I_{4,7}$
(see Example \ref{ex9.4} for GAP computations).\qed\\

We will show {Theorem \ref{thmain4}} and {Theorem \ref{thmain5}} 
for the exceptional cases $i=j=137,139,145,147$ (resp. $i=j=7$) below. 
We should show that 
$X=Y\lhd Z$ with $Z/Y\simeq C_2,C_2^2,C_2,C_2$ (resp. $C_2$).\\

%%%%%%%%%%%%%%%%%%%%%%%%%%%%%%%%%%%%%%%%%%%%%%%%%%%%%%%%%%%%%%%%%%%%%
{\it Proof of {Theorem \ref{thmain4}} for the exceptional cases $i=j=137,139,145,147$.} 

For $i=j=137,139,145,147$, we will show that 
$X=Y\lhd Z$ with $Z/Y\simeq C_2,C_2^2,C_2,C_2$ respectively.\\

The case where  $i=j=137$ with $[M_G]^{fl}\in {\rm WSEC}_{106}$: $G\simeq Q_8\times C_3$.

We obtain that $Z(G)\simeq C_6$, 
${\rm Inn}(G)\simeq G/Z(G)\simeq C_2^2\leq 
X={\rm Aut}_{\GL(4,\bZ)}(G)\simeq S_4\leq 
Y\leq Z\simeq {\rm Aut}(G)\simeq S_4\times C_2$ 
with $[{\rm Aut}(G):X]=2$. 
Applying the function 
{\tt AutGSubdirectProductsWSECInvariantGen(}$G${\tt)}, 
we see that $t=1$, 
i.e. $\langle \sigma_1,x\mid x\in X\rangle=Z$. 
As in the proof of Theorem \ref{th6.1}, by applying the function 
{\tt PossibilityOfStablyEquivalentFSubdirectProduct($\widetilde{H}$)}, 
we get the only possibility is $F-F^\prime=0$ 
in the sense of the equation (\ref{eqpos2}) in Section \ref{S6}, 
i.e. $F\oplus Q\simeq F^\prime\oplus Q$ for some permutation 
$\widetilde{H}$-lattice $Q$, 
where $F=[M_G]^{fl}$ and $F^\prime=[M_{G^{\sigma_1}}]^{fl}$ 
with ${\rm rank}_\bZ F={\rm rank}_\bZ F^\prime=20$. 

Applying the function 
{\tt StablyEquivalentFCheckPSubdirectProduct($\widetilde{H}$,LHSlist($l_1$),RHSlist($l_1$))}, 
we obtain a basis 
$\mathcal{P}=\{P_1,\dots,P_{18}\}$ of the solution space of $G_1P=PG_2$. 

Suppose that $F\oplus Q\simeq F^\prime\oplus Q$ for some permutation 
$\widetilde{H}$-lattice $Q$. 
Then we have $F_p\simeq F^\prime_p$ with 
${\rm rank}_{\bF_p} F={\rm rank}_{\bF_p} F^\prime=20$
where $F_p=F\otimes_\bZ \bF_p$ and $F^\prime_p=F^\prime\otimes_\bZ \bF_p$ 
because Krull-Schmidt-Azumaya theorem holds for $\bF_p[G]$-lattices 
(Curtis and Reiner \cite[Theorem 6.12]{CR81}, see also Section \ref{S7}). 
We should have 
${\rm rank}_{\bF_p}$ 
$\sum_{f\in {\rm Hom}_{\bF_p[G]}(F_p,F^\prime_p)}{\rm Im}(f)$ $=$ 
${\rm rank}_{\bF_p} ({}^tP_1|\cdots|{}^tP_{18})\otimes_\bZ \bF_p=20$ 
where ${}^tP_i$ is 
the transposed matrix of $P_i$. 
However, we can check that 
${\rm rank}_{\bF_2} ({}^tP_1|\cdots|{}^tP_{18})\otimes_\bZ \bF_2=18$. 
This yields a contradiction. 

We conclude that $[F]\neq [F^\prime]$ as $\widetilde{H}$-lattices 
($\bF_2[\widetilde{H}]$-lattices). 
This implies that $X=Y<Z$ with $[Z:X]=2$. 

We can apply the same method for $M_G$ and $M_{G^{\sigma_1}}$ 
instead of $F$ and $F^\prime$. 
Then we get that $\mathcal{P}=\{P_1,P_2\}$ and 
${\rm rank}_{\bF_2} ({}^tP_1|{}^tP_2)\otimes_\bZ \bF_2=2<4={\rm rank}_{\bF_2} M_G\otimes \bF_2$. 
Hence we have 
$M_G\otimes_\bZ \bF_2\not\simeq M_{G^{\sigma_1}}\otimes_\bZ \bF_2$ 
as $\bF_2[\widetilde{H}]$-lattices 
(see Example \ref{ex9.5} for GAP computations).\\

%%%%%%%%%%%%%%%%%%%%%%%%%%
The case where $i=j=139$ with $[M_G]^{fl}\in {\rm WSEC}_{108}$: $G\simeq (Q_8\times C_3)\rtimes C_2$. 

We obtain that $Z(G)\simeq C_2$, 
${\rm Inn}(G)\simeq G/Z(G)\simeq (C_6\times C_2)\rtimes C_2\simeq 
X={\rm Aut}_{\GL(4,\bZ)}(G)\leq 
Y\leq Z\simeq {\rm Aut}(G)\simeq S_3\times D_4\times C_2$ 
with $[{\rm Aut}(G):X]=4$. 
Applying the function 
{\tt AutGSubdirectProductsWSECInvariant(}$G${\tt)}, 
we see that $s=4$, i.e. 
$\{\sigma_1,\sigma_2,\sigma_3,\sigma_4\}$ is a complete set 
of representatives of the double coset $X\backslash Z/X$. 
We may assume that $\sigma_1\in X$. 

For $m=2,3,4$, 
we have $F=[M_{G^{\sigma_1}}]^{fl}$ 
and $F^\prime=[M_{G^{\sigma_m}}]^{fl}$ 
with ${\rm rank}_\bZ F={\rm rank}_\bZ F^\prime=20$. 

As in the proof of Theorem \ref{th6.1}, we see that 
{\tt PossibilityOfStablyEquivalentFSubdirectProduct($\widetilde{H}$)} 
returns a basis $\mathcal{L}=\{l_1,l_2,l_3,l_4\}$ of the solution space 
(see Section 5) where 
\begin{align*}
l_1&=[ 1, 0, -2, -1, 0, 0, 0, 0, 2, -1, 1, -1, 0, 0, 0, 2, 1, -1, 1, -2, 0 ],\\
l_2&=[ 0, 1, 0, 0, 0, 0, -2, -1, 0, -1, 1, -1, 0, 0, 2, 2, 1, -1, 1, -2, 0 ],\\
l_3&=[ 0, 0, 0, 0, 0, 1, 0, 0, 0, -1, -1, -1, 0, -1, 0, 2, 1, 1, 1, -2, 0 ],\\
l_4&=[ 0, 0, 0, 0, 0, 0, 0, 0, 0, 0, 0, 0, 0, 0, 0, 0, 0, 0, 0, 0, 1 ].
\end{align*}
Hence we consider the possibilities 
\begin{align}
&\bZ[\widetilde{H}]\oplus \bZ[\widetilde{H}/H_9]^{\oplus 2}\oplus \bZ[\widetilde{H}/H_{11}]\oplus \bZ[\widetilde{H}/H_{16}]^{\oplus 2}\oplus \bZ[\widetilde{H}/H_{17}]\oplus \bZ[\widetilde{H}/H_{19}]\label{iso1}\\
&\simeq \bZ[\widetilde{H}/H_3]^{\oplus 2}\oplus \bZ[\widetilde{H}/H_4]\oplus \bZ[\widetilde{H}/H_{10}]\oplus \bZ[\widetilde{H}/H_{12}]\oplus \bZ[\widetilde{H}/H_{18}]^{\oplus 2}\oplus \bZ^{\oplus 2},\nonumber\\
&\bZ[\widetilde{H}/H_2]\oplus \bZ[\widetilde{H}/H_{11}]\oplus \bZ[\widetilde{H}/H_{15}]^{\oplus 2}\oplus \bZ[\widetilde{H}/H_{16}]^{\oplus 2}\oplus \bZ[\widetilde{H}/H_{17}]\oplus \bZ[\widetilde{H}/H_{19}]\label{iso2}\\
&\simeq \bZ[\widetilde{H}/H_7]^{\oplus 2}\oplus \bZ[\widetilde{H}/H_8]\oplus \bZ[\widetilde{H}/H_{10}]\oplus \bZ[\widetilde{H}/H_{12}]\oplus \bZ[\widetilde{H}/H_{18}]\oplus \bZ^{\oplus 2},\nonumber\\
&\bZ[\widetilde{H}/H_6]\oplus \bZ[\widetilde{H}/H_{16}]^{\oplus 2}\oplus \bZ[\widetilde{H}/H_{17}]\oplus \bZ[\widetilde{H}/H_{18}]\oplus \bZ[\widetilde{H}/H_{19}]\label{iso3}\\
&\simeq \bZ[\widetilde{H}/H_{10}]\oplus \bZ[\widetilde{H}/H_{11}]\oplus \bZ[\widetilde{H}/H_{12}]\oplus \bZ[\widetilde{H}/H_{14}]\oplus \bZ^{\oplus 2},\nonumber\\
&F\simeq F^\prime\nonumber
\end{align}
in the sense of the equation (\ref{eqpos2-1}) in Section \ref{S6}. 

For $u=1,2,3$, applying 
{\tt StablyEquivalentFCheckPSubdirectProduct($\widetilde{H}$,LHSlist($l_u$),RHSlist($l_u$))}, 
we can confirm that the isomorphisms 
(\ref{iso1}), (\ref{iso2}), (\ref{iso3}) indeed hold 
after tensoring $\bF_2$ (resp. $\bF_3$). 

Also applying 
{\tt StablyEquivalentFCheckPSubdirectProduct($\widetilde{H}$,LHSlist($l_4$),RHSlist($l_4$))}, 
we obtain a basis 
$\mathcal{P}=\{P_1,\dots,P_{10}\}$ of the solution space of $G_1P=PG_2$. 

Suppose that $[F]=[F^\prime]$ as $\widetilde{H}$-lattices. 
Then for $p=2,3$ we have that 
$F_p\simeq F^\prime_p$ with 
${\rm rank}_{\bF_p} F={\rm rank}_{\bF_p} F^\prime=20$ 
where $F_p=F\otimes_\bZ \bF_p$ and $F^\prime_p=F^\prime\otimes_\bZ \bF_p$ 
because Krull-Schmidt-Azumaya theorem holds for $\bF_p[G]$-lattices 
(Curtis and Reiner \cite[Theorem 6.12]{CR81}, see also Section \ref{S7}). 
We should have 
${\rm rank}_{\bF_p}$ 
$\sum_{f\in {\rm Hom}_{\bF_p[G]}(F_p,F^\prime_p)}{\rm Im}(f)$ $=$ 
${\rm rank}_{\bF_p} ({}^tP_1|\cdots|{}^tP_{10})\otimes_\bZ \bF_p=20$ 
where ${}^tP_i$ is 
the transposed matrix of $P_i$. 
However, we can check that 
${\rm rank}_{\bF_2} ({}^tP_1|\cdots|{}^tP_{10})\otimes_\bZ \bF_2=20$ 
(resp. $18$, $18$, $20$)
and 
${\rm rank}_{\bF_3} ({}^tP_1|\cdots|{}^tP_{10})\otimes_\bZ \bF_3=20$ 
(resp. $20$, $18$, $18$) 
for $m=1$ (resp. $2$, $3$, $4$). 
This yields a contradiction. 
We conclude that $[F]\neq [F^\prime]$ as $\widetilde{H}$-lattices 
($\bF_2[\widetilde{H}]$-lattices or $\bF_3[\widetilde{H}]$-lattices). 
We also obtain that $X=Y<Z$ with $Z/Y\simeq C_2^2$. 

We can apply the same method for $M_{G^{\sigma_1}}$ and $M_{G^{\sigma_m}}$ 
instead of $F$ and $F^\prime$. 
Then we get that $\mathcal{P}=\{P_1\}$ and 
${\rm rank}_{\bF_2} ({}^tP_1)\otimes_\bZ \bF_2=4$ (resp. $2$, $2$, $4$) and 
${\rm rank}_{\bF_2} ({}^tP_1)\otimes_\bZ \bF_2=4$ (resp. $4$, $2$, $2$)  
for $m=1$ (resp. $2$, $3$, $4$). 
Hence we have 
$M_{G^{\sigma_1}}\otimes_\bZ \bF_2\not\simeq M_{G^{\sigma_m}}\otimes_\bZ \bF_2$ as $\bF_2[\widetilde{H}]$-lattices 
for $m=2,3$
and 
$M_{G^{\sigma_1}}\otimes_\bZ \bF_3\not\simeq M_{G^{\sigma_m}}\otimes_\bZ \bF_3$ $\bF_3[\widetilde{H}]$-lattices 
for $m=3,4$ 
(see Example \ref{ex9.5} for GAP computations).\\

%%%%%%%%%%%%%%%%%%%%%%%%%%
The case where $i=j=145$ with $[M_G]^{fl}\in {\rm WSEC}_{114}$: $G\simeq \SL(2,\bF_3)\rtimes C_4$. 

We obtain that $Z(G)\simeq C_4$, 
${\rm Inn}(G)\simeq G/Z(G)\simeq S_4\leq 
X={\rm Aut}_{\GL(4,\bZ)}(G)\leq 
Y\leq Z\simeq {\rm Aut}(G)\simeq S_4\times C_2^2$ 
with $[{\rm Aut}(G):X]=2$. 
Applying the function 
{\tt AutGSubdirectProductsWSECInvariantGen(}$G${\tt)}, 
we see that $t=1$, 
i.e. $\langle \sigma_1,x\mid x\in X\rangle=Z$. 

We have $F=[M_G]^{fl}$ 
and $F^\prime=[M_{G^{\sigma_1}}]^{fl}$ 
with ${\rm rank}_\bZ F={\rm rank}_\bZ F^\prime=20$.

As in the proof of Theorem \ref{th6.1}, we see that 
{\tt PossibilityOfStablyEquivalentFSubdirectProduct($\widetilde{H}$)} 
returns a basis $\mathcal{L}=\{l_1,l_2\}$ of the solution space 
(see Section 5) where 
\begin{align*}
l_1&=[ 0, 0, 0, 0, 0, 0, 1, 0, 0, 0, 0, 0, 0, 0, -2, -1, -1, 0, 0, 0, 2, 2, 1, -2, 0 ],\\
l_2&=[ 0, 0, 0, 0, 0, 0, 0, 0, 0, 0, 0, 0, 0, 0, 0, 0, 0, 0, 0, 0, 0, 0, 0, 0, 1 ]. 
\end{align*}
Hence we consider the possibilities 
\begin{align}
&\bZ[\widetilde{H}/H_7]\oplus\bZ[\widetilde{H}/H_{21}]^{\oplus 2}\oplus\bZ[\widetilde{H}/H_{22}]^{\oplus 2}\oplus\bZ[\widetilde{H}/H_{23}]
\simeq \bZ[\widetilde{H}/H_{15}]^{\oplus 2}\oplus\bZ[\widetilde{H}/H_{16}]\oplus\bZ[\widetilde{H}/H_{17}]\oplus\bZ^{\oplus 2}\label{iso4}\\
&F\simeq F^\prime\nonumber
\end{align}
in the sense of the equation (\ref{eqpos2-1}) in Section \ref{S6}. 

Applying 
{\tt StablyEquivalentFCheckPSubdirectProduct($\widetilde{H}$,LHSlist($l_1$),RHSlist($l_1$))}, 
we obtain a basis 
$\mathcal{P}=\{P_1,\dots,P_{90}\}$ of the solution space of $G_1P=PG_2$ 
and we can confirm that the isomorphism (\ref{iso4}) indeed holds 
after tensoring $\bF_3$. 

Suppose that $[F]=[F^\prime]$ as $\widetilde{H}$-lattices. 
Then we get  
$F_3\simeq F^\prime_3$ with 
${\rm rank}_{\bF_3} F={\rm rank}_{\bF_3} F^\prime=20$ 
where $F_3=F\otimes_\bZ \bF_3$ and $F^\prime_3=F^\prime\otimes_\bZ \bF_3$ 
because Krull-Schmidt-Azumaya theorem holds for $\bF_p[G]$-lattices 
(Curtis and Reiner \cite[Theorem 6.12]{CR81}, see also Section \ref{S7}). 
We should have 
${\rm rank}_{\bF_3}$ 
$\sum_{f\in {\rm Hom}_{\bF_p[G]}(F_3,F^\prime_3)}{\rm Im}(f)$ $=$ 
${\rm rank}_{\bF_3} ({}^tP_1|\cdots|{}^tP_{10})\otimes_\bZ \bF_3=20$. 
However, we can check that 
${\rm rank}_{\bF_3} ({}^tP_1|\cdots|{}^tP_{10})\otimes_\bZ \bF_3=16$. 
 
This yields a contradiction. 
We conclude that $[F]\neq [F^\prime]$ as $\widetilde{H}$-lattices 
($\bF_3[\widetilde{H}]$-lattices). 
This implies that $X=Y<Z$ with $[Z:X]=2$. 

We can apply the same method for $M_G$ and $M_{G^{\sigma_1}}$ 
instead of $F$ and $F^\prime$. 
Then we find that the function 
{\tt PossibilityOfStablyEquivalentFSubdirectProduct($\widetilde{H}$)} 
returns {\tt []}. 
This implies that 
$M_G\not\simeq M_{G^{\sigma_1}}$ as $\widetilde{H}$-lattices and 
$M_G\otimes_\bZ \bF_3\not\simeq M_{G^{\sigma_1}}\otimes_\bZ \bF_3$ 
as $\bF_3[\widetilde{H}]$-lattices 
(see Example \ref{ex9.5} for GAP computations).\\

%%%%%%%%%%%%%%%%%%%%%%%%%%
The case where $i=j=147$ with $[M_G]^{fl}\in {\rm WSEC}_{116}$: $G\simeq (\GL(2,\bF_3)\rtimes C_2)\rtimes C_2\simeq 
(\SL(2,\bF_3)\rtimes C_4)\rtimes C_2$. 

We obtain that $Z(G)\simeq C_2$, 
${\rm Inn}(G)\simeq G/Z(G)\simeq (A_4\times C_2^2)\rtimes C_2
\leq X={\rm Aut}_{\GL(4,\bZ)}(G)\leq 
Y\leq Z\leq {\rm Aut}(G)\simeq S_4\times D_4\times C_2$ 
with ${\rm Aut}(G)/X\simeq C_2^2$. 

We find that $H\lhd G$ with $[G:H]=2$ 
which satisfies $H\simeq N_{4,145}$ with $[N_{4,145}]^{fl}\in 
{\rm WSEC}_{114}$. 

Applying the function 
{\tt AutGSubdirectProductsWSECInvariantGen(}$G${\tt)}, 
we see that $t=1$, 
i.e. $\langle \sigma_1,x\mid x\in X\rangle=Z$. 
Then we see that $Z=\langle X,\sigma_1\rangle$ is of order $192$ 
with $[Z:X]=2$ and $[{\rm Aut}(G):Z]=2$. 

Applying the function 
{\tt IsomorphismFromSubdirectProduct($\widetilde{H}$}{\tt)}, 
we obtain the automorphism $\sigma_1\in {\rm Aut}(G)$ 
and for this $\sigma_1$ we can check that 
$[M_H]^{fl}\neq [M_{H^{\sigma_1|_H}}]^{fl}$. 
This implies that $[M_G]^{fl}\neq [M_{G^{\sigma_1}}]^{fl}$. 
Hence we have $X=Y<Z$ with $Z/Y\simeq C_2$ and ${\rm Aut}(G)/Z\simeq C_2$. 

It follows from the case 
where $i=j=145$ with $[M_G]^{fl}\in {\rm WSEC}_{114}$ that 
$M_G\otimes_\bZ \bF_3\not\simeq M_{G^{\sigma_1}}\otimes_\bZ \bF_3$ 
as $\bF_3[\widetilde{H}]$-lattices 
(see Example \ref{ex9.5} for GAP computations).\qed\\

%%%%%%%%%%%%
{\it Proof of {Theorem \ref{thmain5}} for the exceptional case $i=j=7$.}

We will show that 
$X=Y\lhd Z$ with $Z/Y\simeq C_2$. 

We obtain that $Z(G)\simeq C_4$, 
${\rm Inn}(G)\simeq G/Z(G)\simeq S_3
\leq X={\rm Aut}_{\GL(4,\bZ)}(G)\simeq D_6\leq 
Y\leq Z\simeq {\rm Aut}(G)\simeq S_3\times C_2^2$ 
with $[{\rm Aut}(G):X]=2$.

Applying 
{\tt AutGSubdirectProductsWSECInvariantGen(}$G${\tt)}, 
we see that $t=1$, 
i.e. $\langle \sigma_1,x\mid x\in X\rangle=Z$. 
As in the proof of Theorem \ref{th6.1}, by applying the function 
{\tt PossibilityOfStablyEquivalentFSubdirectProduct($\widetilde{H}$)}, 
we get the only possibility is $F-F^\prime=0$ 
in the sense of the equation (\ref{eqpos2}) in Section \ref{S6}, 
i.e. $F\oplus Q\simeq F^\prime\oplus Q$ for some permutation 
$\widetilde{H}$-lattice $Q$, 
where $F=[M_G]^{fl}$ and $F^\prime=[M_{G^{\sigma_1}}]^{fl}$ 
with ${\rm rank}_\bZ F={\rm rank}_\bZ F^\prime=20$. 

Applying the function 
{\tt StablyEquivalentFCheckPSubdirectProduct($\widetilde{H}$,LHSlist($l_1$),RHSlist($l_1$))}, 
we obtain a basis 
$\mathcal{P}=\{P_1,\dots,P_{18}\}$ of the solution space of $G_1P=PG_2$. 

%%%%%%%%
As in the case of $i=j=137$ of the proof of {Theorem \ref{thmain4}}, 
we get that 
${\rm rank}_{\bF_3} ({}^tP_1|\cdots|{}^tP_{18})\otimes_\bZ \bF_3=18$ 
and hence 
%%%%%%%%
$[F]\neq [F^\prime]$ as $\widetilde{H}$-lattices 
($\bF_3[\widetilde{H}]$-lattices). 
Thus we conclude that $X=Y\lhd Z$ with $Z/Y\simeq C_2$. 

For the last assertion, 
we can apply the same method for $M_G$ and $M_{G^{\sigma_1}}$ 
instead of $F$ and $F^\prime$. 
Then we get that $\mathcal{P}=\{P_1,P_2\}$ and 
${\rm rank}_{\bF_3} ({}^tP_1|{}^tP_2)\otimes_\bZ \bF_3=2<4={\rm rank}_{\bF_3} M_G\otimes \bF_3$. 
Hence we have 
$M_G\otimes_\bZ \bF_3\not\simeq M_{G^{\sigma_1}}\otimes_\bZ \bF_3$ 
as $\bF_3[\widetilde{H}]$-lattices 
(see Example \ref{ex9.6} for GAP computations).\qed\\

%%%%%%%%%%%%%%%%%%%%%%%%%%%%%%%%%%%%%%%%%%%%%%%%%%%%%%%%%%%%%%%%%
{\it Proof of the last statement of Theorem \ref{thmain4} and Theorem \ref{thmain5}.}

Because we already have $(1)\Leftrightarrow (2)$ of {Theorem \ref{thmain4}} 
$($resp. Theorem \ref{thmain5}$)$, 
we get 
\begin{align*}
{\rm WSEC}_r=\coprod_{L/k\atop {\rm Gal}(L/k)\simeq {\rm G}_r}
{\rm WSEC}_{r,L},\ 
{\rm WSEC}_{r,L}=
\coprod_{t=1}^{\lambda_{r}} {\rm SEC}_{r,L,t}
\end{align*}
modulo stably birationally $k$-equivalence $\stackrel{\rm s.b.}{\approx}$ 
where 
${\rm SEC}_{r,L,t}$ $(1\leq t\leq \lambda_{r})$ is the $t$-th 
stably $k$-equivalent class of $T$ of dimension $4$ in 
${\rm WSEC}_{r,L}$ which corresponds to the fixed minimal splitting field $L$ 
in 
${\rm WSEC}_r=\coprod_{L/k\atop {\rm Gal}(L/k)\simeq {\rm G}_r} {\rm WSEC}_{r,L}$ 
with 
$[\widehat{T}]^{fl}\in {\rm WSEC}_{r,L}
=\coprod_{t=1}^{\lambda_r} {\rm SEC}_{r,L,t}$, 
${\rm Gal}(L/k)\simeq {\rm G}_r\simeq N_{4,i}$ $($resp. $I_{4,i}$$)$
$(1\leq r\leq 121, r\neq 7,8,10,12$$)$ 
$($resp. $(122\leq r\leq 128)$$)$ and 
$\lambda_r=|{\rm WSEC}_{r,L}|$. 

We get $\lambda_r=|{\rm WSEC}_{r,L}|=|Y\backslash {\rm Aut}(G)|$ 
via one to one correspondence 
$\overline{\sigma}=Y \sigma\in Y\backslash {\rm Aut}(G)$ $\leftrightarrow$ 
$[M_{G^\sigma}]^{fl}\in{\rm WSEC}_{r,L}$. 
Then we get %$\lambda_r=|{\rm WSEC}_{r,L}|$ as 
$\lambda_r=|Y\backslash {\rm Aut}(G)|$ 
%by using the GAP functions {\tt AutWSEC(}$G${\tt )}, 
%{\tt AutomorphismGroup(}$G${\tt )}, {\tt IdCoset(}$G$,$H${\tt )} as 
as $Y=$ {\tt AutWSEC(}$G${\tt )}, 
${\rm Aut}(G)$ $=$ ${\tt AutomorphismGroup(}$G${\tt )}$, 
$\lambda_r=$ {\tt IdCoset}$(${\tt Aut}$(G),${\tt Y}$){\tt [}$\,$1$\,${\tt ]}$
which is given as in Table $4$ $($resp. Table $5$$)$ 
$($see Example \ref{ex9.8} $($resp. Example \ref{ex9.9}$)$$)$. 
\qed

\bigskip
%%%%%%%%%%%%%%%%%%%%%%%%%%%%%%%%%%%%%%%%%%%%%%%%%%%%%%%%%%%%%%%%%%%%%%%%%%
%\ssubsection{N3}
\begin{example}[Proof of ${\rm (2)}\Rightarrow {\rm (1)}$ of {Theorem \ref{thmain2}}]\label{ex9.1}~\vspace*{-5mm}\\
\begin{verbatim}
gap> Read("BCAlgTori.gap");
gap> N3g:=List(N3,x->MatGroupZClass(x[1],x[2],x[3],x[4]));;

gap> N3WSECMembersTable[1][1]; # N_{3,1} in WSEC_1
1
gap> N3autWSEC1invgen:=AutGSubdirectProductsWSECInvariantGen(N3g[1]); # checking X=Y=Z
[  ]

gap> N3WSECMembersTable[2][1]; # N_{3,2} in WSEC_2
2
gap> N3autWSEC2invgen:=AutGSubdirectProductsWSECInvariantGen(N3g[2]);
[ <pc group with 3 generators> ]
gap> Length(N3autWSEC2invgen); # the number of generators of X\Z/X
1
gap> PossibilityOfStablyEquivalentFSubdirectProduct(N3autWSEC2invgen[1]);
[ [ 0, 0, 0, 0, 0, 0, 0, 0, 0, 0, 0, 0, 0, 0, 0, 0, 1 ] ]
gap> l:=last[1]; # possibility for F=F' in the sense of the equation (8)
[ 0, 0, 0, 0, 0, 0, 0, 0, 0, 0, 0, 0, 0, 0, 0, 0, 1 ]
gap> bp:=StablyEquivalentFCheckPSubdirectProduct(
> N3autWSEC2invgen[1],LHSlist(l),RHSlist(l));;
gap> List(bp,Determinant); # the matrix P=bp[1] with det(P)=1 and FP=PF'
[ 1, -3, 3, 0, 0, 0, 0, 0, 0, 0, 0, 0, 0, 0, 0 ]
gap> StablyEquivalentFCheckMatSubdirectProduct( 
> N3autWSEC2invgen[1],LHSlist(l),RHSlist(l),bp[1]); 
true

gap> N3WSECMembersTable[3][1]; # N_{3,3} in WSEC_3
3
gap> N3autWSEC3invgen:=AutGSubdirectProductsWSECInvariantGen(N3g[3]);
[ <pc group with 3 generators> ]
gap> Length(N3autWSEC3invgen); # the number of generators of X\Z/X
1
gap> PossibilityOfStablyEquivalentFSubdirectProduct(N3autWSEC3invgen[1]);
[ [ 0, 0, 0, 0, 0, 0, 0, 0, 0, 0, 0, 0, 0, 0, 0, 0, 1 ] ]
gap> l:=last[1]; # possibility for F=F' in the sense of the equation (8)
[ 0, 0, 0, 0, 0, 0, 0, 0, 0, 0, 0, 0, 0, 0, 0, 0, 1 ]
gap> bp:=StablyEquivalentFCheckPSubdirectProduct(
> N3autWSEC3invgen[1],LHSlist(l),RHSlist(l));;
gap> List(bp,Determinant); # the matrix P=bp[1] with det(P)=-1 and FP=PF'
[ -1, 0, 0, 1, 0, -1, -1, 1, 1, 0, -1, 0, 0, 0, 0, 0, 0, 0, 0, 0, 0, 0, 0 ]
gap> StablyEquivalentFCheckMatSubdirectProduct(
> N3autWSEC3invgen[1],LHSlist(l),RHSlist(l),bp[1]);
true

gap> N3WSECMembersTable[4][1]; # N_{3,4} in WSEC_4
4
gap> N3autWSEC4invgen:=AutGSubdirectProductsWSECInvariantGen(N3g[4]);
[ <pc group with 3 generators>, <pc group with 3 generators> ]
gap> Length(N3autWSEC4invgen); # the number of generators of X\Z/X
2
gap> PossibilityOfStablyEquivalentFSubdirectProduct(N3autWSEC4invgen[1]);
[ [ 0, 0, 0, 0, 0, 0, 0, 0, 1 ] ]
gap> l:=last[1]; # possibility for F=F' in the sense of the equation (8)
[ 0, 0, 0, 0, 0, 0, 0, 0, 1 ]
gap> bp:=StablyEquivalentFCheckPSubdirectProduct(
> N3autWSEC4invgen[1],LHSlist(l),RHSlist(l));;
gap> List(bp,Determinant); # the matrix P=bp[1] with det(P)=-1 and FP=PF'
[ -1, 0, 0, -1, 0, -1, -1, -1, -1, 0, -1, 0, 0, 0, 0, 0, 0, 0, 0 ]
gap> StablyEquivalentFCheckMatSubdirectProduct(
> N3autWSEC4invgen[1],LHSlist(l),RHSlist(l),bp[1]);
true
gap> PossibilityOfStablyEquivalentFSubdirectProduct(N3autWSEC4invgen[2]);
[ [ 0, 0, 0, 0, 0, 0, 0, 0, 1 ] ]
gap> l:=last[1]; # possibility for F=F' in the sense of the equation (8)
[ 0, 0, 0, 0, 0, 0, 0, 0, 1 ]
gap> bp:=StablyEquivalentFCheckPSubdirectProduct(
> N3autWSEC4invgen[2],LHSlist(l),RHSlist(l));;
gap> List(bp,Determinant); # the matrix P=bp[1] with det(P)=-1 and FP=PF' 
[ -1, 0, 0, -1, 0, -1, -1, -1, -1, 0, -1, 0, 0, 0, 0, 0, 0, 0, 0 ]
gap> StablyEquivalentFCheckMatSubdirectProduct(
> N3autWSEC4invgen[2],LHSlist(l),RHSlist(l),bp[1]);
true

gap> N3WSECMembersTable[5][1]; # N_{3,5} in WSEC_5
5
gap> N3autWSEC5invgen:=AutGSubdirectProductsWSECInvariantGen(N3g[5]); # checking X=Y=Z
[  ]

gap> N3WSECMembersTable[5][2]; # N_{3,6} in WSEC_5
6
gap> N3autWSEC5_2invgen:=AutGSubdirectProductsWSECInvariantGen(N3g[6]); # checking X=Y=Z
[  ]

gap> N3WSECMembersTable[6][1]; # N_{3,7} in WSEC_6
7
gap> N3autWSEC6invgen:=AutGSubdirectProductsWSECInvariantGen(N3g[7]);
[ <pc group with 4 generators>, <pc group with 4 generators> ]
gap> Length(N3autWSEC6invgen); # the number of generators of X\Z/X
2
gap> PossibilityOfStablyEquivalentFSubdirectProduct(N3autWSEC6invgen[1]);
[ [ 0, 0, 0, 0, 0, 0, 0, 0, 0, 0, 0, 0, 0, 0, 0, 0, 0, 0, 0, 0, 0, 0, 0, 0, 0, 0, 0, 1 ] ]
gap> l:=last[1]; # possibility for F=F' in the sense of the equation (8)
[ 0, 0, 0, 0, 0, 0, 0, 0, 0, 0, 0, 0, 0, 0, 0, 0, 0, 0, 0, 0, 0, 0, 0, 0, 0, 0, 0, 1 ]
gap> bp:=StablyEquivalentFCheckPSubdirectProduct(
> N3autWSEC6invgen[1],LHSlist(l),RHSlist(l));;
gap> List(bp,Determinant); # the matrix P=bp[4] with det(P)=-1 and FP=PF'
[ 0, 0, 0, -1, 0, -1, -1, -1, 0, 0, 0, 0, 0, 0, 0 ]
gap> StablyEquivalentFCheckMatSubdirectProduct(
> N3autWSEC6invgen[1],LHSlist(l),RHSlist(l),bp[4]);
true
gap> PossibilityOfStablyEquivalentFSubdirectProduct(N3autWSEC6invgen[2]);
[ [ 0, 0, 0, 0, 0, 0, 0, 0, 0, 0, 0, 0, 0, 0, 0, 0, 0, 0, 0, 0, 0, 0, 0, 0, 0, 0, 0, 1 ] ]
gap> l:=last[1]; # possibility for F=F' in the sense of the equation (8)
[ 0, 0, 0, 0, 0, 0, 0, 0, 0, 0, 0, 0, 0, 0, 0, 0, 0, 0, 0, 0, 0, 0, 0, 0, 0, 0, 0, 1 ]
gap> bp:=StablyEquivalentFCheckPSubdirectProduct(
> N3autWSEC6invgen[2],LHSlist(l),RHSlist(l));;
gap> List(bp,Determinant); # the matrix P=bp[1] with det(P)=-1 and FP=PF'
[ -1, 0, 0, 0, 0, -1, -1, 0, 0, 0, 0, 0, 0, 0, 0 ]
gap> StablyEquivalentFCheckMatSubdirectProduct(
> N3autWSEC6invgen[2],LHSlist(l),RHSlist(l),bp[1]);
true

gap> N3WSECMembersTable[7][1]; # N_{3,8} in WSEC_7
8
gap> N3autWSEC7invgen:=AutGSubdirectProductsWSECInvariantGen(N3g[8]); # checking X=Y=Z
[  ]

gap> N3WSECMembersTable[8][1]; # N_{3,9} in WSEC_8
9
gap> N3autWSEC8invgen:=AutGSubdirectProductsWSECInvariantGen(N3g[9]); # checking X=Y=Z
[  ]

gap> N3WSECMembersTable[9][1]; # N_{3,10} in WSEC_9
10
gap> N3autWSEC9invgen:=AutGSubdirectProductsWSECInvariantGen(N3g[10]); # checking X=Y=Z
[  ]

gap> N3WSECMembersTable[10][1]; # N_{3,11} in WSEC_{10}
11
gap> N3autWSEC10invgen:=AutGSubdirectProductsWSECInvariantGen(N3g[11]); # checking X=Y=Z
[  ]

gap> N3WSECMembersTable[10][2]; # N_{3,13} in WSEC_{10}
13
gap> N3autWSEC10_2invgen:=AutGSubdirectProductsWSECInvariantGen(N3g[13]); # checking X=Y=Z
[  ]

gap> N3WSECMembersTable[11][1]; # N_{3,12} in WSEC_{11}
12
gap> N3autWSEC11invgen:=AutGSubdirectProductsWSECInvariantGen(N3g[12]); # checking X=Y=Z
[  ]

gap> N3WSECMembersTable[12][1]; # N_{3,14} in WSEC_{12}
14
gap> N3autWSEC12invgen:=AutGSubdirectProductsWSECInvariantGen(N3g[14]);
[ <pc group with 5 generators> ]
gap> Length(N3autWSEC12invgen); # the number of generators of X\Z/X
1
gap> PossibilityOfStablyEquivalentFSubdirectProduct(N3autWSEC12invgen[1]);;
gap> l:=last[Length(last)]; # possibility for F=F' in the sense of the equation (8)
[ 0, 0, 0, 0, 0, 0, 0, 0, 0, 0, 0, 0, 0, 0, 0, 0, 0, 0, 0, 0, 0, 0, 0, 0, 0, 
  0, 0, 0, 0, 0, 0, 0, 0, 1 ]
gap> bp:=StablyEquivalentFCheckPSubdirectProduct(
> N3autWSEC12invgen[1],LHSlist(l),RHSlist(l));;
gap> List(bp,Determinant); # |det(bp[s])|<>1
[ 5, 0, 0, 0, 65536, 0, 0, 0, 0, 0, 0 ]
gap> SearchPRowBlocks(bp);
rec( bpBlocks := [ [ 1, 2, 3, 4, 5, 6, 7, 8, 9, 10, 11 ] ], 
  rowBlocks := [ [ 1, 2, 3, 4, 5, 6, 7, 8, 9, 10, 11, 12, 13, 14, 15 ] ] )
gap> P:=SearchP1(bp); # the matrix P with det(P)=1, FP=PF' and rank(F)=rank(F')=15
[ [ 1, 1, 1, 0, 0, 1, 0, 1, -1, 0, 0, 1, -1, -1, 0 ], 
  [ 0, 0, 0, 0, 0, 0, 0, 0, 0, 0, 1, 0, 0, 0, 0 ], 
  [ 0, 0, 0, 0, 0, 0, 1, 0, 0, 0, 0, 0, 0, 0, 0 ], 
  [ 0, 1, -1, 2, 1, -1, 0, 0, 2, -1, 1, -1, 1, 0, -1 ], 
  [ 0, 0, 0, 0, 0, 0, 0, 0, 0, 1, 0, 0, 0, 0, 0 ], 
  [ 0, -1, 1, -1, 0, 1, 0, 0, 0, 1, -1, 1, 0, 1, 1 ], 
  [ 0, 0, 1, 0, 0, 0, 0, 0, 0, 0, 0, 0, 0, 0, 0 ], 
  [ 0, 1, 0, 2, 1, 0, 0, -1, 1, -1, 1, 0, 0, -1, -2 ], 
  [ 0, 0, 0, 0, 0, 0, 0, 0, 0, 0, 0, 0, 0, 1, 0 ], 
  [ 0, 0, 0, 0, 1, 0, 0, 0, 0, 0, 0, 0, 0, 0, 0 ], 
  [ 0, 1, 0, 0, 0, 0, 0, 0, 0, 0, 0, 0, 0, 0, 0 ], 
  [ 0, 0, 1, 0, 1, 2, -1, 0, -1, 1, 0, 1, -2, -1, 0 ], 
  [ 0, -1, 1, -1, 1, 2, 0, 0, -1, 2, -1, 1, -1, 0, 1 ], 
  [ 0, 0, 0, 0, 0, 0, 0, 0, 1, 0, 0, 0, 0, 0, 0 ], 
  [ 0, 1, -1, 1, 0, -1, 0, 0, 1, 0, 1, -1, 1, 1, 0 ] ]
gap> StablyEquivalentFCheckMatSubdirectProduct(
> N3autWSEC12invgen[1],LHSlist(l),RHSlist(l),P);
true

gap> N3WSECMembersTable[13][1]; # N_{3,15} in WSEC_{13}
15
gap> N3autWSEC13invgen:=AutGSubdirectProductsWSECInvariantGen(N3g[15]); # checking X=Y=Z
[  ]
\end{verbatim}
\end{example}

\bigskip
%%%%%%%%%%%%%%%%%%%%%%%%%%%%%%%%%%%%%%%%%%%%%%%%%%%%%%%%%%%%%%%%%%%%%%%%%%
%\ssubsection{N3}
\begin{example}[Proof of {Theorem \ref{thmain2}}: the computation of $\lambda_r=|{\rm WSEC}_{r,L}|=|Y\backslash {\rm Aut}(G)|$ as in Table $2$]\label{ex9.2}~\vspace*{-5mm}\\
\begin{verbatim}
gap> Read("BCAlgTori.gap");
gap> N3g:=List(N3,x->MatGroupZClass(x[1],x[2],x[3],x[4]));;
gap> for i in [1..15] do
> Y:=AutWSEC(N3g[i]);
> AutG:=AutomorphismGroup(N3g[i]);
> d:=IdCoset(AutG,Y);
> if d=[1] then
> Print(i,"\t",1,"\n");
> else
> Print(i,"\t",d,"\t",StructureDescription(TransitiveGroup(d[1],d[2])),"\n");
> fi;
> od;
1	1
2	[ 7, 5 ]	PSL(3,2)
3	1
4	1
5	[ 2, 1 ]	C2
6	[ 2, 1 ]	C2
7	[ 4, 3 ]	D8
8	1
9	1
10	1
11	1
12	1
13	1
14	1
15	[ 2, 1 ]	C2
\end{verbatim}
\end{example}

\bigskip
%%%%%%%%%%%%%%%%%%%%%%%%%%%%%%%%%%%%%%%%%%%%%%%%%%%%%%%%%%%%%%%%%%%%%%%
%\ssubsection{N4}
\begin{example}[Proof of ${\rm (2)}\Rightarrow {\rm (1)}$ of {Theorem \ref{thmain4}}]\label{ex9.3}~\vspace*{-5mm}\\
% [inline block 2: 1 envs, 128456 chars -> code_tex | \begin{verbatim} gap> Read("BCAlgTori.gap");...]

\end{example}

\bigskip
%%%%%%%%%%%%%%%%%%
%\ssubsection{I4}
\begin{example}[Proof of ${\rm (2)}\Rightarrow {\rm (1)}$ of {Theorem \ref{thmain5}}]\label{ex9.4}~\vspace*{-5mm}\\
\begin{verbatim}
gap> Read("BCAlgTori.gap");
gap> I4g:=List(I4,x->MatGroupZClass(x[1],x[2],x[3],x[4]));;

gap> I4WSECMembersTable[1][1]; # I_{4,1} in WSEC_{122}
1
gap> N4autWSEC122invgen:=AutGSubdirectProductsWSECInvariantGen(I4g[1]); # checking X=Y=Z
[  ]

gap> I4WSECMembersTable[2][1]; # I_{4,2} in WSEC_{123}
2
gap> N4autWSEC123invgen:=AutGSubdirectProductsWSECInvariantGen(I4g[2]); # checking X=Y=Z
[  ]

gap> I4WSECMembersTable[3][1]; # I_{4,3} in WSEC_{124}
3
gap> N4autWSEC124invgen:=AutGSubdirectProductsWSECInvariantGen(I4g[3]); # checking X=Y=Z
[  ]

gap> I4WSECMembersTable[4][1]; # I_{4,4} in WSEC_{125}
4
gap> N4autWSEC125invgen:=AutGSubdirectProductsWSECInvariantGen(I4g[4]); # checking X=Y=Z
[  ]

gap> I4WSECMembersTable[5][1]; # I_{4,5} in WSEC_{126}
5
gap> N4autWSEC126invgen:=AutGSubdirectProductsWSECInvariantGen(I4g[5]); # checking X=Y=Z
[  ]

gap> I4WSECMembersTable[6][1]; # I_{4,6} in WSEC_{127}
6
gap> N4autWSEC127invgen:=AutGSubdirectProductsWSECInvariantGen(I4g[6]); # checking X=Y=Z
[  ]
\end{verbatim}
\end{example}

\bigskip
%%%%%%%%%%%%%%%%%%%%%%%%%%%%%%%%%%%%%%%%%%%%%%%%%%%%%%%%%%%%%%%%%%%%%%%
%\ssubsection{N4}
\begin{example}[Proof of {Theorem \ref{thmain4}} for the exceptional cases $i=j=137,139,145,147$]\label{ex9.5}~\vspace*{-5mm}\\
% [inline block 3: 1 envs, 22997 chars -> code_tex | \begin{verbatim} gap> Read("BCAlgTori.gap");...]

\end{example}

\bigskip
\begin{example}[Proof of {Theorem \ref{thmain5}} for the exceptional case $i=j=7$]\label{ex9.6}~\vspace*{-5mm}\\
\begin{verbatim}
gap> Read("BCAlgTori.gap");
gap> I4g:=List(I4,x->MatGroupZClass(x[1],x[2],x[3],x[4]));;

gap> I4WSECMembersTable[7][1]; # I_{4,7} in WSEC_{128}
7
gap> G:=I4g[7];
MatGroupZClass( 4, 33, 2, 1 )
gap> StructureDescription(G);
"C3 : C8"
gap> StructureDescription(Centre(G)); # Z(G)=C4
"C4"
gap> StructureDescription(G/Centre(G)); # Inn(G)=G/Z(G)=S3, |Inn(G)|=6
"S3"
gap> StructureDescription(AutomorphismGroup(G)); # Aut(G)=C2xC2xS3, |Aut(G)|=24
"C2 x C2 x S3"
gap> Order(AutGLnZ(G)); # |X|=|AutGLnZ(G)|=12 
12
gap> Order(Centre(AutGLnZ(G))); # X=AutGLnZ(G)=D6
2
gap> I4autWSEC128invgen:=AutGSubdirectProductsWSECInvariantGen(G);
[ <pc group with 4 generators> ]
gap> Length(I4autWSEC128invgen); # the number of generators of X\Z/X, X<Z=Aut(G)
1

gap> GeneratorsOfGroup(G);
[ [ [ 0, 0, -1, 0 ], [ -1, 0, 0, 0 ], [ 1, 1, 1, -2 ], [ 0, 1, 0, -1 ] ], 
  [ [ 0, 1, 0, -1 ], [ 0, 0, -1, 1 ], [ -1, 0, 0, 1 ], [ 0, 1, 0, 0 ] ] ]
gap> List(GeneratorsOfGroup(G),x->Image(Projection(I4autWSEC128invgen[1],2),
> PreImage(Projection(I4autWSEC128invgen[1],1),x)));
[ [ [ 0, 0, 1, 0 ], [ 1, 0, 0, 0 ], [ -1, -1, -1, 2 ], [ 0, -1, 0, 1 ] ], 
  [ [ 0, 1, 0, -1 ], [ 0, 0, -1, 1 ], [ -1, 0, 0, 1 ], [ 0, 1, 0, 0 ] ] ]

gap> PossibilityOfStablyEquivalentFSubdirectProduct(I4autWSEC128invgen[1]);
[ [ 0, 0, 0, 0, 0, 0, 0, 0, 1 ] ]
gap> l:=last[1]; # there exists a possibility for F=F' 
[ 0, 0, 0, 0, 0, 0, 0, 0, 1 ]
gap> bp:=StablyEquivalentFCheckPSubdirectProduct(
> I4autWSEC128invgen[1],LHSlist(l),RHSlist(l));;
gap> SearchPRowBlocks(bp);
rec( bpBlocks := 
    [ [ 1, 2, 3, 4, 5, 6, 7, 8, 9, 10, 11, 12, 13, 14, 15, 16, 17, 18 ] ], 
  rowBlocks := 
    [ [ 1, 2, 3, 4, 5, 6, 7, 8, 9, 10, 11, 12, 13, 14, 15, 16, 17, 18, 19, 20 ] ] )
gap> Rank(Concatenation(bp)*Z(3)^0);  # [F]=[F'] is impossible over F_3 (over Z)
# X=Y<Z, Z/Y=C2
18

gap> StablyEquivalentMCheckPSubdirectProduct(I4autWSEC128invgen[1],LHSlist(l),RHSlist(l));
[ [ [ 1, 2, 2, -2 ], [ -2, -1, 0, 2 ], [ 0, 0, 1, -2 ], [ 0, 0, 2, -1 ] ], 
  [ [ 0, 3, 1, -2 ], [ -2, -1, -1, 4 ], [ -2, -1, 1, 0 ], [ -1, 1, 2, 0 ] ] ]
gap> Rank(Concatenation(last)*Z(3)^0); # M_G=M_G' is impossible over F_3 (over Z)
2
\end{verbatim}
\end{example}

\bigskip
\begin{example}[Computations of $X=Y=Z$ for $G=N_{31,i}$ as in Table $3$]\label{ex9.7}~\vspace*{-5mm}\\
\begin{verbatim}
gap> Read("BCAlgTori.gap");
gap> N31g:=List(N31,x->MatGroupZClass(x[1],x[2],x[3],x[4]));;

gap> N31WSECMembersTable[1]; # WSEC_1
[ 1, 4, 11 ]
gap> N31WSECMembersTable[1][1]; # N_{31,1} in WSEC_1
1
gap> N31autWSEC1invgen:=AutGSubdirectProductsWSECInvariantGen(N31g[1]);
[ <pc group with 2 generators> ]
gap> N31WSECMembersTable[1][2]; # N_{31,4} in WSEC_1
4
gap> N31autWSEC1_2invgen:=AutGSubdirectProductsWSECInvariantGen(N31g[4]); # checking X=Y=Z
[  ]
gap> N31WSECMembersTable[1][3]; # N_{31,11} in WSEC_1
11
gap> N31autWSEC1_3invgen:=AutGSubdirectProductsWSECInvariantGen(N31g[11]);
[ <pc group with 3 generators> ]

gap> N31WSECMembersTable[2]; # WSEC_2
[ 3, 6, 8, 10, 13 ]
gap> N31WSECMembersTable[2][1]; # N_{31,3} in WSEC_2
3
gap> N31autWSEC2invgen:=AutGSubdirectProductsWSECInvariantGen(N31g[3]);
[ <pc group with 3 generators> ]
gap> N31WSECMembersTable[2][2]; # N_{31,6} in WSEC_2
6
gap> N31autWSEC2_2invgen:=AutGSubdirectProductsWSECInvariantGen(N31g[6]);
[ <pc group with 3 generators> ]
gap> N31WSECMembersTable[2][3]; # N_{31,8} in WSEC_2
8
gap> N31autWSEC2_3invgen:=AutGSubdirectProductsWSECInvariantGen(N31g[8]);
[ <pc group with 3 generators> ]
gap> N31WSECMembersTable[2][4]; # N_{31,10} in WSEC_2
10
gap> N31autWSEC2_4invgen:=AutGSubdirectProductsWSECInvariantGen(N31g[10]);
[ <pc group with 3 generators> ]
gap> N31WSECMembersTable[2][5]; # N_{31,13} in WSEC_2
13
gap> N31autWSEC2_5invgen:=AutGSubdirectProductsWSECInvariantGen(N31g[13]);
[ <pc group with 4 generators> ]

gap> N31WSECMembersTable[3]; # WSEC_3
[ 2, 5, 7, 9, 12 ]
gap> N31WSECMembersTable[3][1]; # N_{31,2} in WSEC_3
2
gap> N31autWSEC3invgen:=AutGSubdirectProductsWSECInvariantGen(N31g[2]);
[ <pc group with 3 generators> ]
gap> N31WSECMembersTable[3][2]; # N_{31,5} in WSEC_3
5
gap> N31autWSEC3_2invgen:=AutGSubdirectProductsWSECInvariantGen(N31g[5]);
[ <pc group with 3 generators> ]
gap> N31WSECMembersTable[3][3]; # N_{31,7} in WSEC_3
7
gap> N31autWSEC3_3invgen:=AutGSubdirectProductsWSECInvariantGen(N31g[7]);
[ <pc group with 3 generators> ]
gap> N31WSECMembersTable[3][4]; # N_{31,9} in WSEC_3
9
gap> N31autWSEC3_4invgen:=AutGSubdirectProductsWSECInvariantGen(N31g[9]);
[ <pc group with 3 generators> ]
gap> N31WSECMembersTable[3][5]; # N_{31,12} in WSEC_3
12
gap> N31autWSEC3_5invgen:=AutGSubdirectProductsWSECInvariantGen(N31g[12]);
[ <pc group with 4 generators> ]

gap> N31WSECMembersTable[4]; # WSEC_4
[ 14, 18, 25, 26, 29 ]
gap> N31WSECMembersTable[4][1]; # N_{31,14} in WSEC_4
14
gap> N31autWSEC4invgen:=AutGSubdirectProductsWSECInvariantGen(N31g[14]);
[ <pc group with 3 generators>, <pc group with 3 generators> ]
gap> N31WSECMembersTable[4][2]; # N_{31,18} in WSEC_4
18
gap> N31autWSEC4_2invgen:=AutGSubdirectProductsWSECInvariantGen(N31g[18]);
[ <pc group with 3 generators>, <pc group with 3 generators> ]
gap> N31WSECMembersTable[4][3]; # N_{31,25} in WSEC_4
25
gap> N31autWSEC4_3invgen:=AutGSubdirectProductsWSECInvariantGen(N31g[25]);
[ <pc group with 3 generators>, <pc group with 3 generators> ]
gap> N31WSECMembersTable[4][4]; # N_{31,26} in WSEC_4
26
gap> N31autWSEC4_4invgen:=AutGSubdirectProductsWSECInvariantGen(N31g[26]);
[ <pc group with 3 generators>, <pc group with 3 generators> ]
gap> N31WSECMembersTable[4][5]; # N_{31,29} in WSEC_4
29
gap> N31autWSEC4_5invgen:=AutGSubdirectProductsWSECInvariantGen(N31g[29]);
[ <pc group with 4 generators>, <pc group with 4 generators>, 
  <pc group with 4 generators> ]

gap> N31WSECMembersTable[5]; # WSEC_5
[ 15, 16, 19, 20, 21, 22, 27, 28, 31, 35 ]
gap> N31WSECMembersTable[5][1]; # N_{31,15} in WSEC_5
15
gap> N31autWSEC5invgen:=AutGSubdirectProductsWSECInvariantGen(N31g[15]); # checking X=Y=Z
[  ]
gap> N31WSECMembersTable[5][2]; # N_{31,16} in WSEC_5
16
gap> N31autWSEC5_2invgen:=AutGSubdirectProductsWSECInvariantGen(N31g[16]); # checking X=Y=Z
[  ]
gap> N31WSECMembersTable[5][3]; # N_{31,19} in WSEC_5
19
gap> N31autWSEC5_3invgen:=AutGSubdirectProductsWSECInvariantGen(N31g[19]); # checking X=Y=Z
[  ]
gap> N31WSECMembersTable[5][4]; # N_{31,20} in WSEC_5
20
gap> N31autWSEC5_4invgen:=AutGSubdirectProductsWSECInvariantGen(N31g[20]); # checking X=Y=Z
[  ]
gap> N31WSECMembersTable[5][5]; # N_{31,21} in WSEC_5
21
gap> N31autWSEC5_5invgen:=AutGSubdirectProductsWSECInvariantGen(N31g[21]); # checking X=Y=Z
[  ]
gap> N31WSECMembersTable[5][6]; # N_{31,22} in WSEC_5
22
gap> N31autWSEC5_6invgen:=AutGSubdirectProductsWSECInvariantGen(N31g[22]); # checking X=Y=Z
[  ]
gap> N31WSECMembersTable[5][7]; # N_{31,27} in WSEC_5
27
gap> N31autWSEC5_7invgen:=AutGSubdirectProductsWSECInvariantGen(N31g[27]); # checking X=Y=Z
[  ]
gap> N31WSECMembersTable[5][8]; # N_{31,28} in WSEC_5
28
gap> N31autWSEC5_8invgen:=AutGSubdirectProductsWSECInvariantGen(N31g[28]); # checking X=Y=Z
[  ]
gap> N31WSECMembersTable[5][9]; # N_{31,31} in WSEC_5
31
gap> N31autWSEC5_9invgen:=AutGSubdirectProductsWSECInvariantGen(N31g[31]);
[ <pc group with 4 generators>, <pc group with 4 generators> ]
gap> N31WSECMembersTable[5][10]; # N_{31,35} in WSEC_5
35
gap> N31autWSEC5_10invgen:=AutGSubdirectProductsWSECInvariantGen(N31g[35]);
[ <pc group with 4 generators>, <pc group with 4 generators> ]

gap> N31WSECMembersTable[6]; # WSEC_6
[ 17, 23, 24, 30, 32, 33, 34, 36, 37 ]
gap> N31WSECMembersTable[6][1]; # N_{31,17} in WSEC_6
17
gap> N31autWSEC6invgen:=AutGSubdirectProductsWSECInvariantGen(N31g[17]);
[ <pc group with 4 generators>, <pc group with 4 generators> ]
gap> N31WSECMembersTable[6][2]; # N_{31,23} in WSEC_6
23
gap> N31autWSEC6_2invgen:=AutGSubdirectProductsWSECInvariantGen(N31g[23]);
[ <pc group with 4 generators>, <pc group with 4 generators> ]
gap> N31WSECMembersTable[6][3]; # N_{31,24} in WSEC_6
24
gap> N31autWSEC6_3invgen:=AutGSubdirectProductsWSECInvariantGen(N31g[24]);
[ <pc group with 4 generators>, <pc group with 4 generators> ]
gap> N31WSECMembersTable[6][4]; # N_{31,30} in WSEC_6
30
gap> N31autWSEC6_4invgen:=AutGSubdirectProductsWSECInvariantGen(N31g[30]);
[ <pc group with 4 generators>, <pc group with 4 generators> ]
gap> N31WSECMembersTable[6][5]; # N_{31,32} in WSEC_6
32
gap> N31autWSEC6_5invgen:=AutGSubdirectProductsWSECInvariantGen(N31g[32]);
[ <pc group with 4 generators>, <pc group with 4 generators> ]
gap> N31WSECMembersTable[6][6]; # N_{31,33} in WSEC_6
33
gap> N31autWSEC6_6invgen:=AutGSubdirectProductsWSECInvariantGen(N31g[33]);
[ <pc group with 4 generators>, <pc group with 4 generators> ]
gap> N31WSECMembersTable[6][7]; # N_{31,34} in WSEC_6
34
gap> N31autWSEC6_7invgen:=AutGSubdirectProductsWSECInvariantGen(N31g[34]);
[ <pc group with 4 generators>, <pc group with 4 generators> ]
gap> N31WSECMembersTable[6][8]; # N_{31,36} in WSEC_6
36
gap> N31autWSEC6_8invgen:=AutGSubdirectProductsWSECInvariantGen(N31g[36]);
[ <pc group with 4 generators>, <pc group with 4 generators> ]
gap> N31WSECMembersTable[6][9]; # N_{31,37} in WSEC_6
37
gap> N31autWSEC6_9invgen:=AutGSubdirectProductsWSECInvariantGen(N31g[37]);
[ <pc group with 5 generators>, <pc group with 5 generators>, 
  <pc group with 5 generators>, <pc group with 5 generators> ]

gap> N31WSECMembersTable[7]; # WSEC_7
[ 38, 48 ]
gap> N31WSECMembersTable[7][1]; # N_{31,38} in WSEC_7
38
gap> N31autWSEC7invgen:=AutGSubdirectProductsWSECInvariantGen(N31g[38]); # checking X=Y=Z
[  ]
gap> N31WSECMembersTable[7][2]; # N_{31,48} in WSEC_7
48
gap> N31autWSEC7_2invgen:=AutGSubdirectProductsWSECInvariantGen(N31g[48]); # checking X=Y=Z
[  ]

gap> N31WSECMembersTable[8]; # WSEC_8
[ 40, 47, 53 ]
gap> N31WSECMembersTable[8][1]; # N_{31,40} in WSEC_8
40
gap> N31autWSEC8invgen:=AutGSubdirectProductsWSECInvariantGen(N31g[40]); # checking X=Y=Z
[  ]
gap> N31WSECMembersTable[8][2]; # N_{31,47} in WSEC_8
47
gap> N31autWSEC8_2invgen:=AutGSubdirectProductsWSECInvariantGen(N31g[47]); # checking X=Y=Z
[  ]
gap> N31WSECMembersTable[8][3]; # N_{31,53} in WSEC_8
53
gap> N31autWSEC8_3invgen:=AutGSubdirectProductsWSECInvariantGen(N31g[53]);
[ <pc group with 5 generators> ]

gap> N31WSECMembersTable[9]; # WSEC_9
[ 39, 46, 52 ]
gap> N31WSECMembersTable[9][1]; # N_{31,39} in WSEC_9
39
gap> N31autWSEC9invgen:=AutGSubdirectProductsWSECInvariantGen(N31g[39]); # checking X=Y=Z
[  ]
gap> N31WSECMembersTable[9][2]; # N_{31,46} in WSEC_9
46
gap> N31autWSEC9_2invgen:=AutGSubdirectProductsWSECInvariantGen(N31g[46]); # checking X=Y=Z
[  ]
gap> N31WSECMembersTable[9][3]; # N_{31,52} in WSEC_9
52
gap> N31autWSEC9_3invgen:=AutGSubdirectProductsWSECInvariantGen(N31g[52]);
[ <pc group with 5 generators> ]

gap> N31WSECMembersTable[10]; # WSEC_{10}
[ 41, 43, 50, 51, 60, 62 ]
gap> N31WSECMembersTable[10][1]; # N_{31,41} in WSEC_{10}
41
gap> N31autWSEC10invgen:=AutGSubdirectProductsWSECInvariantGen(N31g[41]); # checking X=Y=Z
[  ]
gap> N31WSECMembersTable[10][2]; # N_{31,43} in WSEC_{10}
43
gap> N31autWSEC10_2invgen:=AutGSubdirectProductsWSECInvariantGen(N31g[43]); # checking X=Y=Z
[  ]
gap> N31WSECMembersTable[10][3]; # N_{31,50} in WSEC_{10}
50
gap> N31autWSEC10_3invgen:=AutGSubdirectProductsWSECInvariantGen(N31g[50]); # checking X=Y=Z
[  ]
gap> N31WSECMembersTable[10][4]; # N_{31,51} in WSEC_{10}
51
gap> N31autWSEC10_4invgen:=AutGSubdirectProductsWSECInvariantGen(N31g[51]); # checking X=Y=Z
[  ]
gap> N31WSECMembersTable[10][5]; # N_{31,60} in WSEC_{10}
60
gap> N31autWSEC10_5invgen:=AutGSubdirectProductsWSECInvariantGen(N31g[60]);
[ <pc group with 5 generators> ]
gap> N31WSECMembersTable[10][6]; # N_{31,62} in WSEC_{10}
62
gap> N31autWSEC10_6invgen:=AutGSubdirectProductsWSECInvariantGen(N31g[62]);
[ <pc group with 5 generators> ]

gap> N31WSECMembersTable[11]; # WSEC_{11}
[ 42, 49, 61 ]
gap> N31WSECMembersTable[11][1]; # N_{31,42} in WSEC_{11}
42
gap> N31autWSEC11invgen:=AutGSubdirectProductsWSECInvariantGen(N31g[42]); # checking X=Y=Z
[  ]
gap> N31WSECMembersTable[11][2]; # N_{31,49} in WSEC_{11}
49
gap> N31autWSEC11_2invgen:=AutGSubdirectProductsWSECInvariantGen(N31g[49]); # checking X=Y=Z
[  ]
gap> N31WSECMembersTable[11][3]; # N_{31,61} in WSEC_{11}
61
gap> N31autWSEC11_3invgen:=AutGSubdirectProductsWSECInvariantGen(N31g[61]);
[ <pc group with 5 generators> ]

gap> N31WSECMembersTable[12]; # WSEC_{12}
[ 45, 55, 57, 59, 64 ]
gap> N31WSECMembersTable[12][1]; # N_{31,45} in WSEC_{12}
45
gap> N31autWSEC12invgen:=AutGSubdirectProductsWSECInvariantGen(N31g[45]);
[ <pc group with 5 generators> ]
gap> N31WSECMembersTable[12][2]; # N_{31,55} in WSEC_{12}
55
gap> N31autWSEC12_2invgen:=AutGSubdirectProductsWSECInvariantGen(N31g[55]);
[ <pc group with 5 generators> ]
gap> N31WSECMembersTable[12][3]; # N_{31,57} in WSEC_{12}
57
gap> N31autWSEC12_3invgen:=AutGSubdirectProductsWSECInvariantGen(N31g[57]);
[ <pc group with 5 generators> ]
gap> N31WSECMembersTable[12][4]; # N_{31,59} in WSEC_{12}
59
gap> N31autWSEC12_4invgen:=AutGSubdirectProductsWSECInvariantGen(N31g[59]);
[ <pc group with 5 generators> ]
gap> N31WSECMembersTable[12][5]; # N_{31,64} in WSEC_{12}
64
gap> N31autWSEC12_5invgen:=AutGSubdirectProductsWSECInvariantGen(N31g[64]);
[ <pc group with 6 generators>, <pc group with 6 generators> ]

gap> N31WSECMembersTable[13]; # WSEC_{13}
[ 44, 54, 56, 58, 63 ]
gap> N31WSECMembersTable[13][1]; # N_{31,44} in WSEC_{13}
44
gap> N31autWSEC13invgen:=AutGSubdirectProductsWSECInvariantGen(N31g[44]); # checking X=Y=Z
[  ]
gap> N31WSECMembersTable[13][2]; # N_{31,54} in WSEC_{13}
54
gap> N31autWSEC13_2invgen:=AutGSubdirectProductsWSECInvariantGen(N31g[54]); # checking X=Y=Z
[  ]
gap> N31WSECMembersTable[13][3]; # N_{31,56} in WSEC_{13}
56
gap> N31autWSEC13_3invgen:=AutGSubdirectProductsWSECInvariantGen(N31g[56]); # checking X=Y=Z
[  ]
gap> N31WSECMembersTable[13][4]; # N_{31,58} in WSEC_{13}
58
gap> N31autWSEC13_4invgen:=AutGSubdirectProductsWSECInvariantGen(N31g[58]); # checking X=Y=Z
[  ]
gap> N31WSECMembersTable[13][5]; # N_{31,63} in WSEC_{13}
63
gap> N31autWSEC13_5invgen:=AutGSubdirectProductsWSECInvariantGen(N31g[63]);
[ <pc group with 6 generators>, <pc group with 6 generators> ]
\end{verbatim}
\end{example}

\bigskip
%%%%%%%%%%%%%%%%%%%%%%%%%%%%%%%%%%%%%%%%%%%%%%%%%%%%%%%%%%%%%%%%%%%%%%%%%%
\begin{example}[Proof of {Theorem \ref{thmain4}}: the computation of $\lambda_r=|{\rm WSEC}_{r,L}|=|Y\backslash {\rm Aut}(G)|$ as in Table $4$]\label{ex9.8}~\vspace*{-5mm}\\
\begin{verbatim}
gap> Read("BCAlgTori.gap");
gap> N4g:=List(N4,x->MatGroupZClass(x[1],x[2],x[3],x[4]));;
gap> for i in [1..152] do
> if i in [137,139,145,147] then
> Y:=AutGLnZ(N4g[i]);
> else
> Y:=AutWSEC(N4g[i]);
> fi;
> AutG:=AutomorphismGroup(N4g[i]);
> if Order(AutG)/Order(Y)>30 then
> YN:=Intersection(ConjugateSubgroups(AutG,Y));
> Print(i,"\t",Order(AutG)/Order(Y),"\t",StructureDescription(AutG/YN),"\n");
> else
> d:=IdCoset(AutG,Y);
> if d=[1] then
> Print(i,"\t",1,"\n");
> else
> Print(i,"\t",d,"\t",StructureDescription(TransitiveGroup(d[1],d[2])),"\n");
> fi;
> fi;
> od;
1	1
2	1
3	[ 21, 14 ]	PSL(3,2)
4	1
5	1
6	[ 7, 5 ]	PSL(3,2)
7	1
8	[ 21, 14 ]	PSL(3,2)
9	[ 28, 32 ]	PSL(3,2)
10	1
11	420	A8
12	840	A8
13	1
14	1
15	[ 2, 1 ]	C2
16	[ 2, 1 ]	C2
17	[ 2, 1 ]	C2
18	[ 2, 1 ]	C2
19	[ 4, 3 ]	D8
20	[ 2, 1 ]	C2
21	[ 8, 9 ]	C2 x D8
22	[ 2, 1 ]	C2
23	1
24	1
25	[ 2, 1 ]	C2
26	[ 2, 1 ]	C2
27	1
28	[ 24, 196 ]	((C2 x C2 x C2 x C2) : C3) : C2
29	[ 4, 3 ]	D8
30	[ 4, 3 ]	D8
31	[ 2, 1 ]	C2
32	[ 16, 9 ]	C2 x D8
33	[ 2, 1 ]	C2
34	[ 16, 9 ]	C2 x D8
35	[ 2, 1 ]	C2
36	[ 4, 2 ]	C2 x C2
37	[ 8, 41 ]	(((C2 x C2 x C2 x C2) : C2) : C3) : C2
38	192	((C2 x C2 x ((C2 x C2 x C2 x C2) : C2)) : C3) : C2
39	1
40	[ 4, 2 ]	C2 x C2
41	1
42	1
43	[ 2, 1 ]	C2
44	1
45	[ 2, 1 ]	C2
46	[ 2, 1 ]	C2
47	[ 12, 22 ]	C2 x S4
48	[ 12, 21 ]	C2 x S4
49	1
50	[ 2, 1 ]	C2
51	[ 3, 2 ]	S3
52	[ 4, 3 ]	D8
53	[ 4, 2 ]	C2 x C2
54	[ 2, 1 ]	C2
55	[ 12, 8 ]	S4
56	[ 16, 395 ]	(((C2 x C2 x C2 x C2) : C2) : C2) : C2
57	1
58	1
59	1
60	1
61	1
62	1
63	[ 2, 1 ]	C2
64	[ 2, 1 ]	C2
65	1
66	[ 2, 1 ]	C2
67	1
68	[ 4, 3 ]	D8
69	1
70	1
71	1
72	[ 2, 1 ]	C2
73	1
74	1
75	1
76	1
77	1
78	1
79	1
80	[ 6, 2 ]	S3
81	1
82	1
83	[ 2, 1 ]	C2
84	1
85	[ 2, 1 ]	C2
86	[ 2, 1 ]	C2
87	1
88	[ 2, 1 ]	C2
89	[ 4, 5 ]	S4
90	[ 12, 9 ]	S4
91	1
92	1
93	[ 2, 1 ]	C2
94	1
95	1
96	[ 2, 1 ]	C2
97	1
98	1
99	[ 4, 3 ]	D8
100	[ 2, 1 ]	C2
101	1
102	[ 2, 1 ]	C2
103	[ 2, 1 ]	C2
104	1
105	[ 4, 3 ]	D8
106	1
107	1
108	1
109	1
110	1
111	1
112	[ 2, 1 ]	C2
113	[ 2, 1 ]	C2
114	1
115	1
116	1
117	[ 2, 1 ]	C2
118	1
119	1
120	[ 2, 1 ]	C2
121	1
122	[ 2, 1 ]	C2
123	[ 2, 1 ]	C2
124	1
125	[ 2, 1 ]	C2
126	1
127	1
128	[ 4, 3 ]	D8
129	[ 2, 1 ]	C2
130	1
131	[ 2, 1 ]	C2
132	1
133	1
134	1
135	[ 2, 1 ]	C2
136	[ 2, 1 ]	C2
137	[ 2, 1 ]	C2
138	1
139	[ 4, 2 ]	C2 x C2
140	1
141	1
142	[ 6, 2 ]	S3
143	[ 2, 1 ]	C2
144	[ 2, 1 ]	C2
145	[ 2, 1 ]	C2
146	[ 12, 3 ]	D12
147	[ 4, 2 ]	C2 x C2
148	[ 2, 1 ]	C2
149	[ 2, 1 ]	C2
150	[ 2, 1 ]	C2
151	[ 4, 2 ]	C2 x C2
152	[ 8, 4 ]	D8
\end{verbatim}
\end{example}

\bigskip
%%%%%%%%%%%%%%%%%%%%%%%%%%%%%%%%%%%%%%%%%%%%%%%%%%%%%%%%%%%%%%%%%%%%%%%%%%
\begin{example}[Proof of {Theorem \ref{thmain5}}: the computation of $\lambda_r=|{\rm WSEC}_{r,L}|=|Y\backslash {\rm Aut}(G)|$ as in Table $5$]\label{ex9.9}~\vspace*{-5mm}\\
\begin{verbatim}
gap> Read("BCAlgTori.gap");
gap> I4g:=List(I4,x->MatGroupZClass(x[1],x[2],x[3],x[4]));;
gap> for i in [1..7] do
> if i=7 then
> Y:=AutGLnZ(I4g[i]);
> else
> Y:=AutWSEC(I4g[i]);
> fi;
> AutG:=AutomorphismGroup(I4g[i]);
> d:=IdCoset(AutG,Y);
> if d=[1] then
> Print(i,"\t",1,"\n");
> else
> Print(i,"\t",d,"\t",StructureDescription(TransitiveGroup(d[1],d[2])),"\n");
> fi;
> od;
1	1
2	1
3	[ 2, 1 ]	C2
4	1
5	1
6	[ 2, 1 ]	C2
7	[ 2, 1 ]	C2
\end{verbatim}
\end{example}

\bigskip
%%%%%%%%%%%%%%%%%%%%%%%%%%%%%%%%%%%%%%%%%%%%%%%%%%%%%%%%%%%%%%%%%%%%%%%%%%
\begin{example}[{Computations of Example \ref{exU2}}]\label{ex9.10}~\vspace*{-5mm}\\
\begin{verbatim}
gap> Read("BCAlgTori.gap");
gap> N3g:=List(N3,x->MatGroupZClass(x[1],x[2],x[3],x[4]));;
gap> U2WSEC:=ConjugacyClassesSubgroups1WSEC(N3g[2]);
rec( 
  ConjugacyClassesSubgroups1 := 
    [ Group([],[ [ 1, 0, 0 ], [ 0, 1, 0 ], [ 0, 0, 1 ] ])^G, 
      Group([ [ [ -1, -1, -1 ], [ 0, 0, 1 ], [ 0, 1, 0 ] ] ])^G, 
      Group([ [ [ -1, 0, 0 ], [ 0, -1, 0 ], [ 0, 0, -1 ] ] ])^G, 
      Group([ [ [ 0, -1, 0 ], [ -1, 0, 0 ], [ 1, 1, 1 ] ] ])^G, 
      Group([ [ [ 0, 0, -1 ], [ 1, 1, 1 ], [ -1, 0, 0 ] ] ])^G, 
      Group([ [ [ 0, 0, 1 ], [ -1, -1, -1 ], [ 1, 0, 0 ] ] ])^G, 
      Group([ [ [ 0, 1, 0 ], [ 1, 0, 0 ], [ -1, -1, -1 ] ] ])^G, 
      Group([ [ [ 1, 1, 1 ], [ 0, 0, -1 ], [ 0, -1, 0 ] ] ])^G, 
      Group([ [ [ -1, -1, -1 ], [ 0, 0, 1 ], [ 0, 1, 0 ] ], 
          [ [ -1, 0, 0 ], [ 0, -1, 0 ], [ 0, 0, -1 ] ] ])^G, 
      Group([ [ [ -1, -1, -1 ], [ 0, 0, 1 ], [ 0, 1, 0 ] ], 
          [ [ 0, -1, 0 ], [ -1, 0, 0 ], [ 1, 1, 1 ] ] ])^G, 
      Group([ [ [ -1, -1, -1 ], [ 0, 0, 1 ], [ 0, 1, 0 ] ], 
          [ [ 0, 0, 1 ], [ -1, -1, -1 ], [ 1, 0, 0 ] ] ])^G, 
      Group([ [ [ -1, 0, 0 ], [ 0, -1, 0 ], [ 0, 0, -1 ] ], 
          [ [ 0, -1, 0 ], [ -1, 0, 0 ], [ 1, 1, 1 ] ] ])^G, 
      Group([ [ [ -1, 0, 0 ], [ 0, -1, 0 ], [ 0, 0, -1 ] ], 
          [ [ 0, 0, -1 ], [ 1, 1, 1 ], [ -1, 0, 0 ] ] ])^G, 
      Group([ [ [ 0, -1, 0 ], [ -1, 0, 0 ], [ 1, 1, 1 ] ], 
          [ [ 0, 0, 1 ], [ -1, -1, -1 ], [ 1, 0, 0 ] ] ])^G, 
      Group([ [ [ 0, 0, -1 ], [ 1, 1, 1 ], [ -1, 0, 0 ] ], 
          [ [ 0, 1, 0 ], [ 1, 0, 0 ], [ -1, -1, -1 ] ] ])^G, 
      MatGroupZClass( 3, 3, 3, 3 )^G ], 
  WSEC := [ 0, 0, 0, 0, 0, 0, 0, 0, 0, 0, 1, 0, 0, 0, 0, 2 ] )
gap> GeneratorsOfGroup(N3g[2]);
[ [ [ -1, 0, 0 ], [ 0, -1, 0 ], [ 0, 0, -1 ] ], 
  [ [ 0, 0, 1 ], [ -1, -1, -1 ], [ 1, 0, 0 ] ], 
  [ [ 0, 1, 0 ], [ 1, 0, 0 ], [ -1, -1, -1 ] ] ]
gap> s1:=U2g[2];
[ [ 0, 0, 1 ], [ -1, -1, -1 ], [ 1, 0, 0 ] ]
gap> s2:=U2g[3];
[ [ 0, 1, 0 ], [ 1, 0, 0 ], [ -1, -1, -1 ] ]
gap> s3:=U2g[1];
[ [ -1, 0, 0 ], [ 0, -1, 0 ], [ 0, 0, -1 ] ]
gap> N3[2];
[ 3, 3, 3, 3 ]
gap> IsInvertibleF(N3g[2]);
false
gap> U2WSEC;
rec( 
  ConjugacyClassesSubgroups1 := 
    [ Group([],[ [ 1, 0, 0 ], [ 0, 1, 0 ], [ 0, 0, 1 ] ])^G, 
      Group([ [ [ -1, -1, -1 ], [ 0, 0, 1 ], [ 0, 1, 0 ] ] ])^G, 
      Group([ [ [ -1, 0, 0 ], [ 0, -1, 0 ], [ 0, 0, -1 ] ] ])^G, 
      Group([ [ [ 0, -1, 0 ], [ -1, 0, 0 ], [ 1, 1, 1 ] ] ])^G, 
      Group([ [ [ 0, 0, -1 ], [ 1, 1, 1 ], [ -1, 0, 0 ] ] ])^G, 
      Group([ [ [ 0, 0, 1 ], [ -1, -1, -1 ], [ 1, 0, 0 ] ] ])^G, 
      Group([ [ [ 0, 1, 0 ], [ 1, 0, 0 ], [ -1, -1, -1 ] ] ])^G, 
      Group([ [ [ 1, 1, 1 ], [ 0, 0, -1 ], [ 0, -1, 0 ] ] ])^G, 
      Group([ [ [ -1, -1, -1 ], [ 0, 0, 1 ], [ 0, 1, 0 ] ], 
          [ [ -1, 0, 0 ], [ 0, -1, 0 ], [ 0, 0, -1 ] ] ])^G, 
      Group([ [ [ -1, -1, -1 ], [ 0, 0, 1 ], [ 0, 1, 0 ] ], 
          [ [ 0, -1, 0 ], [ -1, 0, 0 ], [ 1, 1, 1 ] ] ])^G, 
      Group([ [ [ -1, -1, -1 ], [ 0, 0, 1 ], [ 0, 1, 0 ] ], 
          [ [ 0, 0, 1 ], [ -1, -1, -1 ], [ 1, 0, 0 ] ] ])^G, 
      Group([ [ [ -1, 0, 0 ], [ 0, -1, 0 ], [ 0, 0, -1 ] ], 
          [ [ 0, -1, 0 ], [ -1, 0, 0 ], [ 1, 1, 1 ] ] ])^G, 
      Group([ [ [ -1, 0, 0 ], [ 0, -1, 0 ], [ 0, 0, -1 ] ], 
          [ [ 0, 0, -1 ], [ 1, 1, 1 ], [ -1, 0, 0 ] ] ])^G, 
      Group([ [ [ 0, -1, 0 ], [ -1, 0, 0 ], [ 1, 1, 1 ] ], 
          [ [ 0, 0, 1 ], [ -1, -1, -1 ], [ 1, 0, 0 ] ] ])^G, 
      Group([ [ [ 0, 0, -1 ], [ 1, 1, 1 ], [ -1, 0, 0 ] ], 
          [ [ 0, 1, 0 ], [ 1, 0, 0 ], [ -1, -1, -1 ] ] ])^G, 
      MatGroupZClass( 3, 3, 3, 3 )^G ], 
  WSEC := [ 0, 0, 0, 0, 0, 0, 0, 0, 0, 0, 1, 0, 0, 0, 0, 2 ] )
gap> List(U2WSEC.ConjugacyClassesSubgroups1,x->IsInvertibleF(Representative(x)));
[ true, true, true, true, true, true, true, true, true, true, false, true, 
  true, true, true, false ]
gap> List(U2WSEC.ConjugacyClassesSubgroups1,x->[Order(Representative(x)),
> IsInvertibleF(Representative(x))]);
[ [ 1, true ], [ 2, true ], [ 2, true ], [ 2, true ], [ 2, true ], 
  [ 2, true ], [ 2, true ], [ 2, true ], [ 4, true ], [ 4, true ], 
  [ 4, false ], [ 4, true ], [ 4, true ], [ 4, true ], [ 4, true ], 
  [ 8, false ] ]
gap> List(U2WSEC.ConjugacyClassesSubgroups1,x->[Order(Representative(x)),
> CrystCatZClass(Representative(x)),IsInvertibleF(Representative(x))]);
[ [ 1, [ 3, 1, 1, 1 ], true ], [ 2, [ 3, 2, 1, 2 ], true ], 
  [ 2, [ 3, 1, 2, 1 ], true ], [ 2, [ 3, 2, 2, 2 ], true ], 
  [ 2, [ 3, 2, 2, 2 ], true ], [ 2, [ 3, 2, 1, 2 ], true ], 
  [ 2, [ 3, 2, 1, 2 ], true ], [ 2, [ 3, 2, 2, 2 ], true ], 
  [ 4, [ 3, 2, 3, 2 ], true ], [ 4, [ 3, 3, 2, 4 ], true ], 
  [ 4, [ 3, 3, 1, 3 ], false ], [ 4, [ 3, 2, 3, 2 ], true ], 
  [ 4, [ 3, 2, 3, 2 ], true ], [ 4, [ 3, 3, 2, 4 ], true ], 
  [ 4, [ 3, 3, 2, 4 ], true ], [ 8, [ 3, 3, 3, 3 ], false ] ]
gap> s1;
[ [ 0, 0, 1 ], [ -1, -1, -1 ], [ 1, 0, 0 ] ]
gap> s2;
[ [ 0, 1, 0 ], [ 1, 0, 0 ], [ -1, -1, -1 ] ]
gap> s3;
[ [ -1, 0, 0 ], [ 0, -1, 0 ], [ 0, 0, -1 ] ]
gap> U2e:=Elements(N3g[2]);
[ [ [ -1, -1, -1 ], [ 0, 0, 1 ], [ 0, 1, 0 ] ], 
  [ [ -1, 0, 0 ], [ 0, -1, 0 ], [ 0, 0, -1 ] ], 
  [ [ 0, -1, 0 ], [ -1, 0, 0 ], [ 1, 1, 1 ] ], 
  [ [ 0, 0, -1 ], [ 1, 1, 1 ], [ -1, 0, 0 ] ], 
  [ [ 0, 0, 1 ], [ -1, -1, -1 ], [ 1, 0, 0 ] ], 
  [ [ 0, 1, 0 ], [ 1, 0, 0 ], [ -1, -1, -1 ] ], 
  [ [ 1, 0, 0 ], [ 0, 1, 0 ], [ 0, 0, 1 ] ], 
  [ [ 1, 1, 1 ], [ 0, 0, -1 ], [ 0, -1, 0 ] ] ]
gap> U2e[1]=s1*s2;
true
gap> U2e[2]=s3;
true
gap> U2e[3]=s3*s2;
true
gap> U2e[4]=s3*s1;
true
gap> U2e[5]=s1;
true
gap> U2e[6]=s2;
true
gap> U2e[7]=s3^0;
true
gap> U2e[8]=s3*s1*s2;
true
gap> label:=["s1*s2","s3","s3*s2","s3*s1","s1","s2","1","s3*s1*s2"];
[ "s1*s2", "s3", "s3*s2", "s3*s1", "s1", "s2", "1", "s3*s1*s2" ]
gap> List(U2WSEC.ConjugacyClassesSubgroups1,x->[Order(Representative(x)),
> CrystCatZClass(Representative(x)),IsInvertibleF(Representative(x)),
> List(GeneratorsOfGroup(Representative(x)),y->label[Position(U2e,y)])]);
[ [ 1, [ 3, 1, 1, 1 ], true, [  ] ], [ 2, [ 3, 2, 1, 2 ], true, [ "s1*s2" ] ],
  [ 2, [ 3, 1, 2, 1 ], true, [ "s3" ] ], 
  [ 2, [ 3, 2, 2, 2 ], true, [ "s3*s2" ] ], 
  [ 2, [ 3, 2, 2, 2 ], true, [ "s3*s1" ] ], 
  [ 2, [ 3, 2, 1, 2 ], true, [ "s1" ] ], [ 2, [ 3, 2, 1, 2 ], true, [ "s2" ] ]
    , [ 2, [ 3, 2, 2, 2 ], true, [ "s3*s1*s2" ] ], 
  [ 4, [ 3, 2, 3, 2 ], true, [ "s1*s2", "s3" ] ], 
  [ 4, [ 3, 3, 2, 4 ], true, [ "s1*s2", "s3*s2" ] ], 
  [ 4, [ 3, 3, 1, 3 ], false, [ "s1*s2", "s1" ] ], 
  [ 4, [ 3, 2, 3, 2 ], true, [ "s3", "s3*s2" ] ], 
  [ 4, [ 3, 2, 3, 2 ], true, [ "s3", "s3*s1" ] ], 
  [ 4, [ 3, 3, 2, 4 ], true, [ "s3*s2", "s1" ] ], 
  [ 4, [ 3, 3, 2, 4 ], true, [ "s3*s1", "s2" ] ], 
  [ 8, [ 3, 3, 3, 3 ], false, [ "s3", "s1", "s2" ] ] ]
gap> List(U2e,Trace);
[ -1, -3, 1, 1, -1, -1, 3, 1 ]
gap> List(U2WSEC.ConjugacyClassesSubgroups1,x->[Order(Representative(x)),
> CrystCatZClass(Representative(x)),IsInvertibleF(Representative(x)),
> Collected(List(GeneratorsOfGroup(Representative(x)),Trace))]);
[ [ 1, [ 3, 1, 1, 1 ], true, [  ] ], 
  [ 2, [ 3, 2, 1, 2 ], true, [ [ -1, 1 ] ] ], 
  [ 2, [ 3, 1, 2, 1 ], true, [ [ -3, 1 ] ] ], 
  [ 2, [ 3, 2, 2, 2 ], true, [ [ 1, 1 ] ] ], 
  [ 2, [ 3, 2, 2, 2 ], true, [ [ 1, 1 ] ] ], 
  [ 2, [ 3, 2, 1, 2 ], true, [ [ -1, 1 ] ] ], 
  [ 2, [ 3, 2, 1, 2 ], true, [ [ -1, 1 ] ] ], 
  [ 2, [ 3, 2, 2, 2 ], true, [ [ 1, 1 ] ] ], 
  [ 4, [ 3, 2, 3, 2 ], true, [ [ -3, 1 ], [ -1, 1 ] ] ], 
  [ 4, [ 3, 3, 2, 4 ], true, [ [ -1, 1 ], [ 1, 1 ] ] ], 
  [ 4, [ 3, 3, 1, 3 ], false, [ [ -1, 2 ] ] ], 
  [ 4, [ 3, 2, 3, 2 ], true, [ [ -3, 1 ], [ 1, 1 ] ] ], 
  [ 4, [ 3, 2, 3, 2 ], true, [ [ -3, 1 ], [ 1, 1 ] ] ], 
  [ 4, [ 3, 3, 2, 4 ], true, [ [ -1, 1 ], [ 1, 1 ] ] ], 
  [ 4, [ 3, 3, 2, 4 ], true, [ [ -1, 1 ], [ 1, 1 ] ] ], 
  [ 8, [ 3, 3, 3, 3 ], false, [ [ -3, 1 ], [ -1, 2 ] ] ] ]
gap> List(U2WSEC.ConjugacyClassesSubgroups1,x->[Order(Representative(x)),
> CrystCatZClass(Representative(x)),IsInvertibleF(Representative(x)),
> Collected(List(Representative(x),Trace))]);
[ [ 1, [ 3, 1, 1, 1 ], true, [ [ 3, 1 ] ] ], 
  [ 2, [ 3, 2, 1, 2 ], true, [ [ -1, 1 ], [ 3, 1 ] ] ], 
  [ 2, [ 3, 1, 2, 1 ], true, [ [ -3, 1 ], [ 3, 1 ] ] ], 
  [ 2, [ 3, 2, 2, 2 ], true, [ [ 1, 1 ], [ 3, 1 ] ] ], 
  [ 2, [ 3, 2, 2, 2 ], true, [ [ 1, 1 ], [ 3, 1 ] ] ], 
  [ 2, [ 3, 2, 1, 2 ], true, [ [ -1, 1 ], [ 3, 1 ] ] ], 
  [ 2, [ 3, 2, 1, 2 ], true, [ [ -1, 1 ], [ 3, 1 ] ] ], 
  [ 2, [ 3, 2, 2, 2 ], true, [ [ 1, 1 ], [ 3, 1 ] ] ], 
  [ 4, [ 3, 2, 3, 2 ], true, [ [ -3, 1 ], [ -1, 1 ], [ 1, 1 ], [ 3, 1 ] ] ], 
  [ 4, [ 3, 3, 2, 4 ], true, [ [ -1, 1 ], [ 1, 2 ], [ 3, 1 ] ] ], 
  [ 4, [ 3, 3, 1, 3 ], false, [ [ -1, 3 ], [ 3, 1 ] ] ], 
  [ 4, [ 3, 2, 3, 2 ], true, [ [ -3, 1 ], [ -1, 1 ], [ 1, 1 ], [ 3, 1 ] ] ], 
  [ 4, [ 3, 2, 3, 2 ], true, [ [ -3, 1 ], [ -1, 1 ], [ 1, 1 ], [ 3, 1 ] ] ], 
  [ 4, [ 3, 3, 2, 4 ], true, [ [ -1, 1 ], [ 1, 2 ], [ 3, 1 ] ] ], 
  [ 4, [ 3, 3, 2, 4 ], true, [ [ -1, 1 ], [ 1, 2 ], [ 3, 1 ] ] ], 
  [ 8, [ 3, 3, 3, 3 ], false, [ [ -3, 1 ], [ -1, 3 ], [ 1, 3 ], [ 3, 1 ] ] ] ]
\end{verbatim}
\end{example}

%%%%%%%%%%%%%%%%%%%%%%%%%%%%%%%%%%%%%%%%%%%%%%%%%%%%%%%%%%%%%%%%%%%%%%%%%
\section{Proofs of ${\rm (2)}\Leftrightarrow {\rm (3)}\Leftrightarrow {\rm (4)}$ of {Theorem \ref{thmain2}} $(N_{3,i})$, {Theorem \ref{thmain4}} $(N_{4,i})$ and {Theorem \ref{thmain5}} $(I_{4,i})$}\label{S10}
%\ssubsection{Small Degree Test} 

We have ${\rm (2)}\Rightarrow {\rm (3)}\Rightarrow {\rm (4)}$. 
Hence we should show that ${\rm (4)}\Rightarrow {\rm (2)}$.\\

We made the following GAP \cite{GAP} algorithm 
in order to confirm that ${\rm (4)}\Rightarrow {\rm (2)}$.  
It is available as in \cite{BCAlgTori}.\\
%from\\ 
%\url{https://www.math.kyoto-u.ac.jp/~yamasaki/Algorithm/BCAlgTori/}.\\

\noindent
{\tt AutGWSECInvariantSmallDegreeTest(}$G${\tt )} 
returns the list $l=[l_1,\ldots,l_s]$ $(l_1\leq \cdots\leq l_s)$ 
of integers with the minimal $l_s,\ldots,l_1$ 
which satisfies $Z=Z^\prime$ 
where 
\begin{align*}
Z&=\{\sigma\in{\rm Aut}(G)\mid [M_H]^{fl}\sim [M_{H^\sigma}]^{fl}\ {\rm for}\ {\rm any}\ H\leq G\},\\
Z^\prime&=\{\sigma\in{\rm Aut}(G)\mid [M_H]^{fl}\sim [M_{H^\sigma}]^{fl}\ {\rm for}\ {\rm any}\ H\leq G\ {\rm with}\ [G:H]\in l\}
\end{align*}
for $G\leq \GL(n,\bZ)$ $(n=3,4)$.\\

By using the function 
{\tt AutGWSECInvariantSmallDegreeTest(}$G${\tt )}, we can verify  
${\rm (4)}\Rightarrow {\rm (2)}$
of {Theorem \ref{thmain2}}, 
{Theorem \ref{thmain4}} 
and {Theorem \ref{thmain5} 
(see Examples \ref{ex10.1}, \ref{ex10.2}, \ref{ex10.3} for GAP computations).\qed

\bigskip
\begin{example}[{Proof of ${\rm (4)}\Rightarrow {\rm (2)}$ of {Theorem \ref{thmain2}}}]\label{ex10.1}~\vspace*{-5mm}\\
\begin{verbatim}
gap> Read("FlabbyResolutionBC.gap");
gap> N3g:=List(N3,x->MatGroupZClass(x[1],x[2],x[3],x[4]));;
gap> for i in [1..15] do
> Print(i,"\t",Order(N3g[i]),"\t",AutGWSECInvariantSmallDegreeTest(N3g[i]),"\n");
> od;
1	4	[ 1 ]
2	8	[ 1, 2 ]
3	8	[ 1 ]
4	8	[ 1 ]
5	8	[ 1, 2 ]
6	8	[ 1, 2 ]
7	16	[ 1, 2 ]
8	12	[ 1 ]
9	24	[ 1 ]
10	24	[ 1 ]
11	24	[ 1 ]
12	24	[ 1 ]
13	24	[ 1 ]
14	48	[ 1 ]
15	48	[ 1, 2 ]
\end{verbatim}
\end{example}

\bigskip
\begin{example}[{Proof of ${\rm (4)}\Rightarrow {\rm (2)}$ of {Theorem \ref{thmain4}}}]\label{ex10.2}~\vspace*{-5mm}\\
\begin{verbatim}
gap> Read("FlabbyResolutionBC.gap");
gap> N4g:=List(N4,x->MatGroupZClass(x[1],x[2],x[3],x[4]));;
gap> for i in [1..152] do
> Print(i,"\t",Order(N4g[i]),"\t",AutGWSECInvariantSmallDegreeTest(N4g[i]),"\n");
> od;
1	4	[ 1 ]
2	8	[ 1 ]
3	8	[ 1, 2 ]
4	8	[ 1 ]
5	8	[ 1 ]
6	8	[ 1, 2 ]
7	8	[ 1 ]
8	8	[ 1, 2 ]
9	8	[ 1, 2 ]
10	16	[ 1 ]
11	16	[ 1, 2 ]
12	16	[ 1, 2 ]
13	8	[ 1 ]
14	8	[ 1 ]
15	8	[ 1, 2 ]
16	8	[ 1, 2 ]
17	8	[ 1, 2 ]
18	8	[ 1, 2 ]
19	16	[ 1, 2 ]
20	16	[ 1, 2 ]
21	16	[ 1, 4 ]
22	16	[ 1, 2 ]
23	8	[ 1 ]
24	8	[ 1 ]
25	8	[ 1, 2 ]
26	8	[ 1, 2 ]
27	16	[ 1 ]
28	16	[ 1, 2, 4 ]
29	16	[ 1, 2 ]
30	16	[ 1, 2 ]
31	16	[ 1, 2 ]
32	16	[ 1, 2, 4 ]
33	16	[ 1, 2 ]
34	16	[ 1, 2, 4 ]
35	16	[ 1, 2 ]
36	16	[ 1, 4 ]
37	32	[ 1, 2 ]
38	32	[ 1, 4 ]
39	8	[ 1 ]
40	16	[ 1, 2, 4 ]
41	16	[ 1 ]
42	16	[ 1 ]
43	16	[ 1, 2 ]
44	16	[ 1 ]
45	16	[ 1, 4 ]
46	16	[ 1, 2 ]
47	32	[ 1, 4 ]
48	32	[ 1, 4 ]
49	32	[ 1 ]
50	16	[ 1, 2 ]
51	16	[ 1, 2 ]
52	32	[ 1, 4 ]
53	32	[ 1, 4 ]
54	32	[ 1, 2 ]
55	32	[ 1, 2 ]
56	64	[ 1, 4 ]
57	9	[ 1 ]
58	18	[ 1 ]
59	18	[ 1 ]
60	18	[ 1 ]
61	18	[ 1 ]
62	18	[ 1 ]
63	36	[ 1, 2 ]
64	36	[ 1, 2 ]
65	36	[ 1 ]
66	36	[ 1, 2 ]
67	36	[ 1 ]
68	72	[ 1, 2 ]
69	24	[ 1 ]
70	24	[ 1 ]
71	24	[ 1 ]
72	48	[ 1, 2 ]
73	48	[ 1 ]
74	24	[ 1 ]
75	24	[ 1 ]
76	24	[ 1 ]
77	24	[ 1 ]
78	24	[ 1 ]
79	48	[ 1 ]
80	48	[ 1, 2 ]
81	48	[ 1 ]
82	48	[ 1 ]
83	48	[ 1, 2 ]
84	48	[ 1 ]
85	48	[ 1, 2 ]
86	48	[ 1, 6 ]
87	48	[ 1 ]
88	48	[ 1, 6 ]
89	96	[ 1, 2 ]
90	96	[ 1, 2 ]
91	18	[ 1 ]
92	18	[ 1 ]
93	36	[ 1, 2 ]
94	36	[ 1 ]
95	36	[ 1 ]
96	36	[ 1, 2 ]
97	36	[ 1 ]
98	36	[ 1 ]
99	72	[ 1, 2 ]
100	72	[ 1, 2 ]
101	72	[ 1 ]
102	72	[ 1, 2 ]
103	72	[ 1, 2 ]
104	72	[ 1 ]
105	144	[ 1, 2 ]
106	8	[ 1 ]
107	16	[ 1 ]
108	16	[ 1 ]
109	16	[ 1 ]
110	24	[ 1 ]
111	24	[ 1 ]
112	32	[ 1, 8 ]
113	32	[ 1, 4 ]
114	32	[ 1 ]
115	32	[ 1 ]
116	32	[ 1 ]
117	32	[ 1, 4 ]
118	48	[ 1 ]
119	48	[ 1 ]
120	64	[ 1, 4 ]
121	64	[ 1 ]
122	64	[ 1, 8 ]
123	64	[ 1, 8 ]
124	64	[ 1 ]
125	64	[ 1, 2 ]
126	96	[ 1 ]
127	96	[ 1 ]
128	128	[ 1, 8 ]
129	192	[ 1, 8 ]
130	192	[ 1 ]
131	192	[ 1, 8 ]
132	192	[ 1 ]
133	192	[ 1 ]
134	192	[ 1 ]
135	384	[ 1, 8 ]
136	384	[ 1, 8 ]
137	24	[ 1 ]
138	24	[ 1 ]
139	48	[ 1 ]
140	48	[ 1 ]
141	48	[ 1 ]
142	72	[ 1, 3 ]
143	96	[ 1, 12 ]
144	96	[ 1, 24 ]
145	96	[ 1 ]
146	144	[ 1, 3, 8 ]
147	192	[ 1, 12 ]
148	288	[ 1, 3 ]
149	576	[ 1, 24 ]
150	576	[ 1, 32 ]
151	576	[ 1, 3, 32 ]
152	1152	[ 1, 2, 32 ]
\end{verbatim}
\end{example}

\bigskip
\begin{example}[{Proof of ${\rm (4)}\Rightarrow {\rm (2)}$ of {Theorem \ref{thmain2}}}]\label{ex10.3}~\vspace*{-5mm}\\
\begin{verbatim}
gap> Read("FlabbyResolutionBC.gap");
gap> I4g:=List(I4,x->MatGroupZClass(x[1],x[2],x[3],x[4]));;
gap> for i in [1..7] do
> Print(i,"\t",Order(I4g[i]),"\t",AutGWSECInvariantSmallDegreeTest(I4g[i]),"\n");
> od;
1	20	[ 1 ]
2	20	[ 1 ]
3	40	[ 1, 2 ]
4	120	[ 1 ]
5	120	[ 1 ]
6	240	[ 1, 2 ]
7	24	[ 1 ]
\end{verbatim}
\end{example}

%%%%%%%%%%%%%%%%%%%%%%%%%%%%%%%%%%%%%%%%%%%%%%%%%%%%%%%%%%%%%%%%%%%%%%%%%%
\section{Proof of Theorem \ref{thmain6} (Main theorem $6$): relationships among the $I_{4,i}$ cases}\label{S11}
%%%%

Let $T_i$ $(1\leq i\leq 7)$ be an algebraic $k$-torus of dimension $4$ 
with the minimal splitting field $L_i$ and 
the character module $\widehat{T}_i=M_{G_i}$ which satisfies that 
$G_i\leq \GL(4,\bZ)$ is $\GL(4,\bZ)$-conjugate to $I_{4,i}$. 
Let $T_i^\sigma$ be the algebraic $k$-torus 
with the minimal splitting field $L_i$ and 
$\widehat{T}_i^\sigma=M_{G_i^\sigma}$ $(\sigma\in {\rm Aut}(G_i))$. 

We should show that 
$[M_{G_i}]^{fl}+[M_{G_i^\sigma}]^{fl}=0$ as $\widetilde{H}$-lattices 
for the cases 
(4) $G_6=I_{4,6}\simeq S_5\times C_2$ and 
(5) $G_7=I_{4,7}\simeq C_3\rtimes C_8$ 
because the cases 
(1) $G_1=I_{4,1}\simeq I_{4,2}\simeq F_{20}$, 
(2) $G_3=I_{4,3}\simeq F_{20}\times C_2$ 
and (3) $G_4=I_{4,4}\simeq I_{4,5}\simeq S_5$ 
are obtained by restricting the action of (4) $G_6=I_{4,6}$ 
to the subgroups $I_{4,1}, I_{4,2}, I_{4,3}, I_{4,4}, I_{4,5}\leq I_{4,6}$ 
(see Hoshi and Yamasaki \cite[Lemma 2.17]{HY17}) 
and ${\rm Aut}(G_6)/{\rm Inn}(G_6)\simeq C_2$ 
permutes $G_1=I_{4,1}\simeq F_{20}$ and $G_2=I_{4,2}\simeq F_{20}$
(resp. $G_4=I_{4,4}\simeq S_5$ and $G_5=I_{4,5}\simeq S_5$) 
(see Table $13$).\\

(4) Case $G=G_6=I_{4,6}\simeq S_5\times C_2$. 

By using the function 
{\tt FlabbyResolutionLowRank(}$G${\tt).actionF} (see Section \ref{S6}), 
we can obtain $F=[M_G]^{fl}$ with ${\rm rank}_\bZ F=16$ 
and $F^\prime=[M_{G^\sigma}]^{fl}$ for 
$1\neq\overline{\sigma}\in {\rm Aut}(G)/{\rm Inn}(G)$. 

By using the function 
{\tt PossibilityOfStablyPermutationM(}$F\oplus F^\prime${\tt )}, 
we get the list
\begin{align*}
&l=[0,0,0,0,0,0,0,0,0,0,0,0,0,0,0,0,0,0,0,0,1,0,0,0,0,0,0,0,0,0,0,-1,\\
&\qquad 0,0,-1,0,0,-1,0,-1,0,-1,0,0,0,1,1,1,1,0,0,-2,1,-1,-1,-1,2,1]
\end{align*} 
(see Hoshi and Yamasaki \cite[Section 5.5]{HY17}). 

Applying the function 
{\tt StablyPermutationMCheckP(}$F\oplus F^\prime${\tt ,Nlist(}$l${\tt ),Plist(}$l${\tt ))}, 
we may confirm the isomorphism 
\begin{align*}
&\bZ[G/H_{21}]\oplus\bZ[G/H_{46}]\oplus \bZ[G/H_{47}]\oplus 
\bZ[G/H_{48}]\oplus \bZ[G/H_{49}]\oplus \bZ[G/H_{53}]\oplus 
\bZ^{\oplus 2}\oplus F\oplus F^\prime\\
&\simeq 
 \bZ[G/H_{32}]\oplus \bZ[G/H_{35}]\oplus 
 \bZ[G/H_{38}]\oplus  \bZ[G/H_{40}]\oplus  \bZ[G/H_{42}]\oplus 
 \bZ[G/H_{52}]^{\oplus 2}\oplus 
 \bZ[G/H_{54}]\\
&\quad \oplus  \bZ[G/H_{55}]\oplus  \bZ[G/H_{56}]
\end{align*} 
as $G$-lattices 
with rank 
$40+12+10+10+10+4+2\times 1+16+16=24+20+20+20+20+2\times 5+2+2+2=120$ 
where 
$H_{21}\simeq C_6$, 
$H_{46}\simeq F_{20}$, 
$H_{47}\simeq A_4\times C_2$, 
$H_{48}\simeq S_4$, 
$H_{49}\simeq S_4$, 
$H_{53}\simeq A_5$, 
$H_{57}\simeq S_5\times C_2$, 
$H_{32}\simeq D_5$, 
$H_{35}\simeq A_4$, 
$H_{38}\simeq D_6$, 
$H_{40}\simeq D_6$, 
$H_{42}\simeq D_6$, 
$H_{52}\simeq S_4\times C_2$, 
$H_{54}\simeq S_5$, 
$H_{55}\simeq A_5\times C_2$, 
$H_{56}\simeq S_5$. 
This implies that $[F]+[F^\prime]=0$ (see Example \ref{ex11.1} for GAP computations).\\

(5) Case $G=G_7=I_{4,7}\simeq C_3\rtimes C_8$. 
By using the function 
{\tt FlabbyResolutionLowRank(}$G${\tt).actionF} (see Section \ref{S6}), 
we can obtain $F=[M_G]^{fl}$ with ${\rm rank}_\bZ F=20$ 
and $F^\prime=[M_{G^\sigma}]^{fl}$ for 
$1\neq\overline{\sigma}\in {\rm Aut}(G)/X\simeq C_2$ 
where $X={\rm Aut}_{\GL(4,\bZ)}(G)=\{\sigma\in{\rm Aut}(G)\mid G\ {\rm and}\ G^\sigma\ {\rm are}\ 
{\rm conjugate}\ {\rm in}\ \GL(4,\bZ)\}\simeq N_{\GL(4,\bZ)}(G)/Z_{\GL(4,\bZ)}(G)\simeq D_6$. 

By using the function 
{\tt PossibilityOfStablyPermutationM(}$F\oplus F^\prime${\tt )}, 
we get the list $l=[ 1, 1, 1, 0, -1, 0, 0, 0, -1 ]$ 
(see Hoshi and Yamasaki \cite[Section 5.5]{HY17}). 

Applying the function 
{\tt StablyPermutationMCheckP(}$F\oplus F^\prime${\tt ,Nlist(}$l${\tt ),Plist(}$l${\tt ))}, 
we may confirm the isomorphism 
\begin{align*}
\bZ[G]\oplus\bZ[G/H_2]\oplus\bZ[G/H_3]
\simeq 
\bZ[G/H_5]\oplus F\oplus F^\prime
\end{align*}
as $G$-lattices 
with rank $24+12+8=4+20+20=44$ 
where 
$H_2\simeq C_2$, 
$H_3\simeq C_3$, 
$H_5\simeq C_6$. 
Hence we conclude that $[F]+[F^\prime]=0$ 
(see Example \ref{ex11.2} for GAP computations).\qed

%%%%%%%%%%%%%%%%%%%%%%%%%%%%%%%%%%%%%%%%%%%%%%%%%%%%%%%%%%%%%%%%%%%%%
\bigskip
\begin{example}[{Proof of ${\rm (4)}$ of {Theorem \ref{thmain6}}}]\label{ex11.1}~\vspace*{-5mm}\\
\begin{verbatim}
gap> Read("BCAlgTori.gap");
gap> I4g:=List(I4,x->MatGroupZClass(x[1],x[2],x[3],x[4]));
[ MatGroupZClass( 4, 31, 1, 3 ), MatGroupZClass( 4, 31, 1, 4 ), 
  MatGroupZClass( 4, 31, 2, 2 ), MatGroupZClass( 4, 31, 4, 2 ), 
  MatGroupZClass( 4, 31, 5, 2 ), MatGroupZClass( 4, 31, 7, 2 ), 
  MatGroupZClass( 4, 33, 2, 1 ) ]
gap> G:=I4g[6]; # G=S5xC2
MatGroupZClass( 4, 31, 7, 2 )
gap> AutG:=AutomorphismGroup(G);
<group of size 240 with 4 generators>
gap> List(GeneratorsOfGroup(AutG),IsInnerAutomorphism); # AutG.4 is not Inn(G)
[ true, true, true, false ]
gap> F:=FlabbyResolutionLowRank(G).actionF; # F=[M_G]^fl
<matrix group with 3 generators>
gap> Rank(F.1); # F is of rank 16
16
gap> gf:=GroupHomomorphismByImages(G,F,GeneratorsOfGroup(G),GeneratorsOfGroup(F));;
gap> Gprime:=Group(List(GeneratorsOfGroup(G),x->Image(AutG.4,x))); # G'
<matrix group with 3 generators>
gap> Fprime:=Group(List(GeneratorsOfGroup(Gprime),x->Image(gf,x))); # F'
<matrix group with 3 generators>
gap> FFprime:=DirectSumMatrixGroup([F,Fprime]); # F+F'
<matrix group with 3 generators>
gap> IdSmallGroup(FFprime); 
[ 240, 189 ]
gap> IdSmallGroup(G); 
[ 240, 189 ]
gap> ll:=PossibilityOfStablyPermutationM(FFprime);;
gap> Length(ll);
18
gap> l:=ll[16];
[ 0, 0, 0, 0, 0, 0, 0, 0, 0, 0, 0, 0, 0, 0, 0, 0, 0, 0, 0, 0, 1, 0, 0, 0, 0, 
  0, 0, 0, 0, 0, 0, -1, 0, 0, -1, 0, 0, -1, 0, -1, 0, -1, 0, 0, 0, 1, 1, 1, 
  1, 0, 0, -2, 1, -1, -1, -1, 2, 1 ]
gap> Length(l);
58
gap> ss:=List(ConjugacyClassesSubgroups2(G),
> x->StructureDescription(Representative(x)));
[ "1", "C2", "C2", "C2", "C2", "C2", "C3", "C2 x C2", "C2 x C2", "C2 x C2", 
  "C2 x C2", "C2 x C2", "C2 x C2", "C4", "C4", "C2 x C2", "C5", "C6", "S3", 
  "S3", "C6", "S3", "C6", "S3", "C2 x C2 x C2", "D8", "D8", "C4 x C2", 
  "C2 x C2 x C2", "D8", "D8", "D10", "C10", "D10", "A4", "D12", "D12", "D12", 
  "C6 x C2", "D12", "D12", "D12", "C2 x D8", "C5 : C4", "D20", "C5 : C4", 
  "C2 x A4", "S4", "S4", "C2 x C2 x S3", "C2 x (C5 : C4)", "C2 x S4", "A5", 
  "S5", "C2 x A5", "S5", "C2 x S5" ]
gap> Length(ss);
57
gap> Nlist(l);
[ 0, 0, 0, 0, 0, 0, 0, 0, 0, 0, 0, 0, 0, 0, 0, 0, 0, 0, 0, 0, 0, 0, 0, 0, 0, 
  0, 0, 0, 0, 0, 0, 1, 0, 0, 1, 0, 0, 1, 0, 1, 0, 1, 0, 0, 0, 0, 0, 0, 0, 0, 
  0, 2, 0, 1, 1, 1, 0, 0 ]
gap> Plist(l);
[ 0, 0, 0, 0, 0, 0, 0, 0, 0, 0, 0, 0, 0, 0, 0, 0, 0, 0, 0, 0, 1, 0, 0, 0, 0, 
  0, 0, 0, 0, 0, 0, 0, 0, 0, 0, 0, 0, 0, 0, 0, 0, 0, 0, 0, 0, 1, 1, 1, 1, 0, 
  0, 0, 1, 0, 0, 0, 2, 1 ]
gap> Filtered([1..57],x->Nlist(l)[x]>0);
[ 32, 35, 38, 40, 42, 52, 54, 55, 56 ]
gap> ss{last};
[ "D10", "A4", "D12", "D12", "D12", "C2 x S4", "S5", "C2 x A5", "S5" ]
gap> Filtered([1..57],x->Plist(l)[x]>0);
[ 21, 46, 47, 48, 49, 53, 57 ]
gap> ss{last};                          
[ "C6", "C5 : C4", "C2 x A4", "S4", "S4", "A5", "C2 x S5" ]
gap> bp:=StablyPermutationMCheckP(FFprime,Nlist(l),Plist(l));;
gap> SearchPRowBlocks(bp);
rec( 
  bpBlocks := 
    [ [ 1, 2, 3, 4, 5, 6, 7, 8, 9, 10, 11, 12, 13, 14, 15, 16, 17, 18, 19, 
          20, 21, 22, 23, 24, 25, 26, 27, 28 ], 
      [ 29, 30, 31, 32, 33, 34, 35, 36, 37, 38, 39, 40, 41, 42, 43, 44, 45, 
          46, 47, 48, 49, 50, 51, 52, 53, 54, 55, 56, 57, 58 ], 
      [ 59, 60, 61, 62, 63, 64, 65, 66, 67, 68, 69, 70, 71, 72, 73, 74, 75, 
          76, 77, 78, 79, 80, 81, 82, 83, 84 ], 
      [ 85, 86, 87, 88, 89, 90, 91, 92, 93, 94, 95, 96, 97, 98, 99, 100, 
          101, 102, 103, 104, 105, 106, 107, 108, 109, 110 ], 
      [ 111, 112, 113, 114, 115, 116, 117, 118, 119, 120, 121, 122, 123, 
          124, 125, 126, 127, 128, 129, 130, 131, 132, 133, 134, 135, 136, 
          137, 138 ], 
      [ 139, 140, 141, 142, 143, 144, 145, 146, 147, 148, 149, 150, 151, 
          152, 153, 154 ], 
      [ 155, 156, 157, 158, 159, 160, 161, 162, 163, 164, 165, 166, 167, 
          168, 169, 170 ], 
      [ 171, 172, 173, 174, 175, 176, 177, 178, 179, 180, 181, 182 ], 
      [ 183, 184, 185, 186, 187, 188, 189, 190, 191, 192, 193, 194, 195, 196 ], 
      [ 197, 198, 199, 200, 201, 202, 203, 204, 205, 206, 207, 208 ] ],
  rowBlocks := 
    [ [ 1, 2, 3, 4, 5, 6, 7, 8, 9, 10, 11, 12, 13, 14, 15, 16, 17, 18, 19, 
          20, 21, 22, 23, 24 ], 
      [ 25, 26, 27, 28, 29, 30, 31, 32, 33, 34, 35, 36, 37, 38, 39, 40, 41, 
          42, 43, 44 ], 
      [ 45, 46, 47, 48, 49, 50, 51, 52, 53, 54, 55, 56, 57, 58, 59, 60, 61, 
          62, 63, 64 ], 
      [ 65, 66, 67, 68, 69, 70, 71, 72, 73, 74, 75, 76, 77, 78, 79, 80, 81, 
          82, 83, 84 ], 
      [ 85, 86, 87, 88, 89, 90, 91, 92, 93, 94, 95, 96, 97, 98, 99, 100, 
          101, 102, 103, 104 ], [ 105, 106, 107, 108, 109 ], 
      [ 110, 111, 112, 113, 114 ], [ 115, 116 ], [ 117, 118 ], [ 119, 120 ] ] )

# after some efforts we may get 
gap> n1:=
> [ 1, 1, 0, 1, 0, 1, 0, 1, 0, 0, 0, 1, 0, 0, 0, 0, 0, 1, 1, 1, 1, 1, 1, 1, 1, 
>   0, 1, 1, 1, 1, 1, 1, 0, 1, 0, 1, 0, 1, 1, 1, 0, 0, 0, 1, 1, 0, 0, 0, 1, 1, 
>   1, 0, 1, 0, 1, 1, 1, 0, 1, 0, 1, 0, 0, 0, 0, 1, 0, 1, 1, 1, 0, 1, 0, 1, 0, 
>   0, 1, 1, 1, 1, 1, 0, 0, 0, 1, 1, 1, 0, 1, 1, 1, 0, 0, 0, 1, 1, 0, 1, 1, 0, 
>   0, 1, 0, 0, 0, 0, 1, 1, 1, 0, 0, 0, 0, 0, 0, 1, 0, 0, 1, 0, 1, 1, 0, 1, 1, 
>   0, 1, 1, 1, 0, 1, 0, 0, 1, 0, 0, 0, 0, 0, 0, 0, 1, 0, 0, 1, 1, 1, 0, 1, 1, 
>   1, 0, 0, 1, 0, 0, 1, 1, 0, 1, 0, 1, 1, 1, 1, 0, 1, 0, 1, 0, 0, 1, -1, -1, 
>   0, 1, -1, 1, 1, 0, 0, 1, 0, 0, 1, 1, 0, 1, 1, -1, -1, 1, 1, -1, 1, 0, 3, 
>   1, 2, -2, 1, 0, 2, 0, -2, 0, 2, 1 ];;
gap> P:=n1*bp;;
gap> Size(P);
120
gap> Determinant(P);
1
gap> StablyPermutationMCheckMat(FFprime,Nlist(l),Plist(l),P);
true
\end{verbatim}
\end{example}

\bigskip
\begin{example}[{Proof of ${\rm (5)}$ of {Theorem \ref{thmain6}}}]\label{ex11.2}~\vspace*{-5mm}\\
\begin{verbatim}
gap> Read("BCAlgTori.gap");
gap> I4g:=List(I4,x->MatGroupZClass(x[1],x[2],x[3],x[4]));
[ MatGroupZClass( 4, 31, 1, 3 ), MatGroupZClass( 4, 31, 1, 4 ), 
  MatGroupZClass( 4, 31, 2, 2 ), MatGroupZClass( 4, 31, 4, 2 ), 
  MatGroupZClass( 4, 31, 5, 2 ), MatGroupZClass( 4, 31, 7, 2 ), 
  MatGroupZClass( 4, 33, 2, 1 ) ]
gap> G:=I4g[7]; # G=C3:C8
MatGroupZClass( 4, 33, 2, 1 )
gap> StructureDescription(G);
"C3 : C8"
gap> StructureDescription(Centre(G)); # Z(G)=C4
"C4"
gap> StructureDescription(G/Centre(G)); # Inn(G)=G/Z(G)=S3, |Inn(G)|=6
"S3"
gap> AutG:=AutomorphismGroup(G); 
<group of size 24 with 4 generators>
gap> StructureDescription(AutG); # Aut(G)=C2xC2xS3, |Aut(G)|=24
"C2 x C2 x S3"
gap> Order(AutGLnZ(G)); # |X|=|AutGLnZ(G)|=12 
12
gap> Order(Centre(AutGLnZ(G))); # X=AutGLnZ(G)=D6
2
gap> List(GeneratorsOfGroup(AutG),IsInnerAutomorphism); # AutG.1 is not in Inn(G)
[ false, false, true, true ]
gap> List(GeneratorsOfGroup(AutG),x->x in AutGLnZ(G)); # AutG.1 is not in X
[ false, true, true, true ]
gap> F:=FlabbyResolutionLowRank(G).actionF; # F=[M_G]^fl
<matrix group with 2 generators>
gap> Rank(F.1); # F is of rank 20
20
gap> gf:=GroupHomomorphismByImages(G,F,GeneratorsOfGroup(G),GeneratorsOfGroup(F));;
gap> Gprime:=Group(List(GeneratorsOfGroup(G),x->Image(AutG.1,x))); # G'
<matrix group with 2 generators>
gap> Fprime:=Group(List(GeneratorsOfGroup(Gprime),x->Image(gf,x))); # F'
<matrix group with 2 generators>
gap> FFprime:=DirectSumMatrixGroup([F,Fprime]); # F+F'
<matrix group with 2 generators>
gap> IdSmallGroup(FFprime);
[ 24, 1 ]
gap> IdSmallGroup(G);
[ 24, 1 ]
gap> PossibilityOfStablyPermutationM(FFprime);
[ [ 1, 1, 1, 0, -1, 0, 0, 0, -1 ] ]
gap> l:=last[1];
[ 1, 1, 1, 0, -1, 0, 0, 0, -1 ]
gap> Length(l);
9
gap> ss:=List(ConjugacyClassesSubgroups2(G),
> x->StructureDescription(Representative(x)));
[ "1", "C2", "C3", "C4", "C6", "C8", "C12", "C3 : C8" ]
gap> Length(ss);
8
gap> Nlist(l);
[ 0, 0, 0, 0, 1, 0, 0, 0, 1 ]
gap> Plist(l);
[ 1, 1, 1, 0, 0, 0, 0, 0, 0 ]
gap> Filtered([1..8],x->Nlist(l)[x]>0);
[ 5 ]
gap> ss{last};
[ "C6" ]
gap> Filtered([1..8],x->Plist(l)[x]>0);
[ 1, 2, 3 ]
gap> ss{last};                         
[ "1", "C2", "C3" ]
gap> bp:=StablyPermutationMCheckP(FFprime,Nlist(l),Plist(l));;
gap> SearchPRowBlocks(bp);
rec( 
  bpBlocks := [ [ 1, 2, 3, 4, 5, 6, 7, 8, 9, 10, 11, 12 ], 
      [ 13, 14, 15, 16, 17, 18, 19, 20, 21, 22, 23, 24, 25, 26, 27, 28, 29, 
          30, 31, 32, 33, 34, 35, 36, 37, 38, 39, 40, 41, 42, 43, 44, 45, 46, 
          47, 48, 49, 50, 51, 52 ], 
      [ 53, 54, 55, 56, 57, 58, 59, 60, 61, 62, 63, 64, 65, 66, 67, 68, 69, 
          70, 71, 72, 73, 74, 75, 76, 77, 78, 79, 80, 81, 82, 83, 84, 85, 86, 
          87, 88, 89, 90, 91, 92 ] ], 
  rowBlocks := 
    [ [ 1, 2, 3, 4 ], 
      [ 5, 6, 7, 8, 9, 10, 11, 12, 13, 14, 15, 16, 17, 18, 19, 20, 21, 22, 
          23, 24 ], 
      [ 25, 26, 27, 28, 29, 30, 31, 32, 33, 34, 35, 36, 37, 38, 39, 40, 41, 
          42, 43, 44 ] ] )

# after some efforts we may get 
gap> nn:=
> [ 1, -1, -1, 1, 1, 0, 0, 0, 0, -1, 0, 0, 1, 1, 1, 0, 1, 1, 1, 0, 0, 0, 1, 1, 
>   0, 1, 1, 1, 1, 0, 0, 1, 0, 0, 0, 0, 1, 1, 1, 0, 1, 1, 1, 1, 1, 0, 0, 0, 1, 
>   0, 1, 0, 0, 1, 0, 0, 1, 1, 0, 1, 0, 0, 0, 1, 0, 1, 1, 0, 0, 1, 0, 1, 1, 1, 
>   0, 0, 1, 0, 0, 0, 0, 1, 0, 0, 0, 1, 0, 1, 1, 0, 0, 1 ];;
gap> P:=nn*bp;;
gap> Size(P);
44
gap> Determinant(P);
-1
gap> StablyPermutationMCheckMat(FFprime,Nlist(l),Plist(l),P);
true
\end{verbatim}
\end{example}

%%%%%%%%%%%%%%%%%%%%%%%%%%%%%%%%%%%%%%%%%%%%%%%%%%%%%%%%%%%%%%%%%%%%%%%%%%
%\ssection{other comparison}
\section{Proof of Theorem \ref{thmain7} (Main theorem $7$): higher dimensions}\label{S12}
%%%%

%
%\begin{proof}
By {Theorem \ref{thmain2}}, 
{Theorem \ref{thmain4}} and {Theorem \ref{thmain5}}, 
there exists 
an algebraic $k$-torus $T^{\prime\prime}$ of dimension $3$ or $4$ 
with $\widehat{T}^{\prime\prime}=M_{G^{\prime\prime}}$, 
$G^{\prime\prime}=N_{3,i}$ $(1\leq i\leq 15)$, 
$G^{\prime\prime}=N_{4,i}$ $(1\leq i\leq 152)$ or 
$G^{\prime\prime}=I_{4,i}$ $(1\leq i\leq 7)$ 
such that $[M_{G^{\prime\prime}}]^{fl}\sim [M_G]^{fl}$, 
i.e. there exists a subdirect product 
$\widetilde{H}\leq G^{\prime\prime}\times G$ 
of $G^{\prime\prime}$ and $G$ 
with surjections $\varphi_1: \widetilde{H} \rightarrow G^{\prime\prime}$ 
and $\varphi_2: \widetilde{H} \rightarrow G$ 
such that $[M_{G^{\prime\prime}}]^{fl}=[M_G]^{fl}$ 
as $\widetilde{H}$-lattices where $\widetilde{H}$ acts on 
$M_{G^{\prime\prime}}$ and $M_G$ through the surjections 
$\varphi_1$ and $\varphi_2$ respectively. 
 
We take a subdirect product 
$\widetilde{S}:=\{(h_1,h_2)\in\widetilde{H}_1\times \widetilde{H}_2\mid 
\varphi_2(h_1)=\varphi_2(h_2)\}\leq \widetilde{H}_1\times \widetilde{H}_2$ 
of $\widetilde{H}_1\simeq \widetilde{H}$ and 
$\widetilde{H}_2\simeq \widetilde{H}$ as $\widetilde{H}$-lattices 
with surjections 
$\theta_1: \widetilde{S}\rightarrow \widetilde{H}_1, (h_1,h_2)\mapsto h_1$ and 
$\theta_2: \widetilde{S}\rightarrow \widetilde{H}_2, (h_1,h_2)\mapsto h_2$. 
Then we have 
$[M_{\varphi_1\theta_1(\widetilde{S})}]^{fl}=[M_{\varphi_2\theta_1(\widetilde{S})}]^{fl}$ 
and 
$[M_{\varphi_1\theta_2(\widetilde{S})}]^{fl}=[M_{\varphi_2\theta_2(\widetilde{S})}]^{fl}$ as $\widetilde{S}$-lattices. 
On the other hand, it follows from the condition $\varphi_2\theta_1=\varphi_2\theta_2$ that 
$[M_{\varphi_2\theta_1(\widetilde{S})}]^{fl}=[M_{\varphi_2\theta_2(\widetilde{S})}]^{fl}$ as $\widetilde{S}$-lattices. 
Hence we obtain that 
$[M_{\varphi_1\theta_1(\widetilde{S})}]^{fl}=[M_{\varphi_1\theta_2(\widetilde{S})}]^{fl}$ as $\widetilde{S}$-lattices. 

We also consider a subdirect product 
$\widetilde{U}:=\{(\varphi_1\theta_1(x),\varphi_1\theta_2(x))\in G^{\prime\prime}\times G^{\prime\prime}\mid x\in \widetilde{S}\}
\leq 
\varphi_1\theta_1(\widetilde{S})\times \varphi_1\theta_2(\widetilde{S})$ 
of $\varphi_1\theta_1(\widetilde{S})=G^{\prime\prime}$ and 
$\varphi_1\theta_2(\widetilde{S})=G^{\prime\prime}$ 
with surjections 
$\mu_1:\widetilde{U}\rightarrow \varphi_1\theta_1(\widetilde{S})$ and 
$\mu_2:\widetilde{U}\rightarrow \varphi_1\theta_2(\widetilde{S})$. 
Then we have a surjection $\widetilde{S}\rightarrow \widetilde{U}$, 
$x\mapsto (\varphi_1\theta_1(x),\varphi_1\theta_2(x))$. 
Hence $[M_{\mu_1(\widetilde{U})}]^{fl}=[M_{\mu_2(\widetilde{U})}]^{fl}$ 
as $\widetilde{U}$-lattices. 
Then it follows from Theorem \ref{th8.1}, 
Theorem \ref{th8.2} and Theorem \ref{th8.3} that 
$\mu_1:\widetilde{U}\rightarrow \varphi_1\theta_1(\widetilde{S})\simeq G^{\prime\prime}$ and 
$\mu_2:\widetilde{U}\rightarrow \varphi_1\theta_2(\widetilde{S})\simeq G^{\prime\prime}$. 

Define $N_1=\varphi_1({\rm Ker}(\varphi_2))$. 
We will show that $N_1=1$. 
Suppose that $N_1\neq 1$ and take $n_1\in N_1$ with $n_1\neq 1$. 
Then $(1,1), (n_1,1)\in \widetilde{H}$, 
$((1,1),(n_1,1))\in \widetilde{S}$ and hence 
$(1,1), (1,n_1)\in \widetilde{U}$. 
This implies that $\mu_1$ is not injection and it yields a contradiction. 

We conclude that $N_1=1$. 
This implies ${\rm Ker}(\varphi_2)\subset {\rm Ker}(\varphi_1)$ 
and we have $L^{\prime\prime}\subset L$.\qed\\
%\end{proof}

As a consequence of Theorem \ref{thmain7} 
(Main theorem $7$ 
(with the aid of the proofs of Theorem \ref{th8.1}, Theorem \ref{th8.2} 
and Theorem \ref{th8.3})), 
we obtain the following: 

\begin{theorem}\label{th8.6}
Let $G^{\prime\prime}$ be a finite subgroup of $\GL(n,\bZ)$ $(n=3,4)$
which is conjugate to one of the groups 
{\rm (i)} $N_{3,i}$ $(1\leq i\leq 15)$; 
{\rm (ii)} $N_{4,i}$ $(1\leq i\leq 152)$; 
{\rm (iii)} $I_{4,i}$ $(1\leq i\leq 7)$ 
%as in Definition \ref{defN3N4}, 
and $M_{G^{\prime\prime}}$ be the corresponding 
$G^{\prime\prime}$-lattice as in Definition \ref{d2.2}. 
If $[M_{G^{\prime\prime}}]^{fl}=[F]$ as $G^{\prime\prime}$-lattices, 
then $F$ is a faithful $G^{\prime\prime}$-lattice. 
\end{theorem}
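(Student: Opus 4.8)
The plan is to mirror, for a single flabby lattice $F$, the two-step structure used in the proofs of Theorem \ref{th8.1}, Theorem \ref{th8.2} and Theorem \ref{th8.3}. Fix $G$ as in the statement with standard lattice $M_G$, and suppose $[F]=[M_G]^{fl}$. Write $N=\ker(G\to\mathrm{Aut}(F))$ for the kernel of the action of $G$ on $F$; the goal is to show $N=1$. The key inputs are the faithfulness of the $p$-part $\widetilde{N}_p$ (Theorem \ref{th7.12}, Theorem \ref{th7.13}) and the weak stably $k$-equivalence bookkeeping on subgroups already used in Section \ref{S8}.

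First I would run the $p$-part argument for each prime $p$ with $\widetilde{N}_p\neq 0$. Restricting $[F]=[M_G]^{fl}$ to $Sy_p(G)$ and tensoring with $\bZ_p$, the similarity $[F|_{Sy_p(G)}]=[M_G|_{Sy_p(G)}]^{fl}$ shows that $\widetilde{F}_p|_{Sy_p(G)}$ lies in the same similarity class as $\widetilde{N}_p\oplus(\text{permutation})$ over $\bZ_p[Sy_p(G)]$. Since $Sy_p(G)$ is a $p$-group, Krull-Schmidt-Azumaya holds and the indecomposable permutation summands are exactly the $\bZ_p[Sy_p(G)/K]$; as $\widetilde{N}_p$ is indecomposable and carries no permutation summand, it must occur as a direct summand of $\widetilde{F}_p|_{Sy_p(G)}$. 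Faithfulness of $\widetilde{N}_p$ then forces $Sy_p(G)$ to act faithfully on $F$, whence $p\nmid |N|$. Note that this already replaces the $C_2\times C_2$ device of Step 2 in Section \ref{S8}, since here a single $F$ furnishes only one kernel.

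Next I would run the weak stably $k$-equivalence argument to control the primes $p$ with $\widetilde{N}_p=0$. For $H_1,H_2\leq G$ with $H_1N=H_2N$, the action of each $H_i$ on $F$ factors through $q_i\colon H_i\to\bar{H}:=H_iN/N$, so $F|_{H_1}$ and $F|_{H_2}$ are inflations of one and the same $\bar{H}$-lattice $\bar{F}$; pulling back to the subdirect product $\{(h_1,h_2)\in H_1\times H_2 : q_1(h_1)=q_2(h_2)\}$ makes them equal, hence $[M_G|_{H_1}]^{fl}=[F|_{H_1}]\sim[F|_{H_2}]=[M_G|_{H_2}]^{fl}$, so $H_1$ and $H_2$ lie in the same weak stably $k$-equivalent class. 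Thus $N$ is an invariant normal subgroup in the sense computed by {\tt MaximalInvariantNormalSubgroup}, and the Step 1 computations of the proofs of Theorem \ref{th8.1}, Theorem \ref{th8.2} and Theorem \ref{th8.3} bound $N$ by some $N^{\max}\in\{1,C_2,C_4\}$, with $N^{\max}=1$ for all but the exceptional groups.

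Finally I would combine the two steps. When $N^{\max}=1$ there is nothing to prove. For the exceptional groups of Theorem \ref{th8.1} and Theorem \ref{th8.2} one has $N^{\max}\simeq C_2$ together with $\widetilde{N}_2\neq 0$ (Table $14$, Table $15$), so $2\nmid|N|$ by the $p$-part step, forcing $N=1$. The genuinely separate case is $G=I_{4,7}$, where $N^{\max}\simeq C_4$ but $\widetilde{N}_2=0$, so the $p$-part argument is unavailable; here I would reuse the explicit computation of the proof of Theorem \ref{th8.3} (via {\tt PossibilityOfStablyEquivalentFSubdirectProduct}) to exclude every nontrivial $N$. I expect this last case to be the main obstacle, because it cannot be reduced to Sylow faithfulness and must instead rely on the direct verification that no inflation of $[M_{I_{4,7}}]^{fl}$ from a proper quotient of $G$ lies in the flabby class.
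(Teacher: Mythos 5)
Your proposal is correct and takes essentially the same route as the paper: the paper's own proof is a short reduction of the non-faithfulness of $F$ to a subdirect product $\widetilde{H}\leq G\times G$ with $N_1=N\neq 1$, which is then ruled out by Steps 1 and 2 of the proofs of Theorem \ref{th8.1}, Theorem \ref{th8.2} and Theorem \ref{th8.3}. The two ingredients you unpack --- the faithfulness of the indecomposable $p$-part $\widetilde{N}_p$ forcing $Sy_p(G)\cap N=1$, the {\tt MaximalInvariantNormalSubgroup} bound on $N$, and the separate explicit exclusion for $I_{4,7}$ --- are exactly what those steps supply.
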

\begin{proof}
Let $N^{\prime\prime}=\{\sigma^{\prime\prime}\in G^{\prime\prime}\mid 
f^{\sigma^{\prime\prime}}=f\ {\rm for\ any}\ f\in F\}
\lhd G^{\prime\prime}$ be the kernel 
of the action of $G^{\prime\prime}$ on $F$, i.e. 
$G^{\prime\prime}/N^{\prime\prime}$ acts on $F$ faithfully. 
We should show that $N^{\prime\prime}=1$. 

We take a coflabby resolution of $F$ 
as $(G^{\prime\prime}/N^{\prime\prime})$-lattice, 
i.e. an exact sequence of 
$(G^{\prime\prime}/N^{\prime\prime})$-lattices 
$0\to C\to P\to F\to 0$ 
with $C$ coflabby and $P$ permutation (see Definition \ref{defF}). 
Then there exists $G\leq {\rm GL}(m,\bZ)$ 
such that $M_{G}\simeq C$ as $(G^{\prime\prime}/N^{\prime\prime})$-lattices 
with surjections 
$\psi_1: G^{\prime\prime}\to G^{\prime\prime}/N^{\prime\prime}$ 
and
$\psi_2: G^{\prime\prime}/N^{\prime\prime}\to G$. 
Hence 
we have a 
surjection $\psi=\psi_2\circ\psi_1: G^{\prime\prime}\to G$, 
i.e. $G\simeq G^{\prime\prime}/{\rm Ker}(\psi)$ 
and $N^{\prime\prime}={\rm Ker}(\psi_1)\lhd {\rm Ker}(\psi)$. 
This implies that $[M_G]^{fl}=[M_{G^{\prime\prime}}]^{fl}=[F]$ 
as $G^{\prime\prime}$-lattices via surjection $\psi$. 
It follows from Theorem \ref{thmain7} that 
$L^{\prime\prime}\subset L$, 
i.e. there exists $N\lhd G$ such that $G/N\simeq G^{\prime\prime}$, 
and hence $N\simeq {\rm Ker}(\psi)=1$. 
We conclude that $N^{\prime\prime}=1$. 
\end{proof}

For GAP computations of Example \ref{ex1.42} for $G^{\prime\prime}=N_{5,324}\leq \GL(5,\bZ)$ with CARAT ID (5,100,12), see Example \ref{ex12.1} which needs {\tt MultInvField.gap} 
which is available as in \cite{MultInvField}.\\

\bigskip
\begin{example}[{Computationas of Example \ref{ex1.42}}]\label{ex12.1}~\vspace*{-5mm}\\
\begin{verbatim}
gap> Read("MultInvField.gap");
gap> Read("BCAlgTori.gap");
gap> Position(N5,[5,100,12]);
324
gap> N3g:=List(N3,x->MatGroupZClass(x[1],x[2],x[3],x[4]));;
gap> N5g:=List(N5,x->CaratMatGroupZClass(x[1],x[2],x[3]));;
gap> sd:=AllSubdirectProducts(N3g[1],N5g[324]);
[ <pc group with 5 generators>, <pc group with 4 generators>, 
  <pc group with 4 generators>, <pc group with 4 generators>, 
  <pc group with 3 generators> ]
gap> List(sd,Order);
[ 32, 16, 16, 16, 8 ]
gap> PossibilityOfStablyEquivalentFSubdirectProduct(sd[1]);
[  ]
gap> PossibilityOfStablyEquivalentFSubdirectProduct(sd[2]);
[  ]
gap> PossibilityOfStablyEquivalentFSubdirectProduct(sd[3]);
[  ]
gap> PossibilityOfStablyEquivalentFSubdirectProduct(sd[4]);
[  ]
gap> PossibilityOfStablyEquivalentFSubdirectProduct(sd[5]);
[ [ 0, 0, 0, 0, 0, 1, 0, 0, 1 ] ]
gap> l:=last[1]; 
# possibility for Z[H~/H6]+F=F' in the sense of the equation (8)
[ 0, 0, 0, 0, 0, 1, 0, 0, 1 ]
gap> bp:=StablyEquivalentFCheckPSubdirectProduct(sd[5],LHSlist(l),RHSlist(l));;
gap> SearchPRowBlocks(bp);
rec( bpBlocks := [ [ 1, 2, 3, 4, 5 ], [ 6, 7, 8, 9, 10, 11, 12, 13, 14, 15 ] ]
    , rowBlocks := [ [ 1, 2 ], [ 3, 4, 5, 6, 7 ] ] )
gap> r1:=SearchPFilterRowBlocks(bp,[1..5],[1,2],1);;
gap> Length(r1);
1
gap> r2:=SearchPFilterRowBlocks(bp,[6..15],[3..7],1);;
gap> Length(r2);
1
gap> P:=SearchPMergeRowBlock(r1,r2);
[ [ 1, 0, 0, 0, 0, 0, 0 ], 
  [ 1, 0, 0, 1, 1, -1, -1 ], 
  [ 1, 0, 0, 0, 0, -1, -1 ], 
  [ 0, 0, 0, 0, 0, 1, 0 ], 
  [ 0, 0, 0, 0, 1, 0, 0 ],
  [ 1, 0, -1, 1, 0, -1, -1 ], 
  [ 1, -1, -1, 1, 1, -1, -1 ] ]
gap> StablyEquivalentFCheckMatSubdirectProduct(sd[5],LHSlist(l),RHSlist(l),P);
true

gap> StructureDescription(N3g[1]);
"C2 x C2"
gap> GeneratorsOfGroup(N3g[1]);
[ [ [ 0, 0, 1 ], [ -1, -1, -1 ], [ 1, 0, 0 ] ], 
  [ [ 0, 1, 0 ], [ 1, 0, 0 ], [ -1, -1, -1 ] ] ]
gap> StructureDescription(N5g[324]);
"D8"
gap> GeneratorsOfGroup(N5g[324]);
[ [ [ 0, -1, -1, 0, 1 ], [ -1, 0, 0, 0, -1 ], [ 0, 0, 1, 0, 0 ], 
      [ 0, 0, 0, -1, 0 ], [ 0, 0, 1, 0, -1 ] ], 
  [ [ 1, 1, 1, 1, 0 ], [ 0, -1, 0, 0, 0 ], [ 0, 0, -1, 0, 0 ], 
      [ 0, 0, 0, -1, 0 ], [ 0, -1, -1, -1, 1 ] ] ]
gap> GeneratorsOfGroup(sd[5]);
[ f1, f2, f3 ]
gap> Image(Projection(sd[5],1),sd[5].1);
[ [ 0, 0, 1 ], [ -1, -1, -1 ], [ 1, 0, 0 ] ]
gap> Image(Projection(sd[5],2),sd[5].1);
[ [ 1, 1, 1, 1, 0 ], [ 0, -1, 0, 0, 0 ], [ 0, 0, -1, 0, 0 ], 
  [ 0, 0, 0, -1, 0 ], [ 0, -1, -1, -1, 1 ] ]
gap> Image(Projection(sd[5],1),sd[5].2);
[ [ 0, 1, 0 ], [ 1, 0, 0 ], [ -1, -1, -1 ] ]
gap> Image(Projection(sd[5],2),sd[5].2);
[ [ -1, 0, -1, 0, 0 ], [ 1, 0, 0, 0, 1 ], [ 0, 0, 1, 0, 0 ], 
  [ 0, 0, 0, -1, 0 ], [ 1, 1, 1, 0, 0 ] ]
gap> Image(Projection(sd[5],1),sd[5].3);
[ [ 0, 1, 0 ], [ 1, 0, 0 ], [ -1, -1, -1 ] ]
gap> Image(Projection(sd[5],2),sd[5].3);
[ [ 0, -1, -1, 0, 1 ], [ -1, 0, 0, 0, -1 ], [ 0, 0, 1, 0, 0 ], 
  [ 0, 0, 0, -1, 0 ], [ 0, 0, 1, 0, -1 ] ]
gap> s1:=N3g[1].1;
[ [ 0, 0, 1 ], [ -1, -1, -1 ], [ 1, 0, 0 ] ]
gap> s2:=N3g[1].2;
[ [ 0, 1, 0 ], [ 1, 0, 0 ], [ -1, -1, -1 ] ]
gap> Image(Projection(sd[5],1),sd[5].1)=s1;
true
gap> Image(Projection(sd[5],1),sd[5].2)=s2;
true
gap> Image(Projection(sd[5],1),sd[5].3)=s2;
true
gap> t1:=N5g[324].1;
[ [ 0, -1, -1, 0, 1 ], [ -1, 0, 0, 0, -1 ], [ 0, 0, 1, 0, 0 ], 
  [ 0, 0, 0, -1, 0 ], [ 0, 0, 1, 0, -1 ] ]
gap> t2:=N5g[324].2;
[ [ 1, 1, 1, 1, 0 ], [ 0, -1, 0, 0, 0 ], [ 0, 0, -1, 0, 0 ], 
  [ 0, 0, 0, -1, 0 ], [ 0, -1, -1, -1, 1 ] ]
gap> Centre(N5g[324]);
<matrix group with 1 generator>
gap> GeneratorsOfGroup(last);
[ [ [ 0, 1, 0, 0, -1 ], [ 0, -1, 0, 0, 0 ], [ 0, 0, 1, 0, 0 ], 
      [ 0, 0, 0, 1, 0 ], [ -1, -1, 0, 0, 0 ] ] ]
gap> t3:=last[1];
[ [ 0, 1, 0, 0, -1 ], [ 0, -1, 0, 0, 0 ], [ 0, 0, 1, 0, 0 ], 
  [ 0, 0, 0, 1, 0 ], [ -1, -1, 0, 0, 0 ] ]
gap> Image(Projection(sd[5],2),sd[5].1)=t2;
true
gap> Image(Projection(sd[5],2),sd[5].2)=t1*t3;
true
gap> Image(Projection(sd[5],2),sd[5].3)=t1;
true
gap> Order(t1*t2);
4
gap> t3=t1*t2*t1*t2;
true
gap> Image(Projection(sd[5],2),sd[5].2)=t2*t1*t2;
true
\end{verbatim}
\end{example}

%%%%%%%%%%%%%%%%%%%%%%%%%%%%%%%%%%%%%%%%%%%%%%%%%%%%%%%%%%%%%%%%%%%%%

\end{document}